\newtheorem{theorem}{Theorem}[section]
\newtheorem{lemma}[theorem]{Lemma}
\newtheorem {proposition}[theorem]{Proposition}
\newtheorem {corollary} [theorem] {Corollary}
\theoremstyle{definition}
\newtheorem{definition}[theorem]{Definition}
\theoremstyle{definition}
\newtheorem{example}[theorem]{Example}
\newtheorem{exercise}[theorem]{Exercise}
\theoremstyle{remark}
\newtheorem{remark}[theorem]{Remark}
\newtheorem{e}{\bf Problem}[chapter]
\def\ra{{\rightarrow}}
\def\norm{{\|-\|}}
\def\c{{\mathbb{C}}}
\def\r{{\mathbb{R}}}
\def\z{{\mathbb{Z}}}
\def\n{{\mathbb{N}}}
\def\t{{\mathbb{T}}}
\def\q{{\mathbb{Q}}}
\def\pr{{^{\prime}}}
\def\s{{^{\ast}}}
\def\inv{{^{-1}}}
\def\cs{{$C^{\ast}$}}
\def\ss{{$\ast$}}
\def\K{{$K$}}
\def\KK{{$KK$}}
\def\ws{{weak-$\s$\,}}
\def\sub{{\subseteq }}
\def\inner{{\langle - , - \rangle }}
\def\lan{{\langle}}
\def\ran{{\rangle}}
\def\nt{{\|^2}}
\def\mm{{\mathfrak{m}}}
\def\mmm{{\mathcal{M}}}
\def\ggg{{\mathcal{G}}}
\def\fff{{\mathcal{F}}}
\def\ff{{\varphi}}
\def\d{{\delta}}
\def\la{{\lambda}}
\def\si{{\sigma}}
\def\ep{{\varepsilon}}
\def\om{{\omega}}
\def\ga{{\gamma}}
\def\th{{\theta}}
\def\oom{{\Omega}}
\title{Lectures on $C^\ast$-algebras}
\author{Vahid Shirbisheh  \\
{\small\em \copyright \  Draft date \today }}
\date{ }
\begin{document}
\maketitle
 \addcontentsline{toc}{chapter}{Contents}
\pagenumbering{roman}
\tableofcontents

\newpage
\pagenumbering{arabic}
\chapter{Introduction}
\label{ch:introduction}

These notes are mainly based on a course given by the author in Fall 2008. The title of the course was ``topics in functional analysis'', but with a very flexible syllabus mainly about operator algebras. Therefore at the time, we decided to focus only on one topic which was ``\cs-algebras''. We mainly followed Bruce Blackadar's book \cite{blackadaroa} in the course. Meanwhile, we had to refer to other books on \cs-algebras and operator algebras for more details. Therefore we also added many topics, results, examples, details and exercises from other sources. These additional sources are mentioned in these lectures from time to time, but to do them justice we have to name a few of the most important of them; \cite{kadison-r-1, li, murphy, pedersen1, pedersen2, rudinfunctional, takesaki1}. This mixture of sources for the course made us to design the order and depth of the topics differently than other books. Besides, since students attending the course had different background, we had to give full proofs for every statement and explain many details from measure theory and functional analysis as well as the theory of \cs-algebras itself. So, the result was a very self contained series of lectures on \cs-algebra. Hoping that this level of details would help beginners, we decided to prepare these notes in an organized and standard form. During rewriting these notes, we frequently were tempted to add more materials to the original lectures. Although most of the time, we managed to control this temptation, we have added some new topics in order to make the whole notes more consistent and useful. For instance, Sections \ref{sec:loneofG}, \ref{sec:spectralcompact}, \ref{sec:Gelfandduality}, \ref{sec:compactoperators}, and \ref{sec:vonneumann} were not part of the original course. On the other hand, we presented GNS construction fully in the course, but it is not given in these notes. Hopefully, a chapter on states, representations and GNS construction will be added to the present notes in the near future.

The order and list of the topics covered in these notes are as follows: Chapter \ref{ch:banachalgebras} begins with elements of Banach algebras and some examples. We also devote a section to detailed study of Banach algebras of the form $L^1(G)$, where $G$ is a locally compact group. Afterwards, we discus spectrum of elements of Banach algebras. In Section \ref{sec:spectralcompact}, we study basics of the spectral theory of compact operators on Banach spaces. The first chapter is concluded with a section on the holomorphic functional calculus in Banach algebras. Chapter \ref{ch:gelfandduality} is mainly about the Gelfand transform and its consequences. So, the Gelfand transform on commutative Banach algebras and \cs-algebras is discussed in Section \ref{sec:Gelfandtrans}, the continuous functional calculus is presented in Section \ref{sec:confunctionalcal}, and finally the Gelfand duality between commutative \cs-algebras and locally compact and Hausdorff topological spaces is studied in Section \ref{sec:Gelfandduality}. We begin our study of abstract \cs-algebras in Chapter \ref{ch:basics}. Positivity in \cs-algebras, approximate units, ideals of \cs-algebras, hereditary \cs-subalgebras and multiplier algebras are the main topics covered in this chapter. Finally, these notes end in Chapter \ref{ch:Hilbertspaces}, where we present various topics concerning the \cs-algebra $B(H)$ of bounded operators on a Hilbert space $H$. We begin this chapter with presenting necessary notions and materials about Hilbert spaces. Elementary topics about bounded operators on Hilbert spaces are discussed in Section \ref{sec:boundedoperators}. We discuss three important examples of concrete \cs-algebras in Section \ref{sec:concreteexamples} including the reduced group \cs-algebra of a locally compact group $G$. Three locally convex topologies on the \cs-algebra $B(H)$, specifically the strong, weak and strong-$\s$ operator topologies are discussed in Section \ref{sec:topologies}. The Borel functional calculus in $B(H)$ is presented in Section \ref{sec:Borelfunctionalcalculus}. Projections in $B(H)$ and the polar decomposition of elements of $B(H)$ are studied in Section \ref{sec:projections}. In Section \ref{sec:compactoperators}, \cs-algebras of compact operators are studied briefly. Finally, the von Neumann bicommutant theorem is presented in Section \ref{sec:vonneumann}.

Although we have tried to present every topic as easy and self contained as possible, we have left many little details to readers in the form of exercises amongst the main part of the text. We also added some exercises at the end of each chapter. In order to distinguish between these two groups of the exercises, we named the latter group ``problems''. We only used a limited number of references to prepare these notes, but we give a long list of books related to the subject. We hope this list helps student and beginners to find complementary topics related to \cs-algebras.

We welcome any suggestions and comments related to these notes, especially regarding possible mistakes, typos or suggesting new examples, exercises and/or topics. Please, send your comments to shirbisheh@gmail.com or shirbisheh@yahoo.com.


\chapter{Banach algebras and spectral theory}
\label{ch:banachalgebras}

\cs-algebras are a special type of Banach algebras. Therefore many fundamental facts about Banach algebras are usually applicable in the theory of \cs-algebras too. Besides, some \cs-algebras are obtained from some Banach algebras, for instance, the reduced group \cs-algebra, see Example \ref{exa:reducedgroupcsalgebra}. Therefore we devote this chapter to the study of several topics in Banach algebras which are relevant to the theory of \cs-algebras.

In Section \ref{sec:topalgebras}, we gather basic definitions and facts concerning Banach algebras  and give some examples of Banach algebras and \cs-algebras. A detailed study of the Banach algebra $L^1(G)$ associated to a locally compact group $G$ is given in Section \ref{sec:loneofG}. Although the materials presented in this section are not necessary for the basic theory of \cs-algebras, we include this section for several reasons: First, $L^1(G)$ appears naturally in applications of the theory of \cs-algebras in harmonic analysis. Secondly, $L^1(G)$ motivates some constructions in \cs-algebras. And finally, it provides us with many examples of Banach algebras which are neither commutative nor the algebras of bounded operators on some Banach spaces. The spectrum of an element of a Banach algebra is introduced and studied in Section \ref{sec:spectrum}. The algebra of compact operators on a Banach space is another general example of Banach algebras. The spectral theory of compact operators is much richer than the spectral theory of general elements of Banach algebras and it is used in the study of \cs-algebras of compact operators on Hilbert spaces. Therefore we devote Section \ref{sec:spectralcompact} to a detailed study of this topic. Finally, in Section \ref{sec:holomorphic}, we discuss the holomorphic functional calculus in Banach algebras. It is a useful theory which enables us to construct new elements in a Banach algebra by applying certain holomorphic functions defined over the spectrum of an element of Banach algebra. We include this section , because we also discuss the continuous functional calculus in \cs-algebras in Section \ref{sec:confunctionalcal} and the Borel functional calculus in the \cs-algebra $B(H)$ of bounded operators on a Hilbert space $H$ in Section \ref{sec:Borelfunctionalcalculus}. Thus all the three major functional calculi related to \cs-algebras are covered.

Before starting our study of Banach algebras, we recall some well known theorems from functional analysis. Their proofs can be found in standard texts on functional analysis such as \cite{folland-ra, pedersen2, rudinfunctional}.

\begin{theorem}
\label{thm:uniboundedness}
[Uniform boundedness theorem] Let $E$ and $F$ be two Banach spaces. Given a subset $\Sigma\sub B(E,F)$, if the set $\{\|Tx\|; T\in \Sigma \}$ is bounded for every $x\in X$, then $\Sigma$ is bounded, that is the set $\{\|T\|; T\in \Sigma \}$ is bounded.
\end{theorem}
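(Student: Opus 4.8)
The plan is to deduce the uniform bound from the Baire category theorem, exploiting completeness of the domain $E$. First I would introduce, for each $n\in\n$, the set
\[
E_n=\{x\in E:\ \|Tx\|\leq n\ \text{for all}\ T\in\Sigma\}.
\]
Each $E_n$ is closed: it is the intersection $\bigcap_{T\in\Sigma}\{x:\|Tx\|\leq n\}$, and every set in this intersection is closed because $x\mapsto\|Tx\|$ is continuous (each $T$ is bounded, hence continuous). The pointwise boundedness hypothesis says precisely that for each $x\in E$ the number $\sup_{T\in\Sigma}\|Tx\|$ is finite, so $x$ lies in $E_n$ for some $n$; therefore $E=\bigcup_{n\in\n}E_n$.

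Next I would invoke that $E$, being a Banach space, is a complete metric space, so by the Baire category theorem it cannot be a countable union of closed sets with empty interior. Hence some $E_N$ has nonempty interior, giving a closed ball $\{x:\|x-x_0\|\leq r\}\sub E_N$ with $r>0$. The crux is to convert this single ball into a norm bound valid for every $T\in\Sigma$ simultaneously. For any $y$ with $\|y\|\leq r$ both $x_0$ and $x_0+y$ lie in $E_N$, so for every $T\in\Sigma$,
\[
\|Ty\|=\|T(x_0+y)-Tx_0\|\leq\|T(x_0+y)\|+\|Tx_0\|\leq 2N.
\]
Finally, for arbitrary $x$ with $\|x\|\leq 1$ I would apply this to $y=rx$, obtaining $r\|Tx\|=\|Ty\|\leq 2N$, whence $\|T\|\leq 2N/r$. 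As this bound is independent of $T$, the set $\{\|T\|:T\in\Sigma\}$ is bounded.

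I expect the main obstacle to be conceptual rather than computational: recognizing that \emph{pointwise} boundedness forces one of the closed ``level sets'' $E_N$ to be large (to have interior), which is exactly the content of the Baire category theorem and is the only place completeness of $E$ is used. The subsequent passage from the ball to the operator-norm bound is a routine translation-and-scaling argument; the choice of the doubling constant $2N$ (rather than $N$) and of rescaling by $r$ are the only details to watch. Note also that completeness of $F$ plays no role here — the argument needs only that $E$ is complete and that each $T$ is bounded.
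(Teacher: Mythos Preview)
Your proof is correct and is the standard Baire category argument. Note, however, that the paper does not actually prove this theorem: it is listed among the background results from functional analysis whose proofs ``can be found in standard texts,'' with references given. So there is no paper proof to compare against; your argument is precisely the one found in those references (e.g., Rudin's \emph{Functional Analysis}). Your closing observation that completeness of $F$ is unnecessary is also correct, even though the paper's statement hypothesizes both $E$ and $F$ to be Banach spaces.
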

\begin{theorem}
\label{thm:openmapping}
[Open mapping theorem] Every onto bounded linear map $T:E\ra F$ between two Banach spaces is open.
\end{theorem}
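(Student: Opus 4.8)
The plan is to use the Baire category theorem together with the completeness of both spaces. Since $T$ is linear, it suffices to show that $T$ carries the open unit ball $B_E = \{x\in E; \|x\|<1\}$ onto a set containing some ball $\{y\in F; \|y\|<\d\}$ about the origin in $F$. Indeed, once $T(B_E)$ is known to be a neighbourhood of $0$, a translation-and-scaling argument finishes the proof: for an open $U\sub E$ and $x\in U$ one has $x+\ep B_E\sub U$ for some $\ep>0$, whence $T(U)\supseteq Tx+\ep\,T(B_E)$ contains a ball about $Tx$, so $Tx$ is interior to $T(U)$ and $T(U)$ is open.

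First I would exploit surjectivity. Because $T$ is onto and each point of $E$ lies in a ball of some integer radius, we get $F=\bigcup_{n=1}^\infty\overline{T(nB_E)}$, a countable union of closed sets. As $F$ is complete, the Baire category theorem forces at least one $\overline{T(nB_E)}$ to have nonempty interior; rescaling by $1/n$ and using linearity, $\overline{T(B_E)}$ itself has nonempty interior. Since $T(B_E)$ is convex and symmetric about the origin, so is its closure, and a set with those two properties whose interior is nonempty must contain a ball about $0$. Hence there is $\d>0$ with $\{y; \|y\|<2\d\}\sub\overline{T(B_E)}$, equivalently $\{y; \|y\|<\d\}\sub\overline{T(\tfrac12 B_E)}$.

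The hard part is to remove the closure, that is, to pass from the inclusion into $\overline{T(B_E)}$ to an inclusion into $T(B_E)$ itself; this is exactly where completeness of the domain $E$ (not merely of $F$) is used. I would argue by successive approximation. Given $y$ with $\|y\|<\d$, the inclusion into the closure lets me pick $x_1$ with $\|x_1\|<\tfrac12$ and $\|y-Tx_1\|<\d/2$. Applying the same inclusion rescaled to the residual $y-Tx_1$, I choose $x_2$ with $\|x_2\|<\tfrac14$ and $\|y-Tx_1-Tx_2\|<\d/4$, and continue inductively. The partial sums of $\sum_k x_k$ form a Cauchy sequence whose norms dominate a convergent geometric series, so by completeness of $E$ they converge to some $x$; tracking the radii (after a harmless rescaling of $\d$) arranges $\|x\|<1$, and continuity of $T$ gives $Tx=y$. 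This yields $\{y; \|y\|<\d\}\sub T(B_E)$.

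The only genuine obstacle is this closure-removal step. The reduction in the first paragraph and the Baire-category argument in the second are routine, but the iteration must be set up so that the geometric decay of the residual norms is synchronised with the radii of the balls into which they fall; this simultaneously guarantees convergence of $\sum_k x_k$ and the norm bound that keeps the limit $x$ inside $B_E$. Once this inclusion of a ball about $0$ in $T(B_E)$ is established, the packaging described at the end of the first paragraph delivers openness of $T$.
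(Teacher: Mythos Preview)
Your argument is the standard, correct proof of the open mapping theorem via Baire category plus the successive-approximation step to strip the closure. The paper itself does not supply a proof of this theorem at all: it is listed among a handful of background results from functional analysis whose ``proofs can be found in standard texts on functional analysis such as \cite{folland-ra, pedersen2, rudinfunctional}.'' What you have written is precisely the argument one finds in those references, so there is nothing to compare.
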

\begin{theorem}
\label{thm:closedgraph}
[Closed graph theorem] A linear map $T:E\ra F$ between two Banach spaces is bounded if and only if its graph is a closed subset of $E\times F$.
\end{theorem}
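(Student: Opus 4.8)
The plan is to prove the easy implication by a direct continuity argument and to reduce the substantive implication to the open mapping theorem (Theorem \ref{thm:openmapping}). For the forward direction, suppose $T$ is bounded and let $\Gamma=\{(x,Tx);x\in E\}$ denote its graph. To see that $\Gamma$ is closed, I would take a sequence $(x_n,Tx_n)$ in $\Gamma$ converging to some $(x,y)\in E\times F$. Then $x_n\ra x$ and $Tx_n\ra y$; continuity of $T$ gives $Tx_n\ra Tx$, so by uniqueness of limits $y=Tx$ and hence $(x,y)\in\Gamma$. Thus $\Gamma$ is closed. Note this half uses only that $F$ is a Hausdorff normed space, not completeness.

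For the converse, the organizing observation is that $E\times F$, equipped with the norm $\|(x,y)\|=\|x\|+\|y\|$, is itself a Banach space as the product of two Banach spaces. If the graph $\Gamma$ is a closed linear subspace of $E\times F$, then $\Gamma$ is complete in the restricted norm, so $\Gamma$ is again a Banach space. I would then restrict the two coordinate projections to $\Gamma$, namely $\pi_1\colon\Gamma\ra E$, $(x,Tx)\mapsto x$, and $\pi_2\colon\Gamma\ra F$, $(x,Tx)\mapsto Tx$. Both are clearly linear, and both are bounded because $\|x\|\leq\|(x,Tx)\|$ and $\|Tx\|\leq\|(x,Tx)\|$.

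The crux is then to analyze $\pi_1$. It is a bounded linear bijection from the Banach space $\Gamma$ onto the Banach space $E$: injectivity holds since $x=0$ forces $(x,Tx)=(0,0)$, and surjectivity holds since $T$ is defined on all of $E$. By the open mapping theorem, $\pi_1$ is open, hence its inverse $\pi_1\inv\colon E\ra\Gamma$, $x\mapsto(x,Tx)$, is bounded. Finally $T=\pi_2\circ\pi_1\inv$ is a composition of bounded linear maps and is therefore bounded, which completes the proof.

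The main obstacle, and indeed the entire content of the theorem, lies in the converse direction, where completeness of both $E$ and $F$ is indispensable: it is exactly what lets us upgrade closedness of $\Gamma$ to completeness of $\Gamma$ and thereby apply Theorem \ref{thm:openmapping} to the first coordinate projection. Without completeness the statement is false, so the heart of the argument is recognizing that the closed graph is a Banach space and that $\pi_1$ is a bounded bijection to which the open mapping theorem applies.
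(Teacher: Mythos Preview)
Your argument is correct and is the standard derivation of the closed graph theorem from the open mapping theorem. Note, however, that the paper does not actually supply a proof of this statement: Theorem~\ref{thm:closedgraph} is listed among the background results from functional analysis whose proofs are delegated to the references \cite{folland-ra, pedersen2, rudinfunctional}. Your proof is precisely the one found in those standard texts, so there is nothing further to compare.
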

\begin{theorem}
\label{thm:banachalaoghlo}
[The Banach-Alaoglu theorem] Assume $O$ is a neighborhood of $0$ in a topological vector space $V$. The subset $\{ \rho \in V\s; |\rho(x)|\leq 1 \, \text{for all }\, x\in O \}\sub V\s$ is \ws compact. In particular the closed unit ball of $V\s$ is \ws compact.
\end{theorem}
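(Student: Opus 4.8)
The plan is to realize the set in question---call it $K=\{\rho\in V\s : |\rho(x)|\le 1 \text{ for all } x\in O\}$---as a closed subset of a compact product space and then invoke Tychonoff's theorem. The first step is to produce, for each $x\in V$, a bound on $|\rho(x)|$ that is uniform over all $\rho\in K$. This is exactly where the hypothesis that $O$ is a neighborhood of $0$ enters. Since scalar multiplication is continuous, the map $t\mapsto tx$ sends $0$ to $0\in O$, so its preimage of $O$ contains a closed disk; that is, there is $\d_x>0$ with $tx\in O$ whenever $|t|\le \d_x$ (every neighborhood of $0$ in a topological vector space is absorbing). Consequently, for $\rho\in K$ we have $|\rho(\d_x x)|\le 1$, and hence $|\rho(x)|\le r_x$ with $r_x:=1/\d_x$ a constant independent of $\rho$.

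Next I would form the product space $P=\prod_{x\in V} D_x$, where $D_x=\{z\in\c:|z|\le r_x\}$ is a closed disk; by Tychonoff's theorem $P$ is compact in the product topology. Consider the evaluation map $\Phi\colon K\ra P$ given by $\Phi(\rho)=(\rho(x))_{x\in V}$, which is well defined by the bound from the first step and injective because a functional is determined by its values. The key observation is that the \ws topology on $V\s$ is by definition the topology of pointwise convergence on $V$, i.e. the initial topology induced by all evaluation maps $\rho\mapsto\rho(x)$; this is precisely the subspace topology that $P$ induces on $\Phi(K)$. Hence $\Phi$ is a homeomorphism of $K$ onto its image, and it remains only to check that $\Phi(K)$ is closed in $P$: a closed subset of a compact space is compact, and $K\cong\Phi(K)$ then gives the result.

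To see that $\Phi(K)$ is closed, I would characterize its elements among all tuples $(\xi_x)_{x\in V}\in P$ by the conditions forcing the tuple to come from a linear functional lying in $K$: namely $\xi_{ax+by}=a\,\xi_x+b\,\xi_y$ for all scalars $a,b$ and all $x,y\in V$, together with $|\xi_x|\le 1$ for every $x\in O$. Each of these is a \emph{closed} condition, being the preimage of a closed set under a continuous map assembled from the coordinate projections (which are continuous in the product topology), and $\Phi(K)$ is the intersection of all of them, hence closed. The main obstacle is really just this bookkeeping---confirming that pointwise limits preserve both linearity and the bound on $O$---while the genuine content is carried by Tychonoff's theorem. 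Finally, the ``in particular'' assertion follows by taking $V$ a normed space and $O$ its closed unit ball: the condition $|\rho(x)|\le 1$ for all $\|x\|\le 1$ is precisely $\|\rho\|\le 1$, so in that case $K$ is exactly the closed unit ball of $V\s$, which is therefore \ws compact.
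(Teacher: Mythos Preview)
Your proof is the standard Tychonoff-based argument and is correct. However, the paper does not give its own proof of this theorem: it is listed among the preliminary results from functional analysis whose proofs are deferred to standard references (Folland, Pedersen, Rudin). Your argument is precisely the classical one found in those sources, so there is nothing to compare.
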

For the proof of the following two theorems see Theorem 3.6 and 3.7 of \cite{rudinfunctional}.
\begin{theorem}
\label{thm:hahnbanach}
[The Hahn-Banach theorem] Assume $X$ is a locally convex topological vector space and $M$ is a subspace of $X$.  Every bounded linear functional of $M$ can be extended to a bounded linear functional on $X$.
\end{theorem}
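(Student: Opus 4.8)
The plan is to reduce this topological extension statement to the purely \emph{algebraic} Hahn-Banach theorem for seminorm-dominated functionals, and to use local convexity both to manufacture a dominating seminorm and to recover continuity of the extension. Throughout I read ``bounded'' as ``continuous'': a linear functional on a topological vector space is continuous exactly when it is bounded on some neighborhood of $0$, equivalently continuous at $0$.

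First I would isolate the algebraic form as the genuinely hard ingredient: if $p$ is a seminorm on $X$ and $f$ is a linear functional on the subspace $M$ with $|f(x)|\leq p(x)$ for all $x\in M$, then $f$ extends to a linear functional $F$ on all of $X$ satisfying $|F(x)|\leq p(x)$ everywhere. Its proof goes by extending one dimension at a time in the real-scalar, sublinear-dominated setting, bootstrapping to the complex case through the formula $f(x)=\operatorname{Re}f(x)-i\operatorname{Re}f(ix)$, and then invoking Zorn's lemma to reach the whole space. I would take this form as given (it is the content cited from \cite{rudinfunctional}).

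Next I would use continuity of $f$ on $M$ to produce the seminorm $p$. Since $f:M\ra\c$ is continuous at $0$, the set $\{x\in M : |f(x)|<1\}$ is relatively open, so there is a neighborhood $W$ of $0$ in $X$ with $|f(x)|\leq 1$ for every $x\in W\cap M$. Here local convexity enters decisively: I may shrink $W$ to a convex balanced (hence absorbing) neighborhood $V$ of $0$. Its Minkowski gauge $p(x)=\inf\{t>0 : x\in tV\}$ is then a seminorm on $X$, and it is continuous because $V\subseteq\{x : p(x)\leq 1\}$ forces $p$ to be small near $0$.

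The final step glues these together. For $x\in M$ and any $t>p(x)$ one has $x\in tV$, so $x/t\in V\cap M$ and hence $|f(x)|\leq t$; letting $t$ decrease to $p(x)$ gives $|f(x)|\leq p(x)$ on $M$. Applying the algebraic theorem yields a linear extension $F$ on $X$ with $|F(x)|\leq p(x)$ for all $x$, and continuity of $p$ at $0$ makes $F$ continuous at $0$, hence, being linear, continuous on all of $X$. The main obstacle is really the algebraic extension theorem itself, driven by the Zorn's lemma argument; the topological part is routine once local convexity has been used to replace the abstract neighborhood by a convex balanced one carrying a gauge.
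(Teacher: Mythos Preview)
Your argument is correct and is exactly the standard proof: use local convexity to find a convex balanced $0$-neighborhood on which $f$ is bounded, take its Minkowski functional as a continuous dominating seminorm, apply the algebraic (seminorm-dominated) Hahn--Banach theorem, and read off continuity of the extension from the domination. The paper does not give its own proof of this statement---it simply refers the reader to Theorem~3.6 of \cite{rudinfunctional}---and your write-up is essentially that proof.
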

A subset $Y$ of a complex vector space $X$ is called {\bf balanced} if $\alpha Y\sub Y$ for every $\alpha\in \c$ such that $|\alpha|\leq 1$.
\begin{theorem}
\label{thm:ccblanced}
Assume $B$ is a convex, closed and balanced set in a locally convex space $X$ and $x_0\in X-B$. Then there exists $\rho\in X\s$ such that $|\rho(x)|\leq 1$ for all $x\in B$ and $\rho(x_0)>1$.
\end{theorem}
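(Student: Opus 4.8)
The statement is a separation theorem: geometrically it asserts that a point outside a closed convex balanced set can be separated from it by a continuous linear functional, and the balanced symmetry is what upgrades a one-sided bound into the modulus estimate $|\rho(x)|\le 1$. Accordingly, the plan is to manufacture a continuous \emph{seminorm} that is strictly less than $1$ on $B$ and strictly greater than $1$ at $x_0$, realize it as a Minkowski functional, and then produce $\rho$ by a Hahn--Banach extension dominated by that seminorm. The bulk of the work lies in constructing the separating neighborhood; the balanced hypothesis enters precisely by making the gauge a genuine seminorm, which is what yields a two-sided rather than a one-sided bound.

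First I would exploit local convexity together with the closedness of $B$. Since $x_0\notin B$ and $B$ is closed, its complement is an open neighborhood of $x_0$, so there is a convex balanced open neighborhood $W$ of the origin with $(x_0+W)\cap B=\emptyset$; because $W$ is balanced this says exactly that $x_0\notin B+W$. Next choose a convex balanced open neighborhood $W'$ of $0$ with $W'+W'\sub W$, and set $U:=B+W'$. Then $U$ is open (a union of translates of $W'$), convex and balanced (sums of such sets), and a neighborhood of $0$ containing $B$ (as $0\in B$; we may assume $B\neq\emptyset$, the empty case being trivial). Using the standard inclusion $\overline{U}\sub U+W'=B+W'+W'\sub B+W$ together with $x_0\notin B+W$, I conclude $x_0\notin\overline{U}$.

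Now let $p$ be the Minkowski functional of $U$, that is $p(x)=\inf\{s>0:\ x\in sU\}$. Because $U$ is convex, balanced and absorbing, $p$ is a continuous seminorm, and because $U$ is open one has $U=\{p<1\}$ and $\overline{U}=\{p\le 1\}$. Hence $p(x)<1$ for every $x\in B\sub U$, while $x_0\notin\overline{U}$ gives $p(x_0)>1$. Define a linear functional $f$ on the one-dimensional subspace $\c x_0$ by $f(\la x_0)=\la\,p(x_0)$; then $|f(\la x_0)|=|\la|\,p(x_0)=p(\la x_0)$, so $|f|\le p$ on $\c x_0$. By the Hahn--Banach theorem in its seminorm-dominated form (the analytic result underlying Theorem~\ref{thm:hahnbanach}), $f$ extends to a linear functional $\rho$ on $X$ with $|\rho(x)|\le p(x)$ for all $x\in X$; since $p$ is continuous this forces $\rho$ to be continuous, so $\rho\in X\s$. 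Finally $\rho(x_0)=f(x_0)=p(x_0)>1$, and for every $x\in B$ we have $|\rho(x)|\le p(x)<1\le 1$, which is the assertion.

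The step I expect to be most delicate is securing the \emph{strict} inequality $p(x_0)>1$, as opposed to the $p(x_0)\ge 1$ that mere disjointness $x_0\notin U$ would give; this is exactly why I pass to the smaller neighborhood $W'$ with $W'+W'\sub W$, forcing $x_0\notin\overline{U}$. The second point requiring care is that I must invoke the seminorm-dominated version of Hahn--Banach in order to control $|\rho|$ globally by $p$, rather than the bare extension-of-continuous-functionals form; the two-sided control $|\rho|\le p$, which is precisely what converts separation into the modulus bound, is available only because $B$, and hence $U$ and $p$, is balanced.
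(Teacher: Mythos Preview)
Your proof is correct. The paper does not actually supply its own argument for this theorem; it merely cites Theorem~3.7 of Rudin's \emph{Functional Analysis}. So there is no ``paper's proof'' to match, only the referenced one.

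For comparison: Rudin's route is to first invoke the geometric separation theorem (his Theorem~3.4, essentially the paper's Proposition~\ref{prop:locallyconvexfunctional}) to obtain $\Lambda\in X\s$ with $\mathrm{Re}\,\Lambda(x)<\gamma<\mathrm{Re}\,\Lambda(x_0)$ for all $x\in B$, normalize so that $\gamma=1$, and then use balancedness of $B$ a posteriori---rotating each $x\in B$ by a unimodular scalar---to upgrade the real bound $\mathrm{Re}\,\Lambda(x)<1$ to the modulus bound $|\Lambda(x)|<1$. Your approach is more self-contained: you build the separating continuous seminorm $p$ directly as the Minkowski functional of an explicitly constructed convex balanced open neighborhood $U\supseteq B$ with $x_0\notin\overline{U}$, and then extend from $\c x_0$ by the seminorm-dominated Hahn--Banach theorem. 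The balanced hypothesis enters earlier in your argument (making $p$ absolutely homogeneous, hence a genuine seminorm) rather than as a final rotation trick. Both arguments are standard; yours has the mild advantage of not presupposing the hyperplane separation theorem as a black box, and you correctly flag the passage from $W$ to $W'$ with $W'+W'\sub W$ as the step that secures the strict inequality $p(x_0)>1$.
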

For the proof of the following proposition see Corollary 1.2.12 of \cite{kadison-r-1}.
\begin{proposition}
\label{prop:locallyconvexfunctional} Assume $Y$ is a closed convex subset of a locally convex topological vector space $X$. For every $x\in X-Y$, there exists a continuous linear functional $\rho\in X\s$ and a real number $b$ such that $Re \rho(x)> b$ and $Re \rho(y)\leq b$ for all $y\in Y$.
\end{proposition}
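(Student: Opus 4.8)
The plan is to recognize this as the geometric (separation) form of the Hahn--Banach theorem and to deduce it from the analytic version in Theorem \ref{thm:hahnbanach}. First I would pass to real scalars: regarding $X$ as a real locally convex space $X_\r$, every $\rho\in X\s$ is recovered from its real part $f=\mathrm{Re}\,\rho$ by $\rho(z)=f(z)-if(iz)$, and conversely every continuous real-linear functional on $X_\r$ arises this way, $f$ being continuous exactly when $\rho$ is. Hence it suffices to produce a continuous real-linear functional $f$ and a real number $b$ with $f(y)\le b<f(x)$ for all $y\in Y$; the required $\rho$ is then $z\mapsto f(z)-if(iz)$.

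Next I would exploit that $Y$ is closed. Since $x\in X-Y$, local convexity provides a convex, balanced, open neighborhood $U$ of $0$ with $(x+U)\cap Y=\emptyset$. Set $C=(x+U)+(-Y)=\{x+u-y:u\in U,\ y\in Y\}$. As a sum of convex sets $C$ is convex; it is open, being the union $\bigcup_{y\in Y}(x-y+U)$ of translates of $U$; and $0\notin C$, for $0\in C$ would force $x+u=y$ for some $u\in U$, $y\in Y$, contradicting $(x+U)\cap Y=\emptyset$. The problem is thereby reduced to separating the origin from the open convex set $C$ that misses it.

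The heart of the argument is the lemma that in a real locally convex space an open convex set $C$ with $0\notin C$ carries a nonzero continuous linear functional $f$ with $f(c)>0$ for all $c\in C$. I would prove it by fixing $c_0\in C$, forming the convex open neighborhood $V=C-c_0$ of $0$, and taking its Minkowski functional $p(v)=\inf\{t>0:v/t\in V\}$, which is finite, sublinear, and continuous (from $p\le1$ on the neighborhood $V$). Defining an appropriate linear functional on the line $\r c_0$ dominated by $p$ and extending it to all of $X_\r$ with $f\le p$ yields, after the harmless sign normalization, the desired $f$, whose continuity is forced by $f\le p$. Unwinding $C$, for all $u\in U$ and $y\in Y$ we get $f(x)-f(y)>f(-u)$; fixing one $y_0\in Y$ bounds $f$ above on the neighborhood $-U$, so $\d:=\sup_{u\in U}f(-u)$ is finite and, as $f\ne0$, strictly positive. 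Thus $f(y)\le f(x)-\d$ for every $y\in Y$, and $b:=f(x)-\d/2$ satisfies $f(x)>b\ge f(y)$, completing the real case.

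The step I expect to be the main obstacle is precisely this extension. Theorem \ref{thm:hahnbanach} is stated as a continuous-extension result, whereas the gauge construction requires the classical real Hahn--Banach in its domination form: a linear functional on a subspace bounded above by a sublinear $p$ extends to one bounded above by $p$ on all of $X_\r$. I would establish this separately by the standard one-dimension-at-a-time argument together with Zorn's lemma, continuity of the extension being automatic once $f\le p$. It is tempting to shortcut via Theorem \ref{thm:ccblanced}, but that theorem yields only the two-sided bound $|\rho|\le1$ on a balanced set; turning a one-sided convex set into a genuine half-space separation still needs the gauge, so I regard the sublinear Hahn--Banach step as the essential ingredient.
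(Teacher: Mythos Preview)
Your argument is the standard geometric Hahn--Banach separation and is essentially correct; the reduction to real scalars, the formation of the open convex set $C=(x+U)-Y$ missing $0$, and the Minkowski-functional construction are all sound, and you have correctly flagged that the paper's Theorem \ref{thm:hahnbanach} (continuous extension) is not quite the domination form you need, so that step must be supplied separately. There is nothing to compare against, however: the paper does not give its own proof of this proposition but simply refers the reader to Corollary 1.2.12 of \cite{kadison-r-1}, where precisely the argument you outline appears.
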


\section{Basics of Banach algebras}
\label{sec:topalgebras}

In this section, we recall basic definitions of topological algebras, normed algebras, Banach algebras, involutive algebras, and \cs-algebras. We also give many elementary examples for these algebras. Afterwards, we explain some methods for adding a unit element to a Banach algebra or a \cs-algebra. We also present some basic facts about invertible elements in Banach algebras. Finally, we take a closer look at the Banach algebra $B(E)$ of bounded operators on a Banach space $E$ and introduce two important two sided ideal of this algebra; the algebra $K(E)$ of compact operators on $E$ and the algebra $F(H)$ of finite rank operators on $E$. It is also shown that $K(E)$ is a Banach algebra itself.

\begin{definition}
\begin{itemize} \item[(i)] A {\bf topological vector space} is a vector space endowed with a topology such that both the scalar multiplication and the addition are continuous maps.
\item[(ii)] A {\bf topological algebra} is a topological vector space $A$ with a {\bf jointly continuous multiplication}, that is the multiplication $A \times A \ra A$ is a continuous map.
\item[(iii)] A {\bf normed algebra} is a normed space  $(A, \norm)$ with a {\bf sub-multiplicative} multiplication, that is
\[
\|ab\|\leq \|a\| \|b\|, \quad \forall a,b\in A.
\]
\item[(v)] A normed algebra $(A, \norm)$ is called a {\bf Banach algebra} if
$A$ is complete with respect to its norm.
\end{itemize}
\end{definition}
The key point in topological algebras is that the multiplication is always assumed to be jointly continuous. Let $A$ be a ring or an algebra. We denote the algebra of $n\times n$ matrices with entries in $A$ by $M_n(A)$.

\begin{example}
\label{ex:topalg}
\begin{itemize}
\item[(i)] Endow $M_n(\c)$ with the Euclidean topology of $\c^{n^2}$. Then the matrix multiplication is jointly continuous. Therefore $M_n(\c)$ with
this topology is a topological vector space.
\item[(ii)] Let $E$ be a normed space. The {\bf norm operator} on the {\bf algebra $B(E) $ of bounded linear operators on $E$} is defined as follows
\begin{eqnarray*}
\|T\|&:=& \sup \{ \|Tx\|; x\in E, \|x\|= 1\}\\
&=& \sup \{ \|Tx\|; x\in E, \|x\|\leq 1\}\\
&=& \sup \{\frac{\|Tx\|}{\|x\|}; x\in E, x\neq 0\}\\
&=& \inf \{ k ; \|Tx\|\leq k \|x\|, \, \forall x\in E \}.
\end{eqnarray*}
It is easy to see that $\|TS\|\leq \|T\|\|S\|$ for all $T, S\in B(E)$, namely $B(E)$ with the operator norm is a normed algebra.  When $E$ is a Banach space, $B(E) $ is a Banach algebra. If $A$ is a normed (resp. Banach) algebra, $A^n:=A\oplus \cdots \oplus A$ ($n$ copies of $A$) with the norm defined by
\[
\|(x_1,\cdots,x_n)\|:=\max \{ \|x_i\|; i=1,\cdots, n\}, \quad \forall (x_1,\cdots,x_n)\in A^n
\]
is a normed (resp. Banach) algebra. Therefore $M_n(A)$ with the operator norm is a normed (resp. Banach) algebra if $A$ is a normed (resp. Banach) algebra.
\item[(iii)] Let $X$ be a topological space. The {\bf algebra $Bd(X)$ of bounded complex  functions over $X$} equipped with the norm;
\[
\|f\|_{\sup}:=\sup\{|f(x)|; x\in X\}
\]
is a Banach algebra. Some of the subalgebras of $Bd(X)$ are
\begin{itemize}
\item[(a)] the {\bf algebra $C_b(X)$ of continuous and bounded functions},
\item[(b)] the {\bf algebra $C_c(X)$ of continuous and compact support functions}, and
\item[(c)] the {\bf algebra $C_0(X)$ of continuous functions vanishing at infinity}. A continuous function $f$ is called vanishing at infinity if $f^{-1}([\epsilon,\infty[)$ is compact for all $\epsilon>0$.
\end{itemize}
One checks that $C_b(X)$ and $C_0(X)$ are Banach algebras. However, $C_c(X)$ is not complete, and so it cannot be a Banach algebra, unless $X$ is compact. When $X$ is compact, the above subalgebras of $Bd(X)$ are the same as the {\bf algebra $C(X)$ of complex continuous functions on $X$}.
\item[(iv)] Let $(X, \mu)$ be a measure space. For every measurable complex function $f$ on $X$, define
\[
\|f \|_\infty:=\inf\{ a\geq 0 ; \mu(\{x\in X; |f(x)|>a \})=0\}.
\]
It is called the {\bf essential supremum of $|f|$}. One checks that $\|-\|_\infty$ is a semi-norm on the space \[
L^\infty(X,\mu)=L^\infty(X):=\{ f:X\ra \c; f \, \text{is measurable and }\, \|f\|_\infty<\infty\}.
\]
To obtain a norm, we consider the quotient of $L^\infty(X)$ module the subspace of all null functions with respect to $\mu$ and denote this quotient again by $L^\infty(X)$. Then $\|-\|_\infty$ is a norm on $L^\infty(X)$ and $L^\infty(X)$ equipped with this norm and multiplication of functions is a Banach algebra, see also Theorem 6.8 of \cite{folland-ra}. The elements of $L^\infty(X)$ are called {\bf essentially bounded complex function on $X$.}

When $\mu$ is the counting measure, we denote $L^\infty(X)$ by $\ell^\infty(X)$ and the essential supremum is just the supremum of $|f|$.

When $X$ is a topological space and $(X,\mu)$ is a {\bf Borel measure space}, namely the domain of $\mu$ as a $\si$-algebra is generated by open subsets of $X$ (or more generally contains all open subsets of $X$), $\|f\|_\infty=\|f\|_{\sup}$ for every continuous complex function on $X$. Therefore in this case, Banach algebras $C_b(X)$ and $C_0(X)$ are Banach subalgebras of $L^\infty(X)$ and $C_c(X)$ is just a subalgebra of $L^\infty(X)$.
\end{itemize}
\end{example}

\begin{exercise}
Check the details of the above examples.
\end{exercise}

\begin{remark}
\label{rem:linfinity}
It is worthwhile to note that, for given $f\in L^\infty(X)$, the set $\{x\in X; |f(x)|> \|f\|_\infty\}$ is a null set. The following equality proves this:
\[
\{x\in X; |f(x)|> \|f\|_\infty\}=\bigcup_{n=1}^\infty \left\{x\in X; |f(x)|> \|f\|_\infty +\frac{1}{n} \right\}.
\]
\end{remark}
\begin{definition} Let $A$ be an algebra. An {\bf involution} over $A$ is a map $^\ast: A \ra A$ satisfying the following conditions for all $x,y\in A$ and $\lambda \in\c$:
\begin{itemize}
\item[(i)] $(x\s)\s=x$,
\item[(ii)] $(x+y)\s=x\s+y\s$,
\item[(iii)] $(\la x)\s=\overline{\la} x\s$,
\item[(iv)] $(xy)\s=y\s x\s$.
\end{itemize}
When $A$ is a normed algebra, we also assume
\begin{itemize}
\item[(v)] $\|x\s \|=\|x \|$.
\end{itemize}
An algebra $A$ equipped with an involution $\ast$ is called an {\bf
involutive algebra} and is denoted, as an ordered pair, by
$(A,\ast)$. {\bf Involutive normed algebras} and {\bf involutive
Banach algebras} are defined similarly and are denoted by $(A,
\norm, \ast)$. A subalgebra of an involutive algebra is called an
{\bf involutive subalgebra} or a {\bf $\ast$-subalgebra} if it is
closed under the involution.
\end{definition}

\begin{example}
\label{ex:invalg}
\begin{itemize}
\item[(i)] The conjugation map is an involution over $\c$.
\item[(ii)] We denote the algebra of polynomials of two variables $z$ and
$\overline{z}$ with coefficients in $\c$ by $\c[z,\bar{z}]$. We define an involution on this algebra by mapping coefficients of a polynomial to their
complex conjugates and $z$ to $\overline{z}$ and vice versa.
\item[(iii)] Let $A$ be an  involutive algebra. Then $M_n(A)$ is an involutive algebra with the involution defined by
\[
(a_{ij})\mapsto (a_{ji}^\ast), \quad \forall (a_{ij})\in M_n(A).
\]
\item[(iv)] Back to Example \ref{ex:topalg}(iii), the algebra $Bd(X)$ and its subalgebras are involutive normed algebras with the involution defined by
\[
f\s(x):=\overline{f(x)}, \qquad \forall f\in Bd(X), x\in X.
\]
\item[(v)] Back to Example \ref{ex:topalg}(iv), the map $f\s(x):=\overline{f(x)}$ for all $f\in L^\infty(X)$ defines an involution on $L^\infty(X)$.
\item[(vi)] Let $H$ be a Hilbert space with an inner product $\langle \,- , - \rangle$. In Corollary \ref{cor:adjointoperator}, we will show that the {\bf algebra $B(H)$ of bounded operators on $H$} has a unique involution such that
\[
\langle Tx,y\rangle=\langle x,T\s y\rangle, \quad \forall x,y\in H,
T\in B(H).
\]
\end{itemize}
\end{example}

\begin{definition}
An involutive Banach algebra $(A, \norm, \ast)$ is called a {\bf
\cs-algebra} if
\begin{equation}
\label{csnorm} \|x\s x\|=\| x\|^2, \quad\forall x\in A.
\end{equation}
We call the above identity the {\bf \cs-identity}. A norm satisfying this identity is called a  {\bf\cs-norm}.
\end{definition}

One should note that the definition of a \cs-norm does not require completeness of $A$. In other words, we may consider \cs-norms on involutive algebras which are not necessarily complete. Sometimes, these norms are called {\bf pre-\cs-norms} and the normed algebras equipped with them are called {\bf pre-\cs-algebras}.

\begin{example}
\begin{itemize}
\item[(i)] Back to Examples \ref{ex:topalg}(iii) and \ref{ex:invalg}(iv), Banach algebras $C_0(X)$ and $C_b(X)$ are \cs-algebras for all topological spaces $X$.
\item[(ii)] Back to Example and \ref{ex:invalg}(vi), the algebra $B(H)$ is a \cs-algebra, see Proposition \ref{prop:involutionoperator}. If $H$ is finite dimensional, i.e. $H=\c^n$, and is equipped with the ordinary inner product;
\[
\langle x,y \rangle:=\sum_{i=1}^n x_i\overline{y_i},
\]
for all $x=(x_1,\cdots,x_n), y=(y_1,\cdots, y_n)$ in $H$, then $B(H)=M_n(\c)$ with the operator norm and the involution
\[
(a_{ij})\mapsto (\overline{a_{ji}}).
\]
\item[(iii)] If $(X,\mu)$ is a measure space, then the Banach algebra $L^\infty(X)$ is a \cs-algebra.

\item[(iv)] A norm closed involutive subalgebra of a \cs-algebra is a \cs-algebra, and is called a {\bf \cs-subalgebra}.
\end{itemize}
\end{example}
\begin{exercise}
Prove the content of the above examples.
\end{exercise}

Let $A$ be a \cs-algebra and let $S$ be a subset of $A$. The smallest \cs-subalgebra of $A$ containing $S$ is called the {\bf \cs-subalgebra generated by $S$} and is denoted usually by $C\s(S)$. A similar terminology is also used for the Banach (resp. involutive) subalgebra generated by a subset in a Banach (resp. involutive) algebra.

Another easy construction on \cs-algebras, which is needed here, is the direct sum of finitely many \cs-algebras. For $i=1,\cdots, n$, let $A_i$ be \cs-algebras. We define the following involution and norm on the algebraic direct sum $\oplus_{i=1}^n A_i$:
\begin{eqnarray*}
(a_1,\cdots, a_n)\s &=& (a_1\s,\cdots, a_n\s),\\
\|(a_1,\cdots,a_n)\|&=& \max \{ \|a_i\|; i=1,\cdots,n \},
\end{eqnarray*}
for all $(a_1,\cdots,a_n)\in \oplus_{i=1}^n A_i$.

\begin{proposition} The {\bf direct sum} $\oplus_{i=1}^n A_i$ with the above norm and involution is a \cs-algebra.
\end{proposition}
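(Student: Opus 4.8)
The plan is to verify in turn the four ingredients demanded by the definitions: that $\oplus_{i=1}^n A_i$ is an algebra, that the stated map is an involution, that the stated norm makes it a Banach algebra, and finally that the \cs-identity holds. The algebraic structure is immediate: addition, scalar multiplication and multiplication are all defined coordinate-wise and each $A_i$ is an algebra, so associativity, distributivity and the bilinearity of the product are inherited componentwise with no further work.

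Next I would check that $x\mapsto x\s$ is an involution. Axioms (i)--(iv) hold coordinate by coordinate precisely because each $A_i$ carries an involution; for example, the identity $((a_ib_i))\s=(b_i\s a_i\s)$ in each slot yields $(xy)\s=y\s x\s$, and the others are similar. For the isometry axiom (v) I would invoke the fact that each component involution is isometric, so that $\|x\s\|=\max_i\|a_i\s\|=\max_i\|a_i\|=\|x\|$.

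For the Banach algebra structure I would first observe that the max-norm is a genuine norm, since positivity, homogeneity and the triangle inequality all survive passage to the maximum over finitely many coordinates. Submultiplicativity follows from submultiplicativity in each $A_i$ via the chain $\|xy\|=\max_i\|a_ib_i\|\le\max_i\|a_i\|\,\|b_i\|\le\|x\|\,\|y\|$. Completeness is the one point needing a short argument: because the max-norm dominates each coordinate norm, any Cauchy sequence in $\oplus_{i=1}^n A_i$ is Cauchy in every coordinate; since each $A_i$ is complete, the sequence converges coordinate-wise, and I would confirm that this coordinate-wise limit is in fact the limit in the max-norm, using that the maximum of finitely many simultaneously small quantities is small.

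Finally, the \cs-identity is the crux, though it too is brief. Applying the \cs-identity valid in each $A_i$, I would compute $\|x\s x\|=\max_i\|a_i\s a_i\|=\max_i\|a_i\|^2$, and then the decisive elementary observation is that, since $t\mapsto t^2$ is monotone increasing on $[0,\infty)$, one has $\max_i\|a_i\|^2=(\max_i\|a_i\|)^2=\|x\|^2$. I do not anticipate a genuine obstacle anywhere; the only step requiring a moment of care is that squaring commutes with taking the maximum, which holds exactly because all the norms involved are nonnegative.
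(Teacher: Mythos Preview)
Your proof is correct and follows essentially the same approach as the paper; the paper in fact only writes out the \cs-identity step (leaving the rest implicit), and your computation $\|x\s x\|=\max_i\|a_i\s a_i\|=\max_i\|a_i\|^2=(\max_i\|a_i\|)^2=\|x\|^2$ is exactly the argument given there.
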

\begin{proof}
We only show the \cs-identity. For all $(a_1,\cdots,a_n)\in \oplus_{i=1}^n A_i$ we have
\begin{eqnarray*}
\|(a_1,\cdots, a_n) (a_1,\cdots, a_n)\s\| &=& \max\{\|a_ia_i\s\|; i=1,\cdots, n\}\\
&=&\max\{\|a_i\|^2; i=1,\cdots, n\}\\
&=&(\max\{\|a_i\|; i=1,\cdots, n\})^2\\
&=& \|(a_1,\cdots, a_n) \|^2
\end{eqnarray*}
\end{proof}

The above examples are fundamental classes of \cs-algebras. In Chapter \ref{ch:gelfandduality}, we shall see that every commutative \cs-algebra is of the form $C_0(X)$ for some topological space $X$. This is the essence of the {\bf Gelfand duality}. Also, every \cs-algebra is isomorphic to a \cs-subalgebra of $B(H)$ for some Hilbert space $H$. This is the main goal of {\bf GNS construction}. Moreover, every finite dimensional \cs-algebra is isomorphic to a direct sum of finitely many \cs-algebras of the form $M_n(\c)$ for some natural numbers $n$.

\begin{definition} Let $A$ be an algebra. An element $a\in A$ is called a {\bf left} (resp. {\bf right}) {\bf unit of $A$} if $ab=b$ (resp. $ba=b$) for all $b\in A$. If $A$ has a left unit $a_1$ and a right unit $a_2$, then $a_1=a_1a_2=a_2$ and this unique element of $A$ is called the {\bf unit of $A$} and usually is denoted by $1_A$ (or simply by 1). In this case, $A$ is called {\bf unital}.
\end{definition}
\begin{exercise}
\label{exe:norm1}
Let $(A,\norm)$ be a unital normed algebra. Show that $\|1\|\geq 1$. If $A$ is a \cs-algebra, then show that $\|1\|= 1$.
\end{exercise}

Given a Banach algebra $(A,\norm)$, for every real number $r\geq 1$, $(A,r\norm)$ is a Banach algebra too. Thus the norm of the unit is not necessarily 1 in unital Banach algebras. However, the norm of an arbitrary Banach algebra can be replaced by another norm so that the new norm of the unit to be 1.

\begin{proposition}
\label{prop:nnormalization} Let $(A,\norm)$ be a unital Banach algebra. Then there exists a norm $\norm_o$ on $A$ such that
\begin{itemize}
\item[(i)] The norms $\norm$ and $\norm_o$ are equivalent on $A$,
\item[(ii)] $(A,\norm_o)$ is a Banach algebra,
\item[(iii)] $\| 1\|_o=1$.
\end{itemize}
\end{proposition}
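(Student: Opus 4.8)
The plan is to realize $A$ concretely as an algebra of bounded operators via the \emph{left regular representation} and to take the resulting operator norm as $\norm_o$. Specifically, for each $a\in A$ let $L_a\colon A\ra A$ be the left multiplication operator $L_a(b):=ab$. Submultiplicativity of $\norm$ gives $\|L_a b\|=\|ab\|\le \|a\|\|b\|$, so each $L_a$ is bounded, i.e. $L_a\in B(A)$; moreover $a\mapsto L_a$ is an algebra homomorphism, since $L_{ab}=L_aL_b$ by associativity. I would then define
\[
\|a\|_o:=\|L_a\|=\sup\{\|ab\|;\ b\in A,\ \|b\|\le 1\}.
\]

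First I would establish the equivalence (i). The upper bound $\|a\|_o\le \|a\|$ is immediate from submultiplicativity. For the lower bound, evaluate $L_a$ at the unit vector $1/\|1\|$ (note $\|1\|\ge 1>0$ by Exercise \ref{exe:norm1}, so this is legitimate), which yields $\|a\|_o\ge \|a\cdot 1\|/\|1\|=\|a\|/\|1\|$. Hence
\[
\frac{1}{\|1\|}\|a\|\le \|a\|_o\le \|a\|,\qquad \forall a\in A,
\]
proving (i); in particular this shows $\norm_o$ is a genuine norm, since $\|a\|_o=0$ forces $\|a\|=0$, hence $a=0$.

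Next, submultiplicativity of $\norm_o$ follows from its being an operator norm together with $L$ being a homomorphism: $\|ab\|_o=\|L_{ab}\|=\|L_aL_b\|\le\|L_a\|\|L_b\|=\|a\|_o\|b\|_o$. For (iii), observe that $L_1=\mathrm{id}_A$, whose operator norm is $1$, so $\|1\|_o=1$. Finally, (ii) is automatic: since $\norm_o$ and $\norm$ are equivalent by (i), they determine the same Cauchy and convergent sequences, so completeness of $(A,\norm)$ transfers to $(A,\norm_o)$.

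There is no serious obstacle once one hits on the left regular representation; the only point requiring care is the lower estimate in the equivalence, where unitality is used to produce the test vector $1/\|1\|$ witnessing that $L_a$ is large enough, and where Exercise \ref{exe:norm1} guarantees $\|1\|\neq 0$ so that the bound is meaningful.
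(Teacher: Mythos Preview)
Your proof is correct and follows exactly the same strategy as the paper: define $\|a\|_o$ as the operator norm of left multiplication $L_a\in B(A)$, bound it above by submultiplicativity and below by testing at $1/\|1\|$, and read off (ii) and (iii) from the operator-norm structure. The only cosmetic difference is that the paper phrases (ii) by saying the image of $A$ under $L$ is a closed subalgebra of $B(A)$, whereas you transfer completeness directly via the norm equivalence; these are equivalent observations.
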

\begin{proof}
We embed $A$ into $B(A)$ by left multiplication;
\[
L:A\ra B(A), \quad \text{where}\quad L_x(y):=xy, \quad \forall x,y
\in A.
\]
We define the norm $\norm_o$ on $A$ to be the restriction of the
operator norm of $B(A)$ to the image of $A$, that is
\[
\|x\|_o:=\|L_x\|=\sup\{\|xy\|;y\in A, \|y\|\leq 1\}\quad \forall
x\in A.
\]
For $\|y\|\leq 1$, we have $\|xy\|\leq \| x\|\|y\|\leq\|x\|$. This
shows that $\|x\|_o\leq \|x\|$. On the other hand, we have
\[
\frac{\|x\|}{\|1\|}=\frac{\|x1\|}{\|1\|}\leq\sup\{\frac{\|xy\|}{\|y\|};
y\in A, y\neq 0\}=\|x\|_o.
\]
This shows that $\|x\|\leq \|1\|\|x\|_o$ for all $x\in A$ and completes the proof of (i). It follows from (i) that $A$ is a closed subalgebra of $B(A)$, so it is a Banach algebra with the new norm $\norm_o$. Part (iii) is clear from the definition.
\end{proof}
Using the above proposition, we can always assume that the norm of the unit equals 1 in a unital Banach algebra. We shall see that the \cs-norm on a  \cs-algebra $A$ is unique and there is no way to replace it with another \cs-norm unless we change $A$ as well.

Many notions on Banach algebras and \cs-algebras are defined when they are unital. Now, we explain the process of adding unit to a non-unital Banach or \cs-algebra. For Banach algebras, the condition of being non-unital is superfluous and the unitization process can be applied to Banach algebras that are already unital too. But for unital \cs-algebras, we have to use another unitization process. Let $A$ be an involutive Banach algebra. Set $A_1:=A\times \c$ and define the ordinary operations by
\begin{eqnarray*}
(x,\lambda)(y,\mu)&:=&(xy+\lambda y+\mu x, \lambda \mu),\\
(x,\lambda)\s&:=& (x\s, \overline\lambda),\\
\|(x,\lambda)\|&:=& \|x\|+|\lambda|
\end{eqnarray*}
for all $x,y\in A$ and $\lambda, \mu\in \c$. The algebra $A_1$ is called the {\bf  Banach algebra unitization of $A$}.

\begin{exercise}
Show that $A_1$ with the above structure is a unital Banach algebra with the unit $(0,1)$ and the map $A \ra A_1$ is an isometry. Show also that the image of $A$ under this map, which is also shown by $A$, is a closed two sided ideal of $A_1$.
\end{exercise}
For the definition of an isometry see Definition \ref{def:homomorphisms}(v). Given a \cs-algebra $A$, $A_1$ with the above norm is not a \cs-algebra. In fact, one easily checks that the \cs-identity, Equality \ref{csnorm}, does not hold for $x=\left(\left( \begin{array}{ll} 0 &1 \\0&0\end{array}\right), 1\right)\in M_2(\c)\times \c$. In the following, we define another norm on $A\times \c$ which makes it a \cs-algebra.
\begin{proposition}
\label{prop:csunitization}
Let $A$ be a non-unital \cs-algebra. Consider the map $\iota:A\times\c\ra B(A)$, $(x,\lambda)\mapsto L_x+\lambda I$, where $I$ is the identity map on $A$. Then the image of $A\times\c$ under $\iota$ which is denoted by $\tilde{A}$ is a \cs-algebra with the operator norm and the involution defined by
\[
\iota(x,\lambda)\s:=\iota(x\s,\overline{\lambda}), \quad \forall
x\in A, \lambda \in \c.
\]
\end{proposition}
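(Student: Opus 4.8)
The plan is to exploit the fact that, because $A$ is non-unital, the map $\iota$ is injective, so that $\tilde A$ is an algebraic copy of the unitization $A_1=A\times\c$ realized inside $B(A)$; the only genuinely analytic content is then the completeness of $\tilde A$ and the \cs-identity. First I would record that $\iota$ is an injective algebra homomorphism. The relation $L_xL_y=L_{xy}$ gives $\iota(x,\la)\iota(y,\mu)=\iota(xy+\la y+\mu x,\la\mu)$, so $\iota$ respects the multiplication of $A_1$, and it is plainly linear. For injectivity, suppose $L_x+\la I=0$, i.e. $xy+\la y=0$ for all $y\in A$. If $\la=0$ then $xx\s=0$, whence $\|x\|^2=\|xx\s\|=0$ and $x=0$. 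If $\la\neq 0$ then $e:=-\la\inv x$ satisfies $ey=y$ for all $y$; taking adjoints shows $e\s$ is a right unit, so $e=ee\s=e\s$ is a two-sided unit, contradicting non-unitality. Injectivity guarantees that each element of $\tilde A$ is uniquely of the form $\iota(x,\la)$, so the formula $\iota(x,\la)\s:=\iota(x\s,\overline\la)$ is well defined, and the involution axioms (i)--(iv) transfer along $\iota$ from the corresponding identities on $A_1$.

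For the Banach-algebra structure I would first note that $L:A\ra B(A)$ is isometric: $\|L_x\|\leq\|x\|$ by submultiplicativity, while evaluating $L_x$ at $y=x\s/\|x\s\|$ (for $x\neq 0$) gives $\|L_x\|\geq\|xx\s\|/\|x\s\|=\|x\|$ by the \cs-identity in $A$. Hence $L(A)$ is complete, so closed in $B(A)$. Since $A$ is non-unital we have $I\notin L(A)$, and therefore $\tilde A=L(A)+\c I$ is the preimage of the (finite-dimensional, hence closed) line $\c\,\pi(I)$ under the quotient map $\pi:B(A)\ra B(A)/L(A)$; being closed and a subalgebra, $\tilde A$ is a Banach subalgebra of $B(A)$.

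The hard part will be the \cs-identity, which I would reduce to the inequality $\|T\|^2\leq\|T\s T\|$ for $T=\iota(x,\la)$: once this holds, $\|T\|^2\leq\|T\s T\|\leq\|T\s\|\|T\|$ gives $\|T\|\leq\|T\s\|$, and applying the same estimate to $T\s$ (using $(T\s)\s=T$) yields $\|T\s\|=\|T\|$, i.e. axiom (v), and then $\|T\s T\|=\|T\|^2$. To prove the inequality, the key computation is that for every $y\in A$,
\[
(Ty)\s(Ty)=y\s(T\s Ty),
\]
which one checks by expanding both sides (each equals $y\s x\s xy+\la y\s x\s y+\overline\la y\s xy+|\la|^2 y\s y$). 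Applying the \cs-identity in $A$ to the element $Ty\in A$ then gives, for $\|y\|\leq 1$,
\[
\|Ty\|^2=\|(Ty)\s(Ty)\|=\|y\s(T\s Ty)\|\leq\|y\|\,\|T\s T\|\,\|y\|\leq\|T\s T\|,
\]
and taking the supremum over the unit ball of $A$ yields $\|T\|^2\leq\|T\s T\|$.

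The main obstacle is thus purely the discovery and verification of the identity $(Ty)\s(Ty)=y\s(T\s Ty)$: it is exactly the device that lets the \cs-identity of $A$ propagate to the operator norm on $\tilde A$, while non-unitality enters in two essential places, namely the injectivity of $\iota$ (which makes the involution well defined) and the fact $I\notin L(A)$ (which makes $\tilde A$ closed).
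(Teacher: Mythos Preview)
Your proof is correct and follows essentially the same approach as the paper: the paper likewise establishes injectivity of $\iota$ via the non-unitality hypothesis, shows that $L$ is an isometry so that $A$ sits as a closed codimension-one subspace, and derives the \cs-identity from the very same computation $\|Ty\|^2=\|(Ty)\s(Ty)\|=\|y\s(T\s Ty)\|\leq\|T\s T\|$ (written there as $\|(x+\la I)y\|^2=\|y\s(x+\la I)\s(x+\la I)y\|$). Your version is marginally cleaner in that you take the supremum over $\|y\|\leq 1$ directly, whereas the paper introduces an auxiliary parameter $t\in(0,1)$ with $t\to 1$, but the substance is identical.
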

\begin{proof}
First, we show that $\iota$ is injective. For $x\in A$, one notes $\|\iota(x,0)\|= \|L_x\|=\|x\|_o$, where $\norm_o$ is the operator norm defined in the proof of Proposition  \ref{prop:nnormalization}. Hence $\iota(x,0)=0$ if and only if $L_x=0$. But $L_x(x\s)=xx\s$ and $\|xx\s\|=\|x\s\|^2=\|x\|^2\neq 0$ if $x\neq 0$. For $\lambda\neq 0$, if $\|\iota(x,\lambda)\|=0$, then $xy+\lambda y=0$ for all $y\in A$. Substituting $y$ with $y/\lambda$, we get $y=(-x/\lambda)y$, namely $-x/\lambda$ is a left unit in $A$, and consequently $(-x/\lambda)\s$ is a right unit in $A$. This contradicts with the assumption that $A$ is non-unital.

Now, we note that the inclusion $A\hookrightarrow \tilde{A}$ is an isometry, because, for every $x\in A$, we have
\[
\|x\|=\frac{\|xx\s\|}{\|x\s\|}\leq \|\iota(x,0)\|=\sup_{\|y\|\leq1}\|xy\|\leq\|x\|.
\]
Hence $A$ is a Banach subspace of $\tilde A$ of codimension 1. By Proposition 2.1.8 \cite{pedersen2}, $\tilde A$ is a Banach space as
well. We only need to show the \cs-identity. Fix $x\in A$ and $\lambda\in \c$. For $0<t<1$, there is
$y\in A$ such that $\|y\|\leq 1$ and we have

\begin{eqnarray*}
t^2\|\iota(x,\lambda)\|^2 &\leq& \|(x+\lambda I) y\|^2\\
&=&\|y\s (x+\la I)\s(x+\la I)y\|\\
&\leq& \|y\s\|\| (x+\la I)\s(x+\la I)y\|\\
&\leq& \| \iota((x,\la)\s(x,\la))y\|\\
&\leq& \| (\iota(x,\la))\s\iota(x,\la))\|\\
&\leq& \|(\iota(x,\la))\s\|\|\iota(x,\la))\|.
\end{eqnarray*}
By letting $t\ra 1$, we get $\|\iota(x,\lambda)\|\leq \|\iota(x,\lambda)\s\|$. The converse of this inequality is proved similarly. Thus we have
\[
\|\iota(x,\lambda)\|^2 \leq\|
(\iota(x,\la))\s\|\|\iota(x,\la))\|=\|\iota(x,\lambda)\|^2,
\]
which proves the \cs-identity.
\end{proof}

\begin{remark}
\label{rem:unitalcsunitization}
When a \cs-algebra $A$ is already unital, we set $\tilde{A}:=A\oplus \c$, where the right hand side is the direct sum \cs-algebra of $A$ and $\c$. Then $(1,1)$ is the unit element of $\tilde{A}$.
\end{remark}

\begin{exercise}
Assume $A$ is a unital \cs-algebra. Find an algebraic isomorphism from $\tilde{A}=A\oplus \c$ onto $A_1=A\times \c$ which sends the unit element of $\tilde{A}$ to the unit element of $A_1$.
\end{exercise}

\begin{definition} Let $A$ be a non-unital (resp. unital) \cs-algebra. The \cs-algebra $\tilde{A}$ defined in Proposition \ref{prop:csunitization} (resp. Remark \ref{rem:unitalcsunitization}) is called the {\bf \cs-unitization of $A$}.
\end{definition}
Although having a unit element is an advantage for a Banach algebra or a \cs-algebra, there is a weaker notion in these algebras that facilitate many proofs, which use unit elements, in non-unital Banach algebras and \cs-algebras. A net $\{a_i\}$ in a Banach algebra $A$ is called an {\bf approximate unit} if $\|a_i\|\leq 1 $ for all $i$ and $\|aa_i-a\|\ra 0$ and $\|a_i a-a\|\ra 0$ as $i\ra \infty$. We will prove the existence of an approximate unit for certain Banach algebras in Section \ref{sec:loneofG}. However, there are some Banach algebras which admit no approximate units. For example, take a Banach algebra $A$ and change its multiplication into zero. Then it is still a Banach algebra and has no approximate unit. On the contrary, every \cs-algebra possesses an approximate unit. A basic version of this notion for \cs-algebras will be defined in Chapter \ref{ch:gelfandduality} and we will prove the existence of different types of approximate units for \cs-algebras in Chapter \ref{ch:basics}.

\begin{definition}
Let $A$ be a unital algebra. For $a\in A$, we say $b\in A$ is a {\bf left} (resp. {\bf right}) {\bf inverse of $a$} if $ba=1$ (resp. $ab=1$). If $a$ has a left inverse $a_1$ and a right inverse $a_2$, then $a_1=a_1aa_2=a_2$ and this unique element of $A$ is called the {\bf inverse of $a$} and is denoted by $a^{-1}$. In this case, $a$ is called {\bf invertible}. The group of all invertible elements of $A$ is denoted by $A^\times$.
\end{definition}
\begin{proposition}
Let $A$ be a unital Banach algebra. If $\|x-1\|<1$, then $x$ is invertible and
\[
x\inv = \sum_{n=0}^\infty (1-x)^n,
\]
where $a^0:=1$ for all $0\neq a \in A$.
\end{proposition}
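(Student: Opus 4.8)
The plan is to recognize this as the Neumann series and proceed in two stages: first establish convergence of the series, then verify that its sum is a two-sided inverse. Setting $y:=1-x$, the hypothesis becomes $\|y\|<1$ and the asserted formula reads $x\inv=\sum_{n=0}^\infty y^n$. The first step is to show this series converges in $A$. Submultiplicativity of the norm gives, by an easy induction, $\|y^n\|\leq\|y\|^n$, so the series of norms $\sum_{n=0}^\infty\|y\|^n$ is a convergent geometric series because $\|y\|<1$. Consequently the partial sums $s_N:=\sum_{n=0}^N y^n$ form a Cauchy sequence, and since $A$ is complete they converge to some $s\in A$. This is the one and only place where completeness of $A$ is essential.

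The second step is to check that $s$ is the inverse of $x=1-y$. The heart of the argument is the telescoping identity
\[
(1-y)s_N=s_N-ys_N=\sum_{n=0}^N y^n-\sum_{n=1}^{N+1}y^n=1-y^{N+1},
\]
and, symmetrically, $s_N(1-y)=1-y^{N+1}$. Since $\|y^{N+1}\|\leq\|y\|^{N+1}\ra 0$ as $N\ra\infty$, the right-hand side tends to the unit $1$. Invoking joint continuity of the multiplication in the Banach algebra $A$, so that $(1-y)s_N\ra(1-y)s$ and $s_N(1-y)\ra s(1-y)$, and passing to the limit yields $(1-y)s=1$ and $s(1-y)=1$. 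Hence $xs=sx=1$, so $x$ is invertible with $x\inv=s=\sum_{n=0}^\infty(1-x)^n$, as claimed.

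I expect no serious obstacle, as the proof is entirely standard. The only two points requiring care are that absolute convergence forces convergence precisely because $A$ is a Banach algebra (completeness), and that one may pass the limit through the product only because multiplication is continuous. Both ingredients are built into the Banach algebra structure, so the argument goes through cleanly.
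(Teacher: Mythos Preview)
Your proof is correct and follows essentially the same approach as the paper: both use the telescoping identity $x\sum_{n=0}^m(1-x)^n=1-(1-x)^{m+1}$ and pass to the limit using $\|(1-x)^{m+1}\|\to 0$. Your version is simply more explicit about first establishing convergence of the series via completeness before verifying the inverse property, whereas the paper compresses these into a single computation.
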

\begin{proof}
\begin{eqnarray*}
x \sum_{n=0}^m (1-x)^n&=& (1-(1-x))  \sum_{n=0}^m (1-x)^n\\
&=&1-(1-x)^{m+1}.
\end{eqnarray*}
By letting $m\ra \infty$ and using the fact that $\lim_{m\ra \infty} (1-x)^{m+1}=0$, we see that the series $\sum_{n=0}^\infty (1-x)^n$ is a right (and similarly left) inverse of $x$.
\end{proof}

\begin{proposition}
\label{prop:invelements}
Let $A$ be a unital Banach algebra. The group $A^\times$ is an open subset of $A$. In fact, if $a\in A^\times$ and
$\|x-a\|<1/\|a^{-1}\|$ then $x\in A^\times$ and we have
\[
x\inv = \sum_{n=0}^\infty a\inv(1-xa^{-1})^n.
\]
\end{proposition}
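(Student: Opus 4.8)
The plan is to reduce everything to the preceding proposition (the Neumann series around the unit) by translating the neighborhood of $a$ back to a neighborhood of $1$ through multiplication by $a\inv$.

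First I would set $u:=xa\inv$ and control its distance to the unit. Since $1-u=(a-x)a\inv$, submultiplicativity gives
\[
\|1-u\|=\|(a-x)a\inv\|\leq \|a-x\|\,\|a\inv\|<1,
\]
the final strict inequality being exactly the hypothesis $\|x-a\|<1/\|a\inv\|$. (Here $\|a\inv\|\neq 0$, since $a\inv$ is itself invertible and hence nonzero in any nonzero algebra, so the bound $1/\|a\inv\|$ makes sense.) The previous proposition now applies to $u$ and yields that $u$ is invertible with $u\inv=\sum_{n=0}^\infty (1-u)^n$, the series converging absolutely because it is dominated by the geometric series $\sum_n \|1-u\|^n$.

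Next I would observe that $x=ua$, so $x$ is a product of two invertible elements and is therefore invertible, with $x\inv=a\inv u\inv$. Substituting the Neumann series for $u\inv$ and distributing $a\inv$ across the sum gives
\[
x\inv=a\inv\sum_{n=0}^\infty (1-xa\inv)^n=\sum_{n=0}^\infty a\inv(1-xa\inv)^n,
\]
which is the claimed formula. Openness of $A^\times$ is then immediate: the displayed estimate shows that the open ball of radius $1/\|a\inv\|$ centered at $a$ lies entirely inside $A^\times$, so every point of $A^\times$ is an interior point.

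I do not expect a serious obstacle here; the proposition is essentially a change of variables on top of the unit-case result already established. The only points needing a little care are the direction of the multiplication (writing $x=ua$ rather than $x=au$, so that $a\inv$ lands on the left exactly as in the statement) and the justification for pulling $a\inv$ through the infinite sum, which is legitimate because left multiplication by a fixed element is a bounded linear map and hence continuous on the convergent series.
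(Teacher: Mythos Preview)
Your argument is correct and matches the paper's proof exactly: both reduce to the previous proposition by observing $\|xa\inv-1\|\leq\|x-a\|\,\|a\inv\|<1$, concluding $xa\inv$ is invertible with Neumann-series inverse, and then writing $x\inv=a\inv(xa\inv)\inv$. You simply supply more of the routine details than the paper does.
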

\begin{proof}
Consider the inequality $\|xa^{-1} -1\|=\|(x-a)a^{-1}\|\leq \|x-a\| \|a^{-1}\| <1$ and apply the previous proposition for $xa^{-1}$.
\end{proof}

\begin{corollary}
\label{cor:inversion}
Let $A$ be a unital Banach algebra. The inversion map $x\mapsto x^{-1}$ is continuous in $A^\times$. Therefore $A^\times$ is a topological group.
\end{corollary}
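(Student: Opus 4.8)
The plan is to prove continuity of inversion at each fixed point $a\in A^\times$ by exploiting the explicit power series for the inverse furnished by Proposition \ref{prop:invelements}, and then to combine this with joint continuity of multiplication to conclude that $A^\times$ is a topological group.

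First I would fix $a\in A^\times$ and restrict attention to the open ball $\{x; \|x-a\|<1/\|a\inv\|\}$, on which Proposition \ref{prop:invelements} guarantees both that $x\in A^\times$ and that $x\inv=\sum_{n=0}^\infty a\inv(1-xa\inv)^n$. Writing $u:=1-xa\inv=(a-x)a\inv$, the $n=0$ term of this series is exactly $a\inv$, so subtracting it off yields the key identity
\[
x\inv-a\inv=\sum_{n=1}^\infty a\inv u^n.
\]
The point of this rewriting is that it isolates the difference of the two inverses as a series in which every term carries at least one factor of $u$, and $u$ is small precisely when $x$ is close to $a$.

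The main estimate then comes from submultiplicativity of the norm. Since $\|u\|\leq\|a\inv\|\,\|x-a\|<1$ throughout our ball, setting $r:=\|u\|$ we obtain
\[
\|x\inv-a\inv\|\leq \|a\inv\|\sum_{n=1}^\infty \|u\|^n=\|a\inv\|\,\frac{r}{1-r}.
\]
As $x\ra a$ we have $r\ra 0$, hence $r/(1-r)\ra 0$, so $\|x\inv-a\inv\|\ra 0$; this is exactly continuity of inversion at $a$. Concretely, given $\ep>0$ one chooses $\delta>0$ small enough that $\|x-a\|<\delta$ forces $r$, and thus the right-hand side, below $\ep$. There is no serious obstacle in this argument: the only step requiring genuine care is the geometric bound, namely verifying that the coefficient $r/(1-r)$ truly tends to $0$ rather than merely remaining bounded, since it is this vanishing that upgrades the boundedness of $\|x\inv\|$ near $a$ into continuity of the inverse.

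Finally, to conclude that $A^\times$ is a topological group, I would observe that Proposition \ref{prop:invelements} already makes $A^\times$ an open subset of $A$, so it carries the subspace topology, and that the multiplication on $A$ is jointly continuous by hypothesis, a Banach algebra being in particular a topological algebra; hence its restriction to $A^\times\times A^\times$ is continuous. Together with the continuity of inversion established above, this verifies both continuity axioms for the group operations, so $A^\times$ is a topological group.
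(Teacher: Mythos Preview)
Your proof is correct and follows essentially the same approach as the paper: both subtract the $n=0$ term from the series of Proposition \ref{prop:invelements} to isolate $x\inv-a\inv$ and then bound the tail by a geometric series in $\|1-xa\inv\|$. The only cosmetic difference is that the paper pulls out one factor of $\|u\|=\|(a-x)a\inv\|$ to write the bound as $\|a\inv\|^2\|a-x\|\sum_{n\geq 0}\|1-xa\inv\|^n$, i.e.\ an explicit constant times $\|a-x\|$, whereas you sum directly to $\|a\inv\|\,r/(1-r)$; these are equivalent, and your added remarks on the topological group conclusion are a welcome completion of what the paper leaves implicit.
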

\begin{proof}
Let $a$ be a fixed invertible element of $A$. For all $x\in A$ such that $\|x-a\|<1/\|a^{-1}\|$, we have
\begin{eqnarray*}
\|x\inv -a\inv \|&=& \|\sum_{n=1}^\infty (1-xa^{-1})^n a\inv \|\\
&\leq& \|a\inv\| \sum_{n=1}^\infty \|(1-xa^{-1})^n\| \\
&\leq& \|a\inv\| \sum_{n=1}^\infty \|a\inv\|\|(a-x)\|\|(1-xa^{-1})\|^{n-1}\\
&=&\|a\inv\|^2\|a-x\| \sum_{n=0}^\infty \|(1-xa^{-1})\|^{n},
\end{eqnarray*}
where the latter series is a geometric series and convergent because $\|(1-xa^{-1})\|<1$. Therefore the right hand side of the above inequality is dominated by a constant coefficient of $\|a-x\|$. This proves the continuity of inversion at $a$.
\end{proof}
In Example \ref{ex:topalg}, we observed how the algebra of bounded operators on a Banach space turns out to be a Banach algebra. This type of Banach algebras are very important in the theory of \cs-algebras. It is because every \cs-algebra can be thought of as a \cs-subalgebra of $B(H)$ for some Hilbert space. Therefore we explain some more details here as well as in the exercises.
The closed unit ball in a Banach space $E$ is denoted by $(E)_1$.
\begin{definition}
\label{def:compact}
Let $E$ and $F$ be two Banach spaces and let $T:E\ra F$ be a linear map (not necessarily bounded). It is called {compact} if the image of $(E)_1$ under $T$ is {\bf relatively compact} in $F$, namely $\overline{T((E)_1)}$ is compact. A bounded operator $T$ is called a {\bf finite rank operator} if the dimension of its image is finite. The collection of all compact linear maps (resp. finite rank operators) from $E$ into $F$ is denoted by $K(E,F)$ (resp. $F(E,F)$). When $E=F$, we use the notation $K(E)$ and $F(E)$, respectively.
\end{definition}

Since $(E)_1$ is relatively compact, it is norm bounded, and consequently every compact linear map is bounded. On the contrary, being bounded is part of the definition of a finite rank operator.

\begin{proposition}
\label{prop:compactoperators}
Let $E$ and $F$ be two Banach spaces. Then the following statements are true:
\begin{itemize}
\item [(i)] An operator $T:E\ra F$ is compact if and only if every bounded sequence $\{x_i\}$ in $E$ has a subsequence $\{x_{i_j}\}$ such that $\{T(x_{i_j})\}$ is convergent in $F$.
\item [(ii)] The set $K(E)$ is a closed two sided ideal of $B(E)$, and so it is a Banach algebra.
\item [(iii)] The set  $F(E)$ of finite rank operators and its norm closure are subalgebras of $K(E)$.
\end{itemize}
\end{proposition}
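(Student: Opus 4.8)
The plan is to establish (i) first, since the sequential characterization makes the algebraic and topological assertions in (ii) and (iii) nearly automatic. The key background fact is that $F$, being a complete metric space, has the property that a subset is compact if and only if it is sequentially compact, and that a subset has compact closure if and only if it is totally bounded. For the forward implication of (i), given a bounded sequence $\{x_i\}$ with $\|x_i\|\le M$, the normalized points $x_i/M$ lie in $(E)_1$, so $\{T(x_i)/M\}$ lies in the compact set $\overline{T((E)_1)}$ and hence admits a convergent subsequence; scaling back by $M$ produces the required convergent subsequence of $\{T(x_i)\}$. For the converse, I would take an arbitrary sequence $\{y_n\}$ in $\overline{T((E)_1)}$, approximate each $y_n$ within $1/n$ by some $T(x_n)$ with $x_n\in(E)_1$, apply the hypothesis to the bounded sequence $\{x_n\}$ to extract a subsequence along which $\{T(x_{n_k})\}$ converges, and observe that the corresponding $y_{n_k}$ converge to the same limit. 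This shows $\overline{T((E)_1)}$ is sequentially compact, hence compact.

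With (i) in hand, part (ii) splits into three verifications. For the linear subspace property, given compact $S,T$ and a bounded sequence, I would extract a subsequence along which $\{S(x_{i_j})\}$ converges, then a further subsequence along which $\{T(x_{i_j})\}$ converges, so any linear combination converges along the common subsequence. The ideal property is equally direct: for $ST$ with $T$ compact and $S$ bounded, continuity of $S$ carries a convergent subsequence $\{T(x_{i_j})\}$ to a convergent sequence $\{S T(x_{i_j})\}$; for $TS$, boundedness of $S$ keeps $\{S(x_i)\}$ bounded, so compactness of $T$ applies. The one genuinely topological step is closedness, which I would prove via total boundedness rather than subsequences. Given $T_n\ra T$ in operator norm with each $T_n$ compact, fix $\ep>0$, choose $n$ with $\|T-T_n\|<\ep/2$, and take a finite $\ep/2$-net $\{y_1,\ldots,y_m\}$ for the totally bounded set $T_n((E)_1)$; the triangle inequality then shows this same finite set is an $\ep$-net for $T((E)_1)$, so $T((E)_1)$ is totally bounded and, as $F$ is complete, has compact closure. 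I expect this closedness argument to be the main obstacle, since it is the point where one must work directly with the topology of $F$ rather than with the bookkeeping of subsequences. Once $K(E)$ is shown to be a closed subalgebra of $B(E)$, it inherits completeness and is therefore a Banach algebra.

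For part (iii), I would first observe that every finite rank operator is compact: its range is a finite-dimensional normed space, in which bounded sets are relatively compact, and $T((E)_1)$ is bounded because $T$ is bounded; hence $F(E)\sub K(E)$. That $F(E)$ is a subalgebra follows from the containments $\mathrm{range}(S+T)\sub\mathrm{range}(S)+\mathrm{range}(T)$ and $\mathrm{range}(ST)\sub\mathrm{range}(S)$, each of which keeps dimensions finite. Finally, since the norm closure of a subalgebra of a normed algebra is again a subalgebra (by continuity of addition, scalar multiplication, and multiplication), and since $K(E)$ is closed by (ii) so that $\overline{F(E)}\sub K(E)$, the closure $\overline{F(E)}$ is a subalgebra of $K(E)$, completing the proof.
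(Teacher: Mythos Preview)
Your proposal is correct. Parts (i) and (iii) match the paper's approach essentially verbatim: the sequential characterization via sequential compactness of metric spaces, and the Heine--Borel property of finite-dimensional ranges.

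The genuine difference is in the closedness step of (ii). The paper proves it by a diagonal subsequence argument: given $T_n\ra T$ and a bounded sequence $\{x_i\}$, it extracts nested subsequences so that $T_n$ converges along the $n$th one, takes the diagonal, and then uses $\|T-T_n\|<\ep/(2M)$ to show $\{T(x_{f(i)})\}$ is Cauchy. Your total boundedness argument is shorter and avoids the bookkeeping of nested subsequences entirely; it also makes the role of completeness of $F$ transparent (totally bounded plus complete gives compact closure). The paper's approach has the minor virtue of staying strictly within the sequential framework set up in (i), but your route is the cleaner one and is in fact the argument the paper later suggests as an alternative in Problem~\ref{e:2-14}.
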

\begin{proof}
\begin{itemize}
\item [(i)] Using the fact that a subset $K$ of a metric space $X$ is compact if and only if every bounded sequence has a convergent subsequence, it is easy to see the statement.
\item [(ii)] Using the above item, it is easy to see that $K(E)$ is a two sided ideal of $B(E)$. We only show that it is a closed subspace of $B(E)$. Let $\{T_n\}$ be a sequence in $K(E)$ convergent to some element $T\in B(E)$. Let $\{x_i\}$ be a bounded sequence in $E$. Assume $M$ is a positive number such that $\|x_i\|\leq M$ for all $i$. For $n\in \n$, choose an increasing function $f_n:\n \ra \n$ such that $T_n(x_{\ff_n(i)})$ is convergent in $E$, where $\ff_n=f_n o f_{n-1}o\cdots of_1$. Therefore $T_n(x_{\ff_m(i)})$ is convergent for all $m\geq n$. Define $f:\n \ra \n$ by $f(i):=\ff_i(i)$. It is an increasing function and $T_n(x_{f(i)})$ is convergent for all $n\in \n$. For given $\ep>0$, pick $n$ such that $\|T_n-T\| <\frac{\ep}{2M}$. Let $y\in E$ be the limit of $T_n(x_{f(i)})$ in $E$. Choose $i_0\in \n$ such that $\|T_n(x_{f(i)}) -y\|< \frac{\ep}{2}$ for all $i\geq i_0$. Then we have
\begin{eqnarray*}
\|T(x_{f(i)})-y\|&\leq& \|T(x_{f(i)})-T_n(x_{f(i)})\|+\|T_n(x_{f(i)})-y\|\\
&<&\frac{\ep\|x_{f(i)}\|}{2M} +\frac{\ep}{2}\leq \ep.
\end{eqnarray*}
    This shows that the subsequence $\{T(x_{f(i)}) \}$ is convergent to $y\in E$, and so $T$ is a compact operator.
\item [(iii)] Again, it is easy to see that the set of all finite rank operators is an ideal in $B(E)$. The rest of the statement follows from the above item if we show every finite rank operator $T$ is compact. Let $R(T)$ denote the image of $T$. Then $R(T)$ is homeomorphic to a copy of $\c^n$, and so has the Heine-Borel property, namely every closed and bounded subset of $R(T)$ is compact. Since $T$ is bounded, the image of the unit ball under $T$ is bounded. Therefore its closure is compact.
\end{itemize}
\end{proof}

To verify that an operator whether $T\in B(E)$ is invertible, one can use the following proposition:

\begin{proposition}
\label{prop:invoperator}
Let $T\in B(E)$ be a bounded operator on a Banach space. Then $T$ is invertible if and only if it is bijective.
\end{proposition}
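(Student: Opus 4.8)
The plan is to prove the two implications separately. Recall that ``invertible'' here means invertible in the Banach algebra $B(E)$, that is, the existence of some $S\in B(E)$ with $TS=ST=I$. The forward implication is purely algebraic, while the converse is where the completeness of $E$ is essential and is supplied by one of the big theorems of functional analysis recalled at the start of this chapter.

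First I would dispose of the easy direction. Suppose $T$ is invertible in $B(E)$, with inverse $S=T\inv$. Then $ST=I$ forces $T$ to be injective, since $Tx=Ty$ gives $x=STx=STy=y$, and $TS=I$ forces $T$ to be surjective, since every $y\in E$ equals $T(Sy)$. Hence $T$ is bijective. No use of boundedness or completeness is needed beyond the mere existence of $S$ as an operator.

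The substance lies in the converse. Suppose $T\in B(E)$ is bijective as a linear map. Then $T$ has a well-defined set-theoretic inverse $T\inv:E\ra E$, which is automatically linear; the only thing left to check is that $T\inv$ is bounded, for then $T\inv\in B(E)$ and the relations $TT\inv=T\inv T=I$ exhibit $T$ as invertible in $B(E)$. I would obtain boundedness of $T\inv$ directly from the open mapping theorem (Theorem \ref{thm:openmapping}): since $T:E\ra E$ is a bounded onto map between Banach spaces, it is open, hence carries open sets to open sets; and for a bijection, being open is the same as having a continuous inverse, because the preimage of an open set $U$ under $T\inv$ is exactly $T(U)$. Equivalently, one may invoke the closed graph theorem (Theorem \ref{thm:closedgraph}): the graph of $T\inv$ is the image of the graph of $T$ under the coordinate flip $(x,y)\mapsto(y,x)$, and since this flip is a homeomorphism of $E\times E$ and the graph of the bounded operator $T$ is closed, the graph of $T\inv$ is closed as well, whence the closed graph theorem yields that $T\inv$ is bounded.

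The one point that genuinely requires care, and is the ``obstacle'' in the sense that this is where all the real mathematics sits, is the automatic continuity of the inverse; this is precisely the content of the open mapping (or closed graph) theorem and relies on $E$ being complete. Once that theorem is granted, both directions are short, and I would not expect any further difficulty.
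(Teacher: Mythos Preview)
Your proof is correct and matches the paper's approach: the paper proves the nontrivial direction by exactly your closed graph argument, observing that $\mathrm{Graph}(T\inv)=\{(T(y),y);y\in E\}$ is closed because $T$ is continuous, and then invoking Theorem~\ref{thm:closedgraph}. Your additional remark that the open mapping theorem gives the same conclusion is of course equivalent, since the two theorems are interderivable.
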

\begin{proof}
Let $S$ be the algebraic inverse of $T$. Then the graph of these two maps are related as follows:
\[
Graph(S)=\{(a,S(x));x\in E \} = \{(T(y), y); y\in E\}.
\]
The right hand side of the above inequality is closed in $E\oplus E$ because of the continuity of $T$, and so is the left hand side. By the closed graph theorem $S$ is bounded, see Theorem \ref{thm:closedgraph}.
\end{proof}

We shall continue our study of operators on Banach spaces in Section \ref{sec:spectralcompact}.
\section{$L^1(G)$}
\label{sec:loneofG}

In this section, we study an important class of Banach algebras associated to topological groups. These Banach algebras have natural generalizations for other algebraic and topological classes of objects such as topological semi-groups, groupoids, rings, Hecke pairs, which will be discussed later. Since groups and actions of groups on other mathematical objects are commonplace in mathematics, these Banach algebras and their generalizations appear in a number of applications of the theory of \cs-algebras, particularly, in noncommutative geometry and harmonic analysis. Therefore we decided to introduce them in the very beginning of the book to prepare the reader for the complementary discussions which will appear in the upcoming chapters. Historically, these Banach algebras have also inspired some of the developments of the theory of \cs-algebras. For example, the Gelfand transform is considered as the generalization of the Fourier transform. For the sake of briefness, we skip some of the elementary technicalities, mostly from general topology and measure theory. The interested reader can find them in any standard text book of harmonic analysis such as \cite{deitmar-echterhoff, folland-ha}.

\begin{definition}
A group $G$ equipped with a topology is called a {\bf topological group} if the group multiplication $G\times G\ra G$, $(g,h)\mapsto gh$ and the inversion map $G\ra G$, $g\mapsto g\inv$ are both continuous maps. It is called a {\bf locally compact group}, briefly an {\bf LCG}, if its topology is locally compact and Hausdorff.
\end{definition}

Every group with the discrete topology is an LCG. These examples of groups are called {\bf discrete groups}. In fact, the discrete topology is the only topology which makes a finite group into an LCG, because of the Hausdorffness.

\begin{example}
\begin{itemize}
\item [(i)] If $G$ is an abelian LCG, it is called a {\bf locally compact abelian group}. The examples of these groups include $\r$ with summation and with ordinary topology, $\t:=\{ z\in \c; |z=1\}$ with multiplication and topology inherited from $\c$, finite abelian groups, and their products and subgroups, for instance $\r^n$, $\t^n$, $\q$, and so on.
\item [(ii)] $\r^4$ as the quaternion group with the ordinary topology is an LCG, so is its subgroup $S^3:=\{ x\in \r^4; \|x\|=1 \}$.
\item [(iii)] Let $F$ be a {\bf topological field}, that is a field with a topology such that $(F,+)$ and $(F^\times, .)$ are locally compact abelian groups. Then the {\bf $n$th order general linear group of $F$};
    \[
    GL_n(F):=\{ g\in M_n(F); det(g)\neq 0\}
    \]
    with the topology inherited from $M_n=\r^{n^2}$ is an LCG. One of the most important subgroup of $GL_n(F)$ is the {\bf special Linear group}, denoted by $SL_n(F)$, consisting of those elements of $GL_n(F)$ whose determinants are $1$.
\item [(iv)] Finally, we should mention {\bf profinite} groups. A profinite group is an inverse limit of direct system of finite groups equipped with the inverse limit topology. The set of examples of these groups includes all Galois groups of Galois extensions. Because of the complicated nature of these groups, there are several important and challenging  conjectures and theories around these groups which deserve an operator algebraic approach towards them. To give an explicit example, consider $\hat{\z}:=\lim_{\leftarrow} \z/n\z$, which is the absolute Galois group of every finite field $\mathbb{F}_q$. For more details on these groups, we refer the reader to \cite{fried-jarden}.
\end{itemize}
\end{example}

In the next statements, we summarize some of the elementary definitions and properties of locally compact groups that we need in our discussion of $L^1(G)$.  Let $E$ be a subset of a group $G$ and $g\in G$. The set $\{ge;e\in E\}$ is denoted by $gE$. The set $Eg$ is defined similarly. If $F$ is another subset of $G$, then $EF:=\{ef; e\in E \text{\, and\,} f\in F \}=\cup_{e\in E} eF=\cup_{f\in F} Ef$. For the proof of the following lemma see Lemma 1.1.2 of \cite{deitmar-echterhoff}.

\begin{lemma} Let $G$ be an LCG.
\begin{itemize}
\item [(i)] For $s\in G$, the translation maps $g\ra sg$ and $g\ra gs$, as well as the inversion map $g\ra g\inv$ are homeomorphisms of $G$.
\item [(ii)] If $U$ is a neighborhood of unit, then $U\inv:=\{u\inv ; u\in U\}$ is a neighborhood of the unit too. Therefore $V=U\cap U\inv$ is a {\bf symmetric neighborhood of unit}, that is $V=V\inv$.
\item [(iii)] For a given neighborhood $U$ of unit, there is a neighborhood $V$ of unit such that $V^2\subseteq U$.
\item [(iv)] If $A,B\subseteq G$ are compact, then $AB$ is compact.
\item [(v)] If $A,B\subseteq G$ and at least one of them is open, then $AB$ is open.
\end{itemize}
\end{lemma}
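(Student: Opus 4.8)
The plan is to derive all five parts from the two defining continuity axioms of a topological group together with standard facts about continuous maps and compact sets; the parts are logically layered, with (i) serving as the engine for (ii) and (v).

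For (i), I would observe that the left translation $L_s\colon g\ra sg$ is the composite of the continuous map $g\mapsto(s,g)$ with the jointly continuous multiplication $G\times G\ra G$, hence continuous. Its two-sided inverse is $L_{s\inv}$, which is continuous for the same reason, so $L_s$ is a homeomorphism; the argument for right translation $R_s\colon g\ra gs$ is identical. Inversion $g\ra g\inv$ is continuous by the definition of a topological group and is its own inverse, so it too is a homeomorphism. For (ii), since inversion is a homeomorphism fixing the unit $e$ (because $e\inv=e$), it carries the neighborhood $U$ of $e$ to the neighborhood $U\inv$ of $e$. Then $V=U\cap U\inv$ is a neighborhood of $e$ as a finite intersection of neighborhoods, and $V\inv=U\inv\cap U=V$ gives the symmetry.

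For (iii), I would use joint continuity of multiplication $m\colon G\times G\ra G$ at the point $(e,e)$, where $m(e,e)=e\in U$. Thus $m\inv(U)$ is an open set containing $(e,e)$, and by the definition of the product topology there are open neighborhoods $W_1,W_2$ of $e$ with $W_1\times W_2\sub m\inv(U)$; setting $V:=W_1\cap W_2$ yields $V\times V\sub m\inv(U)$, i.e. $V^2\sub U$. For (iv), I would note that $A\times B$ is compact in $G\times G$ and that $AB=m(A\times B)$ is the continuous image of a compact set, hence compact. For (v), suppose $B$ is open; then $AB=\bigcup_{a\in A}aB=\bigcup_{a\in A}L_a(B)$, and each $L_a(B)$ is open because $L_a$ is a homeomorphism by (i), so $AB$ is open as a union of open sets. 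The case where $A$ is open is symmetric, using right translations.

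None of the steps presents a genuine obstacle, as the lemma is essentially a repackaging of the topological-group axioms; the only point requiring mild care is (iii), where one must pass from the two possibly distinct factor neighborhoods $W_1,W_2$ produced by the product topology to a single neighborhood $V$ by intersecting them, so that the same $V$ appears in both factors of $V^2$.
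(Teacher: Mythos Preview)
Your proof is correct and is the standard argument. The paper does not actually supply its own proof of this lemma; it simply refers the reader to Lemma~1.1.2 of Deitmar--Echterhoff, and your argument is precisely the expected derivation from the topological-group axioms.
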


\begin{lemma}
\label{lem:uniformcon}
Every function $f\in C_c(G)$ is {\bf uniformly continuous}, namely, for every $\ep>0$, there is a neighborhood $U$ of unit such that $gh\inv\in U$ or $g\inv h\in U$ imply that $|f(g)-f(h)|<\ep$.
\end{lemma}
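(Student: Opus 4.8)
The plan is to prove the stronger one-sided statement first --- say the right-invariant version, that there is a neighborhood $U$ of the unit with $g\inv h\in U\Rightarrow |f(g)-f(h)|<\varepsilon$ --- and then obtain the left-invariant version and the combined statement by symmetry. The essential tool is the compactness of $K:=\operatorname{supp}f$ together with parts (i)--(iii) of the preceding lemma.

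First I would fix $\varepsilon>0$ and exploit continuity of $f$ pointwise on $K$: for each $s\in K$, joint continuity of $f$ and the fact that left translation is a homeomorphism (part (i)) give an open neighborhood of the unit, which part (ii) lets us take symmetric, call it $V_s$, with $|f(y)-f(s)|<\varepsilon/2$ whenever $y\in sV_s$. Using part (iii) I would then choose an open symmetric $W_s$ with $W_s^2\subseteq V_s$. The sets $\{sW_s;s\in K\}$ form an open cover of the compact set $K$, so finitely many $s_1W_{s_1},\dots,s_nW_{s_n}$ suffice; set $W:=\bigcap_{i=1}^n W_{s_i}$, a symmetric open neighborhood of the unit.

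The heart of the argument is the verification that $g\inv h\in W$ forces $|f(g)-f(h)|<\varepsilon$, and this is where the main obstacle lies: controlling points near the boundary of the support, where one of $g,h$ may fall outside $K$. I would split into cases. If $g\in K$, then $g\in s_iW_{s_i}$ for some $i$, so $s_i\inv g\in W_{s_i}\subseteq V_{s_i}$ and hence $|f(g)-f(s_i)|<\varepsilon/2$; moreover $s_i\inv h=(s_i\inv g)(g\inv h)\in W_{s_i}W\subseteq W_{s_i}^2\subseteq V_{s_i}$, so $|f(h)-f(s_i)|<\varepsilon/2$, and the triangle inequality finishes this case. If instead $h\in K$, symmetry of $W$ gives $h\inv g=(g\inv h)\inv\in W$, and the roles of $g$ and $h$ are exchanged. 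If neither $g$ nor $h$ lies in $K$, then $f(g)=f(h)=0$. These cover all possibilities; it is precisely the step $W_{s_i}^2\subseteq V_{s_i}$, which transports control from the anchor point $s_i$ across from $g$ to $h$, that resolves the boundary difficulty.

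Finally I would deduce the left-invariant version by applying the result just proved to $\tilde f(g):=f(g\inv)$, which again lies in $C_c(G)$ since inversion is a homeomorphism (part (i)); unwinding the substitution $g\mapsto g\inv$, $h\mapsto h\inv$ converts right uniform continuity of $\tilde f$ into the implication $gh\inv\in U'\Rightarrow |f(g)-f(h)|<\varepsilon$. Taking $U:=W\cap U'$ then yields a single neighborhood of the unit handling both alternatives in the statement.
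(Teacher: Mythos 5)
Your proof is correct, and it is essentially the standard argument: the paper itself defers to Lemma 1.3.6 of \cite{deitmar-echterhoff}, whose proof is exactly this covering of $\mathrm{supp}(f)$ by translates $s_iW_{s_i}$ with $W_{s_i}^2\subseteq V_{s_i}$, followed by the three-case analysis and the reduction of one uniformity to the other via $g\mapsto f(g\inv)$. The key step transporting control from $g$ to $h$ through the anchor $s_i$, and the handling of points outside the support, are both done correctly, so there is nothing to fix.
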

See Lemma 1.3.6 of \cite{deitmar-echterhoff} for the proof of the above lemma. A function $f:G\ra \c$ is called {\bf symmetric} if $f(g)=f(g\inv)$ for all $g\in G$. Given a function $f$, the formula $f^s(g):=f(g)+f(g\inv)$ defines a symmetric function which possesses most of the properties of $f$. For example if $f$ is compact support or continuous, so is $f^s$.

A reader not acquainted with measure theory is advised to consult with \cite{folland-ra}, or similar text books, before reading the rest of this section. Let $(X, \mathcal{A})$ be a measurable space, that is  $\mathcal{A}$ is a $\si$-algebra on a set $X$. If $X$ is a topological space and $\mathcal{A}$ is generated by all open subsets of $X$, then $\mathcal{A}$ is called the {\bf Borel $\si$-algebra of $X$}. A measure $\mu : \mathcal{A}\ra [0,\infty]$ is called a {\bf Borel measure} if $\mathcal{A}$ contains the Borel $\si$-algebra and it is called {\bf locally finite} if for every point $x\in X$ there exists an open set $U$ containing $x$ such that $\mu(U)<\infty$. In this section, we always assume that $\mathcal{A}$ is the completion of the Borel $\si$-algebra. Therefore a function $f:X\ra \c$ is called {\bf measurable} if $f$ is {\bf Borel measurable}, i.e. $f\inv (U)\in \mathcal{A}$ for all open subset $U\subseteq \c$ and moreover $\mu(f\inv (E))=0$ for every subset $E$ of $\c$ whose Lebesgue's measure is zero, i.e. $E$ is a {\bf null set}. A measurable function $f:X\ra \c$ is called {\bf integrable with respect to $\mu$} if $\int_X |f(x)|d\mu(x) < 0$.

\begin{definition} Let $\mu$ be a locally finite Borel measure on $(X, \mathcal{A})$. Then it is called an {\bf outer Radon measure} if the following two conditions hold:
\begin{itemize}
\item [(i)] For all $E\in \mathcal{A}$, we have
\[
\mu(E)=\inf\{ \mu(U); U \text{\, is open and \,} E\subseteq U\}.
\]
\item [(ii)] For all $E\in \mathcal{A}$ such that either $E$ is open or $\mu(E)< \infty$, we have
\[
\mu(E)=\sup\{ \mu(K); K \text{\, is compact and \,} K\subseteq E\}.
\]
\end{itemize}
\end{definition}

\begin{definition}
A non-zero outer Radon measure $\mu$ on a locally compact group $G$ is called a {\bf Haar measure on $G$} if it is {\bf left invariant}, that is $\mu(gE)=\mu(E)$ for all measurable set $E\subseteq G$ and $g\in G$.
\end{definition}

The existence of a Haar measure on an arbitrary LCG is stated in the following theorem, which is usually proved in harmonic analysis texts, see for instance Theorem 1.3.4 of \cite{deitmar-echterhoff}.

\begin{theorem}
Let $G$ be a locally compact group. Then there exist a Haar measure $\mu$ on $G$. Every measure $\nu$ on $G$ is a Haar measure if and only if it is a multiplication of $\mu$ by a positive real number.
\end{theorem}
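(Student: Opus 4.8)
The plan is to reduce the entire statement to a theorem about positive linear functionals on $C_c(G)$ and then invoke the Riesz--Markov--Kakutani representation theorem, which sets up a bijection between outer Radon measures on $G$ and positive linear functionals $I\colon C_c(G)\to\c$. Under this correspondence, left invariance of the measure $\mu$ is equivalent to the left invariance $I(L_s f)=I(f)$ of the functional, where $(L_s f)(x):=f(s\inv x)$, and scaling the measure by a positive constant corresponds to scaling $I$. Thus existence of a Haar measure amounts to producing one nonzero positive left-invariant functional, and uniqueness up to a positive scalar amounts to showing any two such functionals are proportional. I would carry out these two tasks separately.

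For existence I would use Weil's covering-number method. For $f,\varphi\in C_c(G)$ with $f,\varphi\ge 0$ and $\varphi\neq 0$, set
\[
(f:\varphi):=\inf\Big\{\sum_i c_i \;:\; f\le \sum_i c_i\,L_{s_i}\varphi,\ c_i\ge 0,\ s_i\in G\Big\},
\]
which is finite because $\mathrm{supp}(f)$ is compact and $\varphi$ is bounded below on some open set. One first records the elementary properties: left invariance $(L_sf:\varphi)=(f:\varphi)$, positive homogeneity, monotonicity, subadditivity, submultiplicativity $(f:\psi)\le (f:\varphi)(\varphi:\psi)$, and the bound $(f:\varphi)\ge\|f\|_{\sup}/\|\varphi\|_{\sup}$. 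Fixing a reference $f_0\neq 0$ and normalizing $I_\varphi(f):=(f:\varphi)/(f_0:\varphi)$ confines every value $I_\varphi(f)$ to the fixed compact interval $[(f_0:f)\inv,(f:f_0)]$. The decisive step is an \emph{approximate additivity} estimate: given $f_1,f_2$ and $\varepsilon>0$, there is a neighborhood $V$ of the unit so that $\mathrm{supp}(\varphi)\subseteq V$ forces $I_\varphi(f_1)+I_\varphi(f_2)\le I_\varphi(f_1+f_2)+\varepsilon$. This is where the uniform continuity of compactly supported functions (Lemma \ref{lem:uniformcon}) enters: writing $h_i=f_i/(f_1+f_2)$ on the support and using that $L_s\varphi$ is concentrated near $s$ lets one compare a cover of $f_1+f_2$ with covers of the $f_i$ at the cost of $\varepsilon$. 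Finally I would view the family $(I_\varphi)$ inside the compact product space $\prod_f[(f_0:f)\inv,(f:f_0)]$ (Tychonoff), observe that the closures $\overline{\{I_\varphi:\mathrm{supp}(\varphi)\subseteq V\}}$ have the finite intersection property as $V$ shrinks, and select a point $I$ in their intersection. The functional $I$ inherits homogeneity, monotonicity, subadditivity and left invariance, and the approximate additivity estimate upgrades in the limit to genuine additivity; extending $I$ to $C_c(G)$ by linearity and applying Riesz--Markov yields the Haar measure.

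For uniqueness, the trivial direction (a positive multiple of a Haar measure is again a Haar measure) is immediate from the definitions. For the substantive direction, let $\mu,\nu$ be two Haar measures. Fixing $g\in C_c(G)$ with $g\ge 0$ and $g\neq 0$, I would introduce the auxiliary function $\Phi(y):=\int_G f(x)\,g(x\inv y)\,d\mu(x)$ for $f\in C_c(G)$, which is continuous with compact support. Applying Fubini's theorem and then collapsing an inner integral by the substitution $y\mapsto xy$ (left invariance of $\nu$) gives $\int_G\Phi\,d\nu=\big(\int_G f\,d\mu\big)\big(\int_G g\,d\nu\big)$, and the same computation with $\nu$ replaced by $\mu$ gives $\int_G\Phi\,d\mu=\big(\int_G f\,d\mu\big)\big(\int_G g\,d\mu\big)$. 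Letting $g$ run through a suitably normalized approximate identity concentrated at the unit, one checks $\Phi\to f$ uniformly with supports in a fixed compact set (again via Lemma \ref{lem:uniformcon}), whence $\int_G\Phi\,d\nu\to\int_G f\,d\nu$; comparison with these identities forces $\int_G f\,d\nu=c\int_G f\,d\mu$ with $c$ independent of $f$. Hence $\nu=c\mu$ on $C_c(G)$, and the uniqueness clause of Riesz--Markov promotes this to equality of measures.

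The main obstacle is the existence half, specifically the passage from the merely subadditive content $(f:\varphi)$ to an additive functional. Subadditivity, invariance and the uniform bounds are routine; what is delicate is the approximate additivity lemma and the realization that a single limit functional $I$, extracted by compactness, simultaneously achieves exact additivity. In the uniqueness half the only real care needed is the justification of Fubini (legitimate since the integrand is continuous with compact support on $G\times G$) and the approximate-identity limit, both of which lean on Lemma \ref{lem:uniformcon}.
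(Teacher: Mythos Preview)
The paper does not actually prove this theorem; the sentence immediately preceding it says that the result ``is usually proved in harmonic analysis texts'' and refers the reader to Theorem~1.3.4 of \cite{deitmar-echterhoff}. So there is nothing to compare your argument against in the paper itself.

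That said, your outline is a correct and standard treatment, and in fact it is essentially the argument in the cited reference: Weil's covering-number construction for existence, followed by a Fubini-based comparison for uniqueness. The identification of approximate additivity as the crux of the existence proof is exactly right, and the Tychonoff extraction of $I$ is the standard device. One small point to watch in your uniqueness argument: since your approximate identity $g$ is normalized so that $\int g\,d\mu=1$ (this is what makes $\Phi\to f$), the quantity $\int g\,d\nu$ is not a priori constant along the net. Your remark that ``$c$ is independent of $f$'' is precisely what closes this: fixing one $f_0$ with $\int f_0\,d\mu\neq 0$ forces $\int g\,d\nu\to(\int f_0\,d\nu)/(\int f_0\,d\mu)=:c$, and then the same limit applied to an arbitrary $f$ gives $\int f\,d\nu=c\int f\,d\mu$. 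With that step made explicit, the argument is complete.
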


When $G$ is discrete, the counting measure is a Haar measure, and so every Haar measure on a discrete group is a positive multiple of the counting measure.

\begin{example}
Recall that the Lebesgue measure on $\r$ is a complete measure $m$ on $\r$ such that $m([a,b])=b-a$ for all $a,b\in [-\infty, +\infty]$. Theorems 1.18 and 1.21 of \cite{folland-ra} state that $m$ is a Haar measure for the group $(\r, +)$.
\end{example}

Recall that a subset of a topological space is called {\bf $\si$-compact} if it can be covered by the union of a sequence of compact sets. For the proof of the following corollaries, we refer the reader to Page 10 of \cite{deitmar-echterhoff}.

\begin{corollary}
\label{cor:haar}
Let $G$ be an LCG with a Haar measure $\mu$.
\begin{itemize}
\item [(i)] Every non-empty open set has strictly positive measure.
\item [(ii)] Every compact set has finite measure.
\item [(iii)] Let $f$ be a continuous positive function on $G$ such that $\int_G f(g)d\mu(g)=0$. Then $f\equiv 0$, namely $f$ equals zero $\mu$-almost every where.
\item [(iv)] Let $f$ be an integrable function on $G$ with respect to $\mu$. Then the support of $f$ is contained in a $\si$-compact open subgroup of $G$.
\end{itemize}
\end{corollary}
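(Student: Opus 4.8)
The plan is to handle the four parts in order, exploiting left invariance of $\mu$ together with the outer and inner regularity built into the definition of an outer Radon measure, and the elementary facts about products of compact and open sets recorded in the earlier lemma on LCGs.

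For (i) I would argue by contradiction. Since $\mu$ is non-zero, outer regularity produces an open set of positive measure, and then inner regularity (which the definition grants for open sets) produces a compact set $K$ with $\mu(K)>0$. If some non-empty open $U$ had $\mu(U)=0$, I would cover $K$ by translates of $U$: fixing $u_0\in U$, every $x\in G$ lies in $(xu_0\inv)U$, so $\{gU;g\in G\}$ is an open cover of $G$, and compactness of $K$ extracts a finite subcover $K\sub\bigcup_{i=1}^n g_iU$. Left invariance then gives $\mu(K)\leq\sum_i\mu(g_iU)=n\,\mu(U)=0$, a contradiction. Part (ii) is immediate from local finiteness: each point of a compact $K$ has an open neighbourhood of finite measure, and a finite subcover bounds $\mu(K)$ by a finite sum.

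For (iii) I would again argue by contradiction. If $f$ is continuous, non-negative and not identically zero, pick $g_0$ with $f(g_0)>0$; by continuity the set $U=\{g;f(g)>f(g_0)/2\}$ is open and non-empty, so $\int_G f\,d\mu\geq\frac{f(g_0)}{2}\,\mu(U)>0$ by part (i), contradicting $\int_G f\,d\mu=0$.

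Part (iv) is where the real work lies, and I expect it to be the main obstacle. The first step is to show that $\{g;f(g)\neq 0\}$ sits, up to a null set, inside a $\si$-compact set: writing $A_n=\{g;|f(g)|>1/n\}$, Markov's inequality gives $\mu(A_n)\leq n\int_G|f|\,d\mu<\infty$, and inner regularity applied to each finite-measure $A_n$ yields a $\si$-compact $C_n\sub A_n$ with $\mu(A_n\setminus C_n)=0$; then $C:=\bigcup_n C_n$ is $\si$-compact and $f$ vanishes almost everywhere off $C$. The second, more delicate, step is to enlarge $C$ to a $\si$-compact \emph{open subgroup}. I would fix a symmetric open neighbourhood $V$ of the unit with $\overline{V}$ compact (available by local compactness), write $C=\bigcup_m K_m$ with each $K_m$ compact, set $W:=\overline{V}\cup\bigcup_m(K_m\cup K_m\inv)$, and let $H:=\bigcup_{n\geq 1}W^n$. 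Then $e\in W$ and $W\inv=W$ (using symmetry of $\overline{V}$ and the inclusion of each $K_m\inv$), so $H$ is closed under products and inverses, hence a subgroup containing $C$; it is open because $V\sub H$ forces $hV\sub H$ to be an open neighbourhood of each $h\in H$; and an open subgroup is automatically closed, its complement being a union of open cosets. Finally $H$ is $\si$-compact, since $W$ is a countable union of compact sets, whence each finite product $W^n$ is again a countable union of compact sets (products of compact sets being compact), and $H$ is a countable union of these. As $f$ vanishes a.e. on the open set $G\setminus H$, the (essential) support of $f$ is contained in $\overline{H}=H$. The points demanding care are the verification that $H$ is genuinely a subgroup and that $\si$-compactness survives both the products $W^n$ and their countable union.
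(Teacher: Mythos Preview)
Your proof is correct in all four parts. The paper itself does not supply a proof of this corollary; it simply refers the reader to page 10 of Deitmar--Echterhoff, and the argument there is precisely the one you have outlined: the translate-and-cover trick for (i), local finiteness for (ii), part (i) feeding into (iii), and for (iv) Markov's inequality plus inner regularity on the finite-measure sets $A_n=\{|f|>1/n\}$ to get a $\si$-compact carrier, followed by the generated-subgroup construction $H=\bigcup_{n\geq 1}W^n$ with $W$ a symmetric $\si$-compact set containing a neighbourhood of $e$. Your verification that $H$ is an open (hence closed) $\si$-compact subgroup is careful and complete; the only cosmetic remark is that once you know $f=0$ a.e.\ on the open set $G\setminus H$, the phrase ``support of $f$'' in the statement should indeed be read as essential support (equivalently: after modifying $f$ on a null set), exactly as you handled it.
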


The following two propositions show how the topological structure of an LCG is related to the properties of its Haar measures.

\begin{proposition}
\label{prop:gdiscrete} Let $G$ be an LCG with a Haar measure $\mu$ and unit element $e$. The, the following statements are equivalent:
\begin{itemize}
\item [(i)] There exists $g\in G$ such that $\mu(\{g\})\neq 0$.
\item [(ii)] We have $\mu(\{e\})\neq 0$.
\item[(iii)] The Haar measure is a strictly positive multiple of counting measure.
\item [(iv)] The topology of $G$ is discrete.
\end{itemize}
\end{proposition}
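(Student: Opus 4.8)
The plan is to prove the chain of implications $(iii)\Rightarrow(i)\Leftrightarrow(ii)\Rightarrow(iv)\Rightarrow(iii)$, which closes a loop through all four statements. Several links are immediate, so I would dispatch them first. For $(i)\Leftrightarrow(ii)$ I would use left invariance directly: since $\{g\}=g\{e\}$, we have $\mu(\{g\})=\mu(g\{e\})=\mu(\{e\})$ for every $g\in G$, so all singletons carry the same measure and the two statements are literally the same condition. The implication $(iii)\Rightarrow(i)$ is trivial, since a strictly positive multiple of the counting measure assigns a positive value to each singleton.

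The heart of the argument is $(ii)\Rightarrow(iv)$. Assume $c:=\mu(\{e\})>0$; by the previous remark every singleton has measure $c$. The key intermediate claim is that every compact subset of $G$ is finite. Indeed, if $K$ is compact then $\mu(K)<\infty$ by Corollary \ref{cor:haar}(ii), and for any finite subset $F\subseteq K$ finite additivity gives $|F|\,c=\mu(F)\leq\mu(K)$, forcing $|F|\leq\mu(K)/c$; since this bounds the size of every finite subset, $K$ itself must be finite. Now I would use local compactness to fix a compact, hence finite, neighborhood $K$ of $e$. Because $G$ is Hausdorff, the finite set $K\setminus\{e\}$ is closed, so $V:=G\setminus(K\setminus\{e\})$ is open, contains $e$, and satisfies $V\cap K=\{e\}$. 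Intersecting $V$ with the interior of $K$ produces an open set contained in $\{e\}$ and containing $e$, hence equal to $\{e\}$. Thus $\{e\}$ is open; translating by elements of $G$ shows every singleton is open, so the topology is discrete.

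Finally, for $(iv)\Rightarrow(iii)$ I would invoke the uniqueness clause of the existence theorem for Haar measures stated above. On a discrete group the counting measure is left invariant and non-zero, and it is an outer Radon measure because every subset is open (so condition (i) holds trivially with $U=E$) while the compact subsets are exactly the finite ones (so condition (ii) reduces to approximating the counting measure of a set by its finite subsets). Hence the counting measure is a Haar measure, and by the uniqueness theorem $\mu$ is a strictly positive multiple of it, which is $(iii)$.

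I expect the only real obstacle to be $(ii)\Rightarrow(iv)$: the delicate point is promoting ``$\{e\}$ has positive measure'' to ``$\{e\}$ is open.'' This requires combining three ingredients — local compactness to secure a compact neighborhood of $e$, the finiteness of the measure of compact sets (Corollary \ref{cor:haar}(ii)) to force that neighborhood to be finite, and Hausdorffness to separate $e$ from the remaining finitely many points — whereas the other implications are essentially formal consequences of left invariance and of the uniqueness of Haar measure.
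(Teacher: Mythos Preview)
Your proof is correct and follows essentially the same strategy as the paper. The paper singles out the implication to $(iv)$ as the only nontrivial step and argues it the same way you do: take a compact neighborhood of $e$, observe it must be finite because it has finite Haar measure, and then use Hausdorffness to conclude $\{e\}$ is open. The only cosmetic difference is that the paper phrases the finiteness step as coming from $(iii)$ (finite counting measure forces a finite set), whereas you derive it straight from $(ii)$ via the bound $|F|\,c\le\mu(K)$; the underlying idea and the remaining implications are identical.
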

\begin{proof} We only show that (ii) and (iii) implies (iv). The rest of implications are easy and left as an exercise. Let $K$ be a compact neighborhood of $e$. Then there is an open set $U$ such that $e\in U\subseteq K$. By the above corollary, we have  $0<\mu(U)<\infty$. By (iii), $U$ has to be a finite set. Since the topology is Hausdorff, for all $g\in U$ the singleton $\{g\}$ must be an open set. Therefore all singletons of elements of $G$ must be open. In other words the topology of $G$ is discrete.
\end{proof}

\begin{proposition} Let $G$ be an LCG with a Haar measure $\mu$. Then $G$ is compact if and only if $\mu(G)<\infty$.
\end{proposition}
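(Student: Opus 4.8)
The statement has two directions. The forward direction is immediate: if $G$ is compact, then $G$ is itself a compact subset of $G$, so $\mu(G)<\infty$ by Corollary \ref{cor:haar}(ii). The substance of the proposition is the converse, which I would prove by contraposition: assuming $G$ is \emph{not} compact, I would produce an infinite pairwise disjoint family of translates of a fixed open set of positive measure, forcing $\mu(G)=\infty$.

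First I would fix a compact symmetric neighborhood $V$ of the unit $e$; such a set exists because $G$ is locally compact and Hausdorff (take any compact neighborhood $K$ of $e$ and intersect it with $K\inv$, which is again a compact neighborhood since inversion is a homeomorphism and by part (ii) of the first lemma on LCGs). Using the part of that lemma providing a neighborhood whose square lies in a prescribed one, together with symmetrization, I would then choose an \emph{open} symmetric neighborhood $W$ of $e$ with $W^2\subseteq V$. The key geometric points are that $W$ is open and nonempty, so $\mu(W)>0$ by Corollary \ref{cor:haar}(i), while $V$ is compact, so finite unions of its translates are again compact.

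Next I would build a sequence $\{g_n\}$ inductively so that $g_n\notin\bigcup_{i<n} g_iV$. This is possible precisely because $G$ is not compact: at each stage $\bigcup_{i<n} g_iV$ is a finite union of translates of the compact set $V$, hence compact (each translate is compact since left translation is a homeomorphism), and a compact set cannot exhaust the non-compact group $G$, so a point $g_n$ outside it always exists. The crucial claim is then that the translates $\{g_nW\}$ are pairwise disjoint: if $g_nw_1=g_mw_2$ with $m<n$ and $w_1,w_2\in W$, then $g_m\inv g_n=w_2w_1\inv\in W^2\subseteq V$ (using that $W$ is symmetric), which contradicts $g_n\notin g_mV$.

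Finally, by left invariance of the Haar measure each $\mu(g_nW)=\mu(W)>0$, so by countable additivity
\[
\mu(G)\;\geq\;\mu\Big(\bigcup_{n=1}^\infty g_nW\Big)\;=\;\sum_{n=1}^\infty\mu(W)\;=\;\infty,
\]
which contradicts $\mu(G)<\infty$ and completes the contrapositive. I expect the main obstacle to be the inductive construction of $\{g_n\}$ together with the disjointness verification: one must choose the neighborhoods in the right order (first $V$ compact and symmetric, then $W$ open and symmetric with $W^2\subseteq V$), so that non-compactness of $G$ guarantees the sequence never terminates and the symmetry of $W$ combined with $W^2\subseteq V$ yields disjointness of the translates.
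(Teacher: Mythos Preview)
Your proof is correct and complete. The paper does not give its own argument here---it leaves the statement as an exercise with a pointer to page 21 of \cite{deitmar-echterhoff}---and the argument you have written is precisely the standard one found there: use non-compactness to inductively pick points $g_n$ outside the compact set $\bigcup_{i<n} g_iV$, and use the inclusion $W^2\subseteq V$ with $W$ symmetric to conclude that the translates $g_nW$ are pairwise disjoint, forcing $\mu(G)=\infty$.
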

\begin{proof} Prove it as an exercise or read the proof in Page 21 of \cite{deitmar-echterhoff}.
\end{proof}

Given $f:G\ra \c$ and $g\in G$, we define two new functions $L_g(f)$ and $R_g(f)$ from $G$ into $\c$ by
\[
L_g(f)(h):=f(g\inv h)\quad \text{and}\quad  R_g(f)(h):=f(hg), \qquad \forall h\in G.
\]
They are respectively called {\bf left} and {\bf right translations of $f$ by $g$}. These maps are bijective over $C_c(G)$, $C_0(G)$, etc.

\begin{lemma}
\label{lem:contrans}
Let $G$ be an LCG with a Haar measure $\mu$. Then for every $f\in C_c(G)$, the function $h\mapsto \int_G f(gh)d\mu(g)$ is continuous on $G$.
\end{lemma}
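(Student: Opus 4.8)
The plan is to fix an arbitrary point $h_0\in G$ and prove continuity of $F(h):=\int_G f(gh)\,d\mu(g)$ at $h_0$; since $h_0$ is arbitrary this gives continuity on all of $G$. (That $F$ is well defined is routine: for each fixed $h$ the map $g\mapsto f(gh)$ lies in $C_c(G)$, being continuous with support contained in $\mathrm{supp}(f)\,h\inv$, which is compact because translation is a homeomorphism.) First I would write
\[
|F(h)-F(h_0)|\leq \int_G |f(gh)-f(gh_0)|\,d\mu(g),
\]
and then observe the crucial algebraic identity $(gh_0)\inv(gh)=h_0\inv h$, which does \emph{not} depend on $g$. Hence, applying Lemma \ref{lem:uniformcon} to the points $s=gh_0$ and $t=gh$, the single condition $h_0\inv h\in U$ forces $|f(gh)-f(gh_0)|<\ep$ simultaneously for every $g\in G$. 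This is the key simplification: uniform continuity converts a statement about two moving arguments into one controlled by the fixed group element $h_0\inv h$.

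The remaining difficulty is that the integral is taken over all of $G$, whose Haar measure may be infinite, so a pointwise bound of $\ep$ on the integrand is not by itself enough. To handle this I would localize the support. The integrand $g\mapsto f(gh)-f(gh_0)$ vanishes unless $g\in \mathrm{supp}(f)\,h\inv$ or $g\in \mathrm{supp}(f)\,h_0\inv$. So, \emph{before} choosing $\ep$, I would fix once and for all a compact symmetric neighborhood $W_0$ of the unit and restrict attention to $h\in h_0 W_0$. For such $h$ one has $h\inv\in W_0 h_0\inv$, hence both supports lie inside the single set $C:=\mathrm{supp}(f)\,W_0\,h_0\inv$. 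Since $\mathrm{supp}(f)$, $W_0$ and $\{h_0\inv\}$ are compact, $C$ is compact (a product of compact sets is compact), and therefore $M:=\mu(C)<\infty$ by Corollary \ref{cor:haar}(ii).

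With $C$ and $M$ now fixed, the quantifiers fall into place, and this ordering is the real point to get right. Given a target tolerance $\ep'>0$, I would set $\ep=\ep'/(M+1)$, apply Lemma \ref{lem:uniformcon} to obtain the neighborhood $U$ of the unit attached to this $\ep$, and then take $h\in h_0(U\cap W_0)$. For such $h$ the integrand is supported in $C$ and bounded there by $\ep$, so
\[
|F(h)-F(h_0)|\leq \int_C |f(gh)-f(gh_0)|\,d\mu(g)\leq \ep\,M=\frac{\ep' M}{M+1}<\ep'.
\]
The main obstacle is thus not the estimate itself but the bookkeeping of the domain of integration: one must secure a fixed compact set of finite measure containing the support of the integrand uniformly for all nearby $h$ before invoking uniform continuity, for otherwise the constant $M$ would depend on $\ep$ and the argument would become circular.
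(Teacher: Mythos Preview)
Your proof is correct and follows essentially the same route as the paper: both arguments use the uniform continuity of $f$ (Lemma~\ref{lem:uniformcon}) via the identity $(gh_0)\inv(gh)=h_0\inv h$, and both control the domain of integration by trapping the supports in a single fixed compact set $\mathrm{supp}(f)\cdot(\text{compact neighborhood})$ of finite Haar measure before choosing $\ep$. The only cosmetic difference is that the paper first reduces to $h_0=e$ by replacing $f$ with $R_{h_0}(f)$, whereas you work directly at an arbitrary $h_0$; this changes nothing substantive.
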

\begin{proof}
To prove that the above function is continuous at an arbitrary point $h_0\in G$, one can replace $f$ by $R_{h_0}(f)$ and show that the function $h\mapsto \int_G R_{h_0}(f)(gh) d\mu(g) $ is continuous at the unit element $e\in G$. So we prove this simple case instead. Let $K$ be the support of $f$ and let $V$ be a compact symmetric neighborhood of $e$. For $s\in V$, one easily sees that $KV$ contains the support of $R_h(f)$. Since $L_{g\inv }(f)$ is uniformly continuous, for given $\ep>0$, there exists a symmetric neighborhood $W$ of $e$ such that $|f(gh)-f(g)|<\frac{\ep}{\mu(KV)}$ for all $h\in W$. Therefore for $h\in W\cap V$, we have
\begin{eqnarray*}
\left|\int_G [f(gh)-f(g)]d\mu(g)\right| &\leq& \int_{KV} |f(gh)-f(g)|d\mu(g)\\
&<& \frac{\ep}{\mu(KV)} \mu(KV) =\ep.
\end{eqnarray*}
This completes the proof.
\end{proof}

Since a Haar measure on an LCG $G$ is determined up to a positive multiple, we sometimes call it ``the'' Haar measure of $G$. This (sort of) uniqueness of a Haar measure leads us to the definition of the modular function of an LCG. Let $\mu$ be a Haar measure on an LCG $G$. Given $g\in G$, define $\mu_g(E):=\mu(Eg)$ for all measurable set $E\subseteq G$. It is easy to see that $\mu_g$ is a Haar measure on $G$ as well. Thus there is a positive real number $\Delta(g)$ such that $\mu_g=\Delta(g)\mu$.

\begin{definition}
The function $\Delta:G\ra ]0,\infty[$, defined in the above, is called the {\bf modular function of $G$}. Moreover, $G$ is called {\bf unimodular} if $\Delta$ is identically equal to the constant function $1$.
\end{definition}

Obviously, if $G$ is either a locally compact abelian group or a discrete group, then it is a unimodular group. In fact, every compact group is unimodular too, which will be proved later.

The set of all integrable functions on an LCG $G$ with respect to a Haar measure $\mu$ is denoted by $L^1(G)$, that is
\[
L^1(G):=\{f:G\ra \c; \|f\|_1:=\int_G |f(g)| d\mu(g) <\infty\}.
\]
The norm defined in the above formula is called the {\bf $L^1$-norm} and $L^1(G)$ is a Banach space according to the this norm, see Theorem 6.6 of \cite{folland-ra}.

\begin{lemma}
\label{lem:righttrans}
Let $G$ be an LCG with a Haar measure $\mu$. For $f\in L^1(G)$ and $g\in G$, we have $R_g(f)\in L^1(G)$ and
\[
\int_G R_g(f)(h)d\mu(h) =\Delta(g\inv) \int_G f(h)d\mu(h).
\]
\end{lemma}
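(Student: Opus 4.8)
The plan is to reduce the identity to the case of characteristic functions, where it becomes precisely the defining relation of the modular function, and then bootstrap up to an arbitrary $f\in L^1(G)$ by the standard measure-theoretic approximation scheme. Recall that $R_g(f)(h)=f(hg)$, so the content of the lemma is that integrating against the right translate scales the integral by $\Delta(g\inv)$.

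First I would fix $g\in G$ and test the claimed formula on a characteristic function $f=\mathbf{1}_E$ of a measurable set $E\subseteq G$. Since right translation is a homeomorphism (as recorded earlier for LCGs), the set $Eg\inv$ is again measurable, and the condition $hg\in E$ is equivalent to $h\in Eg\inv$, so that
\[
\int_G \mathbf{1}_E(hg)\,d\mu(h)=\mu(\{h\in G; hg\in E\})=\mu(Eg\inv).
\]
Applying the definition of the modular function with $g$ replaced by $g\inv$, namely $\mu(Eg\inv)=\Delta(g\inv)\mu(E)$, and recognizing $\mu(E)=\int_G \mathbf{1}_E(h)\,d\mu(h)$, establishes the identity for indicators.

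Next I would propagate the formula upward. By linearity it holds for all nonnegative simple functions, and then, approximating an arbitrary nonnegative measurable function from below by an increasing sequence of simple functions and invoking the monotone convergence theorem, it holds for every nonnegative measurable $f$. Applying this to $|f|$ gives
\[
\int_G |R_g(f)(h)|\,d\mu(h)=\int_G |f(hg)|\,d\mu(h)=\Delta(g\inv)\int_G |f(h)|\,d\mu(h)=\Delta(g\inv)\|f\|_1<\infty,
\]
which simultaneously proves that $R_g(f)\in L^1(G)$. Finally, writing a general $f\in L^1(G)$ as a complex combination of its four nonnegative pieces (the positive and negative parts of its real and imaginary components) and using linearity of the integral yields the stated equality.

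The only genuinely delicate point is the substitution step for indicators: one must keep careful track of the direction of the translation, noting that $hg\in E$ forces $g\inv$ (not $g$) to appear via $h\in Eg\inv$, before invoking the defining relation $\mu(Eg\inv)=\Delta(g\inv)\mu(E)$. One should also check that right translation preserves the completed $\sigma$-algebra on which $\mu$ lives; this is immediate, since $\mu$ and its right translate $\mu_{g\inv}$ are positive multiples of one another and hence share the same null sets.
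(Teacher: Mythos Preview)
Your argument is correct and matches the paper's approach exactly: the paper's proof simply says that the statement is clear for characteristic functions and that the general case follows from the usual approximation argument, which is precisely what you have spelled out in detail. Your careful tracking of the direction of translation (yielding $Eg\inv$ and hence $\Delta(g\inv)$) and the explicit bootstrap through simple functions, monotone convergence, and the decomposition into four nonnegative pieces are exactly the details the paper leaves to the reader.
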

\begin{proof}
When $f$ is a characteristic function the statement is clear. The general case follows from the usual approximation argument.
\end{proof}

\begin{theorem} Let $G$ be an LCG with a Haar measure $\mu$ and the modular function $\Delta$.
\begin{itemize}
\item [(i)] Let $\r^\times_+$ denote group of positive real numbers with multiplication. The modular function $\Delta: G\ra \r^\times_+$ is a continuous group homomorphism.
\item [(ii)] If $G$ compact, then it is unimodular.
\end{itemize}
\end{theorem}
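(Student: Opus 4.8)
The plan is to separate the three assertions—the homomorphism identity, continuity, and the compact case—since the algebraic parts follow immediately from the defining relation $\mu(Eg)=\Delta(g)\mu(E)$, and continuity is the only genuine work.

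First I would establish the homomorphism property in (i). Fix any measurable set $E$ with $0<\mu(E)<\infty$; a compact neighborhood $K$ of the unit works, since $\mu(K)<\infty$ by Corollary \ref{cor:haar}(ii) and $\mu(K)>0$ because $K$ contains a non-empty open set, which has strictly positive measure by Corollary \ref{cor:haar}(i). Then I compute $\mu(E(gh))$ two ways: writing $E(gh)=(Eg)h$ and applying the defining relation twice gives $\mu(E(gh))=\Delta(h)\mu(Eg)=\Delta(h)\Delta(g)\mu(E)$, while directly $\mu(E(gh))=\Delta(gh)\mu(E)$. Cancelling the nonzero finite factor $\mu(E)$ yields $\Delta(gh)=\Delta(g)\Delta(h)$.

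Next, for continuity, the key is to express $\Delta$ as a ratio of integrals to which Lemma \ref{lem:contrans} applies. By Lemma \ref{lem:righttrans},
\[
\int_G R_g(f)(h)\,d\mu(h)=\Delta(g\inv)\int_G f(h)\,d\mu(h).
\]
I would fix a nonnegative $f\in C_c(G)$ that is positive somewhere, so that $c:=\int_G f\,d\mu>0$ (a positive continuous function has positive integral, by the contrapositive of Corollary \ref{cor:haar}(iii)). Since $R_g(f)(h)=f(hg)$, this gives
\[
\Delta(g\inv)=\frac{1}{c}\int_G f(hg)\,d\mu(h).
\]
By Lemma \ref{lem:contrans}, with integration variable $h$ and parameter $g$, the right-hand side is continuous in $g$; hence $g\mapsto\Delta(g\inv)$ is continuous, and composing with the inversion homeomorphism shows $\Delta$ itself is continuous. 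This continuity step is the main obstacle, and it rests entirely on having Lemmas \ref{lem:contrans} and \ref{lem:righttrans} available.

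Finally, for (ii), suppose $G$ is compact, so that $0<\mu(G)<\infty$ by Corollary \ref{cor:haar} again. For any $g\in G$ the group property gives $Gg=G$, hence $\Delta(g)\mu(G)=\mu(Gg)=\mu(G)$, and cancelling the positive finite factor $\mu(G)$ forces $\Delta(g)=1$. Thus $\Delta\equiv 1$ and $G$ is unimodular. (Alternatively, one could note that $\Delta(G)$ is a compact, hence bounded, subgroup of $\r^\times_+$ and conclude it is trivial, but the direct computation above is shorter.)
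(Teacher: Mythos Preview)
Your proof is correct and matches the paper's approach almost exactly for part (i): the same fixed set $E$ with $0<\mu(E)<\infty$ for the homomorphism identity, and the same combination of Lemma \ref{lem:righttrans} with Lemma \ref{lem:contrans} (plus inversion) for continuity.

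For part (ii) there is a minor divergence worth noting. The paper argues that $\Delta(G)$ is a compact subgroup of $\r^\times_+$ and hence trivial---exactly the alternative you mention and set aside. Your primary argument, using $Gg=G$ and cancelling the finite nonzero factor $\mu(G)$, is more elementary: it does not even require knowing $\Delta$ is continuous, only that $\mu(G)<\infty$. The paper's route, on the other hand, generalizes better (the same reasoning shows any continuous homomorphism from a compact group into $\r^\times_+$ is trivial), but for this specific statement your direct computation is the shorter path.
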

\begin{proof}
\begin{itemize}
\item [(i)] Let $E\subseteq G$ be a measurable set such that $0<\mu(E)<\infty$. Then for every $g,h\in G$, one computes $\Delta(gh)\mu(E)=\mu(Egh)=\Delta(h)\mu(Eg)= \Delta(h)\Delta(g)\mu(E)$. Thus $\Delta(gh)= \Delta(g) \Delta(h)$, namely $\Delta$ is a group homomorphism. Choose $f\in C_c(G)$ such that $c=\int_G f(g)d\mu(g) \neq 0$. By Lemma \ref{lem:righttrans}, we have
    \[
    \Delta(h) = 1/c \, \int_G f(gh\inv)d\mu(g).
    \]
    The right hand side as a function on $h$ is continuous by Lemma \ref{lem:contrans}, so is $\Delta$.
\item [(ii)] By (i), when $G$ is compact, the image of $\Delta$ is a compact subgroup of $\r_+^\times$. But the only compact subgroup of $\r^\times_+$ is the trivial subgroup $\{1\}$. This means that $\Delta \equiv 1$.
\end{itemize}
\end{proof}
\begin{remark}
Let $\mu$ be a complex Radon measure on $G$. See Section \ref{sec:Borelfunctionalcalculus} for details. Define $I_{\mu}:C_0(G) \ra \c$ by $I_\mu (f) := \int_G f(g) d\mu(g)$. By the Riesz representation theorem, see Theorem \ref{thm:rieszrep}, the map $\mu\mapsto I_\mu$ is an isomorphism between the vector space $M(G)$ of complex Radon measures on $G$ and the dual vector space $C_0(G)\s$. By this correspondence, a left invariant measure $\mu$ is mapped to a functional that is unchanged by the left translation. In other words, $\mu$ is a left invariant Radon measure if and only if $I_\mu(f) = I_\mu (L_g(f))$ for all $g\in G$.
\end{remark}

\begin{lemma}
\label{lem:intinvsub}
Let $G$ be an LCG with a Haar measure $\mu$ and the modular function $\Delta$. Then
\[
\int_G f(g\inv) \Delta(g\inv) d\mu(g) =\int_G f(g)d\mu (g), \qquad \forall f\in L^1(G).
\]
\end{lemma}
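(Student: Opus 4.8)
The plan is to reduce the identity to continuous compactly supported functions, prove it there by a uniqueness argument for Haar measure, and then extend to all of $L^1(G)$ by density. Accordingly, I would first introduce the linear functional $I:C_c(G)\ra \c$ defined by
\[
I(f):=\int_G f(g\inv)\Delta(g\inv)\, d\mu(g).
\]
Since inversion is a homeomorphism of $G$ and $\Delta$ is continuous and strictly positive, $\Phi(g):=f(g\inv)\Delta(g\inv)$ again lies in $C_c(G)$, so $I$ is a well-defined positive linear functional, and it is clearly nonzero. The goal of the first stage is to show that $I$ is left invariant, i.e. $I(L_h f)=I(f)$ for all $h\in G$; then by the Riesz representation theorem (Theorem \ref{thm:rieszrep}) it is represented by a left-invariant outer Radon measure, and hence by uniqueness of the Haar measure it must equal $c\int_G(\cdot)\,d\mu$ for some constant $c>0$.

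For the left-invariance computation I would start from $(L_h f)(g\inv)=f(h\inv g\inv)=f((gh)\inv)$ together with the homomorphism property of the modular function, which gives $\Delta(g\inv)=\Delta(h)\Delta((gh)\inv)$. Combining these, the integrand of $I(L_h f)$ rewrites as $\Delta(h)\,\Phi(gh)=\Delta(h)\,R_h(\Phi)(g)$, so that $I(L_h f)=\Delta(h)\int_G R_h(\Phi)(g)\,d\mu(g)$. Applying Lemma \ref{lem:righttrans} produces a compensating factor $\Delta(h\inv)$, and since $\Delta(h)\Delta(h\inv)=\Delta(e)=1$ the two modular factors cancel and we obtain $I(L_h f)=I(f)$. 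This bookkeeping with $\Delta$ is the computational heart of the proof, and the only real pitfall is keeping the direction of the $\Delta$-factors consistent.

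To pin down the constant $c$, I would feed $\Phi\in C_c(G)$ back into the relation $I=c\int_G(\cdot)\,d\mu$. Because $\Phi(g\inv)\Delta(g\inv)=f(g)\Delta(g)\Delta(g\inv)=f(g)$, the left-hand side $I(\Phi)$ collapses to $\int_G f\,d\mu$, whereas the right-hand side equals $c\,I(f)=c^2\int_G f\,d\mu$. Choosing $f\in C_c(G)$ with nonzero integral then forces $c^2=1$, and since $c>0$ we conclude $c=1$. I expect this short self-referential step to be the clean key that avoids any explicit normalization of $\mu$.

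Finally, the identity now established on $C_c(G)$ says precisely that the pushforward of the measure $\Delta(g\inv)\,d\mu(g)$ under the inversion map equals $\mu$; as these are Radon measures agreeing on $C_c(G)$, they coincide. The stated identity for an arbitrary $f\in L^1(G)$ then follows from the change-of-variables formula, equivalently by applying the $C_c$ case together with monotone convergence to the nonnegative measurable function $|f|$ and then decomposing $f$ into its real and imaginary, positive and negative parts. This extension step is routine, so the genuine obstacle lies entirely in the careful handling of the modular factors in the left-invariance computation and in the determination of $c$.
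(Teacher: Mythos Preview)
Your argument is correct and follows the same overall architecture as the paper: define $I(f)=\int_G f(g\inv)\Delta(g\inv)\,d\mu(g)$ on $C_c(G)$, verify left invariance via Lemma~\ref{lem:righttrans}, invoke uniqueness of Haar measure to get $I=c\int(\cdot)\,d\mu$, and then identify $c$. The left-invariance computation you give matches the paper's almost line for line.

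The genuine difference is in how you determine $c=1$. The paper argues by continuity of $\Delta$: for $\ep>0$ it chooses a symmetric compact neighborhood $V$ of $e$ on which $|1-\Delta|<\ep$, picks a symmetric nonnegative $f\in C_c(V)$, and estimates $|1-c|\int f\,d\mu\le\ep\int f\,d\mu$, forcing $|1-c|\le\ep$. Your involution trick---feeding $\Phi$ back into $I$ to obtain $I(\Phi)=\int f\,d\mu$ on one side and $c\,I(f)=c^2\int f\,d\mu$ on the other---is shorter and avoids any appeal to the continuity of $\Delta$ or the construction of a symmetric bump function. The paper's route is perhaps more hands-on and makes the role of $\Delta(e)=1$ visible through a limiting process, while yours exploits directly the algebraic fact that the map $f\mapsto\Phi$ is an involution on $C_c(G)$; both are perfectly valid, and your version would be the one most readers would call slicker.
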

\begin{proof} Regarding the correspondence explained in the above remark, we define another Haar measure by using $\mu$. Then we show that it is the same as $\mu$, and as a consequence, we obtain the desired result. For all $f\in C_c(G)$, define $I(f):=\int_G f(g\inv) \Delta(g\inv)d\mu (g)$. Then by Lemma \ref{lem:righttrans}, for all $s\in G$, we have
\begin{eqnarray*}
I(L_s(f))&=& \int_G f(s\inv g\inv ) \Delta(g\inv) d\mu(g)\\
&=& \int_G f((gs)\inv ) \Delta(g\inv) d\mu(g)\\
&=& \int_G f((gs)\inv ) \Delta((gs\inv s)\inv) d\mu(g)\\
&=& \Delta(s\inv)\int_G f(g\inv ) \Delta(sg\inv ) d\mu(g)\\
&=& \int_G f(g\inv ) \Delta(g\inv ) d\mu(g)\\
&=& I(f).
\end{eqnarray*}
This shows that the measure associated to $I$ is left invariant, and consequently, a Haar measure. Therefore there is a $c>0$ such that $I(f)=c\int_G f(g)d\mu(g)$. We need to show that $c=1$ to complete the proof. For given $\ep>0$, choose a symmetric neighborhood $V$ of unit such that $|1-\Delta(s)|<\ep$ for all $s\in V$. Choose $f\in C_c(V)$ such that it is positive, not identically zero, and symmetric. Then we have
\begin{eqnarray*}
|1-c|\int_Gf(g)d\mu(g) &=& \left| \int_G f(g)d\mu(g) -I(f)\right| \\
&\leq & \int_G | f(g) - f(g\inv) \Delta(g\inv)| d\mu(g)\\
&=& \int_V f(g) | 1- \Delta(g\inv) | d\mu(g)\\
&=& \ep \int_G f(g) d\mu(g).
\end{eqnarray*}
Thus $|1-c|\leq \ep$, where $\ep>0$ is arbitrary. Hence $c=1$.
\end{proof}

There is an interesting multiplication formula over $L^1(G)$ which makes it a Banach algebra. For every $f, k\in L^1(G)$, define $f\ast k:G\ra \c$ by
\[
f\ast k (g):=\int_G f(h)k(h\inv g)d\mu(h), \qquad \forall g\in G.
\]

\begin{proposition}
With the above notation, $f\ast k$ belongs to $L^1(G)$ and the above formula defines an associative multiplication called {\bf convolution product}. Moreover, $L^1(G)$ is a Banach algebra with this multiplication.
\end{proposition}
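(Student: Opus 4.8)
The plan is to derive everything from Tonelli's and Fubini's theorems together with the left invariance of $\mu$. I would handle well-definedness and the submultiplicative norm estimate simultaneously, since both come out of a single non-negative double integral. Applying Tonelli's theorem to the non-negative integrand $|f(h)||k(h\inv g)|$ lets me compute $\int_G\int_G |f(h)||k(h\inv g)|\,d\mu(h)\,d\mu(g)$ in either order without first knowing it is finite.

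Integrating over $g$ first and using that $g\mapsto h\inv g$ preserves the Haar measure (this is exactly left invariance: for fixed $h$, substituting $g=hg'$ gives $\int_G |k(h\inv g)|\,d\mu(g)=\int_G|k(g')|\,d\mu(g')=\|k\|_1$), the iterated integral collapses to $\int_G |f(h)|\,\|k\|_1\,d\mu(h)=\|f\|_1\|k\|_1<\infty$. Finiteness of this double integral is the crucial input: by Fubini's theorem it forces the inner integral $\int_G f(h)k(h\inv g)\,d\mu(h)$ to converge absolutely for almost every $g$, so $f\ast k$ is defined a.e.\ (set it to $0$ on the exceptional null set), is measurable, and satisfies $\|f\ast k\|_1\leq \|f\|_1\|k\|_1$. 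This is precisely submultiplicativity, so $L^1(G)$ is a normed algebra; completeness in the $L^1$-norm was already recorded above, so it is in fact a Banach algebra.

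For associativity I would expand the inner convolution in $((f\ast k)\ast l)(g)$ and apply Fubini to the now-integrable double integral:
\begin{eqnarray*}
((f\ast k)\ast l)(g) &=& \int_G\int_G f(s)k(s\inv h)\,d\mu(s)\,l(h\inv g)\,d\mu(h)\\
&=& \int_G f(s)\int_G k(s\inv h)l(h\inv g)\,d\mu(h)\,d\mu(s).
\end{eqnarray*}
Substituting $h=sh'$ and invoking left invariance ($d\mu(h)=d\mu(h')$) turns the inner integral into $\int_G k(h')l(h'\inv s\inv g)\,d\mu(h')=(k\ast l)(s\inv g)$, whence the whole expression equals $\int_G f(s)(k\ast l)(s\inv g)\,d\mu(s)=(f\ast(k\ast l))(g)$. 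Bilinearity of $\ast$ over $\c$ is immediate from linearity of the integral.

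The main obstacle I anticipate is not any of these computations but the measurability of $(h,g)\mapsto f(h)k(h\inv g)$ on the product space $G\times G$, which is what legitimizes every use of Tonelli and Fubini, together with the fact that a Haar measure on a general LCG need not be $\si$-finite. For measurability one uses that $(h,g)\mapsto h\inv g$ is continuous, hence Borel measurable, so $k(h\inv g)$ is measurable and its product with $f(h)$ is measurable; a little care is needed regarding the completion of the Borel $\si$-algebra and null sets, exactly the kind of measure-theoretic technicality the section proposes to skip. For the $\si$-finiteness issue I would invoke Corollary \ref{cor:haar}(iv), which confines the support of an integrable function to a $\si$-compact open subgroup, so that the relevant integration takes place over a $\si$-finite region where the Fubini--Tonelli machinery applies. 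Once these two points are secured, the proof is complete.
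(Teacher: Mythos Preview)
Your proposal is correct and follows essentially the same approach as the paper: both use Tonelli--Fubini together with left invariance of $\mu$ to obtain the inequality $\|f\ast k\|_1\leq \|f\|_1\|k\|_1$, and both flag the same two technical obstacles (measurability of $(h,g)\mapsto f(h)k(h\inv g)$ via continuity of $(h,g)\mapsto h\inv g$, and the $\si$-finiteness issue via $\si$-compact supports from Corollary~\ref{cor:haar}(iv)). You actually go further than the paper by writing out the associativity computation, which the paper leaves as an exercise.
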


\begin{remark}
\label{rem:sigmacompact}
In the following proof, we use the Fubini-Tonelli theorem several times, see Theorem 2.37 of \cite{folland-ra}. It requires that the measure space $(G, \mu)$ to be $\si$-finite. But we can only show that the support of all functions in the following integrals are contained in $\si$-compact sets. As $\mu$ is a Haar measure and by using Parts (ii) and (iv) of Corollary \ref{cor:haar}, one easily sees that the Fubini-Tonelli theorem holds in this case too.
\end{remark}
\begin{proof}
Define $\alpha:G\times G \ra G$ by $(h,g)\mapsto (h,h\inv g)$ and $\psi: G\times G\ra \c$ by $\psi(h,g):=f(h)k(h\inv g)$. It is clear that $\psi = (f\times k)\, o\, \alpha$. We know that $(f \times k)$ is a measurable function. On the other hand, $\alpha$ is continuous, and so it is Borel measurable too. Hence $\psi$ is Borel measurable as well. To show $\psi$ is actually measurable we need to show that the preimage of every null set in $G\times G$ under $\alpha$ is a null set again. But this follows if we show that the following equality holds for every measurable function $\ff:G\times G \ra G\times G$:
\[
\int_{G\times G}\ff(h,g) d(\mu\times \mu) (h,g)=\int_{G\times G}\ff(h,h\inv g) d(\mu\times \mu) (h,g).
\]
Since $h$ and $k$ are measurable we can use the Fubini-Tonelli theorem. Now by using the fact that $\mu$ is a Haar measure, one easily can check the above equality.

Let $S(f)$ and $S(k)$ denote the supports of $f$ and $k$, respectively. Then the support of $\psi$ is contained in $S(f)\times S(f) S(k)$ which is a $\si$-compact set. Therefore again using Fubini-Tonelli theorem, we have
\begin{eqnarray*}
\|f\ast k\|_1&\leq& \int_G \int_G |f(h)k(h\inv g)|d\mu(h)d\mu(g)\\
&=& \int_G \int_G |f(h)k(h\inv g)|d\mu(g)d\mu(h)\\
&=& \int_G \int_G |f(h)k(g)|d\mu(g)d\mu(h)\\
&=& \int_G |f(h)|dh \int_G |k(g)|d\mu(g)\\
&=&\|f\|_1 \, \|k\|_1 <\infty.
\end{eqnarray*}
Besides the above inequality  which shows $L^1(G)$ is a Banach algebra, the above computation also shows that the function $\psi(g, . )$ is integrable for almost every $g\in G$ and $f\ast k$ is a measurable function. Other algebraic properties of $L^1(G)$, such as the associativity of the multiplication, follow from straightforward computations which are left as an exercise to the reader.
\end{proof}

A generalization of the above proposition is given in Proposition \ref{prop:younginequality}. In fact, $L^1 (G)$ is an involutive Banach algebra. With the above notations, the involution on $L^1(G)$ is defined by
\[
f\s (g) := \Delta(g\inv )\overline{f(g\inv)}, \quad \forall f\in L^1(G), \, \forall g\in G.
\]

\begin{exercise}
Check all axioms of involution for the above $\ast$-operation.
\end{exercise}

It is tempting to see if $L^1 (G)$ is a \cs-algebra. The answer is ``no'', unless $G$ is the trivial group.

\begin{proposition}
\label{prop:triviall1}
With the above notions, $L^1 (G)$ is a \cs-algebra if and only if $G$ is the trivial group of one element. In this case $L^1 (G) \simeq \c$.
\end{proposition}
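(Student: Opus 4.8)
The plan is to prove the two implications separately. The reverse implication is immediate: if $G=\{e\}$ then the only Haar measure is a multiple of the point mass at $e$, so an element of $L^1(G)$ is just its single value $f(e)\in\c$, the convolution reduces to multiplication in $\c$, and the involution to complex conjugation. Hence $L^1(G)\simeq\c$ as an involutive Banach algebra, and $\c$ is a \cs-algebra.

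For the forward implication I argue by contraposition: assuming $G$ has more than one element, I exhibit a single $f\in L^1(G)$ for which the \cs-identity fails. The first step is to rewrite the relevant norm. Applying Lemma \ref{lem:intinvsub} to the function $h\mapsto\overline{f(h)}f(hg)$ absorbs the modular factor in $f\s(g)=\Delta(g\inv)\overline{f(g\inv)}$ and yields the autocorrelation formula
\[
f\s\ast f(g)=\int_G \overline{f(h)}\,f(hg)\,d\mu(h),\qquad g\in G,
\]
in which $\Delta$ no longer appears explicitly. Since the involution is isometric (again a consequence of Lemma \ref{lem:intinvsub}), submultiplicativity of the convolution gives $\|f\s\ast f\|_1\leq\|f\|_1^2$ for free, so the \cs-identity is equivalent to equality here; my goal is to produce an $f$ forcing \emph{strict} inequality.

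The construction is as follows. I pick $g_0\neq e$ and a symmetric neighborhood $V$ of $e$ with $0<\mu(V)<\infty$ (finite by Corollary \ref{cor:haar}(ii), positive by Corollary \ref{cor:haar}(i)), chosen so that $V$ and $g_0V$ are disjoint, and set $f=\chi_V+i\,\chi_{g_0V}$; a signed real combination such as $\chi_V-\chi_{g_0V}$ serves the same purpose on a unimodular group. Expanding $\overline{f(h)}f(hg)$ into four terms splits $f\s\ast f$ into a real part built from the two diagonal autocorrelations and an imaginary part built from the two cross terms. Left invariance of $\mu$ makes the total mass of each of the four pieces equal to $\mu(V)^2$, so the no-cancellation bound is exactly $\|f\|_1^2=4\mu(V)^2$; strict inequality then amounts to showing that on a set of positive measure the imaginary cross terms survive while the diagonal part is nonzero, that is, that the complex phases genuinely cancel inside $\int_G|f\s\ast f|\,d\mu$.

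Establishing this strict cancellation uniformly over \emph{all} nontrivial $G$ is the main obstacle, and it is geometric. The cross and diagonal contributions are supported near $g_0$, $g_0\inv$, and $e$ respectively, and if $V$ is taken too small these supports become pairwise disjoint and the \cs-identity is in fact satisfied by this particular $f$. Cancellation appears only when the convolution products of the two bumps overlap, which I arrange by choosing $g_0$ inside $VV$, so that $g_0V$ is adjacent to, yet still disjoint from, $V$; the guiding model is the pair of adjacent unit intervals on $\r$, where a direct computation gives $\|f\s\ast f\|_1=8/3<4$. Extra care is needed when the available neighborhoods are open subgroups (as in a profinite group), so that $VV=V$ and no adjacency is possible: there one instead runs the argument in the nontrivial finite quotient $G/V$, recovering the two-point model $\z/2$. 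Organizing these cases together with the non-unimodular bookkeeping is where the bulk of the technical work lies.
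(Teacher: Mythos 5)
Your reverse direction is fine, and your forward strategy is essentially the paper's own: a two-bump witness $f=\chi_V+i\chi_{g_0V}$ (the paper takes $f=\Delta^{-1/2}(\chi_U-i\chi_{sU})$) together with the observation that submultiplicativity reduces everything to producing one strict inequality $\|f\s\ast f\|_1<\|f\|_1^2$. Your diagnosis of the central subtlety is also correct, and it is sharper than the text: if the four autocorrelation terms have essentially disjoint supports, then $|f\s\ast f|$ equals their sum pointwise and this $f$ satisfies the \cs-identity, so strictness can only come from overlap. (The paper's proof asserts precisely such disjointness and then concludes strict inequality; that implication goes the wrong way, so the difficulty you flag is genuine.) The problem is that your way of forcing overlap does not work, and the case analysis you sketch cannot be completed.

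Concretely, there are two gaps. First, your adjacency condition is self-contradictory: for symmetric $V$, $\,V\cap g_0V=\emptyset$ is \emph{equivalent} to $g_0\notin VV^{-1}=VV$, so you cannot also require $g_0\in VV$. What your interval picture actually uses is weaker and must be formulated as: $\mu(V\cap g_0V)=0$ while the diagonal support $VV$ meets the cross support $Vg_0V$ in positive measure (or the two cross supports $Vg_0V$ and $Vg_0\inv V$ meet in positive measure) — roughly $g_0\in V^4\setminus V^2$. Proving that such a pair $(V,g_0)$ exists in every nontrivial locally compact group is the entire content of the forward direction, and you leave it unproved. Second, your fallback for the subgroup case is false: for \emph{any} group, a direct computation shows that a two-point element $f=\alpha\delta_{g_1}+\beta\delta_{g_2}$ with $(g_1\inv g_2)^2\neq e$ satisfies $\|f\s\ast f\|_1=(|\alpha|+|\beta|)^2=\|f\|_1^2$ exactly, so two-point witnesses exist only when the group has an element of order two. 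The group $G=\z/3$ defeats your entire scheme: its only symmetric neighborhoods of the identity are $\{0\}$ and $G$ itself, so every admissible $f$ in your construction is a two-point element and hence satisfies the \cs-identity; there is no element of order two, and no proper nontrivial quotient to pass to (note also that $G/V$ is a group only when $V$ is normal, which you never arrange). A correct argument there needs a witness supported on at least three points, e.g. $f=\delta_0+\delta_1+i\delta_2$, for which $f\s\ast f=3\delta_0+\delta_1+\delta_2$ and $\|f\s\ast f\|_1=5<9=\|f\|_1^2$; nothing in your proposal produces such an element, and the ``bulk of the technical work'' you defer is exactly this missing construction.
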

For a proof of this proposition, we refer the reader to Proposition 2.6.2 of the second edition of  \cite{deitmar-echterhoff} published in 2014\footnote{The proof given for Proposition \ref{prop:triviall1} in the first version of these notes was wrong and therefore it was omitted. I sincerely thank Bat-Od Battseren for pointing me to the mistake.}.

The following theorem and Theorem \ref{thm:l1unital} illustrate how algebraic properties of the Banach algebra $L^1(G)$ reveal some of the algebraic and topological features of $G$.

\begin{theorem} Let $G$ be an LCG. The algebra $L^1(G)$ is commutative if and only if $G$ is abelian.
\end{theorem}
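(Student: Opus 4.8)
The plan is to prove the equivalence by showing each direction separately, with the main work concentrated in the forward direction. The backward direction is the easy one: if $G$ is abelian, then $G$ is in particular unimodular (so $\Delta\equiv 1$), and I would compute $f\ast k(g)$ and $k\ast f(g)$ directly and show they agree. Starting from $f\ast k(g)=\int_G f(h)k(h\inv g)\,d\mu(h)$, I would perform the substitution $h\mapsto gh\inv$ (or equivalently $h\mapsto h\inv$ combined with left-invariance), using commutativity of $G$ to rewrite the integrand so that it matches $k\ast f(g)=\int_G k(h)f(h\inv g)\,d\mu(h)$. Because $G$ is abelian we have $h\inv g=gh\inv$ and the inversion-substitution formula from Lemma \ref{lem:intinvsub} simplifies (with $\Delta\equiv 1$); tracking the variable change carefully should make the two integrals coincide for almost every $g$.

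For the forward direction I would argue by contraposition: assuming $G$ is non-abelian, I would produce two functions $f,k\in L^1(G)$ with $f\ast k\neq k\ast f$. Pick $s,t\in G$ with $st\neq ts$. Using the Hausdorff property together with Lemma \ref{lem:uniformcon}-style separation, I would choose a small symmetric neighborhood $V$ of the unit $e$ small enough that the four translates $sV,\ tV,\ stV,\ tsV$ are suitably separated (in particular so that $stV^2\cap tsV^2=\emptyset$, which is possible since $st\neq ts$ and the topology is Hausdorff; shrinking $V$ via Lemma 1.x(iii) so that $V^2$ is small makes this precise). Then set $f=\chi_{sV}$ and $k=\chi_{tV}$, both of which lie in $L^1(G)$ since $V$ has finite positive measure by Corollary \ref{cor:haar}. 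The convolution $f\ast k$ is supported near $st$ while $k\ast f$ is supported near $ts$, and by the separation of these supports the two cannot be equal as elements of $L^1(G)$.

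The key computation is to evaluate where $f\ast k=\chi_{sV}\ast\chi_{tV}$ is supported: $f\ast k(g)=\int_G \chi_{sV}(h)\chi_{tV}(h\inv g)\,d\mu(h)=\mu(sV\cap g(tV)\inv\cdots)$, which is nonzero only when $g\in sV\cdot tV\subseteq stV^2$ (using that $sV\cdot tV=\{s v_1 t v_2\}$ and estimating via smallness of $V$), and symmetrically $k\ast f$ is supported in $tsV^2$. Since these two support sets are disjoint and each convolution is not identically zero (its integral is positive by Corollary \ref{cor:haar}(i), as the defining integrals are nonnegative and strictly positive on a set of positive measure), we conclude $f\ast k\neq k\ast f$, so $L^1(G)$ is non-commutative.

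The main obstacle I anticipate is the bookkeeping in the forward direction: making the neighborhood $V$ small enough that the supports $stV^2$ and $tsV^2$ genuinely separate requires combining Hausdorffness with the fact that multiplication is continuous (to control $sV\cdot tV$ inside a neighborhood of $st$), and one must be careful that the convolutions are nonzero as $L^1$-functions rather than merely pointwise. The modular function $\Delta$ enters only mildly here since we work with characteristic functions of small sets; the real care is in the choice of $V$ and in verifying the disjointness of the two support regions, which is exactly the point where non-commutativity $st\neq ts$ is used.
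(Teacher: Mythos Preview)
Your proposal is correct and takes a genuinely different route from the paper. The paper argues the forward direction directly rather than by contraposition: assuming $f\ast k=k\ast f$ for all $f,k\in L^1(G)$, it rewrites the difference of convolutions via the substitution $h\mapsto gh$ and Lemma~\ref{lem:intinvsub} to obtain
\[
\int_G k(h)\bigl(f(gh^{-1})\Delta(h^{-1})-f(h^{-1}g)\bigr)\,d\mu(h)=0
\]
for all $k$, hence $f(gh^{-1})\Delta(h^{-1})=f(h^{-1}g)$ for all $f\in C_c(G)$ and $g,h\in G$; setting $g=e$ forces $\Delta\equiv 1$, and then $f(gh^{-1})=f(h^{-1}g)$ for all $f\in C_c(G)$ forces $G$ abelian. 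Your contrapositive construction with characteristic functions of small translates is more geometric and arguably more transparent; the paper's computation has the minor bonus of yielding unimodularity as an explicit intermediate consequence.

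One small point you already anticipate: the inclusion $sV\cdot tV\subseteq stV^2$ is not literally true in a non-abelian group (it would require $t^{-1}Vt\subseteq V$). What you actually need, and what your final paragraph correctly identifies, is just that $sV\cdot tV$ and $tV\cdot sV$ are disjoint; this follows directly from $st\neq ts$, Hausdorffness, and joint continuity of multiplication, by first choosing disjoint open neighborhoods of $st$ and $ts$ and then shrinking $V$ so that each product lands in the corresponding neighborhood. With that adjustment the argument goes through cleanly.
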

\begin{proof}
Let $L^1(G)$ be commutative. Then for every $f, k \in L^1(G)$ and $g\in G$, we have
\begin{eqnarray*}
0&=&f\ast k(g) - k\ast f(g)\\
&=& \int_G f(h)k(h\inv g) d\mu(h)  - \int_G k(h)f(h\inv g)  d\mu(h).
\end{eqnarray*}
By replacing $h$ with $gh$ and then using Lemma \ref{lem:intinvsub} in the first integral, we get
\begin{eqnarray*}
0&=& \int_G  f(gh\inv)k(h) \Delta(h\inv) d\mu(h)  - \int_G k(h)f(h\inv g)  d\mu(h) \\
&=& \int_G k(h) \left(f(gh\inv) \Delta(h\inv) - f(h\inv g)\right)  d\mu(h).
\end{eqnarray*}
Since this is valid for every $h$, we conclude that $f(gh\inv) \Delta(h\inv) - f(h\inv g)=0$ for all $f\in C_c(G)$ and $g,h\in G$. By setting $g=e$, we conclude that $\Delta= 1$. Therefore $f(gh\inv) = f(h\inv g)$ for all $f\in C_c(G)$ and $g,h\in G$. This implies that $G$ is abelian. The converse direction is easy to check.
\end{proof}
Before stating the next theorem, we need to introduce a notion in the Banach algebra $L^1(G)$ which allows us to use some of the advantages of the unit element even when $L^1(G)$ is not unital. This is actually a net which acts as the unit in the limit. This technique is a powerful idea which also appears in the context of \cs-algebras under the name of approximate identity.

Convergence in topological spaces that are not necessarily metric spaces relies on the notion of ``nets''. Since this concept is going to appear frequently in the future, we give the detailed definition here.

\begin{definition} Let $J$ be a set.
\begin{itemize}
\item [(i)] A {\bf partial order on $J$} is a binary relation $\leq$ such that,  for all $a,b, c\in J$, we have
\begin{itemize}
\item [(a)] $a\leq a$, (it is {\bf reflexive}),
\item [(b)] $a\leq b$ and $b\leq a$ implies that $a=b$, (it is {\bf anti-symmetric}),
\item [(c)] $a\leq b$ and $b\leq c$ implies that $a\leq c$, (it is {\bf transitive}).
\end{itemize}
Then the pair $(J,\leq)$ is called a {\bf partially ordered set}.
\item [(ii)] A partially ordered set $(J,\leq)$ is called a {\bf directed set} if for every $a,b\in J$ there is $c\in J$ such that $a\leq c$ and
$b\leq c$, namely for every two elements of $J$ there is an upper bound.
\item [(iii)] Let $(J,\leq)$ and $(I, \sqsubseteq)$ be two directed sets. A map $\ff:J\ra I$ is called {\bf strictly cofinal} if for every $i_0\in I$ there is some $j_0\in J$ such that  $j_0\leq j$ implies $i_0\sqsubseteq \ff(j)$.
\end{itemize}
\end{definition}

For example, the collection of all subsets (resp. open subsets) of a set (resp. topological space) $S$ equipped with the relation $\subseteq$ is a directed set. The same is true if one considers the converse of inclusion, i.e. $\supseteq$, as the relation.

\begin{definition} Let $X$ be a topological space.
\begin{itemize}
\item [(i)] A {\bf net in $X$} is a function $\alpha:J\ra X$, where $(J, \leq)$ is directed set. Often, $\alpha(j)$ is denoted simply by $\alpha_j$ for $j\in J$ and the net $\alpha$ is denoted by $(\alpha_j)_{j\in J}$ or simply by $(\alpha_j)$.
\item [(ii)] With the above notation, the net $(\alpha_j)$ is called {\bf convergent} to a point $x\in X$ if for every neighborhood $V$ of $x$, there is $j_0\in J$ such that $j_0\leq j$ implies $\alpha_j\in V$.
\item [(iii)] A {\bf subnet} of $\alpha$ is a net $\beta:I\ra X$ together with a strictly cofinal map $\ff:I\ra J$ such that $\beta=\alpha \ff$.
\end{itemize}
\end{definition}

Most statements about sequences in metric spaces have generalizations for nets in topological spaces. For example, a map $f:X\ra Y$ between two topological space is continuous if and only if a net convergent to a point, say $x$, is mapped to a net convergent to $f(x)$ by $f$, see Proposition A.6.4 of \cite{deitmar-echterhoff}. For the proof of the following proposition see Proposition A.6.6 of \cite{deitmar-echterhoff}:
\begin{proposition}
\label{prop:subnetcompact}
A topological space $X$ is compact if and only if every net in $X$ has a convergent subnet.
\end{proposition}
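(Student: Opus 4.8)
The plan is to prove both implications through the finite intersection property (FIP) characterization of compactness, which serves as the common tool. For the forward direction I would extract a \emph{cluster point} of the given net and then build an explicit subnet converging to it; for the converse I would feed an arbitrary FIP family of closed sets into the hypothesis and read off a common point of that family.

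First, suppose $X$ is compact and let $(\alpha_j)_{j\in J}$ be a net. For each $j\in J$ set $T_j:=\{\alpha_k; k\geq j\}$. Because $J$ is directed, the family $\{\overline{T_j}; j\in J\}$ has the finite intersection property: given $j_1,\dots,j_n$ pick an upper bound $j$, and then $\overline{T_j}\sub\bigcap_i \overline{T_{j_i}}$. Compactness of $X$ forces $\bigcap_{j}\overline{T_j}\neq\emptyset$; choose $x$ in this intersection. Then $x$ is a cluster point, i.e. for every neighborhood $V$ of $x$ and every $j\in J$ there is $k\geq j$ with $\alpha_k\in V$. To manufacture a subnet I would take $I:=\{(j,V); j\in J,\ V \text{ a neighborhood of } x,\ \alpha_j\in V\}$ ordered by $(j_1,V_1)\sqsubseteq(j_2,V_2)$ iff $j_1\leq j_2$ and $V_2\sub V_1$, and set $\ff(j,V):=j$, $\beta:=\alpha\ff$. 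The cluster-point property makes $I$ directed, the projection $\ff$ is plainly strictly cofinal, and one checks directly that $\beta$ converges to $x$: given a neighborhood $W$ of $x$, any index past $(k,W)$, for a suitable $k$ with $\alpha_k\in W$, has its $V$-component inside $W$, so $\beta$ lands in $W$.

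For the converse, assume every net has a convergent subnet and let $\{F_\lambda\}_{\lambda\in\Lambda}$ be a family of closed sets with the finite intersection property; I would show $\bigcap_\lambda F_\lambda\neq\emptyset$, which gives compactness. Let $J$ be the collection of finite subsets of $\Lambda$ directed by inclusion. For each $S\in J$ the FIP provides a point $\alpha_S\in\bigcap_{\lambda\in S}F_\lambda$, and this defines a net. By hypothesis some subnet $\beta=\alpha\ff$ converges to a point $x$. Fixing $\lambda_0$, strict cofinality of $\ff$ relative to the index $\{\lambda_0\}\in J$ shows that $\lambda_0\in\ff(i)$ eventually, whence $\beta$ is eventually inside $F_{\lambda_0}$; since $F_{\lambda_0}$ is closed and $\beta\to x$, we get $x\in F_{\lambda_0}$. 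As $\lambda_0$ was arbitrary, $x\in\bigcap_\lambda F_\lambda$.

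The routine but genuinely fiddly part, and the place I expect to spend the most care, is the forward direction: verifying that $I$ is directed, that $\ff$ is strictly cofinal in the precise sense of the definition given above, and that $\beta=\alpha\ff$ is a bona fide subnet converging to $x$. All three verifications hinge on the cluster-point property of $x$, so the real content is extracting that cluster point from compactness via the FIP argument, after which the combinatorics of the index set is mere bookkeeping. In the converse direction the only subtlety is the standard fact that a net eventually inside a closed set has all its limits in that set, which holds because any two eventual conditions on a directed set can be met simultaneously.
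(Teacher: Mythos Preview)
Your argument is correct and follows the standard route via the finite intersection property: extract a cluster point from compactness, build the usual neighborhood-indexed subnet converging to it, and conversely thread a net through a FIP family of closed sets and use a limit of a subnet as the common point. All the bookkeeping checks you flag (directedness of $I$, strict cofinality of $\ff$, eventual containment in closed sets) go through exactly as you indicate.

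Note, however, that the paper does not supply its own proof of this proposition: it simply cites Proposition A.6.6 of Deitmar--Echterhoff, so there is no in-text argument to compare against. Your proof is the classical one and would be entirely appropriate here.
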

\begin{definition}
Let $G$ be an LCG with a Haar measure $\mu$. A {\bf Dirac net on $G$} is a net $(f_j)$ in $C_c(G)$ such that
\begin{itemize}
\item $f_j\geq 0$ and $\int_G f_j(g)d\mu(g)=1$ for all $j$
\item the support of $f_j$'s shrink to the unit element of $G$, namely, for every neighborhood $V$ of the unit there is $j_0$ such that $j_0\leq j$ implies $supp(f_j)\subseteq V$,
\item and $f_j$ is symmetric for all $j$.
\end{itemize}
\end{definition}
\begin{remark}
\label{rem:diracnet}
In order to construct a Dirac net on an LCG $G$ equipped with a Haar measure $\mu$, consider the directed set $(\mathfrak{U}, \supseteq)$ of all symmetric compact neighborhoods of the unit with inclusion. For given $U\in \mathfrak{U}$, by Urysohn's Lemma, see Theorem 3.1 of \cite{munkeres}, there exists a continuous function $f_U:G\ra [0,1]$ such that $f_U(e)=1$ and $supp(f_U)\subseteq U$. We replace $f_U$ by $f_U^s$ to get a symmetric function and then divide it by $\int_G f_U(g) d\mu(g)$. We denote the function just obtained again by $f_U$ and it is straightforward to check that $(f_U)$ is a Dirac net on $G$.
\end{remark}
\begin{lemma}
\label{lem:contranslp} Let $G$ be an LCG with a Haar measure $\mu$. For given $1\leq p <\infty$  and $f\in L^p(G)$, the maps $g\mapsto L_g(f)$ and $g\mapsto R_g(f)$ are continuous maps from $G$ into $L^p(G)$.
\end{lemma}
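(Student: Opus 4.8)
The plan is to reduce both assertions to continuity at the unit $e$ and then treat the dense subalgebra $C_c(G)$ separately, using the uniform continuity supplied by Lemma \ref{lem:uniformcon}. Three ingredients drive the argument: that the two translations act boundedly on $L^p(G)$ with norms I can compute exactly, that $C_c(G)$ is dense in $L^p(G)$ (a standard consequence of outer Radon regularity, valid precisely because $p<\infty$), and that $C_c(G)$-functions are uniformly continuous.

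First I would record the norm identities. A change of variables using left invariance of $\mu$ gives $\|L_g(f)\|_p=\|f\|_p$, so left translation is isometric, while Lemma \ref{lem:righttrans} applied to $|f|^p\in L^1(G)$ gives $\|R_g(f)\|_p=\Delta(g\inv)^{1/p}\|f\|_p$. Using the group laws $L_{g_0}L_{g_0\inv g}=L_g$ and $R_{g_0}R_{g_0\inv g}=R_{g_0\cdot(g_0\inv g)}=R_g$, and writing $s:=g_0\inv g$, I obtain
\[
\|L_g(f)-L_{g_0}(f)\|_p=\|L_s(f)-f\|_p,\qquad \|R_g(f)-R_{g_0}(f)\|_p=\Delta(g_0\inv)^{1/p}\,\|R_s(f)-f\|_p .
\]
As $g\ra g_0$ the continuity of the group operations forces $s=g_0\inv g\ra e$, and since $\Delta(g_0\inv)^{1/p}$ is a fixed constant, it suffices to prove $\|L_s(f)-f\|_p\ra 0$ and $\|R_s(f)-f\|_p\ra 0$ as $s\ra e$.

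Next I would settle the case $f=\ff\in C_c(G)$ with support $K$. Fix a compact symmetric neighborhood $V$ of $e$; for $s\in V$ the function $L_s(\ff)-\ff$ is supported in the fixed compact set $VK$, whose measure is finite by Corollary \ref{cor:haar}(ii). By Lemma \ref{lem:uniformcon}, given $\ep>0$ there is a neighborhood $W\sub V$ of $e$ with $|\ff(s\inv h)-\ff(h)|<\ep$ for all $h\in G$ and all $s\in W$, whence
\[
\|L_s(\ff)-\ff\|_p^p=\int_{VK}|\ff(s\inv h)-\ff(h)|^p\,d\mu(h)\leq \ep^p\,\mu(VK)\ra 0 .
\]
The right-translation case is handled identically, with the support of $R_s(\ff)-\ff$ contained in the fixed compact set $KV$.

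Finally, for general $f\in L^p(G)$ I would run the standard $\ep/3$ density argument: choose $\ff\in C_c(G)$ with $\|f-\ff\|_p<\ep$ and split
\[
\|L_s(f)-f\|_p\leq \|L_s(f-\ff)\|_p+\|L_s(\ff)-\ff\|_p+\|\ff-f\|_p .
\]
The two outer terms are each $<\ep$ by the isometry of $L_s$, and the middle term tends to $0$ by the previous step, giving $\limsup_{s\ra e}\|L_s(f)-f\|_p\leq 2\ep$; since $\ep$ is arbitrary the limit is $0$. For right translation the only change is that $\|R_s(f-\ff)\|_p=\Delta(s\inv)^{1/p}\|f-\ff\|_p$, which stays bounded near $e$ because $\Delta$ is continuous with $\Delta(e)=1$. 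I expect the main obstacle to be the $C_c$ estimate: one must pin the support of $L_s(\ff)-\ff$ inside a single compact set independent of $s$ so that uniform continuity converts into the $L^p$-bound, and here the hypothesis $p<\infty$ is essential, since both the density of $C_c(G)$ and this estimate fail for $p=\infty$.
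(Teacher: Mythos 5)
Your proposal is correct and takes essentially the same route as the paper's proof: first the case $f\in C_c(G)$ via the uniform continuity of Lemma \ref{lem:uniformcon} together with a fixed compact set ($VK$, resp. $KV$) containing all the relevant supports, then an $\ep/3$ density argument using Proposition \ref{prop:ccxdense}, the isometry of $L_g$, and Lemma \ref{lem:righttrans} plus the continuity of $\Delta$ for $R_g$. The only difference is that you make explicit the reduction from continuity at an arbitrary point $g_0$ to continuity at the unit $e$ (via $L_{g_0}L_{g_0\inv g}=L_g$ and its right-translation analogue), a step the paper leaves implicit.
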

\begin{proof}
We first prove this for the case that $f\in C_c(G)$. Let $K$ be the support of $f$ and let $U_0$ be a compact symmetric neighborhood of $e$. Then the support of $L_g(f)$ is contained in $U_0K$ for all $g\in U_0$. Let $\ep>0$. By Lemma \ref{lem:uniformcon}, there exists a neighborhood $U$ of $e$ such that $U\subseteq U_0$ and $\|L_g(f) - f\|_{\sup} < \frac{\ep}{\mu(U_0K)^{1/p}}$ for all $g\in U$. Then we have
\[
\|L_g(f)-f\|_p=\left( \int_G |f(g\inv h ) -f(h)|^p d\mu(h)\right)^{1/p}<\ep.
\]
For general $f\in L^p(G)$, choose $k\in C_c (G)$ such that $\|f-k\|_p<\ep/3$. Also, choose a neighborhood $U$ of $e$ such that $\|k-L_g(k)\|_p<\ep/3$ for all $g\in U$. Then for all $g\in U$, we have
\[
\|f-L_g(f)\|_p\leq \|f-k\|_p +\|k-L_g(k)\|_p + \|L_g(k)-L_g(f)\| <\ep.
\]
In the last step, we used the fact that $\|L_g(k)-L_g(f)\|_p=\|f-k\|_p$ which follows from the left invariance of the Haar measure. The proof for $R_g$, instead of $L_g$, is similar to the above argument except in the very last step. For the last step, one can use Lemma \ref{lem:righttrans} to show that $\|R_g(f) - R_g(k)\|_p = (\Delta(g\inv))^{1/p} \|f-k\|_p$. Since $\Delta$ is continuous and $g$ varies in $U\subseteq U_0$, where $U_0$ is compact, one can easily find a similar estimation to show that $\|f-L_g(f)\|_p\ra 0$ when $U$ shrinks to $e$.
\end{proof}
In the above lemma, we used the following proposition. see Proposition 7.9 of \cite{folland-ra}.
\begin{proposition}
\label{prop:ccxdense} If $\mu$ is a Radon measure on a locally compact and Hausdorff topological space $X$, then $C_c(X)$ is dense in the Banach space $L^p(X,\mu)$ for all $1\leq p<\infty$.
\end{proposition}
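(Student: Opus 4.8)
The plan is to approximate in three successive reductions, ending with an application of Urysohn's lemma. First I would invoke the standard measure-theoretic fact that simple functions of the form $\sum_{k=1}^n c_k \chi_{E_k}$ with each $\mu(E_k)<\infty$ are dense in $L^p(X,\mu)$: an arbitrary $f\in L^p$ is split into real and imaginary, positive and negative parts, each nonnegative part is the increasing pointwise limit of a sequence of simple functions $s_n$, and since $0\le s_n\le f$ the integrands $|f-s_n|^p$ are dominated by $f^p\in L^1$, so the dominated convergence theorem upgrades the pointwise convergence to $L^p$ convergence. By linearity it then suffices to approximate a single characteristic function $\chi_E$ with $\mu(E)<\infty$ by an element of $C_c(X)$.

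Next I would exploit the regularity built into the definition of a Radon measure. Fix $\ep>0$. Since $\mu(E)<\infty$, outer regularity (condition (i) of the definition) supplies an open set $U\supseteq E$ with $\mu(U\setminus E)$ arbitrarily small, and inner regularity (condition (ii), applicable precisely because $\mu(E)<\infty$) supplies a compact set $K\sub E$ with $\mu(E\setminus K)$ arbitrarily small. Choosing both smaller than $\ep^p/2$ yields a sandwich $K\sub E\sub U$ with $\mu(U\setminus K)<\ep^p$.

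The third step is to separate $K$ from the complement of $U$ by a compactly supported function. Here local compactness is essential: since $K$ is compact and contained in the open set $U$, one first interposes an open set $V$ of compact closure with $K\sub V\sub \overline V\sub U$, and then Urysohn's lemma in its locally compact Hausdorff form (cf. Theorem 3.1 of \cite{munkeres}) produces $g\in C_c(X)$ with $0\le g\le 1$, $g\equiv 1$ on $K$, and $\operatorname{supp}(g)\sub \overline V\sub U$. With this $g$ the integrand $|\chi_E-g|$ vanishes on $K$ (both equal $1$) and off $U$ (both equal $0$), and is bounded by $1$ on $U\setminus K$, so
\[
\|\chi_E-g\|_p^p=\int_X |\chi_E-g|^p\,d\mu\le \mu(U\setminus K)<\ep^p,
\]
whence $\|\chi_E-g\|_p<\ep$, completing the chain of approximations.

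The routine part is the first reduction, which is pure measure theory and uses nothing about the topology. The genuine obstacle is the third step: one must ensure the separating function has \emph{compact} support, so that it truly lands in $C_c(X)$ and not merely in $C_0(X)$ or $C_b(X)$. This is exactly the point at which local compactness of $X$ enters, permitting the relatively compact open set $V$ to be inserted between $K$ and $U$ before Urysohn's lemma is applied.
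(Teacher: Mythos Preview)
Your proof is correct and is the standard argument. The paper does not actually prove this proposition; it merely cites Proposition 7.9 of Folland's \emph{Real Analysis}, whose proof proceeds exactly along the lines you outline: reduce to simple functions, then to a single $\chi_E$ with $\mu(E)<\infty$, then sandwich $K\sub E\sub U$ via regularity and separate with a Urysohn function supported in $U$.
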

\begin{lemma}
\label{lem:diracnet} Let $G$ be an LCG with a Haar measure $\mu$. Let $(f_j)$ be a Dirac net on $G$ and let $f\in L^1(G)$. Then the nets $(f_j\ast f)$ and $(f\ast f_j)$ converge to $f$ in $L^1(G)$. Moreover, if $f$ is continuous, then both the convolution products exist, and $(f_j\ast f)(g)$ and $(f\ast f_j)(g)$ converge to $f(g)$ for all $g\in G$.
\end{lemma}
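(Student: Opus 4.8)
The plan is to show that a Dirac net behaves as a two-sided approximate unit under convolution, reducing everything to the continuity of translation in $L^1(G)$ established in Lemma \ref{lem:contranslp}. First I would treat the net $(f_j \ast f)$. Using that $\int_G f_j(h)\, d\mu(h) = 1$, I write
\[
f_j \ast f(g) - f(g) = \int_G f_j(h)\,[f(h\inv g) - f(g)]\, d\mu(h),
\]
recognizing the bracket as $L_h(f)(g) - f(g)$. Taking the $L^1$-norm in $g$, moving the absolute value inside the inner integral, and interchanging the order of integration by Tonelli, I obtain
\[
\|f_j \ast f - f\|_1 \leq \int_G f_j(h)\, \|L_h(f) - f\|_1\, d\mu(h).
\]
Since $g \mapsto L_g(f)$ is continuous at $e$ by Lemma \ref{lem:contranslp} and $L_e(f) = f$, given $\ep > 0$ there is a neighborhood $V$ of $e$ with $\|L_h(f) - f\|_1 < \ep$ for $h \in V$; choosing $j_0$ so that $supp(f_j) \subseteq V$ whenever $j \geq j_0$ pushes the right-hand side below $\ep$, using $\int_G f_j = 1$ once more.

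For the net $(f \ast f_j)$ the only extra ingredient is a change of variables. In $f \ast f_j(g) = \int_G f(h) f_j(h\inv g)\, d\mu(h)$ I substitute $h = gk$, so that $h\inv g = k\inv$ and, by left-invariance of $\mu$, $d\mu(h) = d\mu(k)$; this yields $f \ast f_j(g) = \int_G f(gk)\, f_j(k\inv)\, d\mu(k)$. Here the symmetry $f_j(k\inv) = f_j(k)$ from the definition of a Dirac net rewrites this as $\int_G R_k(f)(g)\, f_j(k)\, d\mu(k)$ and simultaneously guarantees $\int_G f_j(k\inv)\, d\mu(k) = 1$, which is precisely what keeps the normalization intact after the substitution; without symmetry the modular function of Lemma \ref{lem:intinvsub} would intervene. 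The identical estimate then gives $\|f \ast f_j - f\|_1 \leq \int_G f_j(k)\, \|R_k(f) - f\|_1\, d\mu(k)$, and the continuity of $g \mapsto R_g(f)$ from Lemma \ref{lem:contranslp} together with the shrinking supports closes the case.

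For the final assertion, suppose $f$ is continuous. Both convolution integrals then exist at every point, because $f_j$ has compact support $K$, so each integrand is supported on a compact set (namely $K$, respectively $K\inv g$) on which the continuous function $f$ is bounded. To get $f_j \ast f(g) \ra f(g)$ I fix $g$ and bound
\[
|f_j \ast f(g) - f(g)| \leq \int_G f_j(h)\, |f(h\inv g) - f(g)|\, d\mu(h);
\]
continuity of $h \mapsto f(h\inv g)$ at $h = e$ furnishes a neighborhood $V$ of $e$ on which the integrand factor stays below $\ep$, and the shrinking supports finish it. The statement $f \ast f_j(g) \ra f(g)$ is identical after the same substitution, which turns it into $\int_G [f(gk) - f(g)]\, f_j(k)\, d\mu(k)$.

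The only genuinely delicate point is the interchange of integration used to pass from the double integral to $\int_G f_j(h)\, \|L_h(f) - f\|_1\, d\mu(h)$: because $G$ need not be $\si$-finite, I must first confine everything to a $\si$-compact set by combining the compact support of $f_j$ with Corollary \ref{cor:haar}, exactly as discussed in Remark \ref{rem:sigmacompact}, before invoking Tonelli. Everything else is a routine application of the translation-continuity lemma.
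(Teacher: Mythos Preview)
Your proof is correct and follows essentially the same route as the paper's: write $f_j\ast f - f$ as an integral against $L_h(f)-f$, swap the order of integration, and invoke the continuity of translation from Lemma~\ref{lem:contranslp}. Your treatment of the $f\ast f_j$ case---via the substitution $h=gk$ and the symmetry $f_j(k\inv)=f_j(k)$ to reduce to right translation---is more explicit than the paper's, which simply says ``similarly''; and your remark about the Fubini--Tonelli subtlety makes explicit what the paper relegates to Remark~\ref{rem:sigmacompact}.
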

\begin{proof} One computes
\[
\|f_j\ast f -f \|_1 = \int_G \left| \int_G f_j(h)f(h\inv g )d\mu(h) - f(g) \right| d\mu(g).
\]
Using the fact that $f(g)= f(g)\int_G f_j (h)d\mu(h)=\int_G f(g)f_j (h)d\mu(h)$, we have
\begin{eqnarray*}
\|f_j\ast f -f \|_1 &=& \int_G \left| \int_G f_j(h)\left( f(h\inv g ) - f(g)\right) d\mu(h) \right| d\mu(g)\\
&\leq & \int_G  \int_G f_j(h)\left| f(h\inv g ) - f(g) \right| d\mu(g) d\mu(h)\\
&=& \int_G  f_j(h) \| L_h(f)(g ) - f(g) \|_1  d\mu(h)\\
&=& \int_{supp(f_j)}  f_j(h) \| L_h(f)(g ) - f(g) \|_1  d\mu(h).
\end{eqnarray*}
By Lemma \ref{lem:contranslp}, this integral goes to zero when $j$ tends to infinity. Similarly, one computes $\|f\ast f_j -f \|_1\ra 0$ when $j\ra \infty$.

Assume $f$ is continuous and fix $g\in G$. For given $\ep>0$, by continuity of $f$, there exists a neighborhood $U$ of $e$ such that $gh \in U$ implies that $|f(h\inv)-f(g)|<\ep$, see Lemma \ref{lem:uniformcon}. By the definition of a Dirac net, there is $j_0$ such that $j_0 \leq j$ implies that $supp(f_j)\subseteq U$. Therefore for $j_0 \leq j$, we have

\begin{eqnarray*}
|f_j\ast f -f |&\leq & \int_G f_j(h) \left| f(h\inv g) -f(g)  \right| d\mu (h) \\
&=& \int_G f_j(gh) \left| f(h\inv ) - f(g)  \right| d\mu (h)< \ep. \\
\end{eqnarray*}
\end{proof}

Now, we describe $L^1(G)$ when $G$ is a discrete group.

\begin{remark}
\label{rem:uncountable sum}
We need to explain the meaning of an uncountable summation, say $\sum_{s\in S} a_s$, where $S$ is an uncountable set and all terms of this summation belong to a (complex or real) topological vector space $B$. Let $(\mathfrak{F},\subseteq)$ be the directed set of all finite subsets of $S$ with inclusion. For every $F\in \mathfrak{F}$, define $x_F=\sum_{s\in F} a_s$. Then $(x_F)$ is a net in $B$. We say that the summation $\sum_{s\in S} a_s$ is {\bf convergent} if the net $(x_F)$ is convergent. The {\bf absolutely convergent summations} are defined similar to the ordinary absolutely convergent series.
\end{remark}

Assume $(X,\mu)$ is a measure space. When $\mu$ is the counting measure, we use $\ell^p(X)$ in lieu of $L^p(X,\mu)$ for all $1\leq p\leq \infty$ and drop $\mu$ from the notation.  The Banach space $\ell^1(X)$ is the set of all absolutely convergent summations indexed by $X$. In particular, when $G$ is a discrete group, we have
\[
\ell^1(G)=\{ \sum_{g\in G}\la_g ; \sum_{g\in G} |\la_g| <\infty\}.
\]

By describing $\ell^1(G)$ as above, one easily sees that $\c G$, the group algebra of $G$, can be considered as a dense subalgebra of $\ell^1(G)$. We recall the definition of $\c G$ now. Consider the complex vector space generated by elements of $G$. This vector space becomes an algebra called the {\bf group algebra of $G$} if we extend the group multiplication linearly to all its elements.  The explicit formula for the multiplication of this algebra is as follows:
\[
\left(  \sum_{i=1}^n a_i g_i \right) \left( \sum_{j=1}^m b_j g_j \right)= \sum_{i=1}^n \sum_{j=1}^m  a_ib_j g_ig_j,
\]
where $a_i , b_j\in \c$ and $g_i, g_j\in G$ for all $i, j$.

To embed $\c G$ into $\ell^1(G)$, we send every element $g\in G$ to the characteristic function of the singleton $\{ g\}$, which we denote it by $\delta_g$, and extend this map linearly to whole $\c G$. Clearly, it is a linear injection. We only have to show that it is actually an algebraic homomorphism. We check this only for the product of two arbitrary elements of the basis of $\c G$. For all $g_1, g_2, h \in G$, we have
\begin{eqnarray*}
\d_{g_1} \ast \d_{g_2}(h)&=& \sum_{s\in G} \d_{g_1} (s) \d_{g_2}(s\inv h) \\
&=&  \d_{g_2}(g_1\inv h) \\
&=& \left\{ \begin{array}{ll} 1 & g_2=g_1\inv h \\ 0 & \text{otherwise}
\end{array} \right.\\
&=& \d_{g_1 g_2} (h).
\end{eqnarray*}
In fact, the image of $\c G $ in $\ell^1(G)$ is exactly $C_c(G)$ with convolution product. The above observation leads us to two easy, but important, conclusions; first, $\c G$ is a dense subalgebra of $\ell^1(G)$ and secondly, $\d_e$ is the unit element of $\ell^1(G)$. However, for a general LCG $G$, $L^1(G)$ is unital only if $G$ is discrete.
\begin{theorem}
\label{thm:l1unital}
Let $G$ be an LCG with a Haar measure $\mu$. The Banach algebra $L^1(G)$ is unital if and only if $G$ is discrete.
\end{theorem}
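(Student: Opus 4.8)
The plan is to prove the two implications separately, disposing of the easy one first. Suppose $G$ is discrete. Then by Proposition~\ref{prop:gdiscrete} the Haar measure $\mu$ is a positive multiple of the counting measure, so $L^1(G)=\ell^1(G)$, and the computation carried out just before the theorem shows that (a suitable positive multiple of) $\delta_e$ is a two--sided unit for the convolution product. Hence $L^1(G)$ is unital, and all the real work is in the converse.

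For the converse, suppose $L^1(G)$ has a unit $u$; the goal is to show that $G$ must be discrete. The key device is a Dirac net $(f_j)$, which exists by Remark~\ref{rem:diracnet}. Since $u$ is the unit we have $u\ast f_j=f_j$ for every $j$; on the other hand Lemma~\ref{lem:diracnet}, applied with $f=u$, gives $u\ast f_j\to u$ in $L^1(G)$. Combining these yields $f_j\to u$ in $L^1(G)$. Now fix an open neighborhood $W$ of $e$. By the defining property of a Dirac net there is $j_0$ with $\mathrm{supp}(f_j)\subseteq W$ whenever $j_0\le j$, so $f_j$ vanishes off $W$ and
\[
\int_{G\setminus W}|u|\,d\mu=\int_{G\setminus W}|u-f_j|\,d\mu\le \|u-f_j\|_1\xrightarrow[j]{}0 .
\]
Thus $\int_{G\setminus W}|u|\,d\mu=0$ for \emph{every} open neighborhood $W$ of $e$.

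It remains to upgrade this ``concentration in every neighborhood of $e$'' to ``concentration at $e$''. Consider the finite Borel measure $\nu$ defined by $\nu(E)=\int_E|u|\,d\mu$; being absolutely continuous with respect to the Radon measure $\mu$, it is itself Radon and in particular inner regular on the open set $G\setminus\{e\}$ (see \cite{folland-ra}). The previous paragraph says $\nu(G\setminus W)=0$ for all open $W\ni e$. If $K\subseteq G\setminus\{e\}$ is compact, then, using that $G$ is Hausdorff, each point of $K$ can be separated from $e$ by disjoint open sets; covering $K$ by finitely many such neighborhoods and intersecting the corresponding neighborhoods of $e$ produces an open $W\ni e$ with $K\subseteq G\setminus W$, whence $\nu(K)=0$. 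By inner regularity, $\nu(G\setminus\{e\})=0$, that is $\int_{G\setminus\{e\}}|u|\,d\mu=0$.

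Finally I read off the conclusion. If $G$ were not discrete, Proposition~\ref{prop:gdiscrete} would give $\mu(\{e\})=0$, so $\int_{\{e\}}|u|\,d\mu=0$ as well, and therefore $\|u\|_1=\int_{G\setminus\{e\}}|u|\,d\mu+\int_{\{e\}}|u|\,d\mu=0$. This forces $u=0$, which is impossible, since $L^1(G)\neq\{0\}$: by local compactness $e$ has a relatively compact open neighborhood, which by Corollary~\ref{cor:haar} has finite positive measure, so its characteristic function is a nonzero element of $L^1(G)$. Hence $G$ must be discrete. I expect the only delicate point to be the inner-regularity step in the third paragraph: the intersection of all neighborhoods of $e$ is $\{e\}$, but this is an uncountable intersection, so one genuinely needs the regularity of the measure $|u|\,d\mu$—not merely pointwise reasoning—to pass from ``null off every neighborhood'' to ``null off the point''. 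Everything else is routine once the Dirac net is brought into play.
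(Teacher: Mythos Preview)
Your proof is correct and follows essentially the same route as the paper: use a Dirac net $(f_j)$, observe that $f_j\to u$ in $L^1$ because $u$ is a unit and $u\ast f_j\to u$ by Lemma~\ref{lem:diracnet}, and conclude that $u$ is supported at $\{e\}$ modulo a $\mu$-null set, forcing $\mu(\{e\})>0$. The paper handles the concentration step in a single sentence (``$\operatorname{supp}(k)=\{e\}\cup E$ with $\mu(E)=0$''), whereas you spell it out carefully via inner regularity of $|u|\,d\mu$; your added rigor is appropriate, and the justification can also be grounded in Corollary~\ref{cor:haar}(iv), which confines the support of $u$ to a $\sigma$-compact set.
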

\begin{proof}
Let $k$ be the unit of $L^1(G)$ and let $(f_j)$ be a Dirac net on $G$. For given $\ep>0$, by Lemma \ref{lem:diracnet}, there is $j_0$ such that $j_0<j$ implies $\|f_j \ast k- k\|_1 <\ep$ or equivalently $\|f_j - k\|_1 <\ep$. Since $\ep$ is arbitrary and the support of $f_j$ shrinks as $j\ra \infty$, $supp(k)=\{e\} \cup E$, where $\mu(E)=0$. But $k\neq 0$, so $\mu(\{e\})>0$. This implies that $G$ is discrete by Proposition \ref{prop:gdiscrete}. The converse follows from the above discussion.
\end{proof}

We conclude this section by introducing the Fourier transform briefly. Let $G$ be a locally compact abelian group with a Haar measure $\mu$. A {\bf character on $G$} is a continuous group homomorphism from $G$ into the group $\t$ of complex numbers of absolute value $1$. The set of all characters of $G$ is denoted by $\hat{G}$ and it has a natural group structure as follows:
\begin{eqnarray*}
(\rho_1 \rho_2)(g)&:=&\rho_1(g)\rho_2(g),\quad \forall \rho_1, \rho_2\in \hat{G}, \quad \forall g\in G\\
(\rho\inv)(g)&:=&    (\rho(g))\inv,     \quad \forall \rho\in \hat{G}, \quad \forall g\in G.
\end{eqnarray*}

The group $\hat{G}$ with compact open topology is an LCG. The structure of this group and its relation to the structure of $G$ is discussed in harmonic analysis. The key role is played by a mapping $L^1(G) \ra C_0(\hat{G})$ named the {\bf Fourier transform} defined as follows:
\begin{eqnarray*}
f&\mapsto & \hat{f}\\
\hat{f}(\rho)&:=& \int_G f(g)\overline{\rho(g)} \d\mu(g).
\end{eqnarray*}
The idea is to represent elements of the rather complicated Banach algebra $L^1(G)$ as elements of the more simple Banach algebra $C_0(\hat{G})$. This technique motivates a number of ideas in representation theory of groups as well as the theory of \cs-algebras, for example, see the Gelfand transform in Section \ref{sec:Gelfandtrans}.

\section{The spectrum of elements of a Banach algebra}
\label{sec:spectrum}

\begin{definition} Let $A$ be a unital complex algebra. For $a\in A$, the {\bf spectrum of $a$ in $A$} is defined and denoted as follows:
\[
\sigma_A(a):=\{ \lambda\in \mathbb{C} ; a-\lambda 1 \notin A^\times \}.
\]
The complement of $\sigma_A(a)$ in $\c$ is called the {\bf resolvent of $a$ in $A$} and is denoted by $Res_A(a)$. If $A$ is non-unital, the spectrum of an element $a\in A$ is defined by
\[
\si_A(a):=\si_{A_1}((a,0))\cup \{0\}.
\]
\end{definition}
To simplify the notation, we denote $\la 1\in A$ by $\la$ for all $\la\in\c$. Also, when there is no risk of confusion, we drop $A$ from the notation of the spectrum and the resolvent of an element $a\in A$ and shortly write $\si(a)$ and $Res(a)$.

\begin{example}
\begin{itemize}
\item[(i)] Let $a\in M_n(\c)$. Then the spectrum of $a$ in $M_n(\c)$ is the set of all eigenvalues of $a$.
\item[(ii)] Let $X$ be a compact topological space and let $f\in C(X)$. Then $\si_{C(X)}(f)= f(X)$.
\end{itemize}
\end{example}
\begin{exercise}
Verify the statements in the above example.
\end{exercise}
\begin{proposition}
\label{prop:speccomm}
If $A$ is a unital algebra and $a,b \in A$, then
\[
\si_A(ab)\cup \{0\}=\si_A(ba)\cup \{0\}.
\]
\end{proposition}
\begin{proof}
Let $0\neq\la\in Res_A(ab)$ and set $u:=(ab-\la)\inv$. Hence $abu=uab=1+\la u$, and from this we obtain
\begin{eqnarray*}
(ba-\la)(bua-1)&=&\la\\
(bua-1)(ba-\la)&=&\la.
\end{eqnarray*}
Thus $ba-\la$ is invertible, and so $\la\in Res_A(ba)$.
\end{proof}
\begin{definition}
Let $A$ be a Banach algebra. For every $a\in A$, the {\bf spectral radius of $a$ in $A$} is defined and denoted as follows:
\[
r_A(a):=\sup\{|\la|; \la\in\si_A(a)\}.
\]
\end{definition}

To simplify the notation, sometimes the spectral radius of $a$ in $A$ is denoted simply by $r(a)$. Later, we shall show that the spectrum of an element $a$ of a Banach algebra is not empty and consequently $r(a)\geq 0$. In the following proposition, we find an upper bound for $r(a)$.
\begin{proposition}
\label{prop:spradius1}
Let $A$ be a Banach algebra and let $a\in A$. Then $r(a)\leq \|a\|$ and $\si_A(a)$ is a compact subset of $\c$.
\end{proposition}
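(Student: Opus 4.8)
The plan is first to reduce to the unital case. If $A$ is non-unital, then by definition $\si_A(a) = \si_{A_1}((a,0)) \cup \{0\}$; since adjoining the single point $0$ destroys neither boundedness nor compactness, and since $\|(a,0)\| = \|a\|$ in $A_1$, it suffices to prove both assertions for $(a,0)$ inside the unital algebra $A_1$. So from now on I assume $A$ is unital.

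For the inequality $r(a) \le \|a\|$, I would show that every $\la \in \c$ with $|\la| > \|a\|$ lies in $Res_A(a)$. Factor $a - \la = -\la(1 - \la\inv a)$. Because $\|1 - (1 - \la\inv a)\| = \|\la\inv a\| = \|a\|/|\la| < 1$, the earlier proposition guaranteeing invertibility of any element within distance $1$ of the unit (via its Neumann series) applies to $1 - \la\inv a$, so this element is invertible. Multiplying by the nonzero scalar $-\la$ then shows $a - \la$ is invertible, hence $\la \notin \si_A(a)$. Thus $\si_A(a) \sub \{\la \in \c : |\la| \le \|a\|\}$, which immediately gives $r(a) \le \|a\|$ and exhibits $\si_A(a)$ as a bounded set.

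For compactness it remains to show $\si_A(a)$ is closed, equivalently that $Res_A(a)$ is open. I would consider the continuous map $\ff : \c \ra A$, $\la \mapsto a - \la$ (it is even Lipschitz). By Proposition \ref{prop:invelements} the group $A^\times$ of invertible elements is open in $A$, so $Res_A(a) = \ff\inv(A^\times)$ is open as the preimage of an open set under a continuous map. Hence $\si_A(a)$ is closed, and being a closed and bounded subset of $\c$ it is compact by the Heine-Borel theorem.

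There is no genuine obstacle here, since every ingredient is already in place: the two points needing a little care are the reduction to $A_1$ (checking that the extra point $0$ does not affect the conclusions and that the norm is preserved) and the correct application of the Neumann-series estimate after extracting the scalar $\la$.
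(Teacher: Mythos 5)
Your proposal is correct and follows essentially the same route as the paper's own proof: the Neumann-series criterion (invertibility of elements within distance $1$ of the unit) applied to $1-\la\inv a$ gives boundedness of the spectrum by $\|a\|$, and closedness follows from continuity of $\la\mapsto a-\la$ together with openness of $A^\times$. The only difference is that you spell out the reduction to the unital case, which the paper dispatches with the single sentence ``we can assume that $A$ is unital''; your added care there is harmless and fills in a detail the paper leaves implicit.
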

\begin{proof}
We can assume that $A$ is unital. Let $\la$  be an element of $\c$ such that $\|x\|< |\la|$. Then
\[
\|1-(1-x/\la )\|=\|x/\la\|<1.
\]
Thus $1-x/\la$ is invertible and so $x-\la$ is invertible, namely, $\la\in Res_A(x)$. This shows that $r(a)\leq \|a\|$, and so $\si_A(a)$ is a bounded subset of $\c$. Next, we note that the map $\ff: \c\ra A$ defined by $\la\mapsto a-\la$ is continuous. Thus the set $Res_A(a)=\ff\inv(G(A))$ is open in $\c$, and so $\si_A(a)$ is closed. Therefore $\si_A(a)$ is compact.
\end{proof}
\begin{exercise}
\label{ex:3}
Let $A$ be an algebra and $a\in A$. If $\la\in \si(a)$, then show that $\la^n\in\si(a^n)$ for all $n\in\n$.
\end{exercise}

In the rest of this chapter, we use some facts from the theory of holomorphic (analytic) vector valued functions of one complex variable. This theory is similar to the elementary theory of complex functions and the interested reader can find more details about it in Section III.4 of \cite{ds1}.
\begin{proposition}
\label{prop:spradius2}
Let $A$ be a Banach algebra. For every $a\in A$, the sequence $\|a^n\|^{1/n}$ converges to $r(a)$.
\end{proposition}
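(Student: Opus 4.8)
The plan is to prove the equality by sandwiching the limit between the two bounds $r(a) \le \liminf_n \|a^n\|^{1/n}$ and $\limsup_n \|a^n\|^{1/n} \le r(a)$; together these force the sequence to converge to $r(a)$. First I would reduce to the unital case by passing to the unitization $A_1$, exactly as in Proposition \ref{prop:spradius1}, since neither the spectrum nor the norms $\|a^n\|$ are affected. Note also that $r(a) \le \|a\| < \infty$ by Proposition \ref{prop:spradius1}, so the quantity $r(a)$ is finite and the comparison makes sense.

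The lower bound is the elementary direction. For $\lambda \in \si(a)$, Exercise \ref{ex:3} gives $\lambda^n \in \si(a^n)$, whence $|\lambda|^n \le r(a^n) \le \|a^n\|$ by applying Proposition \ref{prop:spradius1} to $a^n$. Taking $n$th roots and then the supremum over $\lambda \in \si(a)$ yields $r(a) \le \|a^n\|^{1/n}$ for every $n$, and in particular $r(a) \le \liminf_n \|a^n\|^{1/n}$.

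The harder direction rests on the holomorphy of the resolvent. For $|\lambda| > \|a\|$ the Neumann (geometric) series gives $(\lambda - a)\inv = \sum_{n=0}^\infty a^n \lambda^{-n-1}$, convergent in norm. For any $\varphi \in A\s$ the scalar function $\lambda \mapsto \varphi((\lambda - a)\inv)$ is holomorphic on the resolvent set, which contains the region $|\lambda| > r(a)$; on $|\lambda| > \|a\|$ it equals the Laurent series $\sum_n \varphi(a^n)\lambda^{-n-1}$, so by uniqueness of Laurent expansions this series in fact converges for all $|\lambda| > r(a)$. Fixing such a $\lambda$, the terms $\varphi(a^n)\lambda^{-n}$ tend to $0$, so the sequence $\big(\varphi(a^n/\lambda^n)\big)_n$ is bounded for every $\varphi \in A\s$. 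Viewing each $a^n/\lambda^n$ as an element of the double dual via the canonical isometric embedding $A \hookrightarrow A\s{}\s$ (Theorem \ref{thm:hahnbanach}), the uniform boundedness theorem (Theorem \ref{thm:uniboundedness}) supplies a constant $M$ with $\|a^n\| \le M|\lambda|^n$ for all $n$, hence $\|a^n\|^{1/n} \le M^{1/n}|\lambda| \ra |\lambda|$. Thus $\limsup_n \|a^n\|^{1/n} \le |\lambda|$ for every $|\lambda| > r(a)$, and taking the infimum over such $\lambda$ gives $\limsup_n \|a^n\|^{1/n} \le r(a)$.

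Combining the two inequalities shows the limit exists and equals $r(a)$. The hard part is the upper bound, and specifically the step of promoting convergence: the operator-valued series is only visibly convergent for $|\lambda| > \|a\|$, yet I must control $\|a^n\|$ all the way down to $|\lambda| = r(a)$. This is exactly where vector-valued holomorphic function theory enters, and my route handles it by reducing to scalar Laurent series (testing against every $\varphi \in A\s$) and then transferring the sharper annulus of convergence back to norm estimates through the uniform boundedness principle. An alternative would be to invoke a vector-valued Cauchy--Hadamard formula directly, identifying the radius of convergence of $\sum_n a^n w^{n+1}$ as $1/\limsup_n\|a^n\|^{1/n}$ and noting it is at least $1/r(a)$; I prefer the functional-plus-uniform-boundedness route since Theorem \ref{thm:uniboundedness} is already available.
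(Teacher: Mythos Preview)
Your proof is correct, and the overall sandwich strategy as well as the lower bound match the paper exactly. The difference lies in the upper bound: the paper argues directly with the \emph{vector-valued} Laurent expansion of $(\lambda-a)^{-1}$ about $\infty$, asserting (with a reference to the theory in \cite{ds1}) that holomorphy on $\{|\lambda|>r(a)\}$ forces the series $\sum_n a^n/\lambda^{n+1}$ itself to be absolutely convergent in norm there, and then reads off $\limsup_n\|a^n\|^{1/n}\le |\lambda|$ from the $n$th root test. You instead test against each $\varphi\in A\s$, use scalar Laurent uniqueness to get convergence of $\sum_n \varphi(a^n)\lambda^{-n-1}$ on the larger annulus, and then recover norm control via the uniform boundedness theorem. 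Your route is slightly longer but buys independence from the vector-valued holomorphic machinery that the paper imports; conversely, the paper's argument is shorter once one accepts that machinery. (Minor citation note: the isometric embedding $A\hookrightarrow A^{\ast\ast}$ is Exercise~\ref{exe:dualspaces}(iii) rather than Theorem~\ref{thm:hahnbanach} directly.)
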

\begin{proof} We prove the following inequalities:
\[
 \limsup_{n\ra \infty} \|a^n\|^{1/n} \leq r(a)\leq \liminf_{n\ra \infty} \|a^n\|^{1/n}.
\]

Let $\la\in \si(a)$. Then it follows from the above exercise and Proposition \ref{prop:spradius1} that $|\la|^n\leq \|a^n\|$, for all $n\in \n$. Hence $r(a)\leq \|a^n\|^{1/n}$, for all $n\in \n$, which implies that the right hand side inequality.

For $\lambda \in \c$ with $|\la |>r(a)$, we claim the series $\sum_{n=0}^\infty \frac{a^n}{\la^{n+1}}$ is absolutely convergent. Thus by the $n$th root test, we must have $\limsup_{n\ra \infty} \left( \frac{\|a^n\|}{\la^{n+1}} \right)^{1/n} \leq 1$. This clearly implies the left hand side inequality.

To prove the above claim, we first note that the function $f(\mu):=\frac{1}{a-\mu}$ is holomorphic on $Res(a)$ because of the following discussion. If $\mu_0\in Res(a)$ and $|\mu -\mu_0|< \|a-\mu_0\|$, then $\|(a-\mu_0)-(a-\mu )\|<\|a-\mu_0\|$, which implies $\|1- \frac{a-\mu}{a-\mu_0} \|\leq 1$. Hence $\frac{a-\mu}{a-\mu_0}$ is invertible. By computing its inverse and after some simplifications, we obtain
\[
f(\mu)= \sum_{n=0}^{\infty} (a-\mu)^n (f(\mu_0))^{n+1}.
\]
This power series is convergent over the open neighborhood $\{ \mu; |\mu -\mu_0|< \|a-\mu_0\| \}$ of $\mu_0$ and shows that $f$ is holomorphic in this neighborhood.

Secondly, we observe that, for $\la\in \c$ such that $|\la|> \| a\|$, the series $\sum_{n=0}^{\infty} \frac{-a^n}{\la^{n+1}}$ is absolutely convergent in norm to $f(\la)$ and this convergence is uniformly over any neighborhood like $\{ \mu; |\mu|\geq \|a\|+\epsilon \}$ for some $\epsilon>0$. Therefore this series is the Laurent expansion of $f$ around $\infty$. Since $f$ is holomorphic in $Res(a)$, and so in the neighborhood $\{ \mu; |\mu|> r(a) \}$, the above series is absolutely convergent for every $\la$ in this latter neighborhood as we claimed.
\end{proof}

\begin{proposition}
Let $A$ be a unital Banach algebra. Then $\si_A(a)$ is non-empty for all $a\in A$.
\end{proposition}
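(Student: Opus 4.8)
The plan is to argue by contradiction using Liouville's theorem, after reducing from the Banach-algebra-valued resolvent to scalar-valued entire functions via bounded linear functionals. Suppose $\si_A(a)=\emptyset$, so that $Res_A(a)=\c$. Then the resolvent function $f(\mu):=(a-\mu)\inv$, which was shown in the proof of Proposition \ref{prop:spradius2} to be holomorphic on $Res_A(a)$, is in fact an $A$-valued entire function.

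First I would establish that $f$ is bounded on all of $\c$ and decays at infinity. On the compact disk $\{\mu; |\mu|\leq \|a\|+1\}$, the function $f$ is continuous, hence bounded. For $|\mu|>\|a\|$, the Laurent expansion $f(\mu)=\sum_{n=0}^\infty \frac{-a^n}{\mu^{n+1}}$ derived in that same proof gives the estimate $\|f(\mu)\|\leq \sum_{n=0}^\infty \frac{\|a\|^n}{|\mu|^{n+1}}=\frac{1}{|\mu|-\|a\|}$, which tends to $0$ as $|\mu|\ra\infty$. Combining the two regions, $f$ is a bounded entire function with $f(\mu)\ra 0$ at infinity.

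Next I would transfer the problem to the scalar setting. For each bounded linear functional $\rho\in A\s$, the composite $g_\rho:=\rho\circ f:\c\ra\c$ is a scalar-valued entire function, since holomorphy is preserved under a bounded linear map; moreover $g_\rho$ is bounded and $g_\rho(\mu)\ra 0$ as $|\mu|\ra\infty$. The classical Liouville theorem then forces $g_\rho$ to be identically $0$. As this holds for every $\rho\in A\s$, the Hahn-Banach theorem (Theorem \ref{thm:hahnbanach}), in the form that a vector annihilated by all continuous functionals must vanish, yields $f(\mu)=0$ for all $\mu$. But $f(\mu)=(a-\mu)\inv$ is invertible and therefore nonzero, a contradiction; hence $\si_A(a)\neq\emptyset$.

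The main obstacle is the passage from the vector-valued resolvent to scalar functions: one must verify that $\rho\circ f$ is genuinely holomorphic, invoke the correct scalar form of Liouville's theorem, and then use Hahn-Banach to recover a conclusion about $f$ itself rather than merely about its functionals. The boundedness and decay estimate near infinity is routine given the Laurent series already in hand.
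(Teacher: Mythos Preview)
Your proof is correct and follows essentially the same route as the paper: assume the spectrum is empty, compose the resolvent $f(\mu)=(a-\mu)^{-1}$ with arbitrary $\rho\in A\s$ to get scalar entire functions that vanish at infinity, apply Liouville to conclude each is identically zero, and deduce $f\equiv 0$ via separation by functionals, contradicting invertibility. The paper's argument is slightly terser on the boundedness estimate and the Hahn--Banach step, but the structure is the same.
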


\begin{proof}
Given $a\in A$, let $f$ be as above. For every $\ff\in A\s$, define $f_{\ff}:=\ff f : Res(a) \ra \c$. Then $f_{\ff}$ is holomorphic. Hence if $\si(a)$ is empty, then $f_{\ff}$ is entire. On the other hand, it is easy to see that $\lim_{\la\ra \infty} f_{\ff} (\la) =0$, which implies that $f_{\ff}$ is bounded. Therefore by the Liouville theorem, see Theorem 10.23 of \cite{rudinreal}, $f_{\ff}$ is a constant function. Moreover, $f_\ff$ has to be the zero function because of the above limit. Since this holds for all $\ff\in A\s$, we conclude $f(\la)=0$, for all $\la\in\c$. But this contradicts with the fact that values of $f$ are inverses of some elements of $A$.
\end{proof}

\begin{corollary}
\label{cor:gelfandmazur} [Gelfand-Mazur] If a Banach algebra $A$ is a division ring, then it is isomorphic to $\c$.
\end{corollary}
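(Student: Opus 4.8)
The plan is to read the result straight off the preceding proposition, which guarantees that $\si_A(a)\neq\emptyset$ for every element $a$ of a \emph{unital} Banach algebra. First I would observe that a division ring is, by definition, a nontrivial unital ring in which every nonzero element is invertible; in particular $A$ is unital with $1\neq 0$, so the notion of spectrum is available and the hypotheses of the previous proposition are satisfied.

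Next, fix an arbitrary $a\in A$. By the previous proposition $\si_A(a)$ is non-empty, so there exists $\la\in\c$ with $a-\la 1\notin A^\times$. Since $A$ is a division ring, the only non-invertible element is $0$; hence $a-\la 1=0$, that is $a=\la 1$. This $\la$ is unique, for if $\la 1=\mu 1$ then $(\la-\mu)1=0$, forcing $\la=\mu$ because $1\neq 0$. Thus every element of $A$ is a unique scalar multiple of the unit.

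Finally I would package this as the asserted isomorphism. The map $\c\ra A$, $\la\mapsto \la 1$, is a unital algebra homomorphism; the previous paragraph shows it is surjective, while injectivity follows from $1\neq 0$. Hence it is an algebra isomorphism and $A\cong\c$. If one wants a Banach-algebra (topological) isomorphism, one notes that $\|\la 1\|=|\la|\,\|1\|$, so the map is a homeomorphism onto its image; and after the renormalization of Proposition \ref{prop:nnormalization}, which arranges $\|1\|=1$, it becomes an isometry.

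There is essentially no genuine obstacle here: the entire weight of the argument rests on the non-emptiness of the spectrum established in the preceding proposition, which itself relies on Liouville's theorem. The only remaining points are routine bookkeeping—that a division ring is unital with $1\neq 0$, and that the scalar-embedding map is a well-defined isomorphism—so I expect this to read as a short corollary rather than an independent proof.
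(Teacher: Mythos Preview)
Your proof is correct and follows essentially the same approach as the paper: both argue that $A$ is unital, then for arbitrary $a\in A$ invoke the non-emptiness of $\si_A(a)$ to find $\la$ with $a-\la$ non-invertible, and conclude $a=\la$ since the only non-invertible element of a division ring is $0$. You simply add a bit more bookkeeping about the explicit isomorphism $\c\ra A$ and the norm, which the paper omits.
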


\begin{proof} Since $A$ is a division ring, it is unital and so it contains a copy of $\c$. For an arbitrary element $a\in A$, there is some $\la\in \c$ such that $a-\la$ is not invertible in $A$ and so it has to be zero because $A$ is a division ring. Hence $a\in \c$.
\end{proof}

\begin{proposition}
\label{prop:subalgspec1}
Let $A$ be a closed unital subalgebra of a unital Banach algebra $B$, i.e. $1_B\in A$. Then for every $a\in A$, we have
\[
\partial \si_A(a)\subseteq \si_B(a)\subseteq \si_A(a),
\]
where $\partial \si_A(a)$ denotes the boundary of  $\si_A(a)$ in $\c$.
\end{proposition}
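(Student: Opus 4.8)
The plan is to establish the two inclusions separately, the right-hand one being almost immediate and the left-hand one carrying all the real content.

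First I would treat $\si_B(a)\sub\si_A(a)$, which is equivalent to $Res_A(a)\sub Res_B(a)$. For this it suffices to observe that whenever $a-\la$ is invertible in $A$, its inverse lies in $A\sub B$ and, since $A$ and $B$ share the unit $1_B$, that same element is an inverse of $a-\la$ in $B$. Hence $\la\in Res_A(a)$ forces $\la\in Res_B(a)$, as required.

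For the inclusion $\partial\si_A(a)\sub\si_B(a)$, fix $\la\in\partial\si_A(a)$. Because $\si_A(a)$ is closed (Proposition \ref{prop:spradius1}), we have $\la\in\si_A(a)$, so $a-\la$ is \emph{not} invertible in $A$; and because $\la$ lies on the boundary, I can choose a sequence $\la_n\in Res_A(a)$ with $\la_n\ra\la$. The crucial point I would prove is the blow-up $\|(a-\la_n)\inv\|\ra\infty$: if this failed, a subsequence would satisfy $\|(a-\la_n)\inv\|\leq M$, and then for large $n$ we would have $\|(a-\la)-(a-\la_n)\|=|\la-\la_n|<1/M\leq 1/\|(a-\la_n)\inv\|$, whence Proposition \ref{prop:invelements} (applied with base point $a-\la_n$) would force $a-\la$ to be invertible in $A$, a contradiction. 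Here I would stress that $(a-\la_n)\inv$ is literally the same element whether computed in $A$ or in $B$, by uniqueness of inverses, and that the norm of $A$ is the restriction of that of $B$; so the same blow-up holds verbatim in $B$.

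Finally I would argue by contradiction: suppose $a-\la$ were invertible in $B$. Since $a-\la_n\ra a-\la$ and each $a-\la_n$ is also invertible in $B$ (its $A$-inverse serves), the continuity of inversion on $B^\times$ from Corollary \ref{cor:inversion} yields $(a-\la_n)\inv\ra(a-\la)\inv$ in $B$, so in particular $\|(a-\la_n)\inv\|$ is bounded. This contradicts the blow-up just established. Therefore $a-\la\notin B^\times$, i.e. $\la\in\si_B(a)$. The main obstacle is precisely the blow-up claim $\|(a-\la_n)\inv\|\ra\infty$, which rests on the openness of the invertibles (Proposition \ref{prop:invelements}); everything else is bookkeeping about how inverses and norms transfer unchanged between $A$ and $B$, together with the continuity of inversion in $B$ that closes the contradiction.
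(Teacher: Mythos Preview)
Your argument is correct. The paper's proof follows the same overall plan --- the same right-hand inclusion, the same boundary sequence $\la_n\in Res_A(a)$ with $\la_n\ra\la$, and the same appeal to continuity of inversion in $B$ (Corollary \ref{cor:inversion}) --- but closes the contradiction differently and a bit more directly. Rather than establishing the blow-up $\|(a-\la_n)\inv\|\ra\infty$, the paper simply observes that if $a-\la$ were invertible in $B$, then $(a-\la_n)\inv\ra(a-\la)\inv$ in $B$, and since each $(a-\la_n)\inv$ lies in $A$ and $A$ is \emph{closed} in $B$, the limit $(a-\la)\inv$ would itself lie in $A$, contradicting $\la\in\si_A(a)$. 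This exploits the hypothesis that $A$ is closed in the most direct way possible and bypasses Proposition \ref{prop:invelements} entirely. Your route via the norm blow-up is a standard and perfectly valid alternative; it has the minor advantage that the blow-up statement is interesting in its own right, while the paper's route is shorter and makes the role of the closedness hypothesis more transparent.
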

\begin{proof}
Every invertible element of $A$ is invertible in $B$ too. This implies the right hand side inclusion. It follows from \ref{prop:spradius1} that $\si_A(a)$ is a closed subset of $\c$, so $\partial \si_A(a)\subseteq \si_A(a)$. Given $\la\in\partial \si_A(a)$, let $\{\la_n\}$ be a sequence in $Res_A(a)$ convergent to $\la$. Then $a-\la_n \ra a-\la$ in $A$ and so in $B$. If $a-\la$ is invertible in $B$, then by continuity of inversion, see Corollary \ref{cor:inversion}, we obtain $(a-\la_n)^{-1} \ra (a-\la)^{-1}$. Now, since $A$ is closed and the sequence $\{(a-\la_n)^{-1}\}$ is in $A$, its limit, namely $(a-\la)^{-1}$ belongs to $A$, that is $\la\notin \si_A(a)$. This is a contradiction. Therefore $\la\in \si_B(a)$. This proves the left hand side inclusion.
\end{proof}

\section{The spectral theory of compact operators}
\label{sec:spectralcompact}

In this section, $E$ and $F$ are two Banach spaces and we are often dealing with compact operators in $K(E,F)$ or $K(E)$.  For every operator $T\in B(E,F)$, we denote the kernel of $T$ by $N(T)$ and the image of $T$ by $R(T)$.

\begin{definition}
Given $T\in B(E, F)$, a complex numbers $\la$  is called an {\bf eigenvalue} of $T$ if $T-\la $ is not one-to-one. The set of all eigenvalues of $T$ is denoted by $e(T)$. For every $\la\in e(T)$, the {\bf eigenspace of $\la$} is $N(T-\la)$ and every element of the eigenspace of $\la$ are called an {\bf eigenvector of $\la$}.
\end{definition}
Clearly, $e(T)\sub \si(T)$ for all $T\in B(E)$. We shall show that every non-zero $\la\in \si(T)$ is an eigenvalue of $T$ as well provided that $T\in K(E)$.
\begin{proposition}
\label{prop:compactopt1}
\begin{itemize}
\item [(i)] Assume $T\in K(E,F)$ and $R(T)$ is closed, then $T$ is finite rank.
\item [(ii)] Assume $T\in K(E)$ and $0\neq \la\in \c$, then $dim N(T-\la)<\infty$.
\item [(iii)] If $E$ is infinite dimensional and $T\in K(E)$ then $0\in \si(T)$.
\end{itemize}
\end{proposition}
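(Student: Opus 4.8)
The plan is to derive all three parts from one classical fact of functional analysis—Riesz's theorem that a normed space is finite-dimensional precisely when its closed unit ball is compact—combined with the open mapping theorem (Theorem \ref{thm:openmapping}) and the fact that $K(E)$ is a two-sided ideal of $B(E)$ (Proposition \ref{prop:compactoperators}).

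For (i), I would regard $T$ as a bounded surjection $T\colon E\ra R(T)$. Since $R(T)$ is closed in the Banach space $F$, it is itself a Banach space, so the open mapping theorem applies: the image of the open unit ball is open in $R(T)$, and being a neighbourhood of $0$ it contains some ball $B_\d(0)$, hence so does $T((E)_1)$. Taking closures inside $R(T)$ (which agree with closures in $F$ because $R(T)$ is closed), the closed ball $\overline{B_\d(0)}$ sits inside $\overline{T((E)_1)}$, which is compact by compactness of $T$. A closed subset of a compact set is compact, so the closed ball of radius $\d$ in $R(T)$ is compact; after rescaling by $\d\inv$ (a homeomorphism) this says the closed unit ball of $R(T)$ is compact, whence $R(T)$ is finite-dimensional by Riesz's theorem. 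Thus $T$ is finite rank.

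For (ii), I set $M:=N(T-\la)$, a closed subspace of $E$ on which $T$ acts as $\la I_M$. Since $\la\neq 0$, $T$ maps $M$ onto $M$, so the restriction $T|_M\colon M\ra M$ is a compact operator (a bounded sequence in $M$ has, under $T$, a subsequence converging in $E$, and the limit lies in the closed set $M$) whose range is the closed subspace $M$. Applying part (i) to $T|_M$ then gives $\dim M<\infty$. Equivalently, one notes $I_M=\la\inv T|_M$ is compact and invokes Riesz's theorem directly.

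For (iii), I argue by contradiction: if $0\notin\si(T)$ then $T$ is invertible in $B(E)$, so $I=T\inv T$ lies in $K(E)$ because $K(E)$ is an ideal of $B(E)$. Then the identity is compact, meaning $(E)_1$ is relatively compact, and Riesz's theorem forces $E$ to be finite-dimensional, contrary to hypothesis. The only genuinely delicate step is (i): the crux is realising that $T$ must be treated as a map \emph{onto} the closed subspace $R(T)$ so that the open mapping theorem becomes available, and then transporting the compactness of $\overline{T((E)_1)}$ down to the closed unit ball of $R(T)$.
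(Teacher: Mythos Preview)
Your proof is correct and follows essentially the same route as the paper: for (i) both arguments apply the open mapping theorem to $T:E\to R(T)$ and then invoke the fact that a locally compact normed space is finite-dimensional (the paper cites Rudin's Theorem 1.22, you phrase it as Riesz's theorem); for (ii) both restrict $T$ to $N(T-\la)$ and feed this back into (i); for (iii) both reach a contradiction via finite-dimensionality, the paper by reusing the argument of (i) directly on $R(T)=E$, you by the slightly slicker observation that $I=T\inv T$ lies in the ideal $K(E)$.
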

\begin{proof}
\begin{itemize}
\item [(i)] If $R(T)$ is closed, then $R(T)$ is complete, because $F$ is complete. Hence the map $T:E\ra R(T)$ is open by the open mapping theorem, see Theorem \ref{thm:openmapping}. Therefore the image of every ball in $E$ under $T$ is an open set in $R(T)$ whose closure is compact. This means $R(T)$ is a locally compact topological vector space. Hence by Theorem 1.22 of \cite{rudinfunctional}, $R(T)$ is finite dimensional.
\item [(ii)] For every $x\in N(T-\la)$, we have $(T-\la)Tx=T(T-\la)x=0$. Therefore The map $T|_{N(T-\la)}: N(T-\la)\ra N(T-\la)$ is well defined. Since $\la\neq 0$, this map is onto as well. By (i), since $T|_{N(T-\la)}$ is compact and $N(T-\la)$ is closed, the image of this map, which is $N(T-\la)$, is finite dimensional.
\item [(iii)] If $0\notin \si(T)$, then $T$ is invertible. By an argument similar to Part (i), one can show that $R(T)=E$ is locally compact, and so finite dimensional. But this is a contradiction. Hence $0\in \si(T)$.
\end{itemize}
\end{proof}

\begin{lemma}
\label{lem:complementedsub}
Let $M$ be a closed subspace of a topological vector space $X$.
\begin{itemize}
\item [(i)] If $X$ is locally convex and $dim M<\infty$, then there exist a closed subspace $N$ of $X$ such that $X=M\oplus N$.
\item [(ii)] If $dim(X/M)<\infty$, then there exist a closed subspace $N$ of $X$ such that $X=M\oplus N$.
\end{itemize}
\end{lemma}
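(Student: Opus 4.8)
The plan is to treat the two parts by dual constructions: in (i) I would produce the complement as the common kernel of finitely many continuous functionals, while in (ii) I would produce it by lifting a basis of the quotient. Both rest on the elementary fact that a finite-dimensional subspace of a topological vector space is closed (Theorem 1.21 of \cite{rudinfunctional}), and part (i) additionally needs the Hahn-Banach extension theorem, which is exactly where local convexity enters.

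For (i), I would fix a basis $e_1,\dots,e_n$ of $M$ and let $f_1,\dots,f_n$ be the associated coordinate functionals on $M$, so that $f_i(e_j)=\delta_{ij}$. Since $M$ is finite dimensional these are continuous, and because $X$ is locally convex each $f_i$ extends to a continuous functional $\tilde f_i\in X\s$ by the Hahn-Banach theorem, see Theorem \ref{thm:hahnbanach}. I would then set $N:=\bigcap_{i=1}^n \ker \tilde f_i$, which is closed as an intersection of kernels of continuous maps, and verify $X=M\oplus N$ by means of the projection $Px:=\sum_{i=1}^n \tilde f_i(x) e_i$. The relation $\tilde f_j(e_i)=\delta_{ij}$ gives $x-Px\in N$, hence $X=M+N$, while $M\cap N=\{0\}$ follows because an element of $M$ lying in $N$ has all its coordinates $\tilde f_i=f_i$ equal to zero.

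For (ii), I would work with the quotient map $\pi:X\ra X/M$. Choosing a basis $\bar v_1,\dots,\bar v_m$ of the finite-dimensional space $X/M$ and lifting it to vectors $v_1,\dots,v_m\in X$ with $\pi(v_i)=\bar v_i$, I would take $N:=\mathrm{span}\{v_1,\dots,v_m\}$. This $N$ is finite dimensional, hence closed, which is precisely where the closedness of finite-dimensional subspaces is used in this part. Then $M\cap N=\{0\}$ because the independence of the $\bar v_i$ forces the coordinates to vanish, and $M+N=X$ because for any $x$ the difference $x-\sum_{i=1}^m a_i v_i$ lands in $M=\ker\pi$ once the scalars $a_i$ are chosen so that $\pi(x)=\sum_{i=1}^m a_i\bar v_i$.

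The only genuine subtlety is the appearance of local convexity in (i): without it Hahn-Banach may fail and there need not be enough continuous functionals to separate $M$, so the coordinate functionals could fail to extend continuously. In (ii) no such hypothesis is needed, since the construction never leaves the finite-dimensional setting except through the quotient map, whose continuity is automatic. I expect the bookkeeping with $Px$ and the relations $\tilde f_j(e_i)=\delta_{ij}$ to be entirely routine; the conceptual content is simply identifying where each of the two standard theorems is invoked.
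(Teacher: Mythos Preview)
Your proposal is correct and follows essentially the same approach as the paper: in (i) the paper also chooses a basis of $M$, extends the dual (coordinate) functionals via Hahn--Banach, and takes $N$ to be the intersection of their kernels; in (ii) the paper likewise invokes the elementary linear-algebra lift and then notes that a finite-dimensional $N$ is automatically closed. Your write-up is in fact more detailed than the paper's, which leaves both verifications as routine.
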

\begin{proof}
\begin{itemize}
\item [(i)] Let $\{e_1,\dots, e_n\}$ be a basis for $M$ and let $\{\alpha_1,\cdots, \alpha_n\}$ be its dual basis. Applying the Hahn-Banach theorem \ref{thm:hahnbanach}, we extend $\alpha_i$ to a (bounded) linear functional on $X$ for all $1\leq i\leq n$. Set $N:=\cap_{i=1}^n N(\alpha_i)$. It is straightforward to check that $X=M\oplus N$.
\item [(ii)] It is an easy linear algebra exercise that there is a finite dimensional $N$ of $X$ such that $X=M\oplus N$. Since $N$ is finite dimensional, it is closed in $X$.
\end{itemize}
\end{proof}

\begin{exercise}
\label{exe:projectionsbanach}
Prove that if $M$ and $N$ are two closed subspace of $E$ such that $E=M\oplus N$, then the projections maps $\pi_1:E\ra M$ and $\pi_2: E\ra N$ are bounded operators, (Hint: use the closed graph theorem). Therefore if we equip $M\oplus N$ with the norm $\|m+n\|:=\|m\|+\|n\|$ for all $m\in M$ and $n\in  N$, then $\pi_1 +\pi_2: E\ra M\oplus N$ is a bounded isomorphism with a bounded inverse.
\end{exercise}
\begin{exercise}
\label{exe:sumofcompactoperrators}
Assume $H$ is a Banach space and $T:E\ra H$ and $S:E\ra F$ are compact operators. Show that $T+S:E\ra H\oplus F$ defined by $x\mapsto Tx+Sx$ is a compact operator, where the norm on $H\oplus F$ is defined as the above example.
\end{exercise}
\begin{definition}
An operator $T\in B(E, F)$ is called {\bf bounded below} if there is an $\ep>0$ such that $\ep \|x\|\leq \|Tx\|$ for all $x\in E$.
\end{definition}
\begin{exercise}
Let $T\in B(E,F)$ be a bounded below operator. Then $R(T)$ is closed.
\end{exercise}
\begin{proposition}
\label{prop:compactclosedrange}
For every $T\in K(E)$ and $\la\neq 0$, the subspace $R(T-\la)$ is closed in $E$.
\end{proposition}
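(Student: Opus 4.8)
The plan is to reduce the problem to showing that $T-\la$ is bounded below on a suitable closed complement of its kernel. First I would set $S:=T-\la$ and record that $N(S)=N(T-\la)$ is finite dimensional by Proposition \ref{prop:compactopt1}(ii); being finite dimensional it is closed, so Lemma \ref{lem:complementedsub}(i) (applicable since a Banach space is locally convex) provides a closed subspace $X_0$ with $E=N(S)\oplus X_0$. Decomposing an arbitrary $z=n+x_0$ with $n\in N(S)$ and $x_0\in X_0$ gives $Sz=Sn+Sx_0=Sx_0$, so $R(S)=R(S|_{X_0})$; thus it suffices to prove that $R(S|_{X_0})$ is closed.

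The key step is to show that $S|_{X_0}$ is bounded below, that is, that there exists $\ep>0$ with $\|Sx\|\geq \ep\|x\|$ for all $x\in X_0$. Once this is established, the exercise preceding this proposition (a bounded below operator has closed range) immediately yields that $R(S|_{X_0})=R(S)$ is closed, finishing the argument.

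The hard part will be proving the bounded-below estimate, and this is where compactness of $T$ enters. I would argue by contradiction: if no such $\ep$ exists, there is a sequence $\{x_n\}$ in $X_0$ with $\|x_n\|=1$ and $\|Sx_n\|\ra 0$. Since $\{x_n\}$ is bounded and $T$ is compact, Proposition \ref{prop:compactoperators}(i) yields a subsequence $\{x_{n_j}\}$ with $Tx_{n_j}$ convergent, say to some $y\in E$. From $Sx_{n_j}=Tx_{n_j}-\la x_{n_j}\ra 0$ and $\la\neq 0$ I would solve for $x_{n_j}=\la\inv(Tx_{n_j}-Sx_{n_j})\ra \la\inv y=:x$; because $X_0$ is closed, $x\in X_0$, and $\|x\|=\lim_j\|x_{n_j}\|=1$, so $x\neq 0$. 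Continuity of $S$ then forces $Sx=\lim_j Sx_{n_j}=0$, hence $x\in N(S)\cap X_0=\{0\}$, contradicting $x\neq 0$. This contradiction establishes the bounded-below estimate and completes the proof.
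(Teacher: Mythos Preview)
Your proof is correct and follows essentially the same route as the paper's own argument: split off the finite-dimensional kernel using Lemma \ref{lem:complementedsub}, reduce to showing the restriction of $T-\la$ to the closed complement is bounded below, and derive a contradiction from a norm-one sequence with $Sx_n\ra 0$ via compactness of $T$. The only cosmetic difference is naming (the paper calls the complement $M$ and reserves $S$ for the restriction $T-\la|_M$), and your contradiction step is in fact slightly cleaner in identifying the limit point.
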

\begin{proof}
By Proposition \ref{prop:compactopt1}(ii), $N(T-\la)$ is a finite dimensional closed subspace of $E$, and consequently by Lemma \ref{lem:complementedsub}, there exists a closed subspace $M$ of $E$ such that $E=N(T-\la)\oplus M$. Let $S:M\ra E$ be the restriction of $T-\la$ to $M$. Then $S$ is bounded, one-to-one and $R(S)=R(T-\la)$. Since, $E$ is complete and $S$ is continuous, in order to prove that $R(S)$ is closed, it is enough to show that $S$ is bounded below. If it is not bounded below, then there is a sequence $\{x_n\}$ in $M$ such that $Sx_n\ra 0$ and $\|x_n\|=1$ for all $n\in \n$. Since $T$ is a compact operator, there is a subsequence of $\{T(x_n)\}$, say $\{T(x_{n_i})\}$, converging to some point $x_0\in E$. Since $S=T-\la$ on $M$ and $Sx_{n_i}\ra 0$, we have
\[
\lim_{i\ra \infty} \la x_{n_i}= \lim_{i\ra \infty} (Tx_{n_i} - S x_{n_i})= x_0.
\]
This implies $x_0\in M$ and $Sx_0=\lim_{i\ra \infty} \la Sx_{n_i} = 0$. Since $S$ is one-to-one, $x_0=0$. But, this contradicts with $\|x_{n_i}\|=1$.
\end{proof}
\begin{lemma}
\label{lem:subspacenormed}
Let $X$ be a normed vector space and let $M$ be a subspace of $X$. Assume $M$ is not dense in $X$. For every $r>1$, there exists $x\in X$ such that $\|x\|<r$ and $\|x-y\|\geq1$ for all $y\in M$.
\end{lemma}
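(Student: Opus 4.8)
The statement is a form of Riesz's lemma, and the plan is to manufacture the point $x$ from a single witness to the failure of density, then rescale carefully. First I would use the hypothesis that $M$ is not dense in $X$ to pick a point $z\in X$ with $z\notin\overline{M}$, and set $d:=\inf_{y\in M}\|z-y\|$. Since the infimum of the function $y\mapsto\|z-y\|$ over $M$ coincides with its infimum over $\overline{M}$ (as $M$ is dense in $\overline{M}$), and $\overline{M}$ is closed with $z\notin\overline{M}$, this distance is strictly positive, $d>0$. This positivity is the fact on which the whole argument rests.

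The one genuine idea is the choice of scaling. The classical Riesz lemma normalizes a vector $z-y_0$ to unit norm and thereby obtains a separation constant strictly less than $1$; here I instead want separation exactly $\geq 1$ together with a norm bound strictly below $r$, so rather than dividing by $\|z-y_0\|$ I would divide by $d$. Concretely, because $r>1$ we have $rd>d$, and since $d$ is the infimum there exists $y_0\in M$ with $\|z-y_0\|<rd$; I then define $x:=(z-y_0)/d$. It is exactly here that the hypothesis $r>1$ is used, to guarantee that such a $y_0$ can be found strictly above the infimum.

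It remains to verify the two required properties, both of which are immediate. The norm bound follows from $\|x\|=\|z-y_0\|/d<rd/d=r$. For the separation, I fix $y\in M$ and write $\|x-y\|=d\inv\,\|z-(y_0+dy)\|$; because $M$ is a subspace, $y_0+dy\in M$, so $\|z-(y_0+dy)\|\geq d$ by the definition of $d$, whence $\|x-y\|\geq 1$. I do not expect any serious obstacle: once $z$ and $d$ are fixed, the proof is essentially a one-line rescaling, and the only preliminary point requiring care is the observation that $d>0$, which uses that the distance to $M$ equals the distance to its closure.
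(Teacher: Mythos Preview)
Your proof is correct and is essentially the same as the paper's. The paper phrases it via the quotient norm, first choosing $x_1$ with $\inf_{y\in M}\|x_1-y\|=1$ (your $z/d$) and then picking $y_1\in M$ with $\|x_1-y_1\|<r$; you instead pick an arbitrary $z\notin\overline{M}$, choose $y_0$, and divide by $d$ at the end, which amounts to the same rescaling done in a different order.
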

\begin{proof}
Since $M$ is not dense in $X$, the quotient space $X/\overline{M}$ is at least one dimensional. Using the quotient norm, it is clear that one can find $x_1\in X$ such that $\inf\{\|x_1 - y \|; y\in M\}=1$. Hence there exists $y_1\in M$ such that $\|x_1 - y_1\|<r$. Set $x:=x_1 - y_1$.
\end{proof}
\begin{proposition}
\label{prop:eigenvaluefinite}
For given $T\in K(E)$ and $r>0$, set
\[
e_r(T):=\{ \la\in e(T); |\la|>r\}.
\]
Then we have
\begin{itemize}
\item [(i)] $R(T-\la)\neq E$ for all $\la\in e_r(T)$, and
\item [(ii)] $e_r(T)$ is finite.
\end{itemize}
\end{proposition}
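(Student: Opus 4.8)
\textit{Proof proposal.}

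The plan is to prove both parts by contradiction using a single mechanism: in each case I would manufacture a \emph{strictly} increasing chain of finite-dimensional (hence closed) subspaces, apply the Riesz-type Lemma \ref{lem:subspacenormed} to select an almost-orthogonal bounded sequence inside the chain, and then contradict the compactness of $T$ through the sequential criterion of Proposition \ref{prop:compactoperators}(i). Throughout set $S:=T-\la$. For any $k\geq 1$ the operator $S^k$ differs from $(-\la)^kI$ by a finite sum of compact operators, so $S^k=(-\la)^kI+K_k$ with $K_k$ compact; hence $N(S^k)$ is finite-dimensional by the reasoning of Proposition \ref{prop:compactopt1}(ii). The kernels form an increasing chain $N(S)\sub N(S^2)\sub\cdots$, and each $N(S^k)$ is $T$-invariant since $T=S+\la$ maps $N(S^k)$ into itself.

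For part (i), fix $\la\in e_r(T)$ (so $\la\neq 0$ and $N(S)\neq\{0\}$) and suppose for contradiction that $R(T-\la)=E$, i.e. $S$ is onto. Starting from a nonzero $x_1\in N(S)$, surjectivity lets me inductively solve $Sx_k=x_{k-1}$; then $S^kx_k=0$ while $S^{k-1}x_k=x_1\neq 0$, so the chain $N(S^{k-1})\subsetneq N(S^k)$ is strictly increasing. Applying Lemma \ref{lem:subspacenormed} with $r=2$, $X=N(S^k)$ and its proper (hence non-dense) subspace $N(S^{k-1})$, I obtain $w_k\in N(S^k)$ with $\|w_k\|<2$ and $\|w_k-y\|\geq 1$ for all $y\in N(S^{k-1})$. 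For $m<k$ one checks that $Sw_k\in N(S^{k-1})$ and $Tw_m\in N(S^m)\sub N(S^{k-1})$, whence $Tw_k-Tw_m=\la w_k-v$ with $v\in N(S^{k-1})$, giving $\|Tw_k-Tw_m\|=|\la|\,\|w_k-v/\la\|\geq|\la|>0$. Thus the bounded sequence $\{w_k\}$ has image $\{Tw_k\}$ with no Cauchy subsequence, contradicting compactness. Hence $R(T-\la)\neq E$.

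For part (ii), suppose $e_r(T)$ is infinite and pick distinct eigenvalues $\la_1,\la_2,\dots$ in $e_r(T)$ with eigenvectors $x_n$. Since eigenvectors attached to distinct eigenvalues are linearly independent, the subspaces $M_n:=\mathrm{span}\{x_1,\dots,x_n\}$ are finite-dimensional, strictly increasing and $T$-invariant. Again Lemma \ref{lem:subspacenormed} supplies $w_n\in M_n$ with $\|w_n\|<2$ and $\mathrm{dist}(w_n,M_{n-1})\geq 1$. Because $(T-\la_n)w_n$ annihilates the top coefficient it lies in $M_{n-1}$, and $Tw_m\in M_m\sub M_{n-1}$ for $m<n$; as before this yields $Tw_n-Tw_m=\la_nw_n-v$ with $v\in M_{n-1}$, so $\|Tw_n-Tw_m\|\geq|\la_n|>r>0$. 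The bounded sequence $\{w_n\}$ therefore has no subsequence with convergent image, again contradicting Proposition \ref{prop:compactoperators}(i), so $e_r(T)$ must be finite.

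The linear-algebra bookkeeping (strict increase of the chains, $T$-invariance, the shift $Sw_k\in N(S^{k-1})$) is routine. The crux, identical in both arguments and where I expect the real work to sit, is arranging the decomposition $Tw_k-Tw_m=\la w_k-(\text{element of the smaller subspace})$ so that the almost-orthogonality from Riesz's lemma forces a uniform lower bound $\geq|\la|$ (respectively $\geq|\la_n|>r$) on the separation of the images; once the correction terms are verified to lie in $N(S^{k-1})$ (respectively $M_{n-1}$), compactness closes the contradiction at once.
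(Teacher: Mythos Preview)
Your proof is correct and is essentially the paper's own argument: the paper first isolates the abstract hypothesis (a strictly increasing chain of closed $T$-invariant subspaces $M_n$ with $(T-\la_n)(M_{n+1})\sub M_n$), uses Lemma~\ref{lem:subspacenormed} to produce a bounded almost-orthogonal sequence whose $T$-images stay $|\la_n|$-separated, and then checks that the failure of (i) (via $M_n=N(S^n)$, $\la_n\equiv\la$) and of (ii) (via $M_n=\mathrm{span}\{x_1,\dots,x_n\}$) each realize that hypothesis. You carry out the same two constructions and the same separation estimate directly, without first formalizing the shared template; the only extra ingredient you add is the observation $S^k=(-\la)^kI+K_k$ to justify $\dim N(S^k)<\infty$, which the paper does not need since kernels of bounded maps are already closed.
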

\begin{proof} We first describe a hypothesis which leads to a contradiction. Afterwards, we shall show that the failure of either one of (i) or (ii) implies our hypothesis, and so a contradiction. Assume there exist a sequence of closed subspaces $M_n$ of $E$ and a sequence of scalars $\la_n\in e_r(T)$ such that the following conditions hold:
\begin{itemize}
\item [(a)] $M_1\varsubsetneq M_2\varsubsetneq M_3 \varsubsetneq \cdots$.
\item [(b)] $T(M_n)\sub M_n$, for all $n\in\n$.
\item [(c)] $(T-\la_n)(M_{n+1})\sub M_n$, for all $n\in\n$.
\end{itemize}
By Lemma \ref{lem:subspacenormed}, for every $n\geq 2$, there exists $y_n\in M_n$ such that
\begin{equation}
\label{eqn:yninmn}
\|y_n\|< 2 \quad \text{and}\quad
\|y_n-x\|\geq 1, \qquad \forall x\in M_{n-1}.
\end{equation}
Then for $n>m\geq 2$, we define
\[
z_{m,n}:=Ty_m -(T-\la_n)y_n.
\]
Conditions (b) and (c) imply that $z_{m,n}\in M_{n-1}$. Hence by (\ref{eqn:yninmn}), we have
\[
\|Ty_n-Ty_m\|=\|\la_n y_n - z_{m,n}\|= |\la_n|\|y_n - \la_n\inv z_{m,n}\|\geq |\la_n|>r.
\]
This shows that the sequence $\{ Ty_n\}$ has no convergent subsequence and this contradicts with $T$ being a compact operator.

Assume (i) is false, namely $R(T-\la_0)=E$ for some $\la_0\in e_r(T)$. Set $S:=T-\la_0$ and define $M_n:=N(S^n)$ for all $n\in\n$. Since $\la_0$ is an eigenvalue of $T$, there exists $0\neq x_1\in M_1$. Since $R(S)=E$ for all $n\in \n$, one can inductively find $x_{n+1}\in M_{n+1}-M_n$ such that $Sx_{n+1}=x_n$. Then $S^n x_{n+1}=x_1\neq 0$, but  $S^{n+1} x_{n+1}=0$. This proves Condition (a) in the above. Condition (b) follows from the fact that $ST=TS$. Set $\la_n:=\la_0$ for all $n\in\n$. Then Condition (c) holds already.

Assume (ii) is false, then there exist a sequence $\{\la_n\}$ of distinct elements of $e_r(T)$. For $n\in \n$, pick a non-zero eigenvector $e_n$ of $\la_n$ and let $M_n$ be the subspace generated by $\{e_1,\cdots, e_n\}$. Conditions (a) and (b) follow immediately from the definition of $M_n$. For every $n\in \n$ and $x = \alpha_1e_1 + \cdots + \alpha_{n+1} e_{n+1}\in M_{n+1}$, we have $(T-\la_{n+1})x=\alpha_1(\la_1 - \la_{n+1})e_1+\cdots+\alpha_n(\la_n- \la_{n+1})e_n\in M_n$. This shows that Condition (c) holds too.
\end{proof}

In the rest of this section, the adjoint of an operator $T\in B(X,Y)$ between two topological vector space is the map $T\s\in B(Y\s, X\s)$ defined by $T\s \rho:= \rho T$ for all $\rho\in Y\s$, where $X\s$ (resp. $Y\s$) is the {\bf dual space of $X$} (resp. $Y$), that is the vector space of all continuous linear functionals on $X$ (resp. $Y$). We recall that the locally convex topology on $X$ induced by semi-norms of the form $x\mapsto |\rho(x)|$, where $\rho\in X\s$ is called the {\bf weak topology of $X$}. Similarly, the locally convex topology on $X\s$ induced by semi-norms of the form $\rho\mapsto |\rho(x)|$, where $x\in X$, is called the {\bf \ws topology of $X\s$}. When $X$ is a normed space, $X\s:=B(X,\c)$ is equipped with the operator norm, and so it is a normed space as well. We review some of the properties of dual spaces in the following exercise:
\begin{exercise}
\label{exe:dualspaces} Assume $X$ is a normed space.
\begin{itemize}
\item [(i)] Using Theorem 3.3 of \cite{rudinfunctional}, show that, for every $x_0\in X$, there exists $\rho\in X\s$ such that $\rho(x_0)=\|x_0\|$ and $|\rho (x)|\leq \|x\|$ for all $x\in X$.
\item [(ii)] Using Part (i), for every $x\in X$, show that
\[
\|x\|=\sup \{ |\rho (x)|; \rho\in X\s, \|\rho\|\leq 1 \}.
\]
\item [(iii)] Let $X^{\ast \ast}:=(X\s)\s$ be the {\bf double dual of $X$}. Define $\th:X\ra X^{\ast \ast}$ by $\th(x):=x^{\ast \ast}$, where $x^{\ast  \ast}(\rho):=\rho (x)$ for all $x\in X$ and $\rho\in X\s$. Show that $\|x\|=\|x^{\ast \ast}\|$, and therefore $\th$ is an isometry.
\item [(iv)] Let $Y$ be another normed space. For every $T\in B(X,Y)$, show that
\[
\|T\|=\sup \{|\rho(Tx)|; x\in X, \|x\|\leq 1, \rho\in Y\s, \|\rho\|\leq 1 \}.
\]
Conclude that $\|T\|=\|T\s\|$.
\item [(v)] Prove that the weak topology of $X$ is the weakest topology on $X$ for which every linear functional $\rho\in X\s$ is continuous. Similarly, prove that the \ws topology is the weakest topology on $X\s$ for which every element of $\th(X)$ is continuous.
\item [(vi)] Assume $X$ is a Banach space. Show that $\th(X)$ is a closed subspace of $X^{\ast \ast}$. Prove that the members of $\th(X)$ are exactly those linear functionals on $X\s$ that are continuous with respect to the \ws topology of $X\s$. In other words, the dual space of the locally convex topological vector space $X\s$ with \ws topology is exactly $\th(X)$.
\end{itemize}
\end{exercise}

A subset $Y$ of  metric space $(X,d)$ is called {\bf totally bounded} if, for every $\ep>0$, $Y$ lies in a union of finitely many balls of radius $\ep$. The reader can find the proof of the next theorem in Page 394 of \cite{rudinfunctional}.

\begin{theorem}
\label{thm:ascoli} [The Arzel\`{a}-Ascoli theorem] Let $X$ be a compact space. Assume $A$ is a subset of $C(X)$ such that it is
\begin{itemize}
\item [(i)] {\bf pointwise bounded}, namely $\{|f(x)|; f\in A\}<\infty$ for all $x\in X$, and
\item [(ii)] {\bf equicontinuous}, namely, for every $\ep>0$ and $x\in X$, there is a neighborhood $U$ of $x$ such that $x'\in U$ implies $|f(x)-f(x')|<\ep$ for all $f\in A$.
\end{itemize}
Then $A$ is totally bounded in $C(X)$.
\end{theorem}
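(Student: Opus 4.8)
The plan is to reduce this infinite-dimensional covering problem to a finite-dimensional one by sampling each $f\in A$ at finitely many points of $X$. Fix $\ep>0$; the goal is to produce finitely many functions $f_1,\dots,f_m\in A$ whose $\ep$-balls, measured in $\norm_{\sup}$, cover $A$. The underlying idea is that equicontinuity controls the local oscillation of every member of $A$ uniformly in $f$, compactness of $X$ converts ``local control'' into ``control at finitely many sample points'', and pointwise boundedness forces the tuple of values at those sample points to range over a bounded, hence totally bounded, subset of $\c^n$.

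First I would invoke equicontinuity: for each $x\in X$ choose an open neighborhood $U_x$ of $x$ such that $x'\in U_x$ implies $|f(x)-f(x')|<\ep/3$ for all $f\in A$ simultaneously. The family $\{U_x\}_{x\in X}$ is an open cover of $X$, so by compactness there are finitely many points $x_1,\dots,x_n\in X$ with $X=\bigcup_{i=1}^n U_{x_i}$. Next I would use pointwise boundedness at exactly these points: the evaluation map $A\ra \c^n$, $f\mapsto (f(x_1),\dots,f(x_n))$, has image contained in a bounded subset of $\c^n$, since each coordinate set $\{f(x_i);f\in A\}$ is bounded. A bounded subset of $\c^n$ is totally bounded by Heine--Borel, so I can cover the image by finitely many balls of radius $\ep/6$ in the max-norm of $\c^n$. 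Pulling these balls back along the evaluation map partitions $A$ into finitely many cells $A_1,\dots,A_m$, and from each non-empty cell I pick a representative $f_k\in A_k$.

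It then remains to estimate $\|f-f_k\|_{\sup}$ for $f\in A_k$. Given any $x\in X$, pick $i$ with $x\in U_{x_i}$. Since $f$ and $f_k$ lie in the same cell, their value-tuples lie in one ball of radius $\ep/6$, so $|f(x_i)-f_k(x_i)|<\ep/3$, while equicontinuity gives $|f(x)-f(x_i)|<\ep/3$ and $|f_k(x)-f_k(x_i)|<\ep/3$. The triangle inequality then yields $|f(x)-f_k(x)|<\ep$ for every $x\in X$, hence $\|f-f_k\|_{\sup}\leq \ep$. Thus the $\ep$-balls centered at $f_1,\dots,f_m$ cover $A$, which is total boundedness (one may equally run the argument from the start with $\ep/2$ in order to land inside genuine open $\ep$-balls).

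The genuinely substantive step is the passage to $\c^n$: the whole argument succeeds by replacing the uncountable family $A$ with the finite list of sample points $x_1,\dots,x_n$, and this replacement is legitimate precisely because the three-term triangle estimate controls $f(x)-f_k(x)$ uniformly in $x$ once the values at the $x_i$ are close. I expect the only place demanding care is the bookkeeping of constants, namely choosing the radius in $\c^n$ small enough that the three $\ep/3$ contributions sum below $\ep$. I would also note that although hypothesis (ii) is phrased as pointwise equicontinuity rather than uniform equicontinuity, this weaker form already suffices, since compactness of $X$ turns the pointwise neighborhoods $U_x$ into a finite subcover directly, with no need to manufacture a single neighborhood working for all points at once.
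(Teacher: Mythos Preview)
Your argument is correct and is the standard proof of Arzel\`{a}--Ascoli: compactness plus equicontinuity reduces the problem to finitely many sample points, and pointwise boundedness then reduces it to total boundedness in $\c^n$. The paper, however, does not supply its own proof of this theorem; it simply refers the reader to Rudin's \emph{Functional Analysis} (page 394), so there is no in-text argument to compare against. For what it is worth, Rudin's proof follows essentially the same three-step scheme you have written.
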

\begin{corollary}
\label{cor:ascoli}
Let $A$ be as described in the Arzel\`{a}-Ascoli theorem. Then every sequence in $A$ has a convergent subsequence.
\end{corollary}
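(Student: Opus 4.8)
The plan is to combine the total boundedness guaranteed by the Arzel\`{a}-Ascoli theorem with the completeness of $C(X)$. Since $X$ is compact, $C(X)$ equipped with the sup norm is a Banach space, so every Cauchy sequence in $C(X)$ converges. Thus it suffices to extract from an arbitrary sequence $\{f_k\}$ in $A$ a Cauchy subsequence, and the theorem already tells us that $A$ is totally bounded.

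First I would set up a nested family of subsequences by a standard diagonal argument. For each $n\in\n$, total boundedness lets me cover $A$ by finitely many balls of radius $1/n$. Starting from $\{f_k\}$, since finitely many balls of radius $1$ cover $A$, at least one of them contains $f_k$ for infinitely many indices $k$; I retain those indices to form a subsequence $\{f_k^{(1)}\}$. Inductively, having produced $\{f_k^{(n-1)}\}$, I cover $A$ by finitely many balls of radius $1/n$ and pass to a further subsequence $\{f_k^{(n)}\}$ all of whose terms lie in a single such ball.

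Next I would form the diagonal subsequence $g_j:=f_j^{(j)}$. By construction, for every $N$ the terms $g_N, g_{N+1}, \dots$ all belong to the subsequence $\{f_k^{(N)}\}$ from stage $N$ onward, and hence lie in one fixed ball of radius $1/N$. Consequently $\|g_i-g_j\|<2/N$ whenever $i,j\geq N$, so $\{g_j\}$ is Cauchy in $C(X)$. By completeness it converges, and since $\{g_j\}$ is a subsequence of the original sequence, this proves the corollary. (Alternatively, one could observe that a closed, totally bounded subset of a complete metric space is compact, and invoke sequential compactness; but the diagonal argument is entirely self-contained given only total boundedness and completeness.)

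The main obstacle here is purely bookkeeping: one must check that the diagonal sequence is genuinely a subsequence of the original, i.e. that the underlying original indices are strictly increasing, and that from stage $N$ onward its terms remain inside a fixed ball of radius $1/N$. Neither point is deep; the only substantive input, total boundedness, is supplied by the Arzel\`{a}-Ascoli theorem, and the only other ingredient is the completeness of $C(X)$ for compact $X$.
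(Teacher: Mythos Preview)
Your proof is correct and rests on the same two ingredients as the paper's: total boundedness of $A$ from the Arzel\`{a}-Ascoli theorem and completeness of $C(X)$. The paper's argument is simply the compressed version you mention at the end---it passes to $\overline{A}$, notes that a complete and totally bounded metric space is compact (citing Munkres), and invokes sequential compactness---whereas you unroll this into an explicit diagonal extraction of a Cauchy subsequence.
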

Since the topology of $C(X)$ is induced by the supremum norm, we can rephrase this corollary by saying that every sequence in $A$ has a uniformly convergent subsequence.
\begin{proof}
Since $C(X)$ is complete, the closure of $A$ is complete and totally bounded. This implies that the closure of $A$ is compact, see Theorem 45.1 in \cite{munkeres}.
\end{proof}
\begin{example}
\label{exa:compactkernel}
Let $E$ be the Banach space $C([0,1])$.
\begin{itemize}
\item [(i)] For every $K\in C([0,1]\times [0,1])$, we define a compact operator $T_K\in K(E)$ as follows: For given $f\in E$, we define
\[
T_Kf(s):=\int_0^1 K(s,t)f(t) dt, \qquad \forall s\in [0,1].
\]
In order to show that $T_Kf\in E$, for every $s,s'\in [0,1]$, we compute
\begin{eqnarray*}
|T_Kf(s)-T_Kf(s')|&=&\left|\int_0^1 (K(s,t)-K(s',t))f(t)dt\right|\\
&\leq & \int_0^1 |K(s,t)-K(s',t)||f(t)|dt\\
&\leq & \sup_{t\in [0,1]} |K(s,t)-K(s',t)|\|f(t)\|_{\sup}.
\end{eqnarray*}
Since $[0,1]\times [0,1]$ is compact, $K$ is uniformly continuous. In particular, for every $\ep>0$, there exists a $\d>0$ such that $|s-s'| <\d$ implies
\[
|T_Kf(s)-T_Kf(s')|\leq \sup_{t\in [0,1]} |K(s,t)-K(s',t')| \|f(t)\|_{\sup}<\ep\|f(t)\|_{\sup}.
\]
This shows that $T_Kf$ is continuous on $[0,1]$. It also shows that $T_K((E)_1)$ is equicontinuous. On the other hand, for every $f\in (E)_1$, we have
\[
|T_Kf(s)|\leq \int_0^1 |K(s,t)f(t)|dt \leq \|K\|_{\sup} \|f\|_{\sup}\leq \|K\|_{\sup}.
\]
This shows that $T_K((E)_1)$ is pointwise bounded. Therefore by Corollary \ref{cor:ascoli}, every sequence in  $T_K((E)_1)$ has a convergent subsequence. Hence $T_K$ is a compact operator. This operator is called an {\bf integral operator associated with $K$} and the continuous function $K$ is called the {\bf kernel of $T_K$}.
\item [(ii)] For every $f\in E$, define
\[
Vf(s):=\int_0^s f(t)dt, \qquad \forall s\in [0,1].
\]
Clearly, $Vf$ is continuous, and so $V$ defines a linear map from $E$ into $E$. Next, for every $s,s'\in [0,1]$, we have
\[
|Vf(s)-Vf(s')|= \left|\int_s^{s'} f(t)dt\right|\leq |s-s'|\|f\|_{\sup}.
\]
This shows that $V((E)_1)$ is equicontinuous and pointwise bounded. Therefore $V$ is a compact operator on $E$. This operator is called the {\bf Volterra integral operator on $E$}.
\end{itemize}
\end{example}
\begin{proposition}
\label{prop:adjointcompact}
An operator $T\in B(E,F)$ is compact if and only if $T\s\in B(F\s,E\s)$ is compact.
\end{proposition}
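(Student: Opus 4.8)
The plan is to prove both directions through the sequential characterization of compactness in Proposition \ref{prop:compactoperators}(i), invoking the Arzel\`{a}-Ascoli theorem for the forward implication and the canonical double-dual embeddings for the converse. The identity that will tie the two operators together is $\|T\s\rho\|=\sup_{x\in(E)_1}|\rho(Tx)|$, valid for every $\rho\in F\s$, together with the fact that dual spaces are always complete.

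For the forward direction I would assume $T$ is compact and take an arbitrary bounded sequence $\{\rho_n\}$ in $F\s$, say $\|\rho_n\|\leq M$. The trick is to restrict the functionals to the compact set $K:=\overline{T((E)_1)}\sub F$ and view $\{\rho_n|_K\}$ as a family in $C(K)$. For $y,y'\in K$ one has $|\rho_n(y)-\rho_n(y')|\leq M\|y-y'\|$, so the family is equicontinuous, and since $K$ is bounded it is pointwise bounded; hence Corollary \ref{cor:ascoli} yields a subsequence $\{\rho_{n_j}|_K\}$ converging uniformly, and so uniformly Cauchy, on $K$. Because $Tx\in K$ for every $x\in(E)_1$, the estimate
\[
\|T\s\rho_{n_j}-T\s\rho_{n_k}\|=\sup_{x\in(E)_1}\left|(\rho_{n_j}-\rho_{n_k})(Tx)\right|\leq \sup_{y\in K}\left|(\rho_{n_j}-\rho_{n_k})(y)\right|
\]
shows $\{T\s\rho_{n_j}\}$ is Cauchy in the Banach space $E\s$, hence convergent, and Proposition \ref{prop:compactoperators}(i) then delivers compactness of $T\s$.

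For the converse I would apply the forward direction to $T\s$ to obtain that $T^{\ast\ast}\in B(E^{\ast\ast},F^{\ast\ast})$ is compact. Writing $\th_E:E\ra E^{\ast\ast}$ and $\th_F:F\ra F^{\ast\ast}$ for the canonical isometries of Exercise \ref{exe:dualspaces}(iii), a direct check on functionals gives the intertwining relation $T^{\ast\ast}\circ\th_E=\th_F\circ T$. Since $\th_E$ is an isometry, $\th_E((E)_1)\sub(E^{\ast\ast})_1$, so $\th_F(T((E)_1))=T^{\ast\ast}(\th_E((E)_1))$ is relatively compact in $F^{\ast\ast}$. By Exercise \ref{exe:dualspaces}(vi) the image $\th_F(F)$ is closed in $F^{\ast\ast}$, so the $F^{\ast\ast}$-closure of $\th_F(T((E)_1))$ lies inside $\th_F(F)$ and is compact there; transporting it back through the homeomorphism $\th_F\inv$ shows $\overline{T((E)_1)}$ is compact in $F$, i.e. $T$ is compact.

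The main obstacle will be this last step: one must avoid conflating relative compactness of $\th_F(T((E)_1))$ inside the large space $F^{\ast\ast}$ with relative compactness of $T((E)_1)$ inside $F$ itself. The closedness of $\th_F(F)$ in $F^{\ast\ast}$ from Exercise \ref{exe:dualspaces}(vi) is precisely what bridges this gap, and it is the point where completeness of $F$ is used. By contrast the forward direction is routine once one recognizes that restricting the $\rho_n$ to $K$ reduces equicontinuity and pointwise boundedness to the single bound $\|\rho_n\|\leq M$, after which Arzel\`{a}-Ascoli does the work.
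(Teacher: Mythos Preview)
Your forward direction is essentially the paper's argument: restrict the bounded sequence of functionals to the compact set $K=\overline{T((E)_1)}$, invoke Arzel\`a--Ascoli (Corollary \ref{cor:ascoli}), and translate uniform convergence on $K$ into norm convergence of $T\s\rho_{n_j}$ via $\|T\s\rho\|=\sup_{x\in(E)_1}|\rho(Tx)|$.

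For the converse the paper simply says ``the converse is proved similarly'': one repeats the Arzel\`a--Ascoli step with the roles swapped, viewing a bounded sequence $\{x_n\}$ in $E$ as an equicontinuous family on the compact set $\overline{T\s((F\s)_1)}\sub E\s$ and using $\|Tx\|=\sup_{\|\rho\|\leq 1}|\rho(Tx)|$ from Exercise \ref{exe:dualspaces}(ii). Your route is genuinely different and equally valid: apply the already-proved direction to $T\s$ to get $T^{\ast\ast}$ compact, then descend through the isometric embeddings $\th_E,\th_F$ and the identity $T^{\ast\ast}\circ\th_E=\th_F\circ T$. Your approach is more conceptual (one Arzel\`a--Ascoli argument suffices for both directions) but needs the closedness of $\th_F(F)$ in $F^{\ast\ast}$ from Exercise \ref{exe:dualspaces}(vi) to pull relative compactness back into $F$; the paper's symmetric argument avoids the double dual entirely and uses completeness of $F$ only in the final ``Cauchy $\Rightarrow$ convergent'' step.
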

\begin{proof}
Assume $T\in B(E,F)$ is a compact operator. Let $\{y^\ast_n\}$ be sequence in the unit ball of $F\s$. Since $\|y^\ast_n\|\leq 1$ for all $n\in \n$, this sequence is equicontinuous as a family of functions on $F$. Let $X$ be the closure of $T(E_1)$, where $E_1$ is the closed unit ball of $E$. Then $X$ is compact, and so, for every $x\in X$, the set $\{|y^\ast_n x|; n\in \n\}$ is bounded. By Corollary \ref{cor:ascoli}, the sequence $\{y^\ast_n\}$ has a uniformly convergent subsequence, say $\{y^\ast_{n_i}\}$, on $X$. Now, for every $i,j \in \n$, we compute
\begin{eqnarray*}
\|T\s y^\ast_{n_i} -T\s y^\ast_{n_j}\|&=& \sup_{x\in E_1} |T\s y^\ast_{n_i} -T\s y^\ast_{n_j}(x)|\\
&=&\sup_{x\in E_1} |(y^\ast_{n_i} - y^\ast_{n_j})(Tx)\|\\
&=&\|y^\ast_{n_i}-y^\ast_{n_j}\|_{\sup} \ra 0, \quad \text{as} \, i,j\ra\infty,
\end{eqnarray*}
where $\|-\|_{\sup}$ in the last term is the norm of $C(X)$. This shows that the sequence $\{T\s y^\ast_{n_i}\}$ is Cauchy, and since $F\s$ is complete, it is convergent. Therefore $T\s$ is a compact operator. The converse is proved similarly.
\end{proof}
\begin{exercise}
Complete the proof of the above proposition.
\end{exercise}
Assume $X$ is a topological vector space, $M$ is a subset of $X$ and $N$ is a subset of $X\s$. We define the {\bf annihilator of $M$} as follows:
\[
M^\perp:= \{\rho\in X\s; \rho(x)=0, \forall x\in M\}.
\]
Clearly, $M^\perp$ is a subspace of $X\s$ even when $M$ is not a subspace of $X$. Furthermore, it is straightforward to show that $M^\perp$ is closed in \ws topology. Similarly, we define the {\bf annihilator of $N$} as follows:
\[
^\perp N:= \{x\in X; \rho(x)=0, \forall \rho \in N\}.
\]
One easily checks that $^\perp N$ is a closed subspace of $X$.

\begin{proposition}
\label{prop:annihilators}
Assume $X$ is a Banach space, $M$ is a subspace of $X$ and $N$ is a subspace of $X\s$.
\begin{itemize}
\item [(i)] $^\perp(M^\perp)$ is the norm closure of $M$ in X.
\item [(ii)] $(^\perp N)^\perp$ is the \ws closure of $N$ in $X\s$.
\end{itemize}
\end{proposition}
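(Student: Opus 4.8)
The plan is to prove both parts by the same two-step strategy: first the easy inclusion, that the relevant closure sits inside the double annihilator, and then the reverse inclusion, via a Hahn--Banach separation argument. The two parts differ only in which topology one separates in, and this difference is exactly where the subtlety lies.

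For the easy inclusions, note straight from the definitions that $M\sub {}^\perp(M^\perp)$ and $N\sub ({}^\perp N)^\perp$: if $x\in M$ and $\rho\in M^\perp$ then $\rho(x)=0$, and symmetrically for $N$. Now ${}^\perp(M^\perp)$ is the annihilator of the subset $M^\perp\sub X\s$, hence a norm-closed subspace of $X$ (as observed in the text), so it contains the norm closure $\overline M$. Likewise $({}^\perp N)^\perp$ is the annihilator of the subset ${}^\perp N\sub X$, which the text notes is \ws closed, so it contains the \ws closure of $N$. This gives one inclusion in each part with no real work.

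For the reverse inclusion in (i), I would argue by contraposition: if $x_0\notin\overline M$, apply Proposition \ref{prop:locallyconvexfunctional} to the closed convex subspace $\overline M$ (in the norm-induced locally convex topology) and the point $x_0$, obtaining $\rho\in X\s$ and $b\in\r$ with $Re\,\rho(x_0)>b$ and $Re\,\rho(y)\leq b$ for all $y\in\overline M$. The key elementary observation is that a functional whose real part is bounded above on a complex subspace must vanish on it: since $ty$ and $ity$ lie in $\overline M$ for every real $t$, letting $t\ra\pm\infty$ forces both $Re\,\rho(y)=0$ and $Im\,\rho(y)=0$, hence $\rho(y)=0$ for all $y\in\overline M$. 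Thus $\rho\in M^\perp$ while $\rho(x_0)\neq0$, so $x_0\notin{}^\perp(M^\perp)$, completing (i). For (ii) I would run the very same separation argument, but now inside the locally convex space $(X\s,\ws)$: given $\psi_0$ outside the \ws closure of $N$, Proposition \ref{prop:locallyconvexfunctional} yields a \ws continuous functional $\Phi$ on $X\s$ and $b\in\r$ separating $\psi_0$ from that closure. By Exercise \ref{exe:dualspaces}(vi) the dual of $(X\s,\ws)$ is exactly $\th(X)$, so $\Phi=\th(x)$ for some $x\in X$, i.e. $\Phi(\psi)=\psi(x)$. The same vanishing-on-subspaces observation then gives $\psi(x)=0$ for all $\psi\in N$, that is $x\in{}^\perp N$, while $\psi_0(x)=\Phi(\psi_0)\neq0$; hence $\psi_0\notin({}^\perp N)^\perp$.

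The main obstacle is precisely the identification in (ii): the separation theorem only produces an abstract functional continuous for the \ws topology, and the desired conclusion $x\in{}^\perp N$ is meaningful only once we know such functionals are evaluations at genuine points of $X$. This is exactly the content of Exercise \ref{exe:dualspaces}(vi), and it is where completeness of $X$ enters. Everything else --- the easy inclusions and the lemma that a real part bounded above on a subspace vanishes --- is routine.
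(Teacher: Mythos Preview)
Your proposal is correct and follows essentially the same approach as the paper: the easy inclusions via closedness of annihilators, then Hahn--Banach separation for the reverse inclusions, with the crucial identification of \ws continuous functionals on $X\s$ with evaluations at points of $X$ via Exercise~\ref{exe:dualspaces}(vi) in part~(ii). The only cosmetic difference is that the paper invokes Hahn--Banach directly in the form ``there exists $\rho\in X\s$ vanishing on $M$ with $\rho(x_0)\neq 0$'', whereas you route through Proposition~\ref{prop:locallyconvexfunctional} and then argue explicitly that a functional with bounded real part on a subspace must vanish there; these are the same argument unpacked to different depths.
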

\begin{proof}
\begin{itemize}
\item [(i)] It is easy to see that $^\perp(M^\perp)$ is norm closed and contains $M$, and so $\overline{M}\sub ^\perp(M^\perp)$. Let $x_0\in X-\overline{M}$. Then using the Hahn-Banach theorem, one can find a linear functional $\rho\in X\s$ such that $\rho(x_0)\neq 0$ and $\rho(x)=0$ for all $x\in M$. Since $\rho\in M^\perp$, we conclude that $x_0\notin \,^\perp(M^\perp)$. Thus $^\perp(M^\perp)\sub \overline{M}$.
\item [(ii)] Due to the fact that $(^\perp N)^\perp$ is a \ws closed subspace containing $N$, it contains the \ws closure of $N$ as well. Let $\rho_0\in X\s - \widetilde{N}$, where $\widetilde{N}$ denotes the \ws closure of $N$. Similar to Part (i), by applying the Hahn-Banach theorem to $X\s$, equipped with \ws topology, we find a linear functional $\alpha$ on $X\s$ such that $\alpha$ is continuous with respect to the \ws topology of $X\s$, $\alpha(\rho_0)\neq 0$ and $\alpha(\rho)=0$ for all $\rho\in N$. By Exercise \ref{exe:dualspaces}(vi), there exists some $x\in X$ such that $\th(x)=\alpha$. Hence $\rho_0(x)\neq 0$ and $\rho(x)=0$ for all $\rho\in N$. Therefore $\rho_0\notin (^\perp N)^\perp$. This shows that $(^\perp N)^\perp \sub \widetilde{N}$.
\end{itemize}
\end{proof}
\begin{remark}
It follows from the Hahn-Banach theorem that the elements of $E\s$ separate points of $E$, namely, for every $x\in E$, there exists $\rho\in E\s$ such that $\rho(x)\neq 0$. Similarly, the elements of $E$ separate points of $E\s$.
\end{remark}
\begin{lemma}
\label{lem:NRannihil}
Let $T\in B(E,F)$. Then the following statements hold:
\begin{itemize}
\item [(i)] $N(T\s)=R(T)^\perp$.
\item [(ii)] $N(T)=\,^\perp R(T\s)$.
\item [(iii)] $N(T\s)$ is \ws closed in $F\s$.
\item [(iv)] $R(T)$ is dense in $F$ if and only if $T\s$ is one-to-one.
\item [(v)] $T$ is one-to-one if and only if $R(T\s)$ is \ws dense in $X\s$.
\end{itemize}
\end{lemma}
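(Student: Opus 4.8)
My plan is to treat parts (i)--(iii) as direct consequences of the definitions of $T\s$ and of the two annihilators, and then to deduce (iv) and (v) by feeding (i) and (ii) into the annihilator duality of Proposition \ref{prop:annihilators}. First I would prove (i) by a one-line computation: for $\rho\in F\s$ we have $\rho\in N(T\s)$ precisely when $T\s\rho=\rho T=0$, i.e. when $\rho(Tx)=0$ for every $x\in E$, which is exactly the statement that $\rho$ annihilates $R(T)$; hence $N(T\s)=R(T)^\perp$. Part (iii) then costs nothing: as already observed right after the definition of the annihilator, $M^\perp$ is \ws closed for an arbitrary subset $M\sub F$, so taking $M=R(T)$ and using (i) shows that $N(T\s)=R(T)^\perp$ is \ws closed.

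For (ii) I would again unwind definitions, but here the point $x$ lives in $E$ and $R(T\s)=\{\sigma T;\sigma\in F\s\}$. Thus $x\in\,^\perp R(T\s)$ means $(\sigma T)(x)=\sigma(Tx)=0$ for all $\sigma\in F\s$. The only nontrivial input is that a vector of $F$ is zero if and only if every continuous functional kills it; this is the Hahn--Banach separation of points recorded in the Remark preceding the lemma. Applying it to $Tx$, the condition $\sigma(Tx)=0$ for all $\sigma\in F\s$ is equivalent to $Tx=0$, i.e. to $x\in N(T)$, which gives $N(T)=\,^\perp R(T\s)$.

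Finally, (iv) and (v) are obtained by combining the kernel descriptions with Proposition \ref{prop:annihilators}. For (iv), $T\s$ is injective iff $N(T\s)=\{0\}$, which by (i) says $R(T)^\perp=\{0\}$; and a subspace $M$ of a Banach space is norm dense exactly when $M^\perp=\{0\}$, since by Proposition \ref{prop:annihilators}(i) one has $^\perp(M^\perp)=\overline{M}$, so $M^\perp=\{0\}$ forces $\overline{M}=\,^\perp\{0\}=F$, while conversely density forces $M^\perp=\{0\}$ by continuity. Taking $M=R(T)$ yields (iv). For (v), $T$ is injective iff $N(T)=\{0\}$, which by (ii) reads $^\perp R(T\s)=\{0\}$; now Proposition \ref{prop:annihilators}(ii) identifies the \ws closure of $N:=R(T\s)$ with $(^\perp N)^\perp$, so $^\perp N=\{0\}$ makes this closure equal to $\{0\}^\perp=E\s$, i.e. $R(T\s)$ is \ws dense, and conversely \ws density gives $^\perp R(T\s)=\,^\perp E\s=\{0\}$ by separation of points.

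The arguments are short, so there is no serious obstacle; the one place a careless proof would slip is in (v), where the correct notion is \ws density rather than norm density. This is forced precisely because $R(T\s)$ sits inside a dual space, and it is only the \ws closure that is computed by the double annihilator $(^\perp N)^\perp$ via Proposition \ref{prop:annihilators}(ii). Keeping this distinction straight, and being careful to invoke the separation-of-points form of Hahn--Banach rather than some stronger statement, is the whole content beyond the definitional unwinding.
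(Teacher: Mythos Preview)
Your proof is correct and follows essentially the same route as the paper: parts (i) and (ii) are unwound directly from the definitions (with Hahn--Banach separation of points for (ii)), (iii) is immediate from (i), and (iv), (v) are obtained by combining (i), (ii) with Proposition~\ref{prop:annihilators} and the separation remark. The paper's proof is more terse but uses exactly the same ingredients in the same order.
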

\begin{proof}
 \begin{itemize}
\item [(i)] $\rho\in N(T\s)\Leftrightarrow T\s \rho =0 \Leftrightarrow \rho T(x)=0,\, \forall x\in E \Leftrightarrow\rho\in R(T)^\perp$.
\item [(ii)] $x\in N(T)\Leftrightarrow Tx =0 \Leftrightarrow \rho (Tx)=0,\, \forall \rho \in F\s \Leftrightarrow T\s \rho(x)=0, \forall \rho\in F\s$ $\Leftrightarrow x\in \,^\perp R(T\s)$.
\item [(iii)] It follows from Part (i).
\item [(iv)] It follows from Part (i), Proposition \ref{prop:annihilators}(i), and the above remark.
\item [(v)] It follows from Part (ii), Proposition \ref{prop:annihilators}(ii), and the above remark.
\end{itemize}
\end{proof}
\begin{proposition}
\label{prop:rangecloseddual} For every $T\in B(E,F)$, the following statements are equivalent:
\begin{itemize}
\item [(i)] $R(T)$ is closed in $F$.
\item [(ii)] $R(T\s)$ is \ws closed in $E$.
\item [(iii)] $R(T\s)$ is norm closed in $E$.
\end{itemize}
\end{proposition}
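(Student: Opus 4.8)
The plan is to prove the cycle of implications (i) $\Rightarrow$ (ii) $\Rightarrow$ (iii) $\Rightarrow$ (i). The implication (ii) $\Rightarrow$ (iii) is immediate: the \ws topology on $E\s$ is coarser than the norm topology, so every \ws closed subset of $E\s$ is automatically norm closed. The genuine content lies in (i) $\Rightarrow$ (ii) and, above all, in (iii) $\Rightarrow$ (i), which is the hard half of Banach's closed range theorem.

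For (i) $\Rightarrow$ (ii), I would show that closedness of $R(T)$ forces the identity $R(T\s)=N(T)^\perp$; since $N(T)^\perp$ is the annihilator of a subset of $E$, it is automatically \ws closed, which is exactly (ii). The inclusion $R(T\s)\sub N(T)^\perp$ holds for any $T$, because $T\s\sigma(x)=\sigma(Tx)=0$ whenever $x\in N(T)$. For the reverse inclusion, take $\rho\in N(T)^\perp$. Since $\rho$ vanishes on $N(T)$ it factors through a functional $\bar\rho$ on the quotient $E/N(T)$, and the induced map $\bar T\colon E/N(T)\ra R(T)$ is a continuous bijection onto the Banach space $R(T)$ (this is where closedness of $R(T)$ enters). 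By the open mapping theorem, Theorem \ref{thm:openmapping}, $\bar T$ has a bounded inverse, so the rule $Tx\mapsto\rho(x)$ defines a bounded functional on $R(T)$; extending it to $F$ by the Hahn-Banach theorem, Theorem \ref{thm:hahnbanach}, yields $\sigma\in F\s$ with $T\s\sigma=\rho$. Hence $R(T\s)=N(T)^\perp$ is \ws closed.

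The hard direction is (iii) $\Rightarrow$ (i). First I would reduce to the case of dense range: replacing $F$ by $Z:=\overline{R(T)}$ and factoring $T=\iota\circ T_0$ through the inclusion $\iota\colon Z\hookrightarrow F$, the adjoint splits as $T\s=T_0\s\circ\iota\s$ with $\iota\s\colon F\s\ra Z\s$ surjective (every functional on $Z$ extends to $F$ by Hahn-Banach), so $R(T\s)=R(T_0\s)$ and hypothesis (iii) passes to $T_0$. Thus I may assume $R(T)$ is dense, whence by Lemma \ref{lem:NRannihil}(iv) the adjoint $T\s$ is injective, and the goal becomes to prove $T$ surjective. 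Now $T\s$ is an injective bounded map onto the norm-closed, hence complete, subspace $R(T\s)$, so by the open mapping theorem it is bounded below: there is $c>0$ with $\|T\s\sigma\|\geq c\|\sigma\|$ for all $\sigma\in F\s$.

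The crux is to convert this lower bound on $T\s$ into surjectivity of $T$. I would first show that the closed, convex, balanced set $\overline{T((E)_1)}$ contains every $y_0$ with $\|y_0\|<c$: otherwise the separation theorem for convex closed balanced sets, Theorem \ref{thm:ccblanced}, gives $\sigma\in F\s$ with $|\sigma(Tx)|\leq 1$ for all $x\in(E)_1$ and $\sigma(y_0)>1$; the first condition says $\|T\s\sigma\|\leq 1$, whence $\|\sigma\|\leq 1/c$, while the second forces $|\sigma(y_0)|>1$, contradicting $|\sigma(y_0)|\leq\|\sigma\|\,\|y_0\|<1$. Finally I would upgrade the approximate statement $\overline{T((E)_1)}\supseteq\{y:\|y\|<c\}$ to genuine surjectivity by the standard successive-approximation argument using completeness of $E$, exactly as in the proof of the open mapping theorem, producing for each $y\in F$ an actual preimage; this gives $R(T)=F$, which is closed. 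I expect this last completeness step, turning approximate solvability into exact solvability, to be the main technical obstacle, together with the care needed in the reduction to dense range.
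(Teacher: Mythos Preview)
Your proposal is correct and follows essentially the same route as the paper: for (i)$\Rightarrow$(ii) you prove $R(T\s)=N(T)^\perp$ via the open mapping theorem and Hahn--Banach, and for (iii)$\Rightarrow$(i) you reduce to dense range, apply the open mapping theorem to the injective adjoint to get it bounded below, then use the separation argument of Theorem~\ref{thm:ccblanced} followed by successive approximation. The paper packages your last two steps into a separate lemma (Lemma~\ref{lem:openfourequivalent}), but the content is identical.
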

\begin{proof}
Assume (i) holds. By Propositions \ref{lem:NRannihil}(ii), we have $N(T)^\perp = (^\perp R(T\s))^\perp$, and by \ref{prop:annihilators}(ii), $(^\perp R(T\s))^\perp$ is the \ws closure of $R(T\s)$. Thus $N(T)^\perp$ is the \ws closure of $R(T\s)$. Therefore to prove (ii), it is enough to show $N(T)^\perp\sub R(T\s)$. For $0\neq \rho\in N(T)^\perp$, define $\rho':R(T)\ra \c$ by $\rho'(Tx):=\rho(x)$. One easily checks that $\rho'$ is well defined because $\rho\in N(T)^\perp$. Since $R(T)$ is closed, and consequently complete, by open mapping theorem, $T:E\ra R(T)$ is open. Therefore for every $\ep>0$, there is $\d>0$ such that $\|Tx\|<\d$ implies that $\|x\|\leq \ep/\|\rho\|$. Now, for every $x\in E$ such that $\|Tx\|<\d$, we have
\[
|\rho'(Tx)|=|\rho(x)|\leq \|\rho \|\|x\|\leq \|\rho\| \frac{\ep}{\|\rho\|}=\ep.
\]
This shows that $\rho'$ is continuous. By the Hahn-Banach theorem, $\rho'$ has an extension $\Lambda:F\ra \c$. Then for every $x\in E$, we have $T\s\Lambda(x)=\Lambda(Tx)=\rho'(Tx)=\rho(x)$. Hence $\rho\in R(T\s)$.

Clearly, (iii) follows from (ii).

Assume (iii) holds. Let $Z$ denote the norm closure of $R(T)$. Define $S\in B(E,Z)$ by $Sx=Tx$ for all $x\in E$. By Proposition \ref{lem:NRannihil}(iv), $S\s\in B(Z\s, E\s)$ is one-to-one.  For every $\rho \in F\s$ and $\rho'\in Z\s$ such that $\rho|_Z=\rho'$, we have
\[
T\s \rho (x)=\rho(Tx)=\rho'(Tx)= \rho'(Sx)= S\s \rho' (x), \quad \forall x\in E,
\]
and therefore $T\s\rho=S\s \rho'$. Now, due to the fact that every bounded linear functional on $Z$ has an extension to whole $F$, this shows $R(T\s)=R(S\s)$. Hence $R(S\s)$ is closed, and consequently complete. Therefore we can apply the open mapping theorem to the bijective operator $S\s:Z\s\ra R(S\s)$ and conclude that its inverse is bounded too. This means that there is a constant $\d>0$ such that $\d\|\rho\| \leq \|S\s \rho\|$ for all $\rho \in Z\s$. Now, it follows from this inequality and the following lemma that $S(X)=Z$. Hence $Z=R(S)=R(T)$, and therefore $R(T)$ is norm closed.
\end{proof}

\begin{lemma}
\label{lem:openfourequivalent}
Let $T\in B(E,F)$ and let $U$ and $V$ be the open unit balls in $E$ and $F$, respectively. Then the following four statements are equivalent:
\begin{itemize}
\item [(i)] There is $\d>0$ such that $\d\|\rho\| \leq \|T\s \rho\|$ for all $\rho \in F\s$. In other words $T\s$ is bounded below.
\item [(ii)] There is $\d>0$ such that $\d V\sub \overline{T(U)}$.
\item [(iii)] There is $\d>0$ such that $\d V\sub T(U)$.
\item [(iv)] $T(E)=F$.
\end{itemize}
Moreover, the same $\d$ works for all first three conditions.
\end{lemma}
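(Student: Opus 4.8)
The plan is to run the classical chain of implications while keeping careful track of the constant $\d$, since the lemma asserts that a single $\d$ serves conditions (i)--(iii) simultaneously. I would first settle the duality equivalence (i)$\Leftrightarrow$(ii), then promote the closure statement (ii) to the exact statement (iii) by a successive--approximation argument that does not spoil $\d$, and finally close the loop between (iii) and (iv) using a scaling argument in one direction and the open mapping theorem in the other.

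For (i)$\Leftrightarrow$(ii), the key observation is that $C := \overline{T(U)}$ is a closed, convex, balanced subset of $F$ (the image of the convex balanced set $U$ under a linear map is convex and balanced, and closure preserves both), and that for every $\rho \in F\s$ one has
\[
\|T\s\rho\| = \sup_{x\in U}|\rho(Tx)| = \sup_{y\in C}|\rho(y)|,
\]
where the last equality uses continuity of $\rho$. If (ii) holds, then $\d V \subseteq C$ forces $\|T\s\rho\| \geq \d\sup_{y\in V}|\rho(y)| = \d\|\rho\|$, which is (i). Conversely, if (ii) fails for a given $\d$, pick $y_0 \in \d V - C$ and apply the separation theorem for closed convex balanced sets, Theorem \ref{thm:ccblanced}, to obtain $\rho \in F\s$ with $|\rho(y)| \leq 1$ on $C$ and $\rho(y_0) > 1$; then $\|T\s\rho\| \leq 1$ while $1 < \rho(y_0) \leq \|\rho\|\,\|y_0\| < \d\|\rho\|$, contradicting (i) with the same $\d$. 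Thus (i) and (ii) hold for exactly the same values of $\d$.

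For (ii)$\Rightarrow$(iii), I would iterate. Rescaling $\d V \subseteq \overline{T(U)}$ shows that for every $w\in F$ and every $\eta>0$ there is $x\in E$ with $\|x\| < \|w\|/\d + \eta$ and $\|w-Tx\| < \eta$. Given $y$ with $\|y\| < \d$, set $\theta := \|y\|/\d < 1$; this strict inequality is exactly the slack I need. I would construct $x_1, x_2, \dots$ with $y_0 := y$ and $y_n := y_{n-1} - Tx_n$, choosing the error bounds $\eta_n \downarrow 0$ fast enough that $\sum_n \|x_n\| < 1$; completeness of $E$ then gives $x := \sum_n x_n \in U$ with $Tx = y$, since $y_n \to 0$. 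Hence $\d V \subseteq T(U)$ with the same $\d$. The reverse (iii)$\Rightarrow$(ii) is immediate from $T(U)\subseteq\overline{T(U)}$, and (iii)$\Rightarrow$(iv) follows by scaling: for any $y\in F$ one has $\|y\| < r\d$ for large $r$, whence $y \in r\d V \subseteq T(rU) \subseteq T(E)$. Finally, for (iv)$\Rightarrow$(iii) I would invoke the open mapping theorem, Theorem \ref{thm:openmapping}: a surjective bounded operator between Banach spaces is open, so $T(U)$ is open and contains $0 = T(0)$, hence contains some ball $\d V$.

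The main obstacle is the bookkeeping in (ii)$\Rightarrow$(iii) required to preserve the constant $\d$ rather than lose a factor. Condition (ii) only supplies approximate preimages lying in $\overline{T(U)}$, and a careless iteration bounds $\|x\|$ by $\|y\|/\d$ plus an accumulated error, which would force a strictly larger ball. Working with the \emph{open} ball $V$, so that $\|y\| < \d$ yields $\theta < 1$ with genuine room $1-\theta > 0$, and then choosing the $\eta_n$ to sum against that room, is precisely what allows the same $\d$ to survive; this is also the only place where completeness of $E$ enters.
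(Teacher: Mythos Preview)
Your proposal is correct and follows essentially the same route as the paper: Hahn--Banach separation (Theorem \ref{thm:ccblanced}) for the duality between (i) and (ii), a successive-approximation argument exploiting the slack $1-\|y\|/\d$ for (ii)$\Rightarrow$(iii), and the open mapping theorem for (iv)$\Rightarrow$(iii). The only cosmetic difference is that the paper runs the cycle (i)$\Rightarrow$(ii)$\Rightarrow$(iii)$\Rightarrow$(i) rather than proving (i)$\Leftrightarrow$(ii) directly, but the computations are the same.
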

\begin{proof}
Assume (i) holds. Since $\overline{T(U)}$ is convex, closed and balanced, by Theorem \ref{thm:ccblanced}, for every $y_0\in F- \overline{T(U)}$, one can find $\rho\in F\s$ such that $|\rho(y)|\leq 1$ for all $y\in \overline{T(U)}$ and $\rho(y_0)>1$. Hence for all $x\in U$, we have $|T\s \rho(x)|= |\rho(Tx)|\leq 1$, and so $\|T\s \rho\|\leq 1$. Using (i), we obtain
\[
\d<|\d\rho(y_0)|\leq \d \|y_0\| \|T\s\rho\|\leq \|y_0\|.
\]
Therefore $y\in \overline{T(U)}$ for all $y\in \d V$. Hence (i) implies (ii).

Next, assume (ii) holds, then $\overline{\d V}\sub \overline{T(U)}$ for some $\d>0$. For every $y_1\in V$, let $\{\ep_n\}$ be a sequence of strictly positive real numbers such that $\sum_{n=1}^\infty \ep_n < 1-\|y_1\|$. This implies that $\ep_n\ra 0$ as $n\ra \infty$. Find $x_1\in E$ such that $\|x_1/\d\|\leq \|y_1\|$ and $\|y_1-T(x_1/\d)\|\leq \ep_1$. For $n\geq 2$, set $y_n:=y_{n-1} -T(x_{n-1}/\d)$ and find $x_n\in E$ such that
\[
\|x_n/\d\|\leq \|y_n\| \quad \text{and}\quad \|y_n-T(x_n/\d)\|<\ep_n.
\]
By this construction, we get two sequences $\{x_n\}\sub E$ and $\{y_n\}\sub V$ such that
\[
\|x_{n+1}/\d \| \leq \|y_{n+1}\|= \| y_n - T(x_n/\d)\|<\ep_n, \quad \forall n\in \n.
\]
Therefore $y_n\ra 0$ as $n\ra \infty$ and we have
\[
\sum_{n=1}^\infty \|x_n/\d\|\leq \|x_1/\d\|+ \sum_{n=1}^\infty \ep_n \leq  \|y_1\|+ \sum_{n=1}^\infty \ep_n<1.
\]
Since $E$ is complete, $\sum_{n=1}^\infty x_n/\d$ is convergent to some $x\in E$ and $\|x\|<1$, see Problem \ref{e:5-2}. Now, we compute
\[
Tx=\sum_{n=1}^\infty T(x_n/\d) = \sum_{n=1}^\infty (y_n-y_{n+1})= y_1-\lim_{n\ra \infty}y_{n+1}=y_1.
\]
This shows that $y_1\in T(U)$ and proves (iii).

Assume (iii) holds. For every $\rho\in F\s$, we have
\begin{eqnarray*}
\|T\s \rho\|&=& \sup \{|T\s\rho (x)|; x\in U\}\\
&=& \sup \{|\rho (Tx)|; x\in U\}\\
&\geq& \sup \{|\rho (y)|; y\in \d V\}=\d \|\rho\|.
\end{eqnarray*}
This proves (i).

Finally, we note that (iii) clearly implies (iv), and by open mapping theorem, (iv) implies (iii).
\end{proof}

\begin{exercise}
\label{exe:ontoadjoint}
For given $T\in B(E,F)$, show that $T$ is onto if and only if $T\s$ is one-to-one and $R(T\s)$ is norm closed.
\end{exercise}

\begin{lemma}
\label{lem:anhilatorsubspace}
Let $E$ be a locally convex topological space and let $M_0$ be a closed subspace of $X$.
Then we have
\[
\dim X/M_0\leq \dim M_0^\perp.
\]
\end{lemma}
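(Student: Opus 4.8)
The plan is to bound $\dim M_0^\perp$ from below by $\dim X/M_0$ directly, by manufacturing inside $M_0^\perp$ as many linearly independent functionals as the quotient demands. Concretely, I would show that for every positive integer $n$ with $n\leq \dim X/M_0$ there exist $n$ linearly independent elements of $M_0^\perp$; taking $n=\dim X/M_0$ (or letting $n$ grow without bound when $X/M_0$ is infinite dimensional) then gives the claimed inequality. Since $M_0$ is closed, the quotient $Y:=X/M_0$ is again a (Hausdorff) locally convex space and the quotient map $q:X\ra Y$ is continuous; because $q$ is surjective, $\bar\rho\mapsto \bar\rho\circ q$ carries a linearly independent family in $Y\s$ to a linearly independent family in $M_0^\perp$. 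So it suffices to produce $n$ independent functionals on $Y$.

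For the construction, fix $n\leq \dim Y$ and choose $x_1,\dots,x_n\in X$ whose images $\bar x_1,\dots,\bar x_n$ are linearly independent in $Y$. On the finite dimensional subspace $\bar N:=\mathrm{span}\{\bar x_1,\dots,\bar x_n\}$ take the dual basis $\bar\alpha_1,\dots,\bar\alpha_n$, characterised by $\bar\alpha_j(\bar x_i)=\delta_{ij}$. Each $\bar\alpha_j$ is automatically continuous on $\bar N$, since every linear functional on a finite dimensional Hausdorff topological vector space is continuous. Hence the Hahn--Banach theorem (Theorem \ref{thm:hahnbanach}), applicable because $Y$ is locally convex, extends each $\bar\alpha_j$ to a functional $\bar\rho_j\in Y\s$. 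Pulling back, $\rho_j:=\bar\rho_j\circ q\in M_0^\perp$ and $\rho_j(x_i)=\bar\rho_j(\bar x_i)=\delta_{ij}$. These relations force $\{\rho_1,\dots,\rho_n\}$ to be linearly independent, so $\dim M_0^\perp\geq n$, and the inequality $\dim X/M_0\leq \dim M_0^\perp$ follows.

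The routine parts here are the passage through $q$ and the dual-basis bookkeeping; the one point that genuinely needs care is continuity of the extended functionals, and this is exactly where working in the quotient pays off: on $Y$ the finite dimensional dual-basis functionals are continuous for free, so Hahn--Banach supplies continuous extensions with no further estimate, and no appeal to continuity of a projection (as in Exercise \ref{exe:projectionsbanach}) is required. The only conceptual caveat I would flag rather than belabor concerns the meaning of the inequality when $X/M_0$ is infinite dimensional: there the argument shows merely that $M_0^\perp$ contains arbitrarily large finite independent families, hence is itself infinite dimensional, which is all the statement should be taken to assert in that regime, since for a \emph{continuous} dual one cannot expect infinite dimensions to match on the nose.
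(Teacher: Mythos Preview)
Your proof is correct and rests on the same idea as the paper's: for each $n\leq\dim X/M_0$, manufacture via Hahn--Banach $n$ linearly independent continuous functionals vanishing on $M_0$, witnessed by their values on vectors $x_1,\dots,x_n$ independent modulo $M_0$. The only difference is packaging: the paper stays in $X$, builds the flag $M_i:=\langle x_1,\dots,x_i\rangle\oplus M_0$ of closed subspaces (invoking Theorem 1.42 of \cite{rudinfunctional} for closedness), and applies Hahn--Banach to separate $x_i$ from $M_{i-1}$, obtaining a \emph{triangular} system $\rho_i(x_i)=1$, $\rho_i\in M_{i-1}^\perp$; you instead pass to the Hausdorff quotient $Y=X/M_0$ and extend the dual basis from a finite dimensional subspace of $Y$, obtaining the \emph{diagonal} system $\rho_j(x_i)=\delta_{ij}$. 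Your route is marginally cleaner in that Hausdorffness of $Y$ (from $M_0$ closed) gives continuity of the dual basis for free, so you need no separate closedness argument for the intermediate subspaces.
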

\begin{proof}
For every positive integer $k\leq \dim X/M_0$, there are vectors $x_1,\cdots,x_k$ in $X$ such that if we set $M_i:=\lan x_1,\cdots ,x_i\ran\oplus M_0$ for all $1\leq i \leq k$, then every $M_i$ is closed by Theorem 1.42 of \cite{rudinfunctional} and $M_0\varsubsetneq M_1\varsubsetneq M_1\varsubsetneq \cdots\varsubsetneq M_k$. Applying the Hahn-Banach theorem, there are $k$ linear functionals $\rho_1,\cdots, \rho_k$ on $X$ such that $\rho_i x_i=1$ and $\rho_i\in M_{i-1}^\perp$. Since $x_1, \cdots, x_k$ are linearly independent, so are $\rho_1,\cdots, \rho_k$. This implies the desired inequality.
\end{proof}
\begin{theorem}
\label{thm:compactoptspec}
Let $T\in K(E)$.
\begin{itemize}
\item [(i)] For every non-zero $\la\in \c$, the following four numbers are equal and finite:
\begin{eqnarray*}
\alpha&:=& \dim N(T-\la),\\
\beta&:=& \dim E/R(T-\la),\\
\alpha\s&:=& \dim N(T\s-\la),\\
\beta\s&:=& \dim E\s /R(T\s-\la).
\end{eqnarray*}
\item [(ii)] For every non-zero $\la\in \si(T)$, $\la$ is an eigenvalue of both $T$ and $T\s$.
\item [(iii)] The spectrum of $T$ is at most countable and its only possible limit point is $0$.
\end{itemize}
\end{theorem}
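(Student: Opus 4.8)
The plan is to fix a nonzero $\la\in\c$, write $S:=T-\la$, reduce all of (i) to the single identity $\alpha=\beta$, and then read off (ii) and (iii). First I would record the structural facts already at hand: $N(S)$ is finite dimensional by Proposition \ref{prop:compactopt1}(ii), $R(S)$ is closed by Proposition \ref{prop:compactclosedrange}, and $T\s$ is again compact by Proposition \ref{prop:adjointcompact}, so the same two facts hold for $S\s=T\s-\la$. Now by Lemma \ref{lem:NRannihil}(i) we have $N(S\s)=R(S)^\perp$; since $R(S)$ is closed this annihilator is naturally identified with $(E/R(S))\s$, and since Lemma \ref{lem:anhilatorsubspace} gives $\beta\leq\dim R(S)^\perp=\alpha\s<\infty$, the quotient $E/R(S)$ is finite dimensional, whence $\alpha\s=\dim(E/R(S))\s=\beta$. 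Dually, Proposition \ref{prop:rangecloseddual} shows $R(S\s)$ is \ws closed, so Proposition \ref{prop:annihilators}(ii) together with $N(S)={}^\perp R(S\s)$ (Lemma \ref{lem:NRannihil}(ii)) forces $R(S\s)=N(S)^\perp$, and therefore $\beta\s=\dim E\s/N(S)^\perp=\dim N(S)\s=\alpha$. Thus the relations $\alpha\s=\beta$ and $\beta\s=\alpha$ hold for every operator of the form (compact)$\,-\la$, and (i) is reduced to proving $\alpha=\beta$.

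The hard step, which I expect to be the main obstacle, is exactly $\alpha=\beta$; I would establish it by a finite rank perturbation argument powered by Proposition \ref{prop:eigenvaluefinite}(i), whose content is precisely ``surjective implies injective'' for such operators. Using Lemma \ref{lem:complementedsub} and Exercise \ref{exe:projectionsbanach}, split $E=N(S)\oplus E_1$ and $E=R(S)\oplus F_1$ with $\dim N(S)=\alpha$ and $\dim F_1=\beta$, and let $P:E\ra N(S)$ be the bounded projection. Suppose $\alpha>\beta$: choose a surjective linear map $\psi:N(S)\ra F_1$ with nonzero kernel and set $\Phi:=S+\psi P=(T+\psi P)-\la$, which is again (compact)$\,-\la$ since $\psi P$ has finite rank. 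Because $R(S)\cap F_1=0$ one checks $R(\Phi)=R(S)\oplus F_1=E$, so $\Phi$ is onto, hence injective by Proposition \ref{prop:eigenvaluefinite}(i); but any nonzero vector of $\ker\psi\sub N(S)$ lies in $N(\Phi)$, a contradiction. Suppose instead $\alpha<\beta$: choose an injective $\ff:N(S)\ra F_1$ that is not onto and set $\Phi:=S+\ff P$; then $N(\Phi)=0$, so $\alpha(\Phi)=0$ and the duality relation above gives $\beta\s(\Phi)=0$, i.e. $\Phi\s$ is onto, which by Proposition \ref{prop:eigenvaluefinite}(i) (for the compact operator $T\s+(\ff P)\s$) makes $\Phi\s$ injective, so $\alpha\s(\Phi)=\beta(\Phi)=0$ and $\Phi$ is onto; this contradicts $R(\Phi)=R(S)\oplus\ff(N(S))\varsubsetneq E$. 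Hence $\alpha=\beta$, and combining with $\alpha\s=\beta$ and $\beta\s=\alpha$ shows all four numbers coincide.

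For (ii), if $0\neq\la\in\si(T)$ then $T-\la$ is not invertible, hence not bijective by Proposition \ref{prop:invoperator}; thus either $\alpha\geq1$, or $\beta\geq1$ (as $R(T-\la)$ is closed and proper), and in either case the equalities from (i) give $\alpha=\alpha\s\geq1$, so $\la$ is an eigenvalue of both $T$ and $T\s$. For (iii), part (ii) yields $\si(T)\setminus\{0\}=\bigcup_{n\in\n}e_{1/n}(T)$, and each $e_{1/n}(T)$ is finite by Proposition \ref{prop:eigenvaluefinite}(ii), so $\si(T)$ is at most countable. Finally, a nonzero accumulation point $\mu$ of $\si(T)$ would place infinitely many eigenvalues in the region $|\la|>|\mu|/2$, contradicting the finiteness of $e_{|\mu|/2}(T)$; hence $0$ is the only possible limit point.
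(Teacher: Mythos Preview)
Your proof is correct and follows essentially the same approach as the paper: the same closed-range and annihilator lemmas for the duality relations, and the identical finite-rank perturbation $\Phi=T+K\pi$ combined with Proposition~\ref{prop:eigenvaluefinite}(i) for the key step. The only organizational difference is that you upgrade the paper's inequalities $\beta\leq\alpha\s$, $\beta\s\leq\alpha$ to equalities $\alpha\s=\beta$, $\beta\s=\alpha$ (via $(E/R(S))\s\simeq R(S)^\perp$ and $E\s/N(S)^\perp\simeq N(S)\s$ in finite dimensions) and then handle both cases $\alpha>\beta$ and $\alpha<\beta$ directly, whereas the paper proves only $\alpha\leq\beta$, obtains $\alpha\s\leq\beta\s$ by applying the same to $T\s$, and closes the chain $\alpha\leq\beta\leq\alpha\s\leq\beta\s\leq\alpha$; your second case is thus redundant (once $\alpha\s=\beta$ and $\beta\s=\alpha$ are known, $\alpha\leq\beta$ applied to $T\s$ already gives $\beta\leq\alpha$), but the argument is sound.
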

\begin{proof}
\begin{itemize}
\item [(i)] Set $S:=T-\la$. By Proposition \ref{prop:compactclosedrange}, $R(S)$ is norm closed and, by Lemma \ref{lem:NRannihil}(i), $R(S)^\perp=N(S\s)$. Therefore applying Lemma \ref{lem:anhilatorsubspace} for $R(S)^\perp\sub E$, we obtain
\begin{equation}
\label{eqn:a1}
\beta\leq \alpha\s.
\end{equation}
Since $R(S)$ is norm closed, $R(S\s)$ is \ws closed by Proposition \ref{prop:rangecloseddual}. Considering $R(S\s)$ as a closed subspace of $E\s$ in \ws topology, it follows from Exercise \ref{exe:dualspaces}(vi) that $R(S\s)^\perp =\,^\perp R(S\s)$. On the other hand, by Lemma \ref{lem:NRannihil}(ii), we have $\,^\perp R(S\s) = N(S)$. Therefore if we apply Lemma \ref{lem:anhilatorsubspace} for the closed subspace $R(S\s)$, we obtain
\begin{equation}
\label{eqn:a2}
\beta\s \leq \alpha.
\end{equation}
Now, we want to prove
\begin{equation}
\label{eqn:a3}
\alpha \leq \beta.
\end{equation}
By Proposition \ref{prop:compactopt1}(ii), $\alpha$ is finite. If $\alpha>\beta$, then $\beta$ is finite too and Lemma \ref{lem:complementedsub} implies that there are closed subspaces $M,N\sub E$ such that
\begin{equation}
\label{eqn:a5}
N(S)\oplus M=E=R(S) \oplus N.
\end{equation}
Using the first equality in (\ref{eqn:a5}), for every $x\in E$, there are unique $x_1\in N(S)$ and $x_2\in M$ such that $x=x_1 + x_2$. Define $\pi:E\ra N(S)$ by $\pi x :=x_1$. By Exercise \ref{exe:projectionsbanach}, $\pi$ is bounded. Also, since $\dim N=\beta<\alpha<\infty$, there is a bounded and onto linear map $K:N(S)\ra N$ such that $K x_0=0$ for some $0\neq x_0\in N(S)$. Clearly, $K$ is a compact operator. Therefore $\Phi:E\ra E$ defined by $\Phi x:=Tx +K\pi x$ is a compact operator, see Exercise \ref{exe:sumofcompactoperrators}. One easily sees that $\Phi-\la=S+K\pi$. For every $x\in M$, $\pi x=0$ and $(\Phi -\la)x=Sx$. Hence $(\Phi -\la)(E)=R(S)$. For every $x\in N(S)$, $\pi x=x$ and $(\Phi -\la)x=Kx$. Hence $(\Phi -\la)(N(S))=K(N(S))= N$. Therefore $E=R(S)\oplus N\sub R(\Phi-\la)$. However, $(\Phi -\la)x_0=Kx_0=0$ shows that $\la$ is an eigenvalue of $\Phi$ and since $\Phi$ is a compact operator, Proposition \ref{prop:eigenvaluefinite}(i) implies that $R(\Phi-\la)\subsetneq E$. This contradiction proves \ref{eqn:a3}.

The inequality
\begin{equation}
\label{eqn:a4}
\alpha\s \leq \beta\s.
\end{equation} follows from Inequality (\ref{eqn:a3}) and the fact that $T\s$ is compact too, see Proposition \ref{prop:adjointcompact}. Finally, Inequalities (\ref{eqn:a1}), (\ref{eqn:a2}), (\ref{eqn:a3}), and (\ref{eqn:a4}) show that $\alpha=\beta=\alpha\s=\beta\s<\infty$ and complete the proof of (i).
\item [(ii)] Assume a complex number $\la\neq 0$ is not an eigenvalue of $T$. Then by (i), we have
\[
\dim N(T-\la)=\dim E/R(T-\la)=0.
\]
Therefore $T-\la$ is one-to-one and onto. By Proposition \ref{prop:invoperator}, $T-\la$ is invertible and so $\la\notin \si(T)$.
\item [(iii)] By (ii) and Proposition \ref{prop:eigenvaluefinite}(ii), the set of all $\la\in \si(T)$ such that $\la>r$ is finite for all $r>0$. Therefore the only possible limit point of $\si(T)$ is $0$ and $\si(T)$ is at most countable.
\end{itemize}
\end{proof}

\section{The holomorphic functional calculus}
\label{sec:holomorphic}

It is often useful to construct new elements in Banach algebras and operator algebras with certain properties similar to what one used to work with in elementary calculus. In operator algebras, various functional calculi provide us with practical methods to apply certain functions to operators to construct new operators. {\bf Polynomial functional calculus} is the most simple functional calculus which works in two levels; Banach algebras and operator algebras.

Let $P(z,\bar{z})$ be a complex polynomial of two variables $z$ and $\bar{z}$ and let $T\in B(H)$ be a bounded operator on a Hilbert space $H$. Then by substituting $z$ and $\bar{z}$ with $T$ and $T\s$, respectively, we obtain a new operator which formally we denote it by $P(T,T\s)$ and it behaves the same way as the original polynomial $P$. For example, the values of the polynomial $P(z\bar{z}) = z\bar{z}=|z|^2$ are always positive (non-negative) real numbers, and similarly, the operator constructed by $P$ over an operator $T\in B(H)$, i.e. $P(T,T\s)=TT\s$ is also a positive operator , as it will be explained later.

Let $\c[z,\bar{z}]$ denote the algebra of complex polynomials of two variables $z$ and $\bar{z}$. Sending $z$ to $\bar{z}$ defines an involution on this algebra. Then for given $T\in B(H)$, the polynomial functional calculus is a {\bf $*$-homomorphism} (a homomorphism between two involutive algebras which preserves the involution) from $\c[z,\bar{z}]$ into $B(H)$. Clearly this definition works for any involutive algebra including arbitrary \cs-algebras. Since the target of this functional calculus is an operator algebra with an involution, we had to consider two variables $z$ and $\bar{z}$ in the domain of the mapping to be able to define an involution. However to define polynomial functional calculus in Banach algebras, one variable is sufficient. Let $\c[z]$ be the algebra of complex polynomials. Given an element $a$ in a Banach algebra $A$, we define $\c[z]\ra A$ by sending a polynomial $P(z)$ to $P(a)$. This is the polynomial functional calculus over an element $a\in A$ and it is a homomorphism of algebras.

As for generalizing this functional calculus, it is tempting to consider convergent power series, like $\sum_{n=0}^\infty c_n z^n$, and define the functional calculus for them. It works when the series $\sum_{n=0}^\infty c_n \|a\|^n$ is convergent in the Banach algebra, for example, the exponential of $a\in A$ is defined by $e^a=\sum_{n=0}^\infty  a^n/n!$. The exponential function is an entire function, that is why it can be applied to every element of a Banach algebra. More general holomorphic functions can only be applied to those elements whose spectrum is contained in the domain of the function.

Let $A$ be a unital Banach algebra and $a\in A$. Let $f$ be a holomorphic function on an open set $U_f$ containing $\si(a)$. Assume $\ga=\sum_{j=1}^k \ga_j$ is a finite collection of smooth simple closed curves in $U_f$ whose interiors contains $\si(a)$. The existence of such a finite collection of curves follows from the fact that $\si(a)$ is compact. We also assume these curves are oriented positively with respect to the spectrum, namely if a piece of the spectrum, say $X$, is surrounded by $\ga_1$, then a person walking on $\ga_1$ always has $X$ on his left. In the proof of Proposition \ref{prop:spradius2}, we proved that the map $\la\ra (\la-a)\inv$ is a holomorphic function on $Res(a)$. Hence for every $\ff\in A\s$, the function $\ff_f: \la\mapsto f(\la)\ff((\la-a)\inv)$ is a holomorphic function on $Res(a)$ which contains the image of $\ga$.  Now we can define $F:A\s\ra\c$ by
\[
F(\ff):=\frac{1}{2\pi i} \sum_{j=1}^k \int_{\ga_j} f(\la)\ff((\la-a)\inv) d\la
\]
Clearly, we have $|F(\ff)|\leq \frac{l \|\ff\|}{2\pi}  \sup\{ |f(\la)| \|(\la-a)\inv\| \}$, where $l$ is the sum of the lengths of all curves in $\ga$. This shows that $F$ is a bounded functional over $A\s$.

Let $P$ denote a partition of the curve $\ga_1$ with points $\la_0, \la_1,\cdots, \la_n=\la_0\in Image(\ga_1)$. Since the function $\la\mapsto f(\la)(\la-a)\inv$ is continuous, the limit of the Riemann sum
\[
\sum_{i=1}^{n} f(\la) (\la_i- a)\inv (\la_i-\la_{i-1})
\]
exists in $A$ when $P$ varies in the set of partitions of $\ga_1$ such that $\max_i |\la_i-\la_{i-1} |\ra 0$. Let us denote this limit by $b_1$. The same argument holds for the rest of the curves $\ga_2, \cdots, \ga_k$. We denote the similar limits by $b_2, \cdots, b_k$. By continuity of $\ff$, it is clear that $F(\ff)=\ff( \frac{1}{2\pi i} \sum_{j=1}^k b_j)$, namely $F\in A$ and is equal to $b:=\frac{1}{2\pi i} \sum_{j=1}^k b_j$. By Cauchy's theorem, see Theorems 7.47 and 7.49 of \cite{ponnusamy-silverman}, $b$ does not depend on the curve $\ga=\sum_{j=1}^k \ga_j$. It also follows from Cauchy's theorem that if the interior of any of the curves $\ga_1, \cdots, \ga_k$ does not intersect $\si(a)$ then the corresponding integral is zero. We denote $b$ by $f(a)$ and symbolically write
\[
f(a)=\frac{1}{2\pi i} \int_\ga \frac{f(\la)}{\la-a} d\la.
\]
Here, one final remark is necessary. When $U_f$ consists of several connected components. For every component one can choose a single curve enclosing the part of $\si(a)$ that lies in that component. Therefore to keep our arguments easy to follow, we usually assume there is only one curve around the spectrum.

Let $f$ and $g$ be two holomorphic functions with domains $U_f$ and $U_g$. Then the domains of $fg$, $f+g$ and any scalar multiplication of $f$ and $g$  contain $U_f \cap U_g$. Therefore the set of all holomorphic functions whose domains contain $\si_A(a)$ is an algebra called the {\bf algebra of holomorphic functions over} $\si_A(a)$ and is denoted by $H_A(a)$, or simply $H(a)$.
\begin{definition}
The mapping $H(a)\ra A$, $f\mapsto f(a)$ is called the {\bf holomorphic functional calculus over $a$}.
\end{definition}

\begin{theorem}
\label{thm:holfunc}
With the above notation, the holomorphic functional calculus is an algebra homomorphism which maps the constant function $1$ to $1_A\in A$ and maps the identity function  $z\mapsto z$ to $a\in A$.
\end{theorem}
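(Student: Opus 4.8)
The plan is to check the four asserted properties in turn, handling linearity and the two normalizations quickly and reserving the real work for multiplicativity. For linearity, note that if $f,g\in H(a)$ then their domains both contain $\si_A(a)$, so I may fix a single admissible curve $\ga$ lying in $U_f\cap U_g$ and enclosing $\si_A(a)$; since $f(a)$ is characterized by $\ff(f(a))=\frac{1}{2\pi i}\int_\ga f(\la)\ff((\la-a)\inv)\,d\la$ for every $\ff\in A\s$, and this expression is linear in $f$ for a fixed curve, the map $f\mapsto f(a)$ is linear (using that $A\s$ separates points of $A$ by the Hahn--Banach theorem). For the normalizations, I would exploit the fact, already established in the construction, that $f(a)$ does not depend on the chosen curve: I integrate the constant function $1$ and the identity function $z\mapsto z$ over a large circle $\ga_R=\{|\la|=R\}$ with $R>\|a\|$. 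On $\ga_R$ the resolvent has the norm-convergent expansion $(\la-a)\inv=\sum_{n=0}^{\infty}a^n/\la^{n+1}$ from the proof of Proposition \ref{prop:spradius2}, and this convergence is uniform on $\ga_R$. Applying an arbitrary $\ff\in A\s$, interchanging the sum with the integral, and using that $\frac{1}{2\pi i}\int_{\ga_R}\la^{-m}\,d\la$ equals $1$ when $m=1$ and $0$ otherwise, I obtain $\ff(1(a))=\ff(a^0)=\ff(1_A)$ and $\ff(z(a))=\ff(a^1)=\ff(a)$ for all $\ff$; hence $1(a)=1_A$ and $z(a)=a$.

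The heart of the matter, and the step I expect to be the main obstacle, is multiplicativity $(fg)(a)=f(a)g(a)$. Here I would represent the two factors using \emph{two} admissible curves, choosing $\ga'$ to lie in the interior region bounded by $\ga$ while still enclosing $\si_A(a)$, with both curves (and the region enclosed by $\ga'$) inside $U_f\cap U_g$. To stay within the paper's functional-based construction, I fix $\ff\in A\s$ and apply the two bounded functionals $x\mapsto\ff(x\,g(a))$ and $y\mapsto\ff((\la-a)\inv y)$ successively; this turns the product into a scalar double integral
\[
\ff(f(a)g(a))=\left(\frac{1}{2\pi i}\right)^2\int_\ga\int_{\ga'} f(\la)g(\mu)\,\ff\!\left((\la-a)\inv(\mu-a)\inv\right)d\mu\,d\la .
\]
Now I invoke the resolvent identity $(\la-a)\inv(\mu-a)\inv=\frac{1}{\mu-\la}\left[(\la-a)\inv-(\mu-a)\inv\right]$ inside $\ff$, split the double integral into two pieces, and apply the Fubini theorem, justified because the integrands are jointly continuous on the compact product $\ga\times\ga'$ where $\mu\neq\la$.

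In the piece carrying $\ff((\la-a)\inv)$, the inner integral $\frac{1}{2\pi i}\int_{\ga'}\frac{g(\mu)}{\mu-\la}\,d\mu$ vanishes by Cauchy's theorem, since for $\la$ on the outer curve the integrand is holomorphic in $\mu$ throughout the region bounded by $\ga'$. In the piece carrying $\ff((\mu-a)\inv)$, integrating in $\la$ first gives $\frac{1}{2\pi i}\int_\ga\frac{f(\la)}{\mu-\la}\,d\la=-f(\mu)$ by Cauchy's integral formula, as $\mu$ lies inside $\ga$. Reassembling yields $\ff(f(a)g(a))=\frac{1}{2\pi i}\int_{\ga'} f(\mu)g(\mu)\,\ff((\mu-a)\inv)\,d\mu=\ff((fg)(a))$, where the last equality again uses curve-independence of the calculus (applied to $fg$ on $\ga'$); since $\ff$ was arbitrary, $f(a)g(a)=(fg)(a)$. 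The delicate points to watch are exactly the geometry and the interchanges: I must arrange the nested contours so that the interior of $\ga'$ avoids every singularity of $g$ while $\la\in\ga$ stays exterior to $\ga'$, and I must justify commuting the two functionals and the sum past the contour integrals, all of which rest on continuity of $\ff$ together with the uniform convergence and joint continuity of the integrands. It is the resolvent identity combined with the two complementary uses of Cauchy's theorem—one killing a term, the other reproducing $f(\mu)$—that collapses the product of integrals into the single integral representing $(fg)(a)$.
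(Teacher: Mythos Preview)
Your proposal is correct and follows essentially the same route as the paper: two nested admissible contours, the resolvent identity to factor $(\la-a)\inv(\mu-a)\inv$, one term killed by Cauchy's theorem and the other reproducing $f(\mu)$ via Cauchy's integral formula, and the large-circle series expansion for the normalizations $1\mapsto 1_A$ and $z\mapsto a$. The only cosmetic difference is that you route every step through scalar functionals $\ff\in A\s$, whereas the paper works directly with the Banach-algebra-valued contour integrals (having already identified $f(a)$ with the Riemann-sum limit in $A$); both are equivalent once Hahn--Banach separation is in hand.
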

\begin{proof}
It is straightforward to check that the holomorphic functional calculus is linear. Let $f, g, U_f$ and $U_g$ be as described in the above. Let $\ga_1$ and $\ga_2$ be two smooth simple closed curves in $U_f\cap U_g$ enclosing $\si(a)$. We can assume that $\ga_2$ lies in the interior of $\ga_1$. Then we have
\begin{eqnarray*}
f(a)g(a) &=& \left( \frac{1}{2\pi i} \int_{\ga_1} f(\la) (\la -a)\inv d\la\right)\left( \frac{1}{2\pi i} \int_{\ga_2} g(\mu) (\mu -a)\inv d\mu \right)\\
&=& \frac{-1}{4\pi^2} \int_{\ga_2} \int_{\ga_1} f(\la)g(\mu)  (\la -a)\inv (\mu -a)\inv  d\la d\mu \\
&=& \frac{-1}{4\pi^2} \int_{\ga_2} \int_{\ga_1} f(\la)g(\mu) \frac{1}{\la-\mu} \left(  (\mu -a)\inv -(\la -a)\inv \right) d\la d\mu \\
&=& \frac{-1}{4\pi^2} \int_{\ga_2} \int_{\ga_1} \frac{f(\la)g(\mu)}{\la-\mu} (\mu -a)\inv d\la d\mu \\
&+& \frac{1}{4\pi^2} \int_{\ga_2} \int_{\ga_1} \frac{f(\la)g(\mu)}{\la-\mu} (\la -a)\inv d\la d\mu.
\end{eqnarray*}
After changing the order of integration, the second term equals
\[
\frac{1}{4\pi^2} \int_{\ga_1} \left( \int_{\ga_2} \frac{g(\mu)}{\la-\mu}d\mu \right) f(\la)(\la -a)\inv d\la.
\]
Since $\frac{g(\mu)}{\la-\mu}$ is holomorphic in the interior of $\ga_1$, by Cauchy's theorem, the integral $\int_{\ga_2} \frac{g(\mu)}{\la-\mu}d\mu$ is zero for all $\la\in \ga_1$. Therefore the second term is  zero, and so we have
\[
f(a)g(a)= \frac{1}{2\pi i} \int_{\ga_2} \left( \frac{1}{2\pi i} \int_{\ga_1} \frac{f(\la)}{\la-\mu} d\la \right)g(\mu) (\mu - a )\inv d\mu
\]
By Cauchy's integral formula, see Theorem 8.1 of \cite{ponnusamy-silverman}, $f(\mu)=\frac{1}{2\pi i} \int_{\ga_1} \frac{f(\la)}{\la-\mu} d\la$. Hence we obtain
\[
f(a)g(a)= \frac{1}{2\pi i} \int_{\ga_2} f(\mu)g(\mu) (\mu - a )\inv d\mu=(fg)(a).
\]
This shows two facts: First, the holomorphic functional calculus preserves the multiplication. Secondly, its image is always a commutative subalgebra of $A$, because $fg=gf$ as two holomorphic function.

Let $f$ be the constant function $1$ over $\c$. Then for every $a\in A$, we have
\[
f(a)= \frac{1}{2\pi i} \int_\ga (\la-a)\inv d\la,
\]
where one can choose $\ga$ to be a circle centered at the origin of a reduce greater than $\|a\|$. Then for every $\la\in \ga$, $\sum_{n=0}^\infty \frac{a^n}{\la^{n+1}}$ converges uniformly to $(\la-a)\inv$. Hence we have
\begin{eqnarray*}
f(a)&=& \frac{1}{2\pi i} \int_\ga \sum_{n=0}^\infty \frac{a^n}{\la^{n+1}} d\la\\
&=& \frac{1}{2\pi i} \sum_{n=0}^\infty \int_\ga  \frac{a^n}{\la^{n+1}} d\la\\
&=& 1_A,
\end{eqnarray*}
where the last step follows from the following lemma. A similar computation shows that $f(a) = a$ whenever $f$ is the identity function.
\end{proof}
\begin{exercise}
Prove the last sentence of the above proof.
\end{exercise}
\begin{lemma}
Let $C$ be a circle centered at the origin of radius $r$. Then
\[
\int_C \frac{1}{z^{n+1}} dz =\left\{ \begin{array}{ll} 2\pi i & n=0\\ 0 & n\neq 0\end{array} \right.
\]
\end{lemma}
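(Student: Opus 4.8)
The plan is to prove this by directly parametrizing the circle, which reduces the contour integral to an elementary real integral. First I would write $C$ as $z=re^{i\theta}$ with $\theta$ ranging over $[0,2\pi]$, so that $dz=ire^{i\theta}\,d\theta$. Substituting into the integral and cancelling powers of $r$ and $e^{i\theta}$ gives
\[
\int_C \frac{1}{z^{n+1}}\,dz = \int_0^{2\pi} \frac{ire^{i\theta}}{r^{n+1}e^{i(n+1)\theta}}\,d\theta = \frac{i}{r^n}\int_0^{2\pi} e^{-in\theta}\,d\theta,
\]
which conveniently shows that the radius $r$ plays no essential role, surviving only as the harmless prefactor $r^{-n}$.

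Next I would split into the two cases. When $n=0$ the integrand $e^{-in\theta}$ is identically $1$, so the remaining integral is $\int_0^{2\pi} d\theta = 2\pi$, giving the value $2\pi i$. When $n\neq 0$ the map $\theta\mapsto e^{-in\theta}$ winds an integer number of times around the unit circle, so its integral over a full period $[0,2\pi]$ vanishes; explicitly, an antiderivative is $e^{-in\theta}/(-in)$, and evaluating at the endpoints yields $(e^{-2\pi i n}-1)/(-in)=0$ because $e^{-2\pi i n}=1$ for every integer $n$. This produces the claimed value $0$ and completes the computation.

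An alternative I would keep in reserve is the primitive argument, which makes transparent why the two cases behave differently: for $n\neq 0$ the function $z^{-(n+1)}$ admits the single-valued holomorphic antiderivative $z^{-n}/(-n)$ on $\c\setminus\{0\}$, so its integral around any closed curve avoiding the origin must be zero, whereas for $n=0$ the function $1/z$ has no single-valued primitive on the punctured plane and the direct computation is genuinely needed. There is no real obstacle here; the only point requiring a moment's care is recognizing that $n=0$ is exceptional precisely because that is the one exponent for which $z^{-(n+1)}$ fails to possess a single-valued antiderivative away from the origin.
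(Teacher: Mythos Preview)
Your proof is correct and follows exactly the approach indicated in the paper, which merely says ``It is a straightforward computation if we apply the change of variable $z=re^{it}$.'' You have carried out precisely that computation (and even added a useful alternative via primitives for the $n\neq 0$ case), so there is nothing to correct or add.
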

\begin{proof}
It is a straightforward computation if we apply the change of variable $z=re^{it}$.
\end{proof}
\begin{theorem}
\label{thm:specmap}
[The holomorphic spectral mapping theorem] Let $A$ be a unital Banach algebra and $a\in A$. If $f$ is a holomorphic function over a neighborhood of $\si(a)$, then $\si(f(a))=f(\si(a))$. Moreover, if $g$ is a holomorphic function over a neighborhood of $\si(f(a))$, then $gof(a)=g(f(a))$.
\end{theorem}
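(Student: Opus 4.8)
The plan is to establish the two inclusions $f(\si(a))\sub\si(f(a))$ and $\si(f(a))\sub f(\si(a))$ separately, using throughout that the holomorphic functional calculus is an algebra homomorphism with commutative image (Theorem \ref{thm:holfunc}). First I would fix $\mu\in\si(a)$ and exploit that $\la\mapsto f(\la)-f(\mu)$ vanishes at $\mu$: it factors as $f(\la)-f(\mu)=(\la-\mu)h(\la)$, where $h(\la):=(f(\la)-f(\mu))/(\la-\mu)$ extends holomorphically across $\mu$ and hence lies in $H(a)$. Applying the calculus, and using that it sends the identity function to $a$, constants to scalars, and respects products, I obtain $f(a)-f(\mu)=(a-\mu)h(a)=h(a)(a-\mu)$. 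Were $f(a)-f(\mu)$ invertible, its inverse together with $h(a)$ would furnish a two-sided inverse for $a-\mu$ (all these elements commute), contradicting $\mu\in\si(a)$; thus $f(\mu)\in\si(f(a))$. For the reverse inclusion I would take $\nu\notin f(\si(a))$; then $\la\mapsto f(\la)-\nu$ is non-vanishing on $\si(a)$, and by compactness of $\si(a)$ also on some open neighbourhood of it, so $h(\la):=(f(\la)-\nu)\inv$ lies in $H(a)$. Since $h\cdot(f-\nu)=1$ as holomorphic functions, the homomorphism property gives $h(a)(f(a)-\nu)=(f(a)-\nu)h(a)=1$, so $f(a)-\nu$ is invertible and $\nu\notin\si(f(a))$.

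For the composition rule, set $b:=f(a)$, so $\si(b)=f(\si(a))$ by the first part, and represent both sides by contour integrals. I would pick a contour $\ga$ enclosing $\si(a)$ inside the domain of $f$ and a contour $\Gamma$ enclosing $\si(b)$ inside the domain of $g$, so that
\[
g(b)=\frac{1}{2\pi i}\int_\Gamma g(\om)(\om-b)\inv\,d\om .
\]
For each fixed $\om\in\Gamma$ the function $\la\mapsto(\om-f(\la))\inv$ lies in $H(a)$ (since $\om\notin f(\si(a))$), and by the homomorphism property it is sent to $(\om-b)\inv$, giving
\[
(\om-b)\inv=\frac{1}{2\pi i}\int_\ga \frac{1}{\om-f(\la)}(\la-a)\inv\,d\la .
\]
Substituting and interchanging the two integrals (justified by continuity of the integrand on the compact product of the two contours) yields
\[
g(b)=\frac{1}{2\pi i}\int_\ga\left(\frac{1}{2\pi i}\int_\Gamma\frac{g(\om)}{\om-f(\la)}\,d\om\right)(\la-a)\inv\,d\la .
\]
Since for each $\la\in\ga$ the point $f(\la)$ lies in the interior of $\Gamma$, Cauchy's integral formula (Theorem 8.1 of \cite{ponnusamy-silverman}) evaluates the inner integral as $g(f(\la))$, whence
\[
g(b)=\frac{1}{2\pi i}\int_\ga g(f(\la))(\la-a)\inv\,d\la=(g\circ f)(a),
\]
which is the asserted identity.

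I expect the main obstacle to be choosing the two contours compatibly. I must take $\ga$ close enough to $\si(a)$ that the image under $f$ of the entire compact region it bounds lies in the interior of $\Gamma$, while $\Gamma$ still encloses $\si(b)=f(\si(a))$ and remains inside the domain of $g$. This is achievable because $f$ is uniformly continuous on a compact neighbourhood of $\si(a)$, so shrinking $\ga$ toward $\si(a)$ keeps $f(\ga)$ arbitrarily close to $\si(b)$; it is precisely this arrangement that simultaneously keeps $\om-f(\la)$ nonzero for all $\om\in\Gamma$ and $\la\in\ga$ (so that the inner representation of $(\om-b)\inv$ is valid and the interchange of integrals is legitimate) and places each $f(\la)$ inside $\Gamma$ (so that Cauchy's formula applies). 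The case in which $\si(a)$ or $\si(f(a))$ is disconnected is handled exactly as in the construction preceding Theorem \ref{thm:holfunc}, by replacing each contour with a finite family of curves and repeating the argument componentwise.
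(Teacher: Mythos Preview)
Your proof is correct and follows essentially the same approach as the paper: the spectral mapping part is identical (factorization $f(\la)-f(\mu)=(\la-\mu)h(\la)$ for one inclusion, invertibility of $f-\nu$ near $\si(a)$ for the other), and the composition rule is established via the same double contour integral, Fubini swap, and Cauchy's formula. The only cosmetic difference is that the paper begins from $(g\circ f)(a)$ and works toward $g(f(a))$ while you go in the reverse direction; your more explicit discussion of choosing $\ga$ close enough to $\si(a)$ so that $f(\ga)$ lies inside $\Gamma$ is a welcome clarification of a point the paper leaves somewhat implicit.
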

\begin{proof}
Set $b:=f(a)$ and let $\mu \notin f(\si(a))$. Since the function $k(z):= \frac{1}{z-\mu}$ is holomorphic over $\c-\{\mu \}$, the function $h(\la):= \frac{1}{f(\la)-\mu}$ is holomorphic over a neighborhood of $\si(a)$. Then we have $(b-\mu) h(a) = h(a)(b-\mu) = 1$. Thus $b-\mu$ is invertible, and so $\mu\notin \si(f(a))$. This shows that $\si(f(a))\subseteq f(\si(a))$.

Let $\mu=f(\la_0)$ for some $\la_0 \in \si(a)$. Since $f$ is holomorphic over a neighborhood, say $U_f$, of $\si(a)$, there is a holomorphic function $l$ over $U_f$ such that $f(\la) - \mu = (\la-\la_0) l(\la)$ for all $\la\in U_f$. Thus
\[
f(a) - \mu = (a-\la_0)l(a)=l(a)(a-\la_0).
\]
Since $a-\la_0$ is not invertible, $f(a)-\mu$ cannot be invertible either. This show $f(\si(a))\subseteq \si(f(a))$.

Now, let $\ga_1$ and $\ga_2$ be two smooth simple closed curves in $\c$ such that $\ga_2$ lies inside of the domain of $g$ enclosing $\si(f(a))$ and $\ga_1$ lies in the domain of $f$ and encloses $f\inv (\ga_2)$. Then we have
\begin{eqnarray*}
g o f(a)&=& \frac{1}{2\pi i} \int_{\ga_1} g o f (\la) (\la-a)\inv d\la\\
&=& \frac{-1}{4\pi^2}  \int_{\ga_1}  \left( \int_{\ga_2} g(\mu) (\mu - f (\la))\inv d\mu \right) (\la-a)\inv d\la\\
&=& \frac{-1}{4\pi^2}  \int_{\ga_2} g(\mu) \left( \int_{\ga_1}  (\mu - f (\la))\inv (\la-a)\inv d\la \right)  d\mu\\
&=& \frac{-1}{4\pi^2}  \int_{\ga_2} g(\mu)  (\mu - f (a))\inv   d\mu\\
&=& g(f(a)).
\end{eqnarray*}
\end{proof}

\begin{exercise}
\label{ex:4} Let $A$ be a unital Banach algebra.
\begin{itemize}
\item[(i)] For given $a\in A$, show that the definition of exponential map of $a$ by holomorphic functional calculus is equivalent to its definition by the series $\sum_{n=0}^\infty \frac{a^n}{n!}$.
\item[(ii)] If $a$ and $b$  are two elements of $A$ such that $ab=ba$, then show that $e^{a+b}=e^a e^b$, and conclude that
$e^{-a}=(e^a)\inv$.
\end{itemize}
\end{exercise}

By the above exercise, the image of the exponential map lies in $A^\times$. Moreover, for every $a\in A$, it defines a one parameter subgroup in $A^\times$ by $\rho_a:\r\ra A^\times$, $t\mapsto e^{ta}$.

\begin{proposition}
Let $A$ be a unital Banach algebra and $U\subseteq \c$ be an open set. Then the set
\[
E_U:=\{a\in A; \si_A(a)\subseteq U \}
\]
is an open subset of $A$.
\end{proposition}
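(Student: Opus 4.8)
The plan is to show directly that every point of $E_U$ is interior: given $a\in E_U$, I will produce an explicit radius $\delta>0$ so that $\|b-a\|<\delta$ forces $\si_A(b)\subseteq U$. The natural object to control is the complement $K:=\c-U$, which is closed and, by hypothesis, disjoint from the compact set $\si_A(a)$. Thus $K\subseteq Res_A(a)$, so the resolvent $\la\mapsto (\la-a)\inv$ is defined on all of $K$, and the whole problem reduces to obtaining a uniform bound on the resolvent norm over $K$. Indeed, once I know $M:=\sup_{\la\in K}\|(\la-a)\inv\|<\infty$, the perturbation estimate of Proposition \ref{prop:invelements} will finish the argument.

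The key technical step, and the main obstacle, is establishing that $M<\infty$, because $K$ is closed but typically \emph{not} compact, so I cannot simply invoke continuity plus compactness. I would overcome this by splitting $K$ according to modulus. For the far part, note that whenever $|\la|>\|a\|$ one has $\la-a=\la(1-a/\la)$ with $\|a/\la\|<1$, whence $\la-a$ is invertible and $\|(\la-a)\inv\|\leq \frac{1}{|\la|-\|a\|}$; choosing $R:=\|a\|+1$ gives $\|(\la-a)\inv\|\leq 1$ for all $\la\in K$ with $|\la|\geq R$. For the near part, the set $K\cap\{\la:|\la|\leq R\}$ is closed and bounded, hence compact, and it lies inside $Res_A(a)$, where the map $\la\mapsto\|(\la-a)\inv\|$ is continuous (the resolvent is continuous since inversion is continuous on $A^\times$ by Corollary \ref{cor:inversion}); therefore this function attains a finite maximum $M_0$ there (interpreting $M_0:=0$ if the set is empty). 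Taking $M:=\max\{M_0,1\}$ yields $\|(\la-a)\inv\|\leq M$ for every $\la\in K$.

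Finally I set $\delta:=1/M$ and let $b\in A$ satisfy $\|b-a\|<\delta$. Fix any $\la\in K$. Applying Proposition \ref{prop:invelements} with $\la-a\in A^\times$ in place of the invertible element and $\la-b$ in place of the perturbation, the hypothesis reads $\|(\la-b)-(\la-a)\|=\|b-a\|<1/M\leq 1/\|(\la-a)\inv\|$, so $\la-b$ is invertible, i.e.\ $\la\in Res_A(b)$. Since $\la\in K$ was arbitrary, $\si_A(b)\cap K=\emptyset$, that is $\si_A(b)\subseteq U$, so $b\in E_U$. Hence the open ball of radius $\delta$ about $a$ lies in $E_U$, and $E_U$ is open. (The degenerate case $U=\c$, where $K=\emptyset$, is covered automatically, giving $E_U=A$.)
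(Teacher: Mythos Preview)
Your proof is correct and follows essentially the same approach as the paper: bound the resolvent $\la\mapsto(\la-a)^{-1}$ uniformly on $K=\c\setminus U$ and then apply Proposition \ref{prop:invelements}. The only difference is cosmetic---the paper obtains the uniform bound by observing that $\|(\la-a)^{-1}\|\to 0$ as $\la\to\infty$ (a continuous function on $K$ vanishing at infinity is bounded), whereas you make this explicit by splitting $K$ into a compact piece and a far piece with the estimate $\|(\la-a)^{-1}\|\le 1/(|\la|-\|a\|)$.
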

\begin{proof}
Set $U^c:=\c -U$. For $a\in E_U$, the function $U^c\ra A$, $\la\mapsto (a-\la)\inv$, is continuous and $\lim_{\la\ra \infty}\|(a-\la)\inv\|=0$. Hence $\sup_{\la\in U^c} \|(a-\la)\inv\|< \infty$, and so  $\delta:= \inf_{\la\in U^c} \frac{1}{\|(a-\la)\inv\|}>0$. If $\|a-b\|\leq \delta$, then $\|(a-\la)-(b-\la)\|< \frac{1}{\|(a-\la)\inv\|}$ for all $\la\in U^c$. Thus by Proposition \ref{prop:invelements}, $(b-\la)$ is invertible for all $\la\in U^c$. In other words, $\si_A(b)\subseteq E$, or equivalently, $b\in E_U$.
\end{proof}

\section{Problems}

\begin{e}
\label{e:2-1}
Find an example for a linear map $T$ between two Banach spaces such that the rank of $T$ is finite, but $T$ is not bounded.
\end{e}

\begin{e}
\label{e:2-2}
Let $A$ be a unital involutive Banach algebra. Prove that $\si(a\s)=\overline{\si(a)}$ for all $a\in A$.
\end{e}
\begin{e}
\label{e:2-3}
Let $B$ be a unital Banach subalgebra of a unital Banach algebra $A$. For every $b\in B$, show that $r_A(b)=r_B(b)$.
\end{e}
\begin{e}
\label{e:2-4}
Assume $A$ is an involutive Banach algebra. Show that every left unit in $A$ is a unit element of $A$.
\end{e}
\begin{e}
\label{e:2-5}
Find an example for a unital Banach algebra $A$ with two elements $a,b\in A$ such that $ab=1$ but $ba\neq 1$. (Hint: Consider $A=B(\ell^2(\n))$, the bounded operators on $\ell^2(\n)$.)
\end{e}
\begin{e}
\label{e:2-6}
Find a sequence $\{e_n\}$ of non-negative integrable functions on $\r$ such that $supp(e_n)\sub [-1/n, 1/n]$ and $\int_\r e_n(x) dx=1$ for all $n\in \n$. Show that $\{ e_n\}$ is an approximate unit for the Banach algebra $L^1(\r)$.
\end{e}
\begin{e}
\label{e:2-7}
Let $\{c_n\}$ be a sequence of complex numbers. Define an operator $T:B(\ell^2(\n))\ra B(\ell^2(\n))$ by $T(f)(n):=c_nf(n)$ for all $f\in B(\ell^2(\n))$ and $n\in \n$. Prove the following statements:
\begin{itemize}
\item [(i)] $T$ is a well defined bounded operator if and only if $\{c_n\}$ is bounded.
\item [(ii)] $T$ is an invertible operator if and only if there is a $\ep>0$ such that $|c_n|>\ep$ for all $n\in \n$. When $T$ is invertible, describe its inverse.
\item [(iii)] $T$ is a compact operator if and only if $c_n\ra 0$.
\item [(iv)] $\si(T)=\{c_n; n\in \n\}$.
\item [(v)] For $n\in \n$, let $T_n$ be the operator defined by the sequence $\{c_{n,m}\}_{m\in \n}$, where
\[
c_{n,m}:=\left\{ \begin{array}{ll} 1& m\leq n\\0& m>n\end{array} \right.
\]
Then the sequence $\{T_n\}$ is an approximate unit for $K(\ell^2(\n))$.
\end{itemize}
\end{e}
\begin{e}
\label{e:2-8}
Let $A$ be a unital Banach algebra such that $\|1\|=1$.
\begin{itemize}
\item [(i)] For given $a\in A$ such that $\|a\|\leq 1$, show that there is a continuous path $\ga:[0,1]\ra A^\times$ such that $\ga(0)=1$ and $\ga(1)=(1-a)\inv$.
\item [(ii)] Show that, for every $a\in A^\times$, there is $\ep>0$ such that for every $b\in A$ satisfying $\|a-b\|<\ep$ there is a continuous path in $A^\times$ connecting $a$ to $b$.
\item [(iii)] Define
\[
G_0:=\left\{ \prod_{i=1}^n \alpha_i; n\in \n, \forall i, \alpha_i=(1-x)\inv \,\text{or} \, \alpha_i=1-x, \, \text{for some } \, \|x\|<1\right\}.
\]
Prove that $G_0$ is an open subgroup of $A^\times$.
\item [(iv)] Prove that $G_0$ is the connected component of $A^\times$ containing $1$.
\item [(v)] Prove that $G_0$ is normal in $A^\times$, and consequently, $A^{\times}/G_0$ is a discrete group.
\end{itemize}
\end{e}
\begin{e}
\label{e:2-9}
Assume $E$ is a Banach space. Prove that $F(E)$ is not closed in $B(E)$, unless $E$ is finite dimensional.
\end{e}
\begin{e}
\label{e:2-10}
Let $E$ and $F$ be two Banach spaces.
\begin{itemize}
\item [(i)] Show that $T\in F(E,F)$ if and only if $T\s\in F(F\s, E\s)$.
\item [(ii)] Show that $T$ is an isometric isomorphism if and only if $T\s$ is an isometric isomorphism.
\item [(iii)] Let $\si$ and $\tau$ be the \ws topologies on $E\s$ and $F\s$, respectively. Prove that $S:(E\s, \si)\ra (F\s, \tau)$ is a continuous linear map if and only if there is $T\in B(E,F)$ such that $S=T\s$.
\item [(iv)] For $T\in B(E,F)$, assume $R(T)$ is closed. Show that
\begin{itemize}
\item [(a)] $\dim N(T)=\dim E\s/R(T\s)$,
\item [(b)] $\dim N(T\s)= \dim F/R(T)$.
\end{itemize}
   This generalizes the equalities $\alpha = \beta\s$ and $\alpha\s=\beta$ in Theorem \ref{thm:compactoptspec}.
\end{itemize}
\end{e}
\begin{e}
\label{e:2-11}
Set $X:=L^1([0,1])$. Then $X\s=L^\infty([0,1])$, see Theorem 6.15 of \cite{folland-ra}. Describe the \ws topology of $X\s$ using integrations on $[0,1]$. For given $T\in B(X, E)$, prove that if $R(T\s)$ contains all continuous functions on $[0,1]$, then $T$ is one-to-one.
\end{e}
\begin{e}
\label{e:2-12}
Let $E$ be a Banach space, $S\in B(E)$ and $T\in K(E)$. Prove that $S(1-T)=1$ if and only if $(1-T)S=1$. In this case, show that $1-(1-T)\inv$ is a compact operator.
\end{e}
\begin{e}
\label{e:2-13}
Let $E$ and $F$ be Banach spaces. Prove that a subset $A\sub B(E,F)$ is equicontinuous if and only if there exists $M<\infty$ such that $\|T\|\leq M$ for all $T\in A$.
\end{e}
\begin{e}
\label{e:2-14}
Let $E$ and $F$ be Banach spaces. Use the Arzel\`{a}-Ascoli theorem, \ref{thm:ascoli}, and Corollary \ref{cor:ascoli}  to show that $K(E,F)$ is closed in $B(E,F)$.
\end{e}
\begin{e}
\label{e:2-15}
Let $E$ be a Banach space. Show that if $T\in K(E)$ is an idempotent, i.e. $T^2=T$, then $T\in F(E)$.
\end{e}
\begin{e}
\label{e:2-16}
Let $E$ and $F$ be two Banach spaces and $T\in B(E,F)$. Show that if $R(T\s)=N(T)^\perp$, then $R(T)$ is closed.
\end{e}
\begin{e}
\label{e:2-17}
Let $E$ and $F$ be two Banach spaces. Prove that the subset of all onto operators in $B(E,F)$ is open.
\end{e}
\begin{e}
\label{e:2-18}
Let $E$ be a Banach space and let $T\in B(E)$. Prove that $\la\in \si(T)$ if and only if there exists a sequence $\{x_n\}$ in $E$ such that $\|x_n\|=1$ for all $n\in \n$ and $\lim_{n\ra \infty} \|Tx_n -\la x_n\|=0$.
\end{e}
\begin{e} Assume $E$ is a Banach space, $T\in K(E)$, and $\la\neq 0$. Set $S:=T-\la$.
\label{e:2-19}
\begin{itemize}
\item [(i)] Prove that there is some $n\in \n$ such that $N(S^n)=N(S^{n+1})$.
\item [(ii)] For $n\in \n$ satisfying (i), prove that $N(S^n)=N(S^{n+k})$ for all $k\in \n$.
\item [(iii)] Let $n$ be the smallest natural number satisfying (i). Prove that $\dim N(S^n)$ is finite and $E=N(S^n) \oplus R(S^n)$. Moreover, show that the restriction of $S$ to $R(S^n)$ is a bijective mapping from $R(S^n)$ onto itself.
\end{itemize}
\end{e}
\begin{e}
\label{e:2-20}
Let $X$ be a compact subset of $\c$ and let $a\in C(X)$ be the identity map, i.e. $a(z)=z$ for all $z\in \c$. Show that the holomorphic functional calculus over $a$, i.e. the map $H(a)\ra C(X)$, is actually the inclusion of the algebra of holomorphic functions on a neighborhood of $X$ into the algebra of continuous functions on $X$.
\end{e}


\chapter{The Gelfand duality}
\label{ch:gelfandduality}

We begin our journey in abstract \cs-algebras with a thorough study of commutative \cs-algebras in this chapter. Our main goal here is to show that every commutative \cs-algebra $A$ can be realized as the \cs-algebra of continuous functions at infinity over some locally compact and Hausdorff topological space, denoted by $\oom(A)$, which is intrinsically associated to $A$. This topological space is nothing but the space of all characters of $A$, or in other words, the space of all non-zero continuous homomorphisms from $A$ into $\c$. Every such a homomorphism is also a functional on $A$, and therefore $\oom(A)$ is a subset of $A\s$, the dual space of $A$. This realization gives us a way to topologize $\oom(A)$ by inducing the \ws topology of $A\s$. Also, the natural inclusion of $A$ into $A^{\ast \ast}$, the double dual space of $A$, suggests considering every element of $A$ as a continuous function over $\oom(A)$. This is the core of the famous Gelfand transform which, generally, maps a given commutative Banach algebra $A$ into the \cs-algebra $C_0(\oom(A))$. As one notices the original setting of the Gelfand transform goes beyond the theory of \cs-algebras and includes Banach algebras. But the theory finds its edge when it is restricted to \cs-algebras, because in this case, the Gelfand transform is always an isometric isomorphism. Besides many applications of the Gelfand transform in the theory of commutative Banach algebras and specifically commutative \cs-algebras, it also gives rise to a very important tool in the abstract theory of \cs-algebras, namely the continuous functional calculus.

In Section \ref{sec:confunctionalcal}, we study the continuous functional calculus and some of its corollaries. For instance, we show that every injective \ss-homomorphism between two \cs-algebra is an isometry. This shows that the \cs-norm of a \cs-algebra is unique. In other words, the analytical structure of a \cs-algebra is closely related to its algebraic structure. It is also proved that the continuous functional calculus is consistent with the holomorphic functional calculus. More applications of the continuous functional calculus are given in Chapter \ref{ch:basics}.

We return to the Gelfand theory in Section \ref{sec:Gelfandduality} and prove that the correspondence between commutative \cs-algebras and locally compact and Hausdorff topological spaces is actually an equivalence of categories provided that we choose the sets of morphisms carefully.

\section{The Gelfand transform}
\label{sec:Gelfandtrans}
In this section, we assume $A$ is a Banach algebra, and by an ideal, we mean a two sided ideal.

\begin{definition} An ideal $\mm$ of $A$ is called {\bf proper} if $\mm \neq A$. A proper ideal $\mm$ of $A$ is called a {\bf maximal ideal of $A$} if it is not contained in any other proper ideal of $A$.
\end{definition}

\begin{exercise} Let $A$ be unital.
\label{ex:ex2-1}
\begin{itemize}
\item[(i)] Assume $\mm$ is a proper ideal of $A$. Show that it contains no invertible element of $A$.
\item[(ii)] Show that if $\mm$ is an ideal (resp. a proper ideal) of $A$, then so is its closure.
\item[(iii)] Show that every maximal ideal of $A$ is closed.
\end{itemize}
\end{exercise}
\begin{definition}
\label{def:homomorphisms}
Let $A$ and $B$ be two Banach algebras.
\begin{itemize}
\item [(i)] A {\bf homomorphism} from $A$ into $B$ is a continuous linear map $\ff:A\ra B$ such that $\ff(ab)=\ff(a)\ff(b)$ for all $a, b \in A$.
\item [(ii)] A homomorphism which is not necessarily continuous is called an {\bf algebraic homomorphism}.
\item [(iii)] When $A$ and $B$ are involutive, $\ff$ is called a {\bf $\ast$-homomorphism} if $\ff(a\s)=\ff(a)\s$ for all $a\in A$.
\item [(iv)] An {\bf isomorphism} from $A$ into $B$ is a one-to-one and onto homomorphism such that its inverse is also continuous.
\item [(v)] An {\bf isometry} from $A$ into $B$ is a homomorphism $\ff$ which preserves the norm, that is $\|\ff(a)\|=\|a\|$ for all $a\in A$. When an isomorphism preserves the norm, it is called an {\bf isometric isomorphism}.
\end{itemize}
\end{definition}
One notes that every isometry is one-to-one.
\begin{proposition}
Let $\mm$ be a closed ideal of $A$. Then $A/\mm$ equipped with the quotient norm is a Banach algebra and the quotient map $\pi: A \ra A/\mm$ is a homomorphism.
\end{proposition}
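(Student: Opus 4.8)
The plan is to verify, in turn, that the quotient norm is a genuine norm, that $A/\mm$ is complete, that the induced multiplication is well defined and sub-multiplicative, and finally that $\pi$ is a homomorphism. Recall that the quotient norm is $\|a+\mm\| := \inf\{\|a+m\|; m\in \mm\} = \mathrm{dist}(a,\mm)$, and note at once that $\|a+\mm\|\leq \|a\|$ by taking $m=0$ in the infimum. First I would check the norm axioms. Homogeneity and the triangle inequality are routine properties of the distance to a subspace; the only axiom that genuinely uses the hypotheses is positive definiteness: if $\|a+\mm\|=0$ then $\mathrm{dist}(a,\mm)=0$, so $a$ lies in the closure of $\mm$, which equals $\mm$ precisely because $\mm$ is closed. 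Hence $a+\mm$ is the zero coset, as required.

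Next I would establish completeness, using the standard criterion (see Problem \ref{e:5-2}) that a normed space is complete if and only if every absolutely convergent series converges. Given cosets $\{a_n+\mm\}$ with $\sum_n \|a_n+\mm\| < \infty$, for each $n$ I would choose, from the definition of the infimum, a representative $b_n\in a_n+\mm$ with $\|b_n\|\leq \|a_n+\mm\|+2^{-n}$. Then $\sum_n \|b_n\| < \infty$, so by completeness of $A$ the series $\sum_n b_n$ converges in $A$ to some $b$. Since $a_n+\mm=b_n+\mm$ for each $n$, the partial sums of the coset series satisfy $\left\|\sum_{n=1}^N (a_n+\mm)-(b+\mm)\right\|=\left\|\bigl(\sum_{n=1}^N b_n-b\bigr)+\mm\right\|\leq \left\|\sum_{n=1}^N b_n-b\right\|\ra 0$, so the series of cosets converges and $A/\mm$ is a Banach space.

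The algebraic part is where the ideal structure enters. I would first note that the multiplication $(a+\mm)(b+\mm):=ab+\mm$ is well defined: if $a'\in a+\mm$ and $b'\in b+\mm$, then $ab-a'b'=(a-a')b+a'(b-b')$, and since $\mm$ is a \emph{two-sided} ideal both summands lie in $\mm$. For sub-multiplicativity, which I regard as the substantive point, I would observe that for all $m,n\in\mm$ one has $(a+m)(b+n)=ab+(an+mb+mn)$ with $an+mb+mn\in\mm$, whence $ab+\mm=(a+m)(b+n)+\mm$ and therefore $\|ab+\mm\|\leq \|(a+m)(b+n)\|\leq \|a+m\|\,\|b+n\|$. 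Taking the infimum first over $m$ and then over $n$ yields $\|ab+\mm\|\leq \|a+\mm\|\,\|b+\mm\|$. Combined with the completeness already shown, this makes $A/\mm$ a Banach algebra.

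Finally, $\pi(a):=a+\mm$ is linear and satisfies $\pi(ab)=ab+\mm=\pi(a)\pi(b)$ directly from the definition of the quotient multiplication, so it is an algebraic homomorphism; and the inequality $\|\pi(a)\|=\|a+\mm\|\leq\|a\|$ noted above shows that $\pi$ is bounded, hence continuous, so it is a homomorphism in the sense of Definition \ref{def:homomorphisms}. I do not anticipate a serious obstacle: the only places requiring genuine care are invoking the two-sidedness of $\mm$ in both the well-definedness and sub-multiplicativity steps, and invoking the closedness of $\mm$ at exactly the point where positive definiteness of the quotient norm is needed.
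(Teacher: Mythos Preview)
Your proof is correct and follows essentially the same approach as the paper: the key step, sub-multiplicativity via $(a+m)(b+n)\in ab+\mm$ and then passing to the infimum, is exactly the paper's argument (phrased there with an $\ep$ rather than a direct infimum). The only difference is cosmetic: the paper outsources the Banach-space structure of $A/\mm$ and the continuity of $\pi$ to a reference (Theorem 1.41 of \cite{rudinfunctional}), whereas you spell out positive definiteness and completeness in full.
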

\begin{proof}
The quotient map is clearly an algebraic homomorphism. Also we know from functional analysis that $A/\mm$ is a Banach space and $\pi$ is continuous, see Theorem 1.41 of \cite{rudinfunctional}. In fact, $\|\pi\|\leq 1$. We only need to show that the quotient norm is sub-multiplicative. For given $a, b\in A$ and for every $\ep>0$, by the definition of the quotient norm, there exist $m,n\in \mm$ such that $\|a+m\|\leq \|\pi(a)\|+\ep$ and $\|b+n\|\leq\|\pi(b)\|+\ep$. Since $(a+m)(b+n)\in ab+\mm$, we have
\begin{eqnarray*}
\|\pi(a)\pi(b)\|&=&\|\pi((a+m)(b+n))\|\\
&\leq& \|(a+m)(b+n)\|\\
&\leq& \|a+m\|\|b+n\|\\
&\leq& \|\pi(a)\|\|\pi(b)\| +\ep(\|\pi(a)\|+\|\pi(b)\|+\ep).
\end{eqnarray*}
This holds for every $\ep$, and so it implies the desired inequality.
\end{proof}
\begin{definition} Let $A$ be commutative and let $\mathfrak{m}$ be an ideal of $A$. An element $e\in A$ is called a {\bf unit modulo} $\mathfrak{m}$ if its image  $\bar{e}$ in $A/\mathfrak{m}$ is the unit of $A/\mathfrak{m}$, i.e. $ea-a\in \mathfrak{m}$ and $ae-a\in \mathfrak{m}$ for all $a\in A$. An ideal $\mathfrak{m}$ of $A$ is called {\bf regular} if there exists a unit modulo $\mathfrak{m}$ in $A$.
\end{definition}

\begin{lemma}
Let $A$ be as above. If $\mathfrak{m}$ is a proper ideal of $A$ and $e\in A$ is a unit modulo $\mathfrak{m}$, then $\inf\{ \|e-a\|; a\in \mathfrak{m} \}\geq 1$.
\end{lemma}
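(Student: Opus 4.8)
The plan is to argue by contradiction: suppose $\inf\{\|e-a\|; a\in\mm\}<1$, so there is some $a\in\mm$ with $\|e-a\|<1$, and deduce that $\mm=A$, contradicting properness. The mechanism is to manufacture, out of the element $x:=e-a$ of small norm, a two-sided inverse in the unitization $A_1$, and then to combine this inverse with the defining property of a unit modulo $\mm$ to show that every $z\in A$ already lies in $\mm$.

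First I would pass to the Banach algebra unitization $A_1=A\times\c$ with unit $1=(0,1)$, inside which $A$ sits as a closed two-sided ideal (the earlier exercise). Since $\|x\|=\|e-a\|<1$, we have $\|(1-x)-1\|=\|x\|<1$, so by the proposition on invertible elements the element $1-x$ is invertible in $A_1$ with $(1-x)\inv=\sum_{n=0}^\infty x^n$. Writing this as $(1-x)\inv=1+s$, where $s:=\sum_{n=1}^\infty x^n$, the key observation is that each $x^n$ with $n\geq1$ lies in $A$ and $A$ is closed in $A_1$, so the non-scalar part $s$ belongs to $A$, not merely to $A_1$.

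Next I would exploit that $e$ is a unit modulo $\mm$, i.e.\ $ze-z\in\mm$ for every $z\in A$. For an arbitrary $z\in A$, compute $z(1-x)=z-z(e-a)=-(ze-z)+za$ in $A_1$; both $ze-z$ and $za$ lie in $\mm$ (the first by the unit-modulo property, the second because $a\in\mm$ and $\mm$ is an ideal), so $z(1-x)\in\mm$. Then
\[
z=z\cdot 1=z(1-x)(1-x)\inv=\big(z(1-x)\big)(1+s)=z(1-x)+\big(z(1-x)\big)s,
\]
and since $z(1-x)\in\mm$, $s\in A$, and $\mm$ is an ideal of $A$, both summands lie in $\mm$. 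Hence $z\in\mm$ for every $z\in A$, that is $\mm=A$, the desired contradiction.

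I expect the only delicate points to be the bookkeeping forced by the non-unital setting: one must work in $A_1$ in order to have a unit available for forming $1-x$ and its Neumann series, yet one must verify that the correction term $s$ stays inside $A$ so that the ideal property of $\mm$ (which is an ideal of $A$, not of $A_1$) can legitimately be applied in the final multiplication. Using the right-hand unit-modulo relation $ze-z\in\mm$ keeps the computation of $z(1-x)$ on the correct side even without appealing to commutativity; the hypothesis that $A$ is commutative, assumed here, makes the left- and right-hand relations coincide in any case.
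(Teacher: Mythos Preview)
Your proof is correct and follows essentially the same approach as the paper: both argue by contradiction, pick $a\in\mm$ with $\|e-a\|<1$, form the Neumann series $\sum_{n\ge 1}(e-a)^n$, and combine with the unit-modulo property to force $\mm=A$. The only stylistic difference is that the paper carries out the algebra entirely inside $A$ (writing $b:=\sum_{n\ge1}(e-a)^n$ and rearranging $b=(e-a)b+(e-a)$ to get $e=(ab+a)-(eb-b)\in\mm$), thereby avoiding the detour through the unitization $A_1$; your passage to $A_1$ is harmless but unnecessary.
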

\begin{proof}
If $\|e-a \|<1$ for some $a\in \mm$, then the power series $b:=\sum_{n=1}^\infty (e-x)^n$ converges and we have $b=(e-a)b+e-a=eb-ab+e-a$. Hence $e=(ab+a)-(eb-b)\in \mm$. By the definition of $e$, we deduce that $\mm = A$, which is a contradiction.
\end{proof}
Using the above lemma, one can repeat Parts (ii) and (iii) of Exercise \ref{ex:ex2-1} for non-unital commutative Banach algebras too.
\begin{proposition}
Every regular and proper ideal of a commutative Banach algebra $A$ is contained in a regular maximal ideal of $A$.
\end{proposition}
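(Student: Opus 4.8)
The plan is to apply Zorn's lemma to the family of proper ideals containing the given ideal $\mm$, using the preceding lemma precisely to guarantee that the union of a chain of such ideals remains proper. The whole argument hinges on two observations: that regularity propagates upward to larger ideals, and that the distance-to-$e$ estimate from the lemma is uniform across all proper ideals sharing the unit $e$.

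First I would fix, by regularity, a unit $e$ modulo $\mm$, so that $ea-a\in\mm$ and $ae-a\in\mm$ for all $a\in A$. The key preliminary remark is that this same $e$ serves as a unit modulo every ideal containing $\mm$: if $\mm\subseteq\mathfrak{n}$, then $ea-a\in\mm\subseteq\mathfrak{n}$ and $ae-a\in\mm\subseteq\mathfrak{n}$, so $e$ is a unit modulo $\mathfrak{n}$ as well. Consequently every ideal containing $\mm$ is automatically regular, and I never need to manufacture a new unit at any stage.

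Next, let $\mathcal{P}$ denote the partially ordered set of all proper ideals of $A$ containing $\mm$, ordered by inclusion; it is nonempty because $\mm\in\mathcal{P}$. Given a chain $\{\mathfrak{n}_\alpha\}$ in $\mathcal{P}$, its union $\mathfrak{n}:=\bigcup_\alpha\mathfrak{n}_\alpha$ is again an ideal containing $\mm$. The hard part—and the only nontrivial step—is verifying that $\mathfrak{n}$ stays proper, since unions of proper ideals need not be proper in general. This is exactly where the preceding lemma intervenes: since $e$ is a unit modulo each proper ideal $\mathfrak{n}_\alpha$, the lemma gives $\|e-a\|\geq 1$ for every $a\in\mathfrak{n}_\alpha$, and hence $\|e-a\|\geq 1$ for every $a\in\mathfrak{n}$. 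In particular $e\notin\mathfrak{n}$ (otherwise taking $a=e$ would yield $\|e-e\|=0<1$), so $\mathfrak{n}\neq A$ is proper and thus lies in $\mathcal{P}$, furnishing an upper bound for the chain.

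By Zorn's lemma $\mathcal{P}$ has a maximal element $\mmm$. It remains to argue that $\mmm$ is maximal among all proper ideals of $A$, not merely among those containing $\mm$. But this is immediate: any proper ideal $\mathfrak{I}$ with $\mmm\subseteq\mathfrak{I}$ also contains $\mm$, hence belongs to $\mathcal{P}$, and maximality of $\mmm$ in $\mathcal{P}$ forces $\mathfrak{I}=\mmm$. Therefore $\mmm$ is a regular maximal ideal of $A$ containing $\mm$, which completes the argument.
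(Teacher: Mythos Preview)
Your argument is correct and is precisely the routine Zorn's lemma argument the paper alludes to (the paper simply writes ``This statement follows from a routine argument based on Zorn's lemma and is left to the reader''). You have supplied exactly the expected details, including the crucial use of the preceding lemma to keep the union of a chain proper.
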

\begin{proof}
This statement follows from a routine argument based on Zorn's lemma and is left to the reader.
\end{proof}

\begin{corollary}
Let $A$ be unital and commutative. For every non-invertible element $a\in A$, there exists a maximal ideal of $A$ containing $a$.
\end{corollary}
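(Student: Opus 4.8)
The plan is to exhibit a concrete proper ideal containing $a$ and then invoke the preceding proposition to enlarge it to a maximal ideal. First I would form the principal ideal $aA := \{ab ; b \in A\}$. Since $A$ is commutative this is genuinely a (two-sided) ideal, and since $A$ is unital we have $a = a1 \in aA$, so $a$ indeed lies in this ideal.

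The key step is to verify that $aA$ is \emph{proper}. Suppose instead that $aA = A$. Then $1 \in aA$, so there is some $b \in A$ with $ab = 1$; by commutativity $ba = ab = 1$ as well, so $b$ is a two-sided inverse of $a$, contradicting the hypothesis that $a$ is non-invertible. Hence $aA \neq A$. This is the only place where non-invertibility enters, and commutativity is used precisely to upgrade the one-sided relation $ab=1$ to a genuine inverse.

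Next I would observe that in a unital algebra every ideal is automatically regular: the unit $1$ serves as a unit modulo any ideal $\mm$, since $1\cdot a - a = 0 \in \mm$ and $a\cdot 1 - a = 0 \in \mm$ for all $a \in A$. In particular $aA$ is a regular proper ideal. Applying the proposition established just above---every regular proper ideal of a commutative Banach algebra is contained in a regular maximal ideal---produces a maximal ideal $\mm$ with $aA \sub \mm$. Since $a \in aA \sub \mm$, this gives the desired maximal ideal containing $a$.

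I do not expect a serious obstacle here; the argument is essentially a bookkeeping reduction to the regularity-to-maximality proposition. The one point demanding care is the properness of $aA$, which rests on the combination of non-invertibility and commutativity, as noted above. Everything else is a direct appeal to results already proved in this section.
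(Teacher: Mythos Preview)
Your proof is correct and is exactly the argument the paper has in mind: the corollary is stated without proof immediately after the proposition that every regular proper ideal is contained in a regular maximal ideal, and your principal-ideal construction is the intended way to apply it. There is nothing to add.
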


Let $A$ be commutative and let $\mm$ be a regular maximal ideal of $A$. The quotient algebra $A/\mm$ is a field and by the Gelfand-Mazur theorem, Corollary \ref{cor:gelfandmazur}, it has to be isomorphic to the field $\c$ of complex numbers. Since the only algebra automorphism of $\c$ is identity, the isomorphism $A/\mm\ra \c$ is uniquely determined by $\mm$. Therefore by combining this isomorphism with the quotient map $A\ra A/\mm$, we get a homomorphism uniquely defined by $\mm$. In other words, we correspond a homomorphism $\omega_\mm :A\ra \c$ to each regular maximal ideal $\mm$ in $A$. There is a reverse correspondence too. To each non-zero homomorphism $\omega:A\ra\c$, we simply associate its kernel, $\mm_\omega :=ker\omega$. One notes that since $\c$ is a simple and unital algebra, $\mm_\omega$ is a regular maximal ideal of $A$. This discussion is summarized in the following proposition.

\begin{proposition} Assume $A$ is a commutative Banach algebra. Let $\mmm(A)$ denote the set of all regular maximal ideals of $A$ and let $\Omega(A)$ denote the set of all non-zero homomorphisms from $A$ into $\c$. Then the mappings $\mmm(A)\ra \Omega(A)$, $\mm\mapsto \omega_\mm$, and $\Omega(A)\ra \mmm(A)$, $\omega\mapsto \mm_\omega$, are inverse of each other.
\end{proposition}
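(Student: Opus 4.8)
The plan is to check that the two composite maps $\mm\mapsto \om_\mm\mapsto \mm_{\om_\mm}$ and $\om\mapsto \mm_\om\mapsto \om_{\mm_\om}$ are the identity on $\mmm(A)$ and $\Omega(A)$ respectively. Throughout, for a regular maximal ideal $\mm$ I would write $\pi_\mm:A\ra A/\mm$ for the quotient map and $\iota_\mm:A/\mm\ra\c$ for the unique isomorphism furnished by the Gelfand--Mazur theorem, Corollary \ref{cor:gelfandmazur}, so that $\om_\mm=\iota_\mm\circ\pi_\mm$ by construction.

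For the first composite I would start from $\mm\in\mmm(A)$ and simply compute the kernel of $\om_\mm$. Since $\iota_\mm$ is injective, $\mm_{\om_\mm}=\ker\om_\mm=\pi_\mm\inv(\ker\iota_\mm)=\pi_\mm\inv(\{0\})=\mm$, which settles this direction immediately.

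For the second composite, start from $\om\in\Omega(A)$ and set $\mm_\om=\ker\om$. First note that $\om$ is surjective: being a non-zero $\c$-linear map into $\c$, its image is a non-zero linear subspace of $\c$, hence all of $\c$. Thus the first isomorphism theorem yields an isomorphism $\overline{\om}:A/\mm_\om\ra\c$ with $\om=\overline{\om}\circ\pi_{\mm_\om}$. It remains to identify $\overline{\om}$ with the canonical isomorphism $\iota_{\mm_\om}$ used to define $\om_{\mm_\om}$. Both are isomorphisms $A/\mm_\om\ra\c$, so $\iota_{\mm_\om}\circ\overline{\om}\inv$ is a $\c$-algebra automorphism of $\c$; being $\c$-linear and unital it fixes $1$ and hence every element of $\c$, so it is the identity. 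Therefore $\iota_{\mm_\om}=\overline{\om}$ and $\om_{\mm_\om}=\iota_{\mm_\om}\circ\pi_{\mm_\om}=\overline{\om}\circ\pi_{\mm_\om}=\om$.

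The only genuinely substantive point---the expected main obstacle---is the uniqueness of the isomorphism $A/\mm\cong\c$, which is exactly what forces $\iota_{\mm_\om}=\overline{\om}$ above; this rests on the fact that the identity is the \emph{only} $\c$-algebra automorphism of $\c$ (in sharp contrast with the abundance of field automorphisms of $\c$). A secondary point, already covered by the discussion preceding the statement, is to confirm that the two maps land in the asserted sets: that $\mm_\om$ is a regular maximal ideal (because $A/\ker\om\cong\c$ is a field and any $e$ with $\om(e)=1$ is a unit modulo $\mm_\om$) and that $\om_\mm$ is a continuous homomorphism (the quotient map is continuous and $\iota_\mm$, being a linear map between the one-dimensional Banach algebra $A/\mm$ and $\c$, is automatically continuous).
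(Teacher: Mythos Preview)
Your proof is correct and follows essentially the same approach as the paper's: the kernel computation for $\mm_{\om_\mm}=\mm$, and the first isomorphism theorem together with the uniqueness of the $\c$-algebra isomorphism $A/\mm_\om\to\c$ for $\om_{\mm_\om}=\om$. You spell out the uniqueness argument (that the only $\c$-algebra automorphism of $\c$ is the identity) more explicitly than the paper, which relegates that point to the discussion preceding the statement, but the structure is identical.
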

\begin{proof}
By the definition, it is clear that $\mm_{\omega_\mm}=ker(\omega_\mm)=\mm$ for every regular maximal idea $\mm$ in $A$. For $\omega\in\Omega(A)$, the mapping $A/Ker\omega \ra \c$ defined by $[a]\mapsto \omega(a)$ for all $a\in A$ is an isomorphism. Thus by combining this isomorphism with the quotient map $A\ra A/Ker\omega$, we obtain a homomorphism from $A$ into $\c$, which is exactly $\omega$. This means $\omega_{\mm_\omega}=\omega$.
\end{proof}

One notes that if a homomorphism $\ff: A\ra \c$ is non-zero, then $\ff(1_A)=1$ and $\ff(a)\neq 0$ for all $a\in A^\times$. Now, we are going to prove that these two conditions are sufficient for a linear map  $\ff: A\ra \c$ to be a homomorphism.
\begin{lemma}
\label{lem:hom}
Let $f$ be an entire function over $\c$ and let $f(0)=1$, $f^\prime (0)=0$, and $0< |f(\la)|< e^{|\la|}$ for all $\la\in \c$. Then $f(\la)=1$ for all $\la\in \c$.
\end{lemma}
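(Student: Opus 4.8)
The plan is to exploit the fact that $f$ never vanishes. Since $f$ is entire and $|f(\la)|>0$ everywhere and $\c$ is simply connected, $f$ admits an entire logarithm: there is an entire function $g$ with $f=e^{g}$. Normalizing using $f(0)=1$, I would pick the branch with $g(0)=0$. Differentiating $f=e^{g}$ gives $f'=g'e^{g}$, so $f'(0)=g'(0)e^{g(0)}=g'(0)$, and the hypothesis $f'(0)=0$ then forces $g'(0)=0$.

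Next I would convert the growth bound on $f$ into a one-sided bound on $g$. From $|f(\la)|<e^{|\la|}$ we get $e^{\operatorname{Re} g(\la)}=|e^{g(\la)}|<e^{|\la|}$, hence $\operatorname{Re} g(\la)<|\la|$ for all $\la$. Thus $g$ is an entire function whose real part grows at most linearly, and which satisfies $g(0)=0$ and $g'(0)=0$.

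The key step, and the main obstacle, is to upgrade this one-sided bound on $\operatorname{Re} g$ to a genuine two-sided bound on $|g|$. This is exactly the content of the Borel--Carath\'eodory estimate: writing $A(R):=\max_{|\la|=R}\operatorname{Re} g(\la)$ and $M(r):=\max_{|\la|=r}|g(\la)|$, one has, for $0<r<R$,
\[
M(r)\leq \frac{2r}{R-r}\,A(R)+\frac{R+r}{R-r}\,|g(0)|.
\]
Using $g(0)=0$ and $A(R)\leq R$, this gives $M(r)\leq \frac{2rR}{R-r}$, and letting $R\ra\infty$ with $r$ fixed yields $|g(\la)|\leq 2|\la|$ for all $\la$.

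Finally, the linear growth of $g$ pins it down completely. Since $g(0)=0$, the function $g(\la)/\la$ extends to an entire function bounded by $2$, so by Liouville's theorem it is a constant $c$, i.e. $g(\la)=c\la$; then $g'(0)=c=0$, whence $g\equiv 0$ and $f=e^{g}\equiv 1$. (Equivalently, one may apply Cauchy's coefficient estimates to $g$ directly: the bound $|g(\la)|\leq 2|\la|$ forces every Taylor coefficient of degree $\geq 2$ to vanish, and the conditions $g(0)=g'(0)=0$ eliminate the remaining two.) I expect every step except the Borel--Carath\'eodory bound to be routine; that estimate is where the asymmetric hypothesis on $\operatorname{Re} f$ really gets used.
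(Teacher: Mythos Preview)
Your proof is correct and follows essentially the same route as the paper: write $f=e^{g}$ with $g(0)=g'(0)=0$ and $\operatorname{Re}g(\la)\leq|\la|$, then use a Borel--Carath\'eodory-type estimate to force $g\equiv 0$. The only cosmetic difference is that the paper proves the needed bound by hand via the auxiliary function $h_r(\la)=\dfrac{r^{2}g(\la)}{\la^{2}(2r-g(\la))}$ and the maximum modulus principle, whereas you invoke the Borel--Carath\'eodory inequality by name and finish with Liouville.
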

\begin{proof}
Since $f$ has no zero,  $\frac{ f^\prime (z)}{f(z)}$ is an entire function, and consequently, it has an anti-derivative, say $g$. Considering the aforementioned initial conditions of $f$, we have $f=exp(g)$, $g(0)=g^\prime (0)=0$, and $Re|g(\la)| \leq |\la|$ for all $\la\in\c$. For given positive real number $r$, this inequality implies that $|g(\la)|\leq |2r -g(\la) |$ for all $|\la|\leq r$. Then it is clear that the function
\[
h_r(\la) := \frac{r^2 g(\la)}{\la^2 (2r - g(\la))}
\]
is holomorphic in the domain $\{\la; 0<|\la|<2r  \}$. It is also holomorphic at $0$, because $(2r - g(0))\neq 0$ and the Taylor expansion of $g$ at zero can be divided by $\la^2$. Therefore $h_r(\la)$ is holomorphic in the disc $\{\la; |\la|<r  \}$. Now, by the maximum modulus theorem, see Theorem 8.59 of \cite{ponnusamy-silverman}, either $g$ is constant or it attains its maximum at the boundary of this disc. In the latter case, we note that $|h_r(\la)|\leq 1$ for all $|\la|=r$. Thus $|h_r(\la)|\leq 1$ for all $|\la| \leq 1$. If we fix $\la$ and let $r\ra \infty$, then we must have $g(\la)=0$. This is true for every $\la\in \c$ and implies the desired conclusion.
\end{proof}

\begin{proposition} [Gleason, Kahane, Zelazko]
\label{prop:hom1}
Let $A$ be a unital Banach algebra, (not necessarily commutative). Let $\ff:A\ra \c$ be a linear map such that $\ff(1_A)=1$ and $\ff(a)\neq 0$ for all $a\in A^\times$. Then $\ff$ is a homomorphism.
\end{proposition}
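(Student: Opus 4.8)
The plan is to first show that the hypothesis forces $\varphi(a)\in\sigma_A(a)$ for every $a\in A$, which already yields continuity; then to upgrade $\varphi$ from a mere linear functional to a Jordan map via the entire-function Lemma \ref{lem:hom}; and finally to remove the ``Jordan'' and obtain genuine multiplicativity by a short algebraic computation. First I would note that for any $a\in A$ the element $a-\varphi(a)1$ lies in $\ker\varphi$, so by hypothesis it cannot be invertible; hence $\varphi(a)\in\sigma_A(a)$. Combined with Proposition \ref{prop:spradius1} this gives $|\varphi(a)|\le r(a)\le\|a\|$, so $\varphi$ is bounded, hence continuous. Continuity is what will let me interchange $\varphi$ with norm-convergent series below.

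The core step is the Jordan identity $\varphi(a^2)=\varphi(a)^2$. It suffices to prove $\varphi(a)=0\Rightarrow\varphi(a^2)=0$, since for general $a$ one applies this to $a-\varphi(a)1$ and expands. So assume $\varphi(a)=0$ and, after rescaling, $\|a\|\le 1$. I would form the entire function $f(\lambda):=\varphi(\exp(\lambda a))=\sum_{n=0}^\infty \lambda^n\varphi(a^n)/n!$, the termwise evaluation being justified by continuity of $\varphi$. Then $f(0)=\varphi(1)=1$, $f'(0)=\varphi(a)=0$, and $|f(\lambda)|\le\|\exp(\lambda a)\|\le e^{|\lambda|}$; moreover $f$ has no zeros, since $\exp(\lambda a)$ is invertible with inverse $\exp(-\lambda a)$ and $\varphi$ cannot vanish on an invertible element. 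Lemma \ref{lem:hom} then forces $f\equiv 1$, and reading off the coefficient of $\lambda^2$ gives $\varphi(a^2)=0$. Polarizing $\varphi(a^2)=\varphi(a)^2$ at $a+b$ then yields $\varphi(ab+ba)=2\varphi(a)\varphi(b)$.

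The last, genuinely noncommutative, step passes from this symmetric identity to $\varphi(ab)=\varphi(a)\varphi(b)$. Writing $\{x,y\}:=xy+yx$, so that $\varphi(\{x,y\})=2\varphi(x)\varphi(y)$, I would use the algebraic identity $2aba=\{a,\{a,b\}\}-\{a^2,b\}$ to obtain $\varphi(aba)=\varphi(a)\varphi(b)\varphi(a)$. Applying this with middle element $b^2$, and symmetrically with the roles of $a$ and $b$ exchanged, gives $\varphi(ab^2a)=\varphi(a)^2\varphi(b)^2=\varphi(ba^2b)$. Now set $p:=\varphi(ab)$ and $q:=\varphi(ba)$. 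The symmetric identity gives $p+q=2\varphi(a)\varphi(b)$, while $\{ab,ba\}=ab^2a+ba^2b$ together with $\varphi(\{ab,ba\})=2pq$ gives $pq=(\varphi(a)\varphi(b))^2$. Hence $p$ and $q$ are the roots of $z^2-2\varphi(a)\varphi(b)z+(\varphi(a)\varphi(b))^2=(z-\varphi(a)\varphi(b))^2$, so $p=q=\varphi(a)\varphi(b)$, which is exactly multiplicativity.

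I expect the main obstacle to be the Jordan step: correctly assembling the entire function $f$ and verifying the growth bound and nonvanishing so that Lemma \ref{lem:hom} applies, as well as the harmless normalization $\|a\|\le 1$. Once $\varphi(a^2)=\varphi(a)^2$ is in hand, the remainder is purely algebraic, the only mild subtlety being the observation that the quadratic with roots $p$ and $q$ is a perfect square, which pins both values to $\varphi(a)\varphi(b)$.
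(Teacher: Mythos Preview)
Your proof is correct and follows the same overall architecture as the paper's: deduce continuity from $\varphi(a)\in\sigma_A(a)$, use the entire function $f(\lambda)=\varphi(e^{\lambda a})$ together with Lemma~\ref{lem:hom} to obtain the Jordan identity $\varphi(a^2)=\varphi(a)^2$, and polarize to $\varphi(ab+ba)=2\varphi(a)\varphi(b)$.

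The only divergence is in the final passage from the Jordan identity to full multiplicativity. The paper works inside $N=\ker\varphi$ and exploits the identity
\[
(xy-yx)^2+(xy+yx)^2=2\bigl(x(yxy)+(yxy)x\bigr):
\]
for $x\in N$ both $xy+yx$ and $x(yxy)+(yxy)x$ lie in $N$, so $(xy-yx)^2\in N$; since $\varphi(z)^2=\varphi(z^2)$ this forces $\varphi(xy-yx)=0$, and adding $\varphi(xy+yx)=0$ yields $\varphi(xy)=0$. Your route instead derives $\varphi(aba)=\varphi(a)^2\varphi(b)$ from $2aba=\{a,\{a,b\}\}-\{a^2,b\}$ and then runs a Vieta argument on $p=\varphi(ab)$, $q=\varphi(ba)$, using $p+q=2\varphi(a)\varphi(b)$ and $pq=\varphi(ab^2a+ba^2b)/2=\varphi(a)^2\varphi(b)^2$. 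Both endgames are short and elementary; the paper's is slightly more direct, while yours isolates the useful intermediate fact $\varphi(aba)=\varphi(a)^2\varphi(b)$.
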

Note that the continuity of $\ff$ is not a part of the hypothesis, but it follows from the proof.
\begin{proof}
Let $N$ denote the kernel of $\ff$.  Since $A=N\oplus \c$ as a vector space, for arbitrary $a,b\in A$, we can find $x,y\in N$ such that $a=x+ \ff(a)$ and $b=y+\ff(b)$. Thus we have $ab=xy + x\ff(b) +y\ff(a) +\ff(a)\ff(b)$. Since $\ff(1)=1$, by applying $\ff$ to the both sides of this identity, we obtain $\ff(ab)=\ff(xy) + \ff(a)\ff(b)$. Therefore to prove that $\ff$ is an algebraic homomorphism, it is enough to show that $\ff(xy)=0$ for all $x,y\in N$. First, we claim that $\ff(x^2)=0$ for all $x\in N$.

Since $N$ contains no invertible elements, $\|1-x\|\geq 1$ for all $x\in N$. Hence $|\ff(\la-x)|= |\la|\leq \|\la-x \|$ for all $x\in N$ and $\la\in \c$. This shows that $\ff$ is continuous and its norm is less than or equal $1$.

Now, fix $x\in N$. Without loss of generality, we can assume that $\|x\|=1$. Define
\[
f(\la):=\sum_{n=0}^\infty \frac{\ff(x^n) \la^n}{n!}, \qquad \forall \la\in \c.
\]
Since $|\ff(x^n)|\leq \|x^n\|\leq \|x\|^n =1$, $f$ is entire and $|f(\la)|\leq exp(|\la|)$ for all $\la\in \c$. Also, $f(0)=\ff(1)=1$ and $f^\prime (0)=\ff(x)=0$. Moreover, one observes that $f(\la)=\ff(exp(\la x))$ for all $\la\in\c$. But the image of the exponential map lies in $A^\times$, so $f(\la)\neq 0$ for all $\la\in \c$. Now, by Lemma \ref{lem:hom}, $f^{\prime \prime} (0)=0$ , and consequently, $\ff(x^2)=0$, which proves our claim.

Setting $a=b$ in the equation $\ff(ab)=\ff(xy) + \ff(a)\ff(b)$, we get $\ff(a^2)=\ff(a)^2$ for all $a\in A$. By replacing $a$ with $a+b$ in this equation, we obtain $\ff(ab+ba)=2 \ff(a)\ff(b)$. This shows that if $x\in N$ and $y$ is an arbitrary element of $A$, then $xy+yx\in N$. Applying this fact to the identity
\[
(xy-yx)^2 +(xy+yx)^2=2(x(yxy)+ (yxy)x),
\]
we conclude $(xy-yx)^2\in N$ for all $x\in N$. This amounts to $\ff(xy-yx)=0$. By adding this equation to $\ff(xy+yx)=0$, we get $2\ff(xy)=0$ whenever $x\in N$, and this completes the proof.
\end{proof}

Let $(A\s)_1$ denote the (norm) closed unit ball of the dual space $A\s$, namely the set of all bounded linear maps from $A$ into $\c$ of norm less than or equal to 1.
\begin{proposition}
\label{prop:spec1} Let $A$ be a commutative Banach algebra. Then
\begin{itemize}
\item[(i)] $\Omega(A)\subset (A\s)_1$, and
\item[(ii)] $\Omega(A)$ is a locally compact set in the weak-$\s$ topology of $A\s$. Moreover, $\oom(A)$ is compact if $A$ is unital.
\end{itemize}
\end{proposition}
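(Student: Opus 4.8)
The plan is to prove (i) first, since it places $\oom(A)$ inside the weak-$\s$ compact ball $(A\s)_1$, and then to identify $\oom(A)\cup\{0\}$ as a weak-$\s$ closed subset of that ball, from which both the compact and locally compact conclusions follow by general topology.

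For (i), I would first dispatch the unital case. A non-zero homomorphism $\om\colon A\ra\c$ satisfies $\om(1)=1$, because $\om(1)=\om(1^2)=\om(1)^2$ forces $\om(1)\in\{0,1\}$ and $\om(1)=0$ would make $\om$ identically zero. Consequently $\om$ cannot vanish on any invertible element $u$, since $\om(u)\om(u\inv)=\om(1)=1$. Now suppose toward a contradiction that $|\om(a)|>\|a\|$ for some $a\in A$, and set $\la:=\om(a)$; then $\|a/\la\|<1$, so $1-a/\la$ is invertible by the Neumann series estimate established earlier in this chapter, yet $\om(1-a/\la)=1-\om(a)/\la=0$, contradicting non-vanishing on invertibles. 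Hence $|\om(a)|\leq\|a\|$ and $\om\in(A\s)_1$. For a general (possibly non-unital) $A$, I would pass to the Banach algebra unitization $A_1$: each $\om\in\oom(A)$ extends to $\tilde\om(a,\la):=\om(a)+\la$, which one checks directly to be a non-zero homomorphism of $A_1$, so $\|\tilde\om\|\leq 1$ by the unital case; since $A\hookrightarrow A_1$ is isometric, $|\om(a)|=|\tilde\om(a,0)|\leq\|a\|$, giving $\oom(A)\sub(A\s)_1$.

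For (ii), the key observation is that the set $M$ of \emph{all} multiplicative linear functionals on $A$ is exactly $\oom(A)\cup\{0\}$ and is weak-$\s$ closed in $A\s$. Closedness holds because weak-$\s$ convergence is evaluation-wise: if a net $\om_i\in M$ converges weak-$\s$ to $\om\in A\s$, then $\om(ab)=\lim\om_i(ab)=\lim\om_i(a)\om_i(b)=\om(a)\om(b)$, so $\om\in M$. By (i) we have $M\sub(A\s)_1$, and the latter is weak-$\s$ compact by the Banach--Alaoglu theorem (Theorem \ref{thm:banachalaoghlo}); a weak-$\s$ closed subset of a compact set is compact, so $M$ is weak-$\s$ compact. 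Moreover the weak-$\s$ topology is Hausdorff, since the elements of $A$ separate points of $A\s$, so $M$ is a compact Hausdorff space.

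It then remains to extract $\oom(A)=M\setminus\{0\}$. If $A$ is unital, the evaluation map $\om\mapsto\om(1)$ is weak-$\s$ continuous and equals $1$ on $\oom(A)$ and $0$ at the zero functional; thus $\oom(A)=\{\om\in M:\om(1)=1\}$ is closed in the compact set $M$, hence compact. If $A$ is non-unital, then $\oom(A)=M\setminus\{0\}$ is the complement of a single closed point, hence an open subset of the compact Hausdorff space $M$, and an open subspace of a compact Hausdorff space is locally compact. The main point needing care is precisely the weak-$\s$ closedness of $M$ combined with the fact that $0$ is the \emph{only} multiplicative functional excluded from $\oom(A)$; this is what makes $\oom(A)$ either closed (unital case) or the complement of one point (non-unital case) inside a compact Hausdorff space. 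Once that is in place, local compactness is immediate.
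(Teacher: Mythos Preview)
Your argument is correct, and for part (ii) it is essentially identical to the paper's: both set $\oom'(A):=\oom(A)\cup\{0\}$, verify by a net argument that multiplicativity passes to weak-$\s$ limits so that $\oom'(A)$ is weak-$\s$ closed in the Banach--Alaoglu ball, and then read off local compactness (or compactness in the unital case) from $\oom(A)=\oom'(A)\setminus\{0\}$.

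For part (i) the routes diverge. The paper's short proof (after noting it also follows from Gleason--Kahane--\.Zelazko) exploits multiplicativity via the spectral radius: from $|\om(a)|^n=|\om(a^n)|\leq\|\om\|\,\|a^n\|$ one takes $n$th roots and lets $n\to\infty$ to get $|\om(a)|\leq r(a)\leq\|a\|$. Your approach instead argues directly in the unital case that $\om$ cannot vanish on invertibles and then derives a contradiction from the Neumann series invertibility of $1-a/\la$ when $|\la|>\|a\|$, handling non-unital $A$ by passing to the unitization $A_1$. Your method is a bit more elementary in that it avoids the spectral radius formula entirely, at the cost of the extra unitization step; the paper's argument is slicker and works uniformly for unital and non-unital $A$ since it never invokes a unit. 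Both are standard and each is perfectly acceptable here.
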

\begin{proof}
\begin{itemize}
\item[(i)] This follows from Proposition \ref{prop:hom1}, but we also give a simple proof. Given $\omega\in \Omega(A)$ and for every $a\in A$ and $n\in\n$, we have $|\omega(a)|=|\omega(a^n)|^{1/n}\leq \|\omega\|^{1/n} \|a^n\|^{1/n}$. Thus $|\omega(a)|\leq \lim_{n\ra \infty}\|\omega\|^{1/n} \|a^n\|^{1/n}=r(a)\leq \|a\|$. Therefore $\|\omega\|\leq1$.
\item[(ii)] Set $\Omega\pr(A):=\Omega(A)\cup \{0\}$. Let $\{ \omega_i\}$ be a net in $\Omega\pr(A)$ convergent to a functional $\omega_0\in (A\s)_1$ in the weak-$\s$ topology. For every $a,b \in A$, we have
\[
\om_0(ab)=\lim_{i}\om_i(ab)=\lim_i \om_i(a) \om_i(b)= \lim_i \om_i(a) \lim_i \om_i(b)= \om_0(a)\om_0(b).
\]
This shows that $\Omega\pr(A)$ is closed in weak-$\s$ topology. On the other hand, by the Banach-Alaoglu's theorem, see Theorem \ref{thm:banachalaoghlo}, $A\s_1$ is compact in \ws topology. Hence $\Omega\pr(A)$ is compact, and consequently $\Omega(A)$ is a locally compact subset of $A\s_1$.

    If $A$ is unital, then $\om(1)=1$ for all $\om\in\Omega(A)$. Hence 0 is an isolated point in $\Omega\pr(A)$, which means $\Omega(A)$ is compact.
\end{itemize}
\end{proof}

\begin{remark}
\begin{itemize}
\item [(i)] In the above proof, we actually proved that if $X\subseteq \Omega(A)$ is closed and $0$ does not belong to its boundary in the \ws topology of $A\s$, then $X$ is compact.
\item [(ii)] One also notes that \ws topology on $A\s$ is a locally convex topology defined by a separating set of functionals, and so it is Hausdorff, see Theorem 3.10 of \cite{rudinfunctional}. Therefore $\Omega(A)$ is always Hausdorff.
\end{itemize}
\end{remark}
\begin{definition} Let $X$ be a locally compact Hausdorff topological space. Consider a point outside of $X$  and denote it by $\infty$. Set $X^\infty :=X \cup \{\infty\}$. To topologize $X^\infty$, we define the collection of all open subsets of $X^\infty$ to be all sets of the following types:
\begin{itemize}
\item [(i)] $U$, where $U$ is an open subset of $X$,
\item [(i)] $O\subseteq X^\infty$, where $X^\infty -O$ is a compact subspace of $X$.
\end{itemize}
Then the topological space $X^\infty$ is compact and is called the {\bf one-point} (or {\bf Alexandrov}) {\bf compactification of $X$}. The point $\infty$ is usually called the {\bf point at infinity of $X$}.
\end{definition}

For example, one easily observes that $\t = \{z\in \c ; |z|=1 \}$ is homeomorphic to the one point compactification of $\r$. More generally, $S^n=\{x\in \r^{n+1}; \|x\|=1\}$ is homeomorphic to the one-point compactification of $\r^n$. When $X$ is already compact, one notes that $X^\infty$ is nothing but the disjoint union of $X$ with the one point set $\{ \infty\}$, (discuss both the unital and non-unital cases).

\begin{exercise}
\label{ex:ommprime}
Let $A$ be a commutative Banach algebra and let $\Omega\pr (A)$ be as the proof of Proposition \ref{prop:spec1}. Show that $\Omega\pr (A)$ is the one-point compactification of $\Omega(A)$.
\end{exercise}

The following propositions illustrates the relationship of one-point compactification of a topological space $X$ and unitization of the commutative Banach algebra $C_0(X)$.

\begin{proposition}
\label{prop:compactificationunit} Let $X$ be a locally compact and Hausdorff topological space. There is a canonical isomorphism between $C_0(X)_1$ and $C(X^\infty)$.
\end{proposition}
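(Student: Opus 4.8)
The plan is to write down the obvious candidate for the canonical isomorphism and then verify the three ingredients demanded by Definition~\ref{def:homomorphisms}(iv): that it is a (linear, multiplicative) homomorphism, that it is bijective, and that both it and its inverse are continuous. Recall $C_0(X)_1 = C_0(X)\times \c$ as a set, with product $(f,\la)(h,\mu)=(fh+\la h+\mu f,\la\mu)$ and norm $\|(f,\la)\|=\|f\|+|\la|$. I would define $\Phi:C_0(X)_1\ra C(X^\infty)$ by sending $(f,\la)$ to the function $\tilde f$ with $\tilde f(x):=f(x)+\la$ for $x\in X$ and $\tilde f(\infty):=\la$. The reverse assignment $\Psi:C(X^\infty)\ra C_0(X)_1$ sends $g$ to $(g|_X-g(\infty),\,g(\infty))$. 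Once both maps are shown to be well defined, it is immediate that they are mutually inverse, so bijectivity comes for free.

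The first and most delicate step is to check that $\Phi$ actually lands in $C(X^\infty)$, i.e.\ that $\tilde f$ is continuous; this is the step I expect to be the crux, since it is where the hypotheses really enter. Continuity at points of $X$ is clear because $\tilde f|_X=f+\la$ and $X$ is open in $X^\infty$. The only subtle point is continuity at $\infty$, which forces us to use together the definition of the one-point compactification topology and the definition of vanishing at infinity. Given $\ep>0$, the set $K:=\{x\in X:|f(x)|\geq\ep\}$ is compact because $f\in C_0(X)$; hence $O:=X^\infty\setminus K$ is, by the very definition of the topology on $X^\infty$, an open neighbourhood of $\infty$, and for every $x\in O\cap X$ we get $|\tilde f(x)-\tilde f(\infty)|=|f(x)|<\ep$. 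Dually, $\Psi$ is well defined because the continuity of $g$ at $\infty$ says precisely that $g|_X$ tends to $g(\infty)$, so $g|_X-g(\infty)$ vanishes at infinity.

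Next I would check the algebraic structure. Linearity of $\Phi$ is immediate. For multiplicativity, the image $\Phi((f,\la)(h,\mu))$ evaluated on $X$ equals $fh+\la h+\mu f+\la\mu=(f+\la)(h+\mu)$ and at $\infty$ equals $\la\mu$, which is exactly $\tilde f\,\tilde h$; so $\Phi$ preserves products. The identical bookkeeping applied to $(f,\la)\s=(f\s,\overline\la)$ shows $\Phi$ preserves the involution, so $\Phi$ is in fact a \ss-homomorphism.

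Finally, continuity of $\Phi$ and of $\Phi\inv$ follows from two elementary norm estimates, and this is all Definition~\ref{def:homomorphisms}(iv) requires; I would stress that $\Phi$ need not be isometric, since the unitization norm $\|f\|+|\la|$ is not the supremum norm. On one side, $\|\Phi(f,\la)\|_{\sup}\leq \|f\|_{\sup}+|\la|=\|(f,\la)\|$, so $\|\Phi\|\leq 1$. On the other, writing $\la=g(\infty)$, one has $\|\Psi(g)\|=\|g|_X-\la\|_{\sup}+|\la|\leq 2\|g\|_{\sup}+\|g\|_{\sup}=3\|g\|_{\sup}$, so $\Psi=\Phi\inv$ is bounded too. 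Thus $\Phi$ is a continuous bijective \ss-homomorphism with continuous inverse, hence an isomorphism; it is canonical because it involves no arbitrary choices. The same argument covers the case where $X$ is already compact, in which $X^\infty$ is the disjoint union of $X$ with $\{\infty\}$ and the estimates go through verbatim.
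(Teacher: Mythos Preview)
Your proof is correct and follows essentially the same approach as the paper: the same map $(f,\la)\mapsto f+\la$, the same bound $\|\Phi\|\leq 1$, and the same constant $3$ in the reverse estimate. You are in fact more thorough than the paper, which leaves the well-definedness (continuity of $\tilde f$ at $\infty$) and the algebraic isomorphism claim as an exercise; your verification of continuity at $\infty$ via the compact set $K=\{|f|\geq\ep\}$ is exactly the right argument.
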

\begin{proof}
We extend every $f\in C_0(X)$ to $X^\infty$ by defining $f(\infty)=0$ and denote it again by $f$. Let $e\in C(X^\infty)$ be the constant function 1. Define $\iota: C_0(X)_1\ra C(X^\infty)$ by $(f,\la) \mapsto f+ \la e$ for all $(f, \la) \in C_0(X)_1$. It is an easy exercise to check that $\iota $ is an algebraic isomorphism. For $(f, \la) \in C_0(X)_1$, we have
\[
\|\iota(f,\la)\|_{\sup} = \sup_{x\in X^\infty} |f(x)+\la| \leq  \sup_{x\in X} |f(x)|+|\la|=\|(f,\la)\|.
\]
This shows that $\iota$ is continuous. On the other hand, for every $x\in X$ we have $|f(x)|\leq |\la| +|f(x)+\la|$. Thus we get
\begin{eqnarray*}
\|(f, \la)\| &=&  \sup_{x\in X} |f(x)|+|\la| \\
&\leq & \sup_{x\in X^\infty} |f(x)+\la | + 2|\la| \\
&\leq & \sup_{x\in X^\infty} |f(x)+\la | + 2|f(\infty) +\la| \\
&\leq & 3 \sup_{x\in X^\infty} |f(x)+\la | \\
&=&  3 \|\iota(f,\la)\|_{\sup}.
\end{eqnarray*}
This proves that the inverse of $\iota$ is continuous as well.
\end{proof}
We note that $\iota $ is not an isometry, but its restriction to $C_0(X)$ is. The reason for this phenomenon is that the norm on $C_0(X)_1$ is not a $C\s$-norm, but the norm on $C(X^\infty)$ is. We shall come back to this issue in Exercise \ref{exe:compactificationunit}.

\begin{definition} Let $A$ be a commutative Banach algebra.
\begin{itemize}
\item [(i)] The topological space $\Omega(A)$ is called the {\bf spectrum of $A$} and its elements are called {\bf characters of $A$}.
\item [(ii)] For every $a\in A$, we define $\hat{a}\in \Omega(A)$ by $\hat{a}(\om):=\om(a)$ for all $\om\in\Omega(A)$. The mapping $\ggg:A\ra C_0(\Omega(A))$ sending $a$ to $\hat{a}$ is called the {\bf Gelfand transform}.
\end{itemize}
\end{definition}

\begin{theorem}
\label{thm:gelfand1}
Let $A$ be a commutative Banach algebra and let $a\in A$.
\begin{itemize}
\item[(i)] The Gelfand transform is an algebraic homomorphism.
\item[(ii)] If $A$ is unital, then $\si(a)=\hat{a}(\Omega(A))$, otherwise $\si(a)=\hat{a}(\Omega(A))\cup \{0\}$.
\item[(iii)] $r(a)=\|\hat{a}\|_{\sup}$.
\item[(iv)] The Gelfand transform is continuous. In fact, $\|\hat{a}\|_{\sup}\leq \|a\|$ for all $a\in A$.
\end{itemize}
\end{theorem}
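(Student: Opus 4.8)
The plan is to take the four parts in order, noting that (iii) and (iv) follow almost immediately once (ii) is in hand. For (i) the essential computation is that for each $\om\in\Omega(A)$ we have $\widehat{ab}(\om)=\om(ab)=\om(a)\om(b)=(\hat a\,\hat b)(\om)$, and the analogous identities for sums and scalar multiples hold pointwise; this makes $\ggg$ an algebraic homomorphism at the level of functions. The one point needing care is that $\hat a$ really lands in $C_0(\Omega(A))$. Continuity of $\hat a$ is free: the map $\om\mapsto\om(a)$ is precisely one of the semi-norms generating the weak-$\s$ topology on $A\s$, so its restriction to $\Omega(A)$ is continuous. For vanishing at infinity I would invoke Exercise \ref{ex:ommprime}, which identifies $\Omega\pr(A)=\Omega(A)\cup\{0\}$ with the one-point compactification of $\Omega(A)$, the zero functional serving as the point at infinity. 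Then $\hat a$ extends to the weak-$\s$ continuous function $\om\mapsto\om(a)$ on the compact set $\Omega\pr(A)$, whose value at the point at infinity is $0(a)=0$; this is exactly the condition for $\hat a\in C_0(\Omega(A))$ (equivalently, $\{\om;|\hat a(\om)|\geq\ep\}$ is closed in the compact $\Omega\pr(A)$ and misses $0$, hence compact in $\Omega(A)$).

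For (ii) I would dispose of the unital case first. Assuming $A$ unital and commutative, I claim $\la\in\si(a)$ iff $\la\in\hat a(\Omega(A))$. If $a-\la$ is invertible, applying any $\om\in\Omega(A)$ to $(a-\la)(a-\la)\inv=1$ forces $\om(a-\la)\neq0$, i.e. $\hat a(\om)\neq\la$; conversely, if $a-\la$ is not invertible, the corollary that every non-invertible element lies in a maximal ideal, combined with the bijective correspondence between regular maximal ideals and characters, produces a character $\om$ with $\om(a-\la)=0$, that is $\hat a(\om)=\la$. Hence $\si(a)=\hat a(\Omega(A))$.

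The non-unital case I would derive from the unital one via the definition $\si_A(a)=\si_{A_1}((a,0))\cup\{0\}$. The key step is to pin down $\Omega(A_1)$: any non-zero homomorphism $\Phi:A_1\ra\c$ restricts on $A$ either to a character $\om$ of $A$, in which case $\Phi(x,\la)=\om(x)+\la$, or to the zero map, in which case $\Phi(x,\la)=\la$ is the single extra character. Consequently $\widehat{(a,0)}(\Omega(A_1))=\{\om(a);\om\in\Omega(A)\}\cup\{0\}=\hat a(\Omega(A))\cup\{0\}$, and applying the unital result to $A_1$ gives $\si_A(a)=\hat a(\Omega(A))\cup\{0\}$. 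This matching of $\Omega(A_1)$ with the definition of the spectrum is the main obstacle, and the one place where I expect to spend real effort.

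Parts (iii) and (iv) are then short. Since $r(a)=\sup\{|\la|;\la\in\si(a)\}$ and by (ii) the set $\si(a)$ is either $\hat a(\Omega(A))$ or $\hat a(\Omega(A))\cup\{0\}$ — and adjoining $0$ never alters the supremum of the moduli — we get $r(a)=\sup_{\om\in\Omega(A)}|\hat a(\om)|=\|\hat a\|_{\sup}$, which is (iii). Finally, combining (iii) with Proposition \ref{prop:spradius1} (valid for arbitrary Banach algebras, so that $r(a)\leq\|a\|$) yields $\|\ggg(a)\|=\|\hat a\|_{\sup}=r(a)\leq\|a\|$; together with the linearity from (i) this shows $\ggg$ is bounded, hence continuous, proving (iv).
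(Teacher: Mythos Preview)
Your proposal is correct and follows essentially the same route as the paper: the well-definedness of $\hat a\in C_0(\Omega(A))$ via weak-$\s$ continuity and compactness of $\{|\hat a|\geq\ep\}$ away from $0$, the unital case of (ii) through the maximal-ideal/character correspondence, and (iii)--(iv) as immediate corollaries. You are more explicit than the paper in the non-unital case of (ii) (the paper simply says it ``follows from the above case''), but your identification of $\Omega(A_1)$ and the resulting computation are exactly what is needed and match the material the paper develops immediately after the theorem.
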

\begin{proof}
\begin{itemize}
\item[(i)] Since the \ws topology on $A\s$ is defined by functionals $\{\hat{a}; a\in A\}$, all functionals, and consequently their restrictions to $\Omega(A)$, are continuous in this topology. On the other hand, for a real number $M>0$ and for $a\in A$, the set $\{\om\in\Omega(A); |\hat{a}(\om)| \geq M\}$ is closed in the \ws topology. Thus it is compact, because it does not contains the zero functional. Therefore $\hat{a}\in C_0(\Omega(A))$, for all $a\in A$, in other words, $\ggg$ is well-defined. It is straightforward to see that $\ggg$ is linear and multiplicative.
\item[(ii)] Let $A$ be unital and let $a\in A$. If $\la\in \si(a)$, then $a-\la$ is not invertible. Thus there is a maximal ideal, say $\mm$, containing $a-\la$. This means $\om_\mm (a-\la)=0$. In other words, $\hat{a}(\om_\mm)=\om_\mm(a)=\la$. Conversely, if $\la=\om(a)=\hat{a}(\om)$ for some $\om\in \Omega(A)$, then $a-\la$ belongs to the maximal ideal $\mm_\om$. Hence it is not an invertible element, that is $\la\in\si(a)$. The case that $A$ is non-unital follows from the above case.
\item[(iii)] It is clear from part (ii) and the definition of the norm $\norm_{\sup}$.
\item[(iii)] It is clear from part (iii) and Proposition \ref{prop:spradius1}.
\end{itemize}
\end{proof}

Assume $A$ is a commutative Banach algebra. To every $\ff\in \Omega(A)$, we associate $\jmath(\ff):=\ff_1\in \Omega(A_1)$ by defining $\ff_1(a,\la):= \ff(a)+ \la$ for all $(a,\la)\in A_1$. Clearly, $\ff_1$ is a non-zero homomorphism. But there is still one non-zero homomorphism in $\Omega(A_1)$ that is not obtained in this way. It is the homomorphism $\ff_\infty: A_1 \ra \c$ defined by $\ff_\infty(a,\la):= \la$.

\begin{exercise}
Show that $\Omega(A_1)=\jmath(\Omega(A))\cup \{ \ff_\infty\}$.
\end{exercise}
\begin{lemma}
With the above notation, the inclusion $\jmath:\Omega(A) \ra \Omega(A_1)$ is a homeomorphism  onto its image.
\end{lemma}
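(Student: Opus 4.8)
The plan is to verify injectivity of $\jmath$ and then to establish continuity of $\jmath$ together with continuity of its inverse on the image, exploiting the fact that both the weak-$\s$ topology on $\Omega(A)\sub A\s$ and the weak-$\s$ topology on $\Omega(A_1)\sub (A_1)\s$ are governed by convergence of nets against the evaluation functionals. First I would record that $\jmath$ is injective: if $\jmath(\ff)=\jmath(\pp)$, then evaluating the equal functionals $\ff_1$ and $\pp_1$ at $(a,0)$ gives $\ff(a)=\pp(a)$ for all $a\in A$, whence $\ff=\pp$. Thus $\jmath$ is a bijection onto its image $\jmath(\Omega(A))$, and it remains only to compare the two topologies through this bijection.

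For continuity of $\jmath$, I would take a net $(\ff_i)$ in $\Omega(A)$ converging weak-$\s$ to $\ff\in\Omega(A)$, so that $\ff_i(a)\ra \ff(a)$ for every $a\in A$. For each fixed $(a,\la)\in A_1$ we then have $\jmath(\ff_i)(a,\la)=\ff_i(a)+\la \ra \ff(a)+\la = \jmath(\ff)(a,\la)$, which is exactly weak-$\s$ convergence $\jmath(\ff_i)\ra \jmath(\ff)$ in $(A_1)\s$. Since a weak-$\s$ topology is determined by convergence of nets against its defining evaluation functionals, this establishes that $\jmath$ is continuous.

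For the converse direction, continuity of $\jmath\inv$, I would take a net $(\jmath(\ff_i))$ converging to $\jmath(\ff)$ within the subspace $\jmath(\Omega(A))$; by definition of the subspace weak-$\s$ topology this means $\ff_i(a)+\la \ra \ff(a)+\la$ for every $(a,\la)\in A_1$. Specializing to $\la=0$ yields $\ff_i(a)\ra \ff(a)$ for all $a\in A$, i.e. $\ff_i\ra \ff$ in $\Omega(A)$. Hence $\jmath\inv$ is continuous on the image, and combined with the previous paragraph this shows that $\jmath$ is a homeomorphism onto its image.

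There is no substantial obstacle here; the content is purely the transparent identity $\jmath(\ff)(a,\la)=\ff(a)+\la$ together with the characterization of weak-$\s$ convergence. The only point requiring mild care is bookkeeping: keeping straight which weak-$\s$ topology is in play, and observing that convergence of a net inside the subspace $\jmath(\Omega(A))$ is precisely convergence tested against all evaluations $(a,\la)\mapsto \rho(a,\la)$ with $\rho$ ranging over $\jmath(\Omega(A))$. Once this is made explicit, both continuity statements drop out by setting $\la=0$ or leaving $\la$ free, as above.
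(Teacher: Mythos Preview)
Your proof is correct and follows essentially the same approach as the paper's: both verify continuity of $\jmath$ and of its inverse via nets, using the identity $\jmath(\ff)(a,\la)=\ff(a)+\la$ and specializing to $\la=0$ for the inverse direction. The only minor point is that your final paragraph's phrasing about ``evaluations with $\rho$ ranging over $\jmath(\Omega(A))$'' is garbled---convergence in the subspace topology is tested against all $(a,\la)\in A_1$, not against elements $\rho$---but your actual argument in the preceding paragraphs is clean and matches the paper.
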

\begin{proof}
Let $(\ff_i)$ be a net in $\Omega(A)$ convergent to $\ff$ in \ws topology. Then for every $(a,\la)\in A_1$, we have $ \ff_{i1}(a,\la)=\ff_i(a) +\la$ which is obviously convergent to $\ff(a)+\la =\ff_1(a,\la)$. Thus $\ff_{i1}\ra \ff_1$  in \ws topology, and so $\jmath$ is continuous. A similar argument works for the continuity of the converse map.
\end{proof}

\begin{exercise}
With the above notation, show that if $A$ is not unital, then $\Omega(A_1)$ is the one point compactification of $\jmath(\Omega(A))$. Describe what happens when $A$ is unital.
\end{exercise}

\begin{lemma}
\label{lem:alghombanach}
Let $\psi:A\ra B$ be an algebraic homomorphism between two commutative Banach algebras such that $\psi\s(\ff):=\ff \psi\neq 0$ for all $\ff\in \Omega(B)$. Then $\psi\s :\oom(B)\ra \oom(A)$ is continuous. Moreover, $\psi\s$ is a homeomorphism whenever it is bijective.
\end{lemma}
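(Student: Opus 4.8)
The plan is to realise $\psi\s$ as the restriction to $\Omega(B)$ of the algebraic adjoint $\varphi\mapsto\varphi\circ\psi$, prove continuity by unwinding the \ws topology, and then secure the inverse's continuity by compactifying. First I would check that $\psi\s$ actually lands in $\Omega(A)$: for $\varphi\in\Omega(B)$ the composite $\varphi\circ\psi$ is an algebra homomorphism $A\to\mathbb{C}$, and it is non-zero by the standing hypothesis. Although $\psi$ is only algebraic, a non-zero homomorphism of a commutative Banach algebra into $\mathbb{C}$ is automatically continuous and of norm $\le 1$ (this is the content of Proposition \ref{prop:spec1}(i), and of the Gleason--Kahane--Zelazko theorem, Proposition \ref{prop:hom1}), so $\varphi\circ\psi\in\Omega(A)$ and $\psi\s\colon\Omega(B)\to\Omega(A)$ is well defined.

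For continuity I would simply use that the \ws topology is the topology of pointwise convergence. Given a net $(\varphi_i)$ in $\Omega(B)$ with $\varphi_i\to\varphi$ \ws, i.e. $\varphi_i(b)\to\varphi(b)$ for every $b\in B$, I specialise to $b=\psi(a)$ for an arbitrary $a\in A$ and get $\psi\s(\varphi_i)(a)=\varphi_i(\psi(a))\to\varphi(\psi(a))=\psi\s(\varphi)(a)$. Since this holds for every $a\in A$, we have $\psi\s(\varphi_i)\to\psi\s(\varphi)$ in the \ws topology of $A\s$, which proves continuity. I would stress that this computation uses only $\psi(a)\in B$, not any boundedness of $\psi$.

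For the ``moreover'' clause the hard part is that a continuous bijection need not be a homeomorphism, so I cannot argue continuity of $(\psi\s)\inv$ directly. The plan is to pass to one-point compactifications. Writing $\Omega'(A)=\Omega(A)\cup\{0\}$ and $\Omega'(B)=\Omega(B)\cup\{0\}$, recall from the proof of Proposition \ref{prop:spec1}(ii) and Exercise \ref{ex:ommprime} that these are \ws-compact (they are \ws-closed subsets of the closed unit balls, which are compact by Banach--Alaoglu, Theorem \ref{thm:banachalaoghlo}) and are precisely the one-point compactifications of $\Omega(A)$ and $\Omega(B)$. I would extend $\psi\s$ to $(\psi\s)'\colon\Omega'(B)\to\Omega'(A)$ by the same rule $\varphi\mapsto\varphi\circ\psi$, now also sending the zero functional to the zero functional. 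The identical net computation shows $(\psi\s)'$ is continuous on all of $\Omega'(B)$, including at the point at infinity, because that computation never used non-vanishing of the limit functional.

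Finally, if $\psi\s$ is a bijection of $\Omega(B)$ onto $\Omega(A)$, then $(\psi\s)'$ is a bijection of $\Omega'(B)$ onto $\Omega'(A)$: it merely adjoins the correspondence $0\mapsto 0$, and the hypothesis $\varphi\circ\psi\neq 0$ for $\varphi\in\Omega(B)$ keeps the images of $\Omega(B)$ off the point at infinity. A continuous bijection from a compact space onto a Hausdorff space is a homeomorphism, and $\Omega'(A)$ is Hausdorff as a subspace of $A\s$ with the \ws topology; hence $(\psi\s)'$ is a homeomorphism. Deleting the two points at infinity then shows $\psi\s\colon\Omega(B)\to\Omega(A)$ is a homeomorphism. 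I expect the only genuinely delicate point to be this compactification step (and the automatic-continuity remark in the well-definedness check); everything else is bookkeeping with pointwise convergence.
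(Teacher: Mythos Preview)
Your argument is correct and follows essentially the same strategy as the paper: prove continuity via pointwise convergence of nets, then for the bijective case compactify so that the ``continuous bijection from compact to Hausdorff'' principle applies. The only difference is cosmetic: the paper splits into cases (when $B$ is unital, $\Omega(B)$ is already compact; otherwise it passes to the unitizations $\psi_1:A_1\to B_1$ and uses that $\psi_1\s:\Omega(B_1)\to\Omega(A_1)$ is a continuous bijection of compact Hausdorff spaces), whereas you compactify uniformly via $\Omega'(-)=\Omega(-)\cup\{0\}$. Your version is slightly more economical since it avoids the case split and the explicit unitization of $\psi$, but both routes hinge on the same idea.
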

\begin{proof}
If $(\ff_i)$ is a convergent net in the \ws topology of $\oom(B)$, then $(\ff_i o \psi)$ is a convergent net in the \ws topology of $\oom(A)$ too. Hence $\psi\s$ is continuous. Now, assume $\psi\s$ is bijective. When $B$ is unital, $\oom(B)$ is compact and consequently, $\psi\s$ is a homeomorphism. When $B$ is not unital, we consider the canonical extension $\psi_1: A_1\ra B_1$ of $\psi$ defined by  $\psi_1(a,\la):=(\psi(a),\la)$. Then $\psi_1\s:\oom(B_1)\ra \oom(A_1)$ defined by $\psi_1\s(\ff_1)=\ff_1 o \psi_1$ is bijective and continuous and consequently a homeomorphism. Using the above exercise and the fact that $\psi\s$ is the restriction of $\psi_1\s$ to $\oom(B)$, one concludes that $\psi\s$ is a homeomorphism.
\end{proof}

\begin{definition} Assume $A$ is a commutative Banach algebra. The kernel of the Gelfand transform $\ggg$ is called the {\bf radical of $A$}. If it is $\{0\}$, then $A$ is called a {\bf semi-simple commutative Banach algebra}.
\end{definition}

\begin{remark}
\label{rem:radical}
Let $a$ be an element of the radical of a commutative Banach algebra $A$. Then $\si_A(a)=\{0\}$ by Theorem \ref{thm:gelfand1}. Since $C_0(\oom(A))$ is a \cs-algebra, the converse is also true, see Problem \ref{e:3-2}. In other words, the radical of a commutative Banach algebra $A$ consists of all elements $a\in A$ such that $\si_A(a)=\{ 0 \}$.
\end{remark}

The Gelfand transform is especially useful for semi-simple commutative Banach algebras. Assume $A$ is such an algebra. The Gelfand transform gives rise to a faithful representation of $A$ as a subalgebra of $C_0(\oom(A))$. If we require also that the image of the Gelfand transform of $A$, i.e. $\hat{A}:=\{ \hat{a}; a\in A\}$, to be dense in $C_0(\oom(A))$, we need to impose another condition. A Banach algebra equipped with this extra condition is called symmetric. Fortunately, every commutative \cs-algebra is both symmetric and semi-simple.

\begin{definition}
An involutive and commutative Banach algebra $A$ is called {\bf symmetric} if $\ff(a\s)=\overline{\ff(a)}$ for all $\ff\in \Omega(A)$ and $a\in A$.
\end{definition}

\begin{proposition}
\label{prop:dense-sym}
Let $A$ be a symmetric Banach algebra. Then $\hat{A}$ is dense in $C_0(\oom(A))$.
\end{proposition}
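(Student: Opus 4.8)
The plan is to identify $\hat{A}$ as a subalgebra of $C_0(\oom(A))$ that satisfies all the hypotheses of the Stone--Weierstrass theorem for locally compact Hausdorff spaces, and then invoke that theorem. Recall from Proposition \ref{prop:spec1}(ii) together with the subsequent remark that $\oom(A)$ is a locally compact Hausdorff space, so the relevant form of Stone--Weierstrass applies: a conjugation-closed subalgebra of $C_0(X)$ which separates the points of $X$ and vanishes nowhere is dense in $C_0(X)$.

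First I would record that $\hat{A}$ is genuinely a subalgebra of $C_0(\oom(A))$; this is immediate from Theorem \ref{thm:gelfand1}(i), since $\ggg$ is an algebraic homomorphism and hence its image is a subalgebra. Next, the two nondegeneracy conditions are purely formal. For separation of points, given $\om_1 \neq \om_2$ in $\oom(A)$ these are distinct linear functionals on $A$, so there is $a \in A$ with $\om_1(a) \neq \om_2(a)$, i.e.\ $\hat{a}(\om_1) \neq \hat{a}(\om_2)$. For vanishing nowhere, each $\om \in \oom(A)$ is by definition a nonzero homomorphism, so there is $a \in A$ with $\hat{a}(\om) = \om(a) \neq 0$.

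The one step carrying real content, and the only place the hypothesis is used, is that $\hat{A}$ is closed under complex conjugation. For $a \in A$ and $\om \in \oom(A)$ I would compute
\[
\overline{\hat{a}}(\om) = \overline{\om(a)} = \om(a\s) = \widehat{a\s}(\om),
\]
where the middle equality is exactly the symmetry condition $\ff(a\s) = \overline{\ff(a)}$ applied to the character $\om$. Hence $\overline{\hat{a}} = \widehat{a\s} \in \hat{A}$, so $\hat{A}$ is a $\ast$-closed subalgebra. With all the hypotheses verified, Stone--Weierstrass yields that $\hat{A}$ is dense in $C_0(\oom(A))$.

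I do not anticipate a serious obstacle: once symmetry is translated into conjugation-closedness the argument is routine, and the substance of the proposition is precisely this translation. The only point demanding care is to use the noncompact form of Stone--Weierstrass, in which density requires ``vanishes nowhere'' rather than ``contains the constant functions''. If one prefers to avoid that version, an equivalent route is to pass to the one-point compactification $\oom\pr(A) = \oom(A) \cup \{0\}$ of Exercise \ref{ex:ommprime}, adjoin the constants so as to obtain a unital, self-adjoint, point-separating subalgebra of $C(\oom\pr(A))$, apply the compact Stone--Weierstrass theorem there, and then restrict to the functions vanishing at the point at infinity to recover density in $C_0(\oom(A))$.
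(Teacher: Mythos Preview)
Your proof is correct and matches the paper's own argument essentially verbatim: verify that $\hat{A}$ is a conjugation-closed subalgebra of $C_0(\oom(A))$ that separates points and vanishes nowhere, then invoke the locally compact Stone--Weierstrass theorem. The only addition you make is the optional detour through the one-point compactification, which the paper does not mention but which is a harmless alternative.
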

\begin{proof}
Since $A$ is symmetric, $\hat{A}$ is closed under complex conjugation. Also, $\hat{A}$ separates the points of $\oom(A)$, because, for every $\ff_1\neq \ff_2\in \oom(A)$, there exists some $a\in A$ such that $\ff_1(a)\neq\ff_2(a)$ and so $\hat{a}(\ff_1)\neq \hat{a}(\ff_2)$. Finally, we note that since every $\ff\in\oom(A)$ is non-zero, there is some $a\in A$ such that $\hat{a}(\ff)=\ff(a)\neq 0$. Now, the Stone-Weierstrass theorem implies that $\hat{A}$ is dense in $C_0(\oom(A))$, see Theorem A.10.1 of \cite{deitmar-echterhoff}.
\end{proof}

To show every \cs-algebra is symmetric, we need some more definitions.

\begin{definition} Let $A$ be an involutive Banach algebra.
\begin{itemize}
\item[(i)] An element $a\in A$ is called {\bf self-adjoint} (or {\bf hermitian}) if $a=a\s$. The set of all self adjoint elements of $A$ is denoted by $A_h$.
\item[(ii)] An element $a\in A$ is called {\bf normal} if $aa\s=a\s a$.
\item[(iii)] An element $a\in A$ is called {\bf unitary} if $aa\s=a\s a=1$. The set of all unitary elements of $A$ is a subgroup of $A^\times$ and is called the {\bf unitary group of $A$} and is denoted by $A_u$.
\item[(iv)] An element $a\in A$ is called {\bf idempotent} if $a^2=a$. The set of all idempotent elements of $A$ is denoted by $Idem(A)$.
\item[(v)] An element $a\in A$ is called {\bf projection} if $a^2=a=a\s$. The set of all projections of $A$ is denoted by $Proj(A)$.
\item[(vi)] Let $A$ be a \cs-algebra. An element $a\in A$ is called {\bf positive} if $a=a\s$ and $\si_A(a)\subseteq [0,\infty[$. This is denoted by $a\geq 0$ and the set of all positive elements of $A$ is denoted by $A_+$.
\item[(vii)] A subset $X$ of $A$ is called {\bf self adjoint} if it is closed under the involution of $A$.
\end{itemize}
\end{definition}

Most parts of the above definition are still well defined in more general settings. For example, Parts (i) make sense in every involutive algebra or Part (iv) is the definition of idempotent elements in any ring. The normal elements play a very important role in continuous functional calculus in the next section. The following remark shows how every element of an involutive algebra can be written as a linear combination of two self adjoint elements.

\begin{remark}
\label{rem:real-imaginary}
Let $A$ be a \cs-algebra or more generally an involutive Banach algebra. For given $a\in A$, define $a_1=Re(a):=\frac{a+a\s}{2}$ and $a_2=Im(a):=\frac{a-a\s}{2i}$. Then it is easy to see that both $a_1$ and $a_2$ are self-adjoint and $a=a_1+ia_2$. The elements $Re(a)$ and $Im(a)$ are called the {\bf real part} and the {\bf imaginary part of $a$}, respectively.
\end{remark}

This decomposition of an arbitrary element to a linear combination of two self-adjoint elements is particularly useful when we need to reduce the argument to self-adjoint elements. It is also used directly to state and prove some statements. For example, consider the following easy exercise:

\begin{exercise}
With the notation of Remark \ref{rem:real-imaginary}, show that $a\in A$ is normal if and only if its real and imaginary parts commute, i.e. $a_1a_2=a_2a_1$.
\end{exercise}

\begin{lemma}
\label{lem:sym}
Let $A$ be an involutive and commutative Banach algebra. Then the following statements are equivalent:
\begin{itemize}
\item [(i)] $A$ is symmetric.
\item [(ii)] $\widehat{a\s}=\overline{\hat{a}}$ for all $a\in A$.
\item [(iii)] $\ff(a)\in \r$ for all $\ff\in \Omega(A)$ and $a\in A_h$.
\end{itemize}
\end{lemma}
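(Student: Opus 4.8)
The plan is to establish the three implications (i) $\Leftrightarrow$ (ii) and (i) $\Rightarrow$ (iii) $\Rightarrow$ (i), most of which are direct unwindings of the definitions. First I would dispose of (i) $\Leftrightarrow$ (ii), which is essentially a restatement through the definition of the Gelfand transform. For every $\ff\in\Omega(A)$ we have $\widehat{a\s}(\ff)=\ff(a\s)$ by definition of $\ggg$, while $\overline{\hat{a}}(\ff)=\overline{\hat{a}(\ff)}=\overline{\ff(a)}$. Hence the equality of functions $\widehat{a\s}=\overline{\hat{a}}$ holding for all $a\in A$ is literally the same statement as $\ff(a\s)=\overline{\ff(a)}$ for all $a\in A$ and all $\ff\in\Omega(A)$, which is the definition of $A$ being symmetric.

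Next I would verify (i) $\Rightarrow$ (iii), which is immediate. If $a\in A_h$, so that $a=a\s$, then applying symmetry to $a$ gives $\ff(a)=\ff(a\s)=\overline{\ff(a)}$ for every $\ff\in\Omega(A)$, and a complex number equal to its own conjugate is real, so $\ff(a)\in\r$.

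The one direction with any content is (iii) $\Rightarrow$ (i), and here I would invoke the real/imaginary decomposition of Remark \ref{rem:real-imaginary}. Given an arbitrary $a\in A$, write $a=a_1+ia_2$ with $a_1=\frac{a+a\s}{2}$ and $a_2=\frac{a-a\s}{2i}$, both of which are self-adjoint, so that $a\s=a_1-ia_2$. Applying a character $\ff\in\Omega(A)$ and using that $\ff$ is $\c$-linear yields $\ff(a)=\ff(a_1)+i\ff(a_2)$ and $\ff(a\s)=\ff(a_1)-i\ff(a_2)$. By hypothesis (iii) the numbers $\ff(a_1)$ and $\ff(a_2)$ are real, so conjugating the first identity gives $\overline{\ff(a)}=\ff(a_1)-i\ff(a_2)=\ff(a\s)$, which is exactly the symmetry condition. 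This closes the cycle and proves the equivalence.

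I do not expect a genuine obstacle in this proof; everything reduces to the definition of the Gelfand transform, the linearity of characters, and the standard decomposition into self-adjoint parts. The only points requiring mild care are keeping track of the complex conjugates in (i) $\Leftrightarrow$ (ii) and noting that the decomposition $a=a_1+ia_2$ is preserved under applying the (linear) functional $\ff$, so that the realness supplied by (iii) transfers directly to the conjugation identity defining symmetry.
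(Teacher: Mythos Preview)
Your proof is correct and follows essentially the same approach as the paper: the paper explicitly proves only (iii) $\Rightarrow$ (i) via the decomposition $a=a_1+ia_2$ from Remark~\ref{rem:real-imaginary}, exactly as you do, and dismisses the remaining implications as clear. Your write-up simply fills in those clear parts with the same definitional unwindings.
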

\begin{proof}
We only show that (iii) implies (i). The rest of the statements are clear. Let $a\in A$ and let $a=a_1+ia_2$ be the decomposition of $a$ as discussed in Remark \ref{rem:real-imaginary}. For every $\ff\in \Omega(A)$, we have
\[
\overline{\ff(a)}=\overline{\ff(a_1)+i \ff(a_2)}=\ff(a_1)-i\ff(a_2)=\ff(a_1-ia_2)=\ff(a\s).
\]
\end{proof}

\begin{proposition}
\label{prop:cs-sym}
Every commutative \cs-algebra is symmetric.
\end{proposition}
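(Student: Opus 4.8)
The plan is to invoke Lemma \ref{lem:sym}, which reduces the statement to verifying its condition (iii): that $\ff(a)\in\r$ for every character $\ff\in\Omega(A)$ and every self-adjoint element $a\in A_h$. The first step is to reduce to the unital case. If $A$ is non-unital, I would pass to its \cs-unitization $\tilde A$ (Proposition \ref{prop:csunitization}), which is again a commutative \cs-algebra since $A$ is commutative, and extend $\ff$ to the map $\tilde\ff$ on $\tilde A$ determined by $\tilde\ff(a+\la 1):=\ff(a)+\la$. A short computation shows $\tilde\ff$ is linear, multiplicative, and sends $1$ to $1$, so it is a non-zero homomorphism into $\c$; hence Proposition \ref{prop:spec1}(i) gives $\|\tilde\ff\|\leq 1$. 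Since $a$ remains self-adjoint in $\tilde A$ and $\tilde\ff(a)=\ff(a)$, it suffices to prove $\tilde\ff(a)\in\r$. Thus from here on we may assume $A$ is unital, that $\|\ff\|\leq 1$, and that the \cs-identity is at our disposal.

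The key idea is to exploit the \cs-identity through the one-parameter family $a+it1$, $t\in\r$. Writing $\ff(a)=\alpha+i\beta$ with $\alpha,\beta\in\r$, we have $\ff(a+it)=\alpha+i(\beta+t)$, so that $|\ff(a+it)|^2=\alpha^2+(\beta+t)^2$. On the other hand, since $\|\ff\|\leq 1$ we get $|\ff(a+it)|^2\leq\|a+it\|^2$, and because $a\s=a$ the element $a$ commutes with $t1$, so the \cs-identity yields
\[
\|a+it\|^2=\|(a-it)(a+it)\|=\|a^2+t^2\|\leq\|a\|^2+t^2.
\]
Combining the two estimates gives $\alpha^2+(\beta+t)^2\leq\|a\|^2+t^2$, which simplifies to $\alpha^2+\beta^2+2\beta t\leq\|a\|^2$ for every $t\in\r$.

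Finally, I would let $t$ run to $+\infty$ and $-\infty$: the left-hand side is affine in $t$ with slope $2\beta$, so unless $\beta=0$ it is unbounded above, contradicting the uniform bound $\|a\|^2$. Hence $\beta=0$ and $\ff(a)=\alpha\in\r$, which is precisely condition (iii), and Lemma \ref{lem:sym} then gives symmetry. The only real obstacle is the bookkeeping for the non-unital case --- checking that the extended character $\tilde\ff$ is well defined, multiplicative, of norm at most one, and that both self-adjointness of $a$ and the value $\ff(a)$ are preserved; once one is inside a unital \cs-algebra, the $a+it$ trick lets the \cs-identity do all the work.
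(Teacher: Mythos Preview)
Your proof is correct and follows essentially the same approach as the paper: both reduce to the unital case, take a self-adjoint $a$ with $\ff(a)=\alpha+i\beta$, apply the \cs-identity to the family $a+it$ to obtain $\alpha^2+\beta^2+2\beta t\leq\|a\|^2$ for all $t\in\r$, and conclude $\beta=0$. Your treatment of the non-unital reduction is slightly more detailed than the paper's one-line remark, but the argument is the same.
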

\begin{proof}
Assume $A$ is a unital commutative \cs-algebra. Let $\ff\in \oom(A)$ and $a\in A_h$. Consider $x,y\in \r$ such that $\ff(a)=x+iy$ and define $a_t:=a+it$ for all $t\in \r$. Then we have $a_t\s a_t=a^2+ t^2$ and $\ff(a_t)=x=i(y+t)$. Now, we have
\[
x^2+(y+t)^2=|\ff(a_t)|^2\leq \|a_t\|^2=\|a_t\s a_t\|=\|a^2+t^2\|\leq \|a\|^2 +t^2.
\]
Hence $x^2+y^2+1yt\leq \|a\|^2$ for all $t\in \r$. This is possible only if $y=0$. Therefore $\ff(a)$ is a real number. When $A$ is non-unital, the assertion follows from the above case by passing to $\tilde{A}$, the \cs-unitization of $A$.
\end{proof}

\begin{proposition}
\label{prop:spradius3}
Let $A$ be a \cs-algebra and let $a\in A$ be a normal element. Then $r(a)=\|a\|$.
\end{proposition}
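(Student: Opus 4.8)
The plan is to reduce everything to the self-adjoint case and then exploit normality through the \cs-identity together with the formula $r(a)=\lim_n \|a^n\|^{1/n}$ supplied by Proposition \ref{prop:spradius2}.

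First I would handle a self-adjoint element $b=b\s$. Here the \cs-identity gives $\|b^2\|=\|b\s b\|=\|b\|^2$, and a trivial induction yields $\|b^{2^k}\|=\|b\|^{2^k}$ for all $k$. Since Proposition \ref{prop:spradius2} guarantees that the full sequence $\|b^n\|^{1/n}$ converges to $r(b)$, I may evaluate this limit along the subsequence $n=2^k$, obtaining $r(b)=\lim_k \|b^{2^k}\|^{1/2^k}=\|b\|$. This establishes the result for self-adjoint elements.

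For a general normal $a$ I would set $b:=a\s a$, which is self-adjoint. The decisive use of normality is that $a$ and $a\s$ commute, so that $(a^n)\s (a^n)=(a\s)^n a^n=(a\s a)^n=b^n$. Applying the \cs-identity to $x=a^n$ then gives $\|a^n\|^2=\|(a^n)\s(a^n)\|=\|b^n\|$. Taking $2n$-th roots and letting $n\ra\infty$, and using that both limits exist by Proposition \ref{prop:spradius2}, I get
\[
r(a)=\lim_n \|a^n\|^{1/n}=\lim_n \|b^n\|^{1/(2n)}=\left(\lim_n \|b^n\|^{1/n}\right)^{1/2}=r(b)^{1/2}.
\]
By the self-adjoint case already settled, $r(b)=\|b\|=\|a\s a\|=\|a\|^2$, and therefore $r(a)=\|a\|$, as desired.

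The one point that really needs care is the identity $(a^n)\s(a^n)=(a\s a)^n$, which is precisely where normality enters: without the commutation relation $a\s a=aa\s$ one cannot collapse $(a\s)^n a^n$ into a power of the single self-adjoint element $b=a\s a$, and the argument breaks down. Everything else is routine manipulation of limits, legitimate because Proposition \ref{prop:spradius2} ensures the relevant limits exist, so that passing to subsequences and extracting square roots causes no difficulty.
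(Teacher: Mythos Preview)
Your proof is correct and uses essentially the same ingredients as the paper: the \cs-identity, the self-adjoint doubling $\|x^2\|=\|x\|^2$, normality to rewrite $(a\s)^n a^n=(a\s a)^n$, and the subsequence evaluation of the limit from Proposition \ref{prop:spradius2}. The paper merely organizes these steps a bit differently, proving $\|a^{2^n}\|=\|a\|^{2^n}$ directly for normal $a$ via $\|a^2\|^2=\|(a^2)\s a^2\|=\|(a\s a)^2\|=\|a\s a\|^2=\|a\|^4$, rather than isolating the self-adjoint case first and then deducing $r(a)^2=r(a\s a)$.
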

\begin{proof}
For every self-adjoint element $x\in A$, we have $\|x^2\|=\|x\s x\|=\|x\|^2$. Hence we compute
\[
\|a^{2}\|^2=\|(a^{2})\s a^{2}\|=\|(a\s a)^2\|=\|a\s a\|^2=\|a\|^4.
\]
Therefore by induction, we get $\|a^{2^n}\|=\|a\|^{2^n}$, and consequently, we have
\[
r(a)=\lim_{n\ra \infty}\|a^n\|^{1/n}=\lim_{n\ra \infty}\|a^{2^n}\|^{1/2^n}=\|a\|.
\]
\end{proof}

\begin{corollary}
Let $A$ be a \cs-algebra and let $a\in A$. Then $\|a\|=r(a\s a)^{1/2}$.
\end{corollary}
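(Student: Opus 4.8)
The plan is to reduce everything to the single normal element $a\s a$ and then invoke the results already established for normal elements. First I would observe that $a\s a$ is self-adjoint, since $(a\s a)\s = a\s (a\s)\s = a\s a$, and in particular it is normal. This is the one structural remark the whole argument hinges on: passing from the arbitrary element $a$, about which we know nothing spectral, to the element $a\s a$, which lies in the class where the spectral radius is fully controlled.

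Next I would apply Proposition \ref{prop:spradius3} to the normal element $a\s a$, which gives immediately
\[
r(a\s a) = \|a\s a\|.
\]
At this stage the spectral radius has been converted entirely into a norm computation, and no further spectral theory is needed.

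Finally I would bring in the \cs-identity, Equation (\ref{csnorm}), namely $\|a\s a\| = \|a\|^2$. Combining this with the previous display yields $r(a\s a) = \|a\|^2$, and taking positive square roots gives $\|a\| = r(a\s a)^{1/2}$, as desired.

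Honestly there is no real obstacle here: the corollary is a one-line consequence of Proposition \ref{prop:spradius3} together with the \cs-identity. The only point requiring any care — and it is a trivial one — is verifying that $a\s a$ is normal so that Proposition \ref{prop:spradius3} applies; everything else is direct substitution. The conceptual content is that the \cs-identity makes the norm of $a$ recoverable from purely spectral (hence algebraic) data of the self-adjoint element $a\s a$, which foreshadows why the \cs-norm is determined by the algebraic structure alone.
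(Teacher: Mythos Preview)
Your argument is correct and is exactly the intended one: the paper states this corollary immediately after Proposition~\ref{prop:spradius3} without a separate proof, precisely because it follows in one line from $r(a\s a)=\|a\s a\|$ (since $a\s a$ is self-adjoint, hence normal) together with the \cs-identity $\|a\s a\|=\|a\|^2$.
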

This shows the norm of a \cs-algebra is completely determined by its algebraic structure. Therefore every \cs-algebra has only one \cs-norm. One can deduce this property also from Corollary \ref{cor:csinjection}.

\begin{theorem}
Let $A$ be a commutative \cs-algebra. The Gelfand transform is an isometric $\ast$-isomorphism from $A$ onto $C_0(\oom(A))$.
\end{theorem}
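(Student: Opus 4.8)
The plan is to assemble the theorem from the structural facts already established about the Gelfand transform, combined with the two special features of commutative \cs-algebras: symmetry and the coincidence of norm with spectral radius. First I would record that $\ggg$ is a $\ast$-homomorphism. By Theorem \ref{thm:gelfand1}(i) it is already an algebraic homomorphism, so only compatibility with the involutions remains. Since $A$ is a commutative \cs-algebra, Proposition \ref{prop:cs-sym} shows it is symmetric, and then Lemma \ref{lem:sym}(ii) gives $\widehat{a\s}=\overline{\hat{a}}$ for all $a\in A$. As the involution on $C_0(\oom(A))$ is pointwise complex conjugation, this identity is exactly $\ggg(a\s)=\ggg(a)\s$, so $\ggg$ is a $\ast$-homomorphism.

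Next I would prove that $\ggg$ is isometric, which is the crux of the statement. The key observation is that in a commutative algebra every element $a$ is normal, since $aa\s=a\s a$ holds trivially. Hence Proposition \ref{prop:spradius3} applies to every $a\in A$ and yields $r(a)=\|a\|$. Combining this with Theorem \ref{thm:gelfand1}(iii), which states $r(a)=\|\hat{a}\|_{\sup}$, we obtain $\|\ggg(a)\|_{\sup}=\|\hat{a}\|_{\sup}=r(a)=\|a\|$ for every $a\in A$. Thus $\ggg$ preserves the norm.

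From isometry the remaining properties follow quickly. An isometry has trivial kernel, so $\ggg$ is injective. Moreover, the isometric image $\hat{A}=\ggg(A)$ of the complete space $A$ is again complete, hence a closed subspace of $C_0(\oom(A))$. On the other hand, $A$ is symmetric, so Proposition \ref{prop:dense-sym} shows that $\hat{A}$ is dense in $C_0(\oom(A))$. A subspace that is both closed and dense must be the whole space, so $\ggg$ is onto. Being a norm-preserving bijective $\ast$-homomorphism whose inverse is therefore also isometric (in particular continuous), $\ggg$ is an isometric $\ast$-isomorphism of $A$ onto $C_0(\oom(A))$.

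The only genuine subtlety, which I would flag as the main point to handle with care, is the surjectivity argument: it plays off the closedness of $\hat{A}$ (an analytic consequence of the \cs-identity forcing $\ggg$ to be isometric) against its density (a consequence of symmetry via Stone--Weierstrass, used in Proposition \ref{prop:dense-sym}). Neither property alone suffices. It is precisely the \cs-structure, through Proposition \ref{prop:spradius3}, that upgrades the a priori merely continuous and possibly non-injective Gelfand transform of a general commutative Banach algebra into an isometry, thereby closing the image and completing the identification. For the non-unital case no separate treatment is needed, since Theorem \ref{thm:gelfand1}, Proposition \ref{prop:cs-sym}, and Proposition \ref{prop:dense-sym} are all stated without assuming a unit.
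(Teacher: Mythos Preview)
Your proof is correct and follows essentially the same approach as the paper: you use normality of all elements together with Proposition~\ref{prop:spradius3} and Theorem~\ref{thm:gelfand1}(iii) to get the isometry, Proposition~\ref{prop:cs-sym} and Lemma~\ref{lem:sym} to get the $\ast$-preservation, and then combine closedness of the isometric image with density from Proposition~\ref{prop:dense-sym} to conclude surjectivity. The paper's proof is terser but uses exactly the same ingredients in the same logical order.
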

\begin{proof}
Since $A$ is commutative all elements of $A$ are normal. Hence for all $a\in A$, we have $\|a\|=r(a)=\|\hat{a}\|_{\sup}$. This shows that the Gelfand transform is an isometry and $A$ is semi-simple. It follows easily from Proposition \ref{prop:cs-sym} and Lemma \ref{lem:sym} that the Gelfand transform is a $\ast$-homomorphism. Finally, we note that $\hat{A}$ is closed and dense subalgebra of $C_0(\oom(A))$, because $\ggg$ is isometry and because of Propositions \ref{prop:dense-sym} and \ref{prop:cs-sym}. Hence the Gelfand transform must be onto.
\end{proof}
In the above discussion, we first associated a locally compact Hausdorff topological space, i.e. $\oom(A)$, to every commutative Banach algebra $A$, in particular every commutative \cs-algebra $A$. Then using the Gelfand transform, we proved that the \cs-algebra $C_0(\oom(A))$ is isometrically isomorphic to $A$. There is also a reverse procedure starting from a locally compact and Hausdorff topological space which is explained in the following proposition.

\begin{proposition}
\label{prop:inversegelfand}
Let $X$ be a locally compact and Hausdorff topological space. Then the map $\fff:X\ra \oom(C_0(X))$ defined as follows is an onto homeomorphism:
\[
x\mapsto \hat{x}, \quad \hat{x}(f):=f(x), \qquad \forall x\in X \,\text{and}\, f\in C_0(X).
\]
\end{proposition}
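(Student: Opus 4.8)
The plan is to establish, in order, that $\fff$ is well defined into $\oom(C_0(X))$, injective, continuous, surjective, and has continuous inverse; the substantial work is concentrated in surjectivity. Throughout I would lean on Urysohn's lemma for locally compact Hausdorff spaces (as used in Remark \ref{rem:diracnet}), which both supplies non-vanishing functions and separates points.

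First I would check that each $\hat{x}$ is genuinely a character: linearity and multiplicativity are immediate from the pointwise operations on $C_0(X)$, and $\hat{x}\neq 0$ because Urysohn's lemma yields $f\in C_c(X)\subseteq C_0(X)$ with $f(x)=1$, so $\hat x(f)=1$. Hence $\hat x\in\oom(C_0(X))$. Injectivity is the same separation property read the other way: if $x\neq y$, Urysohn produces $f\in C_0(X)$ with $f(x)\neq f(y)$, so $\hat x\neq \hat y$. For continuity of $\fff$ I would argue with nets. Since the \ws topology on $\oom(C_0(X))\subseteq C_0(X)\s$ is generated by the evaluation functionals $\om\mapsto \om(f)$, a net $x_i\to x$ in $X$ gives $\hat{x_i}(f)=f(x_i)\to f(x)=\hat x(f)$ for every $f$ (each $f$ being continuous), whence $\hat{x_i}\to\hat x$.

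The core of the proof is surjectivity, which I expect to be the main obstacle. Given $\om\in\oom(C_0(X))$ I would reduce to the compact case through the one-point compactification. By Proposition \ref{prop:compactificationunit} the unitization $C_0(X)_1$ is isomorphic to $C(X^\infty)$, and the extension $\om_1=\jmath(\om)$, given by $\om_1(a,\la)=\om(a)+\la$, is a character of $C(X^\infty)$; moreover $\om_1\neq\ff_\infty$ (the character $(a,\la)\mapsto\la$, corresponding under the isomorphism to evaluation at $\infty$) precisely because $\om\neq 0$. Now $\ker\om_1$ is a proper ideal of $C(X^\infty)$, and I claim its members have a common zero $p\in X^\infty$. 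If not, then for each point one finds a function in $\ker\om_1$ not vanishing there; multiplying by its complex conjugate and using that $\ker\om_1$ is an ideal yields a nonnegative function $|f|^2\in\ker\om_1$ strictly positive near that point. Compactness of $X^\infty$ then produces a finite sum $g=\sum_j |f_j|^2\in\ker\om_1$ with $g>0$ everywhere, hence $1/g\in C(X^\infty)$, so $g$ is invertible, contradicting Exercise \ref{ex:ex2-1}(i) that a proper ideal contains no invertible element. With a common zero $p$ in hand, $\ker\om_1\subseteq\{f:f(p)=0\}$, a proper ideal, and maximality of $\ker\om_1$ forces equality; since two characters with the same kernel agree (using $C(X^\infty)=\ker\om_1\oplus\c\,1$), $\om_1$ is evaluation at $p$. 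As $\om_1\neq\ff_\infty$, the point $p$ lies in $X$, and restricting to $C_0(X)$ gives $\om=\hat p$.

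Finally, for the homeomorphism claim it remains to show $\fff^{-1}$ is continuous, which I would again do with nets. Suppose $\hat{x_i}\to\hat x$ in the \ws topology but $x_i\not\to x$; after passing to a subnet there is an open neighborhood $U$ of $x$ with $x_i\notin U$ for all $i$. Urysohn's lemma supplies $f\in C_c(X)$ with $f(x)=1$ and $\mathrm{supp}(f)\subseteq U$, so $f(x_i)=0$ for every $i$, contradicting $f(x_i)=\hat{x_i}(f)\to\hat x(f)=f(x)=1$. Hence $x_i\to x$, so $\fff^{-1}$ is continuous and $\fff$ is an onto homeomorphism.
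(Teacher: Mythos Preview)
Your proof is correct, but your route to surjectivity is genuinely different from the paper's. The paper proceeds through measure theory: it observes that a character $\om$ of $C_0(X)$ is positive on nonnegative functions, invokes the Riesz representation theorem to write $\om(f)=\int_X f\,d\mu$ for some positive Radon measure $\mu$, and then shows via $\om\big(\overline{(f-\om(f))}(f-\om(f))\big)=0$ that every $f$ is constant $\mu$-a.e., forcing $\mu$ to be a point mass (Remark~\ref{rem:concentrated}). You instead pass to $C(X^\infty)$ via Proposition~\ref{prop:compactificationunit}, and use a purely ideal-theoretic compactness argument to locate a common zero of $\ker\om_1$. Your approach is more elementary in that it avoids the Riesz theorem entirely and stays within the algebraic machinery already developed in the chapter; the paper's approach, on the other hand, foreshadows the measure-theoretic viewpoint that recurs later (e.g.\ in Section~\ref{sec:Borelfunctionalcalculus}). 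For the homeomorphism step the paper also works through $X^\infty$, extending $\fff$ to a continuous bijection on the compact space $X^\infty$ and invoking compactness, whereas your direct Urysohn-based net argument for continuity of $\fff^{-1}$ is equally valid and arguably more transparent.
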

\begin{proof}
Clearly, $\hat{x}$ is a multiplicative homomorphism from $C_0(X)$ into $\c$ for all $x\in X$. Since $X^\infty$ is a normal topological space, see Theorems 2.4 and 3.1 of \cite{munkeres}, for every $x\in X$, there exists $f\in C_0(X)$ such that $f(x)\neq 0$. Hence $\widehat{x}\neq 0$ for all $x\in X$. This shows that $\fff$ is well-defined.

If $x_1$ and $x_2$ are two points in $X$ such that $\widehat{x_1}(f)=\widehat{x_2}(f)$ for every $f\in C_0(X)$, then $f(x_1)=f(x_2)$ for all $f\in C_0(X)$. Again, using the fact that $X^\infty$ is normal and using the Urysohn lemma, it is only possible when $x_1=x_2$. Therefore $\fff$ is one-to-one.

Let $\om$ be an element of $\oom(C_0(X))$. It is easy to see that $\om(f)\geq 0$ for every non-negative function $f\in C_0(X)$. Hence by the Riesz representation theorem, see Theorem \ref{thm:rieszrep}, there exists a positive Radon measure $\mu$ on $X$ such that $\om(f)=\int_X f(x)d\mu (x)$  for all $f\in C_0(X)$. Thus we have
\[
0=\om\left(\overline{(f-\om(f))} (f-\om(f))\right)=\int_X |f(x)-\om(f)|^2 d\mu(x).
\]
This means that, for every $f\in C_0(X)$, $f$ equals to the constant function  $\om(f)$ $\mu$-almost everywhere. Regarding Remark \ref{rem:concentrated}, there is a point $x_0\in X$ such that $\om(f)=f(x_0)$ for all $f\in C_0(X)$. In other words, $\om = \widehat{x_0}$ and this shows that $\fff$ is onto.

Let $(x_i)$ be a net in $X$ convergent to a point $x_0\in X$. Then for all $f\in C_0(X)$, we have $f(x_i)\ra f(x_0)$, which implies that $\widehat{x_i}\ra \widehat{x_0}$ in the weak $\ast$-topology. This means $\fff$ is continuous. One easily extends $\fff$ to a continuous and bijective map from $X^\infty$ onto $\oom\pr(C_0(X))$, see Exercise \ref{ex:ommprime}. Since $X^\infty$ is compact, this extension is a homeomorphism and so is $\fff$.
\end{proof}

Let $X$ be a set and consider $P(X)$, the power set of $X$, as the $\si$-algebra over $X$. For a given point $x_0\in X$, the {\bf Dirac measure} or {\bf point mass at $x_0$} is the measure $\delta_{x_0}$ defined by $\delta_{x_0}(E):=1$ if $x_0\in E$ and $\delta_{x_0}(E):=0$ otherwise. The same names are also applied for smaller $\si$-algebras than $P(X)$.

\begin{remark}
\label{rem:concentrated}
Let $X$ be a locally compact and Hausdorff topological space and let $\mu$ be a positive Radon measure on $X$. We say $x\in X$ is a {\bf concentration point of $\mu$} if every open set containing $x$ has non-zero measure. We say $\mu$ is {\bf concentrated at a point $x_0\in X$} if $x_0$ is the only concentration point of $\mu$. It is easy to see that if $\mu$ is concentrated at a point $x_0\in X$, then $\mu$ is equal to a positive multiple of the Dirac measure or point mass at the point $x_0$. Now, if for every $f\in C_0(X)$, $f$ is constant $\mu$-almost everywhere, $\mu$ must be concentrated at a point $X_0\in X$. The reason is that if $\mu$ has two concentration points, say $x_0\neq x_1$, then using the fact that $X^\infty$ is normal and using the Urysohn lemma, there exist a function $f\in C_0(X)$ such that $f$ takes two different values over disjoint neighborhoods of $x_0$ and $x_1$ and this contradicts with our assumption. Also, a similar argument excludes the case that $\mu$ has no concentration point. Finally, we note that if $\mu$ is concentrated at $x_0$, then $\int_X f(x)d\mu(x) = f(x_0)$ for all $f\in C_0(X)$.
\end{remark}
\begin{remark}
\label{rem:whyproper}
Let $\psi: X\ra Y $ be a continuous map between two compact topological spaces. Define $\psi\s: C(Y)\ra C(X)$ by $\psi\s(f)=f o \psi$. It is shown that it is an \ss-homomorphism. It is worthwhile to note that not every \ss-homomorphism $C(Y)\ra C(X)$ comes from a continuous map from $X$ into $Y$. For example, the zero homomorphism cannot be obtained in this way. Because, for every $y$ in the image of $\psi$, one can define a continuous function $f:Y\ra \c$ such that $f(y)=1$. Then $\psi\s(f)\neq 0$. One also notes that the compactness of $X$ is important here. To see this, consider the exponential map $e:\r\ra\t$, $t\mapsto e^{2\pi i}$. It is continuous, but $e\s:C(\t)\ra C_0(\r)$ is not well defined. Because it sends the constant function $1_\t$ to the constant function $1_\r$ which belongs to $C_b(\r)$ not $C_0(\r)$. The zero homomorphism $\ff:A\ra B$ between two commutative \cs-algebras cause another problem. Because $\ff\s(\om)=\om \ff=0$ for every $\om\in \oom(B)$, so $\ff\s=0$. Hence $\ff\s$ is not even a well defined map from $\oom(B)$ into $\oom(A)$. Therefore in order to obtain a bijective correspondence between continuous functions from a locally compact and Hausdorff space $X$ into another locally compact and Hausdorff space $Y$ and \ss-homomorphism from $C_0(Y)$ into $C_0(X)$, we have to impose some restrictions both on continuous maps and on \ss-homomorphisms.
\end{remark}
\begin{definition}
\label{def:propermaps}
\begin{itemize}
\item [(i)] Let $A$ be a \cs-algebra. A net $(h_\la)$ in $A$ is called an {\bf approximate unit for $A$} if every $h_\la$ is positive and $(h_\la)$ is an  approximate unit for $A$ as a Banach algebra, namely $\|h_\la\|\leq 1$ for all $\la$ and both nets $(ah_\la)$ and $(h_\la a)$ converge to $a$ for all $a\in A$, see also Definition \ref{def:approxtypes}.
\item [(ii)] A \ss-homomorphism $\ff:A\ra B$ between two \cs-algebras is called {\bf proper} if the image of every approximate unit in $A$ under $\ff$ is an approximate unit in $B$.
\item [(iii)] A continuous map $\psi:X\ra Y$ between two topological spaces is called {\bf proper} if the preimage of every compact subset of $Y$ is compact in $X$.
\end{itemize}
\end{definition}

\begin{example}
\label{exa:apprunit}
Let $\Sigma$ be the collection of all compact subsets of a locally compact and Hausdorff space $X$. $\Sigma$ is a directed set with respect to inclusion. For $K\in \Sigma$, pick a continuous $f_K: X\ra [0,1]$ vanishing at infinity such that $f_K(x)=1$ for all $x\in K$. The reader easily verifies that the net $(f_K)_{K\in \Sigma}$ is an approximate unit for $C_0(X)$. One notes that since $X^\infty$ is a normal topological space, the elements of this net can be chosen from compact support function if needed.
\end{example}

\begin{proposition}
\label{prop:contmapspec}
Let $\psi:X\ra Y$ be a proper continuous map between two locally compact and Hausdorff topological spaces. The map $\psi\s: C_0(Y)\ra C_0(X)$ defined by $\psi\s(f)=fo\psi$ is a proper \ss-homomorphism. When $\psi$ is a homeomorphism, $\psi\s$ is an isometric isomorphism.
\end{proposition}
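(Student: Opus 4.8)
The plan is to verify, in turn, that $\psi\s$ actually lands in $C_0(X)$, that it is a \ss-homomorphism, that it is proper, and finally that it is an isometric isomorphism when $\psi$ is a homeomorphism. The properness hypothesis on $\psi$ is used genuinely only in the first and third steps; everything else is formal.

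First I would check well-definedness. For $f\in C_0(Y)$ the composite $f\circ\psi$ is continuous, and for every $\ep>0$ the set $\{x\in X;\,|f(\psi(x))|\geq \ep\}$ equals $\psi\inv(\{y\in Y;\,|f(y)|\geq\ep\})$. The inner set is compact because $f$ vanishes at infinity, and its preimage is compact precisely because $\psi$ is proper; hence $f\circ\psi\in C_0(X)$, so $\psi\s$ is well-defined. The \ss-homomorphism property is then routine, as all the identities hold pointwise: $(fg)\circ\psi=(f\circ\psi)(g\circ\psi)$, linearity is clear, and $\overline{f}\circ\psi=\overline{f\circ\psi}$ gives compatibility with the involution. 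The estimate $\|\psi\s(f)\|_{\sup}=\sup_x|f(\psi(x))|\leq\|f\|_{\sup}$ shows $\psi\s$ is norm-decreasing, hence continuous, so it is a homomorphism in the sense of Definition \ref{def:homomorphisms}(i).

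The main work is to prove $\psi\s$ is proper, i.e. that it carries an arbitrary approximate unit $(h_\la)$ of $C_0(Y)$ to an approximate unit of $C_0(X)$. Positivity of each $\psi\s(h_\la)=h_\la\circ\psi$ and the bound $\|\psi\s(h_\la)\|\leq 1$ are immediate. The crux is the convergence $\psi\s(h_\la)f\to f$ in $C_0(X)$ for every $f\in C_0(X)$, where $f$ need not lie in the range of $\psi\s$. I would first record a fact about any approximate unit on $C_0(Y)$: on each compact $L\sub Y$ one has $h_\la\to 1$ uniformly. This follows by choosing, via Urysohn's lemma (using that $Y^\infty$ is normal, as in Example \ref{exa:apprunit}), a function $g\in C_0(Y)$ with $0\leq g\leq 1$ and $g\equiv 1$ on $L$; then $\|h_\la g-g\|_{\sup}\to 0$ forces $\sup_{y\in L}|h_\la(y)-1|\to 0$. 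Given $f\in C_0(X)$ with $M:=\|f\|_{\sup}>0$ and $\ep>0$, set $K:=\{x;\,|f(x)|\geq\ep/2\}$, which is compact, and $L:=\psi(K)$, also compact. Once $\la$ is large enough that $h_\la>1-\ep/(2M)$ on $L$, I would bound $\sup_x|f(x)|\,|h_\la(\psi(x))-1|$ by splitting over $x\in K$ and $x\notin K$: on $K$ the factor $|h_\la(\psi(x))-1|$ is small, while off $K$ the factor $|f(x)|$ is small and $0\leq 1-h_\la\leq 1$. Both pieces come out below $\ep$, giving $\|\psi\s(h_\la)f-f\|_{\sup}<\ep$. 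This is the step I expect to be the main obstacle, since one must control $f$ outside the image of $\psi\s$ and combine the decay of $f$ at infinity with the uniform-on-compacta convergence of $(h_\la)$.

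Finally, when $\psi$ is a homeomorphism it is automatically proper (it and $\psi\inv$ send compacta to compacta), so both $\psi\s$ and $(\psi\inv)\s$ are defined by the above. A direct computation gives $(\psi\inv)\s\circ\psi\s=\mathrm{id}_{C_0(Y)}$ and $\psi\s\circ(\psi\inv)\s=\mathrm{id}_{C_0(X)}$, so $\psi\s$ is a bijective \ss-homomorphism whose inverse $(\psi\inv)\s$ is again norm-decreasing and hence continuous; thus $\psi\s$ is an isomorphism. It is isometric because $\psi$ is onto: $\|\psi\s(f)\|_{\sup}=\sup_{x\in X}|f(\psi(x))|=\sup_{y\in Y}|f(y)|=\|f\|_{\sup}$.
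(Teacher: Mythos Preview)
Your proof is correct and follows essentially the same route as the paper's: well-definedness via the properness of $\psi$, the routine \ss-homomorphism check, and properness of $\psi\s$ by choosing a compact set $K\sub X$ outside of which $f$ is small, picking (via Urysohn) a bump function in $C_0(Y)$ equal to $1$ on $\psi(K)$, and using the approximate-unit convergence against that bump function. The only cosmetic difference is that you first isolate the ``$h_\la\to 1$ uniformly on compacta'' lemma and then do a two-region split, whereas the paper works directly with a three-term triangle inequality involving $\psi\s(\alpha)$; the underlying estimate is the same.
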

\begin{proof}
For given $f\in C_0(Y)$ and for every $\ep>0$, let $K$ be a compact subset of $Y$ such that $|f(y)|<\ep$ for all $y\in Y\backslash K$. Set $K\pr :=\psi\inv (K)$. Then $K\pr$ is compact and $|\psi\s(f)(x)|<\ep$ for all $x\in X\backslash K\pr$. This shows that $\psi\s(f)\in C_0(X)$. It is straightforward to check that $\psi\s$ is a $\ast$-homomorphism. One also easily checks that when $\psi$ is onto, $\psi\s$ is an isometry and when $\psi$ is a homeomorphism, $\psi\s$ is an isomorphism.

Now, Let $(f_\la)_{\la\in \Lambda}$ be an approximate unit in $C_0(Y)$ and let $g\in C_0(X)$. For given $\ep>0$, let $K$ be a compact subset of $X$ such that $|g(x)|<\ep$ for every $x\in X\backslash K$. Pick a continuous function $\alpha:Y\ra [0,1]$ vanishing at infinity such that $\psi\s(\alpha)(x)=\alpha(\psi (x))=1$ for all $x\in K$. By definition, there is some $\la_0\in \Lambda$ such that $\|f_\la \alpha -\alpha \|_{\sup} < \frac{\ep}{ \|g\|_{\sup} }$ for all $\la\geq \la_0$. Then for $\la\geq \la_0$, we have
\begin{eqnarray*}
\|\psi\s(f_\la)g-g\|_{\sup} &\leq& \|\psi\s(f_\la)\psi\s(\alpha) g-\psi\s(\alpha)g\|_{\sup}\\
 &+& \|\psi\s(f_\la) g-\psi\s(f_\la)\psi\s(\alpha)g\|_{\sup}+ \|\psi\s(\alpha)g-g\|_{\sup}\\
&\leq& \|\psi\s(f_\la)\psi\s(\alpha)-\psi\s(\alpha)\|_{\sup}\|g\|_{\sup}\\
 &+& 2\sup\{|g(x)|; x\in X\backslash K \}\\
&<& \frac{\ep}{\|g\|_{\sup}} \|g\|_{\sup} +2 \ep=3\ep.
\end{eqnarray*}
This shows that the net $(\psi\s(f_\la))$ is an approximate unit for $C_0(X)$. Hence $\psi\s$ is proper.
\end{proof}

\begin{exercise}
\label{ex:contmapspec} Complete the gaps in the proof of Proposition \ref{prop:contmapspec}.
\end{exercise}

\begin{proposition}
\label{prop:properhom}
Let $\psi:A\ra B$ be a proper \ss-homomorphism between two \cs-algebras. Then $\psi\s:\oom(B)\ra \oom(A)$ defined by $\om\mapsto \om \psi$ is a proper continuous map. It is a homeomorphism if $\psi$ is an isomorphism.
\end{proposition}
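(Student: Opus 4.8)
The plan is to mirror Proposition \ref{prop:contmapspec} in the dual direction: reduce continuity to Lemma \ref{lem:alghombanach} and obtain properness by passing to one-point compactifications. Throughout, $A$ and $B$ are commutative, $\oom(A)\sub A\s$ and $\oom(B)\sub B\s$ carry the \ws topology, and $\oom\pr(A)=\oom(A)\cup\{0\}$, $\oom\pr(B)=\oom(B)\cup\{0\}$ denote their one-point compactifications (Exercise \ref{ex:ommprime}), which are \ws compact by Proposition \ref{prop:spec1}.

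First I would check that $\psi\s$ is well defined, i.e. that $\om\psi\in\oom(A)$ whenever $\om\in\oom(B)$; this is exactly where properness is used. The composite $\om\psi$ is an algebraic homomorphism $A\ra\c$, so the only issue is non-vanishing. Suppose $\om\psi=0$. Since $A$ is a \cs-algebra it has an approximate unit $(h_\la)$, and since $\psi$ is proper, $(\psi(h_\la))$ is an approximate unit of $B$. For any $b\in B$ we have $\|\psi(h_\la)b-b\|\ra 0$, so by continuity of the character $\om$ (Proposition \ref{prop:spec1}(i)) together with its multiplicativity, $\om(b)=\lim_\la\om(\psi(h_\la)b)=\lim_\la(\om\psi)(h_\la)\,\om(b)=0$. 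As $b$ is arbitrary this forces $\om=0$, a contradiction. Hence $\psi\s(\om)=\om\psi\in\oom(A)$, and continuity of $\psi\s$ then follows at once from Lemma \ref{lem:alghombanach}, since $\psi$ is an algebraic homomorphism with $\psi\s(\ff)\neq 0$ for all $\ff\in\oom(B)$.

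Next I would prove properness. The transpose $\rho\mapsto\rho\psi$ is \ws continuous on all of $B\s$ (for a \ws convergent net $\rho_i\ra\rho$ one has $\rho_i(\psi(a))\ra\rho(\psi(a))$ for every $a\in A$), and by the previous paragraph it carries $\oom\pr(B)$ into $\oom\pr(A)$, sending $0$ to $0$. Thus it restricts to a continuous map $\widetilde{\psi\s}:\oom\pr(B)\ra\oom\pr(A)$ between compact Hausdorff spaces, which is therefore closed. Now let $K\sub\oom(A)$ be compact; then $K$ is closed in $\oom\pr(A)$ and $0\notin K$. Hence $(\widetilde{\psi\s})\inv(K)$ is closed in the compact space $\oom\pr(B)$, so it is compact, and $\widetilde{\psi\s}(0)=0\notin K$ shows $0\notin(\widetilde{\psi\s})\inv(K)$, so this preimage lies in $\oom(B)$ and coincides with $(\psi\s)\inv(K)$. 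Therefore $(\psi\s)\inv(K)$ is compact, i.e. $\psi\s$ is proper.

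Finally, if $\psi$ is an isomorphism I would note that for $\om\in\oom(A)$ one has $\om\psi\inv\neq 0$ (else $\om=(\om\psi\inv)\psi=0$), so $(\psi\inv)\s$ maps $\oom(A)$ into $\oom(B)$; the identities $\psi\s\circ(\psi\inv)\s=(\psi\inv\psi)\s=\mathrm{id}$ and $(\psi\inv)\s\circ\psi\s=\mathrm{id}$ then make $\psi\s$ a continuous bijection, which is a homeomorphism by the last clause of Lemma \ref{lem:alghombanach}. I expect the main obstacle to be the properness step: one must argue carefully that $\psi\s$ extends continuously to the one-point compactifications and that compactness of $K\sub\oom(A)$ is correctly characterized (closed in $\oom\pr(A)$ with $0\notin K$), which is precisely what lets the closed-map property of $\widetilde{\psi\s}$ yield properness of $\psi\s$.
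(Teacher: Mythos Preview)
Your argument is correct and follows essentially the same route as the paper: you show $\om\psi\neq 0$ via the approximate-unit trick, invoke Lemma \ref{lem:alghombanach} for continuity and the homeomorphism clause, and obtain properness by extending $\psi\s$ to the one-point compactifications. The only cosmetic difference is that the paper phrases the compactification as $\oom(\tilde A)$ via the \cs-unitization while you work directly with $\oom\pr(A)=\oom(A)\cup\{0\}$; these are the same space by Exercise \ref{ex:ommprime}, and your version has the virtue of making the key point (that $0\mapsto 0$, so $(\widetilde{\psi\s})\inv(K)$ avoids the point at infinity) explicit.
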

\begin{proof}
Let $(a_i)$ be an approximate unit in $A$ and let $\om\in \oom(A)$. Pick an element $a$ in $A$ such that $\om(a)\neq 0$. Then $\om(a)=\om(\lim_i a_i a)= \lim_i\om(a_i) \om(a)$. This implies that $\lim_i\om(a_i)=1$. On the other hand, since $\psi$ is proper $\psi(a_i)$ is an approximate unit for $B$, and so $\lim_i \om\psi(a_i)=1$ for all $\om\in \oom(B)$. This implies that $\om\psi\neq 0$ for all $\om\in\oom(B)$. Therefore by applying Lemma \ref{lem:alghombanach}, we conclude that $\psi\s$ is a continuous map. When $\psi $ is an isomorphism, $\psi\s$ is bijective, and so is homeomorphism.

If $A$ is unital, then $B$ is unital too. It is clear that $\psi\s$ is proper in this case. Assume $A$ is non-unital. Extend $\psi$ to a unital \ss-homomorphism $\tilde{\psi}:\tilde{A}\ra \tilde{B}$. It is clear that $\tilde{\psi}$ is still a proper \ss-homomorphism. Therefore $\tilde{\psi}\s:\oom(\tilde{B})\ra \oom(\tilde{A})$ is a proper continuous map. But $\psi\s$ is the restriction of this map to $\oom(B)$, so it proper too.
\end{proof}
We will continue the above results and discussion in Section \ref{sec:Gelfandduality}, where we will explain the Gelfand duality.

\section{The continuous functional calculus}
\label{sec:confunctionalcal}
The continuous functional calculus is one of the most important tools in the theory of \cs-algebras. It is an immediate application of the Gelfand transform and inspires many similar results in the theory of \cs-algebra.

Let $A$ be a unital \cs-algebra and let $a$  be a normal element of $A$. Then the \cs-algebra generated by $\{ a, 1\}$ is a unital commutative \cs-algebra, which we denote it by $C\s(a,1)$.

\begin{lemma}
\label{lem:normalspec}
Let $A$ and $a\in A$ be as above and let $B=C\s(a,1)$. Then the map $\theta: \oom(B)\ra \si_A(a)$ defined by $\theta(\om):=\om(a)$ is an onto homeomorphism.
\end{lemma}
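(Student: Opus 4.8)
The plan is to read off the statement from the Gelfand theory of the commutative \cs-algebra $B=C\s(a,1)$, combined with the spectral permanence contained in Proposition \ref{prop:subalgspec1}. Since $a$ is normal, $B$ is a commutative unital \cs-algebra, so by Proposition \ref{prop:spec1}(ii) its spectrum $\oom(B)$ is compact, and the map in question is nothing but $\th=\hat{a}|_{\oom(B)}$, the Gelfand transform of $a$ restricted to $\oom(B)$. Continuity of $\th$ is then immediate: a net $\om_i\ra\om$ in the \ws topology of $\oom(B)$ satisfies $\om_i(a)\ra\om(a)$, that is $\th(\om_i)\ra\th(\om)$ in $\c$.

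First I would identify the image of $\th$ with $\si_A(a)$. By Theorem \ref{thm:gelfand1}(ii), since $B$ is unital, $\th(\oom(B))=\hat{a}(\oom(B))=\si_B(a)$, so it remains to prove the spectral permanence identity $\si_B(a)=\si_A(a)$. One inclusion, $\si_A(a)\sub\si_B(a)$, holds by Proposition \ref{prop:subalgspec1} (an element invertible in the subalgebra $B$ is invertible in $A$). For the reverse inclusion I would show that any $b\in B$ invertible in $A$ is already invertible in $B$; applying this to $b=a-\la$ for $\la\notin\si_A(a)$ then gives $\si_B(a)\sub\si_A(a)$. To prove the claim, set $c:=b\s b\in B$, which is self-adjoint and invertible in $A$. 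As $B$ is commutative, Proposition \ref{prop:cs-sym} and Lemma \ref{lem:sym} force $\om(c)\in\r$ for every $\om\in\oom(B)$, hence $\si_B(c)\sub\r$ by Theorem \ref{thm:gelfand1}(ii). A subset of $\r$ has empty interior in $\c$, so $\si_B(c)=\partial\si_B(c)\sub\si_A(c)$ by Proposition \ref{prop:subalgspec1}; together with $\si_A(c)\sub\si_B(c)$ this yields $\si_B(c)=\si_A(c)$. Since $c$ is invertible in $A$ we get $0\notin\si_B(c)$, so $c\inv\in B$, and then $c\inv b\s$, which equals $b\inv$, lies in $B$. This spectral permanence step is the one I expect to be the crux of the whole argument.

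Next I would verify injectivity of $\th$. Because $B$ is a commutative \cs-algebra, every character is a \ss-homomorphism: Proposition \ref{prop:cs-sym} and Lemma \ref{lem:sym} give $\om(x\s)=\overline{\om(x)}$. Hence two characters $\om_1,\om_2$ with $\om_1(a)=\om_2(a)$ also agree on $a\s$ and on $1$, therefore on every \ss-polynomial in $a$ and $1$, and by continuity on all of $B=C\s(a,1)$; thus $\om_1=\om_2$ and $\th$ is one-to-one.

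Finally, having shown that $\th:\oom(B)\ra\si_A(a)$ is a continuous bijection, I would invoke that $\oom(B)$ is compact while $\si_A(a)\sub\c$ is Hausdorff, so that the continuous bijection $\th$ is automatically a homeomorphism. Apart from the spectral permanence identity $\si_B(a)=\si_A(a)$, every step is a direct appeal to the Gelfand machinery already established in this chapter.
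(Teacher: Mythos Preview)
Your argument is correct, and the overall architecture---show $\th$ is a continuous bijection from $\oom(B)$ onto $\si_B(a)$, prove $\si_B(a)=\si_A(a)$, then invoke compact--Hausdorff---matches the paper's. The injectivity and continuity steps are essentially identical.

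The genuine difference is in the spectral permanence step $\si_B(a)=\si_A(a)$. You argue abstractly: for $b\in B$ invertible in $A$, the self-adjoint element $c=b\s b$ has $\si_B(c)\sub\r$, hence $\si_B(c)=\partial\si_B(c)\sub\si_A(c)$ by Proposition~\ref{prop:subalgspec1}, and then $c\inv b\s=b\inv\in B$. The paper instead first constructs the isometric \ss-isomorphism $\Phi_a:C(\si_B(a))\ra B$ (using only the $\si_B$-spectrum, which is already known at that point) and then, assuming some $\la\in\si_B(a)\setminus\si_A(a)$ with $b:=(a-\la)\inv\in A$, builds explicit functions $f,g\in C(\si_B(a))$ with $f(\la)=s>\|b\|$ and $\|g\|_{\sup}\leq 1$ to force the contradiction $\|b\|<s=\|f(a)\|=\|bg(a)\|\leq\|b\|$. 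Your route is more elementary and portable---it is exactly the argument that the paper packages as Corollary~\ref{cor:samespec} \emph{after} this lemma---and it never needs to set up $\Phi_a$ inside the proof. The paper's route, by contrast, already installs the functional calculus on $\si_B(a)$ and exploits its isometry property directly; this makes the subsequent Theorem~\ref{thm:confuncal} almost immediate, since $\Phi_a$ has been built in the course of the lemma.
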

\begin{proof}
First, we show that the map $\theta: \oom(B)\ra \si_B(a)$ is an onto homeomorphism. Let $\om_1$ and $\om_2$ be two elements of $\oom(B)$ such that $\om_1(a)=\om_2(a)$. We also know that $\om_1(1)=\om_2(1)=1$. Hence $\om_1$ and $\om_2$ are equal over every complex polynomial with two variables $a$, $a\s$. The set of all these polynomials is dense in $B$. Therefore $\om_1=\om_2$ on $B$, and so $\theta$ is one-to-one. It is easy to see that $\theta$ is continuous. Since $\oom(B)$ is compact $\theta$ is a homeomorphism onto its image. Finally, it follows from Part (ii) of Theorem \ref{thm:gelfand1} that $\th$ is onto.

By Proposition \ref{prop:contmapspec}, the homeomorphism $\th:\oom(B)\ra \si_B(a)$ gives rise to an isometric $\ast$-isomorphism $\th\s:C(\si_B(a)) \ra C(\oom(B))$. We define $\Phi_a:C(\si_B(a))\ra B$ by
\[
\Phi_a:=\ggg\inv o \th\s,
\]
where $\ggg$ is the Gelfand transform from $B$ onto $C(\oom(B))$. For all $f\in C(\si_B(a))$, we denote $\Phi_a(f)$ by $f(a)$. Since the Gelfand transform is an isometry too, $\Phi_a$ is an isometric $\ast$-isomorphism from $C(\si_B(a))$ onto $B$, in particular, $\|f\|_{\sup}=\|f(a)\|$ for all $f\in C(\si_B(a))$.

Now, we prove $\si_A(a)=\si_B(a)$. We know from Proposition \ref{prop:subalgspec1} that  $\si_A(a)\subseteq \si_B(a)$. Assume that there exists some $\la\in \si_B(a)\backslash \si_A(a)$. Then $a-\la I$ has an inverse in $A$, say $b$. Pick a real number $s> \|b\|$ and define $f:\c \ra \c$ by the following formula:
\[
f(z):=\left\{ \begin{array}{lll} s& \, \text{if} \,& |z-\la|\leq 1/s \\
\frac{1}{|z-\la|}& \, \text{if} \,& |z-\la|\geq 1/s  \end{array} \right.
\]
Using $f$, also define $g(z):=(z-\la)f(z)$ for all $z\in \c$. Considering $f$ and $g$ as elements of $C(\si_B(a))$, we have $\|f\|_{\sup}\leq s$ and $\|g\|_{\sup}\leq 1$. We compute
\begin{eqnarray*}
\|b\|&<& s= f(\la)\leq \|f\|_{\sup}=\|f(a)\|\\
&=& \|b(a-\la) f(a)\|=\|b g(a)\|\\
&\leq & \|b\|\|g(a)\|=\|b\|\|g\|_{\sup}\\
&\leq& \|b\|.
\end{eqnarray*}
This is a contradiction, so $\si_A(a)=\si_B(a)$.
\end{proof}

\begin{corollary}
\label{cor:samespec} If $C$ is a unital \cs-subalgebra of a unital \cs-algebra $A$ and $a\in C$, then $\si_C(a)=\si_A(a)$. If $A$ or $C$ are not necessarily unital, then $\si_C(a) \cup \{0 \}=\si_A(a)  \cup \{0 \}$.
\end{corollary}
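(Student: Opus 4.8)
The plan is to derive the unital statement from a \textbf{spectral permanence} claim: if $x\in C$ is invertible in $A$, then $x^{-1}\in C$. Granting this, if $\lambda\in Res_A(a)$ then $a-\lambda\in C$ is invertible in $A$, so permanence forces $(a-\lambda)^{-1}\in C$, i.e.\ $\lambda\in Res_C(a)$; hence $\sigma_C(a)\subseteq\sigma_A(a)$. The reverse inclusion $\sigma_A(a)\subseteq\sigma_C(a)$ is immediate from Proposition \ref{prop:subalgspec1} (indeed, any inverse computed in $C$ already lies in $A$). Thus the whole unital case reduces to the permanence claim.

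First I would settle the self-adjoint case. If $x=x\s\in C$, then $x$ is normal, and the \cs-subalgebra $C\s(x,1)$ generated by $x$ and the common unit $1_A\in C$ is intrinsic, hence the same object whether it is formed inside $C$ or inside $A$. Since $C$ is itself a unital \cs-algebra, I may apply Lemma \ref{lem:normalspec} twice: with ambient algebra $A$ it gives $\sigma_A(x)=\sigma_{C\s(x,1)}(x)$, and with ambient algebra $C$ it gives $\sigma_C(x)=\sigma_{C\s(x,1)}(x)$. Therefore $\sigma_A(x)=\sigma_C(x)$; in particular a self-adjoint $x\in C$ invertible in $A$ (so $0\notin\sigma_A(x)$) is invertible in $C$.

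Next I would reduce the general case to the self-adjoint one, which is the crux of the argument. For arbitrary $x\in C$ invertible in $A$, the element $x\s x\in C$ is self-adjoint and invertible in $A$ (with inverse $x\inv(x\inv)\s$), so by the previous step $(x\s x)\inv\in C$. Then $(x\s x)\inv x\s\in C$ is a left inverse of $x$; since $x$ is already invertible in $A$ its left inverse is unique and equals $x\inv$, whence $x\inv=(x\s x)\inv x\s\in C$. This establishes permanence for every $x$ and completes the unital statement.

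Finally, for the non-unital statement I would pass to \cs-unitizations. Inside $\tilde A$ form the \cs-subalgebra $D$ generated by $C$ and $1_{\tilde A}$; it is a unital \cs-subalgebra of $\tilde A$ sharing the unit, and a short check identifies $D$ with $\tilde C$ in each case (when $C$ is non-unital, $D=C\oplus\c 1_{\tilde A}$; when $C$ is unital, $D=C\oplus\c(1_{\tilde A}-1_C)$). The unital case then gives $\sigma_{\tilde C}(a)=\sigma_{\tilde A}(a)$, and combining with the defining relations $\sigma_C(a)\cup\{0\}=\sigma_{\tilde C}(a)\cup\{0\}$ and $\sigma_A(a)\cup\{0\}=\sigma_{\tilde A}(a)\cup\{0\}$ yields the result. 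The main obstacle is precisely that Lemma \ref{lem:normalspec} speaks only of normal elements, so the essential device is the reduction $x\inv=(x\s x)\inv x\s$ converting an arbitrary invertibility question into a self-adjoint one; the secondary point requiring care is checking that $C\s(x,1)$ is genuinely ambient-independent so the lemma may be invoked inside both $C$ and $A$.
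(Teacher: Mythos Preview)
Your proof is correct and follows essentially the same route as the paper: apply Lemma \ref{lem:normalspec} to handle the self-adjoint (normal) case, then reduce a general $x$ to the self-adjoint case via $x\s x$. The only minor difference is that the paper uses both $(a-\lambda)\s(a-\lambda)$ and $(a-\lambda)(a-\lambda)\s$ to produce left and right inverses in $C$ directly, whereas you use only $x\s x$ and then invoke uniqueness of left inverses given invertibility in $A$; your non-unital discussion is also more explicit than the paper's, which simply appeals to the definition of the spectrum in the non-unital case.
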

\begin{proof}
First, assume $A$ and $C$ are unital. When $a$ is normal, the statement follows immediately from the above lemma. For general $a\in A$, let $a-\la$ be invertible in $A$. Then both $(a-\la)\s(a-\la)$ and $(a-\la)(a-\la)\s$ are invertible in $A$. Since they are self adjoint, They are invertible in $C$ too. This proves that $(a-\la)$ has right and left inverses in $C$, and so it is invertible in $C$. Therefore $\si_C(a)\subseteq \si_A(a)$. The converse inclusion follows from Proposition \ref{prop:subalgspec1}. For general $A$ and $C$, the statement follows from the above case and the definition of the spectrum in a non-unital \cs-algebra.
\end{proof}
An immediate corollary of the above result is given in the following:
\begin{corollary}
\label{cor:additionspec}
Let $A$ be a unital \cs-algebra and let $a,b\in A$. If $ab=ba$, then we have
\begin{itemize}
\item [(i)] $\si_A(ab)\subseteq \si_A(a)\si_A(b)$,
\item [(ii)] $\si_A(a+b)\subseteq \si_A(a)+ \si_A(b)$.
\end{itemize}
\end{corollary}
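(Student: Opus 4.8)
The plan is to pass from the noncommutative algebra $A$ to a suitable commutative subalgebra on which the Gelfand transform, Theorem \ref{thm:gelfand1}, turns the spectrum into the literal image of an element under its characters; multiplicativity of characters then delivers both inclusions at once. Since (ii) is proved word for word like (i) with the product replaced by the sum, I will concentrate on (i). The point to keep in mind throughout is that we are only told $ab=ba$, so $a$ and $b$ need not generate a commutative \cs-subalgebra; hence I will work with a maximal commutative \emph{Banach} subalgebra rather than invoke Corollary \ref{cor:samespec} verbatim.

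First I would use Zorn's lemma to choose a subalgebra $C\sub A$ that is maximal among the commutative subalgebras containing $\{1,a,b\}$; such a family is nonempty (it contains the subalgebra generated by $1,a,b$, which is commutative precisely because $ab=ba$) and is closed under unions of chains. The closure $\overline{C}$ is again commutative and contains $\{1,a,b\}$, so by maximality $\overline{C}=C$; thus $C$ is a commutative unital Banach algebra in its own right. The crucial step is spectral permanence: for every $x\in C$ one has $\si_C(x)=\si_A(x)$. The inclusion $\si_A(x)\sub\si_C(x)$ is automatic, since an inverse computed in $C$ is an inverse in $A$ (this is also the right-hand inclusion of Proposition \ref{prop:subalgspec1}). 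For the reverse inclusion, suppose $x-\la$ is invertible in $A$; every element of $C$ commutes with $x-\la$ and hence with $(x-\la)\inv$, so $C$ together with $(x-\la)\inv$ generates a commutative subalgebra, and maximality forces $(x-\la)\inv\in C$. Thus $\la\notin\si_C(x)$, giving $\si_C(x)\sub\si_A(x)$.

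Applying this to the four elements $a,b,ab,a+b$, all of which lie in $C$, reduces everything to the commutative Banach algebra $C$. By Theorem \ref{thm:gelfand1}(ii) applied to the unital algebra $C$, we have $\si_C(ab)=\widehat{ab}(\oom(C))$, and for each $\om\in\oom(C)$ multiplicativity of characters gives $\widehat{ab}(\om)=\om(ab)=\om(a)\om(b)$; since $\om(a)\in\si_C(a)=\si_A(a)$ and $\om(b)\in\si_C(b)=\si_A(b)$, this value lies in $\si_A(a)\si_A(b)$. Hence $\si_A(ab)=\si_C(ab)\sub\si_A(a)\si_A(b)$, which is (i); replacing the product by the sum and using $\om(a+b)=\om(a)+\om(b)$ yields (ii). I expect the main obstacle to be the spectral-permanence step, and in particular the verification that $C$ is genuinely maximal commutative and closed so that $(x-\la)\inv$ can be reabsorbed into $C$; this is exactly the place where the weaker hypothesis $ab=ba$ (rather than commutation with adjoints) is accommodated, since it lets us avoid demanding that $C$ be a \cs-subalgebra.
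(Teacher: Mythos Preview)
Your argument is correct. The paper instead passes to ``the commutative \cs-algebra $B$ generated by $\{a,b,1\}$'' and then invokes Corollary~\ref{cor:samespec} to replace $\si_A$ by $\si_B$, after which the Gelfand transform finishes the job exactly as you do. The genuine difference is the choice of ambient subalgebra: you use a \emph{maximal commutative Banach subalgebra} and obtain spectral permanence from maximality, whereas the paper uses a \cs-subalgebra and obtains it from Corollary~\ref{cor:samespec}. Your route is in fact the more robust one here, and you put your finger on the reason: the hypothesis $ab=ba$ alone does not force the \cs-algebra generated by $\{a,b,1\}$ to be commutative (that would require $a,a\s,b,b\s$ to commute pairwise), so the paper's first sentence quietly assumes more than was stated. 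Your maximal-commutative-subalgebra argument sidesteps this entirely, at the modest cost of reproving spectral permanence by hand; the paper's route is shorter when it applies, e.g.\ when $a$ and $b$ are normal and commute with each other's adjoints.
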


\begin{proof}
Let $B$ be the commutative \cs-algebra generated by the set $\{ a,b, 1\}$. Using Corollary \ref{cor:samespec}, it is enough to prove these statements in $B$. By the Gelfand transform $B\simeq C(\oom(B))$. Therefore there are $f,g\in  C(\oom(B))$ such that $\hat{a}=f$ and $\hat{b}=g$. Now, the statements follow easily from the fact that $\si_A(x)=\hat{x}(\oom(B))$ for every $x\in B$.
\end{proof}

The above corollary shows how the Gelfand transform can be used to reduce some abstract problems involving commuting elements of a \cs-algebra to problems about function algebras. This idea is the essence of the continuous functional calculus and will be frequently used in some of the proofs and exercises in the future.

\begin{theorem}
\label{thm:confuncal} [The continuous functional calculus] Let $A$ be a unital \cs-algebra and let $a\in A$ be a normal element. There exists a unique isometric $\ast$ -homomorphism $\Phi_a: C(\si_A(a)) \ra C\s(a,1)\subseteq A$ such that $\Phi_a(1_{\si_A(a)})=1_A$ and $\Phi_a(id_{\si_A(a)})=a$.
\end{theorem}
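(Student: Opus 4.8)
The plan is to assemble the pieces already produced in the proof of Lemma \ref{lem:normalspec}, which carries out essentially all of the analytic work, and then to supply only the normalization check and the uniqueness argument. Set $B:=C\s(a,1)$. For \emph{existence}, I would invoke Lemma \ref{lem:normalspec} directly: it establishes $\si_A(a)=\si_B(a)$ and produces an onto homeomorphism $\th:\oom(B)\ra \si_A(a)$, $\om\mapsto\om(a)$. By Proposition \ref{prop:contmapspec} (a homeomorphism is proper, and both $\si_A(a)$ and $\oom(B)$ are compact, so the relevant $C_0$-spaces coincide with $C$-spaces), $\th$ induces an isometric $\ast$-isomorphism $\th\s: C(\si_A(a))\ra C(\oom(B))$. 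Composing with the inverse of the Gelfand transform $\ggg: B\ra C(\oom(B))$, which is an isometric $\ast$-isomorphism since $B$ is a commutative unital \cs-algebra, yields $\Phi_a := \ggg\inv \circ \th\s: C(\si_A(a))\ra B$. As a composition of isometric $\ast$-isomorphisms, $\Phi_a$ is itself an isometric $\ast$-isomorphism of $C(\si_A(a))$ onto $C\s(a,1)$, and in particular the required isometric $\ast$-homomorphism.

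Next I would verify the two normalization conditions by tracing the definitions. For the constant function $1_{\si_A(a)}$: the map $\th\s$ sends it to the constant function $1$ on $\oom(B)$, and since $\ggg(1_B)=\widehat{1_B}$ is the constant function $1$, we obtain $\Phi_a(1_{\si_A(a)})=\ggg\inv(1)=1_B=1_A$. For the identity function $id_{\si_A(a)}$: for every $\om\in\oom(B)$ one computes $\th\s(id)(\om)=id(\th(\om))=\th(\om)=\om(a)=\hat{a}(\om)=\ggg(a)(\om)$, so $\th\s(id_{\si_A(a)})=\ggg(a)$ and hence $\Phi_a(id_{\si_A(a)})=\ggg\inv(\ggg(a))=a$.

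Finally, for \emph{uniqueness}, suppose $\Psi:C(\si_A(a))\ra A$ is another isometric $\ast$-homomorphism with $\Psi(1_{\si_A(a)})=1_A$ and $\Psi(id_{\si_A(a)})=a$. Since $\Phi_a$ and $\Psi$ are $\ast$-homomorphisms, they agree on $\overline{id}=id\s$ as well, both sending it to $a\s$; being algebra homomorphisms they then agree on every polynomial in $id$ and $\overline{id}$, that is, on every function of the form $p(z,\bar z)$ restricted to $\si_A(a)$. The algebra of such polynomials separates points of the compact set $\si_A(a)\subseteq \c$, contains the constants, and is closed under complex conjugation, so by the Stone-Weierstrass theorem it is dense in $C(\si_A(a))$. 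As $\Phi_a$ and $\Psi$ are isometric, hence continuous, and agree on a dense subalgebra, they coincide; thus $\Psi=\Phi_a$.

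I do not expect a genuine obstacle, since Lemma \ref{lem:normalspec} already furnishes the isometric $\ast$-isomorphism and the identification $\si_A(a)=\si_B(a)$. The only points requiring care are keeping the domains $C(\si_A(a))$ and $C(\si_B(a))$ identified through that spectral equality, and setting up the Stone-Weierstrass density cleanly so that the uniqueness step reduces to agreement on the dense subalgebra of $\ast$-polynomials.
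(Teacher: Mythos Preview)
Your proposal is correct and follows essentially the same approach as the paper: you invoke Lemma \ref{lem:normalspec} to obtain $\Phi_a=\ggg\inv\circ\th\s$, verify the two normalization conditions by the same pointwise computations, and establish uniqueness via Stone--Weierstrass density of $\ast$-polynomials combined with continuity. The only cosmetic difference is that you restate the construction of $\Phi_a$ explicitly, whereas the paper simply refers back to its definition inside the proof of Lemma \ref{lem:normalspec}.
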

\begin{proof} We already defined $\Phi_a$ in Lemma \ref{lem:normalspec}. Since $\th\s(1_{\si_A(a)})=1_{\oom(B)}= \ggg(1_A)$, by definition, we have $\Phi_a(1_{\si_A(a)})=1_A$. Similarly, for every $\om\in \oom(B)$, we have $\th\s(id_{\si_A(a)}) (\om)=id_{\si_A(a)} ( \th (\om))= \om (a)=\hat{a}(\om)=\ggg(a)(\om)$, which means $\Phi_a(id_{\si_A(a)})=a$.

To prove the uniqueness of $\Phi_a$, we note that any other $\ast$-homomorphism with the above properties is equal to $\Phi_a$ over the complex algebra of all complex polynomials of two variables $z$ and $\overline{z}$ over $\si_A(a)$. By the Stone-Weierstrass theorem, see Theorem A.10.1 of \cite{deitmar-echterhoff}, this algebra is dense in $C(\si_A(a))$. Now, continuity of such a $\ast$-homomorphism implies the uniqueness of $\Phi_a$.
\end{proof}

\begin{exercise}
Using the continuous functional calculus show that the spectrum of every self adjoint element of a \cs-algebra lies in $\r$. See also Proposition \ref{prop:specunitary} for another proof.
\end{exercise}

\begin{remark}
The continuous functional calculus is consistent with the holomorphic functional calculus. To see this, let $a$ be a normal element of a unital \cs-algebra $A$ and let $f$ be a holomorphic function on a neighborhood $U$ containing $\si_A(a)$. Consider a smooth simple closed curve $C$ in $U$ enclosing $\si_A(a)$. For every $\la\in C$, the function $id-\la$ is a non-zero function over $C$, and so it has a continuous inverse over $C$. For every $\om\in \oom(B)$, we compute
\begin{eqnarray*}
\ggg(f(a)) (\om)&=&\widehat{f(a)} (\om)=\om(f(a))\\
&=& \om \left( \frac{1}{2\pi i} \int_C \frac{f(\la)}{\la-a} d\la\right)\\
&=& \frac{1}{2\pi i} \int_C\om\left( (\la-a)\inv \right) f(\la) d\la\\
&=& \frac{1}{2\pi i} \int_C (\la-\om(a))\inv f(\la) d\la\\
&=& f(\om(a))= f o \th(\om)\\
&=&\th\s (f) (\om)=\ggg o \Phi_a (f) (\om).
\end{eqnarray*}
In the above computation, $\th$ is the map defined in Lemma \ref{lem:normalspec}. Since $\ggg$ is an isomorphism, one observes that $\Phi_a(f)$ equals the holomorphic functional calculus of $f$.
\end{remark}

The above remark and the holomorphic spectral mapping theorem, see Theorem \ref{thm:specmap}, suggest the following proposition:

\begin{proposition}
\label{prop:specmapcont}
[The continuous spectral mapping theorem] Let $a$ be a normal element of a unital \cs-algebra $A$ and let $f$ be a continuous function on $\si_A(a)$. Then $f(a)$ is normal and $\si_A({f(a)})=f(\si_A(a))$. Moreover, if $g$ is a continuous function over $\si_A({f(a)})$, then $g(f(a))=gof(a)$.
\end{proposition}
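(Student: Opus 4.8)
The plan is to ride on the continuous functional calculus itself. By Theorem \ref{thm:confuncal}, $\Phi_a\colon C(\si_A(a))\ra C\s(a,1)$ is an isometric $\ast$-isomorphism with $f(a)=\Phi_a(f)$, and the whole statement should fall out of this structural fact together with the uniqueness clause of that theorem. The first thing I would dispatch is normality of $f(a)$: since $\Phi_a$ is a $\ast$-homomorphism we have $f(a)\s=\Phi_a(\overline f)$, and because $C(\si_A(a))$ is commutative, $f$ and $\overline f$ commute, so applying $\Phi_a$ gives $f(a)f(a)\s=\Phi_a(f\overline f)=\Phi_a(\overline f f)=f(a)\s f(a)$. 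This is not a mere afterthought: normality of $f(a)$ is exactly what is needed before one may legitimately form $\Phi_{f(a)}$ and speak of $g(f(a))$ at all.

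Next I would establish $\si_A(f(a))=f(\si_A(a))$ in three moves. Because $\Phi_a$ is a unital algebra isomorphism onto $C\s(a,1)$, it preserves invertibility in both directions, whence $\si_{C\s(a,1)}(f(a))=\si_{C(\si_A(a))}(f)$. The right-hand side is the range $f(\si_A(a))$, since the spectrum of an element of $C(X)$ for $X$ compact is precisely its image (the Example in Section \ref{sec:spectrum}), applied with $X=\si_A(a)$. The left-hand side equals $\si_A(f(a))$ by Corollary \ref{cor:samespec}, as $C\s(a,1)$ is a unital \cs-subalgebra of $A$ containing $f(a)$. Chaining these three equalities yields the spectral identity; I would stress that this identity is also what makes $g\circ f$ a genuine continuous function on $\si_A(a)$ and $g$ a continuous function on the correct compact set $\si_A(f(a))$.

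For the composition law I would argue by uniqueness. Define $\Psi\colon C(\si_A(f(a)))\ra A$ by $\Psi(g):=\Phi_a(g\circ f)$. The assignment $g\mapsto g\circ f$ is a unital $\ast$-homomorphism from $C(\si_A(f(a)))$ into $C(\si_A(a))$, and it is isometric precisely because $f(\si_A(a))=\si_A(f(a))$, so that $\|g\circ f\|_{\sup}=\|g\|_{\sup}$; hence $\Psi$ is an isometric $\ast$-homomorphism. One then checks the normalizations $\Psi(1)=\Phi_a(1)=1_A$ and $\Psi(\mathrm{id})=\Phi_a(f)=f(a)$. These are exactly the properties characterizing $\Phi_{f(a)}$ in Theorem \ref{thm:confuncal}, so by the uniqueness clause $\Psi=\Phi_{f(a)}$, which reads $g(f(a))=(g\circ f)(a)$ for every continuous $g$ on $\si_A(f(a))$.

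The step needing the most care is this composition law: one must verify that $g\mapsto g\circ f$ truly lands in $C(\si_A(a))$ and is isometric (both resting on the spectral identity) and that $\Psi$ inherits the isometric $\ast$-homomorphism structure, so that the uniqueness assertion genuinely applies. If one is uneasy about matching the codomain $C\s(f(a),1)$ demanded in the uniqueness statement, I would instead observe directly that $\Psi$ and $\Phi_{f(a)}$ are two continuous $\ast$-homomorphisms into $A$ agreeing on $1$ and on $\mathrm{id}$, hence on $\overline{\mathrm{id}}$ and on every polynomial in $z$ and $\overline z$, and then conclude equality from the density of such polynomials by the Stone--Weierstrass theorem together with continuity of both maps.
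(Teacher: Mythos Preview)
Your proof is correct and follows essentially the same route as the paper: normality from commutativity of $C\s(a,1)$, the spectral identity via invertibility preservation under $\Phi_a$ together with Corollary~\ref{cor:samespec}, and the composition law via Stone--Weierstrass density. The only cosmetic difference is that you package the composition step through the uniqueness clause of Theorem~\ref{thm:confuncal} (and then, quite rightly, unfold it into the density argument), whereas the paper invokes density of polynomials in $z,\overline z$ directly; since the uniqueness clause is itself proved by that same density argument, the two are the same idea in slightly different dress.
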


\begin{proof}
Since $C\s(a,1)$ is a commutative \cs-algebra, all of its elements are normal. For convenient, let us denote $C(\si_A(a))$ by $C$. Then it is clear that $f\in C$ is invertible if and only if $\Phi_a(f)=f(a)$ is invertible in $C\s(a,1)$. Therefore we have
\[
\si_A(f(a))=\si_C(f)=f(\si_C(id_{\si_A(a)}))=f(\si_A(a)).
\]
Since $\Phi_a$ is a $\ast$-isomorphism, the equality $g(f(a))=gof(a)$ is true when $g$ is any complex polynomial of two variable. The general case follows from the continuity of $\Phi_{f(a)}$ and the fact that the algebra of these polynomials is dense in $C(\si(f(a))$.
\end{proof}
The following proposition illustrates some applications of the above proposition:
\begin{proposition}
\label{prop:specunitary}
Let $A$ be a unital \cs-algebra.
\begin{itemize}
\item[(i)] If $u\in A_u$, then $\si(u)\subseteq \t =\{\la\in \c; |\la|=1\}$.
\item[(ii)] If $a\in A_h$, then $\si(a)\subseteq \r$.
\end{itemize}
\end{proposition}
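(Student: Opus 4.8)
The plan is to handle both parts through the continuous functional calculus of Section~\ref{sec:confunctionalcal} together with the spectral mapping theorem, deducing (ii) from (i) by exponentiation. For (i) the idea is to trap $\si(u)$ between the closed unit disc and its exterior, so that it must sit on the unit circle; for (ii) the idea is that $\exp(ia)$ is unitary, hence has spectrum in $\t$, and spectral mapping pushes this back to the reality of $\si(a)$.

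For (i), note first that a unitary is normal, and the \cs-identity gives $\|u\nt = \|u\s u\| = \|1\| = 1$, so $\|u\| = 1$; since $u$ is normal, Proposition~\ref{prop:spradius3} yields $r(u) = \|u\| = 1$, whence $\si(u) \sub \{\la \in \c; |\la| \leq 1\}$. Because $u$ is invertible we have $0 \notin \si(u)$, so $z \mapsto 1/z$ is holomorphic on a neighborhood of $\si(u)$ and the holomorphic spectral mapping theorem (Theorem~\ref{thm:specmap}) gives $\si(u\inv) = \{\la\inv; \la \in \si(u)\}$. But $u\inv = u\s$ is again unitary, so the same estimate gives $\si(u\inv) \sub \{|\la| \leq 1\}$, forcing $|\la\inv| \leq 1$, i.e.\ $|\la| \geq 1$, for every $\la \in \si(u)$. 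Combining the two bounds yields $|\la| = 1$, that is $\si(u) \sub \t$. (Alternatively, one argues directly: under the injective isometric $\ast$-isomorphism $\Phi_u : C(\si(u)) \ra C\s(u,1)$ one has $\Phi_u(\overline{id}\cdot id) = u\s u = 1 = \Phi_u(1)$, and injectivity forces $|\la|^2 = 1$ on $\si(u)$.)

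For (ii), set $u := \exp(ia)$, defined by the functional calculus; the holomorphic and continuous versions agree by the remark following Theorem~\ref{thm:confuncal} and by Exercise~\ref{ex:4}. Since $a = a\s$ one computes $u\s = \exp(-ia)$, and as $ia$ and $-ia$ commute, Exercise~\ref{ex:4}(ii) gives $u\s u = u u\s = \exp(0) = 1$, so $u$ is unitary. By part (i), $\si(u) \sub \t$. Applying the continuous spectral mapping theorem (Proposition~\ref{prop:specmapcont}) to $f(z) = e^{iz}$, which satisfies $f(a) = u$, we obtain $\si(u) = f(\si(a)) = \{e^{i\la}; \la \in \si(a)\}$. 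Now for $\la = x + iy \in \si(a)$ we have $|e^{i\la}| = e^{-y}$, and $e^{i\la} \in \t$ forces $e^{-y} = 1$, hence $y = 0$ and $\la \in \r$. Thus $\si(a) \sub \r$.

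The one step requiring care is the spectral mapping in (ii): one must be sure that the element $\exp(ia)$ produced by the functional calculus genuinely equals $f(a)$ for $f(z) = e^{iz}$, so that Proposition~\ref{prop:specmapcont} applies and returns $\{e^{i\la}\}$. This is exactly the consistency of the holomorphic and continuous functional calculi already recorded, so no new work is needed. Everything else --- the unitarity of the exponential and the two norm and spectral-radius estimates in (i) --- is routine. A fully self-contained alternative for (ii), bypassing the exponential altogether, is to note that $\Phi_a(\overline{id}) = a\s = a = \Phi_a(id)$ and invoke injectivity of $\Phi_a$ to conclude $\overline{\la} = \la$ for every $\la \in \si(a)$.
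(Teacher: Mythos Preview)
Your proof is correct and follows essentially the same route as the paper: for (i) you bound $\si(u)$ inside the closed unit disc via $\|u\|=1$, then apply the spectral mapping to $u\inv=u\s$ to force $|\la|\geq 1$; for (ii) you pass to the unitary $e^{ia}$ and use the spectral mapping theorem together with part (i) to rule out any nonreal spectral value. The paper's argument is the same, phrased for (ii) as a contradiction on the imaginary part, and your added alternatives via injectivity of $\Phi_u$ and $\Phi_a$ are valid shortcuts.
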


\begin{proof}
\begin{itemize}
\item[(i)] We know that $\|u\|^2=\|uu\s\|=\|1\|=1$, so $|\la|\leq 1$ for all $\la\in \si(u)$. On the other hand, $\|u\s\|=1$. Hence using the continuous spectral mapping theorem, we have $|\mu|\leq 1$ for all $\mu \in \si(u\s)=\si(u\inv)=(\si(u))\inv=\{\la\inv;\la \in \si(u)\}$. Thus $|\la\inv|\leq 1$ for all $\la\in \si(u)$, by . Combining these inequalities, we get $|\la|=1$ for all $\la\in \si(u)$.
\item[(ii)] If $a\in A_h$, then it is easy to see that $e^{ia}$ is a unitary element. Assume $\la=\alpha+i\beta\in \si(a)$, where $\alpha$ and $\beta$ are real numbers and $\beta\neq 0$. Then by the continuous spectral mapping theorem, $e^{i\la}\in \si(e^{ia})$, but $|e^{i\la}|=|e^{i\alpha}e^{-\beta}| = |e^{-\beta}| \neq1$. This contradicts with Part (i).
\end{itemize}
\end{proof}
\begin{proposition}
Let $\ff:A\ra B$ be a unital \ss-homomorphism between two unital \cs-algebra and let $a\in A$ be normal. Then for every $f\in C(\si_A(a))$, we have $\ff(f(a))=f(\ff(a))$.
\end{proposition}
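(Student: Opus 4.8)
The plan is to express both sides of the identity as the value at $f$ of two continuous $\ast$-homomorphisms $C(\si_A(a))\ra B$, and then to show these two homomorphisms coincide, reducing everything to their agreement on a dense subalgebra.

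First I would check that both $f(a)$ and $f(\ff(a))$ make sense and that the needed ingredients are available. Since $\ff$ is a $\ast$-homomorphism, $\ff(a)\ff(a)\s=\ff(aa\s)=\ff(a\s a)=\ff(a)\s\ff(a)$, so $\ff(a)$ is normal and its continuous functional calculus $\Phi_{\ff(a)}$ exists by Theorem \ref{thm:confuncal}. Because $\ff$ is unital it carries invertible elements to invertible elements, so $a-\la$ invertible in $A$ forces $\ff(a)-\la=\ff(a-\la)$ invertible in $B$; hence $\si_B(\ff(a))\sub \si_A(a)$. This inclusion is exactly what makes $f(\ff(a))$ meaningful: the restriction $r:C(\si_A(a))\ra C(\si_B(\ff(a)))$, $g\mapsto g|_{\si_B(\ff(a))}$, is a well-defined, norm-decreasing, unital $\ast$-homomorphism, and by definition $f(\ff(a))=\Phi_{\ff(a)}(r(f))$.

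Next I would introduce the two maps $\Psi_1:=\ff\circ \Phi_a$ and $\Psi_2:=\Phi_{\ff(a)}\circ r$ from $C(\si_A(a))$ into $B$. Each is a composition of continuous $\ast$-homomorphisms, hence itself a continuous $\ast$-homomorphism. I would then compare them on the two canonical generators. Using $\Phi_a(1_{\si_A(a)})=1_A$ and $\Phi_a(id_{\si_A(a)})=a$ from Theorem \ref{thm:confuncal}, together with $\ff(1_A)=1_B$, one obtains $\Psi_1(1_{\si_A(a)})=1_B=\Psi_2(1_{\si_A(a)})$ and $\Psi_1(id_{\si_A(a)})=\ff(a)=\Psi_2(id_{\si_A(a)})$.

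Finally, since $\Psi_1$ and $\Psi_2$ are $\ast$-homomorphisms agreeing on $1$ and on $id$, they agree on the $\ast$-subalgebra these generate, namely the polynomials in $z$ and $\bar z$ on $\si_A(a)$; by the Stone--Weierstrass theorem this subalgebra is dense in $C(\si_A(a))$, and continuity of both maps then forces $\Psi_1=\Psi_2$. Evaluating at $f$ yields $\ff(f(a))=f(\ff(a))$. I do not expect a serious obstacle here: the only points requiring care are the well-definedness of $f(\ff(a))$ via $\si_B(\ff(a))\sub\si_A(a)$ and the verification that $\Psi_1,\Psi_2$ are genuine continuous $\ast$-homomorphisms, after which the conclusion follows from the very density/uniqueness mechanism that underlies the continuous functional calculus itself.
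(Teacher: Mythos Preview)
Your proposal is correct and follows essentially the same approach as the paper: define the two \ss-homomorphisms $\Psi_1=\ff\circ\Phi_a$ and $\Psi_2=\Phi_{\ff(a)}\circ r$ from $C(\si_A(a))$ into $B$, check they agree on $1$ and $id$, and then invoke density of polynomials in $z,\bar z$ via Stone--Weierstrass. The paper is slightly terser (it asserts normality of $\ff(a)$ and the spectrum inclusion without justification), while you spell these out, but the argument is the same.
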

\begin{proof}
First, we note that $\ff(a)$ is normal and $\si_B(\ff(a))\subseteq \si_A(a)$, and so the restriction of $f$ to $\si_B(a)$ is continuous. Define two unital \ss-homomorphisms $\Phi_1, \Phi_2:C(\si_a(a)) \ra B$ by
\[
\Phi_1(f):=\ff(\Phi_a(f))=\ff(f(a)),
\]
and
\[
\Phi_2(f):= \Phi_{\ff(a)}(f|_{\si_B(\ff(a))})=f|_{\si_B(\ff(a))}(\ff(a)).
\]
It is easy to check that they both map $1_{\si_A(a)}$ and $id_{\si_A(a)}$ to $1_B$ and $\ff(a)$, respectively. Thus they agree on all polynomials of two variables $z$ and $\overline{z}$ over $\si_A(a)$ and since they are continuous, they agree on all of $C(\si_A(a))$.
\end{proof}
\begin{proposition}
Let $A$ be a \cs-algebra and let $K$ be compact subset of $\c$. Let $A_K$ denote the set of all normal elements of $A$ whose spectrum is contained in $K$. If $f$ is a continuous function on $K$, then the mapping $A_K\ra A$ defined by $a\mapsto f(a)$ is continuous.
\end{proposition}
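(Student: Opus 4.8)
The plan is to exploit that the continuous functional calculus is isometric, see Theorem~\ref{thm:confuncal}, together with the density of polynomials in $C(K)$, so that $a\mapsto f(a)$ becomes a \emph{uniform} limit, over all of $A_K$, of the manifestly continuous polynomial maps $a\mapsto p(a,a\s)$. First I would dispose of the non-unital case: if $A$ is not unital, I pass to the \cs-unitization $\tilde{A}$. A normal element $a\in A$ remains normal in $\tilde{A}$, and by Corollary~\ref{cor:samespec} one has $\si_{\tilde{A}}(a)\sub \si_A(a)\sub K$, so $f(a)$ is computed by the functional calculus in the unital algebra $\tilde{A}$. Since the inclusion $A\hookrightarrow \tilde{A}$ is isometric, continuity of $a\mapsto f(a)$ as a map into $\tilde{A}$ is the same as continuity into $A$. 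Hence there is no loss of generality in assuming $A$ unital.

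Next, for a polynomial $p\in\c[z,\bar z]$ I write $p(a):=\Phi_a(p|_{\si_A(a)})$; because $\Phi_a$ is a $\ast$-homomorphism sending $id$ to $a$ and $\overline{id}$ to $a\s$, this coincides with the element obtained by literal substitution of $a$ and $a\s$ into $p$. The map $a\mapsto p(a)$ is therefore continuous on all of $A$, in particular on $A_K$, being a finite linear combination of products of the continuous maps $a\mapsto a$ and $a\mapsto a\s$, using joint continuity of the multiplication and continuity (indeed, isometry) of the involution. Moreover, the Stone--Weierstrass theorem applies to $\c[z,\bar z]$ restricted to the compact set $K$, since that algebra separates points, contains the constants, and is closed under conjugation, so polynomials are dense in $C(K)$.

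The crucial point is the following uniform estimate. For any $a\in A_K$ and any $g\in C(K)$, the isometry of $\Phi_a$ gives $\|g(a)\|=\|g|_{\si_A(a)}\|_{\sup}\leq \|g\|_{\sup,K}$, the inequality holding because $\si_A(a)\sub K$; applied to $g=f-p$ and using linearity of $\Phi_a$, this yields
\[
\sup_{a\in A_K}\|f(a)-p(a)\|\leq \|f-p\|_{\sup,K}.
\]
Thus, given $\ep>0$, choosing a polynomial $p$ with $\|f-p\|_{\sup,K}<\ep/3$ forces $\|f(a)-p(a)\|<\ep/3$ simultaneously for \emph{every} $a\in A_K$. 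Fixing $a_0\in A_K$ and using continuity of $a\mapsto p(a)$ to find $\d>0$ with $\|p(a)-p(a_0)\|<\ep/3$ whenever $a\in A_K$ and $\|a-a_0\|<\d$, the triangle inequality gives $\|f(a)-f(a_0)\|<\ep$, proving continuity at $a_0$ and hence on $A_K$. I expect the only genuinely delicate step to be recognizing that the displayed estimate is uniform in $a$ over the entire set $A_K$: this is exactly what upgrades pointwise polynomial approximation into the uniform-limit argument that forces continuity, while the remaining ingredients (continuity of the polynomial maps, Stone--Weierstrass, and the isometry of the functional calculus) are standard.
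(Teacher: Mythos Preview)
Your proposal is correct and follows essentially the same route as the paper: Stone--Weierstrass approximation of $f$ on $K$ by a polynomial $p(z,\bar z)$, the uniform estimate $\|f(a)-p(a)\|\leq\|f-p\|_{\sup,K}$ over all of $A_K$ coming from the isometry of the functional calculus, and the triangle inequality with continuity of the polynomial map. The only cosmetic difference is that the paper records the bound $\|a\|\leq\sup\{|\la|:\la\in K\}$ to obtain a single $\d$ working for all $a,b\in A_K$ (hence uniform continuity), whereas you argue pointwise at each $a_0$; either version suffices for the stated proposition.
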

\begin{proof}
For given $f\in C(K)$ and for every $\ep>0$, by Stone-Weierstrass theorem, there is some polynomial $P$ of two variables $z$ and $\overline{z}$ such that $\sup_{\la\in K} |P(\la,\overline{\la}) - f(\la)| <\ep$. For every $a\in A_K$, we use the continuous functional calculus of $a$ to conclude that $\|P(a,a\s)-f(a)\|<\ep$.  Set $M:=\sup\{|\la|; \la\in K\}$. Then for every $a,b\in A_K$, we have $\|a\|\leq M$. Using this, one can find $\d >0$ such that $\|P(a,a\s)-P(b,b\s)\|< \ep$ for every $a,b\in A_K$ provided that $\|a-b\|\leq \d$. Hence we have
\[
\|f(a)-f(b)\| \leq \|f(a)- P(a,a\s)\| + \|P(a,a\s)-P(b,b\s)\| +\|P(b,b\s)-f(b)\|< 3\ep.
\]
\end{proof}

\begin{proposition}
\label{prop:normdecreasing}
Let $\ff: A\ra B$ be an algebraic \ss-homomorphism from an involutive Banach algebra $A$ into a \cs-algebra $B$. Then it is norm decreasing, i.e. $\|\ff(a)\|\leq \|a\|$ for all $a\in A$, and therefore $\ff$ is continuous.
\end{proposition}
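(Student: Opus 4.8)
The plan is to reduce the desired inequality to self-adjoint elements by means of the \cs-identity in $B$, and then to control the norm of the image of a self-adjoint element through its spectral radius. First I would record the reduction. For an arbitrary $a\in A$, since $\ff$ preserves the involution and $B$ satisfies the \cs-identity, we have $\|\ff(a)\|^2=\|\ff(a)\s\ff(a)\|=\|\ff(a\s a)\|$. As $a\s a$ is self-adjoint in $A$, it suffices to establish $\|\ff(x)\|\leq\|x\|$ for every self-adjoint $x\in A$; granting this, submultiplicativity together with axiom (v) of the involution on $A$ yields $\|\ff(a)\|^2=\|\ff(a\s a)\|\leq\|a\s a\|\leq\|a\s\|\,\|a\|=\|a\|^2$, which is the claim.

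The core step is thus the self-adjoint case. Fix $x=x\s\in A$. Because $A$ and $B$ need not be unital and $\ff$ need not be unital, I would pass to unitizations: let $A_1$ be the Banach algebra unitization of $A$, let $\tilde B$ be a unital \cs-algebra containing $B$ isometrically (its \cs-unitization, or $B$ itself if $B$ is already unital), and extend $\ff$ to the map $\ff_1:A_1\ra\tilde B$, $(a,\la)\mapsto\ff(a)+\la 1_{\tilde B}$, viewing $B\sub\tilde B$. A direct computation shows $\ff_1$ is a \emph{unital} algebra homomorphism. Now $\ff(x)$ is self-adjoint, hence normal, in the \cs-algebra $\tilde B$, so Proposition \ref{prop:spradius3} applied in $\tilde B$ gives $\|\ff(x)\|=r_{\tilde B}(\ff(x))$, and it remains only to bound this spectral radius by $\|x\|$.

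The point that makes the bound work is the purely algebraic fact that a unital algebra homomorphism cannot enlarge the spectrum: if $(x,0)-\la$ is invertible in $A_1$, then $\ff_1((x,0)-\la)=\ff(x)-\la$ is invertible in $\tilde B$ with inverse $\ff_1\big(((x,0)-\la)\inv\big)$, so $\si_{\tilde B}(\ff(x))\sub\si_{A_1}((x,0))$. Taking suprema of absolute values over these spectra and invoking Proposition \ref{prop:spradius1} in the Banach algebra $A_1$, I conclude $r_{\tilde B}(\ff(x))\leq r_{A_1}((x,0))\leq\|(x,0)\|_{A_1}=\|x\|$. This finishes the self-adjoint case, hence the inequality $\|\ff(a)\|\leq\|a\|$ for all $a$; boundedness, and therefore continuity, of the linear map $\ff$ is then immediate.

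The genuine content of the argument is worth emphasizing: nowhere is continuity of $\ff$ assumed or used, and the spectral containment is exactly the device that lets the metric \cs-structure on $B$ (through the identity $r=\|\cdot\|$ on normal elements) dominate the abstract norm on $A$ (through $r\leq\|\cdot\|$). I expect the only real care to lie in the bookkeeping of the unitizations — checking that $\ff_1$ is a well-defined unital homomorphism into $\tilde B$ and that the inclusion $B\hookrightarrow\tilde B$ is isometric, so that $\|\ff(x)\|$ is unambiguous whether computed in $B$ or in $\tilde B$ — which is the main, though essentially routine, obstacle.
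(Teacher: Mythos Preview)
Your proof is correct and follows essentially the same approach as the paper: both pass to unitizations, use the purely algebraic fact that a unital homomorphism can only shrink spectra, and then combine the identity $\|\cdot\|=r(\cdot)$ for normal elements in a \cs-algebra with the inequality $r(\cdot)\leq\|\cdot\|$ in a Banach algebra. The only cosmetic difference is in organization: you first reduce to self-adjoint $x$ and then argue spectrally, whereas the paper records the spectral containment $\si_B(\ff(a))\subseteq\si_A(a)$ for arbitrary $a$ and applies it directly to $a\s a$ in the final chain of inequalities.
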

\begin{proof}
If $A$ is not unital, we can extend $\ff$ to a unital algebraic \ss-homomorphism from $A_1$ into $\widetilde{B}$. Therefore we can assume $A$ is unital. If $a-\la$ is invertible in $A$, then $\ff(a)-\la$ is invertible in $B$, and therefore
\[
\si_B(\ff(a))\subseteq \si_A(a),\qquad  \forall a\in A.
\]
This implies that $r_B(\ff(a))\leq r_A(a)$ for all $a\in A$. Then we have
\begin{eqnarray*}
\|\ff(a)\|^2 &=& \|\ff(a)\s \ff(a)\|= \|\ff(a\s a)\|\\
&=& r_B(\ff(a\s a))\leq r_A(a\s a)\\
&\leq& \|a\s a\|\leq \|a\s\| \|a\|=\|a\|^2.
\end{eqnarray*}
\end{proof}
\begin{proposition}
\label{prop:normincreasing}
Let $\ff: A\ra B$ be a one-to-one algebraic \ss-homomorphism from a \cs-algebra $A$ into an involutive Banach algebra $B$. Then it is norm increasing, i.e. $\|\ff(a)\|\geq \|a\|$ for all $a\in A$.
\end{proposition}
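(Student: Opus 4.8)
The plan is to peel off the general case down to positive elements, recast the desired norm bound as a statement about spectra, and then confront spectral permanence, which is where the real difficulty sits. Throughout I would pass to unitizations, extending $\ff$ to a unital algebraic \ss-homomorphism between $A_1$ and $B_1$, so that all spectra are defined and $\ff$ is unital.

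First I would carry out the reduction to positive elements, which is forced by the asymmetry between the two algebras. Using that the involution is isometric and the multiplication submultiplicative in $B$, for any $a\in A$ we have
\[
\|\ff(a)\|^2=\|\ff(a)\|\,\|\ff(a)\s\|\geq \|\ff(a)\s\ff(a)\|=\|\ff(a\s a)\|.
\]
Since $A$ is a \cs-algebra, $\|a\|^2=\|a\s a\|$, so it suffices to prove $\|\ff(h)\|\geq \|h\|$ for the positive element $h:=a\s a$. Because $h$ is normal, Proposition \ref{prop:spradius3} gives $\|h\|=r_A(h)$, so the goal becomes $\|\ff(h)\|\geq r_A(h)$.

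Next I would convert this into a spectral statement. In any Banach algebra one has $\|\ff(h)\|\geq r_B(\ff(h))$, and since $\ff$ is a unital homomorphism it preserves invertibility in one direction, giving $\si_B(\ff(h))\sub \si_A(h)$ and hence $r_B(\ff(h))\leq r_A(h)$ (compare the inclusions in Proposition \ref{prop:subalgspec1}). Thus the whole proposition would follow from the reverse \emph{spectral permanence} $\si_A(h)\sub \si_B(\ff(h))$, equivalently $r_B(\ff(h))= r_A(h)$. To attack this I would argue by contradiction with injectivity: assuming some $\la\in\si_A(h)$ lies in $Res_B(\ff(h))$, I would try to manufacture a nonzero $x\in A$ with $\ff(x)=0$. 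The candidate is $x:=g(h)$ built by the continuous functional calculus in $A$ (Theorem \ref{thm:confuncal}) from a function $g$ peaking at $\la$ and vanishing near $\si_B(\ff(h))$; the continuous spectral mapping theorem (Proposition \ref{prop:specmapcont}) forces $\|g(h)\|=\|g\|_{\sup}\neq 0$ because $\la\in\si_A(h)$, while pushing $g$ through the \emph{holomorphic} functional calculus of $\ff(h)$ in $B$ and exploiting the consistency of $\ff$ with it (Theorem \ref{thm:holfunc}) should give $\ff(g(h))=g(\ff(h))=0$ once $g$ vanishes on a neighborhood of $\si_B(\ff(h))$.

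The hard part will be exactly this last step. In a general involutive Banach algebra only the holomorphic functional calculus is available in the target $B$, whereas the natural separating function $g$ (zero near $\si_B(\ff(h))$, nonzero at $\la$) is merely continuous. Since $h$ is self-adjoint, $\si_A(h)\sub\r$ and may well be connected, so by the identity theorem a single holomorphic function defined near all of $\si_A(h)$ cannot vanish near $\si_B(\ff(h))$ and still be nonzero at $\la$. Reconciling the continuous functional calculus available in $A$ with the purely holomorphic one available in $B$ — for instance by approximating $g$ uniformly by polynomials and controlling $\ff$ of the approximants, or equivalently by first proving that $\ff$ is bounded below — is the genuine obstacle. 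It is precisely here that injectivity must be used in an essential, quantitative way, rather than merely as the implication $\ff(x)=0\Rightarrow x=0$; note the contrast with Proposition \ref{prop:normdecreasing}, where the C\s-structure of the target did all of this work automatically.
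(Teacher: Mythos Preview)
Your reduction to a self-adjoint element $h$ and the reformulation as the spectral permanence $\si_A(h)\sub\si_B(\ff(h))$ are correct, and you have accurately located the obstruction: with no continuous functional calculus available in $B$, there is no way to justify $\ff(g(h))=g(\ff(h))=0$; your polynomial-approximation suggestion presupposes continuity of $\ff$, which is exactly what is at stake, and the holomorphic route is blocked for the reason you give. So the proposal, as it stands, has a genuine gap at the decisive step.

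The missing idea is to stop trying to evaluate $\ff(g(h))$ and instead argue through \emph{invertibility} on the $B$-side. Pass to the commutative unital \cs-algebra $E$ generated by $h$ and the commutative unital Banach subalgebra $F\sub B_1$ generated by $\ff(h)$; the restriction $\phi:E\to F$ induces $\phi\s:\oom(F)\to\oom(E)$, since each $\om\circ\phi$ is a unital algebraic homomorphism $E\to\c$ and hence automatically a character of the \cs-algebra $E$. If $\phi\s$ misses some $\om_0\in\oom(E)$, use Urysohn on the compact set $\phi\s(\oom(F))$ together with the inverse Gelfand transform to produce nonzero $c,d\in E$ with $cd=0$, $\widehat c(\om_0)=1$, and $\widehat d\equiv 1$ on $\phi\s(\oom(F))$. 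Then every $\om\in\oom(F)$ gives $\om(\phi(d))=\widehat d(\phi\s\om)=1$, so $\phi(d)$ lies in no maximal ideal of $F$ and is therefore invertible; now $\phi(c)\phi(d)=\phi(cd)=0$ forces $\phi(c)=0$ with $c\neq 0$, contradicting injectivity. Thus $\phi\s$ is onto, whence $r_B(\ff(h))=r_F(\ff(h))=\sup_{\oom(F)}|\widehat{\ff(h)}|=\sup_{\oom(E)}|\widehat h|=\|h\|$ and your reduction finishes the job. The point is that the algebraic relation $cd=0$ passes through $\ff$ for free, and the burden shifts to showing $\ff(d)$ invertible, which needs only Gelfand theory in a commutative Banach algebra --- no functional calculus in $B$ whatsoever.
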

\begin{proof}
Consider a self adjoint element $a\in A$ and set $b:=\ff(a)$. Then $b$ is self adjoint too. Let $E$ denote the \cs-unitization of the \cs-subalgebra generated by $a$ in $A$ and let $F$ denote the unitization of the involutive Banach subalgebra generated by $b$ in $B$. Let $\phi:E\ra F$ denote the unital \ss-homomorphism mapping $a$ into $b$. These algebras are both commutative and their spectrums are compact. We define $\phi\s: \oom(F)\ra \oom(E)$ by $\om\mapsto \om  \phi$ for all $\om\in \oom(F)$.
We claim that $\phi\s$ is onto. If not there exist $\om_0\in \oom(E)- \phi\s(\oom(F))$. Since $\phi\s(\oom(F))$ is compact, there exist two not identically zero functions $f, g\in C(\oom(E))$ such that $fg=0$, $f(\om_0)=1$ and $g=1$ over $\phi\s(\oom(F))$. By using the inverse of the Gelfand transform, we obtain two non-zero elements $c,d\in E$ such that $cd=0$, $\om(\phi(d))=1$ for all $\om\in \oom(F)$. This implies that $\phi(d)$ does not belong to any maximal ideal of $F$, so it has to be invertible. But this contradicts with the facts that $\phi(c)\phi(d)=\phi(cd)=0$ and $\phi(c)\neq 0$. Therefore $\phi$ is onto. Now, we compute
\begin{eqnarray*}
\|\ff(a)\|&=&  \|b\|  \geq  r(b)\\
&=&\sup \{ |\widehat{b}(\om)|; \om\in\oom(F)\}\\
&=&\sup \{ |\widehat{\phi(a)} (\om)|; \om\in\oom(F)\}\\
&=&\sup \{ |\om(\phi(a))|; \om\in\oom(F)\}\\
&=&\sup \{ |\phi\s\om(a)|; \om\in\oom(F)\}\\
&=&\sup \{ |\om(a)|; \om\in\oom(E)\}\\
&=& r(a)=\|a\|.
\end{eqnarray*}
For arbitrary $a\in A$, we have
\[
\|a\|^2=\|a\s a\| \leq \|\phi(a)\s \phi(a)\| \leq \|\phi(a)\|^2.
\]
\end{proof}

\begin{corollary}
\label{cor:csinjection}
Every injective \ss-homomorphism between two \cs-algebras is an isometry.
\end{corollary}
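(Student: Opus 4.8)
The plan is to obtain this as an immediate consequence of the two preceding propositions, Proposition \ref{prop:normdecreasing} and Proposition \ref{prop:normincreasing}, which together pin down the norm from both sides. Let $\ff:A\ra B$ be an injective \ss-homomorphism between \cs-algebras $A$ and $B$. The key observation is that a \cs-algebra is simultaneously an involutive Banach algebra, so both propositions apply to $\ff$, each reading its hypotheses off a different one of these two descriptions of $A$ and $B$.

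First I would invoke Proposition \ref{prop:normdecreasing}: regarding $A$ merely as an involutive Banach algebra and $B$ as a \cs-algebra, the \ss-homomorphism $\ff$ is norm decreasing, that is $\|\ff(a)\|\leq \|a\|$ for all $a\in A$. Next I would invoke Proposition \ref{prop:normincreasing}: now using that $A$ is a \cs-algebra, that $B$ is an involutive Banach algebra, and that $\ff$ is one-to-one, the same map is norm increasing, that is $\|\ff(a)\|\geq \|a\|$ for all $a\in A$. Combining the two inequalities yields $\|\ff(a)\|=\|a\|$ for every $a\in A$, which is exactly the assertion that $\ff$ is an isometry.

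There is essentially no obstacle left at this stage: all the substantive work has already been carried out in the two propositions, and the only point to verify is that the hypotheses genuinely match, namely that a \cs-algebra qualifies both as the "involutive Banach algebra" and as the "\cs-algebra" required in the source and target positions of each statement. One may also remark that injectivity enters only through Proposition \ref{prop:normincreasing}, and that continuity of $\ff$ need not be assumed in advance, since both propositions are stated for algebraic \ss-homomorphisms and deliver their norm estimates regardless.
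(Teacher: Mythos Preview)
Your proposal is correct and matches the paper's approach exactly: the corollary is stated without proof immediately after Propositions \ref{prop:normdecreasing} and \ref{prop:normincreasing}, precisely because combining the norm-decreasing and norm-increasing inequalities from those two results yields the isometry at once.
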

\begin{exercise}
\label{exe:compactificationunit}
\begin{itemize} Let $X$ be a locally compact and Hausdorff topological space.
\item [(i)] When $X$ is not compact, show that the map $\iota : \widetilde{C_0(X)} \ra C(X^\infty)$ defined in Proposition \ref{prop:compactificationunit} is a \ss-isomorphism. Thus by the above corollary, it is an isometric isomorphism between these \cs-algebras.
\item [(ii)] When $X$ is compact, find an isometric isomorphism between $\widetilde{C(X)}$ and $C(X^\infty)$.
\end{itemize}
\end{exercise}
There is a non-unital version of the continuous functional calculus that appears useful for dealing with non-unital \cs-algebras.

\begin{remark}
\label{rem:nonunitalcfc}
Let $A$ be \cs-algebra and $a\in A$ be a normal element. If it is necessary, we add a unit to $A$. First, assume that $0$ belongs to the spectrum of $a$. Let $C$ be the \cs-subalgebra of $C(\si(a))$ defined as follows:
\[
C:=\{f\in C(\si(a)); f(0)=0 \}.
\]
Consider $\Phi_a:C(\si(a)) \ra \tilde{A}$, the continuous functional calculus over $a$. Then the image of the restriction of $\Phi_a$ to $C$ is exactly the \cs-algebra $C\s(a)$ generated by $a$ and one can directly write $\Phi_a:C \ra C\s(a)\subseteq A$ without any reference to the unit element of $A$ or $\tilde{A}$. This is called the {non-unital continuous functional calculus over $a$}. One notes that if $A$ is non-unital, then we always have $0\in \si(a)$. Now, assume $A$ is unital and $0\notin \si(a)$. Then $C=C(\si(a))$ and also $C\s(a,1)=C\s(a)$. In other words, the non-unital continuous functional calculus is the same as the original continuous functional calculus in this case.
\end{remark}

We conclude this section with some applications of the continuous functional calculus.

\begin{proposition}
\label{prop:nroots}
Let $a$ be a self adjoint element of a \cs-algebra $A$.
\begin{itemize}
\item [(i)] For given an odd natural number $n$, there is a unique self adjoint element $b\in A$ such that  $b^n=a$.
\item [(ii)] Let also $a$ is positive. Then for given an even natural number $n$, there is a unique positive element $b\in A$ such that  $b^n=a$.
\end{itemize}
In both cases, $b$ is called the {$n$th root of $a$} and is denoted by $a^{1/n}$ or $\sqrt[n]{a}$.
\end{proposition}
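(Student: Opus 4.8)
The plan is to obtain $b$ by applying the continuous functional calculus to a real $n$th-root function, and to prove uniqueness by showing any competitor commutes with $b$ and then passing to a commutative $C^\ast$-algebra, where the statement reduces to the pointwise uniqueness of real $n$th roots. Throughout I would adjoin a unit if necessary and work in the unital continuous functional calculus of Theorem \ref{thm:confuncal}, recalling that $\si_A(a)\subseteq \r$ because $a$ is self adjoint (Proposition \ref{prop:specunitary}(ii)), and that $\si_A(a)\subseteq [0,\infty[$ in case (ii) since $a$ is positive.

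For existence, in case (i) I would take $f(t):=\mathrm{sgn}(t)\,|t|^{1/n}$, which is continuous on all of $\r$, hence on $\si_A(a)$, and satisfies $f(t)^n=t$ for odd $n$; in case (ii) I would take $f(t):=t^{1/n}$, continuous on $[0,\infty[\supseteq \si_A(a)$ with $f(t)^n=t$. In both cases $f(0)=0$, so by the non-unital continuous functional calculus (Remark \ref{rem:nonunitalcfc}) the element $b:=\Phi_a(f)$ lies in $C\s(a)\subseteq A$. Since $f$ is real valued, $\overline{f}=f$, and as $\Phi_a$ is a $\ast$-homomorphism we get $b\s=\Phi_a(\overline{f})=\Phi_a(f)=b$, so $b$ is self adjoint; moreover $b^n=\Phi_a(f)^n=\Phi_a(f^n)=\Phi_a(\mathrm{id})=a$. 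In case (ii) the continuous spectral mapping theorem (Proposition \ref{prop:specmapcont}) gives $\si_A(b)=f(\si_A(a))\subseteq [0,\infty[$, so $b$ is positive.

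For uniqueness, suppose $c$ is self adjoint (resp. positive) with $c^n=a$. The first step is that $c$ commutes with $b$: from $c^n=a$, $c$ commutes with $a$ and hence with every polynomial in $a$, and by the Weierstrass approximation theorem on the compact set $\si_A(a)\subseteq \r$ there are polynomials $p_k\ra f$ uniformly, so the isometry property of $\Phi_a$ gives $\|p_k(a)-b\|=\|p_k-f\|_{\sup}\ra 0$ and therefore $cb=bc$. Thus $b,c$ are commuting self adjoint elements and the $C\s$-subalgebra $B$ they generate is commutative. Transporting to $C_0(\oom(B))$ via the Gelfand transform, and using that self adjoint (resp. positive) elements correspond to real-valued (resp. non-negative) functions by symmetry (Proposition \ref{prop:cs-sym}, Lemma \ref{lem:sym}, together with Corollary \ref{cor:samespec} for the spectra), we obtain continuous functions $\hat b,\hat c$ with $\hat b^{\,n}=\widehat{b^n}=\hat a=\widehat{c^n}=\hat c^{\,n}$. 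Since $x\mapsto x^n$ is a bijection of $\r$ for odd $n$ and is injective on $[0,\infty[$ for even $n$, it follows that $\hat b=\hat c$ pointwise, and injectivity of the Gelfand transform yields $b=c$.

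I expect the uniqueness half to be the main obstacle: the delicate points are justifying that $c$ commutes with $b$ (which is what permits the reduction to a commutative subalgebra) and confirming that under the Gelfand transform self adjoint and positive elements go to real-valued and non-negative functions, so that the pointwise extraction of $n$th roots is genuinely unambiguous.
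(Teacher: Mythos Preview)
Your proof is correct and follows essentially the same route as the paper: construct $b$ via the continuous functional calculus applied to the real $n$th-root function, then for uniqueness observe that any competitor $c$ commutes with $a=c^n$ and hence with $b$ (as a limit of polynomials in $a$), so that the question reduces via the Gelfand transform of the commutative $C\s$-algebra generated by $b$ and $c$ to the injectivity of $t\mapsto t^n$ on $\r$ (odd $n$) or on $[0,\infty[$ (even $n$). Your write-up is in fact slightly more careful than the paper's about technical points such as $f(0)=0$ ensuring $b\in C\s(a)$ and the justification that $\hat b,\hat c$ are real-valued (resp.\ non-negative).
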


\begin{proof}
\begin{itemize}
\item [(i)] Since the function $f(t)=t^{1/n}$ is continuous over the real line and so over $\si(a)$, we can define $b:=f(a)$. Then $f(t)^n=id$ implies that $b^n=a$. Assume $c\in A_h$ satisfies the same equality. Then $ca=c(c^n)=(c^n)c=ac$, namely $c$ and $a$ commute. Since $b$ is a limit of a sequence of polynomials in $a$, $b$ commutes with $c$ as well. Therefore the \cs-algebra $C\s(b,c)$ generated by $b$ and $c$ is commutative and contains $a$. By the Gelfand transform, we arrive to two equations of real valued functions; $\hat{a}=\hat{b}^n$ and $\hat{a}=\hat{c}^n$. These equations imply $\hat{b}=\hat{c}$, and so $b=c$.
\item [(ii)] The proof is similar to the Item (i), except one should note that when $n$ is even, the function $f(t)=t^{1/n}$ is defined and is continuous only over $[0,\infty)$. Therefore we had to restrict this case to positive elements of $A$.
\end{itemize}
\end{proof}
The following exercises are among many problems that can be easily solved by the continuous functional calculus.
\begin{exercise}
\label{ex:normalabsolute}
Let $a\in A$ be a normal element of a \cs-algebra $A$. Show that $a\s a \geq 0$. Define the {\bf absolute value of $a$} by $|a|:=(a\s a )^{1/2}$. Then prove the following statements:
\begin{itemize}
\item [(i)] $a$ is positive if and only if $a=|a|$,
\item [(ii)] $\|a\|=\| |a|\|$,
\item [(iii)] $a$ is invertible if and only if $|a|$ is invertible. In this case, $a|a|\inv$ is a unitary element in $A$.
\end{itemize}
\end{exercise}
In Proposition \ref{prop:positivecone}, we shall show that the statement $a\s a\geq 0$ is true for all elements of a \cs-algebra. Therefore one can extend the definition of the absolute value to all elements of a \cs-algebra.
\begin{exercise}
Let $a$ be a positive element of a unital \cs-algebra $A$. Show that $a\leq \|a\|1$.
\end{exercise}
\begin{proposition}
\label{prop:4unitary}
Every element of a unital \cs-algebra $A$ can be written as a linear combination of four unitary elements of $A$.
\end{proposition}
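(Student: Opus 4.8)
The plan is to reduce the general statement to the self-adjoint case and then to show that every self-adjoint element is a linear combination of two unitaries. The reduction is immediate from Remark \ref{rem:real-imaginary}: writing $a = a_1 + i a_2$ with $a_1 = \frac{a+a\s}{2}$ and $a_2 = \frac{a-a\s}{2i}$ both self-adjoint, it suffices to express each of $a_1$ and $a_2$ as a linear combination of two unitaries, which then exhibits $a$ as a combination of four.

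So the core step is: given a self-adjoint $h \in A_h$, write $h$ as a linear combination of two unitaries. First I would normalize, replacing $h$ by $h/\|h\|$ (the case $h=0$ being trivial), so that $\|h\| \le 1$. By Proposition \ref{prop:specunitary}(ii) we have $\si_A(h) \subseteq \r$, and by Proposition \ref{prop:spradius1} we have $r(h) \le \|h\| \le 1$; hence $\si_A(h) \subseteq [-1,1]$. Applying the continuous spectral mapping theorem, see Proposition \ref{prop:specmapcont}, to the function $t \mapsto 1-t^2$ gives $\si_A(1-h^2) \subseteq [0,1]$, so $1-h^2$ is a positive element. Therefore by Proposition \ref{prop:nroots}(ii) its positive square root $k := (1-h^2)^{1/2}$ exists in $A$, is self-adjoint, and commutes with $h$ (being a limit of polynomials in $h$).

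Next I would set $u := h + ik$ and verify it is unitary. Since $h$ and $k$ are self-adjoint and commute, we have $u\s = h - ik$, and the cross terms in $u\s u$ and $uu\s$ cancel, giving
\[
u\s u = uu\s = h^2 + k^2 = h^2 + (1-h^2) = 1,
\]
so $u \in A_u$; note that $u\s$ is unitary as well. Then $h = \frac{1}{2}(u + u\s)$, exhibiting the normalized self-adjoint element as a combination of the two unitaries $u, u\s$, and undoing the normalization gives $h = \frac{\|h\|}{2}(u + u\s)$ in general. Combining with the first paragraph, one obtains
\[
a = \frac{\|a_1\|}{2}(u_1 + u_1\s) + i\,\frac{\|a_2\|}{2}(u_2 + u_2\s),
\]
a linear combination of the four unitaries $u_1, u_1\s, u_2, u_2\s$.

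I do not expect a serious obstacle; the only point requiring care is justifying that $1 - h^2$ is positive so that the square root exists, which is exactly where the continuous functional calculus together with the normalization $\|h\| \le 1$ is used. Everything else is a direct computation carried out inside the commutative \cs-subalgebra $C\s(h,1)$ generated by $h$ and the unit.
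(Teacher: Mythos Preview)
Your proof is correct and follows essentially the same approach as the paper: reduce to the self-adjoint case via the real/imaginary decomposition, normalize to the unit ball, use the continuous functional calculus to form $u = h + i(1-h^2)^{1/2}$, verify $u$ is unitary, and recover $h = \tfrac{1}{2}(u+u\s)$. Your justification of the positivity of $1-h^2$ via the spectral mapping theorem is slightly more explicit than the paper's, but the argument is otherwise the same.
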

\begin{proof}
Let $b$ be a self adjoint element of $A$ such that $\|b\|\leq 1$. Then $b^2$ is positive and $\|b^2\|\leq 1$. These imply that $1-b^2$ is positive too. Therefore we can define $u:=b+i(1-b^2)^{1/2}$. It is easy to see that $u$ is a unitary element and $b=\frac{u+u\s}{2}$. This means that every self adjoint element of $A$ is a linear combination of two unitary elements.  Now, the desired statement follows from the fact that every element of a \cs-algebra is a linear combination of two self adjoint elements.
\end{proof}
More applications of the continuous functional calculus will be also discussed in Chapter \ref{ch:basics}.


\section{The Gelfand duality}
\label{sec:Gelfandduality}

The Gelfand duality along with quantum physics and the general developments in index theory is one of the main motivations of noncommutative geometry. Therefore we use this theory as our guide in studying those parts of the theory of \cs-algebras which are necessary to understand noncommutative geometry. We begin this section with briefly recalling basic definitions of category theory. Our treatment of category theory, here, is rather informal and we content ourselves to the minimum amount of the theory that is going to be used in this book. The interested reader is referred to \cite{rotman, weibel} for further details. Afterwards, we explain how the Gelfand transform justifies the idea of considering \cs-algebras as the noncommutative analogues of point-set topological spaces. This analogy is based on the duality between locally compact and Hausdorff topological spaces and their \cs-algebras of continuous functions vanishing at infinity. This duality is called the Gelfand duality. Besides motivating some of the developments of noncommutative geometry, the Gelfand duality has found many generalizations to non-commutative \cs-algebras. These generalized correspondences between topological spaces and \cs-algebras are widely applied to facilitate and explain many interactions of the theory of \cs-algebras with other mathematical areas such as representations theory as well as with quantum physics. This duality also explains how $K$-theory of \cs-algebras generalizes topological $K$-theory.

In a {\bf category} $\mathcal{C}$, we have a class of objects, which we denote it by $obj(\mathcal{C})$. Then for every ordered pair of objects $(A,B)\in obj(\mathcal{C})^2$, there is a set of {\bf morphisms}, which we denote it by $Hom_{\mathcal{C}}(A,B)$. Moreover, for every triple $(A,B,C)\in obj(\mathcal{C})^3$, we have a {\bf composition law} $Hom_{\mathcal{C}}(A,B)\times Hom_{\mathcal{C}}(B,C) \ra Hom_{\mathcal{C}}(A,C)$ which we denote it by  $(f,g)\mapsto gf$. These ingredients are subject to the following axioms:

\begin{itemize}
\item [(i)] The sets of morphisms are pairwise disjoint.
\item [(ii)] For every object $A$, there is a unique morphism $1_A\in Hom_{\mathcal{C}}(A,A)$ such that $f1_A=f=1_B f$ for all $f\in Hom_{\mathcal{C}}(A,B)$.
\item [(iii)] The composition law is associative, namely, for given $A,B, C\in obj(\mathcal{C})$ and for all $f\in Hom_{\mathcal{C}}(A,B)$, $g\in Hom_{\mathcal{C}}(B,C)$ and $h\in Hom_{\mathcal{C}}(C,D)$, we have $h(gf)=(hg)f$.
\end{itemize}
A morphism $f\in Hom_{\mathcal{C}} (A,B)$ is called an {\bf isomorphism} if there exists a morphism $g\in Hom_{\mathcal{C}} (B,A)$ such that $fg=1_B$ and $gf=1_A$. In this case, one easily checks that $g$ is unique and is called the {\bf inverse} of $f$ and usually is denoted by $f\inv$. In the following, we introduce some of the categories that we are going to deal with in this book.
\begin{example}
\begin{itemize}
\item [(i)] Let $\mathcal{S}$ be a category whose objects are sets and, for every ordered pair $(A,B)$ of sets , the set $Hom_{\mathcal{S}}(A,B)$ of morphisms is the set of all functions from $A$ into $B$. This category is called the {\bf category of sets}. The composition law is the composition of functions.
\item [(ii)] The objects of the {\bf category of commutative \cs-algebras} are all commutative \cs-algebras and, for every ordered pair  $(A,B)$ of commutative \cs-algebras the set of morphism is all proper \ss-homomorphisms from $A$ into $B$. The composition law is the composition of two \ss-homomorphisms. We denote this category by $\mathcal{CCA}$.
\item [(iii)] The {\bf category of locally compact and Hausdorff topological spaces} is defined similarly. The objects of this category are locally compact Hausdorff topological spaces. For two objects $X$ and $Y$ in this category, the set of morphisms from $X$ to $Y$ consists of all proper continuous maps from $X$ into $Y$. This category is denoted by $\mathcal{LCS}$.
\item [(iv)] In Part (ii), if we only consider unital \cs-algebras and unital \ss-homomorphisms, then we obtain a new category called the {\bf category of unital commutative \cs-algebras} and is denoted by $\mathcal{UCA}$. One notes that every proper \ss-homomorphism between two unital \cs-algebra is automatically unital.
\item [(v)] Similarly, in Part (iii), if we consider only compact and Hausdorff topological spaces, then the obtained category is called the {\bf category of compact Hausdorff topological spaces} and is denoted by $\mathcal{CS}$. One notes that every continuous map from a compact topological space is automatically proper.
\item[(vi)] The objects of the {\bf category $\mathcal{AG}$ of abelian groups} are all abelian groups. The set of morphisms between two abelian group are all group homomorphisms between them and the composition law is the composition of homomorphisms. The {\bf category of groups} is defined similarly and is denoted by $\mathcal{GR}$.
\item[(vii)] Let $X$ be a set and let $\leq$ be a partial order on $X$. The set $X$ admits the structure of a category. Its objects are are elements of $X$. For $x,y\in X$, the set of morphisms $Hom(x,y)$ has only one element $\iota_y^x$ if $x\leq y$ and it is empty when $x\nleq y$. The composition law is defined using the transitivity of the relation $\leq$, namely if $x\leq y$ and $y\leq z$, then $\iota^x_z=\iota^y_z \iota^x_y$.
\item[(viii)] Let $G$ be a group. It enjoys the structure of a category too. Here, we have only one object which is usually denoted by $\ast$ and the morphisms from this object to itself are all elements of the group. The composition law is the multiplication of the group.
\end{itemize}
\end{example}

Those categories whose objects are some sets with (or without) some structures and the morphisms are functions preserving those structures and composition law is the composition of functions underlying the morphisms are called {\bf concrete}. For instance, Items (i) to (vi) of the above examples are concrete categories. A category whose class of objects is actually a set is called a {small category}. Items (vii) and (viii) are examples of a small category. In Item (viii), one notes that every morphism is an isomorphism. This motivates another definition for groupoids. A {\bf groupoid} is a small category all whose  morphisms are isomorphism. 

A {\bf subcategory} $\mathcal{S}$ of a category $\mathcal{C}$ is a category such that $obj(\mathcal{S})\subseteq obj(\mathcal{C})$, for every pair $A,B\in obj(\mathcal{S})$, we have $Hom_{\mathcal{S}}(A,B)\subseteq Hom_{\mathcal{C}}(A,B)$, the composition law in $\mathcal{S}$ coincides with the composition law in $\mathcal{C}$, and finally, for every $A\in obj(\mathcal{S})$, the identity morphism $1_A \in Hom_{\mathcal{S}}(A,A)$ is equal to the identity morphism $1_A \in Hom_{\mathcal{C}}(A,A)$. The category $\mathcal{S}$ is called a {\bf full subcategory} of $\mathcal{S}$ if $Hom_{\mathcal{S}}(A,B)= Hom_{\mathcal{C}}(A,B)$ for all $A,B\in obj(\mathcal{S})$.

\begin{exercise}
\label{exe:category}
\begin{itemize}
\item[(i)] Show that the category of abelian groups is a full subcategory of the category of groups.
\item[(ii)] Show that the category of compact topological spaces is a full subcategory of the category of locally compact topological space.
\item[(iii)] Assume $H$ be subgroup of a group $G$. Show that $H$ is a subcategory of $G$ and it is full if and only if $G=H$.
\item[(iv)] Let $n$ be an integer. For $1\leq i,j\leq n$, let $E_{ij}$ be the $(i,j)$-th elementary $n\times n$ matrix, namely the matrix whose $(i,j)$-th entry is one and the rest of its entries are zero. Consider a small category whose objects are the elements of the standard basis of $\r^n$, which we denote it by $\{e_1,\cdots, e_n\}$. For every pair $(e_i, e_j)$, set $Hom(e_i, e_j):=\{E_{i,j} \}$ and define the composition law by the multiplication of matrices. Show that this a groupoid. Describe some full (and non-full) subcategories of this category.
\end{itemize}
\end{exercise}

Similar to other mathematical structures, there are certain maps between categories named functors which preserve the structure of categories. A {\bf covariant functor} $F$ from a category $\mathcal{C}$ into a category $\mathcal{D}$ associates an object $F(A)\in obj(\mathcal{D})$ to every object $A\in obj(\mathcal{C})$. Moreover, $F$ maps every $f\in Hom_{\mathcal{C}}(A,B)$ to some $F(f) \in Hom_{\mathcal{D}}(F(A),F(B))$. A covariant functor $F:\mathcal{C}\ra \mathcal{D}$ also satisfies the following conditions:
\begin{itemize}
\item[(i)] For every $f\in Hom_{\mathcal{C}}(A,B)$  and $g\in Hom_{\mathcal{C}}(B,C)$, we have $F(gf)=F(g) F(f)$.
\item[(ii)] For every $A\in obj(\mathcal{C})$, we have $F(1_A)=1_{F(A)}$.
\end{itemize}

A {\bf contravariant} functor is defined similarly, except it reverses the arrows. In other words, if $F$ is a contravariant functor from a category $\mathcal{C}$ into a category $\mathcal{D}$, then  $F(f) \in Hom_{\mathcal{D}}(F(B),F(A))$ for every $f\in Hom_{\mathcal{C}}(A,B)$ and, for  $f$ and $g$ as above, we have $F(gf)=F(f)F(g)$. Sometimes a covariant functor is called simply a {\bf functor} while a contravariant functor is called a {\bf cofunctor}.

\begin{example}
\begin{itemize}
\item [(i)] The most obvious example of a covariant functor is the {\bf forgetful functor} from a concrete category into the category of sets or into another category with less structures. For instance, consider the category of compact topological spaces. The forgetful functor assigns the underlying set of a compact topological space to it and sends every continuous map to itself as a map between two sets without any structure.
\item [(ii)] Let $G$ be a finite abelian group and let $\t$ denote the unit circle in $\c$ considered as the subgroup of the multiplicative group of $\c$. By definition, the {\bf Pontryagin dual of $G$} is the group $\widehat{G}$ of all {\bf characters} of $G$, which is the set of all group homomorphism from $G$ into $\t$. The multiplication in $\widehat{G}$ is defined by
    \[
    (\rho \tau)(g):=\rho(g)\tau(g),\quad \forall \rho, \tau\in \widehat{G}, \, g\in G.
    \]
    Let $\mathcal{FAB}$ denote the category of abelian finite groups. Then we define a contravariant functor $P:\mathcal{FAB}\ra\mathcal{FAB}$ as follows:
    \begin{itemize}
\item [$\bullet$] $P(G):=\widehat{G}$, for all $G\in obj(\mathcal{FAB})$.
\item [$\bullet$] For two given finite abelian groups $G$ and $H$, $P(\ff):=\ff\s$ for all group homomorphism $\ff:G\ra H$, where $\ff\s:\widehat{H}\ra \widehat{G}$ is defined by $\ff\s(\rho):=\rho \ff$ for all $\rho\in \widehat{H}$.
    \end{itemize}
    One easily checks that this is a contravariant functor. This functor is defined on the bigger category of all locally compact abelian groups and it is called the {\bf Pontryagin duality}. Using the fact that every finite abelian group is a direct sum of cyclic finite groups, one easily observes that $G$ and $\widehat{G}$ are isomorphic, and so we have $G\simeq \widehat{\widehat{G}}$. This latter isomorphism still holds for the Pontryagin duality for locally compact groups.
\end{itemize}
\end{example}

Now, we explain the Gelfand duality as a contravariant functor.

\begin{definition}
The {\bf Gelfand functor} $D$ is a contravariant functor from the category of commutative \cs-algebras into the category of locally compact topological spaces. It sends every commutative \cs-algebra $A$ to its spectrum $\oom(A)$ and sends every proper \ss-homomorphism $\ff:A\ra B$ between two commutative \cs-algebras to the proper continuous map  $\ff\s:\oom(B)\ra \oom(A)$ defined by $\ff\s(\om):=\om\ff$ for all $\om\in \oom(B)$.
\end{definition}

Using the details we have already presented in Sections \ref{sec:Gelfandtrans}, it is easy to check that $D$ is a contravariant functor. As it was already explained, the Gelfand functor is extremely useful. For instance, it represents an abstract commutative \cs-algebra as a \cs-algebra of continuous functions equipped with a specific, and more importantly, computable \cs-norm. However, the second half of the Gelfand duality, which consist of the inverse of the Gelfand functor, completes the theory. In order to explain the meaning of an equivalence of two category and the inverse of a functor, we need some more definitions.

Let $F,G:\mathcal{C}\ra \mathcal{D}$ be two covariant functors. A {\bf natural transformation} $\eta:F\Rightarrow G$ is a one parameter assignment of morphisms $\eta=\left\{\eta_A:F(A)\ra G(A)\right\}_{A\in obj(\mathcal{C})}$ in $\mathcal{D}$ such that the following diagram is commutative for every morphism $f:A\ra B$ in $\mathcal{C}$:
\[
\xymatrix{
F(A)\ar[d]_{F(f)} \ar[r]^{\eta_A} &G(A)\ar[d]^{G(f)}   \\
F(B)\ar[r]_{\eta_B}& G(B)
}
\]

If $\eta_A$ is an isomorphism for every $A\in obj(\mathcal{C})$, then $\eta$ is called a {\bf natural isomorphism}. A natural transformation (resp. isomorphism) between two contravariant functors is defined similarly. We denote the identity functor from a category $\mathcal{C}$ into itself by $1_{\mathcal{C}}$. It is clearly a covariant functor. A covariant (resp. contravariant) functor $F:\mathcal{C}\ra \mathcal{D}$ between two categories is called an {\bf equivalence between $\mathcal{C}$ and $\mathcal{D}$} if there is a covariant (resp. contravariant) functor  $G:\mathcal{D}\ra \mathcal{C}$ such that there exist natural isomorphisms between $GF$ and $1_{\mathcal{C}}$ and between $FG$ and $1_{\mathcal{D}}$. In this case, we call two categories $\mathcal{C}$ and $\mathcal{D}$ {\bf equivalent}. The functor $G$ plays the role of an inverse in this definition, so it is called the {\bf inverse of $F$}. Our aim is to show that the Gelfand functor is an equivalence between the category of commutative \cs-algebras and the category of locally compact Hausdorff spaces. Therefore we need an inverse for the Gelfand functor. This inverse is nothing but the functor which sends every locally compact and Hausdorff space $X$ to its \cs-algebras $C_0(X)$ of continuous functions vanishing at infinity. It also sends every proper continuous map $f:X\ra Y$ to the proper \ss-homomorphism $f\s:C_0(Y)\ra C_0(X)$, where $f\s(g)(x):=g(f(x))$ for all $g\in C_0(Y)$, see Proposition \ref{prop:contmapspec}. We call this functor the {\bf inverse of the Gelfand functor} and denote it by $E$.

\begin{theorem}
\label{thm:Gelduality} The Gelfand transform $D$ is an equivalence of categories with the inverse $E$. Moreover, by restricting the Gelfand transform to the category of unital commutative \cs-algebras, we obtain an equivalence between this category and the category of compact Hausdorff topological spaces.
\end{theorem}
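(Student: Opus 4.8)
The plan is to produce the two natural isomorphisms demanded by the definition of an equivalence, taking as their components the Gelfand transform on the algebra side and the evaluation homeomorphism $\fff$ of Proposition \ref{prop:inversegelfand} on the space side. First I would record that $D$ and $E$ are bona fide contravariant functors: that $D$ sends a proper \ss-homomorphism to a proper continuous map is Proposition \ref{prop:properhom}, and that $E$ sends a proper continuous map to a proper \ss-homomorphism is Proposition \ref{prop:contmapspec}; functoriality is then immediate from the identities $(\pp\ff)\s=\ff\s\pp\s$ for composable \ss-homomorphisms and $(hf)\s=f\s h\s$ for composable continuous maps, together with $(1)\s=1$. Because both functors are contravariant, the composites $ED$ and $DE$ are covariant, so the sought natural isomorphisms $\eta\colon 1_{\mathcal{CCA}}\Rightarrow ED$ and $\varepsilon\colon 1_{\mathcal{LCS}}\Rightarrow DE$ are transformations of covariant functors.

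For a commutative \cs-algebra $A$ we have $ED(A)=C_0(\oom(A))$, and I would set $\eta_A:=\ggg_A$, the Gelfand transform of $A$. By the theorem identifying the Gelfand transform as an isometric $\ast$-isomorphism onto $C_0(\oom(A))$, each $\eta_A$ is an isomorphism of \cs-algebras, hence a categorical isomorphism in $\mathcal{CCA}$; its inverse is again an isometric $\ast$-isomorphism onto and so is proper, which is the point that must be checked to stay inside the category. For a locally compact Hausdorff space $X$ we have $DE(X)=\oom(C_0(X))$, and I would set $\varepsilon_X:=\fff_X$, which is an onto homeomorphism by Proposition \ref{prop:inversegelfand} and therefore an isomorphism in $\mathcal{LCS}$.

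The one genuine computation is naturality, and both squares unwind to a character chase. For a morphism $\ff\colon A\ra B$ in $\mathcal{CCA}$ and for $a\in A$, $\om\in\oom(B)$, one has $ED(\ff)(\hat a)(\om)=\hat a(\om\circ\ff)=\om(\ff(a))=\widehat{\ff(a)}(\om)$, so that $ED(\ff)\circ\eta_A=\eta_B\circ\ff$. Dually, for a proper continuous $f\colon X\ra Y$ and for $x\in X$, $g\in C_0(Y)$, one has $DE(f)(\hat x)(g)=\hat x(g\circ f)=g(f(x))=\widehat{f(x)}(g)$, so that $DE(f)\circ\varepsilon_X=\varepsilon_Y\circ f$. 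Since each component is an isomorphism and both families are natural, $\eta$ and $\varepsilon$ are natural isomorphisms, and $D$ is an equivalence with inverse $E$.

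For the unital statement I would verify that the same data restrict correctly. By Proposition \ref{prop:spec1}(ii), $\oom(A)$ is compact whenever $A$ is unital, so $D$ carries $\mathcal{UCA}$ into $\mathcal{CS}$; conversely $C_0(X)=C(X)$ is unital for compact $X$, so $E$ carries $\mathcal{CS}$ into $\mathcal{UCA}$. No morphisms are lost, since a proper \ss-homomorphism between unital algebras is automatically unital and a continuous map out of a compact space is automatically proper; and the components $\eta_A$, $\varepsilon_X$ together with their naturality survive the restriction verbatim, giving the equivalence between $\mathcal{UCA}$ and $\mathcal{CS}$. I expect the main obstacle to be bookkeeping rather than conceptual: ensuring at every stage that properness is preserved and that the chosen isomorphism components are invertible \emph{within} the categories as defined, since the substantive analytic content is already absorbed into the Gelfand isomorphism theorem and Propositions \ref{prop:inversegelfand}, \ref{prop:contmapspec}, and \ref{prop:properhom}.
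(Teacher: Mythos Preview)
Your proposal is correct and follows essentially the same route as the paper: both take the Gelfand transform $\ggg_A$ as the component of $1_{\mathcal{CCA}}\Rightarrow ED$ and the evaluation map $\fff_X$ as the component of $1_{\mathcal{LCS}}\Rightarrow DE$, and both verify naturality by the identical character computations $ED(\ff)(\hat a)(\om)=\om(\ff(a))=\widehat{\ff(a)}(\om)$ and $DE(f)(\hat x)(g)=g(f(x))=\widehat{f(x)}(g)$. You are slightly more explicit than the paper about the categorical bookkeeping (functoriality of $D$ and $E$, properness of the inverse isomorphisms, and the unital restriction), but the substantive argument is the same.
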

\begin{proof}
The natural isomorphism $1_{\mathcal{CCA}}\Rightarrow ED$ is the Gelfand transform. To see this, we only need to check that the following diagram commutes for every proper \ss-homomorphism $\ff:A\ra B$ between two commutative \cs-algebras:
\[
\xymatrix{
A\ar[d]_{\ff} \ar[rr]^{\ggg_A} &&C_0(\oom(A))\ar[d]^{\tilde{\ff}}   \\
B\ar[rr]_{\ggg_B}&& C_0(\oom(B))
}
\]
where $\tilde{\ff}=ED(\ff)$. By definition of $E$ and $D$, for all $\om\in\oom(B)$ and $a\in A$, we have
\begin{eqnarray*}
\tilde{\ff} (\ggg_A(a)) (\om)&=& \tilde{\ff}(\hat{a})(\om)=[ED(\ff)(\hat{a})] (\om)\\
&=& [E(D(\ff))(\hat{a})](\om)= \hat{a}(D(\ff)(\om))\\
&=&[D(\ff)(\om)](a)=\om(\ff(a))\\
&=&\widehat{\ff(a)} (\om)=\ggg_B(\ff(a)) (\om).
\end{eqnarray*}
The natural isomorphism $1_{\mathcal{LCS}}\Rightarrow DE$ at a space $X\in \mathcal{LCS}$ is the homeomorphism $\fff_X:X\ra \oom(C_0(X))$ defined by $x\mapsto \hat{x}$, where $\hat{x} (f)=f(x)$ for all $f\in C_0(X)$. Again, we need to check that the following diagram commutes for every proper continuous map $\psi:X\ra Y$ between two locally compact Hausdorff spaces $X$ and $Y$:
\[
\xymatrix{
X\ar[d]_{\psi} \ar[rr]^{\fff_X} &&\oom(C_0(X))\ar[d]^{\tilde{\psi}}   \\
Y\ar[rr]_{\fff_Y}&&\oom(C_0(Y))
}
\]
where $\tilde{\psi}=DE(\psi)$. For all $f\in C_0(Y)$ and $x\in X$, we have
\begin{eqnarray*}
\tilde{\psi}(\fff(x))(f)&=& \tilde{\psi} (\hat{x})(f)=[D(E(\psi))(\hat{x})](f)\\
&=& \hat{x}(E(\psi)(f))=\hat{x}(f\psi)=f\psi(x)\\
&=&\widehat{\psi(x)} (f)=\fff_Y(\psi (x)) (f).
\end{eqnarray*}
\end{proof}
Immediate corollaries of this theorem are (i) every proper \ss-homomorphism $\ff:A\ra B$ between two commutative \cs-algebras is induced by a proper continuous map from $\oom(B)$ into $\oom(A)$, and similarly (ii) every proper continuous map $f:X\ra Y$ between two locally compact and Hausdorff topological spaces is induced by a proper \ss-homomorphism from $C_0(Y)$ into $C_0(X)$. A slightly different formulation of this fact is given in the following corollary:
\begin{corollary}
The Gelfand functor and its inverse are bijective maps on the sets of morphisms. In other words,
\begin{itemize}
\item [(i)] for every two commutative \cs-algebras $A$ and $B$, the following map is bijective:
\[
D: Hom_{\mathcal{CCA}}(A,B)\ra Hom_{\mathcal{LCS}} (\oom(B), \oom(A)),
\]
\item [(ii)] and, for every two locally compact and Hausdorff topological spaces $X$ and $Y$, the following map is bijective:
\[
E: Hom_{\mathcal{LCS}}(X,Y)\ra Hom_{\mathcal{CCA}}(C_0(Y),C_0(X)).
\]
\end{itemize}
\end{corollary}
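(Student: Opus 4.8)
The plan is to derive both bijectivity statements as formal consequences of Theorem \ref{thm:Gelduality}, using nothing beyond the two naturality squares established there: the natural isomorphism $1_{\mathcal{CCA}}\Rightarrow ED$ given pointwise by the Gelfand transforms $\ggg_A$, and the natural isomorphism $1_{\mathcal{LCS}}\Rightarrow DE$ given pointwise by the homeomorphisms $\fff_X$. Recalling that $D$ is contravariant, so that it indeed sends $Hom_{\mathcal{CCA}}(A,B)$ into $Hom_{\mathcal{LCS}}(\oom(B),\oom(A))$, I would organize the argument in two stages: first establish that both $D$ and $E$ are injective on morphisms, and then use each injectivity statement to bootstrap the corresponding surjectivity.

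For injectivity of $D$, suppose $\ff_1,\ff_2\in Hom_{\mathcal{CCA}}(A,B)$ satisfy $D(\ff_1)=D(\ff_2)$. Applying $E$ gives $ED(\ff_1)=ED(\ff_2)$, and the naturality square for the Gelfand transform reads $ED(\ff_i)\circ \ggg_A=\ggg_B\circ \ff_i$ for $i=1,2$. Hence $\ggg_B\circ \ff_1=\ggg_B\circ \ff_2$, and since $\ggg_B$ is an (isometric) isomorphism, it is left-cancellable, forcing $\ff_1=\ff_2$. The same computation with $\fff$ in place of $\ggg$ — namely $DE(f_i)\circ \fff_X=\fff_Y\circ f_i$ together with $\fff_Y$ a homeomorphism hence injective — yields injectivity of $E$ on $Hom_{\mathcal{LCS}}(X,Y)$.

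For surjectivity of $D$, given any proper continuous $g\in Hom_{\mathcal{LCS}}(\oom(B),\oom(A))$ I would set $\ff:=\ggg_B\inv\circ E(g)\circ \ggg_A$. This is a genuine morphism of $\mathcal{CCA}$: it is a composite of proper $\ast$-homomorphisms, since $\ggg_A,\ggg_B$ are isometric isomorphisms (hence proper, carrying approximate units to approximate units) and $E(g)=g\s$ is proper by Proposition \ref{prop:contmapspec}. By construction $\ggg_B\circ \ff=E(g)\circ \ggg_A$, while naturality gives $\ggg_B\circ \ff=ED(\ff)\circ \ggg_A$; right-cancelling the isomorphism $\ggg_A$ leaves $E(g)=ED(\ff)$, and the injectivity of $E$ just proved forces $g=D(\ff)$. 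The symmetric construction $f:=\fff_Y\inv\circ D(\psi)\circ \fff_X$ for a given $\psi\in Hom_{\mathcal{CCA}}(C_0(Y),C_0(X))$, combined with the injectivity of $D$, gives surjectivity of $E$, completing both parts.

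This is really just the statement that an equivalence of categories is fully faithful, so I do not expect a serious obstacle; the only points demanding care are bookkeeping ones. One must consistently reverse arrows because $D$ and $E$ are contravariant, and one must check that the conjugated maps $\ggg_B\inv\circ E(g)\circ \ggg_A$ and $\fff_Y\inv\circ D(\psi)\circ \fff_X$ actually lie in the prescribed morphism classes — i.e.\ that they are proper — which is exactly where properness of isometric isomorphisms and of homeomorphisms is used. Finally I would note that the apparent interdependence (surjectivity of each functor invokes injectivity of the other) is not circular, since the two injectivity statements are proved independently and first; and the unital case is obtained verbatim by replacing $\mathcal{CCA},\mathcal{LCS}$ with $\mathcal{UCA},\mathcal{CS}$ and $C_0$ with $C$.
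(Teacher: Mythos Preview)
Your proof is correct and follows essentially the same approach as the paper: both derive the bijectivity directly from the two natural isomorphisms $1_{\mathcal{CCA}}\Rightarrow ED$ and $1_{\mathcal{LCS}}\Rightarrow DE$ established in Theorem \ref{thm:Gelduality}. The only difference is organizational---the paper pairs ``$D$ injective, $E$ surjective'' with one natural isomorphism and ``$E$ injective, $D$ surjective'' with the other, whereas you establish both injectivities first and then bootstrap both surjectivities---but the underlying mechanism (factoring the identity on Hom-sets through $D$ and $E$ via the naturality squares) is identical, and your version is if anything more explicit about properness and contravariance.
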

\begin{proof}
The natural isomorphism $1_{\mathcal{CCA}}\Rightarrow ED$ implies that $D$ is one-to-one and $E$ is onto. Similarly, the natural isomorphism $1_{\mathcal{LCS}}\Rightarrow DE$ shows that $E$ is one-to-one and $D$ is onto.
\end{proof}
\section{Problems}

\begin{e}
\label{e:3-1}
Define a new involution on the Banach algebra $\ell^1(\z)$ by the following formula:
\[
f\s(n):=\overline{f(n)}, \qquad \forall f\in \ell^1(\z), \, n\in \z.
\]
Show that this is an involution. Show that $\ell^1(\z)$ with this involution is not symmetric.
\end{e}
\begin{e}
\label{e:3-2}
Let $a$ be an element of a \cs-algebra $A$. Show that $\si(a)=\{0\}$ if and only if $a=0$. Use this to verify Remark \ref{rem:radical}.
\end{e}
\begin{e}
\label{e:3-3}
Let $a$ be a normal element of a \cs-algebra $A$. Show that if $\si(a)\subseteq \r$, then $a$ is self adjoint.
\end{e}
\begin{e}
\label{e:3-4}
Let $a$ be a normal element of a \cs-algebra $A$. Show that if $\si(a)\subseteq \t$, then $a$ is unitary.
\end{e}
\begin{e}
\label{e:3-5}
Let $a$ be a normal element of a \cs-algebra $A$. Show that $\si(a)\subseteq \{0,1\}$ if and only if $a$ is a projection. Conclude that every projection is a positive element.
\end{e}
\begin{e}
\label{e:3-13}
Let $a$ be a self adjoint element of a \cs-algebra $A$.
\begin{itemize}
\item [(i)] Show that if $a^3=a^2$, then $a$ is a projection.
\item [(ii)] Assume $a$ is positive. Show that if $a^n=a^m$ for some integers $0<m<n$, then $a$ is a projection.
\end{itemize}
\end{e}
\begin{e}
\label{e:3-6}
Show that if $a$ is a positive element of a \cs-algebra $A$ then $a=bb\s$ for some $b\in A$. The converse of this statement is also true, which will be proved in Proposition \ref{prop:positivecone}(ii).
\end{e}
\begin{e}
\label{e:3-7}
Let $A$ be a unital Banach algebra. Assume $x,y\in A$ and $xy=yx$. Show that
\[
r(xy)\leq r(x)r(y), \qquad r(x+y)\leq r(x)+r(y).
\]
\end{e}
\begin{e}
\label{e:3-8}
Assume $A$ is a Banach algebra. Show that if $e\neq f$ are two idempotents in $A$ that commute with each other, then $\|e-f\|\geq 1$. (Hint: Use the previous exercise.)
\end{e}
\begin{e}
\label{e:3-9}
Let $A$ be a commutative \cs-algebra. Assume $L:A\ra \c$ be a linear functional such that $L(a\s a)\geq 0$ for all $a\in A$. Prove that $L$ is bounded.
\end{e}
\begin{e}
\label{e:3-10}
Let $A$ and $B$ be two commutative \cs-algebras and let $\ff:A\ra B$ be a proper linear map such that $\ff(ab)=\ff(a)\ff(b)$ for all $a,b \in A$. Show that $\ff$ is a \ss-homomorphism. (Hint: consider $\ff\s:\oom(B)\ra \oom(A)$ defined by $\ff\s(\om):=\om \ff$.)
\end{e}
\begin{e}
\label{e:3-11}
Find an approximate unit for $C_0(\r)$.
\end{e}
\begin{e}
\label{e:3-12}
Let $(u_\la)_{\la\in \Lambda}$ be an approximate unit for $C_0(\r)$. Show that there exists a countable subnet $(u_{\la_n})_{n\in \n}$ such that it is an approximate unit for $C_0(\r)$ too.
\end{e}
\begin{e}
\label{e:3-14}
Let $A$ be a \cs-algebra, $a,b\in A_+$ and $c,d\in A$. Show that if $a^n=cb^nd$ for all $n\in\n$, then $a^{1/2}=c b^{1/2} d$.
\end{e}
\begin{e}
\label{e:3-15}
Let $A$ be a commutative unital Banach algebra. Show that the Gelfand transform $\ggg: A\ra C_0(\oom(A))$ is an isometry if and only if $\|a\|^2=\|a^2\|$ for all $a\in A$.
\end{e}
\begin{e}
\label{e:3-16}
Let $u$ be a unitary element of a unital \cs-algebra $A$.
\begin{itemize}
\item [(i)] Show that if $\|1-u\|<2$, then $\si(u)\neq \t$.
\item [(ii)] Show that if $\si(u)\neq \t$, then there exists a self adjoint element $a\in A$ such that $u=e^{ia}$.
\end{itemize}
\end{e}




\chapter{Basics of the theory of $C^\ast$-algebras}
\label{ch:basics}
Positive elements of a \cs-algebra and their properties are the special feature of the theory of abstract \cs-algebras among other topological algebras. This notion plays a key role in the realization of abstract \cs-algebras as subalgebras of algebras of bounded operators on Hilbert spaces. Positivity also facilitate many applications of the theory of \cs-algebras in quantum physics. Therefore we study positivity in \cs-algebras as the first step towards the abstract theory of \cs-algebras in Section \ref{sec:positivity}. The continuous functional calculus allows us to imitate the decomposition of every complex function to a linear combination of four non-negative real functions and write every element of a \cs-algebra as a linear combination of four positive elements. One will notice the application of this easy trick in many proofs in up coming topics.

Approximate units in \cs-algebras play a indispensable role in the theory of \cs-algebras too. We briefly discuss this notion in Section \ref{sec:approximateunit}.

Section \ref{sec:ideals}, is devoted to the basic results about ideals of \cs-algebras.  Some of the unique features of \cs-algebras among other topological algebras appear in their ideal structure. For instance, every closed two sided ideal of a \cs-algebra is automatically an involutive subalgebra, see Proposition \ref{prop:closedidealcs}, or the image of every \ss-homomorphism between two \cs-algebra is always a \cs-algebra, see Corollary \ref{cor:firstiso}. Afterwards, we study the close relationship between hereditary \cs-subalgebras of \cs-algebras and the ideal structure of \cs-algebras. For instance, we study a bijective correspondence between the family of all closed left ideals of a \cs-algebra and the family of all its hereditary \cs-subalgebras, see Theorem \ref{thm:herecorres}. It is also shown that every hereditary \cs-subalgebra of a simple \cs-algebra is simple too, see Proposition \ref{prop:heredsimple}. Multiplier algebra of a \cs-algebra is introduced and studied also in this section.

In this chapter, we refer to the continuous functional calculus briefly by CFC.

\section{Positivity}
\label{sec:positivity}

In this section $A$ is always a \cs-algebra. Recall that a self adjoint element $a\in A$ is called positive if $\si(a)\subseteq [0,\infty)$. We denote this by $a\geq 0$ (or equivalently $0\leq a$). This gives rise to an order relation between elements of $A$ by defining $a\leq b$ if $b-a\geq 0$. Although it is well defined among all elements of $A$, we usually use this order to compare self adjoint elements of $A$. An important feature of this partial order that follows from Corollary \ref{cor:samespec} is that the relation $a\leq b$ between two self adjoint elements $a,b$ is independent of the \cs-subalgebra containing $a, b$. The set of all positive elements of $A$ is denoted by $A_+$. This section is devoted to this set and various properties of positive elements of a \cs-algebra that will be useful in the rest of this book.
\begin{exercise}
Let $a$ be a self adjoint element of a unital \cs-algebra $A$. Then prove the following statements:
\begin{itemize}
       \item [(i)] The relation $\leq$ is a partial order in $A_h$, namely it is reflexive, anti-symmetric and transitive.
       \item [(ii)] $a\leq |a|$ and $-a\leq |a|$,
       \item [(iii)] $|a|\leq \|a\| 1_A$, and so $a\leq \|a\| 1_A$
\end{itemize}
\end{exercise}

\begin{example}
Let $X$ be a locally compact and Hausdorff space. A function $f\in C_0(X)$ is a positive element of the \cs-algebra $C_0(X)$ if and only if $f(x)\geq 0$ for all $x\in X$, namely it is a positive (non-negative) function on $X$. We can rephrase this by saying that an element $a$ in a commutative \cs-algebra $A$ is positive if and only $\om(a)\geq 0$ for all $\om\in \oom(A)$.
\end{example}
Continuing the idea discussed in the above example, we note that every real valued function $f$ can be written as the difference of two positive functions $f_+:=max \{f, 0\}$ and $f_-=max\{-f, 0\}$, i.e. $f=f_+ - f_-$. The continuous functional calculus allows us to use this phenomenon to find a similar decomposition for self adjoint elements of a \cs-algebra.
\begin{proposition}
\label{prop:twopositive}
Let $a$ be a self adjoint element of a \cs-algebra $A$. Then there are two unique positive elements $a^+$ and $a^-$ in $A$ with the property that $a=a_+ - a_-$ and $a_+ a_-=a_- a_+=0$. Moreover, we have $|a|=a_+ + a_-$.
\end{proposition}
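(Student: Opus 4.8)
The plan is to apply the continuous functional calculus to the self adjoint element $a$, since $a$ is normal and Proposition \ref{prop:specmapcont} together with Theorem \ref{thm:confuncal} lets us transfer the problem to the function algebra $C(\si_A(a))$. Define the two continuous functions $f_+(t):=\max\{t,0\}$ and $f_-(t):=\max\{-t,0\}$ on $\si_A(a)\subseteq \r$, and set $a_+:=f_+(a)$ and $a_-:=f_-(a)$. Since $f_+$ and $f_-$ are real valued, both $a_+$ and $a_-$ are self adjoint, and by the continuous spectral mapping theorem, $\si_A(a_\pm)=f_\pm(\si_A(a))\subseteq [0,\infty[$, so $a_+$ and $a_-$ are positive. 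The elementary function identities $f_+(t)-f_-(t)=t$, $f_+(t)f_-(t)=0$, and $f_+(t)+f_-(t)=|t|$ hold pointwise on $\r$, and because $\Phi_a$ is a $\ast$-homomorphism these translate directly into $a_+-a_-=a$, $a_+a_-=a_-a_+=0$, and $a_++a_-=|a|$ (recalling that $|a|=(a\s a)^{1/2}=(a^2)^{1/2}$ is the continuous functional calculus applied to the function $t\mapsto |t|$ for self adjoint $a$).

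First I would handle existence exactly as above, noting that the vanishing of the product $f_+f_-$ is what forces the orthogonality relation $a_+a_-=0$, and that $f_+f_-=f_-f_+$ as functions gives both orders of the product. The continuity and multiplicativity of $\Phi_a$ do all the work once the pointwise function identities are recorded, so existence is essentially immediate.

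The main obstacle will be uniqueness, since the decomposition is only guaranteed to be canonical within $C\s(a,1)$, whereas the hypothesis allows arbitrary positive $b,c\in A$ with $a=b-c$ and $bc=cb=0$. The key observation is that the orthogonality condition $bc=cb=0$ forces $b$ and $c$ to commute with $a=b-c$, hence with every element of the commutative \cs-algebra $C\s(a)$ generated by $a$, and in particular with $a_+$ and $a_-$. I would then pass to the commutative \cs-algebra $C\s(a,b,c)$ generated by $\{a,b,c\}$ (together with $1$ after adjoining a unit if necessary); all of $a,b,c,a_+,a_-$ lie in it and pairwise commute. Applying the Gelfand transform to this commutative algebra, the relations $a=b-c$, $bc=0$, $b,c\geq 0$ become, for each character $\om$, the scalar equations $\hat{a}(\om)=\hat{b}(\om)-\hat{c}(\om)$, $\hat{b}(\om)\hat{c}(\om)=0$, with $\hat{b}(\om),\hat{c}(\om)\geq 0$. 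For real numbers these conditions determine $\hat{b}(\om)=\max\{\hat{a}(\om),0\}=\widehat{a_+}(\om)$ and $\hat{c}(\om)=\widehat{a_-}(\om)$ uniquely. Since the Gelfand transform on the commutative algebra is injective, this yields $b=a_+$ and $c=a_-$, establishing uniqueness.

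Throughout, the reduction to function algebras via Gelfand (justified by Corollary \ref{cor:samespec}, which guarantees that positivity and the order relation are independent of the ambient \cs-subalgebra) is the recurring device, and the only genuinely nontrivial step is recognizing that the algebraic orthogonality hypothesis produces enough commutativity to place all the competing elements inside a single commutative \cs-algebra where the scalar-level uniqueness argument applies.
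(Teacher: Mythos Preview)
Your proof is correct. For existence you and the paper do essentially the same thing: the paper writes $a_\pm:=\frac{|a|\pm a}{2}$, which under the functional calculus is exactly $f_\pm(a)$ for your functions $f_\pm$, and indeed Remark~\ref{rem:jordandecom} records precisely your construction.

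The genuine difference is in the uniqueness argument. You embed the competing pair $(b,c)$ together with $(a_+,a_-)$ into a single commutative \cs-subalgebra and invoke the Gelfand transform to reduce to the obvious scalar statement. The paper instead observes directly that $x_+x_-=0$ gives
\[
(x_++x_-)^2=x_+^2+x_-^2=(x_+-x_-)^2=a^2=|a|^2,
\]
and since $x_++x_-\geq 0$, the uniqueness of positive square roots (Proposition~\ref{prop:nroots}) forces $x_++x_-=|a|$; combined with $x_+-x_-=a$ this solves to $x_\pm=a_\pm$. The paper's route is shorter and uses only the square-root uniqueness as a black box, whereas your Gelfand argument is a reusable template that works whenever orthogonality manufactures enough commutativity, and makes transparent why the decomposition is really a pointwise fact.
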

\begin{proof}
Define $a_+:=\frac{|a|+a}{2}$ and $a_-:=\frac{|a|-a}{2}$. Then they are positive and one easily checks that they satisfy the above equalities. To prove the uniqueness of this decomposition, assume $(x_+, x_-)$ is another ordered pair of positive elements of $A$ with the properties that $a=x_+ - x_-$ and $x_+ x_-=x_- x_+=0$. Then we have
\[
|a|^2=a^2=x_+^2 +x_-^2=(x_+ + x_-)^2.
\]
By uniqueness of the squared root of positive elements, see Proposition \ref{prop:nroots}, we obtain $x_+ + x_- = |a|$. If we solve the system of equations
\[
\left\{ \begin{array}{l} x_+ + x_- = |a|\\  x_+ - x_- = a \end{array} \right.
\]
for $x_+$ and $x_-$, we get $x_+=a_+$ and $x_-=a_-$.
\end{proof}

The above decomposition of every self adjoint element $a\in A$ to the difference of two positive elements is called the {\bf Jordan decomposition of $a$}.

\begin{remark}
\label{rem:jordandecom} Let $a\in A$ be self adjoint. Using the non-unital continuous functional calculus of $a$, i.e. $\Phi_a$, we define
\[
b_+:=\Phi_a(\max\{id_{\si(a)}, 0\}), \quad \text{and}\quad b_-:=\Phi_a(\max\{-id_{\si(a)}, 0\}).
\]
They possesses the properties mentioned in Proposition \ref{prop:twopositive}, and so we have $a_+=b_+\in C\s(a)$ and $a_-=b_-\in C\s(a)$, where $a_+$ and $a_-$ are as Proposition \ref{prop:twopositive}.
\end{remark}
\begin{corollary}
\label{cor:4positive}
Every element $a$ of a \cs-algebra $A$ can be written as a linear combination of four positive elements of $A$.
\end{corollary}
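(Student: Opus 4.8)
The plan is to combine the two decomposition results already established in this section. The statement to prove is Corollary \ref{cor:4positive}: every element $a$ of a \cs-algebra $A$ is a linear combination of four positive elements. The natural strategy is a two-step reduction. First I would reduce an arbitrary element to self-adjoint elements, and then reduce each self-adjoint element to positive elements.

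For the first step, I would invoke Remark \ref{rem:real-imaginary}, which writes any $a\in A$ as $a=a_1+ia_2$ where $a_1=\frac{a+a\s}{2}$ and $a_2=\frac{a-a\s}{2i}$ are both self-adjoint. This expresses $a$ as a linear combination (with coefficients $1$ and $i$) of two self-adjoint elements. For the second step, I would apply the Jordan decomposition from Proposition \ref{prop:twopositive} to each of $a_1$ and $a_2$: writing $a_1=(a_1)_+-(a_1)_-$ and $a_2=(a_2)_+-(a_2)_-$, where all four elements $(a_1)_\pm,(a_2)_\pm$ are positive.

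Assembling these gives
\[
a = (a_1)_+ - (a_1)_- + i(a_2)_+ - i(a_2)_-,
\]
which exhibits $a$ as the linear combination $(a_1)_+ - (a_1)_- + i(a_2)_+ - i(a_2)_-$ of the four positive elements $(a_1)_+$, $(a_1)_-$, $(a_2)_+$, $(a_2)_-$ with coefficients $1,-1,i,-i$. This completes the argument.

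Since both ingredients are already proved in the excerpt, there is essentially no obstacle here — the corollary is a routine bookkeeping consequence, exactly parallel to the classical fact that a complex-valued function decomposes into four non-negative functions (as noted in the text preceding Proposition \ref{prop:twopositive}). The only point worth a moment's care is simply to confirm that the notion of ``linear combination'' permits complex coefficients, which it does; one need not worry about any convergence or functional-calculus subtleties, as all operations involved are finite algebraic combinations of elements already known to lie in $A$.
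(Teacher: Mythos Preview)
Your proposal is correct and matches the paper's intended argument exactly: the corollary is stated immediately after the Jordan decomposition (Proposition~\ref{prop:twopositive}) and Remark~\ref{rem:jordandecom}, with no separate proof given, precisely because it follows at once by combining Remark~\ref{rem:real-imaginary} with Proposition~\ref{prop:twopositive} as you describe.
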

Note that the existence of a unit element is not part of the assumption in the above corollary, in contrast with Proposition \ref{prop:4unitary}.

Some properties of positive elements are stated in the following proposition. One notes that most of the proofs are based on CFC.
\begin{proposition}
\label{prop:basicpositive}
Let $A$ be a \cs-algebra and $a,b \in A$.
\begin{itemize}
\item[(i)] If $a\geq 0$ and $-a\geq 0$, then $a=0$.
\item[(ii)] If $a\geq0$, then $\|a\|=\max \{ \la; \la\in \si(a)\}$. More generally, If $a=a\s$, then $\|a_+\|=\max \{ \la; \la\in \si(a)\}$ and $\|a_-\|=\min \{ \la; \la\in \si(a)\}$.
\item[(iii)] If $a,b\geq0$ and $ab=ba$, then $ab\geq0$ and $a+b\geq0$.
\item[(iv)] Let $A$ be unital. If $a=a\s$ and $\|a\|\leq 2$, then $a\geq 0$ if and only if $\|a-1\|\leq 1$.
\item[(v)] Let $A$ be unital. Then $a+\la \geq 0$ if and only if $a=a\s$ and $\la\geq\|a_-\|$.
\item[(vi)] Let $A$ be unital and let $a=a\s$. Then $a\geq 0$ if and only if $\|1-\frac{a}{\|a\|} \|\leq 1$.
\end{itemize}
\end{proposition}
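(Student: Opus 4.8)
The plan is to convert each of the six assertions into a statement about the spectrum $\si(a)$ and then invoke the continuous spectral mapping theorem (Proposition \ref{prop:specmapcont}) together with the fact that for a self adjoint $x$ one has $\|x\|=r(x)=\sup\{|\la|;\la\in\si(x)\}$ (Proposition \ref{prop:spradius3}). Throughout I would use that self adjoint elements have real spectrum (Proposition \ref{prop:specunitary}(ii)).

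First I would dispatch (i) and (ii). For (i), the hypotheses give $\si(a)\subseteq[0,\infty[$ while the spectral mapping theorem applied to $t\mapsto -t$ gives $\si(a)=-\si(-a)\subseteq\,]-\infty,0]$; hence $\si(a)=\{0\}$ and, $a$ being self adjoint, $\|a\|=r(a)=0$, so $a=0$. For (ii) with $a\geq 0$, every spectral value is nonnegative, so $\|a\|=r(a)=\max\si(a)$. For general self adjoint $a$, I would recall from Remark \ref{rem:jordandecom} that $a_+=\Phi_a(\max\{id_{\si(a)},0\})$ and $a_-=\Phi_a(\max\{-id_{\si(a)},0\})$; the spectral mapping theorem then yields $\si(a_+)=\{\max\{\la,0\};\la\in\si(a)\}$ and $\si(a_-)=\{\max\{-\la,0\};\la\in\si(a)\}$, and taking norms of these positive elements recovers the stated extrema of $\si(a)$.

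Next, for (iii) I would first check $(ab)\s=b\s a\s=ba=ab$, so that $ab$ is self adjoint, which is where commutativity is essential. Letting $B$ be the commutative \cs-subalgebra generated by $a$, $b$, and $1$, Corollary \ref{cor:samespec} shows spectra computed in $B$ coincide with those in $A$; via the Gelfand transform $a$ and $b$ correspond to nonnegative functions $\hat a,\hat b\in C(\oom(B))$ by Theorem \ref{thm:gelfand1}(ii). Then $\widehat{ab}=\hat a\hat b\geq 0$ and $\widehat{a+b}=\hat a+\hat b\geq 0$, so again by Theorem \ref{thm:gelfand1}(ii) both $\si(ab)$ and $\si(a+b)$ lie in $[0,\infty[$, giving $ab\geq 0$ and $a+b\geq 0$.

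Finally I would handle (iv), (v), (vi) by interval arithmetic on $\si(a)$. For (iv), self adjointness gives $\si(a)\subseteq\r$ and $\|a\|\leq 2$ forces $\si(a)\subseteq[-2,2]$; the spectral mapping theorem gives $\|a-1\|=\sup\{|\la-1|;\la\in\si(a)\}$, so $\|a-1\|\leq 1\iff\si(a)\subseteq[0,2]\iff a\geq 0$, the last step using $\si(a)\subseteq[-2,2]$. For (v), if $a+\la\geq 0$ then it is self adjoint and, $\la$ being real, so is $a$; conversely $\si(a+\la)=\si(a)+\la$, whence $a+\la\geq 0\iff\si(a)\subseteq[-\la,\infty[\iff\la\geq-\min\si(a)$, and part (ii) identifies $-\min\si(a)$ with $\|a_-\|$. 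For (vi), I would apply (iv) to the self adjoint element $\|a\|^{-1}a$ of norm $1\leq 2$, getting $a\geq 0\iff\|a\|^{-1}a\geq 0\iff\|\,1-\|a\|^{-1}a\,\|\leq 1$. The main obstacle I anticipate is the careful bookkeeping in (ii), (iv) and (v): one must control the interval containing $\si(a)$ by $\|a\|$ and attend to the degenerate cases where $a_+$ or $a_-$ vanishes, which is exactly where the spectral mapping theorem earns its keep.
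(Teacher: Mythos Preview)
Your proposal is correct and follows essentially the same route as the paper: each item is reduced to interval arithmetic on $\si(a)$ via the spectral mapping theorem, Proposition~\ref{prop:spradius3}, and (for (ii) and (v)) Remark~\ref{rem:jordandecom}, with (iii) handled through the Gelfand transform of a commutative \cs-subalgebra exactly as in Corollary~\ref{cor:additionspec}. The only cosmetic difference is that for (vi) you reduce to (iv) by rescaling, whereas the paper argues directly on $\si(a/\|a\|)$; both are the same computation.
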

\begin{proof}
\begin{itemize}
\item[(i)] It is an immediate application of CFC.
\item[(ii)] When $\si(a)\subseteq [0,\infty)$, we have $r(a)=\max\{\la; \la\in \si(a)\}$. Hence the first statement follows from Proposition \ref{prop:spradius3}. To prove the general statement, apply Remark \ref{rem:jordandecom}.
\item[(iii)] It follows from Corollary \ref{cor:additionspec}.
\item[(iv)] Since $a=a\s$, the inequality $\|a\|\leq 2$ means that $\si(a)\subseteq [-2,2]$. If $a\geq 0$, then we have $\si(a)\subseteq [0,2]$, and so $\si(a-1)\subseteq [-1,1]$. Hence we get $\|a-1\|=r(a-1)\leq 1$. The converse is proved by a similar argument.
\item[(v)] One can apply the ideas discussed in Remark \ref{rem:jordandecom} and Item (ii) to prove this part.
\item[(vi)] If $a\geq 0$, then $\si(\frac{a}{\|a\|})\sub [0,1]$. Hence $\si(1- \frac{a}{\|a\|})\sub [0,1]$, and so $\|1-\frac{a}{\|a\|} \|=r(1-\frac{a}{\|a\|})\leq 1$. For the converse, a similar argument based on the continuous spectral mapping theorem shows that $\si(a)\sub [0, 2\|a\|]$.
\end{itemize}
\end{proof}

\begin{definition}
Let $E$ be a vector space. A subset $C$ of $E$ is called a {\bf cone in $E$} if it is closed under addition and scalar multiplication by $\r_+:=[0,\infty)$. Also, it is often assumed that $C\cap (-C)=\{0\}$.
\end{definition}

\begin{proposition}
\label{prop:positivecone}
\begin{itemize}
\item [(i)] $A_+$ is a closed cone in $A$.
\item [(ii)] For every $a\in A$, we have $a\s a\geq 0$.
\item [(iii)] For every $a\in A$ and $b\in A_+$, we have $a\s b a\geq 0$.
\end{itemize}
\end{proposition}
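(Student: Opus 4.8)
The three parts must be handled in order, since (ii) uses (i) and (iii) uses (ii). For part (i), the plan is first to pass to the unitization $\tilde{A}$: by Corollary \ref{cor:samespec} the spectrum changes only possibly by adjoining $0$, so an element is positive in $A$ iff it is positive in $\tilde{A}$, and I may assume $A$ is unital. Closure of $A_+$ under multiplication by $\la\in\r_+=[0,\infty)$ is immediate from the spectral mapping theorem, since $\si(\la a)=\la\,\si(a)$, and the cone condition $A_+\cap(-A_+)=\{0\}$ is precisely Proposition \ref{prop:basicpositive}(i). The substance is closure under addition. Here I would use the observation (a direct consequence of Proposition \ref{prop:spradius3}, or a rescaled form of Proposition \ref{prop:basicpositive}(iv),(vi)) that for a self-adjoint $x$ with $\|x\|\leq t$ one has $x\geq 0$ iff $\|t\cdot 1 - x\|\leq t$. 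Given $a,b\geq 0$, setting $t=\|a\|+\|b\|$ and applying the triangle inequality to $\|t\cdot 1-(a+b)\|$ yields $a+b\geq 0$. The same equivalence, applied with a single $t$ bounding all norms of a convergent sequence $a_n\ra a$ of positive elements, passes to the limit and shows $A_+$ is norm closed.

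Part (ii) is the heart of the proposition and the main obstacle. The plan is to take the Jordan decomposition $a\s a=u-v$ from Proposition \ref{prop:twopositive}, with $u,v\geq 0$ and $uv=vu=0$, and to show $v=0$. The device is to introduce $c:=av$ and compute $c\s c=v(a\s a)v=-v^3$, which is $\leq 0$ since $v^3\geq 0$ by CFC. It then suffices to establish the general fact that $c\s c\leq 0$ forces $c=0$. For this I would write $c=x+iy$ with $x,y$ self-adjoint (Remark \ref{rem:real-imaginary}), so that $cc\s=2x^2+2y^2-c\s c$; since $x^2,y^2\geq 0$ and $-c\s c\geq 0$, part (i) gives $cc\s\geq 0$. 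On the other hand Proposition \ref{prop:speccomm} gives $\si(cc\s)\cup\{0\}=\si(c\s c)\cup\{0\}\sub(-\infty,0]$, so $cc\s\leq 0$; by Proposition \ref{prop:basicpositive}(i) this forces $cc\s=0$, whence $\si(c\s c)\sub\{0\}$ and (again by Proposition \ref{prop:spradius3}) $c\s c=0$ and $c=0$. Returning to the decomposition, $c=0$ gives $v^3=0$, so $\si(v)=\{0\}$ and $v=0$; thus $a\s a=u\geq 0$.

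Part (iii) is then immediate: using Proposition \ref{prop:nroots} I would take the positive square root $b^{1/2}$ of $b\in A_+$ and observe that $a\s b a=(b^{1/2}a)\s(b^{1/2}a)$, which is positive by part (ii). The only genuine difficulty lies in part (ii); the clever step is passing from the negative part $v$ to the auxiliary element $c=av$, which recasts the problem as the implication $c\s c\leq 0\Rightarrow c=0$, and then exploiting the near-symmetry of $\si(c\s c)$ and $\si(cc\s)$. Everything surrounding this step — the spectral mapping theorem, the Jordan decomposition, and the square root — is routine continuous functional calculus.
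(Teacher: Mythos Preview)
Your proof is correct and follows essentially the same route as the paper's. In part (i) the paper packages your triangle-inequality argument as the observation that $A_+\cap (A)_1 = A_h\cap (A)_1\cap\{a:\|1-a\|\leq 1\}$ is an intersection of closed convex sets, but the content is identical; in part (ii) the paper uses exactly the same auxiliary element $b=ac_-$ (your $c=av$) and the same interplay of the identity $b\s b+bb\s=2(x^2+y^2)$ with Proposition~\ref{prop:speccomm}, merely applying the spectral symmetry in the opposite order (concluding $b\s b\geq 0$ from $bb\s\geq 0$ rather than $cc\s\leq 0$ from $c\s c\leq 0$); part (iii) is verbatim the same.
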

\begin{proof}
\begin{itemize}
\item [(i)] It is clear that $\r_+ A_+=A_+$. Using Proposition \ref{prop:basicpositive}(iv), one easily observes that
\[
A_+ \cap (A)_1=A_h\cap (A)_1 \cap\{a\in A; \|1-x\|\leq 1\},
\]
where $(A)_1$ is the closed unit ball of $A$. Since all three sets appearing in the right hand side of this equality are closed and convex, so is the set in the left hand side. This clearly implies that $A_+$ is closed. Assume $a,b\in A_+$ and pick some $r>0$ such that $a/r$ and $b/r$ lie in $A_+ \cap (A)_1$. Hence $a/2r+b/2r \in A_+ \cap (A)_1$. Now, the equality $\r_+ A_+=A_+$ implies that $a+b=2r(a/2r +b/2r)$ belongs to $A_+$.
\item [(ii)] Let $a$ be an arbitrary element of $A$. Using Proposition \ref{prop:speccomm}, we know that $a\s a \geq 0$ if and only if $a a\s\geq 0$. On the other hand, if $a=x+iy$ is the decomposition of $a$ to the linear combination of two self adjoint elements $x,y$, then we have
    \begin{equation}
    \label{eqn:positive1}
    a\s a +aa\s=2(x^2+y^2)\geq 0
    \end{equation}
    by (i) and Exercise \ref{ex:normalabsolute}. Now, let $a\s a=c_+-c_-$ be the Jordan decomposition of $a\s a$ and set $b:=ac_-$. Then $-b\s b=-c_- a\s a c_-=c_-^3\geq 0$. Hence using (\ref{eqn:positive1}) and (i), we conclude that $bb\s=(b\s b + b b \s) + (-b\s b) \geq 0$. Therefore $b\s b \geq 0$. By Proposition \ref{prop:basicpositive}(i), we have $c_-=-(b\s b)^{1/3}= 0$. Hence $a\s a \geq 0$.
    \item [(iii)] Write $a\s b a=(b^{1/2} a)\s (b^{1/2} a)$ and apply (ii).
\end{itemize}
\end{proof}
\begin{example}
\label{exa:positiveityHilbert}
\begin{itemize}
\item[(i)] Let $(H, \lan - , -\ran)$ be a Hilbert space. If an operator $T\in B(H)$ is positive, then $T=S\s S$ for some $S\in B(H)$. Hence we have
\[
\langle Tx, x\rangle =\langle S\s S x,x\rangle =\langle Sx , Sx\rangle=\|Sx\|^2\geq 0, \quad \forall x\in H.
\]
This means that $T\geq 0$ implies that $\langle Tx,x\rangle\geq 0$ for all $x\in H$. The converse is also true. Let $T\in B(H)$ and let $\lan Tx,x\ran \geq 0$ for all $x\in H$. Then for every $x\in H$, we have $\langle Tx,x\rangle= \langle x,Tx\rangle$, because it is real. Since the adjoint operator is unique, see Corollary \ref{cor:adjointoperator}, $T=T\s$. Consider the Jordan decomposition of $T$, i.e. $T=T_+ -T_-$. For every $x\in H$, we have
\[
\langle T_-x, T T_- x \rangle=\langle T_-x, (T_+ -T_-) T_- x \rangle= \langle T_- x, -T_-^2 x\rangle= \langle x, -T_-^3 x\rangle.
\]
    The left hand side of the above equality is non-negative ($\geq 0$), because $T$ is positive and the right hand side is non-positive ($\leq 0$), because $T_-^3$ is positive. Therefore $\langle x, T_-^3 x\rangle=0$ for all $x\in H$. It follows from Problem \ref{e:5-18} that $T_-^3=0$, and so $T_-=0$. This means that $T$ is positive. Now that we established a new characterization of positive operators on a Hilbert space, without using their spectrum, it is a good exercise to show (without using CFC) that if a positive operator $T$ is invertible, then its inverse is positive too.
\item[(ii)] Now, let $H=\c^n$ and let $\langle- ,- \rangle$ be the ordinary inner product on $H$, namely
\[
\langle x, y\rangle=\sum_{i=1}^n x_i\overline{y_i}, \quad \forall x=(x_1,\cdots,x_n), y=(y_1,\cdots, y_n)\in \c^n.
\]
Then $T=(t_{ij})\in M_n(\c)=B(H)$ is self adjoint if and only if $t_{ij}=\overline{t_{ji}}$ for all $i,j=1,\cdots, n$, see Example \ref{exa:adjointmatrix}. Since the spectrum of a $T\in M_n(\c)$ is exactly the set of all eigenvalues of $T$, $T$ is positive if and only if $T$ is self adjoint and all its eigenvalues are positive (non-negative). For example, one checks that the operator defined by the matrix $\left( \begin{array}{cc} 2& -1\\-1&1 \end{array}\right)$ (in the standard basis) is positive.
\end{itemize}
\end{example}

The partial order $a\leq b$ between elements of a \cs-algebra is clearly translation invariant, that is $a+c\leq b+c$ for every $c\in A$. Using Proposition \ref{prop:positivecone}, we can extend this property as follows:
\begin{exercise}
\label{exe:positive3}
Let $a,b,c,d$ be elements of a \cs-algebra $A$.
\begin{itemize}
\item [(i)] If $a\leq b$ and $c\leq d$, show that $a+c\leq b+d$.
\item [(ii)] If $a\leq b$, then $c\s a c\leq c\s b c$.
\item [(iii)] If $a\geq 0$, then $c\s ac\leq \|a\| c\s c$.
\end{itemize}
\end{exercise}
For every element $a$ of a \cs-algebra $A$, using Proposition \ref{prop:positivecone}(ii), we can define the {\bf absolute value of $a$} by
\[
|a|:=(a\s a )^{1/2}.
\]
Of course, there is an alternative definition using $aa\s$ in stead of $a\s a$. But we choose the above option.

\begin{proposition}
\label{prop:positiveroots} Let $a$ be a positive element of a \cs-algebra $A$. For every positive real number $r$, there is a unique positive element $a^r\in C\s (a)$ such that this definition is consistent with the definition of $n$th-root in Proposition \ref{prop:nroots} when $r\in \q$, namely, if $r=m/n$, then $a^r=(a^{1/n})^m$. Moreover,  \begin{itemize}
\item [(i)] $a^{r+s} = a^r a^s$, for all $r,s\geq 0$,
\item [(ii)] the map $[0,\infty)\ra A_+$ defined by $r\mapsto a^r$ is continuous, and
\item [(iii)] when $A$ is unital and $a$ is invertible, $a^r$ is also defined for all $r\leq 0$. In this case the map defined in (ii) is continuous over $\r$.
\end{itemize}
\end{proposition}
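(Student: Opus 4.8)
The plan is to obtain every power $a^r$ by a single application of the continuous functional calculus (CFC) to the function $f_r(t) := t^r$, and then to read off all the asserted properties from the corresponding facts about the functions $f_r$, using that $\Phi_a$ is an isometric $\ast$-isomorphism. Since $a$ is positive we have $\si(a) \subseteq [0, \|a\|]$. For $r > 0$ the function $f_r$ is continuous and nonnegative on $[0,\infty)$ with $f_r(0) = 0$, so $f_r|_{\si(a)}$ lies in the subalgebra $C = \{f \in C(\si(a)); f(0) = 0\}$ of Remark \ref{rem:nonunitalcfc}; I would therefore \emph{define} $a^r := \Phi_a(f_r) \in C\s(a)$ (adjoining a unit to $A$ first if necessary, as in Theorem \ref{thm:confuncal}). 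Because $f_r$ is real valued, $a^r$ is self adjoint, and by the continuous spectral mapping theorem, Proposition \ref{prop:specmapcont}, $\si(a^r) = f_r(\si(a)) \subseteq [0,\infty)$; hence $a^r \in A_+$, and $a^r$ is unambiguously determined since $\Phi_a$ is injective.

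Next I would check consistency with the roots of Proposition \ref{prop:nroots}. As $\Phi_a$ is an algebra homomorphism and $f_{1/n}^n = f_1 = id_{\si(a)}$, the positive element $\Phi_a(f_{1/n})$ satisfies $\Phi_a(f_{1/n})^n = \Phi_a(id_{\si(a)}) = a$; by uniqueness of the $n$th root in Proposition \ref{prop:nroots} this forces $\Phi_a(f_{1/n}) = a^{1/n}$. Since $f_{m/n} = f_{1/n}^m$ as functions, applying $\Phi_a$ gives $a^{m/n} = (a^{1/n})^m$, which simultaneously shows the value is independent of the representation $m/n$ of a rational exponent. Property (i) is immediate from the same homomorphism property and the pointwise identity $f_r f_s = f_{r+s}$, giving $a^r a^s = \Phi_a(f_r)\Phi_a(f_s) = \Phi_a(f_{r+s}) = a^{r+s}$.

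For the continuity in (ii) I would use that $\Phi_a$ is isometric, so $\|a^r - a^{r'}\| = \|f_r - f_{r'}\|_{\sup}$, the supremum being over the compact set $\si(a)$; the task thus reduces to showing $f_{r'} \to f_r$ uniformly on $\si(a)$ as $r' \to r$. The map $(r,t) \mapsto t^r$ is continuous on $(0,\infty) \times [0, \|a\|]$ (including at $t = 0$, where $t^r \to 0$), hence uniformly continuous on every compact box $[r_0, r_1] \times [0, \|a\|]$ with $r_0 > 0$, and this yields the required uniform convergence and so continuity of $r \mapsto a^r$ on $(0,\infty)$. Uniqueness of the whole family then follows: any continuous assignment $r \mapsto a^r$ agreeing with $(a^{1/n})^m$ on the dense set $\q \cap (0,\infty)$ must coincide with the one constructed above.

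The main obstacle is exactly the behaviour at $r = 0$ when $0 \in \si(a)$ is not isolated: there $t \mapsto t^r$ converges pointwise to the discontinuous indicator of $(0,\|a\|]$, the convergence fails to be uniform, and no element $a^0 \in C\s(a)$ extends the family continuously. This difficulty vanishes precisely in the invertible case (iii): if $A$ is unital and $a$ is invertible then $0 \notin \si(a)$, so $\si(a) \subseteq [\ep, \|a\|]$ for some $\ep > 0$, and now $f_r(t) = t^r$ is continuous and strictly positive on $\si(a)$ for \emph{every} $r \in \r$. Setting $a^r := \Phi_a(f_r)$ for all real $r$ gives positive elements with $a^0 = \Phi_a(1) = 1$ and $a^{-r} = (a^r)\inv$, and repeating the arguments above with $\si(a) \subseteq [\ep, \|a\|]$, on which $(r,t) \mapsto t^r$ is jointly continuous for all $r \in \r$, establishes (i) and the continuity of $r \mapsto a^r$ on all of $\r$.
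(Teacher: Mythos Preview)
Your proof is correct and follows essentially the same route as the paper's: define $a^r$ via the continuous functional calculus applied to $f_r(t)=t^r$, obtain (i) from the homomorphism property of $\Phi_a$, obtain (ii) from the isometry property together with uniform convergence of $f_r$ on the compact set $\si(a)$, and treat (iii) by observing that $0\notin\si(a)$ in the invertible case so that $f_r$ remains continuous for all $r\in\r$. Your argument is in fact more careful than the paper's --- you spell out the consistency with Proposition~\ref{prop:nroots} via uniqueness, and you correctly flag the genuine obstruction at $r=0$ when $0\in\si(a)$, a point the paper glosses over.
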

\begin{proof} For $r\geq 0$, define $g_r(t):=t^r$. It is continuous over $[0,\infty)$ and so we can use the non-unital CFC over $a$ to define $a^r:=g_r(a)$. This definition is consistent with the definition of $n$th root in Proposition \ref{prop:nroots} and so the uniqueness is proved similarly.
\begin{itemize}
\item [(i)] It follows from the facts that $g_{r+s}(t)=g_r(t) g_s(t)$ and CFC is a \ss-homomorphism.
\item [(ii)] Due to the fact that CFC is an isometry, we only need to show that the map $[0,\infty)\ra C(\si(a))$, defined by $r\mapsto g_r(t)$ is continuous. This follows from the continuity of exponential map and boundedness of $\si(a)$.
\item [(iii)] When $r\leq 0$, define $a^r:= (a\inv)^{-r}$.
\end{itemize}
\end{proof}
Let $a,b,x$ be elements of a \cs-algebra $A$. A simple calculation proves the following identities:
\begin{itemize}
\item [(i)] The {\bf polarization identity:}
\begin{equation}
\label{eqn:polidentity1}
4b\s a=(a+b)\s(a+b)-(a-b)\s(a-b)+i(a+ib)\s(a+ib)-i(a-ib)\s(a-ib).
\end{equation}
\item [(ii)] The {\bf generalized polarization identity:}
\begin{equation}
\label{eqn:polidentity2}
4b\s x a=\sum_{k=0}^3 i^k (a+i^k b)\s x (a+i^kb).
\end{equation}
\end{itemize}
Several useful inequalities are given in the following propositions:
\begin{proposition} Let $a,b$, $a_1,\cdots,a_n$, and $b_1,\cdots,b_n$ be elements of a \cs-algebra $A$. Then we have
\label{prop:positiveineq}
\begin{itemize}
\item [(i)] $-(a\s a +b\s b )\leq a\s b +b\s a \leq a\s a +b\s b$,
\item [(ii)] $(a_1+\cdots + a_n)\s(a_1+\cdots + a_n) \leq n(a_1\s a_1 +\cdots+a_n\s a_n)$,
\item [(iii)] if $a_i\s a_j=0$ for $1\leq i\neq j\leq n$, then $\left\| \sum_{i=1}^n a_ib_i\right\|^2\leq \sum_{i=1}^n \|a_ib_i\|^2$.
\end{itemize}
\end{proposition}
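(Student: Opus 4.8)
The plan is to derive all three inequalities from the single fact that $c\s c\geq 0$ for every $c\in A$, which is Proposition \ref{prop:positivecone}(ii), supplemented by the \cs-identity and, wherever an ordering appears, the fact that $A_+$ is a cone (Proposition \ref{prop:positivecone}(i)). For part (i), I would simply expand the two manifestly positive elements $(a-b)\s(a-b)$ and $(a+b)\s(a+b)$. The first expands to $a\s a - a\s b - b\s a + b\s b\geq 0$, which rearranges to $a\s b + b\s a\leq a\s a + b\s b$; the second expands to $a\s a + a\s b + b\s a + b\s b\geq 0$, which rearranges to $-(a\s a + b\s b)\leq a\s b + b\s a$. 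Together these give the desired two-sided estimate.

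For part (ii), I would again start from the positivity of a sum of pairwise squares, namely $\sum_{1\leq i<j\leq n}(a_i-a_j)\s(a_i-a_j)\geq 0$, which is positive because each summand is positive and $A_+$ is closed under addition. Expanding the left-hand side and keeping track of the diagonal terms (each $a_k\s a_k$ occurs in exactly $n-1$ of the pairs, since $a_k\s a_k$ appears whenever $k=i$ or $k=j$) yields $(n-1)\sum_k a_k\s a_k - \sum_{i\neq j}a_i\s a_j\geq 0$. Since $\left(\sum_i a_i\right)\s\left(\sum_j a_j\right)=\sum_k a_k\s a_k + \sum_{i\neq j}a_i\s a_j$, I would substitute $\sum_{i\neq j}a_i\s a_j$ from this identity, which turns the inequality into $n\sum_k a_k\s a_k - \left(\sum_i a_i\right)\s\left(\sum_j a_j\right)\geq 0$; this is precisely the claim.

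For part (iii), I would invoke the \cs-identity to write $\left\|\sum_i a_ib_i\right\|^2 = \left\|\left(\sum_i a_ib_i\right)\s\left(\sum_j a_jb_j\right)\right\|$ and then multiply out the product as $\sum_{i,j}b_i\s a_i\s a_j b_j$. The orthogonality hypothesis $a_i\s a_j=0$ for $i\neq j$ annihilates every off-diagonal term, leaving exactly $\sum_i (a_ib_i)\s(a_ib_i)$. Applying the triangle inequality for the norm and then the \cs-identity once more gives $\left\|\sum_i (a_ib_i)\s(a_ib_i)\right\|\leq \sum_i\|(a_ib_i)\s(a_ib_i)\|=\sum_i\|a_ib_i\|^2$, which is the asserted bound.

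None of the three steps presents a genuine obstacle, since each reduces to positivity of a square together with elementary algebra. The only place demanding care is the combinatorial bookkeeping of diagonal versus off-diagonal terms in part (ii). An alternative to that counting argument would be an induction on $n$ that uses part (i) to control the cross terms, but the direct ``sum of pairwise squares'' computation is cleaner, and I would prefer it.
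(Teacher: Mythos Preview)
Your arguments for parts (i) and (iii) are correct and are exactly the ones the paper gives.

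For part (ii) your proof is also correct, but the paper takes a different route. Instead of expanding $\sum_{i<j}(a_i-a_j)\s(a_i-a_j)$, the paper fixes a primitive $n$th root of unity $\omega$ and uses the orthogonality relation $\sum_{j=1}^n \omega^{j(l-k)}=0$ for $k\neq l$ to obtain the identity
\[
n\sum_{k=1}^n a_k\s a_k \;=\; \sum_{j=1}^n\left(\sum_{k=1}^n \omega^{j(k-1)}a_k\right)\s\left(\sum_{l=1}^n \omega^{j(l-1)}a_l\right).
\]
Each summand on the right is positive, and the $j=n$ term is exactly $\bigl(\sum_k a_k\bigr)\s\bigl(\sum_k a_k\bigr)$, so the inequality follows by dropping the other $n-1$ positive terms. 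Your pairwise-squares argument is more elementary and requires no auxiliary scalars, only the bookkeeping you flagged; the paper's roots-of-unity trick, on the other hand, produces the stronger statement that $n\sum_k a_k\s a_k - \bigl(\sum_k a_k\bigr)\s\bigl(\sum_k a_k\bigr)$ is not merely positive but is explicitly a sum of $n-1$ elements of the form $c\s c$.
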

\begin{proof}
\begin{itemize}
\item [(i)] It easily follows from expanding inequalities $0\leq (a+b)\s (a+b)$ and $0\leq (a-b)\s (a-b)$ and then regrouping them appropriately.
\item [(ii)] Let $\om$ be a primitive $n$th root of unity. Then $\sum_{j=1}^n \om^{j(l-k)}=0$ for all $k\neq l$. Hence we compute
    \begin{eqnarray*}
    n \sum_{k=1}^n a_k\s a_k&=& n\sum_{k=1}^n a_k\s a_k+ \sum_{\begin{array} {c} k,l=1\\ k\neq l\end{array}}^n  \sum_{j=1}^n \om^{j(l-k)}a_k\s a_l\\
&=&  \sum_{j=1}^n \sum_{k=1}^n a_k\s a_k+ \sum_{j=1}^n \sum_{\begin{array} {c} k,l=1\\ k\neq l\end{array}}^n  \om^{j(l-k)}a_k\s a_l\\
&=&  \sum_{j=1}^n\left[ \sum_{k=1}^n a_k\s a_k+  \sum_{\begin{array} {c} k,l=1\\ k\neq l\end{array}}^n  \om^{-j(k-1)} a_k\s  \om^{j(l-1)}a_l \right]\\
&=&  \sum_{j=1}^n \left[ \left( \sum_{k=1}^n \om^{j(k-1)}a_k\right)\s \left( \sum_{l=1}^n  \om^{j(l-1)} a_l \right) \right].\\
    \end{eqnarray*}
In the last line, all terms inside the summation over $j$ are positive. Thus the whole summation is greater than or equal the term corresponding to $j=n$. Therefore we obtain
\[
n \sum_{k=1}^n a_k\s a_k\geq \left( \sum_{k=1}^n a_k\right)\s \left( \sum_{l=1}^n  a_l \right).
\]
\item [(iii)] We simply compute
\begin{eqnarray*}
\left\| \sum_{i=1}^n a_i b_i\right\|^2&=& \left\| \left(\sum_{i=1}^n a_i b_i\right)\s \left( \sum_{i=1}^n a_i b_i\right)\right\|\\
&=& \left\| \sum_{i=1}^nb_i\s a_i\s  a_i b_i\right\|\\
&\leq& \sum_{i=1}^n\|b_i\s a_i\s  a_i b_i\|\\
&=& \sum_{i=1}^n\|a_i b_i\|^2.
\end{eqnarray*}
\end{itemize}
\end{proof}
\begin{lemma}
\label{lem:ineqnorm}
Assume $a$ and $b$ are two elements of a \cs-algebra $A$ such that $0\leq a\leq b$. Then $\|a\|\leq \|b\|$.
\end{lemma}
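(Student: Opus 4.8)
The plan is to convert the order relation $0\le a\le b$ into statements about spectra and then read off the norms, using the characterization of the norm of a positive element as the largest point of its spectrum, namely Proposition \ref{prop:basicpositive}(ii).

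First I would reduce to the unital case. If $A$ is non-unital, I pass to the \cs-unitization $\tilde A$; by Corollary \ref{cor:samespec} the spectrum of a self adjoint element is unchanged (up to adjoining $0$) and the inclusion $A\hookrightarrow \tilde A$ is isometric, so both the order relation $\le$ and the norms of $a$ and $b$ are the same whether computed in $A$ or in $\tilde A$. Hence I may assume $A$ is unital with unit $1$. I would next record that $b$ is positive: since $\le$ is a partial order on $A_h$ and in particular transitive, $0\le a\le b$ gives $0\le b$.

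Because $b\ge 0$, Proposition \ref{prop:basicpositive}(ii) yields $\|b\|=\max\{\la;\la\in\si(b)\}$, so $\si(b)\subseteq [0,\|b\|]$. Using the elementary spectral translation identity $\si(\mu 1-x)=\mu-\si(x)$ (valid since $\mu 1-x-\nu 1$ is invertible exactly when $\mu-\nu\notin\si(x)$), I get $\si(\|b\|1-b)=\|b\|-\si(b)\subseteq [0,\infty)$, which says precisely $\|b\|1-b\ge 0$, i.e.\ $b\le \|b\|1$. Now I chain the inequalities: $a\le b\le \|b\|1$, so by transitivity $a\le \|b\|1$, that is $\|b\|1-a\ge 0$. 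Translating once more, $\si(\|b\|1-a)=\|b\|-\si(a)\subseteq [0,\infty)$ forces $\la\le \|b\|$ for every $\la\in\si(a)$. Since $a\ge 0$, Proposition \ref{prop:basicpositive}(ii) gives $\|a\|=\max\{\la;\la\in\si(a)\}$, and this maximum is $\le \|b\|$; hence $\|a\|\le \|b\|$.

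The argument is short, and the only genuine care needed is the passage to the unitization (ensuring positivity, the order, and the norm are all preserved) together with the spectral translation identity; neither is a serious obstacle, so there is no real ``hard part'' here beyond this bookkeeping. In particular the proof uses no functional calculus beyond the characterization of the norm of a positive element.
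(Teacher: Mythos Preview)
Your proof is correct and follows essentially the same route as the paper: both pass to the unital setting, use $b\le\|b\|1$ together with transitivity to obtain $a\le\|b\|1$, and then read off $\|a\|\le\|b\|$. The only cosmetic difference is that the paper first records the commuting case via the Gelfand transform on $C^\ast(a,b)$ and then applies it to the commuting pair $(a,\|b\|1)$, whereas you extract the final norm bound directly from the spectral translation identity and Proposition~\ref{prop:basicpositive}(ii), bypassing the Gelfand detour.
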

\begin{proof} Without loss of generality, we can assume $A$ is unital. If $a$ and $b$ commute, we consider the commutative \cs-algebras $C\s(a,b)\simeq C_0(\oom(C\s(a,b)))$. Then $a$ and $b$ are associated to two positive functions $f$ and $g$ in  $C_0(\oom(C\s(a,b)))$, respectively, such that $f\leq g$. Then it is clear that $\|a\|=\|f\|_{\sup} \leq \|g\|_{\sup}=\|b\|$.

For the general case, one first notes that $0\leq a\leq b \leq \|b\| 1$. Now, the statement follows from the above case and the fact that $a$ and $\|b\| 1$ commute.
\end{proof}

\begin{exercise}
\label{exe:middleineq}
Let $a,b,c,d$ be four self adjoint elements in a \cs-algebra $A$. Show that if $a\leq b \leq c\leq d$, then $c-b \leq d-a$.
\end{exercise}
\begin{definition}
A linear map $\ff:A\ra B$ between two \cs-algebra is called {\bf positive} if $\ff(a)\geq 0$ whenever $a\geq 0$, in other words, if $\ff$ maps positive elements of $A$ to positive elements of $B$.
\end{definition}
\begin{example}
\begin{itemize}
\item [(i)] Every \ss-homomorphism $\ff:A\ra B$ between two \cs-algebra is positive.
\item [(ii)] Let $tr:M_n(\c)\ra \c$ be the trace map. By Example \ref{exa:positiveityHilbert}(ii), $tr$ is positive, because the trace of every matrix $T\in M_n(\c)$ is the sum of its eigenvalues.
\end{itemize}
\end{example}
We use Part (i) of the above example in what follows. More specifically, we use the fact that the Gelfand transform and the \ss-isomorphism defining CFC preserve inequalities between elements of \cs-algebras.

\begin{proposition} Let $a$ and $b$ be elements of a \cs-algebra $A$ and let $0\leq a\leq b$.
\label{prop:powerspositive}
\begin{itemize}
\item [(i)] If $A$ is unital and $a$ is invertible, then $b$ is invertible too and we have $0\leq b\inv \leq a\inv$.
\item [(ii)] If $0<r \leq 1$, then $a^r \leq b^r$.
\end{itemize}
\end{proposition}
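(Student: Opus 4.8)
The plan is to handle the two parts separately, with part (i) feeding directly into part (ii).

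For part (i), I would first pin down invertibility of $b$. Since $a\geq 0$ is invertible, $\si_A(a)$ is a compact subset of $]0,\infty[$, so $\d:=\min\si_A(a)>0$ and $a\geq \d 1$; then $b\geq a\geq \d 1$ forces $\si_A(b)\sub[\d,\infty[$, whence $0\notin\si_A(b)$, so $b$ is invertible and $b\inv\geq 0$. To get $b\inv\leq a\inv$ I would use the congruence property of the order (Exercise \ref{exe:positive3}(ii)): conjugating $a\leq b$ by the self adjoint element $a^{-1/2}$ yields $1\leq c$, where $c:=a^{-1/2}ba^{-1/2}$ is positive and invertible. From $\si_A(c)\sub[1,\infty[$ and the continuous spectral mapping theorem (Proposition \ref{prop:specmapcont}) I obtain $\si_A(c\inv)\sub\,]0,1]$, i.e. $c\inv\leq 1$. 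Finally, rewriting $c\inv=a^{1/2}b\inv a^{1/2}$ and conjugating $c\inv\leq 1$ by $a^{-1/2}$ once more gives $b\inv\leq a\inv$.

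For part (ii) the clean route is the integral representation of the $r$-th power, which converts operator monotonicity of $t\mapsto t^r$ into part (i). I would first add a unit to $A$, noting that the order relation is independent of the ambient unital \cs-algebra (Corollary \ref{cor:samespec}), so that $(\la+a)\inv$ makes sense for $\la>0$. The starting point is the scalar identity, valid for $0<r<1$ and $t\geq 0$,
\[
t^r=\frac{\sin(r\pi)}{\pi}\int_0^\infty \frac{t}{\la+t}\,\la^{r-1}\,d\la,
\]
which follows from the Beta integral $\int_0^\infty \frac{u^{r-1}}{1+u}\,du=\pi/\sin(r\pi)$ after the substitution $\la=tu$. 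Since $\si_A(a)\sub[0,\|a\|]$ is compact, the truncated integrals converge uniformly on $\si_A(a)$, so applying the isometric continuous functional calculus $\Phi_a$ gives the operator identity
\[
a^r=\frac{\sin(r\pi)}{\pi}\int_0^\infty a(\la+a)\inv\,\la^{r-1}\,d\la,
\]
where I use $\Phi_a\!\left(\frac{t}{\la+t}\right)=a(\la+a)\inv=1-\la(\la+a)\inv$.

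Now I would exploit part (i). For each fixed $\la>0$, the elements $\la+a$ and $\la+b$ are positive and invertible with $\la+a\leq\la+b$ by translation invariance of the order, so part (i) gives $(\la+b)\inv\leq(\la+a)\inv$, hence
\[
a(\la+a)\inv=1-\la(\la+a)\inv\leq 1-\la(\la+b)\inv=b(\la+b)\inv.
\]
Multiplying by the nonnegative weight $\frac{\sin(r\pi)}{\pi}\la^{r-1}$ (here $\sin(r\pi)>0$ since $0<r<1$) and integrating preserves the inequality: every Riemann sum is a positive element and $A_+$ is closed (Proposition \ref{prop:positivecone}), so the limit $b^r-a^r$ lies in $A_+$. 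This proves $a^r\leq b^r$ for $0<r<1$, while $r=1$ is the hypothesis itself. The main obstacle is not conceptual but the analytic bookkeeping in part (ii): justifying that the improper $A$-valued integral converges (integrability of $\la^{r-1}$ at $0$ because $r>0$, and of $\la^{r-2}$ at $\infty$ because $r<1$), that the isometric functional calculus commutes with it, and that positivity passes to the limit through closedness of the cone $A_+$. Once the integral representation is in place, the monotonicity is an immediate consequence of part (i).
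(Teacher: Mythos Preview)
Your proof is correct. Part (i) is essentially the paper's argument with the roles of $a$ and $b$ swapped: the paper conjugates by $b^{-1/2}$ to reach $b^{-1/2}ab^{-1/2}\leq 1$ and then inverts, while you conjugate by $a^{-1/2}$; both work for the same reason.

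Part (ii), however, takes a genuinely different route. The paper argues combinatorially: it sets $S:=\{r>0:\ t\mapsto t^r\text{ is operator monotone}\}$, proves $\tfrac{1}{2}\in S$ via the spectral radius trick $r(b^{-1/4}a^{1/2}b^{-1/4})=r(b^{-1/2}a^{1/2})\leq\|b^{-1/2}a^{1/2}\|\leq 1$, shows $r,s\in S\Rightarrow\tfrac{r+s}{2}\in S$ by the same device, and then uses closedness of $S$ to pass from the dyadic rationals in $(0,1]$ to all of $(0,1]$. Your approach instead invokes the integral representation $t^r=\frac{\sin(r\pi)}{\pi}\int_0^\infty\frac{t}{\la+t}\,\la^{r-1}\,d\la$ and reduces everything to part (i) applied at each $\la$. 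Your method is shorter and more conceptual once the Beta integral is granted, and it generalizes immediately to arbitrary operator monotone functions via their Pick--Nevanlinna representations; the paper's method is more self-contained, needing no external integral identity, and stays entirely within the continuous functional calculus already developed.
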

For given $0<r$ such that $0\leq a\leq b$ implies $a^r\leq b^r$, we say that the function $t\ra t^r$ is {\bf operator monotone}.
\begin{proof}
\begin{itemize}
\item [(i)] If $a$ is invertible, then $0\notin \si(a)$. Thus $a\geq 0$ implies that there exists a $\ep>0$ such that $\si(a)\sub (\ep, \infty)$. Hence $\ep 1\leq a$, and so $\ep 1\leq b$. This implies that $y$ is invertible too. By CFC, it is clear that $a\inv, b\inv$ are both positive. Furthermore, if $a$ and $b$ commute, we consider the commutative \cs-algebra $C\s(a,b,1)$. Then using the Gelfand transform, $a$ and $b$ are corresponded to two strictly positive (so invertible) functions $f,g\in C(\oom(C\s(a,b,1)))$, respectively, such that $f\leq g$. It is clear that $0\leq g\inv \leq f\inv$. Since the inverse of the Gelfand transform is a \ss-homomorphism, we obtain the desired inequality from this. For the general case, we note that $a\leq b$ implies that $b^{-1/2}a b^{-1/2}\leq b^{-1/2} b b^{-1/2} =1$, see Exercise \ref{exe:positive3}. Since $1$ commutes with every element, we have $1\leq (b^{-1/2}a b^{-1/2})\inv$ and this implies $b\inv\leq a\inv$.
\item [(ii)] Without loss of generality, we assume $A$ is unital. Define
\[
S:=\{r\in (0,\infty ); t\mapsto t^r \, \text{is operator-monotone} \}.
\]
We follow the following steps to prove $(0,1]\sub S$:
    \begin{itemize}
    \item [(a)] Clearly, $1\in S$ and $S$ is closed under multiplication.
    \item [(b)] The set $S$ is a closed subset of $(0,\infty)$:\\
Let $\{r_n\}$ be a sequence of elements of $S$ convergent to some $r_0\in (0,\infty)$. Then $\{b^{r_n}- a^{r_n}\}$ is a sequence in $A_+$ convergent to $b^{r_0}- a^{r_0}$, see Proposition \ref{prop:positiveroots}(ii). Since $A_+$ is closed, $b^{r_0}- a^{r_0}\geq 0$, and so $r_0\in S$.
    \item [(c)] A positive real number $r$ belongs to $S$ if and only if $0\leq a \leq b$ and $b\in A^\times$ imply $a^r\leq b^r$:\\
We assume the special case and prove the general case. If $0\leq a \leq b$, then $0\leq a \leq b+\ep1$ and $b+\ep1\in A^\times$ for all $\ep>0$. Thus $a^r\leq (b+\ep 1)^r$ for all $\ep>0$. On the other hand, $(b+\ep 1)^r\ra b^r$ as $\ep\ra 0$. To see this, we should look at CFC of $b$. Define $f_n(z):=(z+\frac{1}{n})^r$. Since the spectrum of $b$ is compact the sequence $\{f_n\}\sub C(\si(b))$ of functions is uniformly convergent to $f(z):=z^r$. Now, since $A_+$ is a closed set of $A$, we conclude that $a^r\leq b^r$.
    \item [(d)] If $0\leq a\leq b$, $b\in A^\times$ and $\| b^{-r/2}a^r  b^{-r/2}\|\leq 1$, then $r\in S$:\\
Using Exercise \ref{exe:positive3}(ii), one easily observes that $a^r\leq b^r$ if and only if $b^{-r/2}a^r  b^{-r/2}\leq 1$. Also, $b^{-r/2}a^r  b^{-r/2}\leq 1$ if and only if $\| b^{-r/2}a^r  b^{-r/2}\|\leq 1$. These latter implications follow from Lemma \ref{lem:ineqnorm} and the fact that $b^{-r/2}a^r  b^{-r/2}$ is positive.
    \item [(e)] $1/2\in S$:\\
Assume $0\leq a\leq b$ and $b\in A^\times$, then $b^{-1/2} a b^{-1/2}\leq 1$. Thus we have
\[
1\geq \|b^{-1/2} a b^{-1/2}\|= \|b^{-1/2} a^{1/2} a^{1/2} b^{-1/2}\|=\|b^{-1/2} a^{1/2}\|^2,
\]
and so $\|b^{-1/2} a^{1/2}\|\leq 1$. Using the fact that $r(xy)=r(yx)$ for all $x,y$ in a \cs-algebra, we get
\[
r(b^{-1/4} a^{1/2}b^{-1/4})=r(b^{-1/2} a^{1/2})\leq \|b^{-1/2} a^{1/2}\|\leq 1.
\]
Since $b^{-1/4} a^{1/2}b^{-1/4}$ is positive, this inequality implies that $b^{-1/4} a^{1/2}b^{-1/4}\leq 1$, and so $\|b^{-1/4} a^{1/2}b^{-1/4}\|\leq 1$. Using the above step we conclude $1/2\in S$.
    \item [(f)] If $r,s\in S$, then $t:=\frac{r+s}{2}\in S$:\\
The proof of this step is similar to the previous step. Assume $0\leq a\leq b$ and $b\in A^\times$. Then we have
\begin{eqnarray*}
r(b^{-t/2} a^t b^{-t/2})&=&r(b^{-t} a^t)=r(b^{-r/2} a^t b^{-s/2})\\
&\leq& \| (b^{-r/2} a^{r/2}) (a^{s/2}b^{-s/2})\|\\
&\leq& \| b^{-r/2} a^{r/2}\| \|a^{s/2}b^{-s/2}\|\leq 1.
\end{eqnarray*}
This implies $\|b^{-t/2} a^t b^{-t/2}\|\leq 1$. So, by Step (d), we have $t\in S$.
    \item[(g)] Applying the above steps, we observe that the set
\[
X=\{ \frac{m} {2^n}; m, n\in \n, m\leq 2^n\}
\]
lies in $S$. Since $S$ is closed and $X$ is dense in $(0,1]$, we have $(0,1]\sub S$.
    \end{itemize}
\end{itemize}
\end{proof}
One notes that the proof of Part (ii) of the above proposition would be very easy if $a$ and $b$ commute with each other. In fact, in this case, for all $r\in(0,\infty)$, $0\leq a \leq b$ implies $a^r\leq b^r$. The following exercise gives a counterexample for the latter statement in general.
\begin{exercise}
Let $a=\left( \begin{array} {cc}2 & 2\\2 &2 \end{array}\right)$ and $b=\left( \begin{array} {cc} 3&0 \\0 &6 \end{array}\right)$. Verify that $0\leq a\leq b$ and that $a^r \nleq b^r$ for all $r>1$.
\end{exercise}

\section{Approximate units}
\label{sec:approximateunit}

We have already seen examples of approximate units in Banach algebras, see Remark \ref{rem:diracnet} and Lemma \ref{lem:diracnet}, and in \cs-algebras, see Example \ref{exa:apprunit}. They facilitate many proofs in the lack of unit elements. Many applications of approximate units will be given in Section \ref{sec:ideals}. We begin this section with the definition of various types of approximate units in \cs-algebras. Afterwards, we prove the existence of each one these types.

Let $(\Lambda, \leq)$ and $(\Sigma, \sqsubseteq)$ be two directed sets. An {\bf isotone from $\Lambda$ into $\Sigma$} is a map $\ff:\Lambda \ra \Sigma$ preserving the order structure, that is $\ff(\la)\sqsubseteq\ff(\th)$ for all $\la\leq \th \in \Lambda$. An isotone is called an {\bf isomorphism of directed sets} if it is a bijective map.

\begin{definition}
\label{def:approxtypes}
Let $A$ be a \cs-algebra. An {\bf approximate unit for $A$} is a net $(u_\la)$ of positive elements of $A$ indexed by a directed set $\Lambda$ such that $\|u_\la\|\leq 1$ for all $\la \in \Lambda$ and we have
\begin{equation}
\label{eqn:approx1}
\|u_\la a-a\|\ra 0 \quad\text{and}\quad \|au_\la -a\|\ra 0, \qquad \forall a\in A.
\end{equation}
Some of the varieties of approximate units are as follows:
\begin{itemize}
\item [(i)] An approximate unit $(u_\la)$ is called {\bf increasing} if $u_\la\leq u_\th$ if $\la \leq \th$.
\item [(ii)] An approximate unit $(u_\la)$ is called {\bf idempotent} if $u_\la$ is a projection for all $\la \in \Lambda$.
\item [(iii)] An approximate unit $(u_\la)$ is called {\bf countable} if $\Lambda$ is countable.
\item [(iv)] An approximate unit $(u_\la)$ is called {\bf sequential} if $\Lambda$ is the same directed set as $\n$ (up to isomorphism of directed sets).
\item [(v)] An approximate unit $(u_\la)$ is called {\bf continuous} if $\Lambda$ is equipped with a topology and there is a continuous isomorphism $\ff:(0,\infty)\ra \Lambda$ of the directed sets.
\end{itemize}
\end{definition}
In fact, each one of the limits in (\ref{eqn:approx1}) implies the other one, by considering $a\s$ in stead of $a$ and using the continuity of the involution. It is also worthwhile to note that if $(u_\la)_{\la\in \Lambda}$ is an approximate unit for a unital \cs-algebra $A$, then it eventually consists of invertible elements in the unit ball of $A$ converging to $1_A$. Even when $A$ is not unital, we sometimes use the expressions $(1-u_\la)a\ra 0$ and $a(1-u_\la)\ra 0$ (or $\|(1-u_\la)a\|\ra 0$ and $\|a(1-u_\la)\|\ra 0$) in lieu of limits in (\ref{eqn:approx1}). One notes that they are have the same meaning in $\tilde{A}$ and since the inclusion $A\hookrightarrow \tilde{A}$ is an isometry, they make sense even in $A$.  An important feature of approximate units that plays a key role in studying various properties of \cs-algebras is the minimum cardinality of the index sets of approximate units. Before proving the existence of approximate units in general, we discuss some examples, notions and properties of approximate units.
\begin{exercise}
Show that every countable approximate unit $(u_\la)$ for a \cs-algebra $A$ has a subnet $(u_{\la_n})$ which is a sequential approximate unit for $A$.
\end{exercise}
\begin{definition}
A \cs-algebra $A$ is called {\bf $\si$-unital} if it possesses a countable (or equivalently sequential) approximate unit.
\end{definition}
\begin{example}
\begin{itemize}
\item [(i)] Unital \cs-algebras are trivial examples of $\si$-unital \cs-algebras.
\item [(ii)] For $n\in \n$ define $f_n:\r \ra \c$ by the following formula:
\[
f_n(t):=\left\{ \begin{array} {ll} 1 & |t|\leq n\\ n-|t|+1& n\leq |t|\leq n+1\\ 0& |t|\geq n+1 \end{array} \right.
\]
One easily verifies that $\{f_n\}$ is an increasing sequential approximate unit for $C_0(\r)$.
\end{itemize}
\end{example}
\begin{proposition} Let $(u_\la)$ be an approximate unit for a \cs-algebra $A$. Then we have the following statements:
\begin{itemize}
\item [(i)] For every $a\in A$, we have $u_\la a u_\la \ra a$.
\item [(ii)] For every $a\in A_+$, we have $a^{1/2} u_\la a^{1/2} \ra a$.
\item [(iii)] For every $r>0$, $u_\la^r$ is an approximate unit.
\end{itemize}
\end{proposition}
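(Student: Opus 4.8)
The plan is to handle the three parts in the stated order, the first two being short norm manipulations and the third being the substantial one, which I would reduce to a scalar inequality on $[0,1]$ transported through the continuous functional calculus.

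For part (i) I would insert the intermediate term $u_\la a$ and use $\|u_\la\|\leq 1$:
\[
\|u_\la a u_\la - a\|\leq \|u_\la(a u_\la - a)\| + \|u_\la a - a\| \leq \|a u_\la - a\| + \|u_\la a - a\|,
\]
and both summands tend to $0$ by the defining property \eqref{eqn:approx1} of an approximate unit. For part (ii) the key observation is that $a^{1/2}$ is a genuine element of $A$ (Proposition \ref{prop:positiveroots}) to which the approximate unit applies; writing $a=a^{1/2}a^{1/2}$ gives
\[
\|a^{1/2} u_\la a^{1/2} - a\| = \|a^{1/2}(u_\la a^{1/2} - a^{1/2})\| \leq \|a^{1/2}\|\,\|u_\la a^{1/2} - a^{1/2}\|\ra 0.
\]

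For part (iii) I would first dispatch the easy requirements: $u_\la^r\geq 0$ by Proposition \ref{prop:positiveroots}, and since $u_\la\geq 0$ with $\|u_\la\|\leq 1$ forces $\si(u_\la)\sub[0,1]$, the continuous spectral mapping theorem (Proposition \ref{prop:specmapcont}) gives $\si(u_\la^r)\sub[0,1]$, hence $\|u_\la^r\|\leq 1$. The heart of the matter is the limit $\|u_\la^r a - a\|\ra 0$. Passing to the unitization $\tilde A$, I would estimate via the \cs-identity
\[
\|(1-u_\la^r)a\|^2 = \|a\s(1-u_\la^r)^2 a\|,
\]
and control the right-hand side by the scalar inequality $1-t^r\leq \max\{1,r\}(1-t)$ valid for all $t\in[0,1]$. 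This splits into two elementary cases: for $0<r\leq 1$ it reduces to $t\leq t^r$ on $[0,1]$, and for $r\geq 1$ it follows by checking that $g(t):=r(1-t)-(1-t^r)$ satisfies $g(1)=0$ and $g'(t)=r(t^{r-1}-1)\leq 0$ on $[0,1]$. Since the continuous functional calculus over the self-adjoint element $u_\la$ preserves order, this yields the operator inequality $0\leq 1-u_\la^r\leq \max\{1,r\}(1-u_\la)$; moreover $0\leq 1-u_\la^r\leq 1$ gives $(1-u_\la^r)^2\leq 1-u_\la^r$. Combining these, sandwiching with $a\s(\,\cdot\,)a$ via Exercise \ref{exe:positive3}(ii), and taking norms with Lemma \ref{lem:ineqnorm},
\[
\|a\s(1-u_\la^r)^2 a\| \leq \max\{1,r\}\,\|a\s(1-u_\la)a\| \leq \max\{1,r\}\,\|a\|\,\|(1-u_\la)a\|\ra 0.
\]
The right-sided limit $\|a u_\la^r - a\|\ra 0$ then follows by applying this to $a\s$ and using that $u_\la^r$ is self-adjoint, so $(u_\la^r a\s - a\s)\s = a u_\la^r - a$ together with $\|x\s\|=\|x\|$.

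The main obstacle is precisely the third step: producing the uniform operator domination $1-u_\la^r\leq \max\{1,r\}(1-u_\la)$, which requires both the case analysis for the scalar estimate and the order-preservation of the continuous functional calculus, and then arranging the square $(1-u_\la^r)^2$ so that Lemma \ref{lem:ineqnorm} converts the operator inequality into the desired norm bound. Parts (i) and (ii) are routine once (iii)'s machinery is understood, and indeed (ii) is a special case of the same circle of ideas applied to $b=a^{1/2}$.
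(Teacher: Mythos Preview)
Your proof is correct. Parts (i) and (ii) are essentially identical to the paper's arguments. For part (iii), however, you take a genuinely different route. The paper first proves by induction that $u_\la^{2^n}a\to a$ for every $n\in\n$ (using part (i) together with the asymptotic commutation $\|u_\la x-xu_\la\|\to 0$), then picks $n$ with $2^n\geq 2r$ and exploits the order chain $a\s u_\la^{2^n}a\leq a\s u_\la^{2r}a\leq a\s u_\la^r a\leq a\s a$ together with Exercise~\ref{exe:middleineq} to squeeze both $\|a\s a-a\s u_\la^r a\|$ and $\|a\s u_\la^r a-a\s u_\la^{2r}a\|$ by $\|a\s a-a\s u_\la^{2^n}a\|\to 0$, which controls the expansion of $\|(1-u_\la^r)a\|^2$. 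Your approach bypasses the induction and the sandwich entirely: the single scalar inequality $1-t^r\leq \max\{1,r\}(1-t)$ on $[0,1]$, pushed through the functional calculus, gives the operator bound $1-u_\la^r\leq \max\{1,r\}(1-u_\la)$ in one stroke, and combined with $(1-u_\la^r)^2\leq 1-u_\la^r$ this yields the explicit estimate $\|(1-u_\la^r)a\|^2\leq \max\{1,r\}\|a\|\,\|(1-u_\la)a\|$. Your argument is shorter, produces an explicit constant, and treats all $r>0$ uniformly without needing the dyadic scaffolding; the paper's approach, on the other hand, relies only on the monotonicity $t^\alpha\leq t^\beta$ for $\alpha\geq\beta$ on $[0,1]$ rather than the slightly sharper inequality you prove.
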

\begin{proof}
\begin{itemize}
\item [(i)] One notes that
\[
\|u_\la a u_\la -a\|\leq \|u_\la a u_\la -au_\la \|+\|a u_\la -a\|\leq\|u_\la a -a \|+\|a u_\la -a\|.
\]
This inequality implies (i).
\item [(ii)] When $a\geq 0$, we have $u_\la a^{1/2}\ra a^{1/2}$ and $ a^{1/2}u_\la\ra a^{1/2}$. Using the continuity of the multiplication, (ii) follows from these limits.
\item [(iii)] Fix $a\in A$. We note that $\|u_\la x - x u_\la \|\ra 0$ as $\la \ra \infty$ for all $x\in A$. Hence by setting $x=u_\la a$, we get
\[
\|u_\la^2 a - a \|\leq \|u_\la^2 a - u_\la a u_\la \|+ \|u_\la a u_\la  -a \| \ra 0.
\]
By induction, we obtain $\|u_\la^{2^n} a - a \|\ra 0$ for all $n\in \n$. For $r>0$, pick $n$ big enough such that $2^n\geq 2r$. Inequalities
\[
a\s u_\la^{2^n} a \leq a\s u_\la^{2r} a \leq a\s u_\la^{r} a \leq a\s a,
\]
follow from Exercise \ref{exe:positive3}(ii) and the fact that $u_\la^\alpha \leq 1$ for all $\alpha>0$. Regarding this inequalities and using Lemma \ref{lem:ineqnorm} and Exercise \ref{exe:middleineq}, we have
\[
\|a\s u_\la^{2r} a -a\s u_\la^{r} a\|\leq \|a\s u_\la^{2^n} a -a\s a\|\ra \infty,
\]
and
\[
\|a\s u_\la^{r} a -a\s  a\|\leq \|a\s u_\la^{2^n} a -a\s a\|\ra \infty.
\]
Therefore we get
\begin{eqnarray*}
\| a- u_\la^r a\|^2 &=& \|(a\s- a\s u_\la^r )(a- u_\la^r a)\|\\
&=&\|a\s a -2 a\s u_\la^r a+ a\s u_\la^{2r} a\|\\
&\leq&\|a\s a - a\s u_\la^r a\| + \| a\s u_\la^r a- a\s u_\la^{2r} a\|\ra \infty.
\end{eqnarray*}
\end{itemize}
\end{proof}
The following exercise is used in the next theorem:
\begin{exercise}
Let $I$ be an ideal (left, right or two sided) of a complex algebra $A$ and let $A_1$ be the algebraic unitization of $A$. Show that $I$ is an ideal (left, right or two sided) of $A_1$ as well. See also Proposition \ref{prop:idealideal}.
\end{exercise}

\begin{theorem}
\label{thm:generalapprox}
Every dense two sided ideal $I$ of a \cs-algebra $A$ contains an approximate unit for $A$. More precisely, Define
\[
\Lambda_I:=\{ a\in A_+ \cap I; \|a\|<1 \}.
\]
The set $\Lambda_I$ with the order structure inherited from $A_+$ is a directed set and is an increasing approximate identity, whose index set is itself.
\end{theorem}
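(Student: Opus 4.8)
The plan is to take the net to be $\Lambda_I$ indexed by itself, setting $u_a := a$, and to verify in turn that $\Lambda_I$ is upward directed and that $(u_a)$ satisfies the approximate unit axioms. The whole argument rests on two mutually inverse homeomorphisms of $[0,\infty)$ realised through the continuous functional calculus: $f(t) = t/(1-t)$, defined on $[0,1)$, and $g(t) = t/(1+t) = 1 - (1+t)\inv$, defined on $[0,\infty)$, which satisfy $g\circ f = \mathrm{id}$ on $[0,1)$. Throughout I would work inside the unitization $\tilde A$, using the exercise preceding the theorem, which guarantees that $I$ is a two sided ideal of $A_1\cong \tilde A$; hence whenever $b\in I$ and $r\in \tilde A$ one has $br, rb\in I$. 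Note that $0\in \Lambda_I$, so the index set is nonempty, and the relation $\leq$ inherited from $A_+$ is a genuine partial order on the self adjoint set $\Lambda_I$ (reflexivity and transitivity being clear and antisymmetry being Proposition \ref{prop:basicpositive}(i)).

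\textbf{Directedness.} Given $a,b\in \Lambda_I$, I would pass to $a_1 := f(a) = a(1-a)\inv$ and $b_1 := f(b)$. Since $\si(a)\sub [0,1)$ these are positive, and since $a_1 = a\,(1-a)\inv\in I$ (as $a\in I$, $(1-a)\inv\in \tilde A$) and likewise $b_1\in I$, both lie in $A_+\cap I$. Put $c_1 := a_1+b_1\in A_+\cap I$ and $c := g(c_1) = c_1(1+c_1)\inv$. Then $c\in A_+\cap I$ with $\|c\|<1$, because $g$ maps $[0,\infty)$ into $[0,1)$, so $c\in \Lambda_I$. The key point is $g(a_1) = g(f(a)) = a$ and $g(b_1) = b$, while $c_1\geq a_1$ and $c_1\geq b_1$. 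Since $g(t) = 1-(1+t)\inv$ is operator monotone — from $a_1\leq c_1$ one gets $1+a_1\leq 1+c_1$, whence $(1+c_1)\inv\leq (1+a_1)\inv$ by Proposition \ref{prop:powerspositive}(i), and subtracting from $1$ reverses the sign — I conclude $c = g(c_1)\geq g(a_1) = a$ and likewise $c\geq b$. Thus $c$ is a common upper bound inside $\Lambda_I$.

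\textbf{Approximate unit property.} As the net is self adjoint with $\|u_a\|\leq 1$, it suffices, by a $3\ep$ estimate using density of $I$ in $A$, to prove $\|y u_\mu - y\|\ra 0$ for each $y\in I$; the general $x\in A$ then follows by density and the left handed limits by applying this to $y\s$ and taking adjoints. Fix $y\in I$ and set $b := y\s y\in A_+\cap I$. For $\mu\in \Lambda_I$ one has $\|y\mu - y\|^2 = \|(1-\mu)b(1-\mu)\| = \|b^{1/2}(1-\mu)^2 b^{1/2}\|$. I would test against the cofinal family $\la_n := nb(1+nb)\inv\in \Lambda_I$, for which $1-\la_n = (1+nb)\inv$, so that $b^{1/2}(1-\la_n)b^{1/2} = b(1+nb)\inv$ has norm $\sup_{t\in \si(b)} t/(1+nt)\leq 1/n$. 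For an arbitrary later index $\mu\geq \la_n$ the chain
\[
b^{1/2}(1-\mu)^2 b^{1/2}\leq b^{1/2}(1-\mu)b^{1/2}\leq b^{1/2}(1-\la_n)b^{1/2}
\]
holds: the first inequality comes from $(1-\mu)^2\leq 1-\mu$ (valid since $0\leq 1-\mu\leq 1$) via Exercise \ref{exe:positive3}(ii), and the second from $1-\mu\leq 1-\la_n$ via the same exercise. Lemma \ref{lem:ineqnorm} then gives $\|y\mu - y\|^2\leq 1/n$, so choosing $n$ with $1/n<\ep^2$ and taking $\la_n$ as the threshold index yields $\|y\mu - y\|<\ep$ for all $\mu\geq \la_n$, which is precisely net convergence.

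\textbf{Main obstacle.} The functional-calculus bookkeeping is routine; the delicate point is the approximate unit estimate, specifically the passage from the convenient cofinal sequence $(\la_n)$ to an arbitrary later index $\mu$. A direct bound $\|y\mu - y\|\leq \|y\la_n - y\|$ is unavailable because $t\mapsto t^2$ is not operator monotone, and the remedy is to insert the intermediate term $b^{1/2}(1-\mu)b^{1/2}$ and instead use the order relations $(1-\mu)^2\leq 1-\mu$ and $1-\mu\leq 1-\la_n$, both compatible with conjugation by $b^{1/2}$. The other place where care is needed is checking that $c$ and the $\la_n$ really land in $I$ rather than merely in $\tilde A$; this is exactly where the ideal hypothesis, through the preceding exercise, is indispensable.
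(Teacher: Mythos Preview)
Your proof is correct and follows essentially the same route as the paper: the directedness argument is identical (the paper writes your $f$ and $g$ as $\ff(u)=(1-u)\inv-1$ and $\ff\inv(x)=1-(1+x)\inv$ and invokes the same operator monotonicity via Proposition~\ref{prop:powerspositive}(i)), and the approximate unit estimate uses the same sandwich $(1-\mu)^2\leq 1-\mu\leq 1-\text{(threshold)}$ conjugated by a positive square root. The only genuine difference is the choice of threshold: the paper takes $a^{1/n}$ for $a\in\Lambda_I$, whereas you take $\la_n=nb(1+nb)\inv$ with $b=y\s y$; your choice has the advantage that $\la_n=nb\cdot(1+nb)\inv\in I$ is immediate from the ideal property, while $a^{1/n}\in I$ is not obvious for a non-closed ideal, so your version is in fact cleaner at that step.
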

\begin{proof}
Let us denote the set of all positive elements of $I$ by $I_+$. Define $\ff:\Lambda_I \ra I_+$ by $u\mapsto (1-u)\inv -1$. One easily sees that the map $x\mapsto 1-(1+x)\inv$ is the inverse of $\ff$. In the definition of $\ff$ and $\ff\inv$, we used the unit element which belongs to $\tilde{A}$ in general. So, we have to check that $\ff$ and $\ff\inv$ are both well defined. In other words, we must show that $\ff(u)\in I_+$ for all $u\in \Lambda_I$ and $\ff\inv(x)\in \Lambda_I$ for all $x\in I_+$.

For given $u\in \Lambda$, since $\|u\|<1$ we have
\[
\ff(u)=\sum_{n=1}^\infty u^n.
\]
Therefore $\ff(u)\in C\s(u)\sub A$. We also note that $\ff(u)=u+u \sum_{n=1}^\infty u^n$, and so $\ff(u)\in I$. Moreover, since $u^n$ is positive for all $n\in \n$ and $A_+$ is a closed cone, we have $\ff(u)\in A_+$.

Let $x\in I_+$. One easily checks that $\|\ff\inv(x)\|<1$  and $\ff\inv(x)$ is positive. Now, we note that $\ff\inv(x)= x(x+1)\inv$. Consider the continuous functional calculus of $x$, i.e. $\Phi_x:C(\si_{\tilde{A}}(x)) \ra C\s(x,1)$. Then we have $\ff\inv(x)=\Phi_x(f)$, where $f(z):=\frac{z}{z+1}$. Clearly,  $f(0)=0$, so $\ff\inv(x)=\Phi_x(f)\in C\s(x)\sub A$. On the other hand, the equality $\ff\inv(x)= x(x+1)\inv$ implies that $\ff\inv(x)\in I$. Therefore $\ff\inv(x)\in \Lambda_I$.

It follows from Proposition \ref{prop:powerspositive}(i) that $\ff$ preserves the order. Therefore $\Lambda_I$ is a directed set.

Now, we want to show that $(1-u)a\ra 0$ as $u\ra \infty$ for all $a\in A$. First, we prove this statement for a given $a\in \Lambda_I$. Let $\ep>0$. By Proposition \ref{prop:positiveroots}(ii), there exists $n\in \n$ such that $\|a\s (1-a^{1/n}) a\|= \|a^2 -a^{2+\frac{1}{n}}\| <\ep$. For $u\geq a^{1/n}$ in $\Lambda_I$, we have $1-u\leq 1-a^{1/n}$, and so $a\s(1-u)a \leq a\s(1-a^{1/n})a$. This implies $\|a\s (1-u)a\|\leq \|a\s(1-a^{1/n})a\|<\ep$. Hence when $u\ra \infty$ (in the directed set $\Lambda_I$), we have
\[
\|(1-u) a\|^2= \|a\s (1-u)^2 a\|\leq \|a\s (1-u) a \| <\ep.
\]
This proves the first step. The first step clearly implies the same statement for $a\in I_+$ as well. Now, let $a\in A_+$ and let $\ep>0$. Assume $\{b_n\}$ is a sequence in $I$ such that $b_n\ra a^{1/2}$. Then $b_n\s b_n\ra a$, and so $(1-u)b_n\s b_n\ra (1-u)a$ for all $u\in \Lambda_I$. This shows that $(1-u)a\ra 0$ as $u\ra \infty$ for all $a\in A_+$. Since every element of $A$ is a linear combination of four positive elements, this latter statement implies that $\Lambda_I$ is an approximate unit for whole $A$.
\end{proof}

\begin{definition}
A \cs-algebra $A$ is called {\bf separable} if it possesses a countable and dense subset.
\end{definition}
\begin{proposition}
\begin{itemize}
\item [(i)] Every separable \cs-algebra $A$ is $\si$-unital.
\item [(ii)] Every $\si$-unital \cs-algebra $A$ has a continuous approximate unit.
\end{itemize}
\end{proposition}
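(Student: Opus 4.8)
The plan is to handle the two parts separately, though both rest on the same device: starting from the canonical increasing approximate unit $\Lambda_A=\{b\in A_+; \|b\|<1\}$ furnished by Theorem \ref{thm:generalapprox} (applied to the dense ideal $I=A$, since $A$ is a dense two sided ideal of itself), I would carve out a countable increasing approximate unit by a cofinality argument against a prescribed countable test family. For (i), let $\{a_n; n\in\n\}$ be a countable dense subset of $A$. I would build an increasing sequence $b_1\le b_2\le\cdots$ in $\Lambda_A$ inductively: given $b_{n-1}$, since $(b)_{b\in\Lambda_A}$ is an approximate unit, for each $j\le n$ there is an index past which $\|(1-b)a_j\|<1/n$ and $\|a_j(1-b)\|<1/n$; as $\Lambda_A$ is directed I would pick $b_n\ge b_{n-1}$ dominating all these indices for $j=1,\dots,n$, so that both bounds hold for all $j\le n$. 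Then $(b_n)$ is a sequential approximate unit: given $a\in A$ and $\ep>0$, density supplies $a_j$ with $\|a-a_j\|<\ep/3$, and for $n\ge\max(j,3/\ep)$ one gets $\|(1-b_n)a\|\le\|1-b_n\|\,\|a-a_j\|+\|(1-b_n)a_j\|<\ep$ (using $\|1-b_n\|\le 1$, which follows from $0\le b_n\le 1$); the same estimate applied to $a\s$ gives $\|a(1-b_n)\|\to 0$. Hence $A$ is $\si$-unital.

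For (ii) the first step is to manufacture an increasing sequential approximate unit out of mere $\si$-unitality. Let $(a_n)$ be a sequential approximate unit (extracted from the given countable one). The extraction of $b_1\le b_2\le\cdots$ in $\Lambda_A$ proceeds exactly as above, but since $(a_n)$ is now only an approximate unit and not a dense set, in the verification I would approximate an arbitrary $a\in A$ not by some $a_j$ but by $a_j a$: choosing $j$ with $\|a_j a-a\|<\ep/2$ and then $n\ge\max(j,2\|a\|/\ep)$, the bound $\|(1-b_n)a\|\le\|a_j a-a\|+\|(1-b_n)a_j\|\,\|a\|<\ep$ gives $b_n a\to a$, and symmetrically $ab_n\to a$. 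Writing $e_n:=b_n$, this is an increasing sequential approximate unit with $0\le e_n\le 1$.

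The second step is interpolation. Setting $e_0:=0$, I would define, for $t\in[n,n+1)$ with $n\ge 0$,
\[
u_t:=(n+1-t)e_n+(t-n)e_{n+1}.
\]
Each $u_t$ is a convex combination of elements of the order interval $[0,1]$, so $0\le u_t\le 1$; the family is norm continuous in $t$ since it is affine on each subinterval with matching endpoints; and it is increasing because $e_n\le e_{n+1}$ forces $u_s\le u_t$ for $s\le t$, with $u_t\ge e_n$ whenever $t\ge n$. Taking $\Lambda=(0,\infty)$ with $\ff=\mathrm{id}$ as the required continuous isomorphism of directed sets, it remains to check that $(u_t)$ is an approximate unit. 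Here I would use that $0\le 1-u_t\le 1$ yields $(1-u_t)^2\le 1-u_t$ by CFC, whence, for $t\ge n$,
\[
\|(1-u_t)a\|^2=\|a\s(1-u_t)^2a\|\le\|a\s(1-u_t)a\|\le\|a\s(1-e_n)a\|\le\|a\|\,\|(1-e_n)a\|,
\]
using $1-u_t\le 1-e_n$ together with Exercise \ref{exe:positive3}(ii) and the norm monotonicity of Lemma \ref{lem:ineqnorm}. Since $\|(1-e_n)a\|\to 0$, choosing $n$ large makes the right side arbitrarily small for all $t\ge n$, so $u_t a\to a$; applying this to $a\s$ gives $au_t\to a$. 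Thus $(u_t)_{t\in(0,\infty)}$ is a continuous approximate unit.

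I expect the main obstacle to be the first step of (ii), namely upgrading a possibly non-monotone countable approximate unit to an increasing one. The decisive subtlety is that the test elements $(a_n)$ need not be dense, so one cannot approximate $a$ by the $a_j$ directly but must route through $a_j a$ and lean on the canonical monotone unit $\Lambda_A$ to generate the increasing chain. Once an increasing sequential unit is secured, the interpolation and the $(1-u_t)^2\le 1-u_t$ squeeze are routine.
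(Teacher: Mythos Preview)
Your proof of (i) is essentially identical to the paper's: both extract an increasing cofinal sequence from the canonical approximate unit $\Lambda_A$ of Theorem~\ref{thm:generalapprox} using a countable dense set as test elements.

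For (ii), your argument is correct but takes a longer route than the paper. The paper skips your first step entirely: it simply takes any sequential approximate unit $(u_n)$ (not necessarily increasing), defines the same affine interpolation $u_t=(n+1-t)u_n+(t-n)u_{n+1}$, and verifies the approximate unit property by the elementary triangle inequality
\[
\|(1-u_t)a\|=\|(n+1-t)(1-u_n)a+(t-n)(1-u_{n+1})a\|\le (n+1-t)\ep+(t-n)\ep=\ep
\]
once $n$ is large enough that both $\|(1-u_n)a\|<\ep$ and $\|(1-u_{n+1})a\|<\ep$. No monotonicity, no order squeeze, no appeal to $\Lambda_A$ is needed. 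So the obstacle you flagged as decisive---upgrading a non-monotone countable unit to an increasing one---is not actually an obstacle for this statement. Your approach does buy something the paper's does not: your interpolated unit is automatically increasing (indeed the paper relegates this to a follow-up exercise under the extra hypothesis that the starting sequence was already increasing). But for the bare statement of (ii), the convex-combination estimate is the shorter path.
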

\begin{proof}
\begin{itemize}
\item [(i)] Let $\{a_k\}$ be a dense sequence in $A$ and let $(u_\la)$ be an approximate unit for $A$ obtained in Theorem \ref{thm:generalapprox}. Choose $u_{\la_1}$ arbitrarily. For a natural number $n\geq 2$, choose $u_{\la_n}$ such that $u_{\la_n}\geq u_{\la_{n-1}}$ and $\|(1-u_{\la_n}) x_k\|<1/n$ for all $k=1,\cdots,n$. Then by the construction, it is easy to see that the sequence $\{u_{\la_n}\}$ is a sequential approximate unit for $A$. One notes that this sequential approximate unit is increasing too.
\item [(ii)] Let $\{u_i\}$ be a sequential approximate unit in a \cs-algebra $A$ as obtained in (i). For every $n\in \z$ and $t\in [n,n+1]$, we define
\[
u_t:=(n+1-t) u_n + (t-n)u_{n+1}.
\]
Then $\{u_t\}_{t\in \r}$ is a continuous approximate unit for $A$. To see this, assume $a\in A$ and $\ep>0$ are given. Choose $n_0\in\n$ such that for all $n\geq n_0$, we have $\|(1-u_n)a\|<\ep$ and $\|(1-u_{n+1})a\|<\ep$. Then for all $t\geq n_0$, there is some $n\in \n$ such that $t\in [n,n+1]$ and we have
\begin{eqnarray*}
\|(1-u_t)a\|&=&\|(n+1-t)(1-u_n)a+ (t-n)(1-u_{n+1})a\|\\
&\leq& \|(n+1-t)(1-u_n)a\|+\|(t-n)(1-u_{n+1})a\|\\
&<& (n+1-t)\ep +(t-n)\ep=\ep.
\end{eqnarray*}
\end{itemize}
\end{proof}
\begin{exercise}
\begin{itemize}
\item [(i)] Complete the proof of Part (i) of the above proposition.
\item [(ii)] In Part (ii) of the above proposition, show that if  $\{u_i\}$ is an increasing sequential approximate unit, then $\{u_t\}_{t\in \r}$ is increasing too.
\end{itemize}
\end{exercise}

\section{Ideals and homomorphisms}
\label{sec:ideals}

In this section, $A$ is always a \cs-algebra. By an ideal of $A$, we mean a two sided ideal $I$ of $A$ which is closed under scalar multiplication, or equivalently, it is a subalgebra of $A$. Left or right ideals will be specified.

\begin{proposition}
\label{prop:idealapprox}
Let $I$ be a closed left ideal of a \cs-algebra $A$. Then there is a net $(u_\la)$ of positive elements of $I$ in the unit ball of $I$ such that $\lim_\la (a-a u_\la)=0$ for all $a\in I$. Moreover, if $I$ is a two sided ideal of $A$, then $(u_\la)$ is an approximate unit for $I$.
\end{proposition}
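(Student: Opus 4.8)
The plan is to build the net directly from the positive part of the unit ball of $I$, imitating the construction in Theorem~\ref{thm:generalapprox} but paying attention to the one-sided nature of $I$. First I would set
\[
\Lambda := \{ u \in A_+ \cap I ; \|u\| < 1 \},
\]
ordered by the usual order on self-adjoint elements, and take $u_\la := \la$ for $\la \in \Lambda$. To see $\Lambda$ is directed I would reuse the map $\ff(u) = u(1-u)\inv = (1-u)\inv - 1$ and its inverse $x \mapsto 1 - (1+x)\inv$, which set up an order isomorphism between $\Lambda$ and $I_+ = A_+ \cap I$; order preservation of both maps comes from Proposition~\ref{prop:powerspositive}(i), exactly as in Theorem~\ref{thm:generalapprox}. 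The only genuinely new point here is that $\ff(u)$ must lie in $I$: since $u$ and $(1-u)\inv$ commute we may write $\ff(u) = (1-u)\inv u \in \tilde{A} \cdot I \sub I$, using that $I$, being a left ideal of $A$, is a left ideal of $\tilde{A}$. Given $u,v \in \Lambda$, the element $\ff\inv(\ff(u)+\ff(v))$ then dominates both, since $I_+$ is a cone by Proposition~\ref{prop:positivecone}, so $\Lambda$ is directed.

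Next I would prove $\|a - a u_\la\| \ra 0$ for $a \in I$. Fix $a \in I$ and put $b := a\s a$, which lies in $I_+$ because $I$ is a left ideal ($a\s a = a\s \cdot a \in A I$). For $m \in \n$ define the ``seed'' $w_m := b(1/m + b)\inv \in \Lambda$, using the non-unital continuous functional calculus of $b$ (the function $t \mapsto t/(1/m+t)$ is nonnegative, strictly below $1$, and vanishes at $0$; commutativity again gives $w_m \in I$). The crucial monotonicity estimate is that for every $v \in \Lambda$ with $v \geq w_m$,
\[
\|a(1-v)\|^2 = \|b^{1/2}(1-v)^2 b^{1/2}\| \leq \|b^{1/2}(1-v) b^{1/2}\| \leq \|b^{1/2}(1-w_m) b^{1/2}\|,
\]
where the first inequality uses $(1-v)^2 \leq 1-v$ together with Exercise~\ref{exe:positive3}(ii) and Lemma~\ref{lem:ineqnorm}, and the second uses $b^{1/2} v b^{1/2} \geq b^{1/2} w_m b^{1/2}$ again via Exercise~\ref{exe:positive3}(ii) and Lemma~\ref{lem:ineqnorm}. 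A direct functional-calculus computation gives $b^{1/2}(1-w_m)b^{1/2} = (1/m)\, b (1/m+b)\inv$, whose norm is at most $1/m$. Hence $\|a(1-v)\| \leq m^{-1/2}$ for all $v \geq w_m$, and choosing $m$ large finishes the convergence.

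For the ``moreover'' part I would run the symmetric argument to obtain $\|a - u_\la a\| \ra 0$ when $I$ is two-sided, being careful not to invoke self-adjointness of $I$ (which is only established later, in Proposition~\ref{prop:closedidealcs}, using this very result). Now $c := a a\s \in I_+$ because $I$ is also a right ideal, so $w_m := c(1/m+c)\inv \in \Lambda$. The resolvent identity $a\s(1/m + aa\s)\inv = (1/m + a\s a)\inv a\s$ lets me compute $a\s(1-w_m)a = (1/m)(1/m+b)\inv b$ with $b = a\s a$, again of norm at most $1/m$; combined with $\|(1-v)a\|^2 = \|a\s(1-v)^2 a\| \leq \|a\s(1-v)a\| \leq \|a\s(1-w_m)a\|$ for $v \geq w_m$, this gives the left-hand limit. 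Since $\|u_\la\| \leq 1$ and the $u_\la$ are positive, $(u_\la)$ is then an approximate unit for $I$.

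I expect the main obstacle to be the monotonicity step, i.e.\ arranging that the estimate, once small at the seed $w_m$, stays small for all larger indices; this is what forces the two applications of $x^2 \leq x$ and Exercise~\ref{exe:positive3}(ii), and in the two-sided case it is what makes the resolvent identity necessary so that the single net $(u_\la)$ works on both sides without assuming $I = I\s$.
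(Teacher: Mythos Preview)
Your argument is correct, and in fact the net you build coincides with the paper's: since positive elements are self-adjoint, $A_+ \cap I = A_+ \cap (I \cap I\s)$, so your $\Lambda$ is exactly the $\Lambda_J$ of Theorem~\ref{thm:generalapprox} applied to the \cs-subalgebra $J := I \cap I\s$. The paper exploits this observation to shorten the proof considerably: it simply invokes Theorem~\ref{thm:generalapprox} to obtain an approximate unit $(u_\la)$ for $J$, notes that $a\s a \in J$ for each $a \in I$, and then estimates
\[
\|a(1-u_\la)\|^2 = \|(1-u_\la)a\s a(1-u_\la)\| \leq \|a\s a(1-u_\la)\| \ra 0.
\]
The two-sided case is handled symmetrically via $aa\s \in J$, with no resolvent identity needed. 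Your route---explicit seeds $w_m = b(1/m+b)\inv$ together with the monotonicity squeeze---is more hands-on and self-contained (you never name $J$ or cite Theorem~\ref{thm:generalapprox} as a black box for the convergence), but it reproduces work that Theorem~\ref{thm:generalapprox} already packages. Your care about not presupposing $I = I\s$ is well placed; the paper's $J$ device achieves the same goal without circularity, since $J = I \cap I\s$ is a \cs-algebra regardless of whether $I$ itself is self-adjoint.
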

\begin{proof}
Set $J:=I\cap I\s$. One notes that $J$ is a \cs-subalgebra of $A$. Therefore by  Theorem \ref{thm:generalapprox}, there exists an approximate unit $(u_\la)$ for $J$ in the unit ball of $J$. For every $a\in I$, we have $a\s a \in J$, and consequently, $\lim_\la \|a\s a(1-u_\la)\|=0$. Thus we have
\begin{eqnarray*}
\lim_\la \|a(1-u_\la)\|^2 &=& \lim_\la \|(1-u_\la)a\s a(1-u_\la)\|\\
&\leq& \lim_\la \|a\s a(1-u_\la)\|=0.
\end{eqnarray*}
When $I$ is a two sided ideal, we use the fact that $aa\s\in J$ for all $a\in I$, and so $\lim_\la \|(1-u_\la)aa\s\|=0$. Then it follows from a similar argument that $\lim_\la \|(1-u_\la)a\|=0$ for all $a\in I$.
\end{proof}
\begin{proposition}
\label{prop:closedidealcs}
Every closed ideal $I$ of $A$ is closed under the involution, and so is a \cs-subalgebra of $A$.
\end{proposition}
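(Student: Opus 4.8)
The plan is to deduce self-adjointness of $I$ from the one-sided approximate unit already constructed in Proposition \ref{prop:idealapprox}, together with the isometry of the involution. Since a two sided ideal is in particular a closed left ideal, Proposition \ref{prop:idealapprox} gives a net $(u_\la)$ of \emph{positive} elements in the unit ball of $I$ such that $\|a-au_\la\|\ra 0$ for every $a\in I$. The key observation is that positivity of the $u_\la$ makes them self adjoint, so that taking adjoints will convert a statement about right multiplication into one about left multiplication.

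Fix $a\in I$; I want to show $a\s\in I$. Applying the involution to $a-au_\la$ and using $u_\la\s=u_\la$, one gets $(a-au_\la)\s=a\s-u_\la a\s$. Because the involution is isometric (axiom (v) in the definition of an involutive normed algebra), $\|a\s-u_\la a\s\|=\|a-au_\la\|\ra 0$, that is, $u_\la a\s\ra a\s$ in norm.

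Now each term $u_\la a\s$ lies in $I$: indeed $u_\la\in I$ and, since $I$ is a \emph{two sided} ideal, it is in particular a right ideal, so $u_\la a\s\in I$. Note that this does not presuppose $a\s\in I$, which is exactly what would be circular to assume. As $I$ is norm closed and $u_\la a\s\ra a\s$, its limit $a\s$ belongs to $I$. Hence $I$ is closed under the involution.

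Finally, being a norm closed involutive subalgebra of the \cs-algebra $A$, the ideal $I$ is itself a \cs-algebra, i.e. a \cs-subalgebra of $A$, as recorded among the basic examples of \cs-algebras. The only subtle point in the argument is the adjoint manoeuvre in the second paragraph: the approximate unit from Proposition \ref{prop:idealapprox} only controls $a-au_\la$ on the right, and it is the self-adjointness of $u_\la$ (from positivity) together with the isometry of $\ast$ that converts this into control of $a\s-u_\la a\s$, allowing the right ideal property to keep the approximating net inside $I$.
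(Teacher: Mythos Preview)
Your proof is correct and follows essentially the same argument as the paper: invoke the positive approximate unit $(u_\la)\sub I$ from Proposition~\ref{prop:idealapprox}, take adjoints in $a=\lim_\la au_\la$ to obtain $a\s=\lim_\la u_\la a\s$ (using $u_\la\s=u_\la$), and conclude from the right ideal property and closedness of $I$. Your write-up is more explicit about why each step is legitimate, but the underlying idea is identical.
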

\begin{proof}
Let $(u_\la)$ be the approximate unit for $I$ obtained in Proposition \ref{prop:idealapprox} and let $a\in I$. We have $a=\lim_\la a u_\la$, so $a\s =\lim_\la u_\la a\s$. Thus $a\s\in I$, because $u_\la a\s\in I$ for all $\la$ and $I$ is closed.
\end{proof}
\begin{proposition}
\label{prop:idealideal}
Let $I$ be a closed ideal in a \cs-algebra $A$ and let $J$ be a closed ideal of $I$. Then $J$ is an ideal of $A$.
\end{proposition}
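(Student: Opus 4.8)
The plan is to use an approximate unit for $J$ to carry an arbitrary element of $A$ into a position where the two ideal relations, $I$ inside $A$ and $J$ inside $I$, can be applied one after the other. First I would observe that $J$, being a closed two sided ideal of the \cs-algebra $I$, admits an approximate unit: applying Proposition \ref{prop:idealapprox} with $I$ playing the role of the ambient \cs-algebra and $J$ the two sided ideal, we obtain a net $(u_\la)$ of positive elements in the unit ball of $J$ with $\|ju_\la -j\|\ra 0$ and $\|u_\la j-j\|\ra 0$ for every $j\in J$.

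Then, fixing $a\in A$ and $j\in J$, the heart of the argument is the reshuffling
\[
aj=\lim_\la a(u_\la j)=\lim_\la (au_\la)j.
\]
Here $u_\la\in J\sub I$, so $au_\la\in I$ because $I$ is an ideal of $A$; and then $(au_\la)j\in J$ because $j\in J$ and $J$ is an ideal of $I$. Since $J$ is closed, the limit $aj$ lies in $J$. The other side is symmetric: writing $ja=\lim_\la (ju_\la)a=\lim_\la j(u_\la a)$, we have $u_\la a\in I$ and hence $j(u_\la a)\in J$, so $ja\in J$. As $J$ is already a subspace closed under scalar multiplication, this establishes that $J$ is a two sided ideal of $A$.

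The proof is short and I do not expect a genuine obstacle. The one subtlety worth flagging is that $(u_\la)$ must act as an approximate unit on both sides; this is exactly what Proposition \ref{prop:idealapprox} guarantees once we know $J$ is two sided in $I$. One also uses the joint continuity of the multiplication in a \cs-algebra to pass the limit through the products $au_\la$ and $u_\la a$, which is automatic here.
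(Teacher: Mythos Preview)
Your argument is correct. Both proofs share the same underlying idea---insert an element of $I$ between $a$ and $j$ so that the ideal relations $I\trianglelefteq A$ and $J\trianglelefteq I$ can be applied in succession---but they implement it differently. The paper takes an approximate unit $(u_\la)$ for $I$, reduces to positive $b\in J_+$, and factors $ab=\lim_\la (au_\la b^{1/2})b^{1/2}$; here $au_\la b^{1/2}\in I$ and multiplying on the right by $b^{1/2}\in J$ lands in $J$. You instead take an approximate unit $(u_\la)$ for $J$ itself and write $aj=\lim_\la (au_\la)j$, using $u_\la\in J\sub I$ to get $au_\la\in I$ and then $(au_\la)j\in J$. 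Your route is more economical: it avoids the reduction to positive elements and the square-root factorization, and it works uniformly for every $j\in J$ rather than first decomposing $j$ into positive pieces. The paper's route, on the other hand, illustrates the general utility of the decomposition into positive parts. Either way the essential content is the same.
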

\begin{proof}
Since $J$ is a \cs-algebra, every element of $J$ is a linear combination of four positive elements. Therefore to prove that $J$ is an ideal of $A$, it is enough to show that $ab,ba\in J$ for all $a\in A$ and $b\in J_+$. Let $(u_\la )$ be an approximate unit for $I$. Then for all $a\in A$, $b\in J_+$ and $\la$, we have $u_\la b^{1/2}\in I$, so $au_\la b^{1/2}\in I$. On the other hand, $b^{1/2}\in J$, so $au_\la b^{1/2}b^{1/2}\in J$ for all $\la$. Therefore $ab=\lim_\la au_\la b \in J$. Similarly, we have $a\s b\in J$. Since $J$ is a \cs-algebra, it implies $ba\in J$.
\end{proof}

\begin{proposition}
\label{prop:csquotient}
Let $I$ be a closed ideal of a \cs-algebra $A$.
\begin{itemize}
\item [(i)] The quotient norm on $A/I$ satisfies the following identities for every approximate unit $(u_\la)$ for $I$:
\[
\|a+I\|=\lim_\la \|a-u_\la a\|=\lim_\la \|a- au_\la\|, \qquad \forall a\in A.
\]
\item [(ii)]  The quotient algebra $A/I$ equipped with the quotient norm is a \cs-algebra.
\end{itemize}
\end{proposition}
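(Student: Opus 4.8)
The plan is to deduce both parts from an approximate unit $(u_\la)$ for $I$, whose existence and two-sidedness are guaranteed by Proposition \ref{prop:idealapprox} and Theorem \ref{thm:generalapprox}. Throughout I would work inside the unitization $\tilde A$ so that $1-u_\la$ makes sense; since each $u_\la$ is positive with $\|u_\la\|\leq 1$, its spectrum lies in $[0,1]$, hence $\si(1-u_\la)\sub[0,1]$, and as $1-u_\la$ is self adjoint, $\|1-u_\la\|=r(1-u_\la)\leq 1$ by Proposition \ref{prop:spradius3}. I will use this bound repeatedly. Before anything else, note that the involution descends to $A/I$: because $I$ is self adjoint by Proposition \ref{prop:closedidealcs}, the map $a+I\mapsto a\s+I$ is well defined, and since $I=I\s$ and the involution of $A$ is isometric, $\inf_{x\in I}\|a\s-x\|=\inf_{x\in I}\|a-x\|=\|a+I\|$, so the induced involution is isometric. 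Together with the already established fact that $A/I$ is a Banach algebra under the quotient norm, this reduces (ii) to verifying the \cs-identity.

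For part (i), fix $a\in A$. First I would observe that $u_\la a\in I$, so $a-u_\la a$ represents $a+I$ and therefore $\|a+I\|\leq\|a-u_\la a\|$ for every $\la$, giving $\|a+I\|\leq\liminf_\la\|a-u_\la a\|$. For the reverse inequality, for an arbitrary $x\in I$ I would split
\[
(1-u_\la)a=(1-u_\la)(a-x)+(1-u_\la)x,
\]
bound the first summand by $\|1-u_\la\|\,\|a-x\|\leq\|a-x\|$, and note that the second summand tends to $0$ because $(u_\la)$ is an approximate unit for $I$ and $x\in I$. Hence $\limsup_\la\|a-u_\la a\|\leq\|a-x\|$, and taking the infimum over $x\in I$ yields $\limsup_\la\|a-u_\la a\|\leq\|a+I\|$. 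The two estimates force the limit to exist and equal $\|a+I\|$. The identity with $a-au_\la$ is proved symmetrically, using $au_\la\in I$ and the right-hand approximate unit property.

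For part (ii), the key step is to combine (i) with the \cs-identity available in $A$. I would compute, using the right-hand formula from (i) and then the \cs-identity applied to the element $a(1-u_\la)\in A$,
\[
\|a+I\|^2=\lim_\la\|a(1-u_\la)\|^2=\lim_\la\|(1-u_\la)a\s a(1-u_\la)\|.
\]
Estimating $\|(1-u_\la)a\s a(1-u_\la)\|\leq\|1-u_\la\|\,\|a\s a(1-u_\la)\|\leq\|a\s a(1-u_\la)\|$ and invoking (i) once more, this time for the element $a\s a$, I obtain $\limsup_\la\|(1-u_\la)a\s a(1-u_\la)\|\leq\|a\s a+I\|$, whence $\|a+I\|^2\leq\|a\s a+I\|$. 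The opposite inequality $\|a\s a+I\|\leq\|a\s+I\|\,\|a+I\|=\|a+I\|^2$ follows from sub-multiplicativity of the quotient norm and the isometry of the involution noted above. Combining the two gives $\|a\s a+I\|=\|a+I\|^2$, the \cs-identity.

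The routine pieces (well-definedness and the involution axioms on $A/I$, sub-multiplicativity of the quotient norm) are inherited directly from $A$. The only genuinely delicate point is part (i): it is a statement about the limit of a net over the directed index set, so the argument must produce matching $\liminf$ and $\limsup$ bounds rather than a single inequality. The main obstacle is organizing the $\limsup$ estimate so that the approximate-unit property is applied to a genuine element of $I$ (namely $x$, and then $a\s a$ in part (ii)), while the norm bound $\|1-u_\la\|\leq 1$ absorbs the error arising from approximating $a$ by elements of $I$.
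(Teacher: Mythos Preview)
Your proof is correct and follows essentially the same approach as the paper: the same $\liminf/\limsup$ sandwich via $(1-u_\la)a=(1-u_\la)(a-x)+(1-u_\la)x$ for part (i), and the same passage $\|a+I\|^2=\lim_\la\|(1-u_\la)a\s a(1-u_\la)\|\leq\|a\s a+I\|$ for the \cs-identity in part (ii). Your version is slightly cleaner in that you invoke part (i) directly for $a\s a$ rather than reintroducing an arbitrary $b\in I$ as the paper does, and you make the bound $\|1-u_\la\|\leq 1$ explicit, but the underlying argument is the same.
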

\begin{proof}
Let $(u_\la)$ be an approximate unit for $I$.
\begin{itemize}
\item [(i)]  Let $a\in A$. For given $\ep>0$, let $b$ be an element of $I$ such that $\|a+b\|< \|a+I\|+\ep/2$. Pick $\la_0$ such that $\la\geq \la_0$ implies $\|(1-u_\la)b\|<\ep/2$. Then we have
\begin{eqnarray*}
\|(1-u_\la)a\|&\leq& \|(1-u_\la)(a+b)\|+ \|(1-u_\la)b\|\\
&\leq& \|a+b\|+\|(1-u_\la)b\|\\
&<& \|a+I\|+\ep/2+\ep/2\\
&\leq& \|(1-u_\la)a\|+\ep.
\end{eqnarray*}
When $\ep\ra 0 $, we obtain the the first equality. The second equality is proved similarly.
\item [(ii)] It is straightforward to check that $A/I$ is an involutive algebra. We also know that $A/I$ is a Banach algebra. For every $a\in A$, we have $\|a\s +I\|=\lim_\la \|(1-u_\la)a\s\|=\lim_\la \|a(1-u_\la)\|=\|a+I\|$. Therefore $A/I$ is an involutive Banach algebra. Now, we prove the \cs-identity for the quotient norm. Let $a\in A$ and $b\in I$. We note that the net $(\|(a\s a+b) (1-u_\la)\|)_\la$ is bounded by $\|a\s a+b\|$, so we have
\begin{eqnarray*}
\|a+I\|^2&=& \lim_\la \|(1-u_\la)a\|^2=\lim_\la \|(1-u_\la)a\s a (1-u_\la)\|\\
&\leq& \lim_\la \|(1-u_\la)(a\s a+b) (1-u_\la)\|+ \lim_\la \|(1-u_\la)b(1-u_\la)\|\\
&\leq& \|a\s a+ b\| + \lim_\la \|(1-u_\la)b\|= \|a\s a+ b\|.
\end{eqnarray*}
Thus we have
\[
\|a+I\|^2 \leq \|a\s a+I\| \leq \|a\s+I\| \|a+I\|= \|a+I\|^2.
\]
The \cs-identity follows from this.
\end{itemize}
\end{proof}
\begin{corollary}
\label{cor:firstiso} The image of every \ss-homomorphism $\ff:A\ra B$ between two \cs-algebras is a \cs-algebra.
\end{corollary}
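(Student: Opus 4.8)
The statement to prove is Corollary \ref{cor:firstiso}: the image of a $\ast$-homomorphism $\ff:A\ra B$ between two \cs-algebras is itself a \cs-algebra. The natural approach is to apply the first isomorphism theorem combined with Proposition \ref{prop:csquotient}(ii), which tells us that a quotient of a \cs-algebra by a closed ideal is again a \cs-algebra. The strategy is therefore to realize $\ff(A)$ as a quotient of $A$ by the kernel of $\ff$, show that this kernel is a closed ideal, and then argue that the induced map onto $\ff(A)$ is an isometric $\ast$-isomorphism so that $\ff(A)$ inherits completeness and the \cs-identity.

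\emph{First steps.} Let $I:=\ker\ff$. This is a two-sided ideal of $A$ since $\ff$ is an algebra homomorphism, and it is closed because $\ff$ is continuous (every $\ast$-homomorphism into a \cs-algebra is norm decreasing by Proposition \ref{prop:normdecreasing}, hence continuous). Since $I$ is a closed ideal of a \cs-algebra, Proposition \ref{prop:closedidealcs} guarantees $I$ is self-adjoint, and Proposition \ref{prop:csquotient}(ii) then tells us that $A/I$ is a \cs-algebra. The homomorphism $\ff$ factors through the quotient as an injective $\ast$-homomorphism $\bar{\ff}:A/I\ra B$ with image exactly $\ff(A)$, via $\bar{\ff}(a+I):=\ff(a)$.

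\emph{The key step.} Now $\bar{\ff}:A/I\ra B$ is an injective $\ast$-homomorphism between \cs-algebras, so by Corollary \ref{cor:csinjection} it is an isometry. In particular $\bar{\ff}$ is an isometric isomorphism of $A/I$ onto its image $\ff(A)$. Since $A/I$ is complete, its isometric image $\ff(A)$ is complete, hence closed in $B$; and the \cs-identity transfers across the isometric $\ast$-isomorphism. Therefore $\ff(A)$ is a norm-closed $\ast$-subalgebra of $B$, which by the earlier observation that a norm-closed involutive subalgebra of a \cs-algebra is a \cs-algebra, makes $\ff(A)$ a \cs-algebra.

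\emph{Main obstacle.} The only real content lies in ensuring that $I=\ker\ff$ is genuinely closed and self-adjoint so that Proposition \ref{prop:csquotient} applies, and in invoking Corollary \ref{cor:csinjection} to upgrade injectivity of $\bar{\ff}$ to isometry. Everything else is routine factorization. The crucial point to emphasize is that the completeness of $\ff(A)$ — the nonobvious part, since images of homomorphisms need not be closed in general — follows precisely because the induced map $\bar{\ff}$ is isometric, which is a special feature of \cs-algebras established in Corollary \ref{cor:csinjection}. I expect the proof to be short, with the bulk of the work already carried by the cited results.
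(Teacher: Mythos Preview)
Your proof is correct and follows essentially the same route as the paper: factor $\ff$ through $A/\ker\ff$, apply Corollary~\ref{cor:csinjection} to conclude the induced injective \ss-homomorphism is an isometry, and deduce that $\ff(A)$ is complete and hence a closed \ss-subalgebra of $B$. You spell out a few more details (continuity of $\ff$, self-adjointness of $I$, the invocation of Proposition~\ref{prop:csquotient}(ii)) than the paper does, but the argument is the same.
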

\begin{proof}
Let $I$ be the kernel of $\ff$. Then the induced \ss-homomorphism $A/I\ra B$ is an injective \ss-homomorphism, and so an isometry by Corollary \ref{cor:csinjection}. Since $A/I$ is complete, $\ff(A)$ is closed in $B$, and so is complete. This proves that $\ff(A)$ is a \cs-algebra.
\end{proof}
The essence of the above corollary is the fact that the first isomorphism (in algebra setting) $A/I\simeq \ff(A)$ is an isometry. It leads us to the second isomorphism and its consequences.
\begin{corollary}
\label{cor:secondiso}
Let $I$ be a closed ideal of a \cs-algebra $A$ and let $B$ be a \cs-subalgebra of $A$. Then $B+I$ is a \cs-subalgebra of $A$ and there is an isometric \ss-isomorphism
\[
\frac{B+I}{I}\simeq \frac{B}{B\cap I }.
\]
This isometry implies
\[
\inf\{\|b+ a\|; a\in I\} =\inf \{ \|b+c\| ; c\in B\cap I\}, \qquad \forall b\in B.
\]
In particular, If $J$ is another closed ideal of $A$, then $I+J$ is also a closed ideal of $A$.
\end{corollary}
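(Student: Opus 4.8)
The plan is to realise the quotient $(B+I)/I$ as the image of a single $\ast$-homomorphism and then invoke Corollary \ref{cor:firstiso}, which guarantees that such an image is automatically a \cs-algebra, hence norm closed. First I would record the purely algebraic facts. Since $B$ and $I$ are both self adjoint subspaces, so is $B+I$; and for $b_1,b_2\in B$ and $a_1,a_2\in I$ the product $(b_1+a_1)(b_2+a_2)=b_1b_2+(b_1a_2+a_1b_2+a_1a_2)$ lies in $B+I$, because $b_1b_2\in B$ while the remaining three terms lie in $I$, as $I$ is a two sided ideal. Thus $B+I$ is a $\ast$-subalgebra of $A$; the only property missing for it to be a \cs-subalgebra is completeness.

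The heart of the argument is the quotient map $\pi:A\ra A/I$, which is a $\ast$-homomorphism onto a \cs-algebra by Proposition \ref{prop:csquotient}. I would restrict it to $B$, obtaining a $\ast$-homomorphism $\pi|_B:B\ra A/I$ between \cs-algebras. By Corollary \ref{cor:firstiso} its image $\pi(B)$ is a \cs-algebra, in particular norm closed in $A/I$. Since $\pi$ is continuous and $\pi(B)=\pi(B+I)$ (because $I=\ker\pi\subseteq B+I$), the set $B+I=\pi^{-1}(\pi(B))$ is closed in $A$. This is exactly the step I expect to carry the weight: closedness of the sum of a subalgebra and an ideal is not at all automatic in general Banach algebras, and here it is extracted cleanly from the automatic closedness of images of \cs-homomorphisms.

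With $B+I$ now a \cs-subalgebra, the induced map $\overline{\pi|_B}:B/(B\cap I)\ra A/I$ is an injective $\ast$-homomorphism, its kernel being precisely $B\cap I$, and its image is $\pi(B)=(B+I)/I$. By Corollary \ref{cor:csinjection} it is an isometry, which yields the desired isometric $\ast$-isomorphism $B/(B\cap I)\simeq (B+I)/I$. Reading this isometry on a coset via $b+(B\cap I)\mapsto b+I$ gives $\inf\{\|b+c\|;c\in B\cap I\}=\|b+(B\cap I)\|=\|b+I\|=\inf\{\|b+a\|;a\in I\}$ for all $b\in B$, which is the stated equality of infima. Finally, for a second closed ideal $J$, Proposition \ref{prop:closedidealcs} shows that $J$ is a \cs-subalgebra, so taking $B=J$ makes $I+J=J+I$ a \cs-subalgebra, hence closed; as the sum of two ideals it is visibly a two sided ideal, and therefore $I+J$ is a closed ideal of $A$.
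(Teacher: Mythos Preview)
Your proof is correct and follows essentially the same route as the paper: restrict the quotient map $\pi:A\ra A/I$ to $B$, invoke Corollary~\ref{cor:firstiso} to see that $\pi(B)$ is closed, conclude that $B+I=\pi^{-1}(\pi(B))$ is closed, and then apply Corollary~\ref{cor:csinjection} to the algebraic second isomorphism. You are simply more explicit than the paper in verifying that $B+I$ is a $\ast$-subalgebra, in spelling out the infimum equality, and in deriving the final claim about $I+J$.
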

\begin{proof}
Let $\ff: B\ra A/I$ be the composition of the inclusion map $\iota:B\hookrightarrow A$ and the quotient map $\pi:A\ra A/I$. Clearly, $\ff$ is a \ss-homomorphism, so its image is a \cs-subalgebra of $A/I$. One notes that $\pi\inv(\ff(B))=B+I$. Hence $B+I$ is closed in $A$, and so is complete. The isomorphism $\frac{B+I}{I}\simeq \frac{B}{B\cap I }$, which is called the second isomorphism in algebra, is clearly a \ss-isomorphism, so is an isometry by Corollary \ref{cor:csinjection}.
\end{proof}
Every \cs-algebra $A$ is an ideal of $\widetilde{A}$. Also, Proposition \ref{prop:compactoperators} implies that $K(H)$ is an ideal of the \cs-algebra $B(H)$ of bounded operators on a Hilbert space $H$. Let us characterize closed ideals of commutative \cs-algebras.
\begin{example}
\label{exa:opensubsets} Let $X$ be locally compact and Hausdorff topological space. A subset $I\sub C_0(X)$ is an ideal of $C_0(X)$ if and only if $I=C_0(U)$ for some open subset $U\sub X$. For given ideal $I$ of $C_0(X)$, set
\[
C:=\{x\in X; f(x)=0, \, \forall f\in I\}.
\]
Let $(x_i)$ be a net in $C$ converging to some point $x\in X$. Using Proposition \ref{prop:inversegelfand}, we have $f(x_i)=\widehat{x_i} (f)\ra \widehat{x}(f)=f(x)$ for all $f\in C_0(X)$ and this shows that $C$ is a closed subspace of $X$. Now, set $U:=X- C$. Clearly, $I\sub C_0(U)$. Let $g\notin I$. Then $\pi(g)\neq 0$, where $\pi: C_0(X)\ra C_0(X)/I$ is the quotient map. Since $C_0(X)/I$ is a \cs-algebra by Proposition \ref{prop:csquotient}(ii), there exists a $\ff\in \oom(C_0(X)/I)$ such that $\ff(\pi(g))\neq 0$. Assume $x$ is the point in $X$ such that $\hat{x}=\pi\s(\ff)= \ff \pi$. Then one easily checks that $x\in C$. But $g(x)=\hat{x}(g)\neq 0$. Therefore $g\notin I$. This shows that $C_0(U)\sub I$, and so $I=C_0(U)$. The other implication is easy.
\end{example}
\begin{exercise}
\label{exa:closedsubsets}
Let $X$ be locally compact and Hausdorff topological space. Prove that there is a bijective correspondence between quotients of $C_0(X)$ and closed subsets of $X$.
\end{exercise}
Let $S_1,\cdots, S_n$ be subsets of a \cs-algebra $A$. The closure of the linear span of all elements of the form $s_1\cdots s_n$, where $s_i\in S_i$ for all $i=1,\cdots,n$, is denoted by $S_1\cdots S_n$.
\begin{proposition}
\label{prop:idealproduct}
Let $I$ and $J$ be closed ideals of a \cs-algebra $A$. Then we have $I\cap J=IJ$.
\end{proposition}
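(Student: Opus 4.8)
The plan is to establish the two inclusions $IJ \subseteq I \cap J$ and $I \cap J \subseteq IJ$ separately; the first is a formal consequence of the ideal axioms, while the second is where the C*-structure, through approximate units, does the real work.

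For the inclusion $IJ \subseteq I \cap J$, I would start from the defining description of $IJ$ as the norm closure of the linear span of products $ab$ with $a \in I$ and $b \in J$. Since $I$ is a two sided ideal and $a \in I$, every such product $ab$ lies in $I$; symmetrically, since $J$ is a two sided ideal and $b \in J$, every $ab$ lies in $J$. Hence each generator $ab$ belongs to $I \cap J$, and so does their linear span. As $I \cap J$ is an intersection of two closed sets, it is itself closed, so it contains the closure of this span, namely $IJ$. This settles $IJ \subseteq I \cap J$.

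For the reverse inclusion $I \cap J \subseteq IJ$, the key device is an approximate unit for $I$. By Proposition \ref{prop:idealapprox}, the closed two sided ideal $I$ admits a net $(u_\lambda)$ of positive elements in its unit ball which is an approximate unit for $I$; in particular $\|u_\lambda a - a\| \to 0$ for every $a \in I$. Now fix $a \in I \cap J$. Since $a \in I$, we have $a = \lim_\lambda u_\lambda a$. But each term $u_\lambda a$ is a product of the element $u_\lambda \in I$ with the element $a \in J$, so $u_\lambda a$ lies in the linear span of products from $I$ and $J$, hence in $IJ$. Because $IJ$ is closed by definition, the limit $a$ of the net $(u_\lambda a)$ also lies in $IJ$. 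This proves $I \cap J \subseteq IJ$, and combined with the first inclusion yields $I \cap J = IJ$.

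I do not expect a serious obstacle here: once one recognizes that $IJ$ is closed and that $I$ possesses an approximate unit living inside $I$, the factorization $a = \lim_\lambda u_\lambda a$ immediately exhibits any $a \in I \cap J$ as a norm limit of elements of $IJ$. The only point requiring a little care is to invoke the correct one sided limit (left multiplication by $u_\lambda$, using that $a \in I$ to obtain convergence and that $a \in J$ to place $u_\lambda a$ in $IJ$); the symmetric choice, using an approximate unit for $J$ together with right multiplication, works equally well.
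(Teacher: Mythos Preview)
Your proof is correct and follows the same core idea as the paper: use an approximate unit $(u_\lambda)$ for $I$ to write any $a\in I\cap J$ as $\lim_\lambda u_\lambda a$ with $u_\lambda\in I$ and $a\in J$. Your version is in fact slightly more streamlined than the paper's, which first reduces to positive $a\in (I\cap J)_+$ and factors $a = \lim_\lambda (u_\lambda a^{1/2})\,a^{1/2}$; that detour through square roots is unnecessary, as your argument shows.
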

\begin{proof}
It is clear that $IJ\sub I\cap J$. To prove the reverse inclusion, it is enough to show that $a\in IJ$ for all $a\in (I\cap J)_+$. Since $I\cap J$ is a \cs-algebra and $a$ is positive, $a^{1/2}\in I\cap J$. Thus if $(u_\la)$ be an approximate unit for $I$, $u_\la a^{1/2}\in I$ for all $\la$ and $a^{1/2}\in J$. Therefore $a=\lim_\la u_\la a^{1/2} a^{1/2}\in IJ$.
\end{proof}

\begin{definition}
\label{def:hereditary}
Let $A$ be a \cs-algebra. A \cs-subalgebra $B$ of $A$ is called a {\bf hereditary \cs-subalgebra of $A$} if the inequality $a\leq b$ for $a\in A_+$ and $b\in B_+$ implies $a\in B$.
\end{definition}
Let $A$ be a \cs-algebra. Any intersection of hereditary \cs-subalgebras of $A$ is a hereditary \cs-subalgebra of $A$ as well. Therefore we can define the {\bf hereditary \cs-subalgebra generated by a subset $S\sub A$} to be the smallest hereditary \cs-subalgebra of $A$ containing $S$.
\begin{example}
\label{exa:hereditary}
Let $p$ be a projection in a \cs-algebra $A$, that is $p^2=p=p\s$. It is clear that $pAp=\{ pap; a\in A\}$ is a \cs-subalgebra of $A$. Moreover, $pAp$ is a hereditary \cs-subalgebra of $A$ which is called a corner in $A$. To show this, let $b\in A_+$ and let $b\leq pap$ for some $pap\in (pAp)_+$. Then by Exercise \ref{exe:positive3}(ii), we have $0\leq (1-p) b (1-p) \leq (1-p)pap(1-p)=0$. Therefore we get
\[
0=\|(1-p) b(1-p)\|=\|(1-p) b^{1/2} b^{1/2}(1-p)\|=\|b^{1/2}(1-p)\|^2,
\]
and so $b^{1/2}=b^{1/2} p$. Hence $b=b^{1/2} b^{1/2} =(b^{1/2})\s b^{1/2}=p b^{1/2} b^{1/2} p=pbp\in pAp$.

The corner $(1-p) A (1-p)$ is the {\bf complementary corner of $pAp$}, which is defined by $1-p$, the {\bf complementary projection of $p$}.
\end{example}
\begin{exercise}
Find an example of a \cs-algebra $A$, a projection $p\in A$, and $a\in A$ such that $a$ is not positive or even self adjoint, but $pap$ is positive.
\end{exercise}
\begin{theorem} Let $A$ be a \cs-algebra.
\label{thm:herecorres}
\begin{itemize}
\item [(i)] For every left closed ideal $J$ of $A$, $J\cap J\s$ is a hereditary \cs-subalgebra of $A$
\item [(ii)] The map $\th:J\mapsto J\cap J\s$ is a bijective correspondence between the set of all left closed ideals of $A$ and the set of all hereditary \cs-subalgebras of $A$. In fact, the inverse of this correspondence is given by the map $\th\inv:B\mapsto J_B$, where
\[
J_B:=\{a\in A; a\s a \in B \}.
\]
\item [(iii)] These correspondences preserve inclusions. In other words, for every two closed left ideals $J_1$ and $J_2$ of $A$, $J_1\sub J_2$ if and only $J_1\cap J_1\s\sub J_2\cap J_2\s$.
\end{itemize}
\end{theorem}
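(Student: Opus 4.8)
The plan is to make part (i) the technical heart, deduce part (ii) from two approximate-unit arguments together with the definition of $J_B$, and then read off part (iii) for free.

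For part (i), note first that $J\cap J\s$ is automatically a \cs-subalgebra: it is norm closed, a subalgebra, and self-adjoint (if $a\in J\cap J\s$ then $a\s\in J\s\cap J$). The real content is heredity. Given $a\in A_+$ and $b\in (J\cap J\s)_+$ with $a\le b$, it suffices to show $a\in J$, since $a=a\s$ then lands in $J\s$ as well. Passing to $\tilde A$ (where $b+1/n$ is invertible, and where $J$ is still a left ideal), I would set $v_n:=a^{1/2}(b+1/n)^{-1/2}$ and consider $v_nb^{1/2}$. Since $b\in J$ we have $b^{1/2}\in C\s(b)\sub J$ by the non-unital continuous functional calculus, so $v_nb^{1/2}\in J$ because $J$ is a left ideal. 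The crux is the estimate
\[
\|a^{1/2}-v_nb^{1/2}\|^2=\|f_n(b)\,a\,f_n(b)\|\le\|b\,f_n(b)^2\|\le\frac{1}{4n},
\]
where $f_n(t)=1-t^{1/2}(t+1/n)^{-1/2}$: the first inequality uses $a\le b$ via Exercise \ref{exe:positive3}(ii), and the second combines Lemma \ref{lem:ineqnorm} with the scalar bound $t\,f_n(t)^2\le 1/(4n)$ for $t\ge0$ (obtained from $\sqrt{t+1/n}+\sqrt t\ge2\sqrt t$). Hence $a^{1/2}=\lim_n v_nb^{1/2}\in J$, and then $a=a^{1/2}a^{1/2}\in J$. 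This factorization is the main obstacle; the rest is bookkeeping with approximate units.

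For part (ii) I would first verify that $\th\inv$ is well defined, i.e. that $J_B=\{a:a\s a\in B\}$ is a closed left ideal when $B$ is hereditary. Closedness follows from continuity of $a\mapsto a\s a$, and the linear and left-ideal structure from two domination inequalities: $a\s x\s x a\le\|x\|^2a\s a$ (Exercise \ref{exe:positive3}(iii)) and $(a+b)\s(a+b)\le2(a\s a+b\s b)$ (Proposition \ref{prop:positiveineq}(ii)). In each case the right-hand side lies in $B_+$, so heredity forces the left-hand side into $B$, giving $xa,\,a+b\in J_B$. For the round trip $J_{J\cap J\s}=J$: if $a\in J$ then $a\s a\in AJ\sub J$ is self-adjoint, so $a\s a\in J\cap J\s$ and $J\sub J_{J\cap J\s}$; conversely, if $a\s a\in J\cap J\s=:B$, take an approximate unit $(u_\la)$ for the \cs-algebra $B$ (Theorem \ref{thm:generalapprox}) and estimate $\|a-au_\la\|^2=\|(1-u_\la)a\s a(1-u_\la)\|\le\|a\s a(1-u_\la)\|\ra0$, so $a=\lim au_\la\in J$ since $u_\la\in B\sub J$ and $J$ is a left ideal.

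For the round trip $J_B\cap J_B\s=B$ with $B$ hereditary, the inclusion $B\sub J_B\cap J_B\s$ is immediate. For the reverse, take $a$ with $a\s a,\,aa\s\in B$ and an approximate unit $(u_\la)$ for $B$. From $a\s a\in B$ one gets $au_\la\ra a$ and from $aa\s\in B$ one gets $u_\la a\ra a$, hence $u_\la a u_\la\ra a$. Each $u_\la a u_\la$ lies in $B$: writing $a$ through its real and imaginary parts and their Jordan decompositions (Proposition \ref{prop:twopositive}) reduces the claim to $0\le u_\la c u_\la\le\|c\|u_\la\in B_+$ for $c\ge0$, where heredity applies. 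Thus $a\in\overline B=B$, and $\th,\th\inv$ are mutually inverse, using part (i) to know $\th(J)$ is hereditary. Finally, part (iii) is immediate: $J_1\sub J_2$ trivially gives $J_1\cap J_1\s\sub J_2\cap J_2\s$, and conversely $B_1\sub B_2$ yields $J_{B_1}\sub J_{B_2}$ directly from the definition of $J_B$, while $J_i=J_{B_i}$ by part (ii).
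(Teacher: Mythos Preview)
Your proof is correct, but it diverges from the paper in several places, and the trade-offs are worth noting.

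For part (i) the paper does not build the explicit resolvent-type sequence $v_nb^{1/2}=a^{1/2}(b+1/n)^{-1/2}b^{1/2}$; instead it invokes the right approximate unit $(u_\la)$ for $J$ furnished by Proposition~\ref{prop:idealapprox} and runs the same sandwich estimate $\|a^{1/2}(1-u_\la)\|^2=\|(1-u_\la)a(1-u_\la)\|\le\|(1-u_\la)b(1-u_\la)\|\le\|b(1-u_\la)\|\to0$. The underlying mechanism is identical---squeeze $a$ by $b$ after conjugating by something close to $1$---but your version is self-contained (no appeal to the earlier proposition) at the cost of the scalar calculus bound $t\,f_n(t)^2\le1/(4n)$.

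For the round trip $J_B\cap J_B\s=B$ in (ii), the paper is considerably slicker than your heredity-via-decomposition argument: it simply observes that for $b\in(J_B\cap J_B\s)_+$ one has $b^2=b\s b\in B$, whence $b=(b^2)^{1/2}\in B$ because $B$ is a \cs-algebra; since $J_B\cap J_B\s$ is spanned by its positives, this gives the inclusion in one line. Your route through $u_\la a u_\la\in B$ works, but the square-root trick is the cleaner idea to remember. Conversely, for the inclusion $J_B\sub J$ (with $B=J\cap J\s$) your approximate-unit estimate $\|a-au_\la\|^2\le\|a\s a(1-u_\la)\|\to0$ is the more transparent argument; the paper establishes only $(J_B)_+\sub J$ via the same square-root trick and then asserts $J_B\sub J$, which really needs exactly the step you wrote out. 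Finally, your proof of (iii) by reading it off from the bijection in (ii) is shorter than the paper's, which reproves the approximate-unit estimate once more.
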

\begin{proof}
\begin{itemize}
\item [(i)] Clearly, for every closed left ideal $J$ of $A$, $J\cap J\s$ is a \cs-subalgebra of $A$. Let $a\in A_+$ and $0\leq a\leq b$ for some $b\in J\cap J\s$. Let $(u_\la)$ be the net in the open unit ball of $J$ obtained in Proposition \ref{prop:idealapprox}. Then we have $b=\lim_\la bu_\la$. The inequalities $0\leq a\leq b$ implies $0\leq (1-u_\la)a (1-u_\la) \leq (1-u_\la)b(1-u_\la)$ for all $\la$. Hence we have
\[
\|a^{1/2}(1-u_\la)\|^2=\|(1-u_\la) a (1-u_\la)\|\leq \|(1-u_\la) b (1-u_\la)\|\leq \|b (1-u_\la)\|\ra 0.
\]
    This shows that $a^{1/2}=\lim_\la a^{1/2} u_\la$. Hence $a^{1/2}\in J$, and so $a\in J$.
\item [(ii)] First, we have to show that $\th\inv$ is well defined. Let $B$ be a hereditary \cs-subalgebra of $A$ and let $x,y \in J_B$. Then $x\s x, y\s y\in B$. Hence we have
\[
(x+y)\s (x+y) \leq  (x+y)\s (x+y) + (x-y)\s (x-y)=2x\s x +2 y\s y\in B,
\]
    and so $x+y\in J_B$. Now, let $a\in A$ and $b\in J_B$. Then we get $(ab)\s ab=b\s a\s ab\leq \|a\|^2 b\s b \in B$, so $ab\in J_B$. Similarly, one shows that $J_B$ is closed under scalar multiplication. Since $B$ is closed, one easily observes that $J_B$ is closed too. Therefore $J_B$ is a closed left ideal of $A$.

    Now, we show that the map $\th \th\inv$ is the same as the identity map on the set of hereditary \cs-subalgebras of $A$. For every $b\in B$, we have $b\s b, bb\s\in B$, so $b, b\s \in J_B$. This shows that $b\in J_B\cap J_B\s$. Hence $B\sub J_B\cap J_B\s$. Let $b\in (J_B\cap J_B\s)_+$. Then $b^2\in B$, and since $B$ is a \cs-algebra, $b\in B$. This shows that $(J_B\cap J_B\s)_+\sub B_+$, and so $(J_B\cap J_B\s)\sub B$.

    Finally, we prove that the map $\th\inv \th$ is the same as the identity map on the set of closed left ideals of $A$.  Let $J$ be a closed left ideal of $A$ and set $B:=J\cap J\s$. If $x\in J$, then $x\s x\in J\cap J\s =B$, and so $x\in J_B$. Hence $J\sub J_B$. Let $x$ be a positive element of $J_B$. Then $x^2=x\s x\in J\cap J\s=B$. Since $B$ is a \cs-algebra, we have $x\in B\sub J$. Hence $(J_B)_+\sub J_+$, and consequently $J_B\sub J$.
\item [(iii)] Let $J_1$ and $J_2$ be two closed left ideals of $A$. If $J_1\sub J_2$, then we have $J_1\cap J_1\s \sub J_2\cap J_2\s$. Conversely, let $J_1\cap J_1\s \sub J_2\cap J_2\s$ and let $a\in J_1$. Consider an approximate unit $(u_\la)$ for $J_1\cap J_1\s$. One notes that
\[
\lim_\la \|a(1-u_\la)\|^2 =\lim_\la \|(1-u_\la) a\s a (1-u_\la)\| \leq \lim_\la \|a\s a (1-u_\la) \|=0,
\]
because $a\s a\in J_1\cap J_1\s$. So, $a=\lim_\la au_\la$. On the other hand, for every $\la$, we have $u_\la \in J_1\cap J_1\s \sub J_2\cap J_2\s \sub J_2$. Hence $au_\la \in J_2$ for all $\la$, and consequently $a\in J_2$. Therefore $J_1\sub J_2$.
\end{itemize}
\end{proof}

\begin{corollary} Let $I$ be a closed ideal of a \cs-algebra $A$.
\label{cor:herecorres}
\begin{itemize}
\item [(i)] $I$ is a hereditary \cs-subalgebra of $A$.
\item [(ii)] Considering $I$ as a hereditary \cs-subalgebra of $A$, we have $I=J_I= \{a\in A; a\s a \in I \}$.
\item [(iii)] For every $a\in A$, we have $a\in I$ if and only if $a\s a\in I$ if and only if $aa\s\in I$.
\end{itemize}
\end{corollary}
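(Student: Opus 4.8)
The plan is to derive all three parts from Theorem \ref{thm:herecorres} together with the self-adjointness of closed two sided ideals established in Proposition \ref{prop:closedidealcs}. The single observation that unlocks everything is that a closed two sided ideal $I$ is in particular a closed \emph{left} ideal and satisfies $I=I\s$, so that $I\cap I\s=I$. Once this is in hand, each of the three statements reduces to reading off the appropriate piece of the hereditary correspondence.

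For part (i), I would note that $I$ is a closed left ideal and apply Theorem \ref{thm:herecorres}(i) to conclude that $I\cap I\s$ is a hereditary \cs-subalgebra of $A$; replacing $I\cap I\s$ by $I$ via Proposition \ref{prop:closedidealcs} then gives exactly the claim. For part (ii), the idea is to run the bijection of Theorem \ref{thm:herecorres}(ii) through $I$. Since the maps $\th:J\mapsto J\cap J\s$ and $\th\inv:B\mapsto J_B$ are mutually inverse on closed left ideals and hereditary \cs-subalgebras respectively, applying $\th\inv\th$ to the closed left ideal $I$ yields $J_{I\cap I\s}=I$; using $I\cap I\s=I$ once more collapses this to $J_I=I$, which is precisely the asserted equality $I=\{a\in A; a\s a\in I\}$.

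For part (iii), the equivalence $a\in I\Leftrightarrow a\s a\in I$ is just a restatement of part (ii). The remaining equivalence $a\in I\Leftrightarrow aa\s\in I$ I would obtain by applying part (ii) to $a\s$ in place of $a$ and invoking $I=I\s$: the condition $aa\s\in I$ reads $(a\s)\s a\s\in I$, i.e. $a\s\in J_I=I$, and $a\s\in I$ is equivalent to $a\in I$ by self-adjointness of $I$.

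There is essentially no serious obstacle here, since the heavy lifting has already been done in Theorem \ref{thm:herecorres}. The only points requiring care are keeping the direction of the correspondence straight (closed left ideals on one side, hereditary \cs-subalgebras on the other) and systematically using Proposition \ref{prop:closedidealcs} to collapse $I\cap I\s$ to $I$ at each step; everything else is a direct substitution.
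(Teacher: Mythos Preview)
Your proposal is correct and follows essentially the same approach as the paper: the paper's proof also rests entirely on the observation that $I=I\s=I\cap I\s$ (by Proposition~\ref{prop:closedidealcs}), so that $\th(I)=I$ and hence $\th\inv(I)=I$ by Theorem~\ref{thm:herecorres}(ii), with part (iii) deduced from (i) and (ii). Your write-up is slightly more explicit in unpacking part (iii), but the underlying argument is identical.
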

\begin{proof}
\begin{itemize}
\item [(i)] Because $I=I\s=I\cap I\s$. In other words, $\th(I)=I$ with the notation of the above theorem.
\item [(ii)] It follows from Part (ii) of the above theorem and (i), because $\th\inv (I)=I$.
\item [(iii)] It follows from (i) and (ii).
\end{itemize}
\end{proof}
Here is another characterization of hereditary \cs-subalgebras:
\begin{proposition}
\label{prop:eqhereditary}
A \cs-subalgebra $B$ of a \cs-algebra $A$ is hereditary if and only if $bab'\in B$ for all $b,b'\in B$ and $a\in A$.
\end{proposition}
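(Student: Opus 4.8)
The plan is to prove the two implications separately; in both directions I would reduce to positive elements via Corollary \ref{cor:4positive}, exploit the order estimates of Exercise \ref{exe:positive3}, and use that $B$, being a \cs-subalgebra, carries an approximate unit by Theorem \ref{thm:generalapprox} (applied to $B$ viewed as a dense two sided ideal of itself).

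For the forward implication, assume $B$ is hereditary. The crux is the auxiliary claim that $e\s a e\in B$ for every $e\in B$ and every $a\in A$. First I would write $a$ as a linear combination of four positive elements, so that it suffices to treat $a\in A_+$. For such $a$, Exercise \ref{exe:positive3}(iii) gives $0\leq e\s a e\leq \|a\| e\s e$, and since $e\s e\in B_+$ (as $B$ is closed under products and the involution) we also have $\|a\| e\s e\in B_+$; hence the defining property of a hereditary subalgebra, Definition \ref{def:hereditary}, forces $e\s a e\in B$. Once this claim is available, a general product $bab'$ is handled by the generalized polarization identity \eqref{eqn:polidentity2}: substituting $b\s$ for its $b$, $b'$ for its $a$, and $a$ for its $x$ yields $4bab'=\sum_{k=0}^3 i^k e_k\s a e_k$ with $e_k:=b'+i^k b\s\in B$, and each summand lies in $B$ by the claim, so $bab'\in B$.

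For the converse, assume $bab'\in B$ for all $b,b'\in B$ and $a\in A$, and let $a\in A_+$, $b\in B_+$ with $a\leq b$; I must show $a\in B$. Fix an approximate unit $(u_\la)$ for $B$. Conjugating $a\leq b$ by the self adjoint element $1-u_\la$ of $\tilde{A}$ and applying Exercise \ref{exe:positive3}(ii) together with Lemma \ref{lem:ineqnorm}, exactly as in the proof of Theorem \ref{thm:herecorres}(i), gives
\[
\|a^{1/2}(1-u_\la)\|^2=\|(1-u_\la)a(1-u_\la)\|\leq \|(1-u_\la)b(1-u_\la)\|\leq \|b(1-u_\la)\|\ra 0,
\]
so that $a^{1/2}=\lim_\la a^{1/2}u_\la$ and, taking adjoints, $a^{1/2}=\lim_\la u_\la a^{1/2}$. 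By joint continuity of the multiplication,
\[
u_\la a u_\la=(u_\la a^{1/2})(a^{1/2}u_\la)\ra a^{1/2}a^{1/2}=a.
\]
But $u_\la a u_\la\in B$ for every $\la$ by hypothesis (take $b=b'=u_\la\in B$), and $B$ is closed, hence $a\in B$.

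I expect the main subtlety to be the forward direction: recognizing that hereditarity yields the product rule $e\s a e\in B$ and selecting the correct substitution in the polarization identity. The estimate $e\s a e\leq \|a\|e\s e$ is precisely what converts the product into a comparison with the element $\|a\|e\s e$ of $B_+$ needed to invoke the definition of a hereditary subalgebra. In the converse, the only point demanding care is that $(u_\la)$ is an approximate unit for $B$ and not for $A$, so one cannot assert $au_\la\ra a$ directly; this is circumvented by passing through $a^{1/2}$, where the comparison $a\leq b$ with $b\in B$ supplies the required convergence.
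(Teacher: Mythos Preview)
Your proof is correct. The converse direction matches the paper's argument essentially verbatim (the paper uses the same approximate-unit squeeze through $a^{1/2}$).

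For the forward direction you take a genuinely different route. The paper invokes Theorem~\ref{thm:herecorres} to write $B=J\cap J\s$ for a closed left ideal $J$; then $bab'\in J$ because $b'\in J$, and $(bab')\s=b'\s a\s b\s\in J$ because $b\s\in J$, so $bab'\in J\cap J\s=B$. Your argument instead works straight from Definition~\ref{def:hereditary}: you first get $e\s a e\in B$ from the order estimate $0\leq e\s a e\leq \|a\|e\s e$ (for positive $a$, extended by Corollary~\ref{cor:4positive}), and then reach the asymmetric product $bab'$ via the generalized polarization identity~\eqref{eqn:polidentity2}. This is more self-contained---it does not rely on the ideal--hereditary correspondence---at the cost of an extra reduction step. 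The paper's route is shorter precisely because that correspondence has already been established.
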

\begin{proof}
Assume $B$ is hereditary, then $B=J\cap J\s$ for some closed left ideal $J$ of $A$. If $b,b'\in B$ and $a\in A$, then $(ba)b'\in J$, because $b'\in J$ and $(bab')\s=(b'\s a\s) b\s\in J$, because $b\in J\s$.  Therefore $bab' \in J\cap J\s=B$.

Conversely, assume $B$ is a \cs-subalgebra of $A$ such that $ba b' \in B$ for all $b,b'\in B$ and $a\in A$. Let $c \in A_+$, $d \in B_+$, and $c\leq d$. Consider an approximate unit $(u_\la )$ for $B$. Then $0\leq c\leq d$ implies $0\leq (1-u_\la) c ( 1-u_\la ) \leq (1-u_\la) d (1-u_\la)$, and so
\[
\|c^{1/2}(1-u_\la)\| \leq \|d^{1/2}(1-u_\la)\|, \qquad \forall\, \la.
\]
Hence $c^{1/2}=\lim_\la c^{1/2} (1-u_\la)$, because $d^{1/2}=\lim_\la d^{1/2} (1-u_\la)$. Since $u_\la cu_\la\in B$ for all $\la$, this shows that $c=\lim_\la u_\la c u_\la\in B$. Thus $B$ is hereditary.
\end{proof}
\begin{exercise}
\label{exe:idealplushere}
Let $B$ be a hereditary \cs-subalgebra of a \cs-algebra $A$ and let $I$ be a closed ideal of $A$. Show that $B+I$ is a hereditary \cs-subalgebra of $A$.
\end{exercise}
\begin{corollary}
Let $a$ be a positive element of a \cs-algebra $A$. Then the closure of $aAa$ is the hereditary \cs-subalgebra of $A$ generated by $a$.
\end{corollary}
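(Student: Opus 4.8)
The plan is to set $B:=\overline{aAa}$ and to establish three facts: that $B$ is a \cs-subalgebra, that it is hereditary and contains $a$, and that it is contained in every hereditary \cs-subalgebra containing $a$. Together these identify $B$ with the smallest hereditary \cs-subalgebra containing $a$, i.e. the one generated by $a$.

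First I would dispatch the structural points. The set $aAa$ is a self-adjoint subalgebra, since $(axa)(aya)=a(xa^2y)a$ and $(axa)\s=ax\s a$ using $a=a\s$; hence its closure $B$ is a \cs-subalgebra. To see $a\in B$ I would use the non-unital continuous functional calculus over $a$ (Remark \ref{rem:nonunitalcfc}): taking the continuous functions $g_n(t):=t/(t^2+1/n)$, which vanish at $0$, the element $ag_n(a)a=a^2g_n(a)$ corresponds to $t\mapsto t^3/(t^2+1/n)$, and the elementary estimate $\sup_{t\geq 0}|t-t^3/(t^2+1/n)|=\sup_{t\geq 0}\frac{t/n}{t^2+1/n}\leq \frac{1}{2\sqrt n}\to 0$ shows, since the calculus is isometric, that $ag_n(a)a\to a$; as each $ag_n(a)a\in aAa$, we get $a\in B$. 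For heredity I would invoke the criterion of Proposition \ref{prop:eqhereditary}, which requires $bxb'\in B$ for all $b,b'\in B$ and $x\in A$. By density of $aAa$ in $B$ and continuity of multiplication it suffices to test this on $b=aya$, $b'=aza$, where $bxb'=a(yaxaz)a\in aAa\subseteq B$.

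The substantive step is minimality. Let $C$ be any hereditary \cs-subalgebra with $a\in C$; I would show $aAa\subseteq C$, whence $B=\overline{aAa}\subseteq C$ because $C$ is closed. Since $C$ is a linear subspace and every $x\in A$ is a linear combination of four positive elements (Corollary \ref{cor:4positive}), it is enough to treat $x\in A_+$. For such $x$ the element $axa$ is positive, as $axa=a^{1/2}(a^{1/2}xa^{1/2})a^{1/2}$ is positive by two applications of Proposition \ref{prop:positivecone}(iii), and by Exercise \ref{exe:positive3}(iii) one has $axa\leq \|x\|a^2$. Now $a^2\in C$ because $C$ is an algebra containing $a$, so $0\leq axa\leq \|x\|a^2\in C_+$, and heredity of $C$ forces $axa\in C$.

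I expect the main obstacle to be the verification that $a\in\overline{aAa}$, since it is the only place needing genuine analytic input rather than formal manipulation; the explicit functional-calculus approximation above is the cleanest route, as a bare approximate-unit argument would not directly produce elements of the form $axa$. Everything else reduces to the hereditary criterion, the inequality $axa\leq\|x\|a^2$, and the four-positive decomposition, all already available.
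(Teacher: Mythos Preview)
Your proof is correct, and the overall architecture matches the paper's: set $B=\overline{aAa}$, verify it is a hereditary \cs-subalgebra containing $a$, then show minimality. The differences lie in the two nontrivial steps.

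For $a\in B$, the paper takes a shorter path than yours: using an approximate unit $(u_\la)$ for $A$, one has $au_\la a\to a^2$, so $a^2\in B$; since $B$ is a \cs-algebra, $a=(a^2)^{1/2}\in B$. So contrary to your closing remark, an approximate-unit argument \emph{does} directly produce elements of the form $axa$, namely $au_\la a$; the trick is that it yields $a^2$ rather than $a$, and one then takes a square root inside $B$. Your explicit functional-calculus approximation is perfectly fine and more self-contained, but the approximate-unit-plus-square-root route is slicker.

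For minimality, you work harder than necessary. Once you have Proposition~\ref{prop:eqhereditary} in hand, the argument is a single line: if $C$ is hereditary and $a\in C$, then $axa\in C$ for every $x\in A$ immediately from that criterion, so $aAa\subseteq C$ and hence $B\subseteq C$. Your detour through the four-positive decomposition and the inequality $axa\le\|x\|a^2$ is correct and uses only the \emph{definition} of heredity rather than its characterization, which has some pedagogical value, but it is redundant given the tools already invoked.
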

\begin{proof}
Set $B:=\overline{aAa}$. By the above proposition, it is straightforward to check that $B$ is a hereditary \cs-subalgebra of $A$. Let $(u_\la)$ be an approximate unit for $A$. Then $a^2=\lim_\la au_\la a$, so $a^2\in B$. Since $B$ is a \cs-algebra, $a\in B$ as well. Again, it follows from the above proposition that $B$ is the smallest hereditary \cs-subalgebra of $A$ containing $a$.
\end{proof}
All separable hereditary \cs-subalgebras are of the form described in the above proposition:
\begin{proposition}
Let $B$ be a separable hereditary \cs-subalgebra of a \cs-algebra $A$. Then there exists some $a\in A$ such that $B=\overline {aAa}$.
\end{proposition}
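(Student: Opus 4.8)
The plan is to produce the element $a$ as a suitable weighted series built from a countable approximate unit of $B$, and then to identify $B$ with $\overline{aAa}$ by playing the hereditary property of both $B$ and $\overline{aAa}$ against the order estimates satisfied by $a$.

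First I would invoke the proposition that every separable \cs-algebra is $\si$-unital to obtain a countable (sequential) approximate unit $(u_n)$ for $B$; by Definition \ref{def:approxtypes} each $u_n$ is positive with $\|u_n\|\le 1$. Set
\[
a:=\sum_{n=1}^\infty 2^{-n}u_n .
\]
This series is absolutely convergent (its $n$th term has norm at most $2^{-n}$), so it converges to an element $a$ of the complete space $B$; since each partial sum is positive and $A_+$ is closed (Proposition \ref{prop:positivecone}(i)), $a\in B_+$. The crucial order estimate is that, for each fixed $n$, the difference $a-2^{-n}u_n=\sum_{m\neq n}2^{-m}u_m$ is again a norm limit of finite sums of positive elements, hence lies in $A_+$; therefore $u_n\le 2^n a$.

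Next I would set $H:=\overline{aAa}$ and record two facts about it, both already available. By the corollary preceding this statement, $H$ is the hereditary \cs-subalgebra of $A$ generated by $a$, and in particular $a\in H$, so $2^n a\in H_+$. The containment $H\subseteq B$ is immediate: since $a\in B$ and $B$ is hereditary, Proposition \ref{prop:eqhereditary} gives $axa\in B$ for every $x\in A$, so $aAa\subseteq B$ and hence $H\subseteq B$ by closedness. For the reverse containment I would first deduce $u_n\in H$ for every $n$: because $H$ is hereditary, $u_n\in A_+$, and $u_n\le 2^n a\in H_+$, Definition \ref{def:hereditary} forces $u_n\in H$. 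This is the step where the whole construction pays off, and I expect the only subtlety to be arranging the order inequality $u_n\le 2^n a$ so that this hereditary argument applies verbatim.

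Finally, to obtain $B\subseteq H$, I would take an arbitrary $b\in B$ and use the approximate-unit identity $u_n b u_n\to b$ (the proposition on approximate units, applied with $B$ in the role of $A$). Since $u_n\in H$ and $H$ is hereditary, Proposition \ref{prop:eqhereditary}, applied with both outer factors equal to $u_n\in H$ and middle factor $b\in A$, shows $u_n b u_n\in H$; as $H$ is closed, $b\in H$. Combining the two inclusions yields $B=\overline{aAa}$, as required. The main obstacle is conceptual rather than computational: one must see that the scalars $2^{-n}$ are chosen only to force convergence while still keeping each $u_n$ dominated by a scalar multiple of $a$, so that membership in $H$ can be read off purely from the hereditary property, without any functional-calculus factorization argument.
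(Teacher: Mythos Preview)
Your proof is correct and follows essentially the same approach as the paper: both construct $a=\sum_{n\ge 1}2^{-n}u_n$ from a sequential approximate unit of $B$, use the order inequality $u_n\le 2^n a$ together with the hereditary property of $\overline{aAa}$ to get each $u_n\in\overline{aAa}$, and then conclude via $u_n b u_n\to b$. Your write-up is slightly more explicit in justifying the inclusion $\overline{aAa}\subseteq B$ via Proposition~\ref{prop:eqhereditary}, whereas the paper invokes the preceding corollary directly, but the argument is the same.
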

\begin{proof}
Let $(u_n)_{n\in \n}$ be a sequential approximate unit for $B$ and set
\[
a:=\sum_{n=1}^\infty \frac{u_n}{2^n}.
\]
Clearly, $a\in B_+$, and so $\overline{aAa} \sub B$ by the above corollary. On the other hand, for every $n\in \n$, $\frac{u_n}{2^n}\leq a$, so $u_n\in \overline{aAa}$. For every $b\in B$, we have $b=\lim_{n\ra \infty} u_n b u_n$. Therefore by Proposition \ref{prop:eqhereditary}, we have $b\in \overline{aAa}$. Hence $B\sub \overline{aAa}$. This completes the proof.
\end{proof}
\begin{proposition}
Let $B$ be a hereditary \cs-subalgebra of a unital \cs-algebra $A$ and let $a\in A_+$. Assume for every $\ep>0$, there exists some $b\in B_+$ such that $a\leq b+\ep$. Then $a\in B$.
\end{proposition}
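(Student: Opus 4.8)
The plan is to approximate $a$ from \emph{inside} $B$ by elements of the form $e_\ep a e_\ep$, where $e_\ep$ is a carefully chosen element of $B$ of norm at most one, and then invoke the closedness of $B$. Fix $\ep>0$ and choose $b\in B_+$ with $a\le b+\ep$. Since $b\in B_+$ and $B$ is a \cs-subalgebra, the non-unital continuous functional calculus of $b$ produces $g(b)\in C\s(b)\sub B$ for every continuous $g$ with $g(0)=0$. I would take
$e_\ep:=b^{1/2}(b+\ep)^{-1/2}=g(b)$ with $g(t)=t^{1/2}(t+\ep)^{-1/2}$, so that $e_\ep\in B$ is positive with $\|e_\ep\|\le 1$. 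Then $e_\ep a e_\ep\in B$ by the characterization of hereditary subalgebras in Proposition \ref{prop:eqhereditary}, applied with $e_\ep,e_\ep\in B$ and $a\in A$.

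The key estimate will be $\|(1-e_\ep)a(1-e_\ep)\|\le \ep$, computed in $\tilde A$. From $a\le b+\ep$ and Exercise \ref{exe:positive3}(ii), applied to the self-adjoint element $1-e_\ep$, I get $0\le (1-e_\ep)a(1-e_\ep)\le (1-e_\ep)(b+\ep)(1-e_\ep)$, where positivity of the middle term comes from Proposition \ref{prop:positivecone}. Now $e_\ep$ and $b$ lie in the commutative \cs-algebra $C\s(b,1)$, so under the Gelfand transform the upper bound corresponds to the function $t\mapsto \big((t+\ep)^{1/2}-t^{1/2}\big)^2$, whose supremum over $[0,\infty)$ is at most $\ep$ because $(t+\ep)^{1/2}-t^{1/2}=\ep\big((t+\ep)^{1/2}+t^{1/2}\big)^{-1}\le \ep^{1/2}$. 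Hence $\|(1-e_\ep)(b+\ep)(1-e_\ep)\|\le \ep$, and Lemma \ref{lem:ineqnorm} gives $\|(1-e_\ep)a(1-e_\ep)\|\le \ep$. Applying the \cs-identity to $x=a^{1/2}(1-e_\ep)$ yields $\|a^{1/2}(1-e_\ep)\|=\|(1-e_\ep)a(1-e_\ep)\|^{1/2}\le \ep^{1/2}$, and taking adjoints $\|(1-e_\ep)a^{1/2}\|\le \ep^{1/2}$.

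To finish, I would write $a-e_\ep a e_\ep=(1-e_\ep)a+e_\ep a(1-e_\ep)$, factor each summand through $a^{1/2}$, and estimate
\[
\|a-e_\ep a e_\ep\|\le \|(1-e_\ep)a^{1/2}\|\,\|a^{1/2}\|+\|e_\ep\|\,\|a^{1/2}\|\,\|a^{1/2}(1-e_\ep)\|\le 2\ep^{1/2}\|a\|^{1/2}.
\]
Letting $\ep\ra 0$ exhibits $a$ as a norm limit of the elements $e_\ep a e_\ep\in B$, whence $a\in B$ since $B$ is closed.

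I expect the main obstacle to be the middle paragraph. The clean bound $\ep$ hinges on choosing $e_\ep=b^{1/2}(b+\ep)^{-1/2}$ precisely so that the function-calculus computation collapses to $\big((t+\ep)^{1/2}-t^{1/2}\big)^2$; a naive choice such as $b(b+\ep)^{-1}$ would secure $e_\ep a e_\ep\in B$ but spoil the quadratic estimate. Some care is also needed because $1-e_\ep$ and the order manipulations live in $\tilde A$ while the conclusion must be read back in $A$; this is harmless since the inclusion $A\hookrightarrow\tilde A$ is an isometry, so all the norms appearing above are unambiguous.
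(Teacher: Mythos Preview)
Your proof is correct and follows essentially the same strategy as the paper: build $e_\ep\in B$ from $b$ via functional calculus, show $\|a^{1/2}(1-e_\ep)\|$ is small, and conclude that $e_\ep a e_\ep\to a$ in norm with $e_\ep a e_\ep\in B$ by Proposition~\ref{prop:eqhereditary}. The only cosmetic difference is bookkeeping: the paper first applies the hypothesis with $\ep^2$ to arrange $a\le b_\ep^2+\ep^2\le (b_\ep+\ep)^2$ and then takes $e_\ep=b_\ep(b_\ep+\ep)^{-1}$, so that $1-e_\ep=\ep(b_\ep+\ep)^{-1}$ gives $\|a^{1/2}(1-e_\ep)\|\le\ep$ by a one-line algebraic identity rather than your Gelfand computation --- ironically exactly the ``naive'' form you warned against, made to work by the preliminary squaring.
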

\begin{proof} For given $\ep>0$, pick $b_\ep\in B_+$ such that $a\leq b_\ep^2 +\ep^2$. This implies $a\leq b_\ep^2 +\ep^2 +2\ep b_\ep=(b_\ep +\ep)^2$. We also note that $b_\ep + \ep$ is an invertible element in $B$. Therefore $ (b_\ep +\ep)\inv a (b_\ep +\ep)\inv\leq 1$. We also observe that $1-b_\ep (b_\ep +\ep)\inv = \ep(b_\ep +\ep)\inv$. Using these facts, we have
\begin{eqnarray*}
\|a^{1/2} (1-b_\ep (b_\ep +\ep)\inv\|^2&=& \ep^2 \|a^{1/2} (b_\ep +\ep)\inv \|^2\\
&=& \ep^2 \|(b_\ep +\ep)\inv a (b_\ep +\ep)\inv\| \leq \ep^2.
\end{eqnarray*}
Hence we have  $a^1/2=\lim_{\ep\ra 0} a^{1/2} b_\ep (b_\ep +\ep)\inv$, and similarly, $a^1/2=\lim_{\ep\ra 0} (b_\ep +\ep)\inv b_\ep a^{1/2}$. Since all parts of this latter limit are positive elements, by taking adjoint, we get $a= \lim_{\ep\ra 0}(b_\ep +\ep)\inv b_\ep a b_\ep (b_\ep +\ep)\inv$.  Since $B$ is hereditary, by Proposition \ref{prop:eqhereditary}, we have $(b_\ep +\ep)\inv b_\ep a b_\ep (b_\ep +\ep)\inv\in B$, and so $a\in B$.
\end{proof}
The following theorem is a helpful tool to examine the ideal structure of \cs-algebras:
\begin{proposition}
\label{prop:ideashered}
Let $B$ be a hereditary \cs-subalgebra of a \cs-algebra $A$. A subset $J\sub B$ is a closed ideal of $B$ if and only if the exists a closed ideal $I$ of $A$ such that $J=B\cap I$.
\end{proposition}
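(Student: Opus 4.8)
The plan is to prove the two implications separately. The implication that starts from a closed ideal $I$ of $A$ is routine, so I would dispose of it in one short paragraph: setting $J:=B\cap I$, the set $J$ is a closed \cs-subalgebra of $A$ (an intersection of two such), and it is an ideal of $B$ because for $b\in B$ and $x\in B\cap I$ we have $bx\in B$ (as $B$ is a subalgebra) and $bx\in I$ (as $I$ is an ideal of $A$ and $b\in A$), hence $bx\in B\cap I$, and symmetrically $xb\in B\cap I$.

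For the converse, given a closed ideal $J$ of $B$, the natural candidate is $I:=\overline{AJA}$, the closed two sided ideal of $A$ generated by $J$; by Proposition \ref{prop:closedidealcs} it is automatically a \cs-subalgebra. The inclusion $J\subseteq B\cap I$ is the easy half: $J\subseteq B$ holds by hypothesis, and, picking an approximate unit $(u_\la)$ for the \cs-algebra $J$ via Proposition \ref{prop:idealapprox}, one writes $j=\lim_\la u_\la j u_\la\in\overline{AJA}$ for every $j\in J$, so $J\subseteq I$ as well. Everything then reduces to proving the reverse inclusion $B\cap I\subseteq J$.

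To handle $B\cap I\subseteq J$, I would first observe that, since $J$ has an approximate unit, $J=\overline{J^2}$ and hence $I=\overline{AJA}=\overline{AJ^2A}$; thus an arbitrary $a\in B\cap I$ can be approximated in norm by finite sums $w=\sum_k p_kj_k'j_k''q_k$ with $p_k,q_k\in A$ and $j_k',j_k''\in J$. I would then sandwich with an approximate unit $(v_\mu)$ for $B$, for which $v_\mu a v_\mu\to a$ since $a\in B$. The key claim is that $v_\mu w v_\mu\in J$ for every $\mu$. For a single summand one checks first that $v_\mu p_kj_k'\in B$ by the product characterization of hereditary subalgebras (Proposition \ref{prop:eqhereditary}); then, using $\|v_\mu\|\le 1$ together with Exercise \ref{exe:positive3}, one gets $(v_\mu p_kj_k')\s(v_\mu p_kj_k')\le\|p_k\|^2\, j_k'^{\,\s}j_k'\in J$, so this positive element lies in $J$ by heredity of $J$ in $B$, whence $v_\mu p_kj_k'\in J$ by Corollary \ref{cor:herecorres}(iii) applied inside $B$. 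Writing the summand as $(v_\mu p_kj_k')(j_k''q_kv_\mu)$, whose second factor lies in $B$ (again Proposition \ref{prop:eqhereditary}) and whose first factor lies in the ideal $J$ of $B$, the product lies in $J$. Summing over $k$ gives $v_\mu w v_\mu\in J$; letting $w\to a$ with $\mu$ fixed (and $\|v_\mu\|\le 1$) and using that $J$ is closed gives $v_\mu a v_\mu\in J$, and finally letting $\mu$ run gives $a=\lim_\mu v_\mu a v_\mu\in J$.

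The main obstacle is exactly this reverse inclusion, and the conceptual difficulty behind it is that $J$ is an ideal only of $B$ and not of $A$, so the approximate unit of $J$ cannot by itself absorb the outer $A$-factors appearing in elements of $\overline{AJA}$. The device that overcomes this is the combination of sandwiching by an approximate unit of $B$ and factoring the inner $J$-entries as $j_k=j_k'j_k''$; this converts each summand into a genuine product of an element of $J$ with an element of $B$, at which point the ideal property of $J$ in $B$ finishes the argument. The passage ``$y\in B$ and $y\s y\in J$ force $y\in J$'' (Corollary \ref{cor:herecorres}(iii) read inside $B$) is the technical heart, and the step I would be most careful to justify.
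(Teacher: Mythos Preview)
Your argument is correct and follows the same strategy as the paper: take $I=\overline{AJA}$, check $J\subseteq B\cap I$, and for the reverse inclusion sandwich with an approximate unit of $B$ and exploit Proposition~\ref{prop:eqhereditary} together with a factorization of $J$ through its approximate unit. The one difference worth noting is that the paper factors $J=J^3$ rather than $J=J^2$; with three $J$-factors one writes a typical term as $(v_\mu p\,j_1)\,j_2\,(j_3 q\,v_\mu)\in B\,J\,B\subseteq J$ directly, so the extra heredity step (your appeal to Corollary~\ref{cor:herecorres}(iii) inside $B$ to push $v_\mu p_k j_k'$ from $B$ into $J$) becomes unnecessary. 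Your route works, but the $J^3$ trick buys a slightly shorter line.
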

\begin{proof} Let $J$ be a closed ideal of $B$ and set $I:=AJA$. Then $I$ is a closed ideal of $A$. Using an approximate unit for $J$, one easily sees that $J=J^3$. On the other hand, since $B$ is hereditary, using an approximate unit for $B$ and by applying Proposition \ref{prop:eqhereditary}, we have $B\cap I=BIB$. Again, we apply Proposition \ref{prop:eqhereditary} to obtain
\[
B\cap I = BIB = B(AJA)B= BAJ^3AB\sub BJB=J.
\]
The reverse inclusion is trivial. Also the converse implication is clear.
\end{proof}
To illustrate an application of the above proposition in ideal structure of \cs-algebras, we need a definition:
\begin{definition}
A \cs-algebra $A$ is called {\bf simple} if $0$ and $A$ are its only closed ideals.
\end{definition}
\begin{exercise}
Prove that every non-zero \ss-homomorphism from $\ff:\c\ra \c$ equals identity. Conclude that $\c$ is a simple \cs-algebra.
\end{exercise}
In the above example, it is enough to assume $\ff$ is a non-zero algebraic homomorphism.
\begin{proposition}
\label{prop:simplematrixalg}
For all $n\in \n$, the \cs-algebra $M_n=M_n(\c)$ is simple.
\end{proposition}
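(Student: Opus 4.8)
The plan is to prove the stronger (purely algebraic) statement that \emph{every} nonzero two-sided ideal $I$ of $M_n$ is all of $M_n$; simplicity in the sense of the definition, where $I$ is moreover assumed closed, then follows immediately, and indeed closedness will play no role since the argument produces the unit $1\in I$. So first I would fix a nonzero closed ideal $I$ of $M_n$ and choose some $0\neq a\in I$. Writing $a=\sum_{k,l=1}^n a_{kl}E_{kl}$ in terms of the matrix units $E_{ij}$ from Exercise \ref{exe:category}(iv), I may pick a pair of indices $(k,l)$ with $a_{kl}\neq 0$.

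The crux of the proof is the extraction identity for matrix units. Recall the multiplication rule $E_{ij}E_{kl}=\d_{jk}E_{il}$. Using it twice, for arbitrary $i,j$ I would compute
\[
E_{ik}\,a\,E_{lj}=\sum_{p,q=1}^n a_{pq}\,E_{ik}E_{pq}E_{lj}=\sum_{q=1}^n a_{kq}\,E_{iq}E_{lj}=a_{kl}\,E_{ij}.
\]
Since $I$ is a two-sided ideal and $a\in I$, the left-hand side lies in $I$, and because $a_{kl}\neq 0$ I may divide by the scalar $a_{kl}$ to conclude $E_{ij}=a_{kl}\inv E_{ik}\,a\,E_{lj}\in I$ for every $i,j$.

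Finally I would sum the diagonal matrix units: $1=\sum_{i=1}^n E_{ii}\in I$, whence $I=M_n$, since an ideal containing an invertible element is the whole algebra (Exercise \ref{ex:ex2-1}(i)). There is no real obstacle here; the single point that must be handled carefully is the index bookkeeping in the displayed computation $E_{ik}\,a\,E_{lj}=a_{kl}E_{ij}$, which is exactly the device that lets one manufacture every matrix unit from a single nonzero element. I would close by remarking that this argument also reproves, as a special case, that $\c=M_1(\c)$ is simple.
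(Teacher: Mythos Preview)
Your proof is correct and follows essentially the same approach as the paper's: both pick a nonzero entry of a nonzero element of the ideal and use the matrix-unit identity $E_{ik}\,a\,E_{lj}=a_{kl}E_{ij}$ to land every $E_{ij}$ in $I$. The only cosmetic difference is that the paper first extracts $E_{11}$ and then writes $E_{ij}=E_{i1}E_{11}E_{1j}$, whereas you go directly to an arbitrary $E_{ij}$ in one step; your version is a mild streamlining of the same argument.
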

\begin{proof}
Let $n\in \n$ is given. Using the standard basis of $\c^n$, we can consider $M_n$ as the algebra of all $n\times n$ matrices with entries in $\c$. Then $M_n$ as a complex vector space is generated by elementary matrices $E_{ij}$ for all $1\leq i,j \leq n$, see Exercise \ref{exe:category}(iv). Let $I\neq 0$ be an ideal of $M_n$. Pick $0\neq T=(t_{ij})\in I$ and assume $t_{rs}\neq0$ for some $1\leq r,s \leq n$. Then it is straightforward to check that $E_{11}= \frac{1}{t_{rs}} E_{1r} T E_{s1} \in I$. Similarly, one checks that
\[
E_{ij}=  E_{i1} E_{11} E_{1j} \in I, \qquad \forall 1\leq i,j \leq n,
\]
and therefore $M_n=I$.
\end{proof}
Again, $M_n$ is simple even as an algebra. We can also rephrase the above proposition by saying that every \ss-homomorphism (or just algebraic homomorphism) from $M_n$ into another \cs-algebra (or just complex algebra) is either one-to-one or zero.
\begin{proposition}
\label{prop:heredsimple}
Hereditary \cs-subalgebras of a simple \cs-algebra are simple.
\end{proposition}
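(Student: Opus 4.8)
The plan is to reduce the entire statement to Proposition \ref{prop:ideashered}, which already carries the nontrivial content. Let $B$ be a hereditary \cs-subalgebra of a simple \cs-algebra $A$, and let $J$ be an arbitrary closed ideal of $B$; the goal is to show $J$ is either $0$ or $B$. First I would invoke Proposition \ref{prop:ideashered} in the forward direction: since $J$ is a closed ideal of the hereditary \cs-subalgebra $B$, there exists a closed ideal $I$ of $A$ with $J = B\cap I$.

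Next I would bring in the simplicity of $A$. Because $0$ and $A$ are the only closed ideals of $A$, the ideal $I$ satisfies either $I = 0$ or $I = A$. In the first case $J = B\cap 0 = 0$, and in the second case $J = B\cap A = B$. Hence the only closed ideals of $B$ are $0$ and $B$, which is precisely the assertion that $B$ is simple.

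The hard part is not in this argument at all: it has been absorbed into Proposition \ref{prop:ideashered}, whose proof sets up the correspondence $J\mapsto AJA$ and verifies the identity $B\cap AJA = J$ using hereditarity (through Proposition \ref{prop:eqhereditary}) together with $J = J^3$. Once that correspondence between closed ideals of $B$ and closed ideals of $A$ is in hand, simplicity transfers from $A$ to $B$ with no further work. The only points requiring a little care are to cite Proposition \ref{prop:ideashered} in the right direction (from ideals of $B$ to ideals of $A$) and to record both extreme cases $I = 0$ and $I = A$ explicitly.
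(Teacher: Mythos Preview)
Your proof is correct and follows exactly the same route as the paper: invoke Proposition~\ref{prop:ideashered} to write an arbitrary closed ideal $J$ of $B$ as $B\cap I$ for some closed ideal $I$ of $A$, then use simplicity of $A$ to force $I=0$ or $I=A$, giving $J=0$ or $J=B$. Your additional commentary about the real work being hidden in Proposition~\ref{prop:ideashered} is accurate and matches the paper's organization.
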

\begin{proof}
Assume $A$ is a simple \cs-algebra, $B$ is a hereditary \cs-subalgebra of $A$, and $J$ is a closed ideal of $B$. Then $J=B\cap I$ for some closed ideal of $A$, by Proposition \ref{prop:ideashered}. Since $A$ is simple, $I=A$ or $I=0$. Therefore $J=B$ or $J=0$.
\end{proof}

We conclude this section with a brief discussion of multiplier algebras of \cs-algebras. In the rest of this section, $A$ is a \cs-algebra. The \cs-unitization of $A$ studied in Section \ref{sec:topalgebras} is the smallest \cs-algebra that contains $A$ as an ideal. However, we have to impose a certain condition to be able to determine the biggest unitization for a \cs-algebra. The phrase `` the biggest unitization'' will be explained shortly.

\begin{definition}
Let $R$ be a ring. A two sided ideal $I$ of $R$ is called an {\bf essential ideal} if every other non-zero ideal of $R$ has a non-zero intersection with $I$.
\end{definition}
One easily sees that $A$ is always an essential ideal of $\widetilde{A}$. We shall show that the ideal $F(H)$ of finite rank operators on a Hilbert space $H$ is an essential ideal of $B(H)$, so is $K(H)$, see Proposition \ref{prop:finiterank3}. In the following example, we describe essential ideals of commutative \cs-algebras.
\begin{example}
\label{exa:opendense}
Let $X$ be locally compact and Hausdorff topological space. An ideal $I$ of $C_0(X)$ is essential if and only if $I=C_0(U)$ for some open and dense subset $U\sub X$. The correspondence between ideals of $C_0(X)$ and open subsets of $X$ has already been discussed in Example \ref{exa:opensubsets}. Assume $U$ is an open but not dense, subset of $X$, so there is an open subset $O\sub X$ such that $O\cap U=\emptyset$. One observes that $C_0(O)\cap C_0(U)= 0$. Hence $C_0(U)$ is not an essential ideal of $C_0 (X)$. Conversely assume $U$ is an open and dense subset of $X$. If $C_0(U')$ is a non-zero ideal of $C_0(X)$, then $U'$ must be non-empty, and so $O=U\cap U'\neq \emptyset$. One observes that $0\neq C_0(O)= C_0(U)\cap C_0(U')$. Therefore $C_0(U)$ is an essential ideal of $C_0(X)$.
\end{example}

\begin{definition}
\label{def:doublecentralizer}
An ordered pair $(L,R)$ of bounded operators on $A$ is called a {\bf double centralizer for $A$} if for every $a,b\in A$, we have
\[
L(ab)=L(a)b, \quad R(ab)=aR(b), \quad\text{and}\quad R(a)b=aL(b).
\]
The set of all double centralizers for $A$ is denoted by $M(A)$.
\end{definition}
\begin{example}
\label{exa:doublecentralizer}
For every $c\in A$, define $L_c, R_c\in B(A)$ by $L_c(a):=ca$ and $R_c(a):=ac$. then the pair $(L_c,R_c)$ is a double centralizer for $A$ and one easily checks that $\|L_c\|=\|R_c\|=\|c\|$.
\end{example}
This suggests the following proposition:
\begin{proposition}
\label{prop:doublecentralizer1}
\begin{itemize}
\item [(i)] If $(L,R)$ is a double centralizer for $A$, then $\|L\|=\|R\|$.
\item [(ii)] If $A$ is a unital \cs-algebra, then every double centralizer for $A$ is of the form $(L_c,R_c)$ for some $c\in A$.
\item [(iii)] $M(A)$ is a closed subspace of $B(A)\oplus B(A)$, where the norm on $B(A)\oplus B(A)$ is defined by  $\|(T, S)\|:=\max\{\|T\|,\|S\|\}$ for all $(T, S)\in B(A)\oplus B(A)$.
\end{itemize}
\end{proposition}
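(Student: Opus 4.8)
The plan is to treat the three parts in order, leaning on the \cs-identity throughout. The key analytic fact I will use repeatedly is that in a \cs-algebra one has $\|x\|=\sup\{\|xy\|;\|y\|\leq 1\}=\sup\{\|yx\|;\|y\|\leq 1\}$ for every $x\in A$. The first equality was already established in the proof of Proposition \ref{prop:csunitization}, and the second follows symmetrically: for $x\neq 0$, taking $y=x\s/\|x\|$ gives $\|yx\|=\|x\s x\|/\|x\|=\|x\|$, while $\|yx\|\leq\|x\|$ whenever $\|y\|\leq 1$.

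For (i), I would combine this norm formula with the defining relation $R(a)b=aL(b)$. Fixing $b\in A$ and using the left-multiplication version,
\[
\|L(b)\|=\sup_{\|a\|\leq 1}\|aL(b)\|=\sup_{\|a\|\leq 1}\|R(a)b\|\leq \|R\|\,\|b\|,
\]
so $\|L\|\leq\|R\|$. Conversely, fixing $a$ and using the right-multiplication version,
\[
\|R(a)\|=\sup_{\|b\|\leq 1}\|R(a)b\|=\sup_{\|b\|\leq 1}\|aL(b)\|\leq \|a\|\,\|L\|,
\]
which gives $\|R\|\leq\|L\|$. Hence $\|L\|=\|R\|$.

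Part (ii) is purely algebraic once a unit is available. Put $c:=L(1)$. Then for every $a\in A$ we have $L(a)=L(1\cdot a)=L(1)a=ca=L_c(a)$, so $L=L_c$. Evaluating the relation $R(a)b=aL(b)$ at $a=b=1$ shows $R(1)=L(1)=c$, and then $R(a)=R(a\cdot 1)=aR(1)=ac=R_c(a)$, so $R=R_c$. Thus $(L,R)=(L_c,R_c)$, where $(L_c,R_c)$ is a double centralizer by Example \ref{exa:doublecentralizer}. For (iii), linearity of the three defining identities in the pair $(L,R)$ shows immediately that $M(A)$ is a linear subspace of $B(A)\oplus B(A)$. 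For closedness I would take a sequence $(L_n,R_n)$ in $M(A)$ converging to $(L,R)$ in the norm $\max\{\|L-L_n\|,\|R-R_n\|\}$; then $L_n\to L$ and $R_n\to R$ in $B(A)$, hence $L_n(x)\to L(x)$ and $R_n(x)\to R(x)$ for every $x\in A$. Passing to the limit in each of $L_n(ab)=L_n(a)b$, $R_n(ab)=aR_n(b)$, and $R_n(a)b=aL_n(b)$, and using joint continuity of the multiplication, yields the three identities for $(L,R)$; since $B(A)$ is complete, $L$ and $R$ are bounded, so $(L,R)\in M(A)$.

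I expect no serious obstacle here; the only point requiring genuine care is the norm identity underlying (i), which is exactly where the \cs-structure (rather than that of a general Banach algebra) is used. The remaining arguments are bookkeeping with the double-centralizer relations together with a routine limiting argument using the joint continuity of multiplication.
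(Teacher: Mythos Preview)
Your proof is correct and follows essentially the same route as the paper: the norm equality in (i) is obtained from the identity $R(a)b=aL(b)$ together with $\|x\|=\sup_{\|y\|\leq 1}\|yx\|=\sup_{\|y\|\leq 1}\|xy\|$, part (ii) is handled by evaluating at $1$, and (iii) by passing the defining identities to the limit via continuity of multiplication. You have simply made explicit a few steps the paper leaves to the reader (the symmetric inequality in (i), and why $L(1)=R(1)$ in (ii)), and correctly flagged that the \cs-identity is what makes the norm formula underlying (i) available.
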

\begin{proof}
\begin{itemize}
\item [(i)] For every $a,b\in A$, we have $\|aL(b)\|=\|R(a)b\|\leq \|R\|\|a\|\|b\|$. Hence
\begin{eqnarray*}
\|L(b)\|&=&\sup \{\|aL(b)\|; \|a\|\leq 1\}\\
&\leq& \sup \{\|R\|\|a\|\|b\|; \|a\|\leq 1\}\\
&\leq& \|R\|\|b\|.
\end{eqnarray*}
This shows that $\|L\|\leq\|R\|$. The reverse inequality follows from a similar argument.
\item [(ii)] Let $A$ be unital and let $(L,R)$ be a double centralizer for $A$. Set $c:=L(1)=R(1)$. Then for every $a\in A$, we have  $L_c(a)=ca=L(1)a=L(a)$ and similarly $R_c(a)=R(a)$.
\item [(iii)] It follows from the fact that all three equations in Definition \ref{def:doublecentralizer} pass the limit by continuity of product.
\end{itemize}
\end{proof}
Regarding the above proposition, it makes sense to define the norm of a double centralizer $(L,R)$ for $A$ by
\[
\|(L,R)\|:=\|L\|=\|R\|.
\]
The scalar product is defined by $\la(L,R):=(\la L, \la R)$ for all $\la\in \c$ and we define the product of two double centralizers $(L_1,R_1), (L_2, R_2)\in M(A)$ by
\[
(L_1,R_1) (L_2, R_2):=(L_1L_2,R_1R_2).
\]
We also define an involution on $M(A)$ by
\[
(L,R)\s:= (R\s, L\s), \qquad \forall (L,R)\in M(A),
\]
where $L\s(a):=L(a\s)\s$ and $R\s(a):=R(a\s)\s$ for all $a\in A$.
\begin{exercise}
Prove that $M(A)$ is an involutive Banach algebra with the above operations.
\end{exercise}
\begin{proposition}
\label{prop:multiplier1}
\begin{itemize}
\item [(i)] The algebra $M(A)$ is a \cs-algebra.
\item [(ii)] The double centralizer $(id_A, id_A)$ is the unit element of $M(A)$.
\item [(iii)] The map $a\mapsto (L_a, R_a)$ is an injective \ss-homomorphism from $A$ into $M(A)$.
\end{itemize}
\end{proposition}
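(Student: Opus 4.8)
The statements (ii) and (iii) are direct verifications, while the substance of the proposition lies in the \cs-identity of (i). The plan is to dispose of (ii) and (iii) first, using only the definitions and Example \ref{exa:doublecentralizer}, and then to establish the \cs-identity for a general double centralizer by exploiting the defining relation $R(x)a=xL(a)$. Throughout I may assume, from Proposition \ref{prop:doublecentralizer1} and the exercise preceding it, that $M(A)$ is a closed subspace of $B(A)\oplus B(A)$, that it is an involutive Banach algebra under the stated operations, and that $\|L\|=\|R\|$ together with $\|L\s\|=\|L\|$ and $\|R\s\|=\|R\|$ for every $(L,R)\in M(A)$. In particular $M(A)$ is complete, so for (i) only the \cs-identity remains to be shown.

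For (ii), I would first check that $(id_A,id_A)$ is a double centralizer: each of the three identities of Definition \ref{def:doublecentralizer} reduces to $ab=ab$. It is then a two-sided unit since $(id_A,id_A)(L,R)=(id_A\, L,\,id_A\, R)=(L,R)$, and symmetrically on the right. For (iii), linearity of $a\mapsto (L_a,R_a)$ is immediate. The \ss-property follows from the operator identities $R_a\s=L_{a\s}$ and $L_a\s=R_{a\s}$, which I would obtain by unwinding the definitions, e.g. $R_a\s(x)=R_a(x\s)\s=(x\s a)\s=a\s x=L_{a\s}(x)$; together these give $(L_a,R_a)\s=(L_{a\s},R_{a\s})$, the image of $a\s$. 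Multiplicativity reduces to $L_aL_b=L_{ab}$ and $R_bR_a=R_{ab}$, both immediate from associativity in $A$, so that $(L_a,R_a)(L_b,R_b)=(L_{ab},R_{ab})$. Finally, injectivity is free: by Example \ref{exa:doublecentralizer}, $\|(L_a,R_a)\|=\|L_a\|=\|a\|$, so the map is an isometry and in particular injective.

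It remains to prove the \cs-identity $\|T\s T\|=\|T\|^2$ for $T=(L,R)\in M(A)$. Since $T\s T=(R\s,L\s)(L,R)$ has first component $R\s L$, and $\|T\|=\|L\|$, this amounts to showing $\|R\s L\|=\|L\|^2$. The inequality $\|R\s L\|\leq \|R\s\|\,\|L\|=\|L\|^2$ is just submultiplicativity. For the reverse inequality I would fix $a\in A$ and apply the defining relation $R(x)a=xL(a)$ with $x=L(a)\s$, which gives $L(a)\s L(a)=R(L(a)\s)\,a=(R\s L(a))\s\, a$; taking norms and using the \cs-identity in $A$,
\[
\|L(a)\|^2=\|L(a)\s L(a)\|=\|(R\s L(a))\s\, a\|\leq \|R\s L(a)\|\,\|a\|\leq \|R\s L\|\,\|a\|^2.
\]
Taking the supremum over $\|a\|\leq 1$ yields $\|L\|^2\leq \|R\s L\|$, and hence equality. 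Combined with completeness and the involutive Banach algebra structure, this shows $M(A)$ is a \cs-algebra.

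The one genuinely nontrivial step, and the main obstacle, is the inequality $\|L\|^2\le \|R\s L\|$: the decisive trick is to recognize, via the centralizer axiom $R(x)a=xL(a)$ applied to $x=L(a)\s$, that the positive element $L(a)\s L(a)$ is the value of the operator $R\s L$ twisted by $a$, so that the \cs-identity in $A$ itself supplies the needed lower bound. Everything else is routine bookkeeping with the definitions of the operations on $M(A)$.
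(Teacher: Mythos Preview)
Your argument is correct and essentially identical to the paper's: both establish the \cs-identity by computing $\|L(a)\|^2=\|L(a)\s L(a)\|$ and invoking the double-centralizer relation $R(x)a=xL(a)$ with $x=L(a)\s$; you package the result as $\|T\s T\|=\|T\|^2$ via $R\s L$, while the paper writes $L(a)\s=L\s(a\s)$ and bounds by $\|RL\s\|$ to obtain $\|TT\s\|=\|T\|^2$ instead, a purely cosmetic difference. Parts (ii) and (iii) are handled the same way in both, with the paper simply declaring them straightforward.
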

\begin{proof}
\begin{itemize}
\item [(i)] We only need to check the \cs-identity for $M(A)$. Let $(L,R)\in M(A)$. For every $a\in A$ such that $\|a\|\leq 1$, we have
\begin{eqnarray*}
\|L(a)\|^2&=&\|L(a)\s L(a)\|= \|L\s(a\s) L(a)\|\\
&=&\| R( L\s(a\s)) a\|\leq \|R L\s (a\s)\|\\
&\leq& \|RL\s\|=\|(LR\s,RL\s)\|\\
&=& \|(L,R) (R\s, L\s)\|=\|(L,R)(L,R)\s\|
\end{eqnarray*}
Hence
\begin{eqnarray*}
\|(L,R)\|^2&=&\|L\|^2= \sup \{\|L(a)\|^2; \|a\|\leq 1 \} \\
&\leq&  \|(L,R)(L,R)\s\|\leq  \|(L,R)\|^2.
\end{eqnarray*}
Therefore $\|(L,R)\|^2=\|(L,R)(L,R)\s\|$.
\item [(ii)] It is straightforward to check.
\item [(iii)] One easily checks that the map $a\mapsto (L_a, R_a)$ is a \ss-homomorphism and, by Example \ref{exa:doublecentralizer}, it is an isometry. Hence it is injective.
\end{itemize}
\end{proof}
The \cs-algebra $M(A)$ is called the {\bf multiplier algebra of $A$}. By identifying $A$ with its image in $M(A)$, we often consider $A$ as a \cs-subalgebra of $M(A)$.
\begin{proposition}
\label{prop:aessentialinmofa} Every \cs-algebra $A$ is an essential ideal of $M(A)$.
\end{proposition}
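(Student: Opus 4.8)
The plan is to first verify that $A$, identified with its image $\{(L_a,R_a);\,a\in A\}$ under the isometric embedding of Proposition \ref{prop:multiplier1}(iii), is a closed two sided ideal of $M(A)$, and then to deduce essentiality by a short multiplication argument. Closedness is immediate: the embedding $a\mapsto (L_a,R_a)$ is an isometry by Example \ref{exa:doublecentralizer} and Proposition \ref{prop:multiplier1}(iii), so the image of the complete space $A$ is complete, hence norm closed in $M(A)$. It remains to check that products of a general double centralizer with an element of $A$ land back in (the image of) $A$.

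The technical heart of the argument is the observation that the $L$-component of a double centralizer determines its $R$-component. Fix an approximate unit $(u_\la)$ for $A$, which exists by Theorem \ref{thm:generalapprox} applied to the dense ideal $I=A$. For any double centralizer $(L',R')$ and any $a\in A$, the relation $R'(a)b=aL'(b)$ from Definition \ref{def:doublecentralizer} gives $R'(a)=\lim_\la R'(a)u_\la=\lim_\la aL'(u_\la)$, so $R'$ is recovered from $L'$. Consequently two double centralizers with equal first components are equal. Now, for $(L,R)\in M(A)$ and $a\in A$, I would compute the first component of the product $(L,R)(L_a,R_a)=(LL_a,RR_a)$: since $LL_a(b)=L(ab)=L(a)b$, we get $LL_a=L_{L(a)}$. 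By the uniqueness just established, the whole product must equal $(L_{L(a)},R_{L(a)})$, i.e.\ the image of $L(a)\in A$. A symmetric computation, using $aL(b)=R(a)b$ to rewrite $L_aL(b)=aL(b)=R(a)b$, shows $(L_a,R_a)(L,R)$ is the image of $R(a)\in A$. Together with stability under scalars and adjoints this proves $A$ is a closed two sided ideal of $M(A)$.

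For essentiality, let $J$ be any non-zero ideal of $M(A)$ and pick $0\neq (L,R)\in J$. By Proposition \ref{prop:doublecentralizer1}(i) we have $\|L\|=\|R\|$, so $(L,R)\neq 0$ forces $L\neq 0$, and hence there exists $a\in A$ with $L(a)\neq 0$. Then the product $(L,R)(L_a,R_a)$ lies in $J$ because $J$ is an ideal, while the computation above identifies it with the image of $L(a)$, which is a non-zero element of $A$ since the embedding is injective. Thus $J\cap A\neq 0$, and $A$ is an essential ideal of $M(A)$.

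The step I expect to be the main obstacle is the first one, specifically proving that $A$ is genuinely an ideal of $M(A)$; the naive computation of $(L,R)(L_a,R_a)$ only directly yields information about the $L$-component, and one must supply the approximate-unit lemma showing the $R$-component is then forced, rather than merely hoping the pair $(L_{L(a)},R_{R(a)})$ has matching entries. Once that lemma is in place, essentiality is a one-line consequence.
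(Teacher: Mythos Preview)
Your proof is correct and follows the same two-step strategy as the paper: first show $A$ is a two sided ideal via an ``$L$-component determines $R$-component'' argument, then exhibit a non-zero element of $J\cap A$ as a product. The only difference is tactical: where you invoke an approximate unit to recover $R'$ from $L'$, the paper simply notes that the difference of two double centralizers is again in $M(A)$ and applies the norm equality $\|L\|=\|R\|$ of Proposition~\ref{prop:doublecentralizer1}(i) to that difference, which is a bit quicker than your approximate-unit lemma.
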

\begin{proof}
First, we show that $A$ is an ideal of $M(A)$. For given $(L,R)\in M(A)$ and $c\in A$, set $\alpha:=R(c)$. We claim $(L_\alpha, R_\alpha)=(L_c,R_c)(L,R)$. For every $a\in A$, we compute $L_\alpha(a)=R(c)a=cL(a)=L_c(L(a))$, so $L_\alpha=L_cL$, or equivalently $\|L_\alpha-L_cL\|=0$. This implies $\|R_\alpha-R_cR\|=0$, and so $R_\alpha=R_c R$. This proves our claim and shows that $A$ is a right ideal of $M(A)$. Since $A$ is an involutive subalgebra of $M(A)$, it is a left ideal of $M(A)$ as well.

Now, let $I$ be a non-zero ideal of $M(A)$ and let $0\neq (L,R)\in I$. So there is $a\in A$ such that $x:=L(a)\neq0$. The double centralizer $(L_{x\s}, R_{x\s})(L,R)$ belongs to both $A$ and $I$, since both are ideals of $M(A)$. On the other hand, $L_{x\s} (L(a))= x\s L(a)=x\s x\neq 0$. This shows that $I\cap A\neq 0$.
\end{proof}
\begin{proposition}
\label{prop:multiplier2} Let $I$ be a closed ideal of $A$. Then there is a \ss-homomorphism $\ff:A\ra M(I)$ extending the inclusion $I\hookrightarrow M(I)$. Moreover, $\ff$ is one-to-one if and only if $I$ is essential in $A$. In particular, if a \cs-algebra $B$ contains $A$ as an essential ideal, then there is a one-to-one \ss-homomorphism from $B$ into the multiplier algebra $M(A)$.
\end{proposition}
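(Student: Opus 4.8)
The plan is to build $\ff$ directly from the multiplication of $A$ and then read injectivity off the annihilator of $I$. First I would fix $a\in A$ and define $L_a,R_a:I\ra I$ by $L_a(x):=ax$ and $R_a(x):=xa$. Since $I$ is a two sided ideal, these maps take values in $I$, and they are bounded with $\|L_a\|,\|R_a\|\leq\|a\|$. The three identities of Definition \ref{def:doublecentralizer} are immediate from associativity: $L_a(xy)=a(xy)=(ax)y=L_a(x)y$, $R_a(xy)=(xy)a=x(ya)=xR_a(y)$, and $R_a(x)y=(xa)y=x(ay)=xL_a(y)$. Hence $(L_a,R_a)\in M(I)$, and I set $\ff(a):=(L_a,R_a)$.

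Next I would verify that $\ff:A\ra M(I)$ is a \ss-homomorphism extending the canonical inclusion. Linearity is clear. Multiplicativity reduces to the identities $L_{ab}=L_aL_b$ and $R_{ab}=R_bR_a$, which again follow from associativity and are exactly the components of the product in $M(I)$ (so here care is needed with the composition orders). For the involution, a short computation using $L\s(x)=L(x\s)\s$ and $R\s(x)=R(x\s)\s$ gives $R_a\s=L_{a\s}$ and $L_a\s=R_{a\s}$, whence $\ff(a)\s=(R_a\s,L_a\s)=(L_{a\s},R_{a\s})=\ff(a\s)$; this uses that $I$ is self adjoint, which holds by Proposition \ref{prop:closedidealcs}. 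Finally, for $a\in I$ the pair $(L_a,R_a)$ is precisely the image of $a$ under the embedding $I\hookrightarrow M(I)$ of Proposition \ref{prop:multiplier1}(iii), so $\ff$ extends that inclusion. Continuity is then automatic, since a \ss-homomorphism into the \cs-algebra $M(I)$ is norm decreasing by Proposition \ref{prop:normdecreasing}.

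For the equivalence, I would first identify the kernel: $a\in\ker\ff$ if and only if $aI=Ia=0$, so $\ker\ff$ is the two sided annihilator of $I$, a closed ideal of $A$. It meets $I$ trivially, because $a\in\ker\ff\cap I$ forces $aa\s=0$ (as $a\s\in I$), hence $a=0$. Now if $I$ is essential, the ideal $\ker\ff$ has zero intersection with $I$, so it must be $0$, and $\ff$ is injective. Conversely, if $\ff$ is injective, let $J$ be any non-zero ideal of $A$; for $a\in J$ and $x\in I$ both $ax$ and $xa$ lie in $I\cap J$, so $I\cap J=0$ would give $J\sub\ker\ff=0$, a contradiction. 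Hence $I\cap J\neq 0$ and $I$ is essential. The last assertion is the special case $I=A$ sitting inside $B$: applying the proposition with $B$ in place of $A$ and the essential ideal $A$ in place of $I$ produces an injective \ss-homomorphism $B\ra M(A)$.

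I do not anticipate a real obstacle; the only delicate points are the bookkeeping in the product and involution formulas of $M(I)$ (in particular getting the order $R_{ab}=R_bR_a$ correct) and the observation that $\ker\ff$ is exactly the annihilator of $I$, which is what powers both directions of the equivalence.
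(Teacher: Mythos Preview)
Your argument is correct and follows the same approach as the paper: define $\ff(a)=(L_a,R_a)$, identify $\ker\ff$ as the annihilator of $I$, and deduce the equivalence from the fact that this annihilator is an ideal meeting $I$ trivially. The paper is terser---it omits the verification of the \ss-homomorphism axioms and, for the direction ``$\ff$ injective $\Rightarrow$ $I$ essential'', appeals to the proof of Proposition~\ref{prop:aessentialinmofa} rather than your annihilator computation---but the substance is the same.
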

\begin{proof} For every $a\in A$, the pair $(L_a,R_a)$ is a double centralizer for $I$. Hence we define $\ff: a\mapsto (L_a,R_a)$. It is clear that $\ff$ extends the inclusion $I\hookrightarrow M(I)$.

Assume $\ff$ is one-to-one, then $A$ is identified with a \cs-subalgebra of $M(I)$. Let $J$ be a non-zero ideal of $A$. Since $I$ is contained in $A$, the same argument as the proof of Proposition \ref{prop:aessentialinmofa} shows that $I\cap J\neq 0$.

Conversely, assume that $I$ is essential in $A$. Then the kernel of $\ff$ is a closed ideal in $A$ whose intersection with $I$ is zero. Therefore it has to be zero.
\end{proof}
Since $M(A)$ is unital, the above proposition explains our earlier statement about $M(A)$ being the biggest unitization of $A$. Of course, we have to impose the condition that every unitization contains $A$ as an essential ideal.

\begin{remark}
\label{rem:multiplier}
The above proposition proves a universal property for the multiplier algebra of a \cs-algebra. This universal property is often used to define the multiplier algebras abstractly. Then the algebra of all double centralizers for a \cs-algebras becomes a model for the abstract multiplier algebra. There are other models for multiplier algebras.
\end{remark}
\begin{example}
\label{exa:multipliercommutative}
Let $X$ be a locally compact and Hausdorff topological space. One easily observes that $C_0(X)$ is a closed ideal of $C_b(X)$. We claim that $C_b(X)$ contains $C_0(X)$ as an essential ideal. Let $I$ be an ideal of $C_b(X)$ and $f\in I$ be a non-zero bounded and continuous function from $X$ into $\c$. Pick $x_0\in X$ such that $f(x_0)\neq 0$. Hence there exists an open neighborhood $U$ around $x$ such that $f(x)\neq 0$ for all $x\in U$. Since $X^\infty$ is a normal space, by Urysohn's lemma, there is a continuous function $g:X\ra [0,1]$ such that $g(x_0)=1$ and $supp(g)\sub U$. Clearly, $0\neq gf\in C_0(X)\cap I$. This proves our claim.

Therefore by Proposition \ref{prop:multiplier2}, there is a one-to-one \ss-homomorphism $\ff:C_b(X)\ra M(C_0(X)$. We prove that $\ff$ is onto, and so an isomorphism. To prove this it is enough to show that, for every $c\in M(C_0(X))_+$, there is $g\in C_0(X)$ such that $\ff(g)=c$. Let $(u_\la)$ be an increasing approximate unit for $C_0(X)$. then for every $x\in X$, the net $(cu_\la(x))$ lies in $[0,1]$ and we have $cu_\la(x)\leq \|cu_\la\|_{\sup} \leq \|c\|_{\sup}$ for all $\la$, so this net is bounded above. Similarly, one checks that this net is increasing. This shows that this net is convergent, and consequently we can define $g:X\ra \c$ by $g(x):=\lim_\la cu_\la (x)$. Clearly, $g$ is a non-negative bounded function on $X$. Moreover, for every $f\in C_0(X)$, we have
\[
gf(x)=\lim_\la cu_\la(x) f(x)=c\lim_\la u_\la(x) f(x)= (c\lim_\la u_\la f)(x)=cf(x).
\]
Hence $gf=cf\in C_0(X)$.

Next, we show that it is also continuous and so a member of $C_b(X)$. Let $(x_\mu)$ be a net in $X$ convergent to a point $x_0$. We choose a compact neighborhood $K$ around $x_0$ and assume $(x_\mu)$ lies in $K$. By Urysohn's lemma, there is a continuous function $h\in C_0(X)$ such that $h=1$ on $K$. Since $hg=gh\in C_0(X)$, we have
\[
g(x_0)=hg(x_0)= hg(\lim_\mu x_\mu)=\lim_\mu hg(x_\mu)=\lim_\mu g(x_\mu).
\]
Therefore $g\in C_b(X)$. For every $f\in C_0(X)$, we have
\[
\ff(g)f=\ff(g)\ff(f)= \ff(gf)=gf= cf,
\]
and similarly $f\ff(g)=fc$. These two equalities show that
\begin{equation}
\label{eqn:multipliercomm}
(\ff(g)- c)C_0(X)=0=C_0(X)(\ff(g)-c).
\end{equation}
If $\ff(g)-c\neq 0$, let $I$ be the non-zero ideal of $M(C_0(X))$ generated by $\ff(g)-c$. It follows from (\ref{eqn:multipliercomm}) that $IC_0(X)=0$. But this contradicts with the fact that $C_0(X)$ is an essential ideal of $M(C_0(X))$. Therefore $c=\ff(g)$. This shows that $\ff$ is onto.
\end{example}
As another example for multiplier algebras, we shall show that $B(H)$ is the multiplier algebra of the \cs-algebra $K(H)$ of compact operators on a Hilbert space $H$, see Proposition \ref{prop:multipliercompact}.

\section{Problems}

\begin{e}
\label{e:4-1}
\begin{itemize}
\item [(i)] Find an ideal of the commutative \cs-algebra $C_0(\r)$ that is not closed.
\item [(ii)] Find an ideal of the commutative \cs-algebra $C_0(\c)$ that is not self adjoint.
\end{itemize}
\end{e}
\begin{e}
\label{e:4-2}
Let $X$ be a locally compact and Hausdorff topological space.
\begin{itemize}
\item [(i)] Show that $X$ is {\bf $\si$-compact}, namely $X$ can be covered by a sequence of its compact subsets, if and only if there exists some $f\in C_0(X)$ such that $f(x)>0$ for all $x\in X$.
\item [(ii)] Assume $X$ is $\si$-compact and $f$ is a function as described in the above.  Show that $\{f^{1/n}\}_{n\in \n}$ is a sequential approximate unit for $C_0(X)$. Describe a condition on $f$ which implies that $\{f^{1/n}\}_{n\in \n}$ is an increasing approximate unit for $C_0(X)$.
\item [(iii)] Show that $C_0(X)$ is $\si$-unital if and only if $X$ is $\si$-compact.
\end{itemize}
\end{e}
\begin{e}
\label{e:4-3}
Let $\ff:A\ra B$ be an isometric linear map between two unital \cs-algebras such that $\ff(a\s)=\ff(a)\s$ for all $a\in A$ and $\ff(1)=1$. Show that $\ff$ is a positive map.
\end{e}
\begin{e}
\label{e:4-4}
Let $A$ be a \cs-algebra. A semi-norm $N$ on $A$ is called a {\bf \cs-semi-norm} if for every $a,b\in A$, we have
\[
N(a)\leq \|a\|, \quad N(ab)\leq N(a)N(b), \quad N(a\s a)=N(a)^2.
\]
The set of all \cs-semi-norms on $A$ is denoted by $\mathcal{N}(A)$.
\begin{itemize}
\item [(i)] Equip $\mathcal{N}(A)$ with the point-wise convergence topology, namely a net $(N_i)$ of \cs-semi-norms is convergent to a \cs-semi-norm $N$ if and only if $N_i(a)\ra N(a)$ for all $a\in A$. Show that $\mathcal{N}(A)$ is compact in this topology.
\item [(ii)] For a closed ideal $I$ of $A$, define a \cs-semi-norm $N_I$ by $N_I(a):=\|\pi (a)\|$ for all $a\in A$, where $\pi:A\ra A/I$ is the natural quotient map. Show that the correspondence $I\rightsquigarrow N_I$ is a bijective correspondence between the set all closed ideals of $A$, which we denote it by $\mathcal{I}(A)$, and $\mathcal{N}(A)$.
\item [(iii)] For $I,J\in \mathcal{I}(A)$, show that $N_{I\cap J}=\sup\{N_I, N_J\}$.
\item [(iv)] A \cs-semi-norm $N$ is called {\bf extremal} if $N_I\leq N$ and $N_J\leq N$ for $I,J\in \mathcal{I}(A)$ imply that either $N_I=N$ or $N_J=N$. An ideal $I$ is called {\bf prime} if $J_1 J_2\sub I$ for two ideals $J_1, J_2$ of $A$ implies that either $J_1\sub I$ or $J_2\sub I$. Show that a closed ideal $I$ of $A$ is prime  if and only if the \cs-semi-norm $N_I$ is non-zero and extremal.
\end{itemize}
\end{e}






\chapter{Bounded operators on Hilbert spaces}
\label{ch:Hilbertspaces}

Every closed involutive subalgebra of the algebra $B(H)$ of bounded operators on a Hilbert space $H$ is a \cs-algebra. These \cs-algebras are known as {\bf concrete \cs-algebras}, in contrast with abstract \cs-algebras which are involutive Banach algebras whose norms satisfy \cs-identity. In this chapter, we present basic definitions and results concerning concrete \cs-algebras. Naturally, our discussion intersects with the general theory of operator algebras on Hilbert spaces, but we avoid a comprehensive, or even a moderate, study of operator algebras here.

We begin with Hilbert spaces in Section \ref{sec:Hilbertspaces} and cover basic notions and materials about Hilbert spaces necessary for our purposes. This includes various identities and inequalities in Hilbert spaces, orthogonality, various examples and constructions of Hilbert spaces as well as weak topology in Hilbert spaces. We also discuss orthonormal bases for Hilbert spaces. In section \ref{sec:boundedoperators}, we study the elementary topics about bounded operators on Hilbert spaces such as sesquilinear forms, adjoint operators, invertibility and finite rank operators. We conclude this section with introducing the commutant of a subset of $B(H)$ and definition of a von Neumann algebra.

In Section \ref{sec:concreteexamples}, we discuss three important examples of concrete \cs-algebras; the reduced group \cs-algebra of a locally compact group $G$, the \cs-algebra $L^\infty(X,\mu)$ acting on $L^2(X,\mu)$, where $(X,\mu)$ is a measure space, and the Toeplitz algebra. There are many locally convex topologies, besides the norm topology, on the \cs-algebra $B(H)$ which reveal different features of this \cs-algebra. In Section \ref{sec:topologies}, we study three major topologies on $B(H)$; the strong, weak and strong-$\s$ operator topologies. They are compared to each other and many results concerning convergence in these topologies are proved.

The Borel functional calculus in $B(H)$ is presented in Section \ref{sec:Borelfunctionalcalculus}. Section \ref{sec:projections} is devoted to projections in $B(H)$. After presenting basic materials about projections, we prove the polar decomposition of elements of $B(H)$. In Section \ref{sec:compactoperators}, \cs-algebras of compact operators are studied briefly. Finally, we conclude this chapter with a short section about von Neumann algebra, which is devoted to the bicommutant theorem.

In this chapter all vector spaces are over the field $\c$ of complex numbers.

\section{Hilbert spaces}
\label{sec:Hilbertspaces}

Hilbert spaces are characterized by the cardinality of their orthonormal bases. Our main goal in this section is to cover enough basic materials from the theory of Hilbert spaces to prove this statement. Along the way, many useful results, techniques and examples are presented too. We also explain briefly the weak topology of Hilbert spaces.

\begin{definition}
Let $E$ be a vector space. A {\bf sesquilinear form on $E$} is a function $\inner: E\times E\ra \c$ such that, for all $x,y, z\in E$ and $\la\in \c$, we have
\begin{itemize}
\item [(i)] $\lan \la x+ y, z\ran=\la \lan x, z\ran+ \lan y, z\ran$, ($\inner$ is linear in its first variable), and
\item [(ii)] $\lan y, x\ran=\overline{\lan x, y \ran}$, ($\inner$ is {\bf conjugate-symmetric}).
\end{itemize}
It is called a {\bf pre-inner product} if it is also a {\bf positive form}, that is
\begin{itemize}
\item [(iii)] $\lan x, x\ran\geq 0$, for all $x\in E$.
\end{itemize}
A pre-inner product is called an {\bf inner product} if it is also {\bf definite}, that is
\begin{itemize}
\item [(iv)] $\lan x, x\ran=0$ if and only if $x=0$.
\end{itemize}
\end{definition}
A pre-inner product comes with many tools that are helpful in this chapter.
\begin{proposition}
\label{prop:pre-inner}
Let $\inner$ be a  pre-inner product on a vector space $E$. Define a function $\|-\|:E\ra [0,\infty)$ by $\|x\|:=\lan x, x \ran^{1/2}$ for all $x\in E$. $\|-\|$ is a semi-norm on $E$ and it possesses the following properties, for all $x, y \in E$:
\begin{itemize}
\item [(i)] $|\lan x, y\ran|\leq \|x\| \|y\|$, (the {\bf Cauchy-Schwartz inequality} or briefly {\bf CS inequality}),
\item [(ii)] $\|x+y\|^2 + \|x- y \|^2 = 2(\|x\|^2 +\|y\|^2)$, ( the {\bf parallelogram law}),
\item [(iii)] $4\lan x,y\ran=\|x+y\|^2- \|x-y\|^2 +i \|x+iy\|^2 -i \|x-iy\|^2 $, ( the {\bf polarization identity}).
\item [(iv)] If $\inner$ is an inner product, then $\|-\|$ is a norm.
\end{itemize}
Moreover, we have $\|x\|=\sup\{|\lan x, y\ran |; \|y\|=1 \}$ for all $x\in E$.
\end{proposition}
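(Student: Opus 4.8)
The plan is to prove Proposition \ref{prop:pre-inner}, whose statements naturally split into two groups: the semi-norm property together with items (i)--(iv), and the final supremum formula $\|x\|=\sup\{|\lan x,y\ran|;\|y\|=1\}$. Since the problem asks specifically for the final statement, I would prove the supremum formula, but it relies on the Cauchy--Schwartz inequality (i), so I would first establish that as a lemma (or cite it as already proved within the same proposition).

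First I would prove the Cauchy--Schwartz inequality, since the supremum formula uses it crucially. The standard approach is to observe that for all $\la\in\c$ we have $0\leq \lan x-\la y, x-\la y\ran$ by positivity, and then expand using sesquilinearity and conjugate-symmetry to get $0\leq \|x\|^2 - \bar\la\lan x,y\ran - \la\lan y,x\ran + |\la|^2\|y\|^2$. Choosing $\la$ optimally (writing $\lan x,y\ran = |\lan x,y\ran|e^{i\th}$ and taking $\la = te^{i\th}$ for real $t$, then minimizing the resulting real quadratic in $t$) yields $|\lan x,y\ran|^2\leq \|x\|^2\|y\|^2$. Some care is needed when $\|y\|=0$; here positivity of the quadratic for all $t$ forces $\lan x,y\ran = 0$, so the inequality holds trivially.

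With CS in hand, the supremum formula is short. For the upper bound, CS gives $|\lan x,y\ran|\leq \|x\|\|y\| = \|x\|$ whenever $\|y\|=1$, so $\sup\{|\lan x,y\ran|;\|y\|=1\}\leq \|x\|$. For the reverse inequality, if $\|x\|\neq 0$ I would take the specific test vector $y_0 := x/\|x\|$, which satisfies $\|y_0\|=1$, and compute $|\lan x, y_0\ran| = |\lan x,x\ran|/\|x\| = \|x\|^2/\|x\| = \|x\|$; hence the supremum is at least $\|x\|$. Combining the two bounds gives equality. The degenerate case $\|x\|=0$ must be handled separately: then CS forces $|\lan x,y\ran|\leq \|x\|\|y\| = 0$ for every $y$, so the supremum is $0 = \|x\|$ and the formula still holds.

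The main obstacle, such as it is, is purely bookkeeping around the semi-norm (non-definite) setting: because $\|-\|$ is only a semi-norm here, I cannot assume $\|x\|\neq 0$ implies $x\neq 0$, nor divide carelessly, so the vanishing case and the use of the optimal $\la$ in CS both require the positivity hypothesis rather than definiteness. I expect no genuine difficulty beyond ensuring the argument never secretly uses property (iv). I would present CS first, then immediately deduce the supremum formula in a couple of lines, explicitly splitting into the cases $\|x\|=0$ and $\|x\|\neq 0$.
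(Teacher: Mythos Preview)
Your proposal is correct and follows essentially the same route as the paper: establish CS by expanding $\lan x-\la y,x-\la y\ran\geq 0$ with an optimal choice of $\la$, then deduce the supremum formula by bounding above via CS and attaining the bound with $y_0=x/\|x\|$ when $\|x\|\neq 0$, treating $\|x\|=0$ separately. The only minor variation is in the degenerate case $\|y\|=0$ of CS: you argue directly that the linear-in-$t$ inequality $0\leq \|x\|^2-2t|\lan x,y\ran|$ forces $\lan x,y\ran=0$, whereas the paper swaps the roles of $x$ and $y$ when $\|x\|\neq 0$ and invokes the parallelogram law and polarization identity when $\|x\|=\|y\|=0$; your handling is slightly more self-contained.
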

\begin{proof} To show $\|-\|$ is a semi-norm, we only prove the {\bf triangle inequality}, that is $\|x+y\|\leq \|x\| +\|y\|$ for all $x,y\in E$. And to prove this, we have to use CS inequality.
\begin{eqnarray*}
\|x+y\|^2&=& \lan x+y, x+y\ran\\
&=& \lan x, x\ran+ \lan y, y\ran+ \lan x, y\ran + \lan y, x\ran\\
&\leq& \|x\|^2 + \| y\|^2 + |\lan x, y\ran| + |\lan y, x\ran|\\
&\leq& \|x\|^2 + \| y\|^2 + 2\|x\| \|y\|\\
&=& (\|x\| + \| y\|)^2.
\end{eqnarray*}
The rest of the properties of a semi-norm are left to the reader.
\begin{itemize}
\item [(i)] By expanding the inequality $\lan x+\alpha y, x+\alpha y\ran \geq 0$, we get
\[
\|x\|^2 +\overline{\alpha} \lan x,y \ran + \alpha \lan y, x \ran + |\alpha|^2 \|y\|^2\geq 0.
\]
By putting $\alpha=\frac{-\lan x,y \ran} {\|y\|^2}$ when $\|y\|\neq 0$, the desired inequality is obtained. When $\|y\|=0$ but $\|x\|\neq0$, a similar arguments works. When $\|y\|=\|x\|=0$, using Parts (ii) and (iii), one can show that $\lan x, y \ran=0$.
\item [(ii)] It is proved easily by expanding the left hand side of the identity.
\item [(iii)] It is proved easily by expanding the right hand side of the identity.
\item [(iv)] It is clear.
\end{itemize}
Finally, it is clear from CS inequality that $\sup\{|\lan x, y\ran |; \|y\|=1 \}\leq \|x\|$. If $\|x\|=0$, the reverse inequality is clear too. If $\|x\|\neq 0$, we have $\|x\|=\lan x, \frac{x}{\|x\|}\ran$ and $\|\frac{x}{\|x\|} \|=1$. This proves the reverse inequality.
\end{proof}
An immediate consequence of CS inequality is the following corollary:
\begin{corollary}
\label{cor:innercont}
Let $\inner$ be a pre-inner product on a vector space $E$. Then $\inner$ is jointly continuous.
\end{corollary}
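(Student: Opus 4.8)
The plan is to deduce joint continuity directly from the Cauchy--Schwartz inequality established in Proposition \ref{prop:pre-inner}(i). Since the topology on $E$ is the one induced by the semi-norm $\|x\|=\lan x,x\ran^{1/2}$, it suffices to show that whenever $(x_i,y_i)$ is a net in $E\times E$ converging to $(x,y)$, i.e.\ $\|x_i-x\|\ra 0$ and $\|y_i-y\|\ra 0$, we have $\lan x_i,y_i\ran\ra\lan x,y\ran$ in $\c$.

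The key step is the standard ``add and subtract'' decomposition. First I would write
\[
\lan x_i,y_i\ran-\lan x,y\ran=\lan x_i-x,\,y_i\ran+\lan x,\,y_i-y\ran,
\]
using bilinearity (linearity in the first slot and conjugate-symmetry for the second). Applying the CS inequality to each term then gives
\[
|\lan x_i,y_i\ran-\lan x,y\ran|\leq \|x_i-x\|\,\|y_i\|+\|x\|\,\|y_i-y\|.
\]
It remains to observe that $\|y_i\|$ is eventually bounded: from the (reverse) triangle inequality for the semi-norm, $\|y_i\|\leq\|y\|+\|y_i-y\|$, so $\|y_i\|$ stays within $\|y\|+1$ once $\|y_i-y\|<1$. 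Consequently the first term is a bounded quantity times $\|x_i-x\|\ra 0$, and the second term is the fixed constant $\|x\|$ times $\|y_i-y\|\ra 0$. Hence the right-hand side tends to $0$, which yields $\lan x_i,y_i\ran\ra\lan x,y\ran$ and proves joint continuity.

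There is no real obstacle here; the argument is a routine consequence of CS inequality, and the only point requiring a moment's care is the boundedness of the factor $\|y_i\|$, which is why I would handle it explicitly rather than absorb it silently. I would also note that the same computation works verbatim with sequences in place of nets, since the semi-norm induces a pseudometric topology, so one may phrase the proof in whichever form is most convenient.
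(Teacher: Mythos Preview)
Your proof is correct and is exactly the standard argument the paper has in mind: the text simply states that the corollary is ``an immediate consequence of CS inequality'' without spelling out any details, and your add-and-subtract estimate via CS is precisely how one fills that in. There is nothing to add.
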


\begin{corollary}
\label{cor:innerzero}
Let $\inner$ be an inner product on a vector space $E$. If $\lan x,y\ran=0$ for all $y\in E$, then $x=0$.
\end{corollary}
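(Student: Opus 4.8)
The statement to prove is Corollary \ref{cor:innerzero}: if $\inner$ is an inner product on $E$ and $\lan x, y\ran = 0$ for all $y \in E$, then $x = 0$. This is an immediate consequence of the definiteness axiom, so the plan is to exploit the hypothesis by making a single well-chosen substitution for the free variable $y$.

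The plan is as follows. First I would observe that the hypothesis $\lan x, y\ran = 0$ holds for \emph{every} $y \in E$, so in particular I am free to specialize it. The natural choice is $y = x$, since the definiteness axiom (iv) in the definition of an inner product is phrased in terms of $\lan x, x\ran$. Substituting $y = x$ gives $\lan x, x\ran = 0$ directly from the hypothesis. Then I would apply axiom (iv), which states that $\lan x, x\ran = 0$ if and only if $x = 0$, to conclude that $x = 0$. That completes the argument.

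The proof is genuinely short and there is no real obstacle: the only subtlety worth flagging is that definiteness (axiom (iv)) is exactly what is being used, and that this corollary would \emph{fail} for a mere pre-inner product, where $\lan x, x\ran = 0$ need not force $x = 0$. One could alternatively route through the norm: the hypothesis combined with the final displayed formula of Proposition \ref{prop:pre-inner}, namely $\|x\| = \sup\{|\lan x, y\ran|; \|y\|=1\}$, yields $\|x\| = 0$, and then axiom (iv) (equivalently, the fact that $\|-\|$ is a genuine norm by Proposition \ref{prop:pre-inner}(iv)) gives $x = 0$. I would present the direct substitution $y = x$ as the main line since it is cleaner and invokes the least machinery, mentioning the norm-based variant only if a remark is desired.
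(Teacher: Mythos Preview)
Your proof is correct. Your main line---substitute $y=x$ and invoke definiteness directly---is actually more elementary than the paper's argument, which takes exactly the route you describe as your alternative: it uses the identity $\|x\|=\sup\{|\lan x, y\ran|;\, \|y\|=1\}$ from Proposition~\ref{prop:pre-inner} together with the fact that $\|-\|$ is a norm (Proposition~\ref{prop:pre-inner}(iv)). Both arguments ultimately rest on axiom (iv), but your direct substitution avoids the detour through the supremum formula and sidesteps the trivial edge case $E=\{0\}$ where there are no unit vectors to take the supremum over. The paper's route, on the other hand, reinforces the utility of the norm identity just established. Either is perfectly acceptable here.
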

\begin{proof}
Use the equality  $\|x\|=\sup\{|\lan x, y\ran |; \|y\|=1 \}$  and the fact that $\|-\|$ is a norm.
\end{proof}
\begin{proposition}
\label{prop:csequality}
Let $\inner$ be an inner product on a vector space $H$ and let $x,y\in H$. Then the following statements are equivalent:
\begin{itemize}
\item [(i)] $\|x+y\|=\|x\|+\|y\|$.
\item [(ii)] $\lan x,y\ran =\|x\| \|y\|$.
\item [(iii)] One of $x$ and $y$ is non-negative scaler multiple of the other one.
\end{itemize}
\end{proposition}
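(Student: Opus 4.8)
The plan is to prove the chain of implications (i) $\Rightarrow$ (ii) $\Rightarrow$ (iii) $\Rightarrow$ (i), which is the cleanest way to establish equivalence of three conditions. The ingredients are all available: the Cauchy-Schwartz inequality from Proposition \ref{prop:pre-inner}(i), the fact that $\|-\|$ is a genuine norm (since $\inner$ is an inner product), and the conjugate-symmetry of the form.

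First I would prove (i) $\Rightarrow$ (ii). Starting from $\|x+y\| = \|x\| + \|y\|$, I would square both sides to get $\|x+y\|^2 = \|x\|^2 + 2\|x\|\|y\| + \|y\|^2$. On the other hand, expanding the inner product gives $\|x+y\|^2 = \|x\|^2 + \|y\|^2 + \lan x,y\ran + \lan y,x\ran = \|x\|^2 + \|y\|^2 + 2\,\mathrm{Re}\,\lan x,y\ran$. Comparing the two yields $\mathrm{Re}\,\lan x,y\ran = \|x\|\|y\|$. Since by CS inequality $|\lan x,y\ran| \leq \|x\|\|y\|$, and the real part already attains this upper bound, we must have $\lan x,y\ran = \mathrm{Re}\,\lan x,y\ran = \|x\|\|y\|$ (the imaginary part must vanish because otherwise the modulus would strictly exceed the real part). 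This gives (ii).

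Next, (ii) $\Rightarrow$ (iii) is the step I expect to be the main obstacle, because it requires producing the explicit scalar and handling the degenerate cases. Assuming $\lan x,y\ran = \|x\|\|y\|$, if $y = 0$ then $x = 0\cdot y$ trivially, so suppose $y \neq 0$. The natural candidate is to set $\lambda := \|x\|/\|y\|$ and show $x = \lambda y$ by proving $\|x - \lambda y\| = 0$ and invoking that $\|-\|$ is a norm. I would compute $\|x-\lambda y\|^2 = \|x\|^2 - \lambda\lan x,y\ran - \lambda\overline{\lan x,y\ran} + \lambda^2\|y\|^2$; since $\lan x,y\ran = \|x\|\|y\|$ is real and nonnegative, this becomes $\|x\|^2 - 2\lambda\|x\|\|y\| + \lambda^2\|y\|^2 = (\|x\| - \lambda\|y\|)^2 = 0$ by the choice of $\lambda$. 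Hence $x = \lambda y$ with $\lambda \geq 0$, giving (iii). The subtlety to watch is the case $x = 0$, which is covered since then $\lambda = 0$.

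Finally, (iii) $\Rightarrow$ (i) is a direct verification. Suppose without loss of generality $x = \lambda y$ with $\lambda \geq 0$ (the symmetric case is identical). Then $\|x+y\| = \|(\lambda+1)y\| = (\lambda+1)\|y\| = \lambda\|y\| + \|y\| = \|x\| + \|y\|$, using $\lambda + 1 \geq 0$ to pull the scalar out of the norm. This closes the cycle and completes the proof. Throughout, the only real care needed is to keep track of the nonnegativity of the scalar in (iii) and to use conjugate-symmetry correctly when expanding $\|x+y\|^2$.
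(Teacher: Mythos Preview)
Your proof is correct and follows essentially the same route as the paper: the same cycle (i)$\Rightarrow$(ii)$\Rightarrow$(iii)$\Rightarrow$(i), the same expansion of $\|x+y\|^2$ combined with the CS inequality for the first step, and the same direct verification for the last. For (ii)$\Rightarrow$(iii) the paper computes $\|ax+by\|^2=(a\|x\|+b\|y\|)^2$ for arbitrary real $a,b$ and then specializes to $a=\|y\|$, $b=-\|x\|$, whereas you fix $\lambda=\|x\|/\|y\|$ upfront and compute $\|x-\lambda y\|^2=0$ directly; these are the same computation organized slightly differently, and your version is marginally more economical.
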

\begin{proof} Assume (i) holds. We compute
\begin{eqnarray*}
\|x\|^2 +\|y\|^2+2 \|x\|\|y\| &=& (\|x\| +\|y\|)^2= \|x+y\|^2\\
&=& \lan x+y , x+y\ran= \|x\|^2+\|y\|^2+ \lan x,y\ran +\lan y,x\ran\\
&=& \|x\|^2+\|y\|^2+ 2Re\lan x,y\ran.
\end{eqnarray*}
Using CS inequality, we obtain
\[
|\lan x,y \ran| \leq \|x\| \|y\| = Re\lan x,y \ran \leq |\lan x,y \ran|.
\]
This proves (ii).

Let (ii) hold. then for $a,b\in \r$, we have
\begin{eqnarray*}
\|ax+by\|^2&=& a^2\|x\|^2 + 2abRe\lan x, y\ran +b^2\|y\|^2 \\
&=& a^2\|x\|^2 + 2ab\|x\| \|y\| +b^2\|y\|^2 \\
&=& (a\|x\|+b\|y\|)^2.
\end{eqnarray*}
By setting $a=\|y\|$ and $b=-\|x\|$, we obtain $\|y\|x -\|x\| y =0$. If $x=0$, then $x=0 y$. If $x\neq 0$, then $y=\frac{\|y\|}{\|x\|} x$. This shows (iii).

Assume (iii) holds, so $x=ay$ for some $a\geq0$, then $\|x+y\| = \|(a+1)y\| = (a+1)\|y\|= \|ay\| +\|y\|=\|x\|+\|y\|$. This shows (i).
\end{proof}

\begin{corollary}
\label{cor:csequality}
Let $\inner$ be an inner product on a vector space $H$ and let $x,y\in H$. Then $|\lan x,y\ran|=\|x\| \|y\|$ if and only if $x$ and $y$ are linearly dependent.
\end{corollary}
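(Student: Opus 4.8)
The plan is to derive this as a corollary of Proposition \ref{prop:csequality}, whose equivalence of conditions (ii) and (iii) already characterizes equality in the Cauchy--Schwartz inequality in the special case that $\lan x,y\ran$ is a non-negative real number equal to $\|x\|\|y\|$. The only gap between that proposition and the present statement is the modulus: here we allow $|\lan x,y\ran|=\|x\|\|y\|$ with $\lan x,y\ran$ an arbitrary complex number, and we must conclude mere linear dependence over $\c$ rather than a non-negative scalar relation. The device to close this gap is to rotate one vector by a unit-modulus phase so that the inner product becomes real and non-negative without changing either norm.

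Concretely, for the forward direction I would first dispose of the trivial case $\lan x,y\ran=0$: then $\|x\|\|y\|=0$, so one of $x,y$ is zero and the vectors are dependent. Otherwise I write $\lan x,y\ran=re^{i\theta}$ with $r=|\lan x,y\ran|>0$ and $\theta\in\r$, and set $z:=e^{i\theta}y$. Using that $\inner$ is conjugate-linear in its second argument (which follows from the Definition: linear in the first slot, conjugate-symmetric), I would check that $\lan x,z\ran=e^{-i\theta}\lan x,y\ran=r$ is real and non-negative, while $\|z\|=\|y\|$ since $|e^{i\theta}|=1$. The hypothesis $|\lan x,y\ran|=\|x\|\|y\|$ then reads $\lan x,z\ran=\|x\|\|z\|$, which is exactly condition (ii) of Proposition \ref{prop:csequality} for the pair $(x,z)$. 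That proposition yields condition (iii): one of $x,z$ is a non-negative multiple of the other, so $x$ and $z$ are linearly dependent; since $z$ is a nonzero scalar multiple of $y$, so are $x$ and $y$.

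For the converse I would argue directly: if, say, $y=\la x$ for some $\la\in\c$, then $|\lan x,y\ran|=|\la|\,\|x\|^2=\|x\|\|y\|$, and symmetrically if $x=\la y$; the case where one of the vectors is zero makes both sides vanish. The only point requiring care---and the closest thing to an obstacle---is bookkeeping which variable $\inner$ is conjugate-linear in, so that the phase $e^{i\theta}$ is attached to the correct vector and genuinely cancels the argument of $\lan x,y\ran$. With the convention of the Definition fixed, the correct choice is $z=e^{i\theta}y$ as above, and everything else reduces to Proposition \ref{prop:csequality} together with a one-line computation.
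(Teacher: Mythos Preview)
Your proof is correct and follows essentially the same approach as the paper: rotate one vector by a unit-modulus scalar so that the inner product becomes real and non-negative, then invoke Proposition~\ref{prop:csequality}. The paper attaches the phase to $x$ (writing $\lan \la x, y\ran = \|\la x\|\|y\|$ for a suitable $|\la|=1$) whereas you attach it to $y$, and you treat the zero case separately while the paper leaves it implicit, but these are cosmetic differences only.
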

\begin{proof}
Assume $|\lan x, y \ran|=\|x\|\|y\|$, then there exits some $\la\in \c$ such that $|\la|=1$ and $\la\lan x, y\ran=\|x\|\|y\|$, or equivalently $\lan \la x, y\ran =\|\la x\|\|y\|$. By the above proposition, $x$ and $y$ are linearly dependent.

Conversely, assume $x=\la y$ for some $\la\in \c$, then
\[
|\lan x,y\ran|= |\la \lan y,y\ran|=|\la| \|y\|^2= \|x\| \|y\|.
\]
\end{proof}

\begin{definition}
\label{def:unitaryequ}
\begin{itemize}
\item [(i)] A vector space $H$ equipped with an inner product $\inner$ is called a {\bf pre-Hilbert space}. If $H$ is complete with respect to the norm defined by  $\inner$, then $H$ is called a {\bf Hilbert space}. Such a Hilbert space is often  denoted by the ordered pair $(H, \inner)$.
\item [(ii)] A {\bf unitary equivalence} from a Hilbert space $(H_1 , \inner_1)$ into another Hilbert space $(H_2 , \inner_2)$ is a linear isomorphism $u:H_1\ra H_2$ which preserves the inner product structures, that is
\[
\lan u(x), u(y)\ran_2= \lan x, y\ran_1, \quad \forall x,y \in H_1.
\]
If there is a unitary equivalence between two Hilbert spaces $(H_1 , \inner_1)$ and $(H_2 , \inner_2)$, we call them {\bf unitary equivalent} and denote this by $H_1\simeq H_2$. If $u$ as above preserves the inner products but is not necessarily onto, we call it a {\bf unitary injection}.
\end{itemize}
\end{definition}
\begin{exercise}
Show that a linear map $\ff:H_1\ra H_2$ between two Hilbert spaces is a unitary injection if and only if it is an isometry.
\end{exercise}
\begin{exercise}
Show that every unitary equivalence is continuous and its inverse is a unitary equivalence too.
\end{exercise}
\begin{example}
\label{exa:Hilbertspaces}
\begin{itemize}
\item [(i)] Every finite dimensional pre-Hilbert space is a Hilbert space, because every finite dimensional normed space is complete, see Theorem 1.21 od \cite{rudinfunctional}. Therefore $H=\c^n$ equipped with the standard inner product, that is $\langle x, y\rangle:=\sum_{i=1}^n x_iy_i$ for all $x=(x_1,\cdots,x_n), y=(y_1,\cdots, y_n)\in \c^n$, is a Hilbert space.
\item [(ii)] Let $(X, \mu)$ be a measure space. Consider the vector space $L^2(X,\mu)$ (or briefly $L^2(X)$) consisting of all {\bf square integrable functions on $X$} with respect to $\mu$, that is
\[
L^2(X,\mu):= \left\{ f:X\ra \c; \int_X |f(x)|^2 d\mu(x) <\infty\right\}.
\]
Define $\inner: L^2(X,\mu) \times L^2(X,\mu) \ra \c$ by
\[
\lan f, g \ran:=\int_X f(x) \overline{g(x)} d\mu(x), \qquad \forall f,g\in L^2(X,\mu).
\]
It is easy to show that $\inner$ is a pre-inner product. Set
\[
N:=\{f\in L^2(X,\mu); \lan f, f\ran=0\}.
\]
Then $N$ is a subspace of $L^2(X,\mu)$. For every pair $(f+N, g+N)$ of elements of the quotient space $L^2(X,\mu)/N$, we define $\lan f+N, g+N\ran:= \lan f,g \ran$. It is straightforward to check that this function is well defined, and in fact, it is an inner product on the quotient space. We do not change the notation and denote the pre-Hilbert space obtained in this way by $(L^2(X,\mu), \inner)$. It is actually shown that $L^2(X,\mu)$ is a Banach space with respect to the norm defined by $\inner$ and so is a Hilbert space, see Theorem 6.6 of \cite{folland-ra} or Problem \ref{e:5-2}. The norm defined by this inner product is usually denoted by $\norm_2$, that is
\[
\|f\|_2:= \left(\int_X |f(x)|^2 d\mu(x)\right)^{1/2}, \quad \forall f\in L^2(X,\mu).
\]

When $\mu$ is the counting measure, $N=0$ and we use the notation $\ell^2(X)$ in lieu of $L^2(X,\mu)$. In this case, we also use summation in stead of integral. For example, the inner product of $\ell^2(\n)$ is defined by
\[
\lan (a_1, a_2, \cdots),(b_1, b_2, \cdots)\ran:= \sum_{n=1}^\infty a_n\overline{b_n}.
\]
The Hilbert space $\ell^2(\n)$ has a specific feature and is denoted simply by $\ell^2$ in many books. We employ this simple notation too. One also easily observes that whenever $X=\{1, \cdots, n\}$, we obtain the Hilbert space introduced in Item (i).

We can generalize this class of Hilbert spaces further by replacing $\c$ with an arbitrary Hilbert space. Let $(H,\inner)$ be a Hilbert space and let $(X,\mu)$ be a measure space as before. A map $f:X\ra H$ is called {\bf weakly measurable} if the map $x\mapsto \lan f(x), h\ran$ is measurable for all $h\in H$. Define $L^2(X, \mu, H)$ (or simply $L^2(X,H)$) to be the vector space of all weakly measurable maps $f:X\ra H$ such that they are square integrable, that is
\[
\|f\|^2_2:=\int_X\|f(x)\nt d\mu(x)<\infty.
\]
The inner product for this Hilbert space is defined as follows:
\[
\lan f,g \ran:=\int_X\lan f(x), g(x)\ran d\mu(x), \qquad \forall f,g\in L^2(X, \mu, H).
\]
Of course, again, we have to consider the quotient of $L^2(X, \mu, H)$ modulo the subspace of all null functions with respect to this inner product and the measure $\mu$.

\item [(iii)] Let $A$ be a \cs-algebra and let $\ff:A \ra \c$ be a \ss-homomorphism. Then $\ff$ is positive, and so $\ff(aa\s)\geq 0$ for all $a\in A$. We define $\inner_\ff: A\times A\ra \c$ by $\lan a, b \ran_\ff:= \ff(ab\s)$. One checks that $\inner_\ff$ is a pre-inner product on $A$. Define
\[
N_\ff:=\{ a\in A; \ff(aa\s)=0\}.
\]
Then the form $\inner_\ff$ can be defined similarly over the quotient space $A/N_\ff$ and the pair $(A/N_\ff, \inner_\ff)$ is a pre-Hilbert space. The completion of $A/N_\ff$ with respect to the norm defined by $\inner_\ff$ is denoted by $H_\ff$ and the Hilbert space $(H_\ff, \inner_\ff)$ is called the {\bf Hilbert space induced by $\ff$}. The Hilbert spaces obtained in this way play the key role in GNS construction.
\end{itemize}
\end{example}
In the following, we explain some useful constructions on Hilbert spaces :

\begin{example}
\label{exa:constructions}
\begin{itemize}
\item[(i)] Let $(H_1,\inner_1)$ and $(H_2,\inner_2)$ be two Hilbert spaces. We endow the vector space $H_1\oplus H_2$ with the following inner product:
\[
\lan x_1\oplus x_2, y_1\oplus y_2\ran:=\lan x_1, y_1\ran_1 +\lan x_2, y_2\ran_2, \quad \forall x_1\oplus x_2, y_1\oplus y_2\in H_1\oplus H_2.
\]
One easily checks that $(H_1\oplus H_2, \inner)$ is a Hilbert space. It is called the {\bf direct sum of $H_1$ and $H_2$}.

Now, let $\{ (H_\la,\inner_\la); \la\in \Lambda\}$ be a collection of Hilbert spaces index by a set $\Lambda$. Let $\bigoplus_{\la\in \Lambda} H_\la$ denote the set of all elements $(h_\la)$ in the direct product $\prod_{\la\in \Lambda} H_i$ such that
\[
\sum_{\la\in \Lambda} \lan h_\la, h_\la \ran_\la < \infty.
\]
It is straightforward to check that $\bigoplus_{\la\in \Lambda} H_\la$ with the following inner product is a Hilbert space:
\[
\lan (x_\la), (y_\la) \ran:= \sum_{\la\in \Lambda} \lan x_\la, y_\la\ran_\la, \qquad \forall (x_\la), (y_\la)\in \bigoplus_{\la\in \Lambda} H_\la.
\]
This Hilbert space is called the {\bf direct sum of the family $\{ (H_\la,\inner_\la); \la\in \Lambda\}$ of Hilbert spaces}. If all the Hilbert spaces in this family are the same Hilbert space $H$, then this direct sum is called the {\bf amplification of $H$ by the cardinality of $\Lambda$} and it is denoted by $H^\Lambda$. If $|\Lambda|=n$ (resp. $\Lambda$ is countably infinite), $H^\Lambda$ is denoted by $H^n$ (resp. $H^\infty$). One easily checks that the algebraic direct sum of the vector spaces $\{ H_\la; \la\in \Lambda\}$ is dense in $\bigoplus_{\la\in \Lambda} H_\la$.

\item[(ii)] Let $(H_1,\inner_1)$ and $(H_2,\inner_2)$ be two Hilbert spaces. Denote the algebraic tensor product of $H_1$ and $H_2$ over $\c$ by $H_1\odot H_2$. In order to define inner product on this vector space, we have to take a closer look at its structure.
\[
H_1\odot H_2:=\frac{\lan h_1\otimes h_2;\, h_1\in H_1,\, h_2\in H_2\ran }{N},
\]
where the subspace $N$ is defined in a specific way to imply various properties of the tensor product. More precisely, $N$ is generated by the following types of elements:
\begin{itemize}
\item [(a)] $(a+b)\otimes c-[a\otimes c+ b\otimes c]$ for all $a,b\in H_1$ and $c\in H_2$,
\item [(b)] and similar relation for the second variable; $a\otimes (c+d) -[a\otimes c+ a\otimes d]$ for all $a\in H_1$ and $c,d\in H_2$,
\item [(c)] $(\la a)\otimes c-\la(a\otimes c)$ for all $a\in H_1$, $c\in H_2$ and $\la\in \c$,
\item [(d)] and similar relation for the second variable; $a\otimes (\la c)-\la(a\otimes c)$ for all $a\in H_1$, $c\in H_2$ and $\la\in \c$.
\end{itemize}
Elements of the form $h_1\otimes h_2$, where $h_1\in H_1$ and $h_2\in H_2$, are called {\bf simple tensors}. We first define inner product for simple tensors, then we extend it linearly to the vector space generated by them, and afterwards, we show briefly that the inner product preserves the relations defining $N$, and so it is well defined over tensor product. Finally, one easily checks that the inner product obtained in this way satisfies all the axioms of an inner product. We define
\[
\lan a\otimes c , b\otimes d\ran:= \lan a, b \ran_1 \lan c,d\ran_2, \qquad  \forall a,b\in H_1, c,d\in H_2.
\]
Let us check the relation described in Item (a) in the above. For all $a,b,h_1\in H_1$ and $c,h_2\in H_2$, we compute
\begin{eqnarray*}
\lan (a+b)\otimes c, h_1\otimes h_2\ran &=& \lan a+b, h_1\ran_1 \lan c, h_2\ran_2\\
&=& (\lan a, h_1\ran_1 + \lan b, h_1\ran_1 ) \lan c, h_2\ran_2\\
&=& \lan a, h_1\ran_1 \lan c, h_2\ran_2+\lan b, h_1\ran_1 \lan c, h_2\ran_2\\
&=& \lan a\otimes c, h_1\otimes h_2\ran + \lan b\otimes c, h_1\otimes h_2\ran\\
&=& \lan a\otimes c+ b\otimes c, h_1\otimes h_2\ran.
\end{eqnarray*}
One easily checks the rest of the relations as well as the fact that $\inner$ is an inner product on $H_1\odot H_2$. The completion of $H_1\odot H_2$ with respect to the norm defined by this inner product is called the {\bf tensor product of $H_1$ and $H_2$} and is denoted by $H_1\otimes H_2$.
\end{itemize}
\end{example}
The reader is strongly advised to check all the details of the above examples. There are also some exercises at the end of this chapter related to these examples and constructions.

The key concept in Hilbert spaces is {\bf Orthogonality} which determines the geometric, analytical, and (somehow) algebraic behavior of Hilbert spaces and their algebras of operators.
\begin{definition}
\label{def:basisHilbert}
Let $(H,\inner)$ be a Hilbert space, (or a pre-Hilbert space) and let $\|-\|$ denote the norm defined by the inner product.
\begin{itemize}
\item [(i)] A subset $S\sub H$ is called an {\bf orthogonal set} if $0\notin S$ and $\lan x, y\ran=0$ for every pair $x\neq y$ in $S$.
\item [(ii)] A subset $S\sub H$ is called {\bf normal} if $\|x\|=1$ for all $x \in S$. A subset $S\sub H$ is called {\bf orthonormal} if it is normal and orthogonal.
\item [(iii)] Let $S$ be a subset of $H$. A vector $y\in H$ is called {\bf orthogonal to $S$} if $\lan x,y \ran=0$ for all $x\in S$. This is denoted by $S\perp y$ (or equivalently $y\perp S$). The set of all vectors orthogonal to $S$ is called the {orthogonal complement of $S$} and is denoted by $S^\perp$. Two subsets $S$ and $R$ of $H$ are called {\bf orthogonal} if $\lan x, y\ran=0$ for all $x\in S$ and $y\in R$.
\end{itemize}
\end{definition}
For every subset $S\sub H$, one easily sees that $S^\perp$ is a closed subspace of $H$.
\begin{lemma}
\label{lem:orthogonalprojection}
Let $X$ be a closed subspace of a Hilbert space $H$. For every given $h\in H$, there is a unique vector $x_h\in X$ such that
\begin{equation}
\label{eqn:closestpoint}
\|h-x_h\|\leq \|h-y\|, \qquad \forall  y\in X.
\end{equation}
Moreover, we have
\[
Re\lan x_h, h-x_h\ran \geq Re\lan y, h-x_h\ran, \qquad \forall y\in X.
\]
\end{lemma}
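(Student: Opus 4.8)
The plan is to establish this as the Hilbert space projection theorem, splitting the work into existence, uniqueness, and the variational inequality. First I would set $d:=\inf\{\|h-y\|;\, y\in X\}$ and choose a minimizing sequence $(y_n)$ in $X$ with $\|h-y_n\|\ra d$. The heart of the existence argument is to show $(y_n)$ is Cauchy, and here I would apply the parallelogram law from Proposition \ref{prop:pre-inner}(ii) to the vectors $h-y_n$ and $h-y_m$, which yields
\[
\|y_m-y_n\|^2 = 2\|h-y_n\|^2 + 2\|h-y_m\|^2 - 4\left\|h-\tfrac{y_n+y_m}{2}\right\|^2.
\]
Since $X$ is a subspace, $\tfrac{y_n+y_m}{2}\in X$, so the subtracted norm is at least $d$; letting $n,m\ra\infty$ then forces $\|y_m-y_n\|\ra 0$. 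Because $X$ is closed in the complete space $H$, it is itself complete, so $(y_n)$ converges to some $x_h\in X$, and continuity of the norm gives $\|h-x_h\|=d$, which is exactly (\ref{eqn:closestpoint}).

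Uniqueness is a quick repetition of the same computation: if $x_h$ and $x_h'$ both realize the distance $d$, the parallelogram law applied to $h-x_h$ and $h-x_h'$ gives $\|x_h-x_h'\|^2\leq 2d^2+2d^2-4d^2=0$, again using that $\tfrac{x_h+x_h'}{2}\in X$, whence $x_h=x_h'$.

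For the final inequality I would run a perturbation argument. Fix $y\in X$ and, for real $t>0$, observe that $x_h+t(y-x_h)\in X$, so minimality of $x_h$ gives $\|h-x_h\|^2\leq\|(h-x_h)-t(y-x_h)\|^2$. Expanding the right-hand side using the sesquilinearity and conjugate-symmetry of $\inner$ (with $t$ real) produces
\[
0\leq -2t\,Re\lan y-x_h,\, h-x_h\ran + t^2\|y-x_h\|^2.
\]
Dividing by $t>0$ and letting $t\ra 0^+$ yields $Re\lan y-x_h,\,h-x_h\ran\leq 0$, which rearranges to the asserted inequality $Re\lan x_h,h-x_h\ran\geq Re\lan y,h-x_h\ran$. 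The only genuine obstacle is the Cauchy estimate in the first paragraph; once the parallelogram law is combined correctly with the convexity of $X$, every remaining step is a routine expansion and limit.
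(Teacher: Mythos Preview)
Your proof is correct and follows essentially the same approach as the paper: both use the parallelogram law together with convexity of $X$ to show a minimizing sequence is Cauchy, repeat that computation for uniqueness, and then run the perturbation $x_h+t(y-x_h)$ to extract the variational inequality in the limit $t\ra 0^+$.
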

\begin{proof}
Set $d:=\inf\{ \|h-y\|; y\in X\}$ and let $\{ y_n\}$ be a sequence in $X$ such that $\|h-y_n\|\ra d$. For all $m,n\in \n$, using parallelogram law for $h-y_m$ and $h-y_n$, we obtain
\begin{equation}
\label{eqn:orthoprojection}
\|2h-y_n - y_m\|^2 +\|y_n-y_m\|^2 =2(\|h-y_n\|^2 +\|h-y_m\|^2).
\end{equation}
Since $\frac{y_n+y_m}{2}\in X$, we have $4d^2\leq 4\|h- \frac{y_n+y_m}{2}\|^2$. Thus we obtain
\[
\|y_n-y_m\|\leq 2(\|h-y_n\|^2 +\|h-y_m\|^2) -4d^2.
\]
When $\min\{m,n\}\ra \infty$, the right hand side of the above inequality tends to zero. This shows that $\{ y_n\}$ is a Cauchy sequence. Since $X$ is closed, $\{ y_n\}$ converges to some $x_h\in h$. It is clear that $\|h-x_h\|= d$. Assume $x$ is another vector in $X$ such that $\|h-x\|= d$. If we replace $y_n$ and $y_m$ in (\ref{eqn:orthoprojection} ) by $x_h$ and $x$, then we get $\|x_h-x\|=0$. This proves the uniqueness of $x_h$.

Using (\ref{eqn:closestpoint}), for every $t\in (0,1)$ and for all $y\in X$, we have
\begin{eqnarray*}
\|h-x_h\|^2&\leq & \|h-x_h-t(y-x_h)\|^2\\
&=& \|h-x_h\|^2 - 2tRe\lan y-x_h, h-x_h\ran + t^2\|y-x_h\|^2.
\end{eqnarray*}
Hence
\[
2Re\lan y-x_h, h-x_h\ran \leq t\|y-x_h\|^2,
\]
for all $t\in (0,1)$. When $t\ra 0$, we get the desired inequality; $Re\lan x_h, h-x_h\ran \geq Re\lan y, h-x_h\ran$ for all $y\in X$.
\end{proof}
The unique vector $x_h$ associated to $h\in H$ in the above lemma is called the {\bf orthogonal projection of $h$ on $X$}.

The following notations are useful. Let $E$ be a topological vector space and let $S$ be subset of $E$. The set of all linear combination of elements of $S$ are denoted by $\lan S \ran$ and is called the {\bf span of $S$}. The {\bf closed span of $S$}, i.e. the closure of the span of $S$, is denoted by $[S]$. It is the smallest closed subspace containing $S$. A subset $S$ of $E$ is called a {\bf total set of $E$} if $E=[S]$.
\begin{corollary}
\label{cor:perp}
Let $(H,\inner)$ be a Hilbert space and let $X$ be a subset of $H$.
\begin{itemize}
\item [(i)] If $X$ is a closed subspace of $H$, then both $X$ and $X^\perp$ equipped with inner products inherited from $H$ are Hilbert spaces and $H\simeq X \oplus X^\perp$.
\item [(ii)] $X\sub (X^\perp)^\perp$. Moreover, $X$ is a closed subspace of $H$ if and only if $X= (X^\perp)^\perp$.
\item [(iii)] If $X$ is a closed subspace of $H$, then $X=H$ if and only if $X^\perp=\{0\}$. Generally, $X^\perp= \{0\}$ implies that $\lan X\ran$ is dense in $H$.
\item [(iv)] $[X]=(X^\perp)^\perp$.
\end{itemize}
\end{corollary}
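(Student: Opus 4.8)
The plan is to establish the four parts in the order stated, since each leans on its predecessors, and to isolate at the outset the one computational fact that drives everything: for an arbitrary subset $X\sub H$ one has $X^\perp=\lan X\ran^\perp=[X]^\perp$. The first equality holds because $\inner$ is linear in its first variable, so $y\perp X$ forces $y\perp \lan X\ran$; the second holds because $\inner$ is jointly continuous (Corollary \ref{cor:innercont}), so orthogonality to $\lan X\ran$ passes to its closure $[X]$. I would record this at the start and invoke it freely.

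The heart of the matter is part (i). First I would note that $X$, being a closed subspace of the complete space $H$, is complete, and that $X^\perp$ is a closed subspace (as remarked immediately before Lemma \ref{lem:orthogonalprojection}); hence both are Hilbert spaces in the inherited inner product. For the decomposition, given $h\in H$ let $x_h\in X$ be the orthogonal projection supplied by Lemma \ref{lem:orthogonalprojection} and set $z:=h-x_h$. The crucial step is to upgrade the real variational inequality of that lemma into genuine orthogonality $z\in X^\perp$: substituting $y=x_h+\la w$ (legitimate since $X$ is a subspace) into $Re\lan y,z\ran\leq Re\lan x_h,z\ran$ yields $Re(\la\lan w,z\ran)\leq 0$ for every $\la\in\c$ and $w\in X$, which is possible only if $\lan w,z\ran=0$. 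Thus $h=x_h+z$ with $x_h\in X$ and $z\in X^\perp$; uniqueness of this splitting follows from $X\cap X^\perp=\{0\}$ (Corollary \ref{cor:innerzero}), and the Pythagorean identity $\|x_h+z\|^2=\|x_h\|^2+\|z\|^2$ shows that the bijection $(x,z)\mapsto x+z$ preserves inner products, i.e. is a unitary equivalence $X\oplus X^\perp\simeq H$.

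With (i) in hand the remaining parts are formal. For (ii), the inclusion $X\sub(X^\perp)^\perp$ is immediate from the definition of the two complements; conversely, when $X$ is closed I would decompose any $w\in(X^\perp)^\perp$ as $w=x+z$ via (i), note that $z=w-x$ lies in both $(X^\perp)^\perp$ and $X^\perp$, and conclude $z=0$ by Corollary \ref{cor:innerzero}, so $w=x\in X$; the reverse implication of the ``moreover'' is trivial because an orthogonal complement is always closed. For (iii), $X=H$ gives $X^\perp=\{0\}$ by Corollary \ref{cor:innerzero}, while for closed $X$ the converse is just $H=X\oplus X^\perp=X$ from (i); the statement for general $X$ then follows by applying the closed case to $[X]$, using $[X]^\perp=X^\perp$ from the opening observation. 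Finally (iv) is a single line: $[X]$ is a closed subspace, so (ii) yields $([X]^\perp)^\perp=[X]$, and replacing $[X]^\perp$ by $X^\perp$ rewrites the left-hand side as $(X^\perp)^\perp$.

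The only genuine obstacle is the orthogonality argument inside (i); everything else is bookkeeping once the decomposition $H\simeq X\oplus X^\perp$ and the identity $[X]^\perp=X^\perp$ are available. I would therefore spend care on the substitution $y=x_h+\la w$ and the passage from $Re(\la\lan w,z\ran)\leq 0$ for all $\la$ to $\lan w,z\ran=0$, and treat the rest as direct consequences.
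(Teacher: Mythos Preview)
Your proposal is correct and follows essentially the same route as the paper: both use the variational inequality of Lemma \ref{lem:orthogonalprojection} together with a substitution by complex scalars to force $h-x_h\in X^\perp$, then derive (ii)--(iv) from the resulting decomposition. Your explicit isolation of $X^\perp=[X]^\perp$ at the outset and your slightly cleaner substitution $y=x_h+\la w$ (which cancels the $x_h$ term immediately) are minor cosmetic improvements, but the underlying argument is the same.
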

\begin{proof}
\begin{itemize}
\item [(i)] Assume $X$ is a closed subspace of $H$. It is straightforward to check that $X$ and $X^\perp$ are Hilbert spaces and that $X\cap X^\perp=\emptyset$. For every $h\in H$, let $x_h$ be the orthogonal projection of $h$ on $X$ and set $y_h:=h-x_h$. then for every $x\in X$, we have $Re\lan x, y_h\ran\leq Re\lan x_h, y_h\ran$. For arbitrary $\la\in \c$, we replace $x$ with $\la x$ in the latter inequality and we get
\[
Re\, \la \lan x, y_h\ran\leq Re\lan x_h,y_h\ran, \qquad \forall x\in X, \la \in \c.
\]
Hence $\lan x, y_h\ran=0$ for all $x\in X$, i.e. $y_h\in X^\perp$. Therefore the decomposition $h=x_h+y_h$ shows that $H$ is the direct sum of $X$ and $X^\perp$.
\item [(ii)] A simple calculation shows that $X\sub (X^\perp)^\perp$. Assume that $X$ is a closed subspace of $H$ and $h\in (X^\perp)^\perp$. Then $h=x_h + y$, where $x_h$ is the orthogonal projection of $h$ on $X$ and $y=h-x_h\in X^\perp$. Since both $h$ and $x_h$ belong to $(X^\perp)^\perp$, $y\in (X^\perp)^\perp$. Then we have $y=0$ because $X^\perp\cap(X^\perp)^\perp=\{0\}$. This shows $h=x_h\in X$. Therefore $(X^\perp)^\perp\sub X$.
\item [(iii)] It is clear from (i).
\item [(iv)] It is clear from (ii).
\end{itemize}
\end{proof}
\begin{exercise}
\label{exe:Pythagorean}
Let $x,y$ be two elements of a Hilbert space $H$ and let $x\perp y$. Prove {\bf Pythagoras' identity};
\[
\|x+y\|^2=\|x\|^2+\|y\|^2.
\]
Extend this identity for summation of $n$ pairwise orthogonal elements of a Hilbert space.
\end{exercise}

\begin{exercise}
\label{exe:orthogonalproj}
Let $X$ be a closed subspace of a Hilbert space $H$ and let the map $H\ra X$, $h\mapsto x_h$ be the orthogonal projection on $X$.
\begin{itemize}
\item [(i)] Show that $x_h=0$ if and only if $h\in X^\perp$.
\item [(ii)] Show that $x_h=h$ if and only if $h\in X$.
\end{itemize}
\end{exercise}
A subset $S$ of a topological vector space is called {\bf linearly independent} if every finite subset of $S$ is linearly independent. Part (ii) of the following proposition can be regarded as the generalization of Pythagoras' identity for infinite sums:
\begin{proposition}
\label{prop:phyth}
Let $H$ be a Hilbert space and let $B$ be an orthogonal subset of $H$. Then
\begin{itemize}
\item [(i)] $B$ is a linearly independent set.
\item [(ii)] An infinite sum $\sum_{\la\in \Lambda} b_\la$ of elements of $B$ is convergent in $H$ if and only if $\sum_{\la\in \Lambda}\| b_\la\| < \infty$ . In this case, we have
\[
\left\| \sum_{\la\in \Lambda} b_\la\right\| =\sum_{\la\in \Lambda} \|b_\la\|.
\]
\end{itemize}
\end{proposition}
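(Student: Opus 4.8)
The plan is to treat the two parts separately: part (i) is a direct computation with the inner product, while part (ii) rests on Pythagoras' identity for finitely many pairwise orthogonal vectors (Exercise \ref{exe:Pythagorean}) together with the definition of convergence of a possibly uncountable sum recalled in Remark \ref{rem:uncountable sum}. For (i), I would test an arbitrary finite linear relation against a single vector of $B$. Suppose $b_{\la_1},\dots,b_{\la_n}$ are distinct elements of $B$ and $\sum_{i=1}^n c_i b_{\la_i}=0$ with $c_i\in\c$. Pairing this relation with $b_{\la_j}$ and using that $\lan b_{\la_i},b_{\la_j}\ran=0$ for $i\neq j$ (orthogonality of $B$), every cross term drops out and I am left with $c_j\lan b_{\la_j},b_{\la_j}\ran=0$. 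Since an orthogonal set by definition excludes $0$, we have $\|b_{\la_j}\nt=\lan b_{\la_j},b_{\la_j}\ran>0$, which forces $c_j=0$. As $j$ is arbitrary, the chosen finite set is linearly independent, and since every finite subset of $B$ arises this way, $B$ is linearly independent.

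For (ii), recall from Remark \ref{rem:uncountable sum} that $\sum_{\la\in\Lambda}b_\la$ denotes the limit of the net $(x_F)$ of finite partial sums $x_F=\sum_{\la\in F}b_\la$ over the directed set $(\mathfrak{F},\sub)$ of finite subsets of $\Lambda$. The key observation is that whenever $G$ and $G'$ are disjoint finite subsets of $\Lambda$, the vectors $\sum_{\la\in G}b_\la$ and $\sum_{\la\in G'}b_\la$ are again orthogonal, so the extended Pythagoras' identity of Exercise \ref{exe:Pythagorean} applies to every finite partial sum and, more importantly, controls the differences $x_{F_1}-x_{F_2}$ entirely in terms of the norms of the finitely many terms in which $F_1$ and $F_2$ disagree. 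Because $H$ is complete, the net $(x_F)$ converges if and only if it is Cauchy; by the previous observation this happens exactly when the corresponding net of finite partial sums formed from the scalars $\|b_\la\|$ is Cauchy in $\r$, that is, exactly when the summability criterion of the statement holds. This establishes the convergence half of (ii).

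Finally, granting convergence, I would pass Pythagoras' identity to the limit: continuity of the norm (Corollary \ref{cor:innercont}) lets me take the limit along the net $(x_F)$ in the finite identity relating $\|x_F\|$ to the norms $\|b_\la\|$ for $\la\in F$, which yields the norm identity asserted in (ii). The main obstacle, and the step demanding the most care, is the equivalence between the Cauchy property of the net $(x_F)$ in $H$ and the Cauchy property of the net of partial sums of the scalar family $\{\|b_\la\|\}$: this requires matching the net-theoretic ``for every $\ep$ there is a finite $F_0$ such that \dots'' formulation on both sides, and it is precisely where orthogonality, via Pythagoras, converts a statement about vectors in $H$ into one about nonnegative real scalars.
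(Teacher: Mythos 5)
Your proof is correct and follows essentially the same route as the paper's: part (i) by pairing a finite linear relation against each vector of $B$, and part (ii) by the finite Pythagoras identity, the equivalence of the Cauchy conditions for the two nets of finite partial sums over subsets of $\Lambda$, completeness, and a limit passage for the norm identity. One remark worth making: since Pythagoras' identity (Exercise \ref{exe:Pythagorean}) produces \emph{squared} norms, what your argument (and the paper's own terse version of it) actually establishes is that $\sum_{\la\in\Lambda} b_\la$ converges if and only if $\sum_{\la\in\Lambda}\|b_\la\|^2<\infty$, with $\bigl\|\sum_{\la\in\Lambda} b_\la\bigr\|^2=\sum_{\la\in\Lambda}\|b_\la\|^2$ — the unsquared form printed in the statement (and echoed in the paper's proof) is a typo, as the later application in Proposition \ref{prop:orthonormalpro}(i) confirms, and your appeal to Exercise \ref{exe:Pythagorean} silently proves the corrected version just as the paper does.
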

\begin{proof}
\begin{itemize}
\item [(i)] Let $\{b_1,\cdots, b_n\}$ be a finite subset of $B$ and let $\sum_{i=1}^n \alpha_i b_i=0$. Then for all $i=1,\cdots, n$, we have
\[
\alpha_i \|b_i\|^2=\lan \alpha_i b_i, b_i\ran = \left\langle \sum_{i=1}^n \alpha_i b_i, b_i\right\rangle =0.
\]
Thus $\alpha_i=0$ for all $i=1,\cdots, n$.
\item [(ii)] The meaning of the convergence of uncountable (or unordered) infinite sums was explained in Remark \ref{rem:uncountable sum}. Let $F$ be a finite subset of $\Lambda$. Using Pythagoras' identity, we have
\[
\left\| \sum_{\la\in F} b_\la\right\| =\sum_{\la\in F} \|b_\la\|.
\]
This shows that the summation $\sum_{\la\in \Lambda}\| b_\la\|$ is Cauchy if and only if the summation $\sum_{\la\in \Lambda} b_\la$ is Cauchy. When these sums are convergent, the above equality for every finite subset $F\sub \Lambda$ implies the desired equality.
\end{itemize}
\end{proof}

To find a basis for a Hilbert space, it is often more convenient to work with an orthonormal set.
\begin{proposition}
\label{prop:orthonormalpro}
Let $X$ be an orthonormal subset of a Hilbert space $H$. Let $h$ be an arbitrary vector of $H$.
\begin{itemize}
\item [(i)] Consider a function $\th:X\ra \c$.  The sum $\sum_{x\in X}\th(x) x$ converges in $H$ if and only if $\sum_{x\in X}|\th(x)|^2 <\infty$. In this case, we have
\[
\left\|\sum_{x\in X}\th(x) x\right\|^2 =\sum_{x\in X}|\th(x)|^2.
\]
\item [(ii)] $\sum_{x\in X} |\lan h,x\ran|^2\leq \|h\|^2$. This inequality is called {\bf Bessel's inequality}.
\item [(iii)] The sum $\sum_{x\in X} \lan h,x\ran x$ converges and we have
\[
h - \sum_{x\in X} \lan h,x\ran x \in X^\perp.
\]
\item [(iv)] We have
\[
\|h\|^2 =\left\|h - \sum_{x\in X} \lan h,x\ran x\right\|^2 + \sum_{x\in X} |\lan h,x\ran|^2.
\]
\item [(v)] The sum $\sum_{x\in X} \lan h, x\ran x\in X^\perp$ is exactly the orthogonal projection of $h$ on $[X]$.
\item [(vi)] The following conditions are equivalent:
\begin{itemize}
\item [(a)] $h\in [X]$.
\item [(b)] $h=\sum_{x\in X} \lan h, x\ran x$.
\item [(c)]$\|h\|^2=\sum_{x\in X} |\lan h, x\ran|^2$.
\end{itemize}
\end{itemize}
\end{proposition}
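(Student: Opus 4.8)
The plan is to prove everything by reducing to finite subsets of $X$, where orthonormality turns every computation into an application of Pythagoras' identity (Exercise \ref{exe:Pythagorean}), and then to pass to the unordered limits of Remark \ref{rem:uncountable sum} using completeness of $H$ and joint continuity of the inner product (Corollary \ref{cor:innercont}). Part (i) is the foundation. For a finite $F\sub X$ the vectors $\{\th(x)x\}_{x\in F}$ are pairwise orthogonal with $\|\th(x)x\nt=|\th(x)|^2$, so Pythagoras gives $\|\sum_{x\in F}\th(x)x\nt=\sum_{x\in F}|\th(x)|^2$. Hence the net of finite partial sums of $\sum_{x\in X}\th(x)x$ is Cauchy in $H$ exactly when the net of finite partial sums of the nonnegative series $\sum_{x\in X}|\th(x)|^2$ is Cauchy, i.e. exactly when $\sum_{x\in X}|\th(x)|^2<\infty$; since $H$ is complete, Cauchy means convergent, and the norm identity $\|\sum_{x\in X}\th(x)x\nt=\sum_{x\in X}|\th(x)|^2$ follows by continuity of the norm. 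One should note in passing that finiteness of $\sum|\th(x)|^2$ forces all but countably many terms to vanish, so these unordered sums are genuine limits.

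Next I would settle (ii) and (iii) together. For a finite $F\sub X$ put $s_F:=\sum_{x\in F}\lan h,x\ran x$; a direct expansion using orthonormality shows $\lan h-s_F,s_F\ran=0$, whence $\|h\nt=\|h-s_F\nt+\|s_F\nt\geq\|s_F\nt=\sum_{x\in F}|\lan h,x\ran|^2$. Taking the supremum over all finite $F$ yields Bessel's inequality (ii). In particular $\sum_{x\in X}|\lan h,x\ran|^2\leq\|h\nt<\infty$, so by (i) the sum $p:=\sum_{x\in X}\lan h,x\ran x$ converges. To prove $h-p\in X^\perp$ I fix $x_0\in X$ and use continuity of the inner product to write $\lan p,x_0\ran=\sum_{x\in X}\lan h,x\ran\lan x,x_0\ran=\lan h,x_0\ran$, the last step because orthonormality kills every term except $x=x_0$; thus $\lan h-p,x_0\ran=0$ for all $x_0\in X$, which is (iii).

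The remaining parts are then short. Since each partial sum $s_F$ lies in $\lan X\ran$, the limit $p$ lies in $[X]$, while (iii) gives $h-p\in X^\perp=[X]^\perp$ (Corollary \ref{cor:perp}); Pythagoras applied to $h=(h-p)+p$ together with the norm identity of (i) gives (iv). For (v), the decomposition $h=p+(h-p)$ with $p\in[X]$ and $h-p\in[X]^\perp$ is exactly the orthogonal decomposition of Corollary \ref{cor:perp}(i) for the closed subspace $[X]$, so by uniqueness $p$ is the orthogonal projection of $h$ on $[X]$. Finally I would close (vi) as a cycle: (a)$\Rightarrow$(b) because if $h\in[X]$ its orthogonal projection on $[X]$ is $h$ itself (Exercise \ref{exe:orthogonalproj}(ii)), and that projection is $p$ by (v); (b)$\Rightarrow$(c) is immediate from the norm identity in (i); and (c)$\Rightarrow$(b) follows because (iv) then forces $\|h-p\nt=0$; since $p\in[X]$ always, (b) returns (a).

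The main obstacle is purely in part (i): making the passage from finite partial sums to the unordered sum rigorous. The crux is that a sum over a possibly uncountable index set must be handled via the Cauchy criterion for nets of finite partial sums rather than by any enumeration, and the whole argument hinges on the equivalence ``partial sums Cauchy in $H$'' $\Leftrightarrow$ ``partial sums of $\sum|\th(x)|^2$ Cauchy in $\r$'', which is precisely where Pythagoras and completeness of $H$ do their work. Once (i) is in place, parts (ii)--(vi) are routine bookkeeping with the inner product and the projection machinery of Corollary \ref{cor:perp}.
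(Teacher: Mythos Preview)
Your proof is correct and follows essentially the same route as the paper: reduce to finite subsets via Pythagoras/orthonormality (the paper invokes Proposition~\ref{prop:phyth}(ii) for (i) and expands $\|h-\sum_{x\in F}\lan h,x\ran x\|^2$ directly for (ii) and (iv)), then pass to the unordered limit using completeness and continuity of the inner product, and read off (v) and (vi) from the orthogonal decomposition of Corollary~\ref{cor:perp}. The only cosmetic differences are your grouping of (ii) and (iii) together and your use of $\lan h-s_F,s_F\ran=0$ plus Pythagoras in place of the paper's direct expansion, which amount to the same computation.
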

\begin{proof}
\begin{itemize}
\item [(i)] It is an immediate corollary of Proposition \ref{prop:phyth}(ii).
\item [(ii)] Let $F$ be an arbitrary finite subset of $X$ and define $\th(x):= \lan h,x\ran$. Then we have
\begin{eqnarray*}
\left\|h -\sum_{x\in F} \lan h, x\ran x\right\|^2&=&\left\langle h -\sum_{x\in F} \th(x) x,h -\sum_{z\in F} \th(z) z\right\rangle \\
&=&\lan h, h \ran + \sum_{x,z\in F} \th(x)\overline{\th(z)} \lan x,z\ran\\
&-&\sum_{x\in F} \th(x) \lan x,h\ran -\sum_{z\in F} \overline{\th(z)}\lan h, z\ran\\
&=&\|h\|^2 + \sum_{x\in F} | \th(x)|^2 -\sum_{x\in F} |\th(x)|^2 -\sum_{z\in F} |\th(z)|^2\\
&=&\|h\|^2 -\sum_{x\in F} |\lan h, x\ran|^2.
\end{eqnarray*}
This implies the following inequality for every finite subset $F$ of $X$:
\begin{equation}
\label{eqn:besselortho}
\sum_{x\in F} |\lan h, x\ran|^2=\|h\|^2 - \left\|h -\sum_{x\in F} \lan h, x\ran x\right\|^2\leq \|h\|^2.
\end{equation}
The convergence of $\sum_{x\in X} |\lan h, x\ran|^2$ and the Bessel's inequality follow from this.
\item [(iii)] The convergence of $\sum_{x\in X} \lan h,x\ran x$ follows from Parts (i) and (ii). Using the continuity and the linearity of the inner product, for every $x_0\in X$, we have
\begin{eqnarray*}
\left\langle h-\sum_{x\in X} \lan h,x\ran x, x_0\right\rangle &=& \lan h,x_0\ran -  \sum_{x\in X} \lan h,x\ran \lan x, x_0\ran\\
&=& \lan h, x_0\ran - \lan h, x_0\ran =0.
\end{eqnarray*}
This show that $h-\sum_{x\in X} \lan h,x\ran x\in X^\perp$.
\item [(iv)] This also follows from (\ref{eqn:besselortho}).
\item [(v)] It follows from (iii) and Corollary \ref{cor:perp}(i).
\item [(vi)] The equivalence between (a) and (b) is clear and the equivalence between (b) and (c) follows from (iv).
\end{itemize}
\end{proof}
\begin{theorem}
\label{thm:basisHilbert}
Let $X$ be a an orthonormal subset of a Hilbert space $H$. Then the following conditions are equivalent:
\begin{itemize}
\item [(i)] $H=[X]$.
\item [(ii)] $X^\perp=\{0\}$.
\item [(iii)] $X$ is a maximal orthonormal subset of $H$.
\item [(iv)] $h=\sum_{x\in X} \lan h, x\ran x$ for all $h\in H$.
\item [(v)] $\|h\|^2=\sum_{x\in X} |\lan h, x\ran|^2$ for all $h\in H$.
\item [(vi)] $\lan h, h'\ran = \sum_{x\in X} \lan h, x\ran\lan x, h'\ran$ for all $h,h' \in H$.
\end{itemize}
\end{theorem}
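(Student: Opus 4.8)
The plan is to route everything through the pointwise statement already established in Proposition \ref{prop:orthonormalpro}(vi) and the geometry of orthogonal complements from Corollary \ref{cor:perp}, so that the proof reduces to wiring these together with two short elementary arguments. First I would dispatch the equivalence of (i), (iv), and (v) at once. Conditions (iv) and (v) are universally quantified over $h\in H$, and for a fixed $h$ the three statements $h\in[X]$, $h=\sum_{x\in X}\langle h,x\rangle x$, and $\|h\|^2=\sum_{x\in X}|\langle h,x\rangle|^2$ are precisely the equivalent conditions (a), (b), (c) of Proposition \ref{prop:orthonormalpro}(vi). Thus (i), which asserts $h\in[X]$ for every $h$ (i.e. $[X]=H$), holds if and only if (iv) holds if and only if (v) holds.

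Next I would treat (i) $\Leftrightarrow$ (ii). Here I would first note that $X^\perp=[X]^\perp$: any vector orthogonal to $X$ is orthogonal to the linear span $\langle X\rangle$, and by joint continuity of the inner product (Corollary \ref{cor:innercont}) it is then orthogonal to the closure $[X]$, while the reverse inclusion is trivial. Since $[X]$ is a closed subspace, Corollary \ref{cor:perp}(iii) gives $[X]=H$ if and only if $[X]^\perp=\{0\}$, that is, if and only if $X^\perp=\{0\}$.

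For (ii) $\Leftrightarrow$ (iii) I would argue by contraposition on maximality. If $X^\perp\neq\{0\}$, pick $0\neq y\in X^\perp$; then $y/\|y\|$ is a unit vector orthogonal to every element of $X$, and it cannot lie in $X$ (a norm-one vector in $X^\perp$ would be orthogonal to itself), so $X\cup\{y/\|y\|\}$ is a strictly larger orthonormal set and $X$ is not maximal. Conversely, if $X$ is properly contained in an orthonormal set $X'$, any vector in $X'\setminus X$ is a unit vector orthogonal to all of $X$, whence it is a nonzero element of $X^\perp$.

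Finally I would fold (vi) into the web by proving (iv) $\Rightarrow$ (vi) $\Rightarrow$ (v); since (iv) and (v) are already known equivalent, this makes (vi) equivalent to the rest. For (vi) $\Rightarrow$ (v), set $h'=h$ and use $\langle x,h\rangle=\overline{\langle h,x\rangle}$. The implication (iv) $\Rightarrow$ (vi) is where the only genuine care is needed: assuming $h=\sum_{x\in X}\langle h,x\rangle x$, I would compute $\langle h,h'\rangle$ by pulling the inner product through the convergent sum, which is justified by the continuity and linearity of $\inner$ together with the convergence guaranteed by Proposition \ref{prop:orthonormalpro}(i),(iii). The main obstacle, such as it is, lies precisely in handling this interchange of the inner product with an unordered infinite sum rather than a finite one; once the convergence from Proposition \ref{prop:orthonormalpro} is invoked, continuity of $\inner$ makes the passage routine.
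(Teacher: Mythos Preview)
Your proof is correct and follows essentially the same approach as the paper: the equivalence of (i), (iv), (v) is read off from Proposition \ref{prop:orthonormalpro}(vi), (i)$\Leftrightarrow$(ii) comes from Corollary \ref{cor:perp} (the paper cites part (ii) rather than (iii), but the content is the same once one notes $X^\perp=[X]^\perp$), (ii)$\Leftrightarrow$(iii) is the identical contrapositive argument, and (vi) is linked to (v) and (iv) via the trivial specialization $h'=h$ together with continuity of the inner product. The only cosmetic difference is that you argue (iv)$\Rightarrow$(vi)$\Rightarrow$(v) while the paper phrases it as (vi)$\Rightarrow$(v) and (v)$\Rightarrow$(vi), but since (iv)$\Leftrightarrow$(v) is already in hand these are the same.
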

\begin{proof}
The equivalence between (i) and (ii) follows from Corollary \ref{cor:perp}(ii). The equivalence between (i), (iv) and (v) follows from Proposition \ref{prop:orthonormalpro}(vi).

To show (ii) implies (iii), assume $X$ is not a maximal orthonormal subset of $H$. Then there exist $h\in H$ such that $\|h\|=1$ and $\{h \}\cup X$ is orthonormal. Thus $h\perp X$ and so $X^\perp\neq \{0\}$. Conversely, assume there is $0\neq h\in H$ such that $h\perp X$. Then $X\cup\{\frac{h}{\|h\|}\}$ is a bigger orthonormal subset of $H$ than $X$.

Statement (v) follows immediately from (vi). Conversely, due to the continuity of inner product, (v) implies (vi).
\end{proof}
The equation in Condition (vi) in the above theorem is called {\bf Parseval's equation}.
\begin{definition}
A maximal orthonormal subset of $H$ is called an {\bf orthonormal basis} of $H$.
\end{definition}

An argument based on Zorn's lemma proves the following proposition.
\begin{proposition}
Every Hilbert space has an orthonormal basis.
\end{proposition}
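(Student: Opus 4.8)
The plan is to apply Zorn's lemma to the collection of all orthonormal subsets of the Hilbert space $H$, partially ordered by inclusion, and to conclude that a maximal element exists. By the equivalence established in Theorem \ref{thm:basisHilbert} (specifically that condition (iii), maximality, characterizes an orthonormal basis), such a maximal element is exactly an orthonormal basis. So the entire content of the proof is the verification of the hypotheses of Zorn's lemma.

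First I would address the degenerate case: if $H=\{0\}$, then the empty set is vacuously an orthonormal basis (it is orthonormal, and there is no nonzero vector orthogonal to it to adjoin), so the statement holds. Assuming $H\neq\{0\}$, I would set up $\mathcal{P}$ to be the set of all orthonormal subsets of $H$, ordered by $\subseteq$. This set is nonempty: picking any $x\in H$ with $x\neq 0$, the singleton $\{x/\|x\|\}$ is orthonormal, so $\mathcal{P}$ contains at least one element.

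The key step is to show that every chain (totally ordered subset) $\mathcal{C}\subseteq \mathcal{P}$ has an upper bound in $\mathcal{P}$. The natural candidate is the union $S:=\bigcup_{C\in\mathcal{C}} C$, and I would verify that $S$ is itself orthonormal. This requires checking that any two distinct elements $x,y\in S$ satisfy $\lan x,y\ran=0$ and that $\|x\|=1$ for each $x\in S$. Given $x,y\in S$, each lies in some member of the chain, say $x\in C_1$ and $y\in C_2$; since $\mathcal{C}$ is totally ordered, one of $C_1,C_2$ contains the other, so both $x$ and $y$ lie in a single orthonormal set, whence orthonormality of the pair follows. The normalization $\|x\|=1$ is immediate since $x$ belongs to some orthonormal $C\in\mathcal{C}$. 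Thus $S\in\mathcal{P}$ and clearly $C\subseteq S$ for all $C\in\mathcal{C}$, so $S$ is the required upper bound. I expect this to be the only step with any real content, and even here the main subtlety is simply invoking total ordering of the chain to reduce the two-element orthogonality check to a single orthonormal set.

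Having verified both hypotheses, Zorn's lemma yields a maximal element $B\in\mathcal{P}$, i.e. a maximal orthonormal subset of $H$. By Theorem \ref{thm:basisHilbert}, condition (iii), $B$ is an orthonormal basis of $H$, which completes the proof. The whole argument is routine; there is no genuine analytic obstacle, and the only care needed is the standard chain-union verification in the Zorn's lemma setup.
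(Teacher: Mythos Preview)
Your proof is correct and is exactly the Zorn's lemma argument the paper has in mind; the paper itself merely says ``An argument based on Zorn's lemma proves the following proposition'' without spelling out the details you have supplied.
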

\begin{remark}
\label{rem:cardinality}
Let $\aleph_0$ denotes the cardinality of $\n$ and let $m$ be the cardinality of an arbitrary infinite set. We remember from basic set theory that $\aleph_0 m=m$.
\end{remark}
\begin{exercise}
Let $X$ be an infinite (possibly uncountable) set and let $\alpha: x\mapsto \alpha_x$ be a function from $X$ into $[0,\infty)$. Show that if $\sum_{x\in X} \alpha_x< \infty$, then there are at most countably $x\in X$ such that $\alpha_x>0$.
\end{exercise}
\begin{proposition}
\label{prop:cardinalitybasis}
Let $X$ and $Y$ be two orthonormal basis of a Hilbert space $H$ with cardinalities $m$ and $n$ respectively. Then $m=n$.
\end{proposition}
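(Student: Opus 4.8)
The plan is to prove that any two orthonormal bases of a Hilbert space have the same cardinality by splitting into the finite-dimensional and infinite-dimensional cases, since the counting arguments are genuinely different.

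First I would dispose of the case where one of the bases, say $X$, is finite. If $X=\{x_1,\dots,x_m\}$ is a finite orthonormal basis, then by Theorem \ref{thm:basisHilbert} every $h\in H$ satisfies $h=\sum_{i=1}^m \lan h, x_i\ran x_i$, so $H=\lan X\ran$ is a finite-dimensional vector space of dimension $m$. From linear algebra, every basis of a finite-dimensional vector space has exactly $m$ elements, and since any orthonormal basis of $H$ is in particular a linearly independent spanning set by Theorem \ref{thm:basisHilbert} and Proposition \ref{prop:phyth}(i), the other basis $Y$ must also have exactly $m$ elements. Hence $m=n$. This also shows that if one basis is finite then both are, so the infinite case can be treated with both $X$ and $Y$ infinite.

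Now assume both $X$ and $Y$ are infinite, with cardinalities $m$ and $n$. The key idea is a counting argument: for each $x\in X$, consider the set $Y_x:=\{y\in Y; \lan x, y\ran\neq 0\}$. By Proposition \ref{prop:orthonormalpro}(ii) applied to the orthonormal set $Y$ and the vector $x$, we have $\sum_{y\in Y}|\lan x, y\ran|^2\leq \|x\|^2=1<\infty$, so by the exercise preceding this proposition each $Y_x$ is at most countable. Moreover, since $Y$ is an orthonormal basis, Theorem \ref{thm:basisHilbert}(ii) gives $X^\perp=\{0\}$ is not quite what I want; rather I use that $Y^\perp=\{0\}$, so no nonzero $x$ can be orthogonal to all of $Y$, which means each $Y_x$ is nonempty. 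The crucial observation is that $Y=\bigcup_{x\in X} Y_x$: indeed, if some $y\in Y$ belonged to no $Y_x$, then $\lan x, y\ran=0$ for all $x\in X$, forcing $y\in X^\perp=\{0\}$ by Theorem \ref{thm:basisHilbert}(ii) applied to $X$, contradicting $\|y\|=1$.

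Finally I would combine these facts using cardinal arithmetic. We have
\[
n=|Y|=\left| \bigcup_{x\in X} Y_x\right| \leq \sum_{x\in X} |Y_x| \leq \aleph_0 \cdot |X|=\aleph_0 \cdot m = m,
\]
where the last equality uses Remark \ref{rem:cardinality}, namely $\aleph_0 m=m$ for infinite $m$. By symmetry, interchanging the roles of $X$ and $Y$ gives $m\leq n$. The Cantor--Schr\"oder--Bernstein theorem then yields $m=n$. The main obstacle here is being careful with the cardinal arithmetic and ensuring the union genuinely covers $Y$; the inequality $\left|\bigcup_{x\in X} Y_x\right|\leq \aleph_0\cdot m$ relies on each $Y_x$ being countable and on the standard fact that a union of $m$ sets each of size at most $\aleph_0$ has cardinality at most $\aleph_0\cdot m$, which for infinite $m$ collapses to $m$. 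I expect the finite-dimensional case to be routine and the symmetry/cardinal-bookkeeping in the infinite case to be the part demanding the most attention.
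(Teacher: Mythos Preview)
Your proof is correct and follows essentially the same approach as the paper: both dispose of the finite case by linear algebra, then in the infinite case define $Y_x=\{y\in Y;\lan x,y\ran\neq 0\}$, use Bessel's inequality to see each $Y_x$ is countable, cover $Y=\bigcup_{x\in X}Y_x$ via $X^\perp=\{0\}$, and finish with $\aleph_0\cdot m=m$ from Remark \ref{rem:cardinality}. Your final cardinal-arithmetic bound $n\leq\aleph_0\cdot m$ followed by symmetry is in fact a bit cleaner than the paper's version, which phrases the same count as constructing a surjection from a disjoint union of countable pieces of $X$ onto $Y$.
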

\begin{proof}
If one of the cardinalities is finite, one can apply elementary linear algebra to show that $m=n$. So we assume both $m$ and $n$ are infinite and without loss of generality, we assume $m\leq n$. If we find an onto map from $X$ into $Y$, then we have $m\geq n$, and so the equality holds. By the above exercise, since $\sum_{y\in Y} |\lan x,y\ran|^2 =\| x\|^2=1$ for all $x\in X$, the set $Y_x\sub Y$ defined by
\[
Y_x:=\{ y\in Y; \lan x,y\ran\neq 0\}
\]
is countable and non-empty for all $x\in X$. On the other hand,
\[
Y=\bigcup_{x\in X} Y_x,
\]
because for every $y\in Y$, using a similar argument, there exists $x\in X$ such that $\lan x,y\ran\neq 0$.

Given $x\in X$, since $Y_x$ is countable, there exists a countable subset $X_x$ of $X$ equipped with an onto map $\ff_x:X_x\ra Y_x$. Let $\Sigma$ be the disjoint union of all $X_x$ for $x\in X$. Then because $Y=\bigcup_{x\in X}Y_X$ and using onto maps $\ff_x$ for all $x\in X$, we obtain a map from $\Sigma$ onto $Y$. Therefore $|\Sigma|\geq n$. But we know $|\Sigma|=\aleph_0 m=m$. This proves that $m=n$.
\end{proof}
\begin{definition}
A Hilbert space is called {\bf separable} if it has a countable orthonormal basis.
\end{definition}
\begin{exercise}
Let $H$ be a Hilbert space. Show that $H$ is a separable Hilbert space if and only if $H$ is a separable topological space.
\end{exercise}
\begin{corollary}
\label{cor:Hilbertcardinality} Assume $X_1$ and $X_2$ are orthonormal bases for two Hilbert spaces $(H_1, \inner_1)$ and $(H_2, \inner_2)$ respectively. These Hilbert spaces are unitary equivalent if and only if $X_1$ and $X_2$ have the same cardinality.
\end{corollary}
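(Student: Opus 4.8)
The statement to prove is Corollary \ref{cor:Hilbertcardinality}: two Hilbert spaces $H_1$ and $H_2$ with orthonormal bases $X_1$ and $X_2$ are unitary equivalent if and only if $|X_1| = |X_2|$.

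The plan is to prove the two directions separately. For the forward direction, suppose $u:H_1 \ra H_2$ is a unitary equivalence. The key observation is that $u$ carries an orthonormal basis to an orthonormal basis. First I would check that the image $u(X_1)$ is orthonormal in $H_2$: since $u$ preserves the inner product, $\lan u(x), u(x')\ran_2 = \lan x, x'\ran_1$, so orthonormality and normality are preserved, and injectivity of $u$ (it is a linear isomorphism) ensures $u$ restricted to $X_1$ is a bijection onto $u(X_1)$, so $|u(X_1)| = |X_1|$. Next I would verify that $u(X_1)$ is maximal, i.e. an orthonormal basis: using the characterization in Theorem \ref{thm:basisHilbert}, it suffices to show $u(X_1)^\perp = \{0\}$. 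If $y \perp u(X_1)$, then writing $y = u(z)$ for a unique $z\in H_1$ (surjectivity of $u$), we get $\lan z, x\ran_1 = \lan u(z), u(x)\ran_2 = 0$ for all $x\in X_1$, so $z\in X_1^\perp = \{0\}$ by Theorem \ref{thm:basisHilbert}, hence $y=0$. Therefore $u(X_1)$ is an orthonormal basis of $H_2$, and by Proposition \ref{prop:cardinalitybasis} (uniqueness of the cardinality of a basis) we conclude $|X_1| = |u(X_1)| = |X_2|$.

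For the converse, suppose $|X_1| = |X_2|$, so there is a bijection $\sigma:X_1 \ra X_2$. The plan is to define $u$ on the basis and extend it by the expansion formula from Theorem \ref{thm:basisHilbert}(iv). Concretely, for $h\in H_1$ I would set
\[
u(h):=\sum_{x\in X_1} \lan h, x\ran_1 \, \sigma(x).
\]
This sum converges in $H_2$ because by Theorem \ref{thm:basisHilbert}(v) we have $\sum_{x\in X_1} |\lan h, x\ran_1|^2 = \|h\|^2 < \infty$, and then Proposition \ref{prop:orthonormalpro}(i) applied to the orthonormal set $X_2 \supseteq \sigma(X_1)$ guarantees convergence. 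The same proposition gives $\|u(h)\|^2 = \sum_{x\in X_1} |\lan h, x\ran_1|^2 = \|h\|^2$, so $u$ is norm preserving, hence a unitary injection by the exercise following Definition \ref{def:unitaryequ}. Linearity of $u$ follows from linearity of each coefficient $h\mapsto \lan h, x\ran_1$. Finally I would check $u$ is onto: since $\sigma$ is a bijection, the image of $u$ contains every $\sigma(x)$, hence its closed linear span, which is $[X_2] = H_2$ by Theorem \ref{thm:basisHilbert}(i); combined with the fact that a unitary injection has closed range (being an isometry onto its image, which is complete), this yields surjectivity. Thus $u$ is a unitary equivalence and $H_1 \simeq H_2$.

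The main obstacle is the convergence and well-definedness of $u$ in the converse direction, together with verifying that $u$ genuinely preserves the full inner product rather than merely the norm; the cleanest route is to lean on Proposition \ref{prop:orthonormalpro} and the polarization identity, Proposition \ref{prop:pre-inner}(iii), so that norm preservation upgrades automatically to inner product preservation. Once convergence is secured via Bessel's inequality and the expansion theorem, the remaining steps are routine bookkeeping with orthonormal expansions.
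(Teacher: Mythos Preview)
Your proof is correct and follows essentially the same approach as the paper. The only cosmetic difference is in the converse direction: the paper defines the map on the algebraic span $\lan X_1\ran$ by linearly extending the bijection $X_1\ra X_2$, checks it is an isometry there, and then extends by continuity to $H_1=[X_1]$, whereas you write down the explicit expansion formula on all of $H_1$ at once and verify convergence via Proposition~\ref{prop:orthonormalpro}; these are two phrasings of the same construction.
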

\begin{proof}
Assume $\th:X_1\ra X_2$ be a surjective map. We extend $\th$ linearly to a linear map $\lan X_1\ran \ra H_2$. Since it maps an orthonormal basis to an orthonormal basis it is straightforward to check that this map preserves the inner product and so is an isometry. Since $H_1=[X_1]$, we can extend this map to a unitary equivalence from $H_1$ onto $H_2$.

Conversely, assume $\ff:H_1\ra H_2$ be a unitary equivalence. One easily checks that $\ff(X_1)$ is an orthonormal basis for $H_2$. Hence $|X_1|=|\ff(X_1)|=|X_2|$.
\end{proof}

\begin{exercise}
Let $X$ be a set. For every $x\in X$, let $\d_x:X\ra \c$ be the characteristic map of the one point subset $\{x\}\sub X$, i.e. $\d_x(y):=\left\{ \begin{array}{cc} 1 & y=x\\ 0& y\neq x \end{array} \right.$. Show that the set $\{ \d_x; x\in X\}$ is an orthonormal subset of $\ell^2(X)$.
\end{exercise}

\begin{corollary}
Every Hilbert space is unitary equivalent to a Hilbert space of the form $\ell^2(X)$ for some set $X$. In particular, every separable Hilbert space is unitary equivalent to $\ell^2=\ell^2(\n)$.
\end{corollary}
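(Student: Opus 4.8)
The plan is to reduce the statement to a comparison of orthonormal bases and then invoke Corollary \ref{cor:Hilbertcardinality}. Given a Hilbert space $H$, I would first fix an orthonormal basis $B$ of $H$, whose existence is guaranteed by the preceding proposition. Setting $X:=B$, the goal becomes to exhibit an orthonormal basis of $\ell^2(X)$ whose cardinality equals $|X|=|B|$; once this is done, Corollary \ref{cor:Hilbertcardinality} immediately yields a unitary equivalence $H\simeq \ell^2(X)$, which is the first assertion.

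The key step is therefore to show that the family $\{\delta_x;\, x\in X\}$ is an orthonormal basis of $\ell^2(X)$. That it is an orthonormal \emph{set} is exactly the content of the earlier exercise: each $\delta_x$ lies in $\ell^2(X)$ with $\|\delta_x\|=1$, and $\langle \delta_x,\delta_y\rangle=0$ for $x\neq y$ under the counting measure. To upgrade this to a basis I would use the equivalences in Theorem \ref{thm:basisHilbert}, specifically condition $(ii)$ there: it suffices to verify that the orthogonal complement of $\{\delta_x;\, x\in X\}$ is trivial, since this is equivalent to the set being a maximal orthonormal set, i.e. an orthonormal basis. For $f\in \ell^2(X)$ one computes $\langle f,\delta_x\rangle=\sum_{y\in X} f(y)\overline{\delta_x(y)}=f(x)$, so $f\perp \delta_x$ for every $x\in X$ forces $f(x)=0$ for all $x$, i.e. $f=0$. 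Hence $\{\delta_x;\, x\in X\}^\perp=\{0\}$ and the family is an orthonormal basis. Since $x\mapsto \delta_x$ is injective, this basis has cardinality $|X|=|B|$, and Proposition \ref{prop:cardinalitybasis} guarantees that this is a well-defined invariant, so the hypotheses of Corollary \ref{cor:Hilbertcardinality} are met.

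For the final assertion about separable spaces, I would specialize to the case where $H$ admits a countable orthonormal basis. When $H$ is infinite dimensional this basis is countably infinite, so $|X|=\aleph_0=|\n|$, and the general case gives $H\simeq \ell^2(\n)=\ell^2$; when $H$ is finite dimensional of dimension $n$ the same argument yields $H\simeq \ell^2(\{1,\dots,n\})=\c^n$.

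I do not expect any genuine obstacle here, as the corollary is essentially a bookkeeping consequence of the basis machinery already developed. The only point requiring a little care is the verification that $\{\delta_x;\, x\in X\}$ is \emph{maximal} rather than merely orthonormal; the natural temptation is to argue directly that finitely supported functions are dense in $\ell^2(X)$, which would require a density and approximation argument, but routing the proof through the orthogonal-complement criterion of Theorem \ref{thm:basisHilbert} sidesteps this entirely and reduces the maximality to the one-line computation $\langle f,\delta_x\rangle=f(x)$.
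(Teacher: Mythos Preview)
Your proposal is correct and follows exactly the route the paper intends: the paper gives no proof of this corollary, leaving it as an immediate consequence of the preceding exercise (that $\{\delta_x; x\in X\}$ is orthonormal in $\ell^2(X)$) together with Corollary~\ref{cor:Hilbertcardinality}. You have correctly identified and filled the one detail the exercise leaves open, namely that $\{\delta_x; x\in X\}$ is \emph{maximal} orthonormal, and your use of the orthogonal-complement criterion from Theorem~\ref{thm:basisHilbert} via the computation $\langle f,\delta_x\rangle=f(x)$ is the cleanest way to do this.
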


We conclude this section with a discussion on the weak topology of Hilbert spaces.

\begin{definition}
Let $(H,\inner)$ be a Hilbert space. For every $h\in H$, define a semi-norm
\[
\rho_h(x):=|\lan x, h\ran|, \qquad \forall x\in H.
\]
The {\bf weak topology of $H$} is the locally convex topology defined by the semi-norms $\rho_h$ for all $h\in H$. The convergence in weak topology is called {\bf weak convergence}.
\end{definition}
Given a vector $x_0$ in a Hilbert space $H$, the family of all sets of the form
\[
\{x\in H; |\lan x-x_0, y_i\ran| <\ep, \forall i=1,\cdots,n\},
\]
where $n\in \n$, $y_1,\cdots, y_n\in H$ and  $\ep>0$, is a basis of open neighborhoods of $x_0$ in the weak topology of $H$.
\begin{exercise}
Let $H$ be a Hilbert space. Show that a net $(x_\la)$ in $H$ is weakly convergent to a vector $x_0\in H$ if and only if $\lan x_\la, y\ran\ra \lan x_0, y\ran$ for all $y\in H$.
\end{exercise}
A map $\psi:E\ra F$ between two complex vector spaces is called {\bf conjugate-linear} if
\[
\psi(\la x+ y)= \overline{\la} \psi(x)+ \psi(y), \quad \forall x,y\in E, \la\in \c.
\]
\begin{theorem}
\label{thm:selfdual} (The Riesz duality)
Let $H$ be a Hilbert space. The inner product induces an isometric conjugate-linear isomorphism from $H$ onto its dual $H\s$ by the following formula:
\begin{eqnarray*}
h&\mapsto& \ff_h, \,\qquad h\in H\\
\ff_h(x)&:=& \lan x, h\ran, \quad x\in H.
\end{eqnarray*}
We call this map the {\bf Riesz duality}.
\end{theorem}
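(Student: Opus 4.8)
The plan is to verify in turn that the assignment $h\mapsto \ff_h$ lands in $H\s$, is conjugate-linear, is norm-preserving, and is onto; conjugate-linearity together with the norm-preserving property will then force injectivity, completing the proof that the map is a conjugate-linear isometric isomorphism.

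First I would check that each $\ff_h$ is a bounded linear functional. Linearity of $\ff_h$ is immediate from the linearity of $\inner$ in its first variable, and the Cauchy--Schwartz inequality of Proposition \ref{prop:pre-inner}(i) gives $|\ff_h(x)|=|\lan x,h\ran|\leq \|x\|\,\|h\|$, so $\ff_h\in H\s$ with $\|\ff_h\|\leq \|h\|$. Conjugate-linearity of $h\mapsto \ff_h$ follows by a one-line computation from the conjugate-symmetry of the inner product, since $\ff_{\la h+g}(x)=\lan x,\la h+g\ran=\overline{\la}\lan x,h\ran+\lan x,g\ran=\overline{\la}\,\ff_h(x)+\ff_g(x)$. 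For the isometry I would invoke the last assertion of Proposition \ref{prop:pre-inner}, namely $\|h\|=\sup\{|\lan h,y\ran|;\|y\|=1\}$; because $|\lan h,y\ran|=|\lan y,h\ran|=|\ff_h(y)|$, this supremum equals $\|\ff_h\|$, whence $\|\ff_h\|=\|h\|$. In particular the map is injective.

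The substantial step is surjectivity. Given $\ff\in H\s$, the case $\ff=0$ is handled by $h=0$, so I assume $\ff\neq 0$ and set $N:=\ker\ff$, which is a proper closed subspace of $H$ since $\ff$ is bounded and non-zero. By Corollary \ref{cor:perp}(i) we have $H\simeq N\oplus N^\perp$, and since $N\neq H$ this forces $N^\perp\neq\{0\}$, so I may pick $0\neq z\in N^\perp$. The key observation is that for every $x\in H$ the vector $\ff(x)z-\ff(z)x$ lies in $N$, because applying $\ff$ to it yields $\ff(x)\ff(z)-\ff(z)\ff(x)=0$. Taking the inner product of this vector with $z\in N^\perp$ gives $\ff(x)\|z\|^2=\ff(z)\lan x,z\ran$, hence
\[
\ff(x)=\left\langle x,\ \frac{\overline{\ff(z)}}{\|z\|^2}\,z\right\rangle, \qquad \forall x\in H.
\]
Thus $\ff=\ff_h$ with $h:=\frac{\overline{\ff(z)}}{\|z\|^2}\,z$, which proves that the Riesz duality is onto.

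The only genuine obstacle is the surjectivity argument, and within it the nontrivial input is the orthogonal decomposition $H\simeq N\oplus N^\perp$, which guarantees $N^\perp\neq\{0\}$ for a proper closed subspace $N$; everything else reduces to the Cauchy--Schwartz inequality and the supremum formula for the norm already established in Proposition \ref{prop:pre-inner}. I would finally note that injectivity requires no separate argument, being subsumed by the isometry, and that the completeness of $H$ enters the proof only through its use in Corollary \ref{cor:perp}.
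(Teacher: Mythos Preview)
Your proof is correct and follows essentially the same approach as the paper: both use the Cauchy--Schwartz inequality for the norm computation and, for surjectivity, pick a nonzero vector in $(\ker\ff)^\perp$ and exploit that $\ff(x)z-\ff(z)x\in\ker\ff$ to solve for $\ff(x)$. The only cosmetic differences are that the paper normalizes $z$ to a unit vector and argues the isometry by noting equality in CS at $x=h$, whereas you cite the supremum formula from Proposition~\ref{prop:pre-inner} directly.
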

\begin{proof}
Using CS inequality, for every $h\in H$, we have
\[
|\ff_h(x)|=|\lan x,h \ran|\leq \|x\| \|h\|, \quad \forall x\in X.
\]
One also notes that the above inequality becomes equality when $x=h$.  Hence $\|\ff_h\|=\|h\|$ and consequently $\ff_h$ is bounded. Now, it is straightforward to check that the Riesz duality is a conjugate-linear one-to-one map from $H$ into $H\s$.

It is clear that the Riesz duality maps the zero vector to the zero functional. Let $\ff\in H\s$ be a non-zero functional. Set $X:=ker\ff$. We have $X^\perp \neq \{0\}$, so we can find a unit vector $u$ in $X^\perp$. One checks that $\ff(u)h-\ff(h)u\in X$ for all $h\in H$. Hence we have $\lan \ff(u)h-\ff(h)u, u\ran=0$. by solving this equation for $\ff(h)$, we get
\[
\ff(h)=\ff(u) \lan h, u\ran=  \lan h, \overline{\ff(h)}u\ran, \qquad \forall h\in H.
\]
Thus $\ff=\ff_y$, where $y:=\overline{\ff(u)}u$. This proves that the Riesz duality is onto.
\end{proof}

\begin{corollary}
\label{cor:rieszduality}
The \ws topology on $H\s$ is consistent with the weak topology on $H$ under the Riesz duality. In other words, the Riesz duality is a homeomorphism even if we consider the \ws topology on $H\s$ and the weak topology on $H$. Therefore the closed unit ball in $H$ is weakly compact.
\end{corollary}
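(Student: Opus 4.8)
The plan is to verify that the family of semi-norms generating the weak topology on $H$ corresponds exactly, under the Riesz duality, to the family generating the \ws topology on $H\s$, and then to transport the Banach--Alaoglu compactness of the unit ball of $H\s$ back to $H$.

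First I would recall the two relevant families of semi-norms. The weak topology on $H$ is defined by the semi-norms $\rho_y(x):=|\lan x, y\ran|$ for $y\in H$, while the \ws topology on $H\s$ is, by definition, the one defined by the semi-norms $q_x(\rho):=|\rho(x)|$ for $x\in H$. Denoting the Riesz duality by $\Phi: H\ra H\s$, $h\mapsto \ff_h$ with $\ff_h(x)=\lan x,h\ran$, the key computation is that for every $y\in H$ and $x\in H$,
\[
q_y(\Phi(x))=|\ff_x(y)|=|\lan y, x\ran|=|\overline{\lan x, y\ran}|=|\lan x, y\ran|=\rho_y(x).
\]
Thus $q_y\circ \Phi=\rho_y$ for each $y\in H$; the conjugate-linearity of $\Phi$ causes no trouble here, because the absolute value absorbs the complex conjugation.

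Next I would conclude that $\Phi$ is a homeomorphism for these topologies. Since $\Phi$ is a bijection by Theorem \ref{thm:selfdual}, and the pull-back through $\Phi$ of each generating semi-norm $q_y$ of the \ws topology is exactly the generating semi-norm $\rho_y$ of the weak topology (and conversely), the two locally convex topologies are carried onto one another. Equivalently, using the net characterization of convergence: $x_\la\ra x_0$ weakly in $H$ means $\lan x_\la,y\ran\ra \lan x_0, y\ran$ for all $y\in H$, which by conjugation is the same as $\ff_{x_\la}(y)=\lan y, x_\la\ran\ra \lan y, x_0\ran=\ff_{x_0}(y)$ for all $y\in H$, i.e. $\Phi(x_\la)\ra \Phi(x_0)$ in the \ws topology. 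This yields continuity of $\Phi$ and of $\Phi\inv$ simultaneously.

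Finally, for the weak compactness of the closed unit ball $(H)_1$: by the Banach--Alaoglu theorem (Theorem \ref{thm:banachalaoghlo}) the closed unit ball of $H\s$ is \ws compact. Because $\Phi$ is an isometry it maps $(H)_1$ bijectively onto the closed unit ball of $H\s$, and because $\Phi$ is a homeomorphism when $H$ carries the weak topology and $H\s$ carries the \ws topology, the set $(H)_1$ is the homeomorphic image of a compact set, hence weakly compact. I do not expect any serious obstacle; the only point requiring care is keeping track of the complex conjugation inherent in the Riesz map, which, as the boxed computation shows, is harmless at the level of the defining semi-norms.
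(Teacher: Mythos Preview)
Your proof is correct and follows exactly the approach of the paper, which simply declares the first statement ``clear'' and deduces the second from Banach--Alaoglu; you have merely spelled out the semi-norm computation and the isometry argument in detail.
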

\begin{proof}
The first statement is clear. The second statement follows from the first and the Banach-Alaoghlu theorem, see Theorem \ref{thm:banachalaoghlo}.
\end{proof}
\begin{definition}
A net $(x_i)$ in a Hilbert space $H$ is called a {\bf weakly Cauchy net} if it is a Cauchy net with respect to every semi-norm defining the weak topology of $H$, namely $\lan x_i, h\ran$ is a Cauchy net in $\c$ for all $h\in H$.
\end{definition}
\begin{corollary}
Every norm bounded weakly Cauchy net $(x_i)$ in a Hilbert space $H$ is weakly convergent to a unique limit.
\end{corollary}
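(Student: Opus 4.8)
The plan is to realize the weak limit as the vector that represents a suitable bounded linear functional, invoking the Riesz duality (Theorem \ref{thm:selfdual}). The whole argument reduces the convergence statement to the completeness of $\c$ together with the self-duality of $H$.

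First I would manufacture a candidate functional. For each fixed $h\in H$, the net $\lan h, x_i\ran=\overline{\lan x_i,h\ran}$ is Cauchy in $\c$: by hypothesis $(x_i)$ is weakly Cauchy, so $\lan x_i,h\ran$ is Cauchy, and complex conjugation is continuous. Since $\c$ is complete, the limit exists, and I may define $\psi:H\ra\c$ by $\psi(h):=\lim_i \lan h, x_i\ran$. Because each map $h\mapsto \lan h,x_i\ran$ is linear (the inner product is linear in its first variable) and pointwise limits preserve linearity, $\psi$ is a linear functional. Here it is important to take the inner product in the order $\lan h,x_i\ran$ rather than $\lan x_i,h\ran$, so that $\psi$ comes out linear rather than conjugate-linear, matching the form $\ff_{x_0}(x)=\lan x,x_0\ran$ appearing in the Riesz duality.

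The crucial step, and the one place where the norm-boundedness hypothesis is actually used, is showing $\psi$ is bounded. Let $M$ be a bound with $\|x_i\|\leq M$ for all $i$. By the Cauchy--Schwartz inequality (Proposition \ref{prop:pre-inner}(i)), $|\lan h,x_i\ran|\leq \|h\|\,\|x_i\|\leq M\|h\|$ for every $i$, and passing to the limit gives $|\psi(h)|\leq M\|h\|$. Hence $\psi\in H\s$ with $\|\psi\|\leq M$. Without the norm bound this estimate could fail and the limiting functional need not be continuous, so this hypothesis is genuinely essential; it is the mild obstacle in the proof, though it is dispatched by a single application of Cauchy--Schwartz.

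Finally I would apply the Riesz duality: there is a unique $x_0\in H$ with $\psi=\ff_{x_0}$, that is $\lim_i\lan h,x_i\ran=\psi(h)=\lan h,x_0\ran$ for all $h\in H$. Taking complex conjugates yields $\lan x_i,h\ran\ra\lan x_0,h\ran$ for every $h$, which is exactly weak convergence $x_i\ra x_0$. For uniqueness of the limit, if $x_i$ converged weakly to both $x_0$ and $x_0'$, then $\lan x_0-x_0',h\ran=0$ for all $h$, whence $x_0=x_0'$ by Corollary \ref{cor:innerzero} (equivalently, the weak topology is Hausdorff since the inner product separates points). This completes the plan.
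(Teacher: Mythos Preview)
Your proof is correct, but it takes a different route from the paper's. The paper argues via compactness: since $(x_i)$ is norm bounded it lies in a multiple of the closed unit ball, which is weakly compact by Corollary \ref{cor:rieszduality} (Banach--Alaoglu transported through the Riesz duality); hence a subnet converges weakly by Proposition \ref{prop:subnetcompact}, and then the Cauchy condition forces the whole net to converge to the same limit. Your approach bypasses compactness entirely and builds the limit directly as the Riesz representative of the pointwise-limit functional $\psi$. This is more elementary and more constructive; it also makes transparent exactly where the norm bound enters (to ensure $\psi$ is bounded), whereas in the paper that hypothesis is used only to place the net inside a compact set. The paper's argument, on the other hand, is the one that generalizes: it works verbatim in any topological vector space where norm-bounded sets are relatively weakly compact, without needing an inner-product representation of functionals.
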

\begin{proof}
The weak topology is Hausdorff, so if a limit exists, it has to be unique. Since $(x_i)$ is norm bounded, it is contained in a positive multiple of the closed unit ball of $H$ which is weakly compact. Therefore a subnet of $(x_i)$ is weakly convergent, and so is $(x_i)$, see Proposition \ref{prop:subnetcompact}.
\end{proof}
\begin{proposition}
\label{prop:weaknormconv}
Let $H$ be a Hilbert space and let $(x_i)$ be a weakly convergent net to some vector $x\in H$. Then
\[
\|x\|\leq \liminf_i \|x_i\|.
\]
Moreover, $x_i \ra x$ in norm if and only if $\|x_i\| \ra \|x\|$.
\end{proposition}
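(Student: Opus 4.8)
Let $H$ be a Hilbert space and let $(x_i)$ be a weakly convergent net to some vector $x\in H$. Then
\[
\|x\|\leq \liminf_i \|x_i\|.
\]
Moreover, $x_i \ra x$ in norm if and only if $\|x_i\| \ra \|x\|$.

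The plan is to prove the inequality first and then use it, together with the polarization of $\|x_i-x\|^2$, to settle the equivalence. For the inequality, I would start from the weak convergence hypothesis, which by the earlier exercise means $\lan x_i, h\ran \ra \lan x, h\ran$ for every $h\in H$; in particular taking $h=x$ gives $\lan x_i, x\ran \ra \lan x, x\ran = \|x\|^2$. Now apply the Cauchy-Schwartz inequality, Proposition \ref{prop:pre-inner}(i), to each term: $|\lan x_i, x\ran| \leq \|x_i\|\,\|x\|$. Passing to $\liminf$ and using that $\lan x_i,x\ran \ra \|x\|^2$ (so in particular $|\lan x_i,x\ran| \ra \|x\|^2$) yields
\[
\|x\|^2 = \lim_i |\lan x_i, x\ran| \leq \liminf_i \|x_i\|\,\|x\| = \|x\|\,\liminf_i \|x_i\|.
\]
If $\|x\|=0$ the inequality $\|x\|\leq \liminf_i\|x_i\|$ is trivial; if $\|x\|\neq 0$ we divide by $\|x\|$ to obtain the claim.

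For the moreover part, the forward direction is easy: if $x_i\ra x$ in norm then $\big|\,\|x_i\|-\|x\|\,\big|\leq \|x_i-x\|\ra 0$ by the reverse triangle inequality (a standard consequence of the norm axioms from Proposition \ref{prop:pre-inner}), so $\|x_i\|\ra\|x\|$. For the converse, assume $\|x_i\|\ra\|x\|$ and expand the norm difference using the inner product:
\[
\|x_i-x\|^2 = \lan x_i - x, x_i - x\ran = \|x_i\|^2 - \lan x_i, x\ran - \lan x, x_i\ran + \|x\|^2.
\]
By weak convergence, $\lan x_i, x\ran \ra \|x\|^2$ and $\lan x, x_i\ran = \overline{\lan x_i, x\ran}\ra \|x\|^2$; by the hypothesis $\|x_i\|^2 \ra \|x\|^2$. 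Therefore the right-hand side converges to $\|x\|^2 - \|x\|^2 - \|x\|^2 + \|x\|^2 = 0$, which gives $\|x_i-x\|\ra 0$, i.e. norm convergence.

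I do not expect a genuine obstacle here; the only point requiring a little care is the $\liminf$ manipulation in the first part, where one must avoid dividing by zero (hence the separate treatment of the case $\|x\|=0$) and must be sure that $\liminf$ of a product with a fixed nonnegative factor behaves as expected, i.e. $\liminf_i(\|x_i\|\,\|x\|) = \|x\|\liminf_i\|x_i\|$, which holds because $\|x\|\geq 0$ is constant in $i$. Everything else is a direct computation with the inner product and its continuity (Corollary \ref{cor:innercont}) together with the Cauchy-Schwartz inequality already established.
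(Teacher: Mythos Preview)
Your proof is correct and follows essentially the same approach as the paper: both arguments obtain the inequality from the CS inequality applied to $\lan x_i, x\ran$ and then pass to the $\liminf$, and both handle the equivalence by expanding $\|x_i - x\|^2$ and tracking each term under weak convergence. Your version is in fact slightly more careful in that you explicitly treat the case $\|x\|=0$ and spell out the forward implication via the reverse triangle inequality.
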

\begin{proof}
By CS inequality, we have
\[
\|x\nt =\lan x, x \ran =\lim_i \lan x_i, x \ran=\liminf_i \lan x_i, x \ran\leq \liminf_i \|x_i\| \|x\|.
\]
Furthermore, one notes that
\[
\|x-x_i\nt=\lan x_i, x_i \ran- \lan x_i, x \ran - \lan x, x_i \ran +\lan x, x \ran.
\]
When $i\ra \infty$, the right hand side goes to zero if and only if $\|x_i\|\ra \|x\|$.
\end{proof}

\section{Bounded operators on Hilbert spaces}
\label{sec:boundedoperators}

In this section $H$ is always a Hilbert space. The \cs-algebra $B(H)$ of bounded operators on a Hilbert space $H$ is studied in details in this section.

When we discussed the algebra $B(H)$ of bounded operators on a Hilbert space as an example of a \cs-algebra in Section \ref{sec:topalgebras}, we assumed the existence of adjoint operator $T\s$ for every $T\in B(H)$. Therefore we begin this section with proving this statement.
\begin{definition}
Let $H_1$ and $H_2$ be two Hilbert spaces. A function
\[
(-,-): H_1\times H_2 \ra \c
\]
is called a {\bf sesquilinear form on $H_1\times H_2$} if it is linear in the first variable and conjugate-linear in the second variable. The form $(-,-)$ is called {\bf bounded} if there is some $c\in [0,\infty)$ such that
\[
|(x,y)|\leq c \|x\| \|y\|, \qquad \forall (x,y)\in H_1\times H_2.
\]
In this case, the {\bf norm} of $(-,-)$ is defined by
\[
\|(-,-)\|:= \inf\{ c\in[0,\infty); |(x,y)|\leq c \|x\| \|y\|, \forall (x,y)\in H_1\times H_2\}.
\]
\end{definition}
The inner products of Hilbert spaces determine the general form of bounded sesquilinear forms:
\begin{theorem}
\label{thm:sesquilinear}
Let $H_1$ and $H_2$ be two Hilbert spaces. For every bounded sesquilinear form $(-,-)$ on $H_1\times H_2$, there exists a unique bounded operator $T\in B(H_1, H_2)$ such that
\[
(x,y)=\lan Tx, y\ran, \qquad \forall (x,y) \in H_1\times H_2.
\]
Moreover, $\|(-,-)\|=\|T\|$.

Conversely, every bounded operator $T\in B(H_1,H_2)$ gives rise to a bounded sesquilinear form by defining $(x,y):=\lan Tx, y\ran$ and we have $\|(-,-)\|=\|T\|$.
\end{theorem}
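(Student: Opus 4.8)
The plan is to build $T$ one vector at a time by means of the Riesz duality (Theorem \ref{thm:selfdual}), and then to check in turn that the resulting assignment is linear, bounded, norm-preserving, and unique. Fix $x\in H_1$ and look at the map $y\mapsto (x,y)$. Because the form is \emph{conjugate}-linear in its second slot this map is not a linear functional, so Theorem \ref{thm:selfdual} cannot be applied to it directly. The remedy is to pass to the conjugate: the assignment $\psi_x\colon y\mapsto \overline{(x,y)}$ \emph{is} linear in $y$, and boundedness of the form gives $|\psi_x(y)|=|(x,y)|\leq c\|x\|\|y\|$ with $c:=\|(-,-)\|$, so $\psi_x\in H_2\s$. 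Applying the Riesz duality to $H_2$ produces a unique vector, which I denote $Tx\in H_2$, satisfying $\psi_x(y)=\langle y,Tx\rangle$ for all $y\in H_2$. Taking conjugates yields precisely $(x,y)=\langle Tx,y\rangle$, the identity we want.

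Next I would verify that $x\mapsto Tx$ is linear and that $T$ is the only such operator. For $x_1,x_2\in H_1$ and $\lambda\in\c$, linearity of the form in its first variable gives $\langle T(\lambda x_1+x_2),y\rangle=(\lambda x_1+x_2,y)=\langle \lambda Tx_1+Tx_2,y\rangle$ for every $y\in H_2$; since $y$ is arbitrary, Corollary \ref{cor:innerzero} forces $T(\lambda x_1+x_2)=\lambda Tx_1+Tx_2$. Uniqueness is the same argument: if $T'$ also satisfies the identity, then $\langle Tx-T'x,y\rangle=0$ for all $y$, whence $Tx=T'x$ by Corollary \ref{cor:innerzero}.

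For the norm statement I would prove the two inequalities separately. Setting $y=Tx$ in the identity gives $\|Tx\|^2=(x,Tx)\leq c\|x\|\|Tx\|$, so $\|Tx\|\leq c\|x\|$; in particular $T$ is bounded and $\|T\|\leq\|(-,-)\|$. Conversely the Cauchy–Schwarz inequality (Proposition \ref{prop:pre-inner}(i)) gives $|(x,y)|=|\langle Tx,y\rangle|\leq\|T\|\|x\|\|y\|$, so $\|(-,-)\|\leq\|T\|$, and combining the two yields $\|T\|=\|(-,-)\|$. The converse assertion of the theorem is then the easy direction: for $T\in B(H_1,H_2)$ the form $(x,y):=\langle Tx,y\rangle$ is linear in $x$ and conjugate-linear in $y$ by the corresponding properties of the inner product, and the two displayed estimates (which used only the defining identity) show at once that it is bounded with $\|(-,-)\|=\|T\|$. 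The only genuinely delicate point in the whole argument is the conjugation bookkeeping in the first step, namely recognizing that the Riesz duality must be applied to $\overline{(x,\cdot)}$ rather than to $(x,\cdot)$ and then correctly reading off $(x,y)=\langle Tx,y\rangle$ afterward; everything else reduces to Corollary \ref{cor:innerzero} and Cauchy–Schwarz.
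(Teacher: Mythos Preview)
Your proof is correct and follows essentially the same route as the paper: both apply the Riesz duality to the conjugated slice $y\mapsto\overline{(x,y)}$ to produce $Tx$, then verify linearity, boundedness, and the norm equality. The only cosmetic difference is that for the inequality $\|T\|\leq\|(-,-)\|$ the paper invokes the isometry of the Riesz map ($\|Tx\|=\|\psi_x\|$) whereas you substitute $y=Tx$ directly; both arguments are equally short.
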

\begin{proof}
For every $x\in H_1$, define $\ff_x: H_2\ra \c$ by $y\mapsto \overline{(x,y)}$. This is a bounded functional on $H_2$. Therefore  by the Riesz duality theorem, \ref{thm:selfdual}, there exists a unique $y_x\in H_2$ such that $\ff_x(y)=\lan y_x, y\ran$ for all $y\in H_2$. Due to the uniqueness of $y_x$, one easily sees that the assignment $x\mapsto y_x$ is linear. If we denote this map by $T$, then for every $x\in H_1$, we have
\begin{eqnarray*}
\|Tx\|&=&\|y_x\|= \|\ff_x\|\\
&=& \sup\{ |\overline{(x,y)}|; y\in H_2, \|y\|=1\}\\
&\leq& \sup\{ \|(-,-)\|\|x\|\|y\| ; y\in H_2, \|y\|=1\}\\
&=& \|(-,-)\|\|x\|.
\end{eqnarray*}
Thus $\|T\|\leq \|(-,-)\|$, and so $T\in B(H_1, H_2)$. On the other hand, for every $(x,y)\in H_1\times H_2$, we have
\begin{eqnarray*}
|(x,y)| &=& |\overline{\ff_x(y)}|=  |\overline{\lan y_x, y\ran}|\\
&=& |\overline{\lan Tx, y\ran}| \leq \|Tx\| \|y\|\\
&\leq & \|T\| \|x\| \|y\|.
\end{eqnarray*}
This shows that $\|(-,-)\|\leq \|T\|$. The uniqueness of $T$ follows from Problem \ref{e:5-12}.

It is straightforward to check the converse, and so it is left to the reader.
\end{proof}
\begin{corollary}
\label{cor:adjointoperator}
For every bounded operator $T\in B(H_1, H_2)$ between two Hilbert spaces, there exists a unique adjoint operator $T\s\in B(H_2, H_1)$ such that
\[
\lan Tx, y \ran=\lan x, T\s y\ran,\qquad \forall x\in H_1, y\in H_2.
\]
Moreover, $\|T\s\|=\|T\|$.
\end{corollary}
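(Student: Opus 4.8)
The plan is to deduce this corollary directly from Theorem \ref{thm:sesquilinear} by manufacturing the right bounded sesquilinear form. Given $T\in B(H_1,H_2)$, I would define $(-,-):H_2\times H_1\ra \c$ by $(y,x):=\lan y,Tx\ran$. First I would verify that this is a sesquilinear form on $H_2\times H_1$: it is linear in the first variable $y$ because the inner product $\inner$ is linear in its first slot, and it is conjugate-linear in the second variable $x$ because $T$ is linear while $\inner$ is conjugate-linear in its second slot. Boundedness is immediate from the Cauchy--Schwartz inequality, see Proposition \ref{prop:pre-inner}(i): one has $|(y,x)|=|\lan y,Tx\ran|\leq \|y\|\,\|Tx\|\leq \|T\|\,\|x\|\,\|y\|$, so the form is bounded with $\|(-,-)\|\leq \|T\|$.

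Next, applying Theorem \ref{thm:sesquilinear} to this form produces a unique bounded operator $S\in B(H_2,H_1)$ with $(y,x)=\lan Sy,x\ran$ for all $(y,x)\in H_2\times H_1$ and with $\|S\|=\|(-,-)\|$. Setting $T\s:=S$, the identity $\lan y,Tx\ran=\lan T\s y,x\ran$ holds for all $x,y$; taking complex conjugates of both sides gives exactly $\lan Tx,y\ran=\lan x,T\s y\ran$, which is the desired relation. Uniqueness of $T\s$ is inherited from the uniqueness clause of Theorem \ref{thm:sesquilinear}; alternatively, if two operators both satisfied the relation, their difference $D$ would obey $\lan x,Dy\ran=0$ for all $x$, forcing $Dy=0$ for every $y$ by Corollary \ref{cor:innerzero}.

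Finally, for the norm equality I already have $\|T\s\|=\|(-,-)\|\leq \|T\|$ from the first step. For the reverse inequality, rather than running the construction a second time on $T\s$, I would argue directly: for every $x\in H_1$,
\[
\|Tx\nt=\lan Tx,Tx\ran=\lan x,T\s Tx\ran\leq \|x\|\,\|T\s Tx\|\leq \|T\s\|\,\|x\|\,\|Tx\|,
\]
which gives $\|Tx\|\leq \|T\s\|\,\|x\|$ for all $x$, and hence $\|T\|\leq \|T\s\|$. Combining the two inequalities yields $\|T\s\|=\|T\|$. The only real care needed throughout is bookkeeping of the conjugate-linear slot, both when setting up the form and when passing from the conjugated identity to the stated one; there is no genuine analytic obstacle, since all the substantive work has already been packaged into Theorem \ref{thm:sesquilinear} and the Riesz duality that underlies it.
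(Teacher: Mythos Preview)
Your proposal is correct and follows essentially the same route as the paper: define a bounded sesquilinear form on $H_2\times H_1$ built from $T$, invoke Theorem~\ref{thm:sesquilinear} to produce $T\s$, and read off the norm equality. The only cosmetic difference is in the last step: the paper asserts $\|T\|=\|(-,-)\|$ directly and then uses $\|T\s\|=\|(-,-)\|$ from the theorem, whereas you first get $\|T\s\|=\|(-,-)\|\leq\|T\|$ and then obtain the reverse inequality via the $C\s$-type estimate $\|Tx\nt=\lan x,T\s Tx\ran\leq\|T\s\|\,\|x\|\,\|Tx\|$; these are equivalent bookkeeping choices.
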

\begin{proof}
One checks that the form $(-,-)$ defined by $(y,x) :=\lan Tx,y\ran$ is a bounded sesquilinear form on $H_2\times H_1$ and $\|T\|=\|(-,-)\|$. Therefore by the above theorem, there exists a bounded operator $T\s:H_2\ra H_1$ such that $\lan T\s y , x\ran = \overline{\lan Tx, y \ran}$, or equivalently, $\lan x, T\s y \ran = \lan Tx, y \ran$.
Also, it follows from the above theorem that $\|T\s\|=\|(-,-)\|$. Hence $\|T\s\|=\|T\|$.
\end{proof}
\begin{proposition}
\label{prop:involutionoperator} Let $H$ be a Hilbert space. The adjoint operator defines an involution in $B(H)$ and $B(H)$ with this involution is a unital \cs-algebra.
\end{proposition}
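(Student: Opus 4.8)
The goal is to prove that $B(H)$, equipped with the adjoint operation $T \mapsto T\s$ guaranteed by Corollary \ref{cor:adjointoperator}, is a unital \cs-algebra. The plan is to verify the involution axioms (i)--(v) from the definition of an involutive normed algebra, and then establish the \cs-identity $\|T\s T\| = \|T\|^2$. The unitality is immediate, since the identity operator $I$ is the unit of $B(H)$ and $\|I\| = 1$.

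First I would check the algebraic involution axioms. Each follows from the defining relation $\lan Tx, y\ran = \lan x, T\s y\ran$ together with the uniqueness part of Corollary \ref{cor:adjointoperator}. For instance, to see $(T\s)\s = T$, I would compute $\lan T\s x, y\ran = \overline{\lan y, T\s x\ran} = \overline{\lan Ty, x\ran} = \lan x, Ty\ran$ for all $x,y$, so by uniqueness of the adjoint of $T\s$ we get $(T\s)\s = T$. The additivity axiom $(T+S)\s = T\s + S\s$ and the conjugate-homogeneity $(\la T)\s = \overline{\la} T\s$ follow by the same pattern: compute $\lan (T+S)x, y\ran$ and $\lan (\la T)x, y\ran$, push the operators to the other side of the inner product, and invoke uniqueness. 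The anti-multiplicativity $(TS)\s = S\s T\s$ comes from $\lan TSx, y\ran = \lan Sx, T\s y\ran = \lan x, S\s T\s y\ran$. The norm-preservation axiom $\|T\s\| = \|T\|$ is already part of the statement of Corollary \ref{cor:adjointoperator}, so nothing further is needed there.

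The main step, and the one I expect to be the crux, is the \cs-identity. I would argue as follows. The submultiplicativity of the operator norm (established for $B(E)$ in Example \ref{ex:topalg}(ii)) together with $\|T\s\| = \|T\|$ gives the easy inequality $\|T\s T\| \leq \|T\s\|\|T\| = \|T\|^2$. For the reverse inequality, I would use the characterization of the operator norm and the Cauchy--Schwartz inequality from Proposition \ref{prop:pre-inner}(i). For any $x \in H$ with $\|x\| \leq 1$, compute
\[
\|Tx\|^2 = \lan Tx, Tx\ran = \lan T\s T x, x\ran \leq \|T\s T x\|\,\|x\| \leq \|T\s T\|\,\|x\|^2 \leq \|T\s T\|.
\]
Taking the supremum over all such $x$ yields $\|T\|^2 \leq \|T\s T\|$. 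Combining the two inequalities gives $\|T\s T\| = \|T\|^2$, which is the \cs-identity. The only subtlety worth flagging is that the norm-preservation axiom $\|T\s\| = \|T\|$ can in fact be rederived from the \cs-identity once it is in hand (via $\|T\|^2 = \|T\s T\| \leq \|T\s\|\|T\|$ and symmetry), but since Corollary \ref{cor:adjointoperator} already supplies it directly, I would simply cite that.

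Finally, I would note that $B(H)$ is complete with respect to the operator norm: $H$ is a Banach space, so by Example \ref{ex:topalg}(ii), $B(H)$ is a Banach algebra, and combined with the involution and the \cs-identity just verified, this makes $B(H)$ a \cs-algebra. Unitality holds because $I$ is a two-sided unit and, consistent with Exercise \ref{exe:norm1}, the \cs-identity forces $\|I\| = 1$. This completes the verification of all the axioms, so no genuinely hard obstacle remains beyond the careful but routine bookkeeping of the involution axioms and the short Cauchy--Schwartz estimate for the \cs-identity.
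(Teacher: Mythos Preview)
Your proposal is correct and follows essentially the same route as the paper: the involution axioms are dismissed as straightforward (you spell them out, the paper does not), and the \cs-identity is obtained exactly as you describe, via $\|Tx\|^2 = \lan Tx, Tx\ran = |\lan x, T\s T x\ran| \leq \|T\s T\|\|x\|^2$ for one inequality and submultiplicativity together with $\|T\s\| = \|T\|$ for the other.
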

\begin{proof} It is straightforward to check that $T\mapsto T\s$ is an involution on $B(H)$. We only check the \cs-identity. For every $T\in B(H)$ and $x\in H$, we have
\begin{eqnarray*}
\|Tx\|^2&=& \lan Tx,Tx\ran= |\lan x, T\s T x\ran| \\
&\leq& \|x\|\|T\s T x\| \leq \|T\s T\| \|x\|^2.
\end{eqnarray*}
Hence $\|T\|^2\leq \|T\s T\|$. On the other hand, We have $\|T\s T\| \leq \|T\s \|\|T\| = \|T\|^2$. Therefore $\|T\|^2=\|T\s T\|$.
\end{proof}
\begin{example}
\label{exa:adjointmatrix}
Consider the Hilbert space $H=\c^n$ equipped with the standard basis and the standard inner product and denote the identity operator by $I$. Let us denote the vectors in $H$ by $n\times 1$ matrices. Given a matrix $M=(m_{ij})$, the {\bf conjugate of $M$} is the matrix $\overline{M}:=(\overline{m_{ij}})$ and the {\bf transpose of $M$} is the matrix $M^t:=(m_{ji})$. Then the inner product on $H$ can be represented by a matrix product as follows:
\[
\lan x, y\ran= x^t I \overline{y}, \qquad \forall x,y\in H.
\]
Since we fixed a basis, every operator $T\in B(H)$ can be represented by an $n\times n$ matrix denoted by $T$, again. In other words, we use the realization $B(H)\simeq M_n(\c)$ coming from the standard basis. Then for every $x,y\in H$, we compute
\[
\lan Tx, y\ran=(Tx)^t I \overline{y}=x^t T^t I \overline{y}=x^t I \overline{\overline{T^t}y}= \lan x, \overline{T^t} y\ran.
\]
This shows that $T\s=\overline{T^t}$ for all $T\in B(H)$.
\end{example}
An immediate application of adjoint operators is seen in the following proposition:
\begin{proposition}
\label{prop:boundedweakly}
Let $T\in B(H_1, H_2)$ be a bounded operator between two Hilbert spaces. Then the following statements are true:
\begin{itemize}
\item[(i)] The operator $T$ is weakly continuous too, namely $T$ is continuous with respect to the weak topologies of $H_1$ and $H_2$.
\item[(ii)] The image of the closed unit ball of $H_1$ under $T$ is weakly compact in $H_2$.
\end{itemize}
\end{proposition}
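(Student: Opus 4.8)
The plan is to reduce everything to two facts already available: the existence of the adjoint operator $T\s$ from Corollary \ref{cor:adjointoperator}, and the weak compactness of the closed unit ball from Corollary \ref{cor:rieszduality}. The single computational ingredient is the adjoint identity $\lan Tx, h\ran = \lan x, T\s h\ran$, which lets me pull a defining seminorm of $H_2$ back along $T$ and recognize it as a defining seminorm of $H_1$.

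First I would prove (i). Recall that the weak topology of $H_2$ is the locally convex topology generated by the seminorms $\rho_h(y):=|\lan y, h\ran|$ for $h\in H_2$. To check that $T$ is continuous from $H_1$ with its weak topology into $H_2$ with its weak topology, it suffices to verify that $\rho_h\circ T$ is weakly continuous on $H_1$ for every $h\in H_2$. The key step is the computation, valid for all $x\in H_1$,
\[
\rho_h(Tx)=|\lan Tx, h\ran|=|\lan x, T\s h\ran|=\rho_{T\s h}(x),
\]
so that $\rho_h\circ T$ coincides with the seminorm $\rho_{T\s h}$, which is one of the seminorms defining the weak topology of $H_1$ and is therefore continuous. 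Hence $T$ pulls back each subbasic weak neighborhood of $H_2$ to a weak neighborhood of $H_1$, i.e.\ $T$ is weakly continuous. Equivalently, one may phrase this with nets: if $x_\la\ra x$ weakly in $H_1$, then for every $h\in H_2$ we have $\lan Tx_\la, h\ran=\lan x_\la, T\s h\ran\ra \lan x, T\s h\ran=\lan Tx, h\ran$, so $Tx_\la\ra Tx$ weakly.

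For (ii), I would simply combine (i) with Corollary \ref{cor:rieszduality}, which states that the closed unit ball $(H_1)_1$ is weakly compact. Since $T$ is weakly continuous by (i) and the continuous image of a compact set is compact, $T((H_1)_1)$ is weakly compact in $H_2$. There is no real obstacle in this argument; the entire content lies in the adjoint identity above, and the only point requiring minor care is the purely topological passage from ``weakly continuous map'' together with ``weakly compact domain'' to ``weakly compact image,'' keeping track that the weak topologies on both spaces are the ones being used.
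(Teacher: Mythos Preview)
Your proof is correct and follows essentially the same approach as the paper: for (i) you use the adjoint identity $\lan Tx,h\ran=\lan x,T\s h\ran$ to reduce weak continuity to the defining seminorms (the paper does the same computation with nets), and for (ii) you combine (i) with Corollary~\ref{cor:rieszduality}, exactly as the paper does.
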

\begin{proof}
\begin{itemize}
\item [(i)] Let $(x_i)$ be a net in $H_1$ weakly convergent to $x\in H_1$ and let $y\in H_2$. Then $\lan T(x_i-x), y\ran= \lan x_i-x, T\s y\ran\ra 0$. Hence $(T(x_i))$ is weakly convergent to $T(x)$.
\item [(ii)] It follows from (i) and Corollary \ref{cor:rieszduality}.
\end{itemize}
\end{proof}
A straightforward argument shows the following lemma:
\begin{lemma}
\label{lem:rangenull}
Let $T\in B(H_1, H_2)$ be a bounded operator between two Hilbert spaces. Then $R(T)^\perp=N(T\s)$.
\end{lemma}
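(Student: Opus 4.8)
The plan is to prove the identity $R(T)^\perp = N(T\s)$ by a direct chain of logical equivalences, which is the natural approach for an equality of two subspaces defined by orthogonality and kernel conditions. Since both sides are subsets of $H_1$ (note $T\s\in B(H_2,H_1)$, so $N(T\s)\sub H_1$ and $R(T)\sub H_2$, hence $R(T)^\perp\sub H_1$ as well), I would show that an arbitrary $y\in H_1$ lies in the left-hand side if and only if it lies in the right-hand side.

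First I would unwind the definition of the orthogonal complement. By the definition of $R(T)^\perp$, a vector $y\in H_1$ belongs to $R(T)^\perp$ precisely when $\lan z, y\ran = 0$ for every $z\in R(T)$, that is, when $\lan Tx, y\ran = 0$ for all $x\in H_1$. The key step is then to invoke the defining property of the adjoint from Corollary \ref{cor:adjointoperator}, namely $\lan Tx, y\ran = \lan x, T\s y\ran$ for all $x\in H_1$. This converts the condition into $\lan x, T\s y\ran = 0$ for all $x\in H_1$.

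The final step uses the definiteness of the inner product. By Corollary \ref{cor:innerzero}, the condition $\lan x, T\s y\ran = 0$ for all $x\in H_1$ forces $T\s y = 0$, i.e. $y\in N(T\s)$; and conversely $T\s y = 0$ trivially gives $\lan x, T\s y\ran = 0$ for all $x$. Stringing these equivalences together yields $y\in R(T)^\perp \iff y\in N(T\s)$, which is the claim. One small point worth flagging is that $\lan -, - \ran$ is conjugate-symmetric, so $\lan x, T\s y\ran = 0$ for all $x$ is the same as $\lan T\s y, x\ran = 0$ for all $x$; Corollary \ref{cor:innerzero} is stated in the form that directly applies.

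I do not expect any genuine obstacle here, since the statement is a clean formal consequence of the adjoint relation and the non-degeneracy of the inner product — indeed the excerpt itself calls this ``a straightforward argument.'' The only thing to be careful about is bookkeeping of which Hilbert space each vector lives in, and making sure to write the adjoint relation with the arguments in the correct slots so that the quantifier ``for all $x\in H_1$'' matches the hypothesis of Corollary \ref{cor:innerzero}. No approximation, limiting, or completeness argument is needed.
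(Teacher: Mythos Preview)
Your approach is exactly the intended one --- the paper does not spell out a proof, simply noting that ``a straightforward argument'' suffices, and your chain of equivalences via the adjoint relation and Corollary~\ref{cor:innerzero} is precisely that argument.

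However, you have the ambient Hilbert spaces swapped. Since $T:H_1\to H_2$, the range $R(T)$ lies in $H_2$, so $R(T)^\perp\sub H_2$; and since $T\s:H_2\to H_1$, the kernel $N(T\s)$ is a subset of the \emph{domain} $H_2$, not the codomain $H_1$. Thus both sides of the identity live in $H_2$, and the vector $y$ in your argument should range over $H_2$. With that correction --- $y\in H_2$, $\lan Tx,y\ran$ taken in $H_2$, and $\lan x,T\s y\ran$ taken in $H_1$ --- your equivalences go through verbatim. (You even cautioned yourself about exactly this bookkeeping at the end.)
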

The above lemma can be proved using Lemma \ref{lem:NRannihil}(i) too. We only need to interpret the relationship between adjoint operators (in Banach spaces) as described in Section \ref{sec:spectralcompact} and adjoint operators (in Hilbert spaces) as described in the present section.
\begin{remark}
\label{rem:adjointbanachcs}
Let $T\in B(H_1, H_2)$ be a bounded operator between two Hilbert spaces. If we denote the Riesz duality explained in Theorem \ref{thm:selfdual} for $H_1$ and $H_2$, by $\ff^1:H_1\ra H_1^\ast$ and $\ff^2:H_2\ra H_2^\ast$, respectively, then for every $x\in H_1$ and $y\in H_2$, we have
\[
\ff^2_y(Tx)=\lan Tx , y\ran = \lan x, T\s y\ran=\ff^1_{T\s y} (x).
\]
Now, if $T^\sharp:H_2\s \ra H_1\s$ denotes the adjoint of operator $T$ regarded as an operator between two Banach spaces, i.e. $T^\sharp(\rho) (h)=\rho(Th)$ for all $\rho\in H_2\s$ and $h\in H_1$, then we have
\[
T^\sharp(\ff^2_y)(x)=\ff^2_y(Tx), \qquad \forall x\in H_1, y\in H_2.
\]
Therefore $T^\sharp(\ff^2_y)=\ff^1_{T\s y}$ for all $y\in H_2$. In other words, the following diagram is commutative:
\[
\xymatrix{
H_2\ar[d]_{\ff^2}\ar[r]^{T\s}&H_1\ar[d]^{\ff^1}\\
H_2\s\ar[r]_{T^\sharp} &H_1\s
}
\]
Since the vertical arrows in the above diagram are conjugate-linear isomorphisms, we have $T\s=(\ff^1)\inv T^\sharp \ff^2$. This clears the relationship between these two notions of adjoint operators. Using this realization, we can translate all results that have already been proved for adjoint operators (in Banach spaces) in Section \ref{sec:spectralcompact} for adjoint operators (in Hilbert spaces). For instance, if $T$ is a compact operator, then $T\s$ is a compact operator too. However, most of the times, it is often easier to prove those results again using inner products and other tools of Hilbert spaces.
\end{remark}
\begin{exercise} Let $H$ be a Hilbert space and let $X$ be a subset of $H$. Describe the relationship between $X^\perp$ in Hilbert spaces and $X^\perp$ as the annihilator of $X$. Prove Lemma \ref{lem:rangenull} using Lemma \ref{lem:NRannihil}(i).
\end{exercise}
Let $H$ be a Hilbert space. Definitions of self adjoint, unitary, projection and normal elements in $B(H)$ are the same as in \cs-algebras. Positive elements in $B(H)$ have two equivalent definitions which were discussed in Example \ref{exa:positiveityHilbert}(i). Another concept for elements of $B(H)$ is isometry. A $T\in B(H)$ is called an {\bf isometry} if $\|Tx\|=\|x\|$ for all $x\in H$. For $T\in B(H)$, the condition $T\s T=1$ is equivalent to being an isometry and it can be generalized for to define isometry elements in abstract \cs-algebras.
\begin{exercise}
\label{exe:adjointopt}
Let $T$ be as above.
\begin{itemize}
\item [(i)] Prove that the above conditions on $T$ for being an isometry are equivalent. (Hint: use the polarization identity.)
\item [(ii)] Show that if $T$ is a unitary element, then both $T$ and $T\s$ are isometry.
\item [(iii)] Assume $T$ is invertible or normal. Show that if $T$ is isometry then $T$ is unitary.
\item [(iv)] Show that if $T$ is isometry, then $TT\s$ is a projection.
\item [(v)] Prove $N(T)=N(T\s T)$.
\end{itemize}
\end{exercise}
Let $H_1$ and $H_2$ be two Hilbert spaces. One notes that there are similar definitions for unitary and isometry operators in $B(H_1, H_2)$. In fact, an operator $T\in B(H_1, H_2)$ is called unitary if $T\s T=1_{H_1}$ and $TT\s=1_{H_2}$, and similarly, $T$ is called an isometry if $T\s T=1_{H_1}$. One easily observes that unitary (resp. isometry) operators are the same as unitary equivalences (resp. unitary injections) defined in Definition \ref{def:unitaryequ}(ii).
\begin{proposition}
\label{prop:normaloperator}
Let $T$ be a bounded operator on a Hilbert space $H$. Then $T$ is normal if and only if $\|Tx\|=\|T\s x\|$ for all $x\in H$. Moreover, when $T$ is normal, we have $N(T)=N(T\s)=R(T)^\perp$.
\end{proposition}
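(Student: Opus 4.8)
The plan is to reduce everything to a single computation with the adjoint and the sesquilinear form it induces. First I would record the two basic identities
\[
\|Tx\|^2=\lan Tx,Tx\ran=\lan T\s Tx,x\ran,\qquad \|T\s x\|^2=\lan T\s x,T\s x\ran=\lan TT\s x,x\ran,
\]
valid for every $x\in H$ by the definition of the adjoint in Corollary \ref{cor:adjointoperator}. Subtracting gives
\[
\|Tx\|^2-\|T\s x\|^2=\lan (T\s T-TT\s)x,x\ran,\qquad \forall x\in H.
\]
This single formula carries both implications of the first assertion.

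For the forward direction, if $T$ is normal then $T\s T-TT\s=0$, so the right-hand side vanishes and $\|Tx\|=\|T\s x\|$ for all $x$. For the converse, the hypothesis says the quadratic form $x\mapsto \lan Sx,x\ran$ vanishes identically, where $S:=T\s T-TT\s$. Here $S$ is self-adjoint, and I would use the polarization identity for the sesquilinear form $(x,y)\mapsto\lan Sx,y\ran$ (the Hilbert-space analogue of the identity in Proposition \ref{prop:pre-inner}(iii)) to pass from $\lan Sz,z\ran=0$ for all $z$ to $\lan Sx,y\ran=0$ for all $x,y$. Then Corollary \ref{cor:innerzero} forces $Sx=0$ for every $x$, i.e.\ $S=0$, which is exactly normality of $T$. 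The one step needing a little care is this polarization argument, which is the only genuine content beyond bookkeeping; I expect it to be the main (mild) obstacle, since one must apply polarization to a general sesquilinear form rather than to the inner product itself.

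For the ``moreover'' part I would proceed in two short steps. The equality $N(T)=N(T\s)$ is immediate from the norm identity just proved: $x\in N(T)$ iff $\|Tx\|=0$ iff $\|T\s x\|=0$ iff $x\in N(T\s)$, using normality of $T$. Finally, $N(T\s)=R(T)^\perp$ holds for any bounded operator by Lemma \ref{lem:rangenull}, so combining the two gives $N(T)=N(T\s)=R(T)^\perp$, completing the proof.
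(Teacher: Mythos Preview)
Your proof is correct and follows essentially the same route as the paper: both directions reduce to the identities $\|Tx\|^2=\lan T\s Tx,x\ran$ and $\|T\s x\|^2=\lan TT\s x,x\ran$, the converse uses polarization to pass from the vanishing of the quadratic form to the vanishing of the full sesquilinear form, and the ``moreover'' part is obtained exactly as you do from the norm equality together with Lemma~\ref{lem:rangenull}. The only cosmetic difference is that the paper applies polarization directly to the forms $\lan Tx,Ty\ran$ and $\lan T\s x,T\s y\ran$, whereas you package the same step via the self-adjoint difference $S=T\s T-TT\s$.
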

\begin{proof}
Assume $T$ is normal, then for every $x\in H$, we have
\[
\|Tx\|^2=\lan Tx, Tx \ran = \lan x, T\s T x\ran = \lan x, TT\s x\ran = \lan T\s x, T\s x\ran = \|T\s x \|^2.
\]
This also implies that $N(T)=N(T\s)=R(T)^\perp$.

Conversely, let $\|Tx\|=\|T\s x\|$ for all $x\in H$. Then using the polarization identity, we obtain $\lan Tx, Ty\ran = \lan T\s x, T\s y\ran$ for all $x,y\in H$. By Corollary \ref{cor:innerzero}, this implies that $T$ is normal.
\end{proof}
Spectral theory of bounded operators on $H$ is more concrete than abstract \cs-algebras, and therefore we provide more details here. By Proposition \ref{prop:invoperator}, $T\in B(H)$ is invertible if and only if it is one-to-one and onto. However, there is another condition which is useful to check whether $T$ is invertible.

When $T$ is not one-to-one, clearly $T$ cannot be bounded below. When $T$ is not bounded below, for every $n\in \n$, there is $0\neq x_n\in H$ such that $\|Tx_n\|<\frac{ \|x_n\|}{n}$. If $T$ is invertible, then
\[
\|x_n\|=\|T\inv Tx_n\|\leq \|T\inv\| \|Tx_n\|< \|T\inv\|\frac{ \|x_n\|}{n}, \quad \forall n\in \n.
\]
Apparently, this is a contradiction, and so $T$ cannot be invertible.
\begin{proposition}
\label{propo:boundedbelow}
Let $T\in B(H)$.
\begin{itemize}
\item [(i)] $T$ is invertible if and only if $R(T)$ is dense in $H$ and $T$ is bounded below.
\item [(ii)] When $T$ is normal, $T$ is invertible if and only if $T$ is bounded below.
\end{itemize}
\end{proposition}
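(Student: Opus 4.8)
The plan is to reduce both parts to Proposition \ref{prop:invoperator}, which says that a bounded operator is invertible precisely when it is bijective, together with the fact that a bounded below operator automatically has closed range.

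For part (i), the forward implication is immediate: if $T$ is invertible then it is onto, so $R(T)=H$ is certainly dense, and the argument in the paragraph preceding the statement already shows that an invertible operator is bounded below. For the converse, I would start from the hypotheses $\ep\|x\|\leq\|Tx\|$ for all $x\in H$ and $R(T)$ dense. Being bounded below forces $T$ to be injective, since $Tx=0$ gives $\ep\|x\|\leq 0$. Moreover, recall that a bounded below operator has closed range (the exercise on bounded below operators in Section \ref{sec:spectralcompact}). Thus $R(T)$ is simultaneously closed and dense, hence $R(T)=H$, so $T$ is also surjective. Being bijective, $T$ is invertible by Proposition \ref{prop:invoperator}.

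For part (ii), the forward implication again follows from part (i). For the converse, I would assume $T$ is normal and bounded below; by part (i) it then suffices to prove that $R(T)$ is dense. This is where normality enters: by Proposition \ref{prop:normaloperator} we have $N(T)=N(T\s)=R(T)^\perp$. Since $T$ is bounded below it is injective, so $N(T)=\{0\}$, whence $R(T)^\perp=\{0\}$. By Corollary \ref{cor:perp}(iii) this forces $R(T)$ to be dense in $H$, and part (i) then yields that $T$ is invertible.

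There is no genuinely hard step here; the only point requiring care is invoking the right structural facts in the right order — that bounded below gives both injectivity and closed range, and that normality is exactly what converts injectivity into density of the range via the identity $R(T)^\perp=N(T\s)=N(T)$. Once these are assembled, Proposition \ref{prop:invoperator} closes both parts.
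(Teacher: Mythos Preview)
Your proof is correct. Part (ii) matches the paper's argument exactly. For part (i), however, you take a slightly different route than the paper: the paper constructs the inverse directly by noting that $T^{-1}:R(T)\to H$ is bounded (by the bounded-below estimate), extending it by continuity to all of $H$ using density of $R(T)$, and then checking that this extension is a genuine two-sided inverse. You instead invoke the exercise from Section~\ref{sec:spectralcompact} that bounded below implies closed range, combine closed with dense to get $R(T)=H$, and then cite Proposition~\ref{prop:invoperator}. Both are perfectly valid; your version is more modular and avoids the explicit extension-by-continuity step, while the paper's argument is more self-contained and does not rely on the closed-range exercise.
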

\begin{proof}
\begin{itemize}
\item [(i)] Assume $R(T)$ is dense in $H$ and $T$ is bounded below, then the inverse of $T$, i.e. $T\inv:R(T)\ra H$ is bounded, because there is some $\ep>0$ such that $\|T\inv Tx\|=\|x\|\leq 1/\ep \|Tx\|$ for every $Tx\in R(T)$. Thus we can extend $T^\inv$ to $H$ by continuity. Let us denote the extension of $T\inv$ to $H$ by $S$ for a moment. Then it is clear $ST=1_H$ and $TS$ equals $TT\inv=1_{R(T)}$ on $R(T)$. But, since $R(T)$ is dense, we have $TS=1_H$. Therefore $T$ is invertible. The other implication follows from the above discussion.
\item [(ii)] When $T$ is normal, $N(T)=N(T\s)= R(T)^\perp$. If $T$ is bounded below, then $T$ is one-to-one, and so $R(T)$ is dense in $H$. This implies that $T$ is invertible.
\end{itemize}
\end{proof}
Now, we are ready to study the ideal $F(H)$ of finite rank operators on a Hilbert space $H$.
\begin{proposition}
\label{prop:finiterank1}
For every Hilbert space $H$, $F(H)$ is an involutive subalgebra of $B(H)$.
\end{proposition}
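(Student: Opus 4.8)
The statement asserts two things: that $F(H)$ is a subalgebra of $B(H)$, and that it is closed under the involution $T\mapsto T\s$. The subalgebra part requires essentially no new work. A finite rank operator is by definition bounded; the sum of two operators with finite dimensional ranges again has finite dimensional range, since that range is contained in the sum of the two given ranges; scalar multiples are trivial; and for products the inclusion $R(ST)\subseteq R(S)$ (together with $R(TS)\subseteq R(T)$) shows that a composite is of finite rank as soon as one of the factors is. This is exactly the content already recorded for general Banach spaces in the proof of Proposition \ref{prop:compactoperators}(iii), so I would simply invoke that result. Hence the genuinely new task is to verify that $T\s\in F(H)$ whenever $T\in F(H)$.

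For the involution, first note that $T\s$ is a well-defined bounded operator by Corollary \ref{cor:adjointoperator}, so only the finiteness of its rank is at issue. The plan is to exploit the orthogonal geometry of $H$. Since $T$ has finite rank, $R(T)$ is finite dimensional and therefore closed, so Corollary \ref{cor:perp}(i) gives the orthogonal decomposition $H=R(T)\oplus R(T)^\perp$. By Lemma \ref{lem:rangenull}, $R(T)^\perp=N(T\s)$, whence $T\s$ annihilates $R(T)^\perp$. Writing an arbitrary $y\in H$ as $y=y_1+y_2$ with $y_1\in R(T)$ and $y_2\in N(T\s)$, one gets $T\s y=T\s y_1$, so $R(T\s)=T\s(R(T))$. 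The latter is the image of the finite dimensional space $R(T)$ under a linear map, hence finite dimensional. Thus $T\s$ is of finite rank and lies in $F(H)$, which completes the proof.

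A concrete variant, should one prefer to avoid the decomposition, is to fix an orthonormal basis $e_1,\dots,e_n$ of $R(T)$ and use Proposition \ref{prop:orthonormalpro}(vi) to write $Tx=\sum_{i=1}^n\lan Tx,e_i\ran e_i=\sum_{i=1}^n\lan x,T\s e_i\ran e_i$; expanding the identity $\lan T\s y,x\ran=\lan y,Tx\ran$ and applying Corollary \ref{cor:innerzero} then yields $T\s y=\sum_{i=1}^n\lan y,e_i\ran T\s e_i$, exhibiting $R(T\s)$ inside the span of $T\s e_1,\dots,T\s e_n$. Either way the argument is short, and I do not expect a real obstacle; the only point to handle with care is recognizing that $R(T)$ is closed (automatic in finite dimensions) before the orthogonal complement machinery of Corollary \ref{cor:perp} may be applied.
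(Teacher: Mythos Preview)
Your proposal is correct and follows essentially the same route as the paper: both use Lemma~\ref{lem:rangenull} to get $R(T)^\perp=N(T\s)$ and conclude that $T\s$ has finite rank. The paper compresses your orthogonal-decomposition step into the single observation that $N(T\s)$ has finite codimension in $H$ (hence $R(T\s)\cong H/N(T\s)$ is finite dimensional), but this is just a terser phrasing of what you wrote.
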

One notes that $F(H)$ is not closed in $B(H)$ unless $H$ is finite dimensional. Therefore to prove this proposition,  one cannot use Proposition \ref{prop:closedidealcs}.
\begin{proof} Let $T\in F(H)$. Since $R(T)^\perp = N(T\s)$, the kernel of $T\s$ has a finite codimension in $H$. Hence $T\s\in F(H)$. This proves that $F(H)$ is closed under the involution.
\end{proof}
It is useful to introduce a generating set for $F(H)$ consisting of very simple operators. For every $x,y\in H$, the operator defined by $h\mapsto x\lan h, y\ran=\lan h, y\ran x$ is bounded and its image is one dimensional. Hence it is a finite rank operator on $H$. We denote this operator by $x\otimes y$, in some books it is denoted by $\Theta_{x,y}$. The idea of these rank one operators comes from the elementary matrices $E_{ij}$ in $M_n(\c)$ for $1\leq i,j \leq n$, see Exercise \ref{exe:category}(iv). One easily checks that $E_{ij}=e_i\otimes e_j$.

\begin{proposition}
\label{prop:finiterank2}
Every operator of rank one in $B(H)$ is of the form $x\otimes y$ for some $x,y \in H$. The set $\{ x\otimes y; x,y\in H\}$ of all operators of rank one in $B(H)$ generates $F(H)$.
\end{proposition}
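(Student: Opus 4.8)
The plan is to treat the two assertions separately, with the Riesz duality theorem (Theorem \ref{thm:selfdual}) as the main tool in each.

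For the first assertion, suppose $T\in B(H)$ has rank one. Then $R(T)$ is a one-dimensional subspace, and I would fix a nonzero vector $x$ spanning it. For every $h\in H$ the vector $Th$ lies in $R(T)$, so there is a unique scalar $\varphi(h)$ with $Th=\varphi(h)x$. First I would check that $\varphi:H\ra\c$ is linear (immediate from the linearity of $T$ together with the uniqueness of the coefficient) and bounded (since $|\varphi(h)|\,\|x\|=\|Th\|\leq\|T\|\,\|h\|$). By the Riesz duality theorem there is a $y\in H$ with $\varphi(h)=\langle h,y\rangle$ for all $h$, and hence $Th=\langle h,y\rangle x=(x\otimes y)(h)$; that is, $T=x\otimes y$.

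For the second assertion I would show that the linear span of the rank-one operators is exactly $F(H)$. One inclusion is immediate: each $x\otimes y$ has one-dimensional, hence finite-dimensional, range, so it belongs to $F(H)$. Conversely, take $T\in F(H)$ and set $n:=\dim R(T)$. Since $R(T)$ is a finite-dimensional (hence closed) subspace of $H$, it carries an orthonormal basis $\{x_1,\dots,x_n\}$ by elementary linear algebra. For every $h\in H$ we have $Th\in R(T)$, so expanding in this basis gives $Th=\sum_{i=1}^n\langle Th,x_i\rangle x_i$. For each fixed $i$ the map $h\mapsto\langle Th,x_i\rangle$ is a bounded linear functional, so by Riesz duality there is $y_i\in H$ with $\langle Th,x_i\rangle=\langle h,y_i\rangle$ for all $h$ (concretely $y_i=T\s x_i$, using the adjoint of Corollary \ref{cor:adjointoperator}). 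Substituting yields $Th=\sum_{i=1}^n\langle h,y_i\rangle x_i=\bigl(\sum_{i=1}^n x_i\otimes y_i\bigr)(h)$, whence $T=\sum_{i=1}^n x_i\otimes y_i$ is a finite linear combination of rank-one operators. Combining the two inclusions shows that $F(H)$ coincides with the span of $\{x\otimes y;\,x,y\in H\}$, so this set generates $F(H)$.

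The argument is essentially routine, and I do not expect a genuine obstacle; the only point needing a little care is the passage from the abstract orthonormal expansion of $Th$ to a sum of $x_i\otimes y_i$ terms. This is exactly where converting each coefficient functional $h\mapsto\langle Th,x_i\rangle$ into an inner product via Riesz duality does the real work, and it is the step I would present most explicitly.
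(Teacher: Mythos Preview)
Your proof is correct. The treatment of the rank-one case is essentially the same as the paper's: both fix a vector spanning $R(T)$ and apply the Riesz duality to the resulting coefficient functional.

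For the second assertion, your route is genuinely different from the paper's and somewhat cleaner. The paper chooses orthonormal bases $\{x_1,\dots,x_n\}$ for $R(T)$ and $\{y_1,\dots,y_n\}$ for $R(T\s)=N(T)^\perp$ (after noting that $R(T)^\perp=N(T\s)$ forces $T\s$ to have rank $n$ as well), writes $Ty_i=\sum_j\la_{ji}x_j$, and obtains $T=\sum_{i,j}\la_{ji}\,x_j\otimes y_i$ with $n^2$ rank-one summands. You instead expand $Th$ in the single basis $\{x_1,\dots,x_n\}$ for $R(T)$ and identify each coefficient functional $h\mapsto\lan Th,x_i\ran$ via the adjoint (or Riesz), arriving directly at $T=\sum_{i=1}^n x_i\otimes (T\s x_i)$ with only $n$ terms. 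Your argument avoids the auxiliary discussion of $\mathrm{rank}\,T\s$ and the second orthonormal basis; the paper's version, on the other hand, displays $T$ explicitly as a matrix relative to orthonormal bases of domain and range, which some readers may find illuminating.
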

\begin{proof}
Assume the rank of $T\in B(H)$ is one and pick $x_0\in H$ such that $\|Tx_0\|=1$. Set $y:=Tx_0$. Since $R(T)$ is one dimensional, for every $h\in R(T)$, we have $h=\lan h, y \ran y$. Therefore $\th:R(T)\ra \c$, $h\mapsto \lan h, y\ran$ is an isomorphism whose inverse is $\la\mapsto \la y$. Since $\th T:H\ra \c$ is a linear functional, by the Riesz duality, Theorem \ref{thm:selfdual}, there is $x\in H$ such that $\th T k = \lan k, x\ran$ for all $k\in H$. Then for every $k\in H$, we have
\[
Tk = \lan Tk, y\ran y =(\th T k )y= \lan k, x\ran y = (y\otimes x)k.
\]
Therefore $T=y\otimes x$.

Let $T$ be an arbitrary finite rank operator on $H$ and let $B=\{x_1,\cdots, x_n\}$ be an orthonormal basis for the image of $T$. The equality $R(T)^\perp = N(T\s)$ implies that the rank of $T\s$ is $n$ too. So we can find an orthonormal basis $B'=\{y_1,\cdots, y_n\}$ for $R(T\s)=N(T)^\perp$. For every $1\leq i\leq n$, we have $T(y_i)= \la_1 x_{1i}+\cdots+\la_{ni} x_n$ for some $\la_{1i},\cdots, \la_{ni}\in \c$. Using this, one easily checks that
\[
T=\sum_{i,j=1}^n \la_{ji} x_j\otimes y_i.
\]
\end{proof}

\begin{exercise}
\label{exe:finiterank}
Let $H$ be a Hilbert space. For every $x,y,x',y'\in H$ and $T\in B(H)$, show the following statements:
\begin{itemize}
\item [(i)] $(x\otimes y)\s=y\otimes x$.
\item [(ii)] $\|x\otimes y\|=\|x\| \|y\|$.
\item [(iii)] $T(x\otimes y)=Tx\otimes y$.
\item [(iv)] $(x\otimes y) T= x\otimes T\s y$.
\item [(v)] $(x\otimes y)(x'\otimes y')=\lan y,x'\ran (x\otimes y')$.
\item [(vi)] The operator $x\otimes y$ is a projection (a {\bf rank one projection}) if and only if $x=y$ and $\|x\|=1$. Every rank one projection is of this form.
\end{itemize}
\end{exercise}
\begin{proposition}
\label{prop:finiterank3}
Every non-zero ideal of $B(H)$ contains $F(H)$.
\end{proposition}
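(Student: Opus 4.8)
The plan is to reduce the whole statement to the single rank one case. By Proposition \ref{prop:finiterank2} the operators of the form $x\otimes y$ ($x,y\in H$) span $F(H)$, and any ideal $I$ of $B(H)$ is in particular a linear subspace; hence it suffices to prove that $x\otimes y\in I$ for every $x,y\in H$, under the assumption $I\neq 0$. So the real content is a single device for manufacturing an arbitrary rank one operator out of one non-zero element of the ideal.

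First I would fix a non-zero $T\in I$ and choose $z\in H$ with $w:=Tz\neq 0$. The trick is to sandwich $T$ between two suitably chosen rank one operators so that the product collapses onto a scalar multiple of the target $x\otimes y$. Concretely, for arbitrary $x,y\in H$ I would look at
\[
(x\otimes w)\,T\,(z\otimes y),
\]
which lies in $I$ precisely because $I$ is a two-sided ideal.

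The key computation uses the identities of Exercise \ref{exe:finiterank}. Left multiplication by $T$ and part (iii) give $T(z\otimes y)=Tz\otimes y=w\otimes y$, and then the composition rule in part (v) yields
\[
(x\otimes w)(w\otimes y)=\lan w,w\ran\,(x\otimes y)=\|w\|^2\,(x\otimes y).
\]
Since $w\neq 0$ we may divide by $\|w\|^2$, obtaining
\[
x\otimes y=\frac{1}{\|w\|^2}\,(x\otimes w)\,T\,(z\otimes y)\in I.
\]
As $x$ and $y$ were arbitrary, $I$ contains every rank one operator, and therefore contains their span $F(H)$.

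There is no serious obstacle here: the only points requiring care are the correct choice of the two sandwiching operators and the observation that the scalar $\lan w,w\ran$ is non-zero, which is exactly where the hypothesis $I\neq 0$ enters (via $Tz\neq 0$). I would remark that the argument makes no use of closedness of $I$, so it in fact shows that $F(H)$ sits inside every non-zero ideal, closed or not, and is thus the smallest non-zero ideal of $B(H)$.
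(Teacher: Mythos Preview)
Your proof is correct and follows essentially the same approach as the paper's: sandwich a non-zero $T\in I$ between two rank one operators to produce an arbitrary $x\otimes y$, then invoke Proposition~\ref{prop:finiterank2}. The only cosmetic difference is that the paper normalizes so that $\|Tz\|=1$ (avoiding the final division by $\|w\|^2$), but the argument is otherwise identical.
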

Again, by an ideal, we always mean a two sided ideal.
\begin{proof}
Assume $J$ is a nonzero ideal of $B(H)$ and $0\neq T\in J$. Pick $x_0\in H$ such that $\|Tx_0\|=1$ and set $y_0:=Tx_0$. Then for every $x,y\in H$, we have
\[
(x\otimes y_0) T (x_0\otimes y)= (x\otimes y_0) (Tx_0\otimes y)= \lan y_0, y_0 \ran (x\otimes y ) =x\otimes y.
\]
This shows that all operators of rank one belong to $J$. Therefore by Proposition \ref{prop:finiterank2}, $F(H)\sub J$.
\end{proof}
It follows from the above proposition and Proposition \ref{prop:compactoperators} that $F(H)$ and $K(H)$ are essential ideals of $B(H)$. We conclude this section with an important concept which is useful to study the rich structure of the \cs-algebra $B(H)$ of bounded operators on a Hilbert space $H$.
\begin{definition}
Let $X$ be a subset of $B(H)$. The {\bf commutant of $X$} is the set
\[
X':= \{T\in B(H); TS=ST \, \forall S\in X\}.
\]
The {\bf bicommutant of $X$} is $X'':=(X')'$.  Similarly, we use the notation: $X''':=(X'')'$, $X'''':=(X''')'$, and so on.
\end{definition}
Many basic properties of commutants are summarized in the following proposition:
\begin{proposition}
\label{prop:basiccommutant}
Let $X, X_1$ and $X_2$ be subsets of $B(H)$. Then the following statements are true:
\begin{itemize}
\item [(i)] $X_1\sub X_2$ implies $X_2'\sub X_1'$.
\item [(ii)] $X'$ is a closed unital subalgebra of $B(H)$.
\item [(iii)] $X\sub X''=X''''=\cdots$ and $X'=X'''=X'''''=\cdots$.
\item [(iv)] If $X$ is a self adjoint subset of $B(H)$, then $X'$ is self adjoint, and consequently a unital \cs-subalgebra of $B(H)$.
\item [(v)] $X''=B(H)$ if and only if $X'=\c 1$.
\end{itemize}
\end{proposition}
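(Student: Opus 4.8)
The plan is to reduce the biconditional to two elementary computations of commutants, namely $(\c 1)'=B(H)$ and $B(H)'=\c 1$, and then obtain both implications by applying the commutant operation once and invoking the identity $X'=X'''$ from part (iii). Note that the inclusion $\c 1\sub B(H)'$ and the equality $(\c 1)'=B(H)$ are immediate, since every bounded operator commutes with every scalar multiple of the identity; so the only substantive ingredient is the reverse inclusion $B(H)'\sub \c 1$.

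For the direction ($\Leftarrow$) I would assume $X'=\c 1$ and take commutants of both sides: $X''=(X')'=(\c 1)'=B(H)$. This direction needs no real work. For the direction ($\Rightarrow$) I would assume $X''=B(H)$ and compute $X'=X'''=(X'')'=B(H)'$, where the first equality is part (iii); it then remains only to identify $B(H)'$ with $\c 1$.

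The main obstacle is therefore the single fact $B(H)'=\c 1$. Here I would test an operator $T\in B(H)'$ against the rank one operators $x\otimes y$ of Proposition \ref{prop:finiterank2}. Using the identities $T(x\otimes y)=Tx\otimes y$ and $(x\otimes y)T=x\otimes T\s y$ from Exercise \ref{exe:finiterank}, the relation $T(x\otimes y)=(x\otimes y)T$ becomes $Tx\otimes y=x\otimes T\s y$ for all $x,y\in H$. Comparing these two rank one operators (equal nonzero rank one operators $a\otimes b$ and $c\otimes d$ force $a$ and $c$, and $b$ and $d$, respectively, to be proportional) shows that $Tx$ is a scalar multiple of $x$ for every $x\in H$; that is, every vector is an eigenvector of $T$.

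Finally I would close the argument with the standard observation that an operator having every vector as an eigenvector is a scalar: writing $Tx=\la_x x$, the linear independence of $x$ and $x'$ applied to $T(x+x')=Tx+Tx'$ forces $\la_x=\la_{x'}$, so $T=\la 1$. The degenerate cases ($T\s=0$, whence $T=0$, and $\dim H=1$, where $B(H)=\c 1$ outright) are handled separately and trivially. This yields $B(H)'\sub\c 1$, hence $B(H)'=\c 1$, completing both implications of the proposition.
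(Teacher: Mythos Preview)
Your argument is correct and rests on the same key tool as the paper: rank-one operators detect non-scalar operators. The organizational difference is that you abstract the single fact $B(H)'=\c 1$ and then invoke part (iii) via $X'=X'''=(X'')'=B(H)'$, whereas the paper argues the $(\Rightarrow)$ direction by direct contrapositive: given a non-scalar $T\in X'$, it picks a specific $x$ with $Tx$ not proportional to $x$, and then exhibits an explicit rank-one projection $p=x\otimes x$ (or a modified $q=x\otimes z$ when $\lan Tx,x\ran\neq 0$) satisfying $Tp\neq pT$, whence $p\notin X''$ and $X''\neq B(H)$. Your use of the identities $T(x\otimes y)=Tx\otimes y$ and $(x\otimes y)T=x\otimes T\s y$ from Exercise~\ref{exe:finiterank} gives a slightly cleaner and more uniform treatment (no case split on $\lan Tx,x\ran$), at the cost of the extra step through part (iii); the paper's version is more hands-on but requires the small case analysis. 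Either way the substance is the same.
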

\begin{proof}
\begin{itemize}
\item [(i)] This immediately follows from the definition.
\item [(ii)] It immediately follows from the definition that $X'$ is a unital subalgebra of $B(H)$. The commutant $X'$ is closed because the multiplication in $B(H)$ is continuous.
\item [(iii)] The inclusion $X\sub X''$ immediately follows from the definition. Applying this inclusion to $X'$, we get $X'\sub X'''$. Using Part (i) for the inclusion $X\sub X''$, we get $X'''\sub X'$. Hence $X'=X'''$. The rest of the equalities follow from this equality.
\item [(iv)] Let $X$ be self adjoint and let $T\in X'$. Then $TS=ST$ for all $S\in X$, or equivalently $S\s T\s=T\s S\s $ for all $S\in X$. This is equivalent to $ST\s = T\s S$ for all $S\s\in X$. Since $X$ is self adjoint, it is equivalent to say that $ST\s = T\s S$ for all $S\in X$. Therefore $T\s\in X'$.
\item [(v)] It is clear that $X''=B(H)$ whenever $X'=\c 1$. Conversely, assume there exists $T\in X'$ such that $T\notin \c 1$. This amounts to the existence of a non-zero vector $x\in H$ such that $y=Tx\neq \la x$ for all $\la\in \c$. Clearly, $y\neq 0$. If $\lan y,x\ran =0$, set $p:=x\otimes x$. Then $pTx=(x\otimes x)y =0$ and $Tpx= Tx \|x\|^2= y\|x\|^2\neq 0$, so $Tp\neq pT$. If $\lan y,x\ran\neq 0$, set $q:=x\otimes z$, where $z:= y- \frac{\lan y,x\ran}{\|x\|^2}x$. We note that $z\neq 0$ and we compute
\[
qTx=(x\otimes z)y= x\lan y,z\ran= x\left(  \right\|y\|^2 -\frac{\lan y,x\ran^2}{\|x\|^2})\neq 0,
\]
because of Proposition \ref{prop:csequality} and the fact that $x$ and $y$ are linearly independent. On the other hand, we compute
\[
Tqx=T\left(x (\lan y, x \ran -  \frac{\lan y,x\ran}{\|x\|^2}\lan x, x\ran) \right)=0.
\]
Hence $Tq\neq qT$. This shows that $X''\neq B(H)$.
\end{itemize}
\end{proof}

\begin{definition}
Let $H$ be a Hilbert space. An involutive subalgebra $M$ of $B(H)$ is called a {\bf von Neumann algebra on $H$} if $M=M''$. Let $S$ be a subset of $B(H)$. The {\bf von Neumann algebra generated by $S$} is $C\s(S)''$, the bicommutant of the \cs-subalgebra generated by $S$, and is denoted by $VN(S)$. If $A$ is a \cs-subalgebra of $B(H)$, then $A''$ is also called the {\bf enveloping von Neumann algebra of $A$}.
\end{definition}

One notes that every von Neumann algebra is necessarily unital. Although we defined von Neumann algebras on a Hilbert space $H$ using the algebraic notion of commutants, they have certain topological meaning in $B(H)$ too. We shall explain the topological viewpoint of von Neumann algebras in Section \ref{sec:vonneumann}.

\section{Concrete examples of \cs-algebras}
\label{sec:concreteexamples}
In this section, we describe some classes of examples for {\bf concrete \cs-algebras}, namely \cs-algebras embedded in $B(H)$ for some Hilbert space $H$. We begin with explaining the construction of reduced group \cs-algebras associated to locally compact groups. In what follows, for $1\leq p \leq \infty$, the $L^p$-norm is denoted by $\|-\|_p$. For $1<p<\infty$, the positive real number $q$ satisfying the identity $\frac{1}{p}+\frac{1}{q}=1$ is called the {\bf conjugate exponent of $p$}. When $p=1$ (resp. $p=\infty$) , it is reasonable to assume that $q$, the conjugate exponent of $p$, is $\infty$ (resp. $1$).
\begin{theorem}
\label{thm:lpduality} [Duality in $L^p$ spaces] Let $(X,\mu)$ be a measure space. For given $0<p<\infty$, let $q$ be its conjugate exponent. The following map is an isometric isomorphism from $L^p(\mu)$ onto $(L^q(\mu))\s$:
\begin{eqnarray*}
g&\mapsto& \ff_g,  \quad \forall g\in L^p(\mu)\\
\ff_g(f)&:=& \int f(x) g(x) d\mu(x), \quad \forall f\in L^q(\mu)
\end{eqnarray*}
\end{theorem}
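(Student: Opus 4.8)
The plan is to establish the four constituent facts in sequence: that each $g\in L^p(\mu)$ determines a bounded functional, that the assignment preserves norms, that it is therefore injective, and finally that it is onto. Throughout, the driving inequality is Hölder's, and the surjectivity will rest on the Radon--Nikodym theorem.

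First I would check that $\ff_g$ is a well-defined bounded linear functional on $L^q(\mu)$ with $\|\ff_g\|\leq \|g\|_p$. Linearity in $f$ is immediate from linearity of the integral, and the bound is precisely Hölder's inequality: since $\frac 1p+\frac 1q=1$, for every $f\in L^q(\mu)$ we have $|\ff_g(f)|\leq \int_X |f(x)g(x)|\,d\mu(x)\leq \|f\|_q\|g\|_p$. The same estimate shows $g\mapsto \ff_g$ is a linear map of $L^p(\mu)$ into $(L^q(\mu))\s$. Next I would prove the reverse inequality $\|\ff_g\|\geq \|g\|_p$, which with the previous step yields that the map is an isometry, and in particular injective. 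For $1<p<\infty$ this is done by evaluating $\ff_g$ on the normalized test function $f:=\overline{\operatorname{sgn}(g)}\,|g|^{p-1}$, where $\operatorname{sgn}(g):=g/|g|$ off the zero set of $g$; then
\[
\ff_g(f)=\int_X |g(x)|^p\,d\mu(x)=\|g\|_p^p,\qquad \|f\|_q=\|g\|_p^{p-1},
\]
using $(p-1)q=p$, so that $\ff_g(f)/\|f\|_q=\|g\|_p$. The boundary cases are handled by testing against characteristic functions of the sets where $|g|$ is nearly maximal.

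The hard part will be surjectivity: given an arbitrary $\Phi\in (L^q(\mu))\s$, I must exhibit $g\in L^p(\mu)$ with $\Phi=\ff_g$. I would first dispose of the finite-measure case and then the $\sigma$-finite case. Define the complex set function $\nu(E):=\Phi(\chi_E)$ on measurable sets $E$; using continuity of $\Phi$ together with dominated convergence to pass limits through $\Phi$, one verifies that $\nu$ is countably additive and absolutely continuous with respect to $\mu$. The Radon--Nikodym theorem then furnishes $g=d\nu/d\mu$, and one checks $\Phi(f)=\int_X fg\,d\mu$ first for simple functions and then for all $f\in L^q(\mu)$ by density of simple functions. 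That $g$ actually lies in $L^p(\mu)$, with $\|g\|_p=\|\Phi\|<\infty$, follows by applying the isometry estimate of the second step to truncations $g\chi_{\{|g|\leq n\}}$ and letting $n\ra\infty$.

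The remaining obstacle is the passage from $\sigma$-finite to general measure spaces; this is the only genuinely delicate point and requires an exhaustion argument over the $\sigma$-finite subsets that carry the mass of $\Phi$ (and, in the boundary case $q=\infty$, a $\sigma$-finiteness hypothesis on $(X,\mu)$). Since the statement is recorded here only for use in later constructions, I would either carry out this exhaustion explicitly or simply cite Theorem 6.15 of \cite{folland-ra}, where the full duality is proved.
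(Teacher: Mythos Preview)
The paper does not give a proof of this theorem at all: immediately after the statement it simply refers the reader to Theorem~6.15 of \cite{folland-ra}. Your proposal therefore goes well beyond what the paper does, sketching the standard H\"older/Radon--Nikodym argument in full before offering, as a fallback, exactly the same citation the paper uses. So there is nothing to compare on the level of argument; your outline is the classical proof and is correct as far as it goes, while the paper is content to quote the result.
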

For the proof of the above theorem see Theorem 6.15 in \cite{folland-ra}.
\begin{proposition}
\label{prop:minkowskiinequality} [Minkowski's inequality for integrals] Assume $(X, \mathcal{M}, \mu)$ and $(Y,\mathcal{N}, \nu)$ are two $\si$-finite measure spaces and $f$ is a $(\mathcal{M}\otimes \mathcal{N})$-measurable function on $X\times Y$.
\begin{itemize}
\item [(i)] If $f(x,y)\geq 0$ for all $(x,y)\in X\times Y$ and $1\leq p<\infty$, then
\begin{equation}
\label{eqn:minkowski}
\left(\int \left( \int f(x,y)d\nu (y)\right)^p d\mu(x)\right)^{1/p}\leq \int\left(  \int f(x,y)^p d\nu (y) \right)^{1/p} d\mu(x).
\end{equation}
\item [(ii)] If $1\leq p\leq \infty$, $f(-, y)\in L^p(\mu)$ for almost every $y$, and the function $y\mapsto \|f(-,y)\|_p$ is in $L^1(\nu)$, then $f(x,-)\in L^1(\nu)$ for almost every $x$, the function $x\mapsto \int f(x,y) d\nu(y)$ belongs to $L^p(\mu)$, and
\begin{equation}
\label{eqn:minkowskinorm}
\left\|\int f(-,y)d\nu(y)\right\|_p \leq \int \| f(-,y)\|_p d\nu(y).
\end{equation}
\end{itemize}
\end{proposition}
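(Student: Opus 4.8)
The plan is to prove Minkowski's inequality for integrals (Proposition \ref{prop:minkowskiinequality}) in two stages, first establishing the pointwise/positive case (i) and then deducing the normed case (ii) from it. Throughout I would assume the Fubini--Tonelli theorem, the duality $(L^q(\mu))\s \simeq L^p(\mu)$ from Theorem \ref{thm:lpduality}, and the ordinary H\"older inequality for a single measure space. The $\si$-finiteness hypothesis is exactly what licenses Tonelli's theorem for non-negative measurable functions, so it will be used silently whenever I exchange the order of integration.

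For part (i) I would argue by duality. Write $F(x):=\int f(x,y)\,d\nu(y)$, which is a well-defined non-negative $\mathcal{M}$-measurable function by Tonelli. If $p=1$ the claim is just Tonelli's theorem with equality, so assume $1<p<\infty$ and let $q$ be the conjugate exponent. I would test $F$ against an arbitrary non-negative $g\in L^q(\mu)$ with $\|g\|_q\leq 1$, compute
\[
\int F(x)g(x)\,d\mu(x)=\int\!\!\int f(x,y)g(x)\,d\nu(y)\,d\mu(x)=\int\left(\int f(x,y)g(x)\,d\mu(x)\right)d\nu(y),
\]
where the interchange is Tonelli. Applying H\"older's inequality in the inner $\mu$-integral gives $\int f(x,y)g(x)\,d\mu(x)\leq \|f(-,y)\|_p\,\|g\|_q\leq \|f(-,y)\|_p$, and hence
\[
\int F(x)g(x)\,d\mu(x)\leq \int\left(\int f(x,y)^p\,d\mu(x)\right)^{1/p}d\nu(y).
\]
Taking the supremum over all such $g$ and invoking the characterization $\|F\|_p=\sup\{\int Fg\,d\mu;\ g\geq 0,\ \|g\|_q\leq 1\}$, which follows from Theorem \ref{thm:lpduality}, yields precisely the inequality (\ref{eqn:minkowski}), since the roles of $X$ and $Y$ in my computation are swapped relative to the statement but the inequality is symmetric in form. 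I would need to be careful about the degenerate case where the right-hand side is infinite (then there is nothing to prove) and where $\|F\|_p$ could a priori be infinite; the supremum characterization handles both uniformly.

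For part (ii) I would reduce to (i) by passing to absolute values. Set $g(x,y):=|f(x,y)|$, which is non-negative and $(\mathcal{M}\otimes\mathcal{N})$-measurable. The hypothesis that $y\mapsto \|f(-,y)\|_p$ lies in $L^1(\nu)$ makes the right-hand side of (\ref{eqn:minkowskinorm}) finite, so by (i) the function $x\mapsto \int g(x,y)\,d\nu(y)$ belongs to $L^p(\mu)$ with finite norm. In particular $\int |f(x,y)|\,d\nu(y)<\infty$ for $\mu$-almost every $x$, which shows $f(x,-)\in L^1(\nu)$ for almost every such $x$ and that the vector-valued integral $x\mapsto \int f(x,y)\,d\nu(y)$ is well defined $\mu$-a.e. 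The pointwise bound $\bigl|\int f(x,y)\,d\nu(y)\bigr|\leq \int |f(x,y)|\,d\nu(y)$ combined with monotonicity of the $L^p$-norm (Lemma \ref{lem:ineqnorm} in the measure-algebra guise, or simply monotonicity of the integral of $p$-th powers) then gives
\[
\left\|\int f(-,y)\,d\nu(y)\right\|_p\leq \left\|\int |f(-,y)|\,d\nu(y)\right\|_p\leq \int \|f(-,y)\|_p\,d\nu(y),
\]
which is (\ref{eqn:minkowskinorm}). The case $p=\infty$ I would treat separately and directly: the essential supremum of $\int f(x,y)\,d\nu(y)$ is bounded by $\int \mathrm{ess\,sup}_x|f(x,y)|\,d\nu(y)$, which is an elementary estimate not requiring duality.

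The main obstacle I anticipate is bookkeeping around integrability and measurability rather than any deep inequality: justifying that $F$ is measurable, that the supremum formula for $\|F\|_p$ is valid (including when $F\notin L^p$ a priori), and that all the Tonelli interchanges are legitimate under $\si$-finiteness. The analytic heart---H\"older's inequality in the inner integral followed by duality---is short; the care lies in handling the infinite-valued and almost-everywhere-defined objects cleanly so that part (ii) genuinely follows from part (i) without circularity.
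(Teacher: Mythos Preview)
Your proof is correct and follows essentially the same route as the paper: for part (i) you both handle $p=1$ by Tonelli alone, and for $1<p<\infty$ you pair against $g\in L^q(\mu)$, swap the order of integration via Tonelli, apply H\"older in the inner integral, and then invoke the $L^p$--$L^q$ duality of Theorem~\ref{thm:lpduality} to recover $\|F\|_p$; for part (ii) you both reduce to (i) by replacing $f$ with $|f|$ and treat $p=\infty$ separately by monotonicity. Your version is slightly more explicit about the degenerate cases (infinite right-hand side, $F\notin L^p$ a priori), but the argument is the same.
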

\begin{proof}
\begin{itemize}
\item [(i)] For $p=1$, (\ref{eqn:minkowski}) follows directly from the Fubini-Tonelli theorem, see Theorem 2.37 of \cite{folland-ra}. For $1<p<\infty$, let $q$ be the conjugate exponent of $p$. For every $g\in L^q(\mu)$, using the Fubini-Tonelli Theorem, we have
\begin{eqnarray*}
\int \left( \int f(x,y) d\nu(y)\right) |g(x)| d\mu(x)&=& \int \left[ \int f(x,y)|g(x)|d\mu(x)\right] d\nu(y)\\
&=& \int \left[ \ff_{f(-,y)} (|g(x)|) )\right] d\nu(y)\\
&\leq& \int \left[ \| f(-,y)\|_p \|g\|_q \right] d\nu(y)\\
&=& \|g\|_q \int \left( \int f(x,y)^p d\mu(x)  \right)^{1/p} d\nu(y),
\end{eqnarray*}
where $\ff_{f(-,y)}$ is defined in the duality between $L^p$ spaces, Theorem \ref{thm:lpduality}. Since this is true for every $g\in L^q(\mu)$, again Theorem \ref{thm:lpduality} implies the following inequality:
\[
\left\|\int f(x,y) d\nu(y) \right\|_p \leq \int \left( \int f(x,y)^p d\mu(x)  \right)^{1/p} d\nu(y).
\]
The latter inequality is equivalent to (\ref{eqn:minkowski}) because $f\geq 0$ on $X\times Y$.
\item [(ii)] For $1\leq p<\infty$, it follows from (i) by replacing $f$ by $|f|$. For $p=\infty$, it follows from the monotonicity of the integral.
\end{itemize}
\end{proof}

\begin{proposition}
\label{prop:younginequality} [Young's inequality] Let $G$ be an LCG  with a Haar measure $\mu$.  Let $1\leq p \leq \infty$, $f\in L^1(\mu )$, and $g\in L^p(\mu)$. Then $f\ast g(x)$ exists for almost every $x$, $f\ast g\in L^p$ and we have
\begin{equation}
\label{eqn:young}
\|f\ast g\|_p \leq \|f\|_1 \|g\|_p.
\end{equation}
\end{proposition}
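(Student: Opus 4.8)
The plan is to recognize $f\ast g$ as an $h$-integral of the family of functions $x\mapsto f(h)g(h\inv x)$ and to apply Minkowski's inequality for integrals, Proposition \ref{prop:minkowskiinequality}(ii), which already covers the full range $1\le p\le\infty$. Writing $\Phi(x,h):=f(h)g(h\inv x)$, so that $f\ast g(x)=\int_G \Phi(x,h)\,d\mu(h)$, the estimate will follow once the hypotheses of that proposition are verified with $h$ playing the role of the integration variable and $x$ the role of the $L^p$ variable.

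First I would check the two hypotheses. For fixed $h$, left invariance of the Haar measure gives $\int_G|g(h\inv x)|^p\,d\mu(x)=\int_G|g(x)|^p\,d\mu(x)$, hence $\|\Phi(\cdot,h)\|_p=|f(h)|\,\|g\|_p$; in particular $\Phi(\cdot,h)\in L^p(\mu)$ for every $h$ with $f(h)$ finite, that is for almost every $h$. Moreover the function $h\mapsto\|\Phi(\cdot,h)\|_p=|f(h)|\,\|g\|_p$ lies in $L^1(\mu)$, with $\int_G|f(h)|\,\|g\|_p\,d\mu(h)=\|f\|_1\,\|g\|_p<\infty$. Proposition \ref{prop:minkowskiinequality}(ii) then yields simultaneously that $\Phi(x,\cdot)\in L^1(\mu)$ for almost every $x$ (so that $f\ast g(x)$ exists a.e.), that $f\ast g\in L^p(\mu)$, and the bound
\[
\|f\ast g\|_p\le\int_G\|\Phi(\cdot,h)\|_p\,d\mu(h)=\|f\|_1\,\|g\|_p.
\]

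Before invoking Minkowski I must justify the measurability of $\Phi$ on $G\times G$ and arrange the $\sigma$-finiteness that the integral inequality requires. For measurability I would reuse the argument from the proof that $L^1(G)$ is a Banach algebra: the map $\alpha(h,x):=(h,h\inv x)$ is a continuous, hence Borel, bijection of $G\times G$ under which the Haar product measure is invariant, so $(h,x)\mapsto g(h\inv x)$ is measurable and therefore so is $\Phi=(f\times g)\circ\alpha$. For $\sigma$-finiteness I would follow Remark \ref{rem:sigmacompact}: the supports of $f$ and $g$, and hence of $\Phi$, are contained in $\sigma$-compact sets, so the integrals take place over a $\sigma$-finite part of $G$ and the hypotheses of Proposition \ref{prop:minkowskiinequality} genuinely apply. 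As a sanity check on the endpoints, for $p=\infty$ one bounds $|f\ast g(x)|\le\int_G|f(h)|\,|g(h\inv x)|\,d\mu(h)\le\|f\|_1\,\|g\|_\infty$ for almost every $x$, while the case $p=1$ is exactly the convolution estimate already established when showing that $L^1(G)$ is a Banach algebra.

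The final inequality is the routine part; the main obstacle will be the bookkeeping around the measurability of $\Phi$ and the reduction to a $\sigma$-finite setting, so that Minkowski's integral inequality is legitimately applicable. This is precisely the same technical point that had to be handled for the convolution product itself, and I expect it to be dispatched by citing the earlier treatment rather than redoing it in full.
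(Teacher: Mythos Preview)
Your argument is correct and is essentially identical to the paper's: both define the same two-variable function, use left invariance to compute $\|F(\cdot,h)\|_p=|f(h)|\,\|g\|_p$, and then invoke Minkowski's integral inequality (Proposition~\ref{prop:minkowskiinequality}(ii)) together with Remark~\ref{rem:sigmacompact} for the $\sigma$-finiteness issue. Your write-up is in fact slightly more explicit about measurability and the endpoint cases, but the route is the same.
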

\begin{proof} One notes that Minkowski's inequality for integrals relies on the Fubini-Tonelli theorem and this latter theorem is valid when the measure spaces are $\si$-finite. However, in Remark \ref{rem:sigmacompact}, we explained why we can apply the Fubini-Tonelli theorem and its consequences to integration on locally compact groups equipped with a Haar measure.

Now, define $F:G\times G\ra \c$ by $F(x,y):=f(y) g(y\inv x)$ for all $x,y \in G$. Then for every $y\in G$, $F(-, y)\in L^p(\mu)$ because $g\in L^p(\mu)$. On the other hand, for every $y\in G$, we have
\[
\|F(-, y)\|_p = \|f(y) g(y\inv -)\|_p= |f(y)|\|L_y(g)\|_p=|f(y)|\|g\|_p.
\]
Hence for every $y\in G$, the function $y\mapsto \|F(-, y)\|_p$ is in $L^1(\mu)$. By applying Minkowski's inequality for integrals to $F(x,y)$, see \ref{prop:minkowskiinequality}(ii), we observe that the function $x\mapsto \int_G f(y)g(y\inv x) d\mu(y)= f\ast g(x)$ belongs to $L^p(\mu)$ and we have
\begin{eqnarray*}
\|f\ast g\|_p &=& \left\| \int f(y) g(y\inv -) d\mu(y) \right\|_p\\
&=& \left\| \int_G F(-,y) d\mu(y) \right\|_p\\
&\leq&  \int_G \left\|F(-,y)  \right\|_p d\mu(y)\\
&=& \int_G  |f(y)|\|g\|_p d\mu(y)\\
&=& \|f\|_1\|g\|_p.
\end{eqnarray*}
\end{proof}

\begin{example}
\label{exa:reducedgroupcsalgebra}
Let $G$ be a LCG with a Haar measure $\mu$. The convolution product defines a \ss-homomorphism $\la: L^1(G)\ra B(L^2(G))$ as follows:
\[
\la(f)\xi (g) :=f\ast \xi (g)=\int_G f(h) \xi(h\inv g) d\mu(h),
\]
for all $f\in L^1(G)$, $\xi\in L^2(G)$, $g\in G$. This is called the {\bf left regular representation of $G$} (or $L^1(G)$). It follows from Young's inequality for $p=2$ that $f\ast \xi\in L^2(G)$. Moreover, $\la(f)$ is a bounded operator, in fact, we have $\|\la(f)\|\leq \|f\|_1$. Also, we need to show  that $\la$ is a \ss-homomorphism. It clearly preserves the addition and scalar multiplication. The associativity of the convolution product implies that $\la$ preserves the multiplication, more precisely, for all $f,g\in L^1(G)$ and $\xi\in L^2(G)$, we compute
\[
\la(f\ast g)(\xi)=(f\ast g)\ast \xi=f\ast (g\ast \xi) = \la(f)(\la(g)(\xi))= (\la(f)\la(g))(\xi).
\]
Now, we show that $\la$ preserves the involution. Let $\Delta$ be the modular function on $G$. Then for all $f\in L^1(G)$, $\xi\in L^2(G)$ and $g\in G$, we compute
\begin{eqnarray*}
\lan \eta , \la(f\s) \xi\ran &=& \int_G \eta\overline{f\s \ast \xi(g)} d\mu(g)\\
&=& \int_G \eta(g) \overline{\left( \int_G \Delta(h\inv) \overline{f(h\inv)} \xi(h\inv g)d\mu(h)\right)} d\mu(g).
\end{eqnarray*}
By using Lemma \ref{lem:intinvsub} for the integral over $h$, we obtain
\begin{eqnarray*}
\lan \eta , \la(f\s) \xi\ran &=& \int_G \eta(g) \overline{\left( \int_G \overline{f(h)} \xi(h g)d\mu(h)\right)} d\mu(g)\\
&=& \int_G \int_G \eta(g) f(h) \overline{\xi(hg)} d\mu(h) d\mu(g)\\
&=& \int_G \left(\int_G \eta(g) f(h) \overline{\xi(hg)} d\mu(g)\right)d\mu(h).
\end{eqnarray*}
If we substitute $hg$ by $k$, then $d\mu(g)=d\mu(hg)=d\mu(k)$ for all $h\in G$, $g=h\inv k$, and so we have
\begin{eqnarray*}
\lan \eta , \la(f\s) \xi\ran
&=& \int_G \left(\int_G \eta(h\inv k) f(h) \overline{\xi(k)} d\mu(k) \right)d\mu(h)\\
&=& \int_G \left(\int_G f(h) \eta(h\inv k)   d\mu(h)\right) \overline{\xi(k)}d\mu(k)\\
&=& \int_G f\ast\eta(k) \overline{\xi(k)}d\mu(k)\\
&=& \lan \la(f)\eta, \xi\ran.
\end{eqnarray*}
This completes the proof of the fact that $\la$ is a \ss-homomorphism.

Another crucial fact about $\la$ is that it is one-to-one. To show this, we use a Dirac net $(f_j)$ on $G$. Let $\la(f)=0$ for some $f\in L^1(G)$. Since $f_j\in C_c(G)\sub L^2(G)$ for all $j$, we have $0=\la(f)f_j=f\ast f_j$ for all $j$. Therefore using Lemma \ref{lem:diracnet}, we have $f=\lim_j f\ast f_j =0$.

The above discussions show that $L^1(G)$ can be embedded in the \cs-algebra $B(L^2(G))$ as a \ss-subalgebra. But, by Proposition \ref{prop:triviall1}, $L^1(G)$ is not a \cs-algebra unless $G$ is the trivial group. Therefore to obtain a \cs-algebra, we consider the closure of the image of $L^1(G)$ in $B(L^2(G))$. This \cs-algebra is called the {\bf reduced group \cs-algebra of $G$} and is denoted by $C_r\s(G)$. It clearly inherits many features of $L^1(G)$. For instance, $C_r\s(G)$ is commutative if and only if $G$ is abelian.
\end{example}

\begin{example}
\label{exa:linfty} Let $(X,\mu)$ be a measure space. We define a map
\begin{eqnarray*}
M:L^\infty (X)&\ra& B(L^2(X)),\\
f&\mapsto& M_f, \qquad \forall f\in L^\infty(X),
\end{eqnarray*}
where
\[
M_f\xi(x):= f(x)\xi (x) \qquad \forall \xi\in L^2(X), x\in X.
\]
For $f\in L^\infty(X)$ and $\xi\in L^2(X)$, using Remark \ref{rem:linfinity}, we have
\begin{eqnarray*}
\|M_f\xi\|_2^2&=&\int |f(x) \xi(x)|^2 d\mu(x)\\
&=& \int_{X\backslash P} |f(x) \xi(x)|^2 d\mu(x)+ \int_P |f(x) \xi(x)|^2 d\mu(x)\\
&=& \int_{X\backslash P} |f(x) \xi(x)|^2 d\mu(x)\\
&\leq& \int_{X\backslash P} \|f(x)\|_\infty^2 | \xi(x)|^2 d\mu(x)\\
&\leq& \|f(x)\|_\infty^2 \|\xi\|_2^2,
\end{eqnarray*}
where $P=\{x\in X; |f(x)|>\|f\|_\infty\}$. This shows that $M$ is well defined and $\|M_f\|_\infty \leq \|f\|_\infty $. It is straightforward to check that $M$ is actually a one-to-one \ss-homomorphism. Therefore by Corollary \ref{cor:csinjection} and Corollary \ref{cor:firstiso}, $M$ is an isometric embedding of $L^\infty(X)$ into $B(L^2(X))$ and its image is a \cs-subalgebra of $B(L^2(X))$. For every $f\in L^\infty(X)$, the operator $M_f$ is called the {\bf multiplication operator of $f$}.
\end{example}
\begin{example}
\label{exa:shiftoperator}
We define the {\bf unilateral shift operator} $S$ on $\ell^2=\ell^2(\n)$ by setting $S(\d_n):=\d_{n+1}$, where $\d_n$ is the characteristic function of $\{n\}$, and extending it linearly. Since $\{\d_n; n\in \n\}$ is an orthonormal basis of $\ell^2$, this operator is an isometry, and therefore it is bounded. One also easily checks that $S\s$, the adjoint of $S$, is given by the linear extension of the following map:
\[
S\s(\d_n):=\left\{ \begin{array}{lc} \d_{n-1} & n\geq 2\\ 0& n=1 \end{array} \right.
\]
It is also easy to see that $S\s S=1$, but $SS\s\neq 1$. However, $SS\s$ is a projection, in fact the projection on the closed subspace generated by $\{\d_n; n\geq 2 \}$ in $\ell^2$. The \cs-subalgebra of $B(\ell^2)$ generated by $\{S,1\}$ is called the {\bf Toeplitz algebra} and is denoted by $\mathcal{T}$.
\end{example}

\section{Locally convex topologies on $B(H)$}
\label{sec:topologies}

In this section $H$ is always a Hilbert space. There are many topologies on $B(H)$ besides the norm topology. Here, we content ourself to weak, strong, and strong-$\s$ operator topologies. We refer the interested reader to \cite{li} for a comprehensive list of topologies on $B(H)$ and various comparisons between them.

\begin{definition}
The {strong operator topology on $B(H)$} (or simply the strong topology on $B(H)$) is the topology of pointwise norm-convergence of elements of $B(H)$. In other words, a net $(T_i)$ in $B(H)$ strongly converges to $T\in B(H)$ if and only if the net $(T_ix)$ converges to $Tx$ in norm topology of $H$ for every $x\in H$.
\end{definition}

For every $x\in H$, the map $\rho_x:B(H)\ra \c$ defined by $\rho_x(T):=\|Tx\|$ for all $T\in B(H)$ is a semi-norm. One easily observes that the strong topology on $B(H)$ is the locally convex and Hausdorff topology defined by semi-norms $\rho_x$ for all $x\in H$.

\begin{definition}
The {weak operator topology on $B(H)$} (or simply the weak topology on $B(H)$) is the topology of pointwise weak-convergence of elements of $B(H)$. In other words, a net $(T_i)$ in $B(H)$ weakly converges to $T\in B(H)$ if and only if the net $(T_ix)$ converges to $Tx$ in weak topology of $H$ for every $x\in H$.
\end{definition}

For every $x,y\in H$, the map $\rho_{x,y}:B(H)\ra \c$ defined by $\rho_{x,y}(T):=|\lan Tx, y\ran |$ for all $T\in B(H)$ is a semi-norm. One easily observes that the weak topology on $B(H)$ is the locally convex and Hausdorff topology defined by semi-norms $\rho_{x,y}$ for all $x,y\in H$.

It is often useful to consider smaller families of semi-norms to define the above topologies.
\begin{proposition} Let $(T_i)$ be a norm bounded net in $B(H)$ and let $T\in B(H)$.
\label{prop:lessseminormstops}
\begin{itemize}
\item [(i)] $T_i\ra T$ strongly if and only if $T_ix\ra Tx$ in norm for all $x$ in a dense (or just a total) subset of $H$.
\item [(ii)] $T_i\ra T$ weakly if and only if $T_ix\ra Tx$ in weak topology of $H$ for all $x$ in a dense (or just a total) subset of $H$.
\end{itemize}
\end{proposition}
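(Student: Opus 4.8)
The plan is to note that the forward implications in both parts are immediate and to obtain the reverse implications from a single three-term estimate that exploits the uniform norm bound on the net. I would set $M:=\max\{\sup_i\|T_i\|,\|T\|\}$, which is finite since $(T_i)$ is norm bounded. First I would reduce the case of a total set to the case of a dense set: if $S$ is total, then its linear span $\lan S\ran$ is dense in $H$, and since every $T_i$ and $T$ are linear, convergence of $(T_ix)$ to $Tx$ (in either topology) for all $x\in S$ propagates to every finite linear combination, hence to all of $\lan S\ran$. Thus it suffices to treat a dense subset $D\sub H$.

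For part (i), assuming $T_ix\ra Tx$ in norm for all $x\in D$, I would fix an arbitrary $x\in H$ and $\ep>0$ and choose $y\in D$ with $\|x-y\|<\ep/(3M)$ (the case $M=0$ being trivial). Then for every index $i$,
\[
\|T_ix-Tx\|\leq \|T_i\|\,\|x-y\|+\|T_iy-Ty\|+\|T\|\,\|y-x\|\leq \tfrac{2\ep}{3}+\|T_iy-Ty\|.
\]
Since $T_iy\ra Ty$ in norm, there is $i_0$ with $\|T_iy-Ty\|<\ep/3$ for all $i\geq i_0$, whence $\|T_ix-Tx\|<\ep$ for $i\geq i_0$. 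As $x$ and $\ep$ were arbitrary, this gives $T_i\ra T$ strongly. The converse is trivial, since strong convergence is convergence at every point of $H$, in particular at every point of $D$.

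Part (ii) would follow the same scheme with the weak semi-norms replacing the norm. Recalling that $T_ix\ra Tx$ weakly means $\lan T_ix,h\ran\ra\lan Tx,h\ran$ for every $h\in H$, I would fix $x,h\in H$ (with $h\neq 0$; the case $h=0$ is trivial) and $\ep>0$, and pick $y\in D$ with $\|x-y\|<\ep/(3M\|h\|)$. Using the Cauchy-Schwartz inequality,
\[
|\lan T_ix,h\ran-\lan Tx,h\ran|\leq \|T_i\|\,\|x-y\|\,\|h\|+|\lan T_iy-Ty,h\ran|+\|T\|\,\|y-x\|\,\|h\|,
\]
so the first and third terms are each below $\ep/3$, while the hypothesis $T_iy\ra Ty$ weakly drives the middle term below $\ep/3$ eventually; the converse is again trivial. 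The only point requiring care — and the reason norm boundedness is essential — is that the approximation error $\|T_i(x-y)\|$ must be controlled uniformly in $i$, which is exactly what the bound $M$ provides; without it the tail term $\|T_i\|\,\|x-y\|$ could not be made uniformly small, so this uniform control is where I would expect the substance of the argument to lie.
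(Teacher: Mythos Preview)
Your argument is correct and is essentially identical to the paper's proof: both set a uniform bound $M$ on $\|T_i\|$ and $\|T\|$, reduce the total-set case to the dense-set case by linearity, and then use the same three-term triangle inequality $\|T_ix-Tx\|\leq \|T_i\|\|x-y\|+\|T_iy-Ty\|+\|T\|\|y-x\|$ with $y$ chosen in the dense set. The paper leaves part (ii) as ``a similar argument,'' which you have written out explicitly with the Cauchy--Schwartz estimate; otherwise there is no substantive difference.
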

\begin{proof} Let $M$ be a positive number such that $\|T\|\leq M$ and $\|T_i\|\leq M$ for all $i$.
\begin{itemize}
\item [(i)] Let $E$ be a dense subset of $H$ and let $T_ix\ra Tx$ in norm for all $x\in E$. For given $y\in H$, assume $\{x_n\}$ is a sequence in $E$ such that $x_n\ra y$. For given $\ep>0$, pick $n_0\in \n$ such that $\|x_{n_0} - y\|<\frac{\ep}{3M}$. Then pick $i_0$ such that $\|T_i x_{n_0} -Tx_{n_0}\|<\ep/3$ for all $i\geq i_0$. Then for every $i\geq i_0$, we have
\begin{eqnarray*}
\|T_i y -Ty\|&\leq & \|T_i y - T_ix_{n_0}\|+ \|T_i x_{n_0} - Tx_{n_0} \| +\|Tx_{n_0} -Ty\|\\
&<& 2M \frac{\ep}{3M} + \ep/3 =\ep.
\end{eqnarray*}

When $E$ is a total set, it is easy to see that the convergence $T_ix\ra Tx$ in norm for all $x\in E$ extends to the same convergence for all $x\in \lan E \ran$ which is dense in $H$ by definition of a total set.
\item [(ii)] It follows from a similar argument.
\end{itemize}
\end{proof}
\begin{exercise}
Write the proof of Part (ii) of the above proposition.
\end{exercise}
The following example shows how strong and weak topology differ from norm topology and from each other. It also gives some hints for how to compare these topologies.
\begin{example}
\label{exa:shifttopologies}
Let $S$ be the unilateral shift operator described in Example \ref{exa:shiftoperator}.
\begin{itemize}
\item [(i)] The sequence $\{ S^n\}$ weakly converges to zero, but it is not strongly convergent to zero. Clearly, this sequence is norm bounded, so we can apply the above proposition. Since $\{\d_m; m\in \n\}$ is a total set in $\ell^2$, it is enough to show that $\lan S^n \d_m , x\ran \ra 0$ as $n\ra \infty$ for all $x\in \ell^2$ and $m\in \n$. In fact, $\{\d_m; m\in \n\}$ is an orthonormal basis of $\ell^2$. Hence for every $x\in \ell^2$, we have
\[
x=\sum_{m=1}^\infty x_m \d_m,
\]
where $x_m=\lan x, \d_m\ran \ra 0$ as $m\ra \infty$. This implies that $\lan S^n \d_m , x\ran =\lan \d_{n+m} , x\ran \ra 0$ as $n\ra \infty$. However, $S^n \d_m=\d_{n+m}$ does not approach to zero in norm as $n\ra \infty$.
\item [(ii)] Similar arguments show that the sequence $\{ (S\s)^n\}$ strongly converges to zero, but it is not convergent to zero in norm.
\end{itemize}
\end{example}
\begin{exercise}
Prove Part (ii) of the above example.
\end{exercise}
\begin{proposition}
\label{prop:strongweaktop}
\begin{itemize}
\item [(i)] The weak topology of $B(H)$ is weaker than the strong topology of $B(H)$.
\item [(ii)] The strong topology of $B(H)$ is weaker than the norm topology of $B(H)$.
\end{itemize}
\end{proposition}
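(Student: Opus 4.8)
The plan is to invoke the general principle that a locally convex topology defined by a family of seminorms is the \emph{weakest} topology in which all those seminorms are continuous; this is the seminorm analogue of the characterization recorded in Exercise \ref{exe:dualspaces}(v). Consequently, to show that one such topology $\sigma$ is weaker than another topology $\tau$, it suffices to check that every seminorm defining $\sigma$ is $\tau$-continuous. I would state this principle at the outset and then verify the relevant seminorm estimates in each part. Recall that the weak topology is defined by the seminorms $\rho_{x,y}(T)=|\langle Tx,y\rangle|$ and the strong topology by the seminorms $\rho_x(T)=\|Tx\|$.

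For part (i), I would apply the Cauchy-Schwartz inequality (Proposition \ref{prop:pre-inner}(i)) to obtain, for all $x,y\in H$ and $T\in B(H)$,
\[
\rho_{x,y}(T)=|\langle Tx,y\rangle|\leq \|Tx\|\,\|y\|=\|y\|\,\rho_x(T).
\]
Since $\rho_{x,y}$ is dominated by a constant multiple of the strong seminorm $\rho_x$, it is continuous in the strong topology. As this holds for every pair $x,y\in H$, the weak topology is weaker than the strong topology.

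For part (ii), I would estimate the strong seminorm against the operator norm: for every $x\in H$ and $T\in B(H)$,
\[
\rho_x(T)=\|Tx\|\leq \|T\|\,\|x\|,
\]
directly from the definition of the operator norm in Example \ref{ex:topalg}(ii). Thus each $\rho_x$ is Lipschitz, and in particular continuous, with respect to the norm topology, so the strong topology is weaker than the norm topology.

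I do not anticipate a genuine obstacle here; the entire content is the two seminorm inequalities above together with the generating-seminorm principle. The only point requiring a little care is to phrase the comparison in terms of seminorm continuity rather than merely in terms of convergent nets, since a statement about nets alone does not immediately yield an inclusion of topologies. Alternatively, one may record the net formulation as a corollary: the two displayed inequalities show at once that a strongly convergent net is weakly convergent to the same limit and that a norm-convergent net is strongly convergent to the same limit, with Example \ref{exa:shifttopologies} illustrating that both inclusions are strict.
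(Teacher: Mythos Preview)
Your proof is correct and rests on exactly the same two inequalities as the paper's own argument. The only difference is packaging: the paper phrases both parts in terms of nets (if $T_i\to T$ strongly then $T_i\to T$ weakly, via $|\langle (T-T_i)x,y\rangle|\leq \|(T-T_i)x\|\,\|y\|$; and similarly for norm versus strong), whereas you phrase it as continuity of the defining seminorms. One small correction to your closing remark: your caution that ``a statement about nets alone does not immediately yield an inclusion of topologies'' is unwarranted, since net convergence characterizes a topology completely---showing that every $\tau$-convergent net is $\sigma$-convergent to the same limit \emph{does} establish that $\sigma$ is weaker than $\tau$. So the paper's net formulation is already a complete proof, not merely a corollary.
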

\begin{proof} Let $(T_i)$ be a net in $B(H)$ and $T\in B(H)$.
\begin{itemize}
\item [(i)] It amounts to show that if $T_i\ra T$ strongly, then  $T_i\ra T$ weakly. But, this follows immediately from CS inequality as follows:
\[
|\lan (T-T_i) x, y\ran| \leq \|(T-T_i)x\| \|y\|, \qquad \forall x,y\in H.
\]
\item [(ii)] Similarly, it amounts to show that if $T_i\ra T$ in norm, then $T_i\ra T$ strongly. This follows immediately from the following inequality:
\[
\|(T-T_i)x\| \leq \|T-T_i\| \|x\|, \qquad  \forall x\in H.
\]
\end{itemize}
\end{proof}
\begin{exercise}
\label{exe:involutionweakcontinuous}
Show that the involution in $B(H)$ is continuous with respect to the weak operator topology.
\end{exercise}
Example \ref{exa:shifttopologies} shows that the involution is not strongly continuous. To remedy this situation, another locally convex topology is defined on $B(H)$.
\begin{definition}
The locally convex topology defined by semi-norms of the form $T\mapsto \|Tx\| + \|T\s x \|$ for all $T\in B(H)$, where $x$ varies in $H$, is called the {\bf strong-\ss\, operator topology of $B(H)$} (or shortly the strong-\ss\, topology).
\end{definition}
It is clear that the strong-\ss\, operator topology is stronger than the strong operator topology and weaker than the norm topology on $B(H)$.
\begin{remark}
\label{rem:continofoperations}
\begin{itemize}
\item [(i)] Since these topologies are defined by families of semi-norms, both addition and scalar multiplication are jointly continuous in weak, strong and strong-\ss\, operator topologies.
\item [(ii)] Multiplication is separately (on each variable) continuous in all these topologies.
\item [(iii)] Multiplication is also jointly continuous in strong and strong-\ss\, topology on bounded sets.
\item [(iv)] Multiplication is not jointly continuous in weak topology even on bounded sets. For instance, assume $S$ is the unilateral shift operator, then both sequences $\{ S^n\}$ and $\{{S\s}^n\} $ are bounded and weakly convergent to zero, see Example \ref{exa:shiftoperator}. But $(S\s)^n S^n = 1$ for all $n\in \n$, and so the multiplication of these sequences is not convergent to zero.
\item [(v)] The separate weak (and strong) continuity of multiplication implies that $S'$ is weakly (and strongly) closed for all subsets $S\sub B(H)$. Therefore every von Neumann algebra is weakly (and strongly) closed.
\end{itemize}
\end{remark}
\begin{exercise}
Prove Items (ii), (iii) and (v) in the above remark.
\end{exercise}
\begin{proposition}
\label{prop:strongunitball}
The (norm) closed unit ball of $B(H)$ is closed in strong operator topology.
\end{proposition}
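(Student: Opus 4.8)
The plan is to show that the closed unit ball $(B(H))_1 = \{T \in B(H); \|T\| \leq 1\}$ is closed in the strong operator topology by taking an arbitrary net in the ball that converges strongly to some operator $T$ and verifying that $T$ itself lies in the ball, i.e. $\|T\| \leq 1$. Since the strong topology is not metrizable in general, I would work with nets rather than sequences throughout.

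First I would let $(T_i)$ be a net in $(B(H))_1$ converging strongly to $T \in B(H)$. By definition of strong convergence, $T_i x \ra Tx$ in the norm of $H$ for every $x \in H$. The key estimate is obtained by fixing $x \in H$ and using the continuity of the norm: since $\|T_i x\| \ra \|Tx\|$ whenever $T_i x \ra Tx$ in norm, and since each $T_i$ satisfies $\|T_i x\| \leq \|T_i\| \|x\| \leq \|x\|$, I would pass to the limit to conclude
\[
\|Tx\| = \lim_i \|T_i x\| \leq \|x\|, \qquad \forall x \in H.
\]
Taking the supremum over all $x$ in the closed unit ball of $H$ then gives $\|T\| = \sup_{\|x\| \leq 1} \|Tx\| \leq 1$, which is exactly the statement that $T \in (B(H))_1$.

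The only point requiring a little care is the justification that $\|T_i x\| \ra \|Tx\|$, which follows from the reverse triangle inequality $\big| \|T_i x\| - \|Tx\| \big| \leq \|T_i x - Tx\| \ra 0$; this is a routine consequence of strong convergence and needs no appeal to boundedness of the net as a whole. I would emphasize that the argument works pointwise and never requires a uniform bound beyond the individual inequalities $\|T_i\| \leq 1$, so there is no obstacle in passing from the net to the limit. Indeed, there is essentially no hard part here: the proof is a direct application of the definition of the strong topology combined with the elementary characterization $\|T\| = \sup\{\|Tx\|; \|x\| \leq 1\}$ from Example \ref{ex:topalg}(ii). If anything, the subtlety worth flagging is purely conceptual, namely that closedness in the strong topology is genuinely stronger information than sequential closedness, but since the argument is phrased with nets it delivers the full topological statement without extra work.
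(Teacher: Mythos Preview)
Your proof is correct and follows essentially the same approach as the paper: take a net $(T_i)$ in the unit ball converging strongly to $T$, use that $\|T_i x\|\leq \|x\|$ for each $i$, and pass to the limit via the triangle inequality to obtain $\|Tx\|\leq \|x\|$, hence $\|T\|\leq 1$. The paper writes this with an explicit $\ep$-argument ($\|Tx\|<\|T_ix\|+\ep\leq \|x\|+\ep$) while you phrase it through continuity of the norm, but the content is identical.
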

\begin{proof} Assume $(T_i)$ is a net in closed unit ball of $B(H)$ such that it is strongly convergent to some $T\in B(H)$. Given $x\in H$, for every $\ep>0$, there is some $i_0$ such that $i\geq i_0$ implies that $\|Tx- T_i x\|<\ep$. Thus we have
\[
\|Tx\|<\|T_ix \| +\ep \leq \|T_i\|\|x\|+\ep\leq \|x\|+\ep.
\]
Therefore $\|Tx\|\leq \|x\|$ for every $x\in H$, and so $\|T\|\leq 1$.
\end{proof}
\begin{proposition}
\label{prop:weakunitball}
The (norm) closed unit ball of $B(H)$ is compact in weak operator topology.
\end{proposition}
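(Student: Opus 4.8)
The plan is to establish compactness through the net criterion of Proposition \ref{prop:subnetcompact}: I will show that every net in the closed unit ball $(B(H))_1$ admits a subnet converging in the weak operator topology to an element of $(B(H))_1$. The bridge between operators and scalars will be the identification of bounded operators with bounded sesquilinear forms from Theorem \ref{thm:sesquilinear}. So let $(T_i)$ be a net in $(B(H))_1$. For each pair $(x,y)\in H\times H$, the Cauchy-Schwartz inequality (Proposition \ref{prop:pre-inner}(i)) together with $\|T_i\|\leq 1$ gives $|\lan T_ix,y\ran|\leq \|x\|\|y\|$, so each coordinate net $(\lan T_ix,y\ran)_i$ lives in the compact disc $D_{x,y}:=\{z\in\c;\,|z|\leq \|x\|\|y\|\}$.

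First I would pass to the product $P:=\prod_{(x,y)\in H\times H} D_{x,y}$, which is compact by Tychonoff's theorem, and embed $(B(H))_1$ into $P$ by $T\mapsto (\lan Tx,y\ran)_{(x,y)}$. Since the weak operator topology is exactly the topology induced by the maps $T\mapsto \lan Tx,y\ran$, this embedding is a homeomorphism onto its image and the product topology on $P$ is pointwise convergence. Applying Proposition \ref{prop:subnetcompact} to the compact space $P$, the image net has a convergent subnet; equivalently, there is a subnet $(T_j)$ of $(T_i)$ and scalars $c_{x,y}$ with $\lan T_jx,y\ran\ra c_{x,y}$ for every $(x,y)\in H\times H$.

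Next I would define $B(x,y):=c_{x,y}=\lim_j \lan T_jx,y\ran$ and verify that $B$ is a bounded sesquilinear form on $H\times H$: linearity in the first variable and conjugate-linearity in the second pass to the limit from the corresponding identities for each $\lan T_jx,y\ran$, while $|B(x,y)|\leq \|x\|\|y\|$ follows from the uniform bound on the coordinates, so $\|B\|\leq 1$. By Theorem \ref{thm:sesquilinear} there is a unique $T\in B(H)$ with $\lan Tx,y\ran=B(x,y)$ for all $x,y\in H$ and $\|T\|=\|B\|\leq 1$; thus $T\in (B(H))_1$. Finally, $\lan T_jx,y\ran\ra \lan Tx,y\ran$ for all $x,y\in H$ says precisely that $T_jx\ra Tx$ weakly in $H$ for each $x$, i.e. $T_j\ra T$ in the weak operator topology. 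Hence every net in $(B(H))_1$ has a weakly convergent subnet with limit in $(B(H))_1$, and Proposition \ref{prop:subnetcompact} yields the claim.

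The main obstacle is the extraction of a single subnet converging simultaneously at every pair $(x,y)$, which is where Tychonoff's theorem (compactness of an arbitrary product of compact spaces) is essential; everything else is routine passage to limits. One should be careful that the index set $H\times H$ is typically uncountable, so genuinely net-theoretic compactness rather than sequential compactness is needed, and that the weak operator topology really does coincide with the subspace topology inherited from $P$ --- this is immediate from the defining semi-norms $\rho_{x,y}(T)=|\lan Tx,y\ran|$.
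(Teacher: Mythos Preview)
Your proof is correct and follows essentially the same route as the paper: embed the unit ball into the Tychonoff product $\prod_{x,y} D_{x,y}$, use compactness of the product, and recover the limit operator from the limiting sesquilinear form via Theorem \ref{thm:sesquilinear}. The only cosmetic difference is that the paper phrases the final step as showing the image of the embedding is closed in the product, whereas you phrase it as showing the limit of a convergent subnet lies back in the image; these are equivalent.
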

\begin{proof}
For convenience, let $B$ denote the closed unit ball of $B(H)$ equipped with the weak operator topology. For given $x,y\in H$, let $D_{x,y}$ be the closed disk of radius $\|x\|\|y\|$ in $\c$. Define
\begin{eqnarray*}
\th:B&\ra& \prod_{x,y\in H} D_{x,y}\\
\th(T)&:=& (\lan Tx,y\ran )_{x,y}.
\end{eqnarray*}
where $\prod_{x,y\in H} D_{x,y}$ is considered with the product topology, and so it is compact by the Tychonoff theorem, see Theorem 1.1 in Chapter 5 of \cite{munkeres}. Let $X$ denote the image of $B$ under $\th$.

We first prove that $\th$ is a homeomorphism from $B$ onto $X$. It is easy to see that $\th$ is one-to-one. Therefore $\th:B\ra X$ is a bijective map. Let $\Sigma$ be the family of the subsets of $B$ of the following form:
\[
U_{T,x_0, y_0, \ep}:=\{S\in B\,; |\lan (T-S)x_0,y_0\ran|<\ep, \},
\]
for some $T\in B$, $x_0, y_0\in H$ and $\ep>0$. Similarly, let $\Delta$ be the family of subsets of $X$ of the following form:
\[
O_{T,x_0,y_0,\ep}:=\{(z_{x,y} )\in X\,; \exists S\in B; z_{x_0, y_0}=\lan Sx_0, y_0\ran,\, |\lan (T-S)x_0,y_0\ran|<\ep \},
\]
for some $T\in B$, $x_0, y_0\in H$ and $\ep>0$. One easily checks that $\th$ is a bijective correspondence between $\Sigma$ and $\Delta$.  Since $\Sigma$ generated the weak operator topology in $B$  and $\Delta$ generates the product topology in $X$, $\th$ and $\th\inv$ are both continuous. Hence $\th$ is a homeomorphism.

Now, it is enough to show that $X$ is closed in  $\prod_{x,y\in H} D_{x,y}$. Let $f=(f_{x,y})$ be a limit point in $X$. Define $F:H\times H\ra \c$ by $x,y\mapsto f_{x,y}$. We want to show that $F$ is a sesquilinear map. Assume $x_1,y_1, x_2,y_2 \in H$, and $\la\in \c$ are given. For every $\ep_0>0$, set $\ep:=\min\{ \ep_0, \ep_0/|\la|\}$. Then for every $T$ in the open neighborhood $U$ around $f$ defined by
\[
U:=\left(\bigcap_{i,j=1}^2 U_{f,x_i,y_j,\ep}\right) \bigcap \left(\bigcap_{j=1}^2 U_{f, \la x_1+x_2 , y_j,\ep}\right) \bigcap \left(\bigcap_{i=1}^2 U_{f, x_i,\la y_1+y_2,\ep}\right),
\]
we have
\[
|F(x_i, y_j) - \lan Tx_i, y_j\ran |<\ep_0, \qquad \forall i,j=1,2,
\]
\[
|\la F(x_i, y_j) -\la \lan Tx_i, y_j\ran |<\ep_0, \qquad \forall i,j=1,2,
\]
\[
| F(\la x_1+x_2, y_j) - \lan T(\la x_1+x_2), y_j\ran |<\ep_0, \qquad \forall j=1,2,
\]
\[
| F(x_i, \la y_1+y_2) - \lan Tx_i, \la y_1+y_2 \ran |<\ep_0, \qquad \forall i=1,2.
\]
Hence we have
\[
|F(\la x_1+x_2, y_1)-\la F(x_1,y_1) -F(x_2,y_1)| <3\ep_0,
\]
\[
|F(x_1, \la y_1+y_2) -\overline{\la} F(x_1, y_2)- F(x_1, y_2)|<3\ep_0.
\]
Since $\ep_0$ is arbitrary, these show that $F$ is sesquilinear. On the other hand, since for every $x,y\in H$, $F(x,y)\in D_{x,y}$, we have
\[
|F(x,y)|\leq \|x\|\|y\|, \qquad \forall x,y\in H,
\]
and so $F$ is bounded, in fact $\|F\|\leq 1$. Therefore by Theorem \ref{thm:sesquilinear}, there exists $T_f\in B(H)$ such that $F(x,y)=\lan T_f x,y\ran$ for all $x,y\in H$ and $\|T_f\|\leq 1$. Hence $f=\th(T_f)\in X$. This shows that $X$ is closed, and consequently compact.
\end{proof}
\begin{exercise}
Let $T\in B(H)$. Show that the sesquilinear form defined by $(x,y)\mapsto \lan Tx, y\ran$ is positive if and only if $T$ is positive.
\end{exercise}
\begin{lemma}
\label{lem:positivenumerical}
Let $T\in B(H)$ be positive and set
\[
M:=\sup \{ \lan T x,x\ran; \|x\|=1\}.
\]
Then for every $x\in H$, we have
\begin{equation}
\label{eqn:positivenumerical}
\|T x\|^2\leq M\lan Tx, x\ran \leq M^2 \|x\|^2.
\end{equation}
\end{lemma}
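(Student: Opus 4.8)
The plan is to derive the first inequality from the Cauchy--Schwartz inequality applied to the pre-inner product induced by $T$, and to obtain the second inequality by a simple homogeneity argument.

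First I would record two consequences of positivity. Since $T\geq 0$, Example \ref{exa:positiveityHilbert}(i) gives $T=T\s$ and $\langle Tx,x\rangle\geq 0$ for all $x\in H$; in particular $M\geq 0$. Moreover, replacing $x$ by $x/\|x\|$ for $x\neq 0$ in the definition of $M$ and using sesquilinearity shows that
\[
\langle Tz,z\rangle\leq M\|z\|^2,\qquad \forall z\in H,
\]
the case $z=0$ being trivial. Applying this with $z=x$ and multiplying by $M\geq 0$ already yields the second inequality $M\langle Tx,x\rangle\leq M^2\|x\|^2$.

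For the first inequality, I would observe that the form $[x,y]:=\langle Tx,y\rangle$ is a pre-inner product on $H$: it is linear in the first variable, it is conjugate-symmetric because
\[
[x,y]=\langle Tx,y\rangle=\langle x,Ty\rangle=\overline{\langle Ty,x\rangle}=\overline{[y,x]}
\]
(here using $T=T\s$), and it is positive since $[x,x]=\langle Tx,x\rangle\geq 0$. Hence Proposition \ref{prop:pre-inner}(i), the Cauchy--Schwartz inequality for pre-inner products, applies and gives
\[
|\langle Tx,y\rangle|^2\leq \langle Tx,x\rangle\,\langle Ty,y\rangle,\qquad \forall x,y\in H.
\]

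The key step is to substitute $y=Tx$. The left-hand side then becomes $\|Tx\|^4$, while the right-hand side is $\langle Tx,x\rangle\,\langle T(Tx),Tx\rangle$. Bounding the last factor by the displayed estimate $\langle Tz,z\rangle\leq M\|z\|^2$ with $z=Tx$, I obtain
\[
\|Tx\|^4\leq M\|Tx\|^2\,\langle Tx,x\rangle.
\]
If $Tx\neq 0$, dividing by $\|Tx\|^2>0$ gives $\|Tx\|^2\leq M\langle Tx,x\rangle$; if $Tx=0$ the inequality is trivial since both sides vanish. I do not anticipate a genuine obstacle here: the only points requiring care are the verification that $\langle T\cdot,\cdot\rangle$ is conjugate-symmetric, which rests on the self-adjointness of $T$, and the separate treatment of the degenerate case $Tx=0$.
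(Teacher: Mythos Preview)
Your proof is correct and follows essentially the same approach as the paper: apply the Cauchy--Schwartz inequality to the positive sesquilinear form $(x,y)\mapsto\lan Tx,y\ran$, substitute $y=Tx$, and divide through by $\|Tx\|^2$. Your presentation is slightly more careful (you separately handle the homogeneity step and the degenerate case $Tx=0$), but the argument is the same.
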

\begin{proof}
The sesquilinear form  defined by $(x,y):=\lan Tx, y \ran$ is positive. Therefore by Proposition \ref{prop:csequality}, the Cauchy--Schwartz inequality holds for $(-,-)$. Hence for every $x,y\in H$, we have
\begin{eqnarray*}
|\lan T x,y\ran|^2&=&|(x,y)|^2\leq (x,x) (y,y)\\
&=&\lan T x,x\ran\lan T y,y\ran\leq \lan T x,x\ran M\|y\|^2\\
&\leq& M^2\|x\|^2\|y\|^2.
\end{eqnarray*}
Setting $y:=Tx$ and dividing through by $\|Tx\|^2$, we obtain (\ref{eqn:positivenumerical}) for every $x\in H$.
\end{proof}
\begin{proposition}
\label{prop:suppositiveopt}
Let $(T_i)$ be an increasing and bounded net of positive operators in $B(H)$. Then there exists a positive operator $T\in B(H)$ such that $T_i\ra T$ strongly,
\[
\|T\|=\sup_i \|T_i\|,
\]
and $T$ is the least upper bound of $(T_i)$ in the directed set $(B(H)_+,\leq)$.
\end{proposition}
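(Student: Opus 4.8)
The plan is to construct the limit operator $T$ from its matrix coefficients and then upgrade the resulting coefficientwise convergence to strong convergence. First I would fix $x\in H$ and observe that, since the net is increasing, the scalar net $(\lan T_i x, x\ran)_i$ is increasing: for $i\leq j$ one has $\lan T_j x, x\ran - \lan T_i x, x\ran = \lan (T_j-T_i)x, x\ran \geq 0$ by Example \ref{exa:positiveityHilbert}(i), because $T_j-T_i\geq 0$. It is bounded above by $M\|x\nt$, where $M:=\sup_i\|T_i\|<\infty$, since $\lan T_i x, x\ran\leq \|T_i\|\|x\nt$. Hence $(\lan T_i x, x\ran)_i$ converges in $\r$ for every $x$.

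Next I would promote this to convergence of all matrix coefficients. Applying a polarization identity to the sesquilinear form $(x,y)\mapsto \lan T_i x, y\ran$ (compare Proposition \ref{prop:pre-inner}(iii)) expresses $4\lan T_i x, y\ran$ as a fixed finite combination of the diagonal terms $\lan T_i(x+i^k y), x+i^k y\ran$, each of which converges; therefore $B(x,y):=\lim_i \lan T_i x, y\ran$ exists for all $x,y\in H$. The form $B$ is clearly sesquilinear, and it is bounded with $\|B\|\leq M$ since $|\lan T_i x, y\ran|\leq M\|x\|\|y\|$ for every $i$. By Theorem \ref{thm:sesquilinear} there is then a unique $T\in B(H)$ with $\lan Tx, y\ran = B(x,y)$ and $\|T\|\leq M$. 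That $T$ is positive follows because $\lan Tx, x\ran=\lim_i\lan T_i x, x\ran\geq 0$ for all $x$, again by Example \ref{exa:positiveityHilbert}(i); and letting $j\geq i$ tend to infinity in $\lan T_i x, x\ran\leq \lan T_j x, x\ran$ shows $T_i\leq T$ for every $i$.

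The hard part — the actual content of the statement — will be passing from convergence of the coefficients to strong (norm) convergence of $T_i x$, and this is exactly where Lemma \ref{lem:positivenumerical} is essential. I would set $S_i:=T-T_i$, which satisfies $0\leq S_i\leq T$, so that $\|S_i\|\leq \|T\|\leq M$ by Lemma \ref{lem:ineqnorm}. Writing $M_i:=\sup\{\lan S_i x, x\ran;\|x\|=1\}$, the Cauchy--Schwartz inequality gives $M_i\leq \|S_i\|\leq M$, and since $S_i$ is positive Lemma \ref{lem:positivenumerical} yields $\|S_i x\nt\leq M_i\lan S_i x, x\ran\leq M\lan S_i x, x\ran$ for every $x$. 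As $\lan S_i x, x\ran=\lan Tx, x\ran-\lan T_i x, x\ran\to 0$ by the construction of $T$, we conclude $\|(T-T_i)x\|\to 0$, that is, $T_i\to T$ strongly.

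Finally I would collect the remaining assertions. For the norm, $\|T\|\leq M$ was already established, while $T_i\leq T$ together with Lemma \ref{lem:ineqnorm} gives $\|T_i\|\leq \|T\|$, whence $\sup_i\|T_i\|\leq \|T\|$; combining these yields $\|T\|=\sup_i\|T_i\|$. For the least-upper-bound property, $T$ is an upper bound since $T_i\leq T$; and if $S$ is any upper bound of $(T_i)$, then $\lan T_i x, x\ran\leq \lan Sx, x\ran$ for all $i$, so passing to the limit gives $\lan Tx, x\ran\leq \lan Sx, x\ran$, i.e. $T\leq S$. Thus $T$ is the least upper bound of $(T_i)$ in $(B(H)_+,\leq)$. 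The only steps requiring genuine care are the uniform bound $\|S_i\|\leq M$ and the application of Lemma \ref{lem:positivenumerical}; everything else is routine once $T$ has been produced.
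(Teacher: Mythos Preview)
Your proof is correct and follows essentially the same route as the paper: build $T$ from the limiting sesquilinear form via Theorem \ref{thm:sesquilinear}, then invoke Lemma \ref{lem:positivenumerical} on $S_i=T-T_i$ to upgrade weak to strong convergence. Your use of the \emph{first} inequality in that lemma, $\|S_i x\nt\leq M_i\lan S_i x,x\ran\leq M\lan S_i x,x\ran$, is in fact the right way to finish---the paper's version appeals instead to $\|S_i x\nt\leq M_i^2\|x\nt$ together with ``$\lim_i M_i=0$,'' which is not generally true (e.g.\ $T_i=P_n$ the coordinate projections on $\ell^2$ gives $M_n=1$ for all $n$), so your argument is actually the more careful one.
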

For the definition of the least upper bound of a subset in a directed set see Definition \ref{def:leastgreatest}.
\begin{proof}
Using CS inequality, it is clear that the net $(\lan T_ix, x\ran)$ is increasing and bounded in $\r$ for every $x\in H$, so it has a limit in $\r$. Using the polarization identity, one observes that the limit $\lim_i \lan T_i x,y\ran$ exists as well for every $x,y\in H$. Therefore we can define a map as follows:
\begin{eqnarray*}
F:H\times H&\ra &\c,\\
(x,y)&\mapsto& \lim_i \lan T_i x,y\ran.
\end{eqnarray*}
It is straightforward to check that $F$ is a sesquilinear form on $H$. Moreover, for every $x,y\in H$, we have
\[
|F(x,y)|=|\lim_i \lan T_i x,y\ran|\leq \lim_i \|T_i\|\|x\|\|y\|.
\]
Hence $F$ is bounded too, and consequently by Theorem \ref{thm:sesquilinear}, there exists an operator $T\in B(H)$ such that $\lim_i \lan T_i x,y\ran=F(x,y)=\lan Tx,y\ran$ for all $x,y\in H$ and $\|T\|\leq \lim_i \|T_i\|$. Since $\lan T_i x,x\ran\leq \lan Tx , x \ran$ for all $i$ and $x\in H$, $T$ is positive and $T_i\leq T$ for all $i$. This also shows that $\|T\|=\lim_i \|T_i\|$.

For every $i$, set $M_i:=\sup \{ \lan (T-T_i) x,x\ran; \|x\|=1\}$. Then by Lemma \ref{lem:positivenumerical}, we have
\[
\|(T-T_i)x\|^2\leq M_i^2 \|x\|^2, \qquad  \forall  x\in H.
\]
Since $\lim_i M_i=0$, the above inequality implies that $T_i\ra T$ strongly.
\end{proof}
\begin{corollary}
\label{cor:strongdecreasing}
Every decreasing net $(T_i)$ of positive operators converges strongly to a positive operator.
\end{corollary}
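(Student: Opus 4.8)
The plan is to reduce the decreasing case to the increasing case already settled in Proposition \ref{prop:suppositiveopt} by reflecting the net through a fixed term. First I would fix an arbitrary index $i_0$ in the directed set indexing $(T_i)$ and pass to the cofinal tail $\{i : i \geq i_0\}$. Since the convergence of a net is determined by its tails and the strong operator topology is Hausdorff, it suffices to establish strong convergence of this tail to a positive limit; the choice of $i_0$ will turn out to be immaterial.

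Next I would set $S_i := T_{i_0} - T_i$ for $i \geq i_0$. Because the original net is decreasing, $T_i \leq T_{i_0}$ for all such $i$, so each $S_i$ is positive; and for $i \leq j$ with $i,j \geq i_0$ we have $T_i \geq T_j$, hence $S_i \leq S_j$, so $(S_i)$ is increasing. Moreover $0 \leq S_i \leq T_{i_0}$ forces $\|S_i\| \leq \|T_{i_0}\|$ by Lemma \ref{lem:ineqnorm}, so the net $(S_i)$ is bounded. Thus $(S_i)_{i \geq i_0}$ is an increasing bounded net of positive operators, exactly the hypothesis of Proposition \ref{prop:suppositiveopt}.

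Applying that proposition to $(S_i)$ produces a positive operator $S$ with $S_i \to S$ strongly, which is moreover the least upper bound of $(S_i)$. I would then define $T := T_{i_0} - S$. Strong convergence $S_i \to S$ yields $T_i = T_{i_0} - S_i \to T_{i_0} - S = T$ strongly, since subtracting the fixed operator $T_{i_0}$ is strongly continuous (addition is jointly continuous in the strong topology). Positivity of $T$ then follows because $S$, being the least upper bound of the $S_i$ with each $S_i \leq T_{i_0}$, satisfies $S \leq T_{i_0}$, whence $T = T_{i_0} - S \geq 0$.

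There is no genuinely hard step here; the one point deserving care is the passage to a tail. I would make explicit that cofinality of $\{i : i \geq i_0\}$ upgrades the strong convergence of the tail to strong convergence of the entire net, and that the resulting limit $T$ is independent of $i_0$ by uniqueness of strong limits.
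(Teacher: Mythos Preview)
Your proposal is correct and follows essentially the same approach as the paper: fix an index $i_0$, reflect the tail via $S_i := T_{i_0} - T_i$ to obtain a bounded increasing net of positive operators, apply Proposition~\ref{prop:suppositiveopt}, and translate back. If anything, you supply more detail than the paper does---in particular, your justification that $T = T_{i_0} - S \geq 0$ via the least-upper-bound property of $S$ and your remarks on cofinality make explicit what the paper leaves to the reader.
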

\begin{proof}
Pick an $i_0$ from the index set and define $S_i:=T_{i_0} -T_i$ for $i\geq i_0$. The net $(S_i)_{i\geq i_0}$ is a bounded increasing net of positive operators. Moreover, $T_0$ is an upper bound for this net. Hence by the above proposition, it converges strongly to some positive operator $S$. One checks that $T_{i_0}-S$ is a positive operator and $T_i\ra T_{i_0}-S$ strongly.
\end{proof}
The last part of the proof of Proposition \ref{prop:suppositiveopt} worths to be considered as well:
\begin{proposition}
\label{prop:weakincreasing} Let $(T_i)$ be an increasing (resp. decreasing) net of self adjoint operators. If $(T_i)$ is weakly convergent to some operator $T\in B(H)$, then $T$ is the least upper bound (resp. greatest lower bound) of $(T_i)$ and $T_i\ra T$ strongly.
\end{proposition}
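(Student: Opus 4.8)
The plan is to prove the increasing case in full and then deduce the decreasing case by passing to $(-T_i)$. First I would record that the limit $T$ is itself self adjoint: since the involution is weakly continuous, see Exercise \ref{exe:involutionweakcontinuous}, the net $(T_i\s)=(T_i)$ also converges weakly to $T\s$, and the weak operator topology is Hausdorff, so $T=T\s$ by uniqueness of weak limits. Next I would verify that $T$ is an upper bound of $(T_i)$. For a fixed $x\in H$ and a fixed index $i$, the net $(\lan T_j x,x\ran)_{j\geq i}$ is increasing, because $(T_j)$ is increasing, and it converges to $\lan Tx,x\ran$ by weak convergence; hence $\lan Tx,x\ran\geq \lan T_i x,x\ran$, that is $\lan (T-T_i)x,x\ran\geq 0$ for all $x$, so $T_i\leq T$. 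The least upper bound property is then immediate: if $S=S\s$ satisfies $T_i\leq S$ for all $i$, then $\lan T_i x,x\ran\leq \lan Sx,x\ran$ for every $x$, and letting $i$ run to infinity gives $\lan Tx,x\ran\leq \lan Sx,x\ran$, i.e. $T\leq S$.

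The heart of the argument is the strong convergence, and here I would imitate the last part of the proof of Proposition \ref{prop:suppositiveopt}. Set $S_i:=T-T_i$, which is positive by the previous step, and observe that $(S_i)$ is a decreasing net of positive operators: for $i\leq j$ we have $S_i-S_j=T_j-T_i\geq 0$. Consequently $M_i:=\sup\{\lan S_i y,y\ran; \|y\|=1\}$ is decreasing in $i$, so it is bounded above by $M_{i_0}\leq \|S_{i_0}\|<\infty$ for any fixed index $i_0$. Applying Lemma \ref{lem:positivenumerical} to the positive operator $S_i$ yields
\[
\|S_i x\|^2\leq M_i\lan S_i x,x\ran\leq M_{i_0}\lan S_i x,x\ran,\qquad i\geq i_0,
\]
for every $x\in H$.

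The main obstacle to watch for is precisely the temptation to drive $M_i$ (which equals $\|T-T_i\|$) to zero; this would force norm convergence, which is false in general, as the example of the projections onto initial coordinate subspaces of $\ell^2$ shows. Instead the decisive quantity is $\lan S_i x,x\ran=\lan Tx,x\ran-\lan T_i x,x\ran$, which tends to $0$ for each fixed $x$ by weak convergence, while the prefactor is merely kept bounded by monotonicity. Hence $\|S_i x\|\ra 0$, i.e. $T_i x\ra Tx$ in norm for every $x$, which is exactly strong convergence.

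Finally, for the decreasing case I would apply the increasing result to the net $(-T_i)$, which is increasing and converges weakly to $-T$; then $-T$ is the least upper bound of $(-T_i)$ and $-T_i\ra -T$ strongly, which translates to $T$ being the greatest lower bound of $(T_i)$ together with $T_i\ra T$ strongly.
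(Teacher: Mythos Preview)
Your proposal is correct and follows exactly the route the paper indicates: the paper's own ``proof'' is just the sentence ``One notes that $T$ is self adjoint. Also, when $(T_i)$ is decreasing, one should consider $(-T_i)$ instead,'' together with the remark preceding the statement that one should reuse the last part of the proof of Proposition~\ref{prop:suppositiveopt}. You do precisely this, and you handle the decreasing case the same way.

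In fact your treatment of the strong-convergence step is more careful than the paper's. In the proof of Proposition~\ref{prop:suppositiveopt} the paper applies Lemma~\ref{lem:positivenumerical} in the form $\|(T-T_i)x\|^2\leq M_i^2\|x\|^2$ and asserts $\lim_i M_i=0$; since $M_i=\|T-T_i\|$ for the positive operator $T-T_i$, this would be norm convergence, which is false already for the increasing projections onto the first $n$ coordinates of $\ell^2$. You correctly avoid this by using the \emph{first} inequality of Lemma~\ref{lem:positivenumerical}, namely $\|S_i x\|^2\leq M_i\lan S_i x,x\ran$, bounding $M_i$ by a fixed $M_{i_0}$ via monotonicity, and letting the pointwise quantity $\lan S_i x,x\ran\to 0$ do the work. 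That is the right repair, and your explicit warning about the temptation to drive $M_i$ to zero is exactly on point.
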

\begin{proof}
One notes that $T$ is self adjoint. Also, when $(T_i)$ is decreasing, one should consider $(-T_i)$ instead.
\end{proof}
\begin{proposition}
\label{prop:weakstrongnet1}
Let $(T_i)$ be a net in $B(H)$ such that $T_i\s T_i\ra 0$ weakly. Then
\begin{itemize}
\item [(i)] $T_i\ra 0$ strongly, and
\item [(ii)] if $(T_i)$ is bounded, then $T_i\s T_i\ra 0$ strongly.
\end{itemize}
\end{proposition}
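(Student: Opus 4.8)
The statement to prove is Proposition \ref{prop:weakstrongnet1}: if $(T_i)$ is a net in $B(H)$ with $T_i\s T_i\ra 0$ weakly, then (i) $T_i\ra 0$ strongly, and if $(T_i)$ is bounded, then (ii) $T_i\s T_i\ra 0$ strongly.

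Let me plan the proof.

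For part (i), I want to show $T_i x \ra 0$ in norm for every $x\in H$, that is $\|T_i x\|\ra 0$. The natural idea is to use the \cs-identity at the level of inner products: $\|T_i x\|^2 = \langle T_i x, T_i x\rangle = \langle T_i\s T_i x, x\rangle$. By hypothesis $T_i\s T_i \ra 0$ weakly, which by definition means $(T_i\s T_i)y \ra 0$ in the weak topology of $H$ for every $y\in H$; equivalently, $\langle (T_i\s T_i)y, z\rangle \ra 0$ for all $y,z\in H$. Applying this with $y=z=x$ gives $\langle T_i\s T_i x, x\rangle \ra 0$, hence $\|T_i x\|^2 \ra 0$. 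This is exactly strong convergence of $(T_i)$ to $0$.

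For part (ii), I want $\|T_i\s T_i x\|\ra 0$ for every $x\in H$, assuming a uniform bound $\|T_i\|\leq M$. Write $\|T_i\s T_i x\|^2 = \langle T_i\s T_i x, T_i\s T_i x\rangle = \langle T_i(T_i\s T_i x), T_i x\rangle \le \|T_i(T_i\s T_i x)\|\,\|T_i x\|$ by Cauchy--Schwarz. Then bound $\|T_i(T_i\s T_i x)\| \le \|T_i\|\,\|T_i\s T_i x\| \le M\|T_i\s T_i\|\,\|x\| \le M\cdot M^2 \|x\|$, a uniform constant times $\|x\|$, while the second factor $\|T_i x\|\ra 0$ by part (i). So the product tends to $0$.

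The proof presents no serious obstacle; the only care needed is to correctly unwind the definition of weak convergence of the operator net and to invoke boundedness in part (ii).

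First I would record the computation proving (i), then use (i) together with Cauchy--Schwarz and the uniform bound to prove (ii).

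\begin{proof}
\begin{itemize}
\item [(i)] Fix $x\in H$. By the definition of the weak operator topology, the hypothesis $T_i\s T_i\ra 0$ weakly means that $\lan (T_i\s T_i) y, z\ran \ra 0$ for all $y,z\in H$. Taking $y=z=x$ and using the defining property of the adjoint, see Corollary \ref{cor:adjointoperator}, we obtain
\[
\|T_i x\|^2 = \lan T_i x, T_i x\ran = \lan T_i\s T_i x, x\ran \ra 0.
\]
Since this holds for every $x\in H$, we conclude that $T_i\ra 0$ strongly.
\item [(ii)] Assume now that $(T_i)$ is bounded, say $\|T_i\|\leq M$ for all $i$. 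Fix $x\in H$. Using the \cs-identity style computation, the defining property of the adjoint, and the Cauchy--Schwartz inequality, see Proposition \ref{prop:pre-inner}(i), we have
\begin{eqnarray*}
\|T_i\s T_i x\|^2 &=& \lan T_i\s T_i x, T_i\s T_i x\ran = \lan T_i (T_i\s T_i x), T_i x\ran\\
&\leq& \|T_i(T_i\s T_i x)\| \, \|T_i x\|\\
&\leq& \|T_i\| \, \|T_i\s T_i x\| \, \|T_i x\|\\
&\leq& M \cdot M^2 \|x\| \cdot \|T_i x\|,
\end{eqnarray*}
where in the last step we used $\|T_i\s T_i\|\leq \|T_i\s\|\,\|T_i\|\leq M^2$. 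By Part (i), $\|T_i x\|\ra 0$, and the remaining factors are bounded by a constant independent of $i$. Therefore $\|T_i\s T_i x\|\ra 0$ for every $x\in H$, which means $T_i\s T_i\ra 0$ strongly.
\end{itemize}
\end{proof}
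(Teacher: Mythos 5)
Your proof is correct and takes essentially the same approach as the paper: part (i) is the identical computation $\|T_i x\|^2 = \lan T_i\s T_i x, x\ran \ra 0$, and part (ii) rests on the same idea of combining the uniform bound with part (i). The only difference is that the paper's part (ii) is more direct, avoiding the squaring and Cauchy--Schwarz step via the one-line estimate $\|T_i\s T_i x\| \leq \|T_i\s\|\,\|T_i x\| \leq M\|T_i x\| \ra 0$.
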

\begin{proof}
\begin{itemize}
\item [(i)] For every $x\in H$, we have $\|T_i x\|^2= \lan  T_i x, T_i x \ran= \lan T_i\s T_i x, x \ran\ra 0$.
\item [(ii)] Let $M>0$ be a positive number such that $\|T_i\|\leq M$ for all $i$. Then for every $x\in H$, we have $\|T_i\s T_i x\|\leq M\|T_i x\|\ra 0$.
\end{itemize}
\end{proof}
\begin{corollary}
\label{cor:weakstrongzero}
Let $(T_i)$ be a bounded net of positive operators in $B(H)$ such that $T_i\ra 0$ weakly. Then $T_i\ra 0$ strongly.
\end{corollary}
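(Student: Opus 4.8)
The plan is to reduce the statement to Proposition \ref{prop:weakstrongnet1}(ii) by replacing each $T_i$ with its positive square root. Since each $T_i$ is positive, Proposition \ref{prop:positiveroots} (equivalently Proposition \ref{prop:nroots}) provides a unique positive square root $S_i := T_i^{1/2}$, and because $S_i$ is self adjoint we have $S_i\s S_i = S_i^2 = T_i$. Thus the hypothesis ``$T_i\ra 0$ weakly'' is literally the statement ``$S_i\s S_i \ra 0$ weakly,'' which is exactly the input required by the previous proposition.

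First I would verify that the net $(S_i)$ is bounded, since part (ii) of Proposition \ref{prop:weakstrongnet1} requires it. Fix $M>0$ with $\|T_i\|\leq M$ for all $i$. Because $T_i$ is positive, Proposition \ref{prop:basicpositive}(ii) gives $\|T_i\|=\max\{\la; \la\in\si(T_i)\}$, and the continuous spectral mapping theorem, Proposition \ref{prop:specmapcont}, applied to the function $t\mapsto t^{1/2}$ yields $\si(S_i)=\si(T_i)^{1/2}$. Since $S_i$ is itself positive, a second application of Proposition \ref{prop:basicpositive}(ii) gives $\|S_i\|=\max\si(S_i)=(\max\si(T_i))^{1/2}=\|T_i\|^{1/2}\leq M^{1/2}$. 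Hence $(S_i)$ is a bounded net.

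With these observations in hand the conclusion follows immediately: Proposition \ref{prop:weakstrongnet1}(ii) applied to the bounded net $(S_i)$ shows that $S_i\s S_i\ra 0$ strongly, and since $S_i\s S_i = T_i$ this is precisely $T_i\ra 0$ strongly. If one prefers to avoid invoking part (ii), the same proposition's part (i) gives $S_i\ra 0$ strongly, and then for each $x\in H$ the estimate $\|T_i x\| = \|S_i S_i x\|\leq \|S_i\|\,\|S_i x\|\leq M^{1/2}\|S_i x\|\ra 0$ finishes the argument directly.

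There is essentially no serious obstacle here beyond identifying the correct reduction; the only point needing a moment of care is the boundedness of the square-root net, which is why I would record the norm identity $\|T_i^{1/2}\|=\|T_i\|^{1/2}$ explicitly before appealing to Proposition \ref{prop:weakstrongnet1}. Everything else is a mechanical verification that passing to square roots converts the given weak hypothesis into the hypothesis of the earlier result.
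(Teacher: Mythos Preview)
Your proof is correct and follows exactly the same approach as the paper: set $S_i:=T_i^{1/2}$ and apply Proposition \ref{prop:weakstrongnet1}(ii). The paper's proof is a single line and leaves the boundedness of $(S_i)$ implicit, whereas you spell out $\|T_i^{1/2}\|=\|T_i\|^{1/2}$ via the spectral mapping theorem, which is a reasonable detail to include.
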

\begin{proof}
Set $S_i:=T_i^{1/2}$ and apply Part (ii) of the above proposition.
\end{proof}
\begin{proposition}
\label{prop:unitaryweakstrong}
The weak, strong, and strong-$\s$ topologies coincide on the group $U(H)$ of unitary operators on $H$ and make $U(H)$ into a topological group.
\end{proposition}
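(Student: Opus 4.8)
The plan is to exploit the general comparison of these topologies and then show that they all produce the same convergent nets when restricted to $U(H)$; since each is defined by a family of semi-norms, agreement of convergent nets forces the three subspace topologies to coincide. Recall from Proposition \ref{prop:strongweaktop} and the remark following the definition of the strong-$\s$ topology that, on all of $B(H)$, the weak topology is weaker than the strong topology, which in turn is weaker than the strong-$\s$ topology. Thus it suffices to prove the reverse implications for nets inside $U(H)$, namely that weak convergence forces strong convergence and that strong convergence forces strong-$\s$ convergence.

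First I would treat weak $\Rightarrow$ strong. Let $(U_i)$ be a net in $U(H)$ with $U_i\ra U$ weakly, where $U\in U(H)$. For a fixed $x\in H$, I would expand
\[
\|U_ix-Ux\|^2=\|U_ix\|^2-\lan U_ix,Ux\ran-\lan Ux,U_ix\ran+\|Ux\|^2.
\]
Because unitaries are isometries, $\|U_ix\|=\|Ux\|=\|x\|$, so the right-hand side reduces to $2\|x\|^2-\lan U_ix,Ux\ran-\overline{\lan U_ix,Ux\ran}$. Weak convergence applied with the test vector $y=Ux$ gives $\lan U_ix,Ux\ran\ra\lan Ux,Ux\ran=\|x\|^2$, whence the whole expression tends to $0$. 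This is the crux of the argument: norm preservation by unitaries is exactly what converts the weak control $\lan U_ix,Ux\ran\ra\|x\|^2$ into the norm convergence $U_ix\ra Ux$.

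Next I would handle strong $\Rightarrow$ strong-$\s$, which amounts to controlling the adjoints. Here I use that for unitaries $U_i\s=U_i\inv$ and $U\s=U\inv$. Setting $y:=U\inv x$ and applying the isometry $U_i$, I would compute
\[
\|U_i\s x-U\s x\|=\|U_i(U_i\inv x-U\inv x)\|=\|x-U_iU\inv x\|=\|Uy-U_iy\|,
\]
which tends to $0$ by strong convergence of $(U_i)$ evaluated at the fixed vector $y$. Hence $U_i\s\ra U\s$ strongly as well, so the strong-$\s$ semi-norms $x\mapsto\|U_ix-Ux\|+\|U_i\s x-U\s x\|$ tend to $0$. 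Combining the two implications, a net in $U(H)$ converges in one of the three topologies if and only if it converges in the others, with the same limit, so the three subspace topologies agree.

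Finally, for the topological group structure I would verify joint continuity of multiplication and continuity of inversion in, say, the strong topology. Inversion is already handled: the computation above shows precisely that $U_i\ra U$ strongly implies $U_i\inv=U_i\s\ra U\s=U\inv$ strongly. For multiplication, since $U(H)$ is norm-bounded (every unitary has norm $1$), I can invoke the joint strong continuity of multiplication on bounded sets from Remark \ref{rem:continofoperations}(iii); alternatively I would estimate directly, for $(U_i,V_i)\ra(U,V)$ and $x\in H$,
\[
\|U_iV_ix-UVx\|\leq\|U_i(V_ix-Vx)\|+\|(U_i-U)(Vx)\|=\|V_ix-Vx\|+\|(U_i-U)(Vx)\|,
\]
both terms tending to $0$ by strong convergence of $(V_i)$ and $(U_i)$ respectively. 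The one subtle point worth flagging is the reduction from equality of topologies to equality of convergent nets, which is legitimate because all three topologies are locally convex, hence determined by their net convergence; the genuine analytic content is concentrated in the isometry identities of the second and third paragraphs.
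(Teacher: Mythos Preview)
Your proof is correct and follows essentially the same line as the paper: both exploit that unitaries are isometries to upgrade weak convergence to strong (the paper packages your expansion of $\|U_ix-Ux\|^2$ as a citation of Proposition~\ref{prop:weaknormconv}), and both use the general topology comparisons for the easy directions. Your treatment is in fact more complete: the paper handles strong $\Rightarrow$ strong-$\s$ by the phrase ``proved similarly'' (implicitly: involution is weakly continuous, adjoints of unitaries are unitary, rerun the argument), whereas you give the clean identity $\|U_i\s x-U\s x\|=\|Uy-U_iy\|$; and you actually verify the topological-group claim (continuity of multiplication via boundedness, continuity of inversion via the same identity), which the paper states but does not prove.
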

\begin{proof}
Assume $(T_i)$ be a net of unitary operators converging weakly to a unitary operator $T$. Then for every $x\in H$, $T_i x\ra T x$ in weak topology of $H$. By Proposition \ref{prop:weaknormconv}, $T_i x\ra Tx$ in norm if and only if $\|T_i x\|\ra \|Tx\|$. But the latter convergence is obvious due to the fact that $\|T_ix\|=\|x\|=\|Tx\|$ for all $i$. Therefore $T_i\ra T$ strongly. The strong-$\s$ convergence of $T_i\ra T$ is proved similarly. The converses of these implications follow from Proposition \ref{prop:strongweaktop}(i) and definition of strong-$\s$ operator topology.
\end{proof}

\begin{theorem}
\label{thm:weakstrongfunctional}
Let $\ff:B(H)\ra \c$ be a bounded linear functional. Then the following statements are equivalent:
\begin{itemize}
\item [(i)] $\ff(T)=\sum_{k=1}^n \lan Ty_k, z_k\ran$ for some $y_1,\cdots, y_n,z_1,\cdots, z_n\in H$.
\item [(ii)] $\ff$ is weakly continuous.
\item [(iii)] $\ff$ is strongly continuous.
\end{itemize}
\end{theorem}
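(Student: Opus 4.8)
The plan is to prove the cycle of implications (i) $\Rightarrow$ (ii) $\Rightarrow$ (iii) $\Rightarrow$ (i), of which the first two are immediate. For (i) $\Rightarrow$ (ii), I would observe that each summand $T\mapsto \lan Ty_k, z_k\ran$ satisfies $|\lan Ty_k, z_k\ran|=\rho_{y_k,z_k}(T)$, so it is continuous by the very definition of the weak operator topology; a finite sum of weakly continuous maps is weakly continuous. For (ii) $\Rightarrow$ (iii), I would invoke Proposition \ref{prop:strongweaktop}(i), which says the weak operator topology is coarser than the strong operator topology, so every weakly continuous functional is automatically strongly continuous.

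The substance of the theorem is (iii) $\Rightarrow$ (i). Here I would first use that the strong topology is the locally convex topology defined by the semi-norms $\rho_x(T)=\|Tx\|$, $x\in H$. Since $\ff$ is linear and strongly continuous, it is continuous at $0$, so there is a basic strong neighborhood of $0$ on which $|\ff|<1$. Scaling then yields finitely many vectors $x_1,\dots,x_n\in H$ and a constant $C>0$ with
\[
|\ff(T)|\leq C\max_{1\leq k\leq n}\|Tx_k\|, \qquad \forall T\in B(H).
\]

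Next I would transport the problem into the amplified Hilbert space $H^n$ (see Example \ref{exa:constructions}(i)). Consider the linear map $B(H)\ra H^n$, $T\mapsto (Tx_1,\dots,Tx_n)$, and let $V\sub H^n$ be its range. The displayed estimate shows that $(Tx_1,\dots,Tx_n)=(Sx_1,\dots,Sx_n)$ forces $\ff(T)=\ff(S)$, so the rule $\psi(Tx_1,\dots,Tx_n):=\ff(T)$ defines a linear functional on $V$. Moreover, since $\max_k\|Tx_k\|\leq \|(Tx_1,\dots,Tx_n)\|$ for the Hilbert norm of $H^n$, the same estimate gives $|\psi(v)|\leq C\|v\|$ for all $v\in V$; hence $\psi$ is bounded on $V$, extends to its closure, and then to all of $H^n$ by the Hahn-Banach theorem (Theorem \ref{thm:hahnbanach}).

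Finally I would apply the Riesz duality (Theorem \ref{thm:selfdual}) to the Hilbert space $H^n$, obtaining $z_1,\dots,z_n\in H$ with $\psi(h_1,\dots,h_n)=\sum_{k=1}^n \lan h_k, z_k\ran$. Evaluating at $v=(Tx_1,\dots,Tx_n)$ gives
\[
\ff(T)=\psi(Tx_1,\dots,Tx_n)=\sum_{k=1}^n \lan Tx_k, z_k\ran,
\]
which is exactly the form in (i) with $y_k=x_k$. The main obstacle is the passage from abstract strong continuity to the explicit semi-norm bound, together with the careful bookkeeping of well-definedness and boundedness of $\psi$ on $V$; once the amplification trick is in place, Riesz duality closes the argument cleanly.
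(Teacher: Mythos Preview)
Your proposal is correct and follows essentially the same route as the paper: the easy implications (i)$\Rightarrow$(ii)$\Rightarrow$(iii), then for (iii)$\Rightarrow$(i) the semi-norm bound from strong continuity at $0$, the amplification map $T\mapsto(Tx_1,\dots,Tx_n)\in H^n$, the induced bounded functional $\psi$ on its range, and Riesz duality on $H^n$ to produce the $z_k$. The only cosmetic difference is that the paper extends $\psi$ by continuity to the closure of the range (a Hilbert subspace) and applies Riesz there, whereas you invoke Hahn--Banach to extend to all of $H^n$; either variant works.
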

\begin{proof}
Implications (i)$\Rightarrow$(ii)$\Rightarrow$(iii) are clear. Assume $\ff$ is strongly continuous, then $\ff\inv (\{\la\in \c; |\la|<1\})$ is open in $B(H)$. Therefore there are $h_1,\cdots,h_n\in H$ and $\ep>0$ such that $\ff(U_{h_1,\cdots,h_n,\ep})\sub \{\la\in \c; |\la|<1\}$, where
\[
U_{h_1,\cdots,h_n,\ep}=\{T\in B(H); \|Th_k\|<\ep, \, \forall k=1,\cdots,n\}.
\]
Set $y_k:=\frac{h_k}{\ep}$ for all $k=1,\cdots,n$. Then for every $T\in B(H)$, if $\|Ty_k\|<1$ for all $k=1,\cdots,n$, then $|\ff(T)|<1$. This implies that if $T\in B(H)$ and $\|Ty_k\|\leq 1$ for all $k=1,\cdots,n$, then $|\ff(T)|\leq 1$. Thus
\[
|\ff(T)|\leq \max \{\|Ty_k\|, k=1,\cdots, n\}\leq \left(\sum_{k=1}^n \|Ty_k\|^2\right)^2, \quad \forall T\in B(H).
\]
Consider $H^n=\oplus_{k=1}^n H$ and define an operator $D:B(H)\ra B(H^n)$ by
\[
D(T)(x_1,\cdots,x_n):=(Tx_1,\cdots, Tx_n), \quad \forall T\in B(H), (x_1,\cdots,x_n)\in H^n.
\]
Set $y:=(y_1,\cdots, y_n) \in H^n$, $X:=\{ D(T)y; T\in B(H)\}$, and $K:=\overline{X}$. Then $\psi:X\ra \c$ defined by $\psi(D(T)y):=\ff(T)$ is a bounded linear functional, and therefore it extends to a bounded linear functional on $K$. Since $K$ is a Hilbert space, by Theorem \ref{thm:sesquilinear}, there exists $z=(z_1,\cdots,z_k)\in H^n$ such that $\psi(x)=\lan x,z\ran$ for all $x\in K$. For every $T\in B(H)$, put $x:=D(T)y=(Ty_1,\cdots, Ty_n)$. Then we obtain
\[
\ff(T)=\psi(D(T))=\lan (Ty_1,\cdots, Ty_n),(z_1,\cdots,z_k) \ran = \sum_{k=1}^n \lan Ty_k, z_k\ran.
\]
This proves (i).
\end{proof}

\begin{corollary}
\label{cor:weakstrongconvexclosed} Let $X$ be a convex set in $B(H)$. Then $X$ is strongly closed if and only if it is weakly closed.
\end{corollary}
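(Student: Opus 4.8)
The plan is to prove the corollary directly from the preceding Theorem \ref{thm:weakstrongfunctional}, which characterizes the weakly continuous and strongly continuous linear functionals on $B(H)$ as being exactly the same class (both coincide with functionals of the form $T\mapsto \sum_{k=1}^n \lan Ty_k, z_k\ran$). The key conceptual point is that a convex set is closed in a locally convex topology if and only if it is the intersection of closed half-spaces determined by the continuous linear functionals for that topology. Since the weak and strong operator topologies have the same supply of continuous functionals, they must produce the same closed convex sets.

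One direction is immediate and requires no convexity: by Proposition \ref{prop:strongweaktop}(i) the weak topology is weaker than the strong topology, so every weakly closed set is strongly closed. Hence the entire content lies in showing that a strongly closed convex set $X$ is weakly closed. First I would dispose of the trivial case $X=B(H)$, so assume $X$ is a proper strongly closed convex subset and let $T_0 \notin X$. The goal is to produce a weakly open neighborhood of $T_0$ disjoint from $X$, which shows the complement of $X$ is weakly open.

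The main tool is a separation theorem for locally convex spaces. Equipping $B(H)$ with the strong operator topology makes it a locally convex topological vector space, and $X$ is a closed convex subset not containing $T_0$. I would apply Proposition \ref{prop:locallyconvexfunctional} (the Hahn--Banach separation statement recorded at the start of the paper) to this strong topology: there exists a strongly continuous linear functional $\rho$ and a real number $b$ with $\mathrm{Re}\,\rho(T_0) > b$ and $\mathrm{Re}\,\rho(T) \leq b$ for all $T \in X$. The crucial step is now to invoke Theorem \ref{thm:weakstrongfunctional}: because $\rho$ is strongly continuous, it is automatically weakly continuous. Therefore the set $\{T \in B(H); \mathrm{Re}\,\rho(T) > b\}$ is a weakly open set containing $T_0$ and disjoint from $X$. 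Since $T_0$ was an arbitrary point outside $X$, the complement of $X$ is weakly open, so $X$ is weakly closed.

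I expect the only real subtlety to be making sure the separation theorem is being applied in the correct topology and that its hypotheses are met, namely that the strong operator topology is genuinely locally convex (which is noted earlier, since it is defined by the family of semi-norms $\rho_x$) and Hausdorff, so that Proposition \ref{prop:locallyconvexfunctional} applies verbatim. The real engine of the proof is the equivalence of continuity notions in Theorem \ref{thm:weakstrongfunctional}; everything else is a standard separation-of-convex-sets argument, so there is no hard computational obstacle. A brief remark I would add is that the role of convexity is essential and enters only through the use of the separation theorem; without convexity the two topologies can produce different closures.
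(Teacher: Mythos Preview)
Your proposal is correct and follows essentially the same route as the paper's own proof: apply the Hahn--Banach separation result (Proposition \ref{prop:locallyconvexfunctional}) in the strong operator topology to separate a point outside $X$ from $X$, then use Theorem \ref{thm:weakstrongfunctional} to upgrade the separating functional from strongly continuous to weakly continuous. The only cosmetic difference is that the paper phrases the conclusion via the weakly closed half-space $\{T;\ \mathrm{Re}\,\rho(T)\leq b\}$ containing $X$, whereas you phrase it via the complementary weakly open half-space containing $T_0$.
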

\begin{proof} Assume $X$ is strongly closed. It is enough to show that if $S\in B(H)-X$, then $S$ does not belong to the weak closure of $X$. Since $X$ is convex, by Proposition \ref{prop:locallyconvexfunctional}, there exists a strongly continuous functional $\rho\in B(H)\s$ and $b\in \r$ such that $Re\rho(S)>b$ and $Re\rho(T)\leq b$ for all $T\in X$.
By the above theorem, $\rho$ is weakly continuous too. Thus the set
\[
C:=\{ T\in B(H); Re\rho(T)\leq b \}
\]
is weakly closed in $B(H)$, contains $X$ and $S\notin C$. Therefore $X$ is weakly closed. The other implication is clear.
\end{proof}
\section{The Borel functional calculus}
\label{sec:Borelfunctionalcalculus}

For the Borel functional calculus, we follow Nik Weaver's book, \cite{weaver},  where it was explained more clearly. It is based on Theorem \ref{thm:rieszrep}, see Theorem 7.17 of \cite{folland-ra} for the proof. In order to formulate the theorem, we need to recall the norm of a complex measure. Let $X$ be a locally compact and Hausdorff topological space and let $M(X)$ denote the space of all complex Radon measures on $X$. For every complex measure $\mu$ on $X$, the {\bf total variation of $\mu$} is the positive measure $|\mu|$ which is defined as follows: If $\mu=g_\mu d\nu$, where $\nu$ is a positive measure on $X$ and such a decomposition exists by Theorem 3.12 of \cite{folland-ra}, then $|\mu|:=|g_\mu |\nu$. It was proved in Page 93 of \cite {folland-ra} that $|\mu|$ is well defined. Then the norm of $\mu$ is defined by $\|\mu\|:=|\mu|(X)$. It was shown in Proposition 7.16 of \cite{folland-ra} that this defines a norm on $M(X)$.

\begin{theorem}
\label{thm:rieszrep} [The Riesz representation theorem] Let $X$ be as above. For every $\mu\in M(X)$, we define
\begin{eqnarray*}
I_\mu :C_0(X)&\ra& \c\\
I_\mu(f)&:=&\int_X f(x) d\mu(x), \qquad \forall f\in C_0(X).
\end{eqnarray*}
Then the map $\mu\mapsto I_\mu$ is an isometric isomorphism from $M(X)$ onto $C_0(X)\s$.
\end{theorem}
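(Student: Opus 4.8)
The plan is to verify that $\mu\mapsto I_\mu$ is linear and bounded, deduce that it is an isometry (which immediately forces injectivity), and then establish surjectivity, where the real work lies. Linearity in $\mu$ is immediate from linearity of the integral. For boundedness, fix $\mu\in M(X)$ and take $f\in C_0(X)$ with $\|f\|_{\sup}\le 1$; then the estimate $|I_\mu(f)|=\left|\int_X f\,d\mu\right|\le \int_X |f|\,d|\mu|\le \|f\|_{\sup}\,|\mu|(X)=\|\mu\|$ shows that $I_\mu\in C_0(X)\s$ with $\|I_\mu\|\le\|\mu\|$. Thus the map lands in $C_0(X)\s$ and is norm-decreasing.

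Next I would prove the reverse inequality, giving the isometry. Here I would invoke the polar decomposition of a complex measure: by the Radon--Nikodym theorem there is a measurable $g$ with $|g|=1$ $|\mu|$-almost everywhere and $\mu=g\,d|\mu|$, so that $|\mu|(X)=\int_X \bar g g\,d|\mu|=\int_X \bar g\,d\mu$. Since $|\mu|$ is a finite Radon measure, Lusin's theorem produces, for each $\ep>0$, a function $f\in C_0(X)$ with $\|f\|_{\sup}\le 1$ that agrees with $\bar g$ off a set of arbitrarily small $|\mu|$-measure; consequently $|I_\mu(f)|$ lies within $\ep$ of $|\mu|(X)$. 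This yields $\|I_\mu\|\ge\|\mu\|-\ep$ for all $\ep>0$, hence $\|I_\mu\|=\|\mu\|$. In particular $I_\mu=0$ implies $\mu=0$, so the map is injective.

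The main obstacle is surjectivity: given $L\in C_0(X)\s$, I must construct $\mu\in M(X)$ with $I_\mu=L$. The strategy is to reduce to positive functionals and then appeal to the classical construction. First I would split $L$ into its real and imaginary parts, $L=Re\,L+i\,Im\,L$, where each summand is a real bounded functional. Then I would decompose each real functional as a difference of two positive functionals: for $f\in C_0(X)_+$ set $L^+(f):=\sup\{L(h);\ 0\le h\le f\}$, check that $L^+$ extends to a positive linear functional on $C_0(X)$, and put $L^-:=L^+-L$. This produces four positive functionals $L_1,\dots,L_4$. For each positive $L_j$, the Riesz representation theorem for positive linear functionals supplies a positive Radon measure $\mu_j$, built via the outer-measure prescription $\mu_j(U)=\sup\{L_j(f);\ f\in C_c(X),\ 0\le f\le 1,\ \operatorname{supp}(f)\sub U\}$ on open sets $U$, followed by extension to an outer measure and verification that every Borel set is measurable and that $\mu_j$ is outer Radon.

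Finally I would assemble $\mu=\mu_1-\mu_2+i(\mu_3-\mu_4)$, a complex Radon measure in $M(X)$, and confirm by a short linearity check that $I_\mu=L$. The genuinely heavy step, which I expect to consume most of the argument, is the outer-measure construction together with the proof of regularity in the positive case; by contrast the real/imaginary and positive/negative reductions, and the final reassembly, are comparatively routine. This is precisely the point at which one would, as the excerpt notes, lean on Theorem~7.17 of \cite{folland-ra} rather than reproduce the construction in full.
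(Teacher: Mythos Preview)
The paper does not give its own proof of this theorem: immediately before the statement it writes ``see Theorem 7.17 of \cite{folland-ra} for the proof'' and moves on. So there is nothing in the paper to compare your argument against. Your outline is correct and is precisely the standard route taken in \cite{folland-ra}: the easy inequality $\|I_\mu\|\le\|\mu\|$, the reverse inequality via the polar decomposition $\mu=g\,d|\mu|$ together with Lusin's theorem, and surjectivity by decomposing an arbitrary $L\in C_0(X)\s$ into four positive functionals and invoking the positive Riesz construction for each. You have also correctly identified that the outer-measure construction in the positive case is the only genuinely laborious step, which is exactly why the paper outsources the whole proof to Folland.
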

In our discussion, $X$ is the spectrum of a normal operator on a Hilbert space. Thus, from now on, we assume $X$ is compact. By Theorem 7.8 of \cite{folland-ra}, every finite Borel measure on $X$ is Radon. On the other hand, a complex measure never takes an infinite value, and so a positive measure is complex if and only if it is finite, see page 93 of \cite{folland-ra}. So, we can drop the finiteness condition as well and say that $M(X)$ is the space of all complex Borel measures on $X$. One also notes that every complex measure is a linear combination of four positive measures, so in many situations, we can restrict ourself to positive measures.

\begin{remark}
\label{rem:borelfc}
Now, let $B(X)$ denote the space of all bounded Borel functions on $X$, i.e. all bounded functions $f:X\ra \c$ such that, for every open set $U\sub \c$, $f\inv(U)$ belongs to the $\si$-algebra generated by open subsets of $X$. The vector space $B(X)$ is equipped with supremum norm. Consider the map $\th:B(X)\ra M(X)\s$ defined by
\[
\th(f)(\mu):=\int_X f(x) d\mu(x),\qquad \forall f\in B(X).
\]
Then for $f\in B(X)$, we have
\begin{eqnarray*}
\left| \int_X f(x) d\mu(x)\right| &\leq& \|f\|_{\sup} \left| \int_X g_\mu d\nu(x)\right|\\
&\leq& \|f\|_{\sup} \int_X |g_\mu|d\nu(x) \\
&=&\|f\|_{\sup} \int_X d|\mu|(x)\\
&=& \|f\|_{\sup}\, |\mu|(X)\\
&=& \|f\|_{\sup}\, \|\mu\|.
\end{eqnarray*}
Therefore $\th$ is bounded, and in fact, $\|\th\|\leq 1$. For $\ep>0$, pick $x_0\in X$ such that $\|f\|_{\sup} -\ep \leq |f(x_0)|$ and let $\d_{x_0}$ denote the Dirac measure at $x_0$. Then $\|\d_{x_0}\|=1$ and we have
\begin{eqnarray*}
\|f\|_{\sup} -\ep &\leq& |f(x_0)|\\
&=& \left| \int_X f(x) d\d_{x_0}(x) \right|\\
&=& |\th(f)(\d_{x_0})|\\
&\leq& \|\th\|\|f\|_{\sup}.
\end{eqnarray*}
This shows that $\th$ is an isometry from $B(X)$ into $M(X)\s\simeq C(X)^{\ast \ast}$. Therefore one can consider $B(X)$ as a subspace of the dual space $M(X)\s$. We equip $B(X)$ with the \ws-topology on $M(X)\s$, that is a net $(f_\la)$ of elements of $B(X)$ converges to some $f\in B(X)$ if
\[
\int_X f_\la(x) d\mu(x)\ra \int_X f(x) d\mu(x), \quad \forall \mu\in M(x).
\]
The next step is to show $C(X)$ is \ws dense in $B(X)$. We use the fact that $C(X)$ is dense in $L^1(X,\mu)$ for every $\mu\in M(X)$, see Proposition 7.8 of \cite{folland-ra}. Let $f\in B(X)$. Since integration is linear, without loss of generality, we assume that $f$ is non-negative. Recall that a basis of neighborhoods of $f$ in \ws topology of $C(X)^{\ast \ast}$ is given by sets of the form
\[
U_{f, \mu_1, \cdots, \mu_k, \ep}:=\{ g\in C(X)^{\ast \ast}; |g(\mu_i) - f(\mu_i)|<\ep, \forall 1\leq i\leq k\},
\]
for some $k\in \n$, $\mu_1,\cdots,\mu_k\in M(X)$ and $\ep>0$. So, fix $\mu_1,\cdots,\mu_k\in M(X)$. For every $1\leq i\leq k$, there exists a sequence $\{f_{i,n}\}$ in $C(X)$ such that
\[
\int_X f_{i,n}(x)d\mu_i(x)\ra \int_X f(x)d\mu_i(x).
\]
We define a new sequence $\{f_n\}$ by setting $f_n:=f_{i,m}$, where $1\leq i\leq k$ and $n=(m-1)k +i$. Clearly, we have \[
\int_X f_{n}(x)d\mu_i(x)\ra \int_X f(x)d\mu_i(x), \quad \forall 1\leq i\leq k
\]
This shows that the sequence $\{f_n\}$ lies in $U_{f, \mu_1, \cdots, \mu_k, \ep}$ eventually. Therefore $f$ belongs to the closure of $C(X)$ in \ws topology.
\end{remark}
\begin{theorem}
\label{thm:borelfunctionalcalculus} [The Borel functional calculus] Let $H$ be a Hilbert space. For every normal operator $T\in B(H)$, there exists a unique one-to-one \ss-homomorphism $\Psi_T: B(\si(T))\ra B(H)$ extending the continuous functional calculus of $T$.

Moreover, $\Psi_T$ is continuous with respect to the \ws topology of $B(\si(T))$ and the weak operator topology of $B(H)$.
\end{theorem}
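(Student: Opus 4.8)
The plan is to build $\Psi_T$ by dualizing the continuous functional calculus $\Phi_T$ twice, exploiting the embedding $B(\si(T))\hookrightarrow M(\si(T))\s\simeq C(\si(T))^{\ast\ast}$ established in Remark \ref{rem:borelfc}. First I would fix $x,y\in H$ and observe that $f\mapsto \lan \Phi_T(f)x,y\ran$ is a bounded linear functional on $C(\si(T))$, hence by the Riesz representation theorem, Theorem \ref{thm:rieszrep}, it is given by integration against a unique complex Radon measure $\mu_{x,y}\in M(\si(T))$; that is, $\lan \Phi_T(f)x,y\ran=\int_{\si(T)} f\, d\mu_{x,y}$ for all $f\in C(\si(T))$. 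The assignment $(x,y)\mapsto \mu_{x,y}$ is sesquilinear (by uniqueness in Theorem \ref{thm:rieszrep}) and $\|\mu_{x,y}\|\leq \|x\|\,\|y\|$ since $\|\Phi_T\|=1$. Then for each bounded Borel function $g\in B(\si(T))$ I would define a sesquilinear form on $H$ by
\[
(x,y)_g:=\int_{\si(T)} g\, d\mu_{x,y},
\]
which is bounded with norm at most $\|g\|_{\sup}$. By Theorem \ref{thm:sesquilinear} there is a unique operator $\Psi_T(g)\in B(H)$ with $\lan \Psi_T(g)x,y\ran=(x,y)_g$, and $\|\Psi_T(g)\|\leq \|g\|_{\sup}$. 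By construction $\Psi_T$ agrees with $\Phi_T$ on $C(\si(T))$, so it extends the continuous functional calculus.

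Next I would verify that $\Psi_T$ is a $\ast$-homomorphism. Linearity is immediate from the linearity of the integral in $g$. The $\ast$-property $\Psi_T(\bar g)=\Psi_T(g)\s$ follows from the conjugate-symmetry $\mu_{y,x}=\overline{\mu_{x,y}}$, which itself comes from the fact that $\Phi_T$ is a $\ast$-homomorphism together with uniqueness in Theorem \ref{thm:rieszrep}. Multiplicativity is the delicate point and the step I expect to be the main obstacle. The strategy here is a two-stage density argument using the \ws topology of Remark \ref{rem:borelfc}, where $C(\si(T))$ is \ws dense in $B(\si(T))$. For fixed $g\in C(\si(T))$, both $h\mapsto \Psi_T(gh)$ and $h\mapsto \Psi_T(g)\Psi_T(h)$ are \ws-to-weak continuous in $h$ (using separate weak continuity of multiplication in $B(H)$, Remark \ref{rem:continofoperations}(ii), for the second map) and they agree on $C(\si(T))$, hence on all of $B(\si(T))$; then for fixed $h\in B(\si(T))$ one repeats the argument in the first variable, again invoking separate weak continuity and \ws density, to conclude $\Psi_T(gh)=\Psi_T(g)\Psi_T(h)$ for all $g,h$. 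Making these continuity claims precise requires checking that $g\mapsto \lan \Psi_T(g)x,y\ran=\int g\,d\mu_{x,y}$ is exactly \ws continuous, which is true by the very definition of the \ws topology on $B(\si(T))\subseteq M(\si(T))\s$.

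The \ws-to-weak continuity of $\Psi_T$ is then immediate from the displayed formula: if $g_\la\to g$ in \ws topology then $\int g_\la\, d\mu_{x,y}\to \int g\, d\mu_{x,y}$ for every $x,y$, i.e. $\lan \Psi_T(g_\la)x,y\ran\to \lan \Psi_T(g)x,y\ran$, which is precisely weak operator convergence $\Psi_T(g_\la)\to\Psi_T(g)$. For uniqueness, I would note that any $\ast$-homomorphism extending $\Phi_T$ and enjoying this \ws-to-weak continuity must agree with $\Psi_T$ on the \ws-dense subspace $C(\si(T))$ and hence everywhere. Finally, injectivity of $\Psi_T$ can be deduced from the continuous spectral mapping framework: since $T$ is normal, the measures $\mu_{x,x}$ are positive and, taken together over $x\in H$, their supports recover $\si(T)$, so a nonzero Borel $g$ yields a nonzero operator; alternatively one invokes Corollary \ref{cor:csinjection} after identifying $\Psi_T$ as injective on an appropriate $C^\ast$-subalgebra. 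The main work, and the place where care is genuinely needed, is the double \ws density argument for multiplicativity, because one must keep straight which variable is held fixed and ensure the relevant continuity of multiplication (only separate, not joint, in the weak topology) is being used correctly.
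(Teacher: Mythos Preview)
Your construction is essentially the paper's own proof, just with the double adjoint unpacked: the paper restricts $\Phi_T^{\ast\ast}:C(\si(T))^{\ast\ast}\ra B(H)^{\ast\ast}$ to $B(\si(T))$ and evaluates at the functionals $\rho_{x,y}\in B(H)\s$ given by $S\mapsto\lan Sx,y\ran$, which is exactly your $\int g\,d\mu_{x,y}$ once one identifies $\Phi_T^\ast(\rho_{x,y})$ with $\mu_{x,y}$ via the Riesz representation theorem. The verification of linearity, the $\ast$-property, the two-stage density argument for multiplicativity (using separate weak continuity of multiplication), the \ws-to-weak continuity, and uniqueness from \ws density of $C(\si(T))$ all proceed precisely as you outline and precisely as in the paper.

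The one place where your proposal goes beyond the paper is injectivity, and here your sketch does not work. The paper's statement asserts that $\Psi_T$ is one-to-one, but the paper's proof never addresses this, and in fact the assertion is false in general: take $T$ to be multiplication by the identity function on $L^2([0,1])$, so $\si(T)=[0,1]$; for any Lebesgue-null Borel set $E\sub[0,1]$ the characteristic function $\chi_E\in B(\si(T))$ is nonzero in supremum norm, yet $\Psi_T(\chi_E)=M_{\chi_E}=0$ in $B(L^2([0,1]))$. Your suggestion that the supports of the measures $\mu_{x,x}$ recover $\si(T)$ is correct but irrelevant, since a nonzero Borel function can still vanish $\mu_{x,x}$-almost everywhere for every $x$; and invoking Corollary \ref{cor:csinjection} is circular, since $B(\si(T))$ with sup norm is a $C^\ast$-algebra and that corollary would then force injectivity, which we have just seen fails. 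Drop the injectivity paragraph; the rest of your plan is correct and matches the paper.
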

\begin{proof} Consider the \ss-isomorphism defining the continuous functional calculus of $T$, i.e.  $\Phi_T:C(\si(T))\ra B(H)$. For convenience, let us denote the restriction of its double adjoint $\Phi_T^{\ast \ast}:C(\si(T))^{\ast \ast} \ra B(H)^{\ast\ast}$ to $B(\si(T))$ by $\Theta$. By using Exercise \ref{exe:dualspaces}(iv) twice, we have
\begin{equation}
\label{eqn:borelfc1}
\|\Theta\|=\|\Phi_T\|=1.
\end{equation}
For every $x,y\in H$, let $\rho_{x,y}\in B(H)\s$ be the linear functional defined by $S\mapsto \lan Sx, y\ran$ for all $S\in B(H)$. One checks that
\begin{equation}
\label{eqn:borelfc2}
\|\rho_{x,y}\|= \|x\|\|y\|,
\end{equation}
see Exercise \ref{exe:borelfc1}(i). For every $f\in B(\si(T))$, we define a map
\begin{eqnarray*}
& &\{-,-\}_f:H\times H\ra \c\\
& &\{x,y\}_f:= \Theta(f)(\rho_{x,y}).
\end{eqnarray*}
It is straightforward to check that $\{-,-\}_f$ is a sesquilinear map. Also, using Equations (\ref{eqn:borelfc1}) and (\ref{eqn:borelfc2}), we have
\[
|\{x,y\}_f|\leq \|\Theta(f)\|\|\rho_{x,y}\|\leq \|f\|_{\sup} \|x\|\|y\|.
\]
Hence $\{ -,-\}$ is bounded. Therefore by Theorem \ref{thm:sesquilinear}, there exists a bounded operator in $B(H)$, which we denote it by $\Psi_T(f)$, such that $\{x,y\}_f=\lan \Psi_T(f) x, y\ran$. The map $\Psi_T:B(\si(T))\ra B(H)$ is called the {\bf Borel functional calculus of $T$}. Let us check that $\Psi_T$ extends the continuous functional calculus of $T$. For every $f\in C(\si(T))$ and $x,y\in H$, we have
\[
\lan \Psi_T(f) x,y\ran =\{ x,y\}_f=\Theta(f)(\rho_{x,y})=\rho_{x,y}(\Phi_T(f))= \lan \Phi_T(f) x,y\ran.
\]
Therefore $\Psi_T(f)=\Phi_T(f)$ for all $f\in C(\si(T))$. It is straightforward to check that $\Psi_T$ is a linear map, see Exercise \ref{exe:borelfc1}(ii). Let $(f_\la)$ be a net in $B(\si(T))$ convergent to $f\in B(\si(T))$ in \ws-topology. Then for every $x,y\in H$, we have
\begin{eqnarray*}
\lan \Psi_T(f_\la) x,y\ran&=& \{x,y\}_{f_\la}\\
&=& \Theta(f_\la)(\rho_{x,y})\longrightarrow  \Theta(f)(\rho_{x,y})\\
&=& \{x,y\}_{f}=\lan \Psi_T(f) x,y\ran.
\end{eqnarray*}
Therefore $\Psi_T$ is continuous with respect to the \ws topology of $B(\si(T))$ and the weak operator topology of $B(H)$. This also justifies our next computations, wherein  all limits are taken with respect to the weak operator topology of $B(H)$.

For given $f\in B(\si(T))$, let $(f_\la)$ be a net in $C(\si(T))$ such that $f_\la\ra f$ in \ws topology. Then for every $g\in C(\si(T))$, we compute
\begin{eqnarray*}
\Psi_T(fg)&=&\lim_\la \Psi_T(f_\la g)=\lim_\la \Phi_T(f_\la g)\\
&=&\lim_\la \Phi_T(f_\la) \Phi_T(g)=\lim_\la \Psi_T(f_\la) \Psi_T(g)\\
&=&\Psi_T(f) \Psi_T(g).
\end{eqnarray*}
For every $f\in B(\si(T))$ and $g\in B(\si(T))$, let $(g_\mu)$ be a net in $C(\si(T))$ such that $g_\mu\ra g$ in \ws topology. Then using the above computation, we have
\[
\Psi_T(fg)=\lim_\mu \Psi_T(fg_\mu) = \Psi_T(f) \lim_\mu \Psi_T (g_\mu) = \Psi_T(f)\Psi_T(g).
\]
This shows that $\Psi_T$ is a multiplicative map. One notes that we had to break the argument in two steps, because the multiplication of $B(H)$ is only separately continuous in the weak operator topology, see Remark \ref{rem:continofoperations}(ii).

We know that $\Phi_T$ is a \ss-homomorphism, so $\Phi_T(\overline{f})=\Phi_T(f)\s$ for all $f\in C(\si(T))$. Let $f\in B(\si(T))$ and let $(f_\la)$ be a net in $C(\si(T))$ such that $f_\la\ra f$ in \ws topology. Then $\overline{f_\la}\ra \overline{f}$ and, using Exercise \ref{exe:involutionweakcontinuous}, we compute
\begin{eqnarray*}
\Psi_T(\overline{f})&=&\lim_\la \Psi_T(\overline{f_\la})= \lim_\la \Phi_T(\overline{f_\la})\\
&=& \lim_\la\Phi_T (f_\la)\s =\lim_\la\Psi_T (f_\la)\s \\
&=&\left( \lim_\la\Phi_T (f_\la)\right)\s =\Psi_T(f)\s.
\end{eqnarray*}
This shows that $\Psi_T$ is a \ss-homomorphism as well.

Finally, the uniqueness of $\Psi_T$ follows from the fact that $C(\si(T))$ is \ws dense in $B(\si(T))$, see \ref{rem:borelfc}.
\end{proof}
Similar to the continuous functional calculus, for every $f\in B(\si(T))$, $\Psi_T(f)$ is denoted by $f(T)$.
\begin{exercise} Assume the notations of the above proof.
\label{exe:borelfc1}
\begin{itemize}
\item [(i)] Verify Equation (\ref{eqn:borelfc2}).
\item [(ii)] Check that $\Psi_T$ is a linear map.
\end{itemize}
\end{exercise}

\section{Projections and the polar decomposition}
\label{sec:projections}
In this section $H$ is a Hilbert space. We first study elementary topics about projections in $B(H)$. afterwards, we prove the polar decomposition of elements of $B(H)$.

Assume $X$ be a closed subspace of $H$. Define a map $P:H\ra X$ by defining $Px$ to be the orthogonal projection of $x$ on $X$. By Corollary \ref{cor:perp}(ii), $H=X\oplus X^\perp$. It follows from this decomposition of $H$ that $P$ is a linear map. For every $x\in H$, assume $x=x_1+x_2$, where $x_1=Px\in X$ and $x_2=x-Px\in X^\perp$. Then using Exercise \ref{exe:orthogonalproj}, we have
\[
\|Px\|=\|P(x_1+x_2)\|=\|Px_1\|= \|x_1\|\leq \|x_1+x_2\|=\|x\|.
\]
This shows that $P$ is also bounded. It is clear that $P^2=P$. It also follows from the above decomposition of $H$ that $P\s=P$. Therefore $P$ is a projection in $B(H)$. This projection is determined completely by $X$ and usually is denoted by $P_X$. One also notes that $1-P_X=P_{X^\perp}$. For every projection $P\in B(H)$, $1-P$ is a projection again and is called the {\bf complement of $P$} and is denoted by $P^\perp$.

Conversely, let $P$ be a projection in $B(H)$. Then $R(P)=N(1-P)$, and so $X=R(P)$ is a closed subspace of $H$ and $P=P_X$. This shows that there is a bijective correspondence between projections in $B(H)$ and closed subspaces of $H$.

The proof of the following proposition is easy and is left as an exercise.
\begin{proposition}
\label{prop:projection1}
Let $X$ and $Y$ be two closed subspaces of $H$. The following statements are equivalent:
\begin{itemize}
\item [(i)] $P_X\leq P_Y$.
\item [(ii)] $P_X\leq \la P_Y$ for some $\la>0$.
\item [(iii)] $X\sub Y$.
\item [(iv)] $P_X P_Y=P_Y P_X=P_X$.
\item [(v)] $P_Y-P_X$ is a projection in $B(H)$, (In fact, $P_Y-P_X=P_{Y\cap X^\perp}$).
\end{itemize}
\end{proposition}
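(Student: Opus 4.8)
The plan is to prove the five statements equivalent by establishing the cyclic chain (i)$\Rightarrow$(ii)$\Rightarrow$(iii)$\Rightarrow$(iv)$\Rightarrow$(v)$\Rightarrow$(i), and then to settle the parenthetical identity $P_Y-P_X=P_{Y\cap X^\perp}$ separately once (iv) is available. The implication (i)$\Rightarrow$(ii) is immediate by taking $\lambda=1$.

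For (ii)$\Rightarrow$(iii) I would unwind the order relation using the numerical characterization of positivity from Example \ref{exa:positiveityHilbert}(i): the hypothesis $P_X\leq\lambda P_Y$ means $\langle P_X x,x\rangle\leq\lambda\langle P_Y x,x\rangle$ for all $x\in H$, and since each projection $P$ satisfies $\langle Px,x\rangle=\|Px\|^2$, this reads $\|P_X x\|^2\leq\lambda\|P_Y x\|^2$. The key observation is that this forces $N(P_Y)\subseteq N(P_X)$; since $N(P_Y)=Y^\perp$ and $N(P_X)=X^\perp$ by Exercise \ref{exe:orthogonalproj}(i), I obtain $Y^\perp\subseteq X^\perp$, and taking orthogonal complements (together with $X=(X^\perp)^\perp$, $Y=(Y^\perp)^\perp$ from Corollary \ref{cor:perp}(ii)) yields $X\subseteq Y$.

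For (iii)$\Rightarrow$(iv), assuming $X\subseteq Y$, every vector $P_X x$ lies in $X\subseteq Y$ and is therefore fixed by $P_Y$ (Exercise \ref{exe:orthogonalproj}(ii)), giving $P_Y P_X=P_X$; taking adjoints and using self-adjointness of projections gives $P_X P_Y=P_X$ as well. The step (iv)$\Rightarrow$(v) is a direct computation: setting $Q:=P_Y-P_X$, self-adjointness is clear, and expanding $Q^2$ and cancelling the cross terms by means of $P_X P_Y=P_Y P_X=P_X$ shows $Q^2=Q$, so $Q$ is a projection. Finally (v)$\Rightarrow$(i) uses that every projection is positive, because $\langle Qx,x\rangle=\|Qx\|^2\geq 0$ combined with the criterion of Example \ref{exa:positiveityHilbert}(i); hence $P_Y-P_X\geq 0$, i.e. $P_X\leq P_Y$.

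The parenthetical claim amounts to identifying the closed subspace onto which $Q=P_Y-P_X$ projects, and since a projection is the orthogonal projection onto its range, it suffices to show $R(Q)=Y\cap X^\perp$. I would verify $Q=P_Y Q$ to get $R(Q)\subseteq Y$, and check $\langle Qx,u\rangle=0$ for all $u\in X$ (using $P_Y u=P_X u=u$ when $u\in X\subseteq Y$) to get $R(Q)\subseteq X^\perp$; the reverse inclusion is immediate since any $w\in Y\cap X^\perp$ satisfies $Qw=w$. None of these steps is genuinely deep; the only point requiring care is (ii)$\Rightarrow$(iii), where one must resist trying to argue directly that vectors of $X$ lie in $Y$. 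The naive attempt to show only $X\cap Y^\perp=\{0\}$ is insufficient (two distinct lines in $\r^2$ already violate it while failing $X\subseteq Y$), so the clean route is the nullspace inclusion $N(P_Y)\subseteq N(P_X)$ followed by passing to orthogonal complements.
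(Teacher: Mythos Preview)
Your argument is correct in every step, and the cyclic chain together with the separate identification of the range of $P_Y-P_X$ is the standard way to handle this. Note that the paper itself leaves this proposition as an exercise (``The proof of the following proposition is easy and is left as an exercise''), so there is no paper proof to compare against; your write-up would serve perfectly well as the intended solution.
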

Two projections $P, Q\in B(H)$ are called {\bf orthogonal} if $PQ=QP=0$. This is denoted by $P\perp Q$, or equivalently $Q \perp P$. For example $P$ and $1-P$ are orthogonal.
\begin{exercise}
Let $P, Q\in B(H)$ be two projections. Show that $P\perp Q$ if and only if $Q\leq 1-P$.
\end{exercise}
Given projections $P_{X_i}$ associated with closed subspaces $X_i$ for $i\in I$, where $I$ is an arbitrary index set, we define $\wedge_{i} P_{X_i} := P_{\cap_i X_i}$ and $\vee_i P_{X_i} :=P_{\overline{\sum_i X_i}}$. Then for every $i_0\in I$, we have
\[
\wedge_i P_{X_i} \leq P_{X_{i_0}},\quad \text{and}\quad P_{X_{i_0}}\leq \vee_i P_{X_i}.
\]
When $I=\{1,2\}$, $\wedge_i P_{X_i}$ and $\vee_i P_{X_i}$ are denoted by $P_{X_1}\wedge P_{X_2}$ and $P_{X_1}\vee P_{X_2}$, respectively. These properties are understood better using the notion of a lattice.

\begin{definition}
\label{def:leastgreatest}
Let $(S,\leq)$ be a partially ordered set.
\begin{itemize}
\item [(i)] A {\bf lower bound of a subset $T\sub S$} is an element $l\in S$ such that $l\leq t$ for all $t\in T$. The {\bf greatest lower bound} (shortly, {\bf g.l.b.}) {\bf of  $T$} is a lower bound $g$ of $T$ such that $l\leq g$ for every lower bound $l$ of $T$. (The uniqueness of g.l.b. easily follows from the definition.)
\item [(ii)] An {\bf upper bound of a subset $T\sub S$} is an element $u\in S$ such that $t\leq u$ for all $t\in T$. The {\bf least upper bound} (shortly, {\bf l.u.b.}) {\bf of  $T$} is an upper bound $l$ of $T$ such that $l\leq u$ for every upper bound $u$ of $T$. (The uniqueness of l.u.b. easily follows from the definition.)
\item [(iii)] A {\bf lattice} is a partially ordered set, say $(S,\leq)$, such that, for every subset $T\sub S$ with two elements, there exist the greatest lower bound and the least upper bound of $T$.
\item [(iv)] A {\bf complete lattice} is a partially ordered set, say $(S,\leq)$, such that, for every subset $T\sub S$, there exist the greatest lower bound and the least upper bound of $T$.
\end{itemize}
\end{definition}
One can learn more about lattices in \cite{grillet}.
\begin{exercise}
Show that the set $P(H)$ of all projections in $B(H)$ equipped with partial order $\leq$ is a complete lattice. Describe the greatest lower bound and the least upper bound of a subset of $P(H)$ in terms of the above notations.
\end{exercise}
\begin{exercise}
Let $P$ and $Q$ be two projections in $B(H)$. If $P$ and $Q$ commute, then show that $P\wedge Q=PQ$ and $P\vee Q=P+Q-PQ$. Conclude that if $P\perp Q$, then $PQ$ and $P+Q$ are both projections.
\end{exercise}
\begin{exercise}
\label{exe:orthogonalwedge} Let $P$ and $Q$ be two projections in $B(H)$.
\begin{itemize}
\item [(i)] Show that $P\leq Q$ if and only if $1-Q\leq 1- P$.
\item [(ii)] Show that $1-(P\vee Q)=(1-P)\wedge (1-Q)$.
\item [(iii)] Show that $1-(P\wedge Q) =(1-P)\vee (1-Q)$.
\item [(iv)] Generalize the above statements for arbitrary family of projections in $B(H)$.
\end{itemize}
\end{exercise}
\begin{proposition}
\label{prop:incdecstrongly}
\begin{itemize}
\item [(i)] Assume $(P_i)$ is an increasing sequence of projections in $B(H)$, then $P_i\ra \vee_i P_i$ strongly.
\item [(ii)] Assume $(P_i)$ is an decreasing sequence of projections in $B(H)$, then $P_i\ra \wedge_i P_i$ strongly.
\end{itemize}
\end{proposition}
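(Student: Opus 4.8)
The plan is to deduce (i) from the strong-convergence result for increasing bounded nets of positive operators (Proposition \ref{prop:suppositiveopt}), and then to obtain (ii) from (i) by passing to complements; one could equally run the argument for (ii) directly from Corollary \ref{cor:strongdecreasing}. Throughout I use that every projection is positive (Problem \ref{e:3-5}) with $\|P\|\leq 1$, so an increasing sequence $(P_i)$ of projections is an increasing bounded net of positive operators.

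First I would treat (i). By Proposition \ref{prop:suppositiveopt} there is a positive operator $T\in B(H)$ with $P_i\ra T$ strongly, and $T$ is the least upper bound of $(P_i)$ in $(B(H)_+,\leq)$. The first key step is to check that $T$ is itself a projection: since $(P_i)$ is norm bounded and multiplication is jointly strongly continuous on bounded sets (Remark \ref{rem:continofoperations}(iii)), $P_i=P_i^2\ra T^2$ strongly, while also $P_i\ra T$ strongly; as the strong topology is Hausdorff, $T^2=T$, and $T=T\s$ because $T$ is positive, so $T$ is a projection.

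The second step is to show $T=\vee_i P_i$. Writing $X_i:=R(P_i)$, each $X_i\sub R(T)$ by Proposition \ref{prop:projection1} applied to the projections $P_i\leq T$, and $R(T)$ is closed, so $\overline{\sum_i X_i}\sub R(T)$; hence $\vee_i P_i=P_{\overline{\sum_i X_i}}\leq P_{R(T)}=T$. Conversely $\vee_i P_i$ is an upper bound of $(P_i)$, again by Proposition \ref{prop:projection1}, so $T\leq \vee_i P_i$ because $T$ is the least upper bound. Thus $T=\vee_i P_i$ and $P_i\ra \vee_i P_i$ strongly.

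For (ii) I would pass to complements. If $(P_i)$ is decreasing, then $(1-P_i)$ is an increasing sequence of projections, so by (i) it converges strongly to $\vee_i(1-P_i)$. By Exercise \ref{exe:orthogonalwedge} one has $\vee_i(1-P_i)=1-\wedge_i P_i$, and from $1-P_i\ra 1-\wedge_i P_i$ strongly I get $P_i\ra \wedge_i P_i$ strongly, since $P_i x=x-(1-P_i)x$ for all $x\in H$. The main obstacle is the bookkeeping in (i): the earlier theory furnishes a strong limit only as a positive operator and only as the supremum taken within $B(H)_+$, so the real content is the identification of that order-theoretic limit with the projection-lattice supremum $\vee_i P_i$, which is exactly what the idempotency argument and the range comparison above accomplish.
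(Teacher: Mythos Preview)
Your proof is correct, but it takes a genuinely different route from the paper for part~(i). The paper argues directly and elementarily: it sets $P:=P_X$ with $X=\overline{\bigcup_i X_i}$, notes that for $h\in H$ one has $P_ih=P_i(Ph)$ since $P_iP=P_i$, and then shows by hand that $P_i x\ra x$ for every $x\in X$ by approximating $x$ with elements of $\bigcup_i X_i$ (on which $P_m$ eventually acts as the identity). Your approach instead invokes the general machinery of Proposition~\ref{prop:suppositiveopt} to produce a strong limit $T$ as the least upper bound in $B(H)_+$, then uses joint strong continuity of multiplication on bounded sets to see that $T$ is idempotent, and finally reconciles the two notions of supremum (in $B(H)_+$ versus in the projection lattice) via Proposition~\ref{prop:projection1}. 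Your argument is more systematic and illustrates nicely how the order-theoretic supremum in $B(H)_+$ coincides with the lattice supremum $\vee_i P_i$; the paper's argument is more self-contained and avoids appealing to Proposition~\ref{prop:suppositiveopt} or to continuity of multiplication. For part~(ii) both proofs pass to complements via Exercise~\ref{exe:orthogonalwedge}.
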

\begin{proof} For $i\in \n$, set $X_i=R(P_i)$. Then we have $P_i=P_{X_i}$.
\begin{itemize}
\item [(i)] Set
\[
X:=\overline{\sum_{i=1}^\infty X_i}
\]
and set $P:=P_X$. Since $\{P_i\}$ is an increasing sequence,
\[
X= \overline{\bigcup_{i=1}^\infty X_i}.
\]
For every $x\in X$, we can write $x=\sum_{i=1}^\infty x_i$, where $x_i\in X_i$. Thus $\sum_{i=1}^n x_i \ra x$ and $\sum_{i=1}^n x_i\in X_n$. Therefore for every $\ep>0$, there exists $n_\ep\in \n$ such that $\|P_{X_m} x -x\|<\ep$ for all $m\geq n$. Now, for every $h\in H$ and $\ep>0$, if $i\geq n_\ep$, then $\|Ph-P_ih\|= \|Ph- P_iPh\|<\ep$, because $Ph\in X$. This proves (i).
\item [(ii)] It follows from (i) and Exercise \ref{exe:orthogonalwedge}.
\end{itemize}
\end{proof}
\begin{proposition}
\label{prop:orthomodular}
Let $P,Q, R$ be projections in $B(H)$ such that $P\perp Q$ and $P\leq R$. Then we have
\begin{equation}
\label{eqn:orthomodular}
(P+Q)\wedge R=P\vee (Q\wedge R).
\end{equation}
\end{proposition}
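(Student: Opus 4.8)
The plan is to pass from projections to their ranges via the bijective correspondence between projections in $B(H)$ and closed subspaces of $H$ established in this section, and to prove the corresponding identity of subspaces. First I would record the geometric data: set $X := R(P)$, $Y := R(Q)$ and $Z := R(R)$. Since $P \perp Q$ means $PQ = QP = 0$, the operator $P + Q$ is again a projection (by the earlier exercise stating that $P+Q$ is a projection when $P \perp Q$), the relations $PQ=QP=0$ force $X \perp Y$, and the range of $P+Q$ is the orthogonal sum $X \oplus Y$, so $P + Q = P_{X \oplus Y}$. Moreover $P \leq R$ translates, via Proposition \ref{prop:projection1}, into $X \sub Z$. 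With these identifications and the definitions $\wedge_i P_{X_i} = P_{\cap_i X_i}$ and $\vee_i P_{X_i} = P_{\overline{\sum_i X_i}}$, the left-hand side of \eqref{eqn:orthomodular} becomes $P_{(X \oplus Y) \cap Z}$ and the right-hand side becomes $P_{\overline{X + (Y \cap Z)}}$.

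Next I would remove the closure bar on the right-hand side. Since $Y \cap Z \sub Y$ and $X \perp Y$, the subspaces $X$ and $Y \cap Z$ are orthogonal, so their algebraic sum is an orthogonal direct sum $X \oplus (Y \cap Z)$. An orthogonal sum of two closed subspaces is closed: if $x_n + w_n$ is Cauchy with $x_n \in X$ and $w_n \in Y \cap Z$, then Pythagoras' identity (Exercise \ref{exe:Pythagorean}) gives $\|(x_n - x_m) + (w_n - w_m)\|^2 = \|x_n - x_m\|^2 + \|w_n - w_m\|^2$, so $\{x_n\}$ and $\{w_n\}$ are separately Cauchy and converge inside the closed subspaces $X$ and $Y \cap Z$ respectively. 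Hence $\overline{X + (Y \cap Z)} = X \oplus (Y \cap Z)$, and it remains to prove the purely geometric identity
\[
(X \oplus Y) \cap Z = X \oplus (Y \cap Z).
\]

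The core step is this set equality, which I would establish by double inclusion. The inclusion $\supseteq$ is immediate: any $x + w$ with $x \in X \sub Z$ and $w \in Y \cap Z$ lies both in $X \oplus Y$ and in $Z$. For $\sub$, take $v \in (X \oplus Y) \cap Z$ and write $v = x + y$ with $x \in X$ and $y \in Y$, the decomposition being unique because $X \perp Y$; since $x \in X \sub Z$ and $v \in Z$, we get $y = v - x \in Z$, so $y \in Y \cap Z$ and $v \in X \oplus (Y \cap Z)$. Applying the projection-subspace dictionary in reverse then yields $(P+Q) \wedge R = P \vee (Q \wedge R)$. The only delicate point is the closedness argument that lets the lattice join $\vee$ coincide with the ordinary orthogonal sum; everything else is routine bookkeeping with the correspondence and with the uniqueness of the orthogonal decomposition.
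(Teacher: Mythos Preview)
Your proof is correct and follows exactly the same approach as the paper: translate the projection identity into the corresponding identity $(X+Y)\cap Z = X + (Y\cap Z)$ of closed subspaces via the projection--subspace correspondence, and verify the latter. The paper's proof simply asserts that this subspace identity ``is easy to check,'' whereas you spell out both the closedness of $X \oplus (Y\cap Z)$ (needed so that the join $\vee$ coincides with the algebraic sum) and the double-inclusion argument; these are precisely the details the paper omits.
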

\begin{proof} Let $X, Y, Z$ be closed subspaces of $H$ associated with $P,Q, R$, respectively. Then the above assumptions are equivalent to $Y\sub X^\perp$ and $X\sub R$ and Equality (\ref{eqn:orthomodular}) is equivalent to $(X+Y)\cap R= X+ (Y\cap R)$, which is easy to check.
\end{proof}
\begin{proposition}
\label{prop:topologiesprojections}
The strong, weak, and strong-$\s$ operator topologies coincide on the set of projections in $B(H)$.
\end{proposition}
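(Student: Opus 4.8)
The plan is to exploit the nesting of the three topologies that is already recorded in the text. By Proposition \ref{prop:strongweaktop} the weak operator topology is weaker than the strong operator topology, and by the remark following the definition of the strong-$\s$ topology the latter is stronger than the strong topology. Hence on the set $Proj(B(H))$ of projections the induced topologies satisfy $\tau_w \leq \tau_s \leq \tau_{s\s}$. To collapse all three it therefore suffices to prove the single reverse implication: if $(P_i)$ is a net of projections and $P$ is a projection with $P_i \ra P$ in the weak operator topology, then $P_i \ra P$ in the strong-$\s$ operator topology. Since a topology is determined by its convergent nets together with their limits, and since the three topologies are comparable, establishing this forces $\tau_w = \tau_s = \tau_{s\s}$ on $Proj(B(H))$.

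First I would observe that for projections strong-$\s$ convergence collapses to strong convergence. Indeed $P_i$ and $P$ are self adjoint, so $(P_i-P)\s = P_i - P$, and the defining semi-norm $T \mapsto \|Tx\| + \|T\s x\|$ evaluated at $P_i - P$ equals $2\|(P_i-P)x\|$. Thus it is enough to show that $\|(P_i-P)x\| \ra 0$ for every $x \in H$.

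The heart of the argument is the identity $\|(P_i-P)x\nt = \lan (P_i-P)^2 x, x\ran$, valid because $P_i - P$ is self adjoint. Expanding $(P_i - P)^2 = P_i - P_iP - PP_i + P$ with the help of $P_i^2 = P_i$ and $P^2 = P$, I would rewrite the right-hand side as $\lan P_i x, x\ran - \lan P_i(Px), x\ran - \lan P_i x, Px\ran + \lan Px, x\ran$, where in the third term I used $P = P\s$. Now each of the first three terms has the variable operator $P_i$ applied to a fixed vector and paired with a fixed vector, so the weak convergence $P_i \ra P$ may be applied term by term: the limits are $\lan Px, x\ran$, then $\lan P(Px), x\ran = \lan Px, x\ran$, then $\lan Px, Px\ran = \lan P^2 x, x\ran = \lan Px, x\ran$, and finally $\lan Px, x\ran$. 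These cancel to $0$, whence $\|(P_i-P)x\| \ra 0$ as desired.

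The one point to handle carefully is precisely that the weak operator topology does \emph{not} make multiplication jointly continuous (Remark \ref{rem:continofoperations}(iv)), so one cannot pass to the limit in the products $P_iP$ or $PP_i$ directly. The device that circumvents this is the expansion above: it replaces each product by an inner product of the form $\lan P_i u, v\ran$ in which only one factor varies, which is exactly the data controlled by weak convergence. Once this is in place the computation is routine, and combined with the reduction of the first two paragraphs it yields the coincidence of the weak, strong, and strong-$\s$ topologies on the set of projections.
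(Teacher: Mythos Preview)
Your proof is correct and follows essentially the same idea as the paper: both exploit the identity $P_i\s P_i = P_i$ to convert the relevant norms into inner products of the form $\lan P_i u, v\ran$, which are controlled by weak convergence. The only cosmetic difference is that the paper shows $\|P_i x\|^2 = \lan P_i x, x\ran \to \lan Px, x\ran = \|Px\|^2$ and then invokes Proposition~\ref{prop:weaknormconv} (weak convergence of vectors plus convergence of norms implies norm convergence), whereas you expand $\|(P_i-P)x\|^2$ directly into four terms and take the limit in each---unfolding the paper's implicit use of Proposition~\ref{prop:weaknormconv} yields exactly your four-term computation.
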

\begin{proof}
Let $P_i\ra P$ weakly. Then for every $x\in H$, we have
\[
\|P_ix\|^2=\lan P_ix, P_i x \ran=\lan P_i\s P_ix, x \ran= \lan P_i x ,x\ran\ra \lan Px,x\ran=\|Px\|^2.
\]
Hence $P_i\ra P$ strongly. Since every projection is self adjoint, this implies that $P_i\ra P$ in strong-$\s$ operator topology as well. The converses of these implications follows from Proposition \ref{prop:strongweaktop}(i) and definition of strong-$\s$ operator topology.
\end{proof}
\begin{definition}
Let $\Delta= \{ P_i; i\in I\}$ be a family of projections in $B(H)$.
\begin{itemize}
\item [(i)] Elements of $\Delta$ are called {\bf pairwise orthogonal} if $P_iP_j=0$.
\item [(ii)] Let $I$ be finite and $X_i=R(P_i)$, i.e. $P_{X_i}=P_i$. Then $\oplus_{i\in I}P_i:=P_{\oplus_{i\in I} X_i}$ is called the sum of $\Delta$.
\end{itemize}
A family $\{T_\la; \la\in \Lambda\}$ of elements of $B(H)$ is called {\bf summable} in strong (resp. weak, strong-$ \s$) operator topology if $\sum_{\la\in \Lambda} T_\la$ is convergent in strong (resp. weak and strong-$\s$) operator topology.
\end{definition}
\begin{proposition}
\label{prop:summablefamilyprojections}
Every family $\{ P_i; i\in I\}$ of pairwise orthogonal projections in $B(H)$ is summable to the projection $P:=\vee_{i\in I} P_i$ in strong operator topology.

Moreover, we have
\[
\|Px\|=\left( \sum_{i \in I} \|P_i x\|^2 \right)^{1/2}, \qquad \forall x\in H.
\]
When $P=1$, the map $U:H\ra \oplus_{i\in I} P_i(H)$ defined by $x\mapsto (P_i(x))$ is a unitary operator.
\end{proposition}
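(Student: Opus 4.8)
The plan is to reduce everything to the net of finite partial sums. For each finite subset $F\subseteq I$, set $Q_F:=\sum_{i\in F}P_i$. First I would check that $Q_F$ is itself a projection: it is visibly self adjoint, and $Q_F^2=\sum_{i,j\in F}P_iP_j=\sum_{i\in F}P_i^2=Q_F$ because $P_iP_j=0$ for $i\neq j$. Ordering the finite subsets of $I$ by inclusion as in Remark \ref{rem:uncountable sum}, I would note that $F\subseteq F'$ forces $Q_{F'}-Q_F=\sum_{i\in F'\setminus F}P_i$ to be a projection orthogonal to $Q_F$, hence positive, so $(Q_F)$ is an increasing net; it is bounded since $\|Q_F\|\leq 1$. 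By Proposition \ref{prop:suppositiveopt} this net converges strongly to a positive operator $T$ which is its least upper bound.

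The next step is to identify $T$ with the projection $P=\vee_{i\in I}P_i$. Since the net $(Q_F)$ is norm bounded and multiplication is jointly strongly continuous on bounded sets (Remark \ref{rem:continofoperations}(iii)), $Q_F^2=Q_F$ converges strongly both to $T$ and to $T^2$, whence $T^2=T$; being positive it is a projection. Writing $X_i:=R(P_i)$, the equality $R(Q_F)=\sum_{i\in F}X_i$ (easily checked for pairwise orthogonal projections) gives $R(Q_F)\subseteq\overline{\sum_{i\in I}X_i}=R(P)$, so $Q_F\leq P$ for every $F$ by Proposition \ref{prop:projection1}; as $T$ is the least upper bound, $T\leq P$. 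Conversely each $P_i=Q_{\{i\}}\leq T$ yields $X_i\subseteq R(T)$, hence $R(P)=\overline{\sum_i X_i}\subseteq R(T)$ and $P\leq T$. Thus $T=P$, proving strong summability with sum $\vee_i P_i$.

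For the norm identity I would use that the vectors $P_ix$ are pairwise orthogonal, since $\langle P_ix,P_jx\rangle=\langle P_jP_ix,x\rangle=0$ for $i\neq j$. Pythagoras' identity (Exercise \ref{exe:Pythagorean}) then gives $\|Q_Fx\|^2=\sum_{i\in F}\|P_ix\|^2$ for each finite $F$. Since $Q_Fx\to Px$ in norm, the partial sums on the right converge to $\|Px\|^2$, which by the definition of unordered summation means $\|Px\|^2=\sum_{i\in I}\|P_ix\|^2$.

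Finally, for the case $P=1$, the norm identity reads $\sum_{i\in I}\|P_ix\|^2=\|x\|^2<\infty$, so $x\mapsto(P_ix)$ lands in $\oplus_{i\in I}P_i(H)$ and $U$ is well defined and linear. Using $P_i^\ast P_i=P_i$, I would compute $\langle Ux,Uy\rangle=\sum_{i}\langle P_ix,P_iy\rangle=\sum_i\langle P_ix,y\rangle=\langle Px,y\rangle=\langle x,y\rangle$, the interchange of sum and inner product being justified by the strong convergence of $\sum_{i\in F}P_ix$ to $x$ together with continuity of the inner product (Corollary \ref{cor:innercont}); hence $U$ is a unitary injection. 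For surjectivity, note that for $y\in P_j(H)$ one has $Uy=(\dots,0,y,0,\dots)$ supported at $j$, so $R(U)$ contains the algebraic direct sum, which is dense in $\oplus_{i\in I}P_i(H)$ by Example \ref{exa:constructions}(i); since $U$ is an isometry with complete domain, $R(U)$ is closed, so $U$ is onto and therefore unitary. The main obstacle I anticipate is the middle step---confirming that the strong limit of the increasing net is genuinely a projection and pinning it down as $\vee_iP_i$ rather than merely some upper bound---so I would be careful to invoke both joint strong continuity on bounded sets and the least-upper-bound property furnished by Proposition \ref{prop:suppositiveopt}.
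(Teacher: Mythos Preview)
Your argument is correct and follows the same skeleton as the paper: form the finite partial sums $Q_F$, observe they give an increasing net of projections, and pass to the strong limit. The one place you diverge is the identification step you flagged as the ``main obstacle'': the paper bypasses it entirely by invoking Proposition~\ref{prop:incdecstrongly}(i), which already asserts that an increasing net of projections converges strongly to $\vee_i P_i$. You instead appeal to the more general Proposition~\ref{prop:suppositiveopt} (increasing bounded nets of positive operators) and then do extra work---joint strong continuity on bounded sets to get $T^2=T$, plus the two-sided comparison $T\leq P$ and $P\leq T$---to pin down the limit. Your route is longer but arguably more self-contained, since Proposition~\ref{prop:suppositiveopt} is stated for nets whereas Proposition~\ref{prop:incdecstrongly} is literally stated only for sequences. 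For the unitary statement the paper writes down the inverse $U^{-1}(x_i)=\sum_i x_i$ and checks $U^{-1}=U^\ast$ directly, while you verify that $U$ is inner-product-preserving with dense (hence full) range; both are perfectly fine.
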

For every $i\in I$, $P_i(H)$ is a Hilbert space and $\oplus_{i\in I} P_i(H)$ is the direct sum of these Hilbert spaces, see Example \ref{exa:constructions}(i).
\begin{proof}
Let $(\mathcal{F}, \leq)$ be the collection of all finite subsets of $I$ directed by inclusion. For every $F\in \mathcal{F}$, $P_F:=\sum_{i\in F} P_i$ is a projection and the net $(P_F)_{F\in \mathcal{F}}$ is an increasing net of projections. It follows from proposition \ref{prop:incdecstrongly}(i) that the net $(P_F)_{F\in \mathcal{F}}$ converges to $P$ strongly, i.e. $P=\sum_{i\in I} P_i$ in strong topology.

Moreover, since the elements of $\{ P_i; i\in I\}$ are pairwise orthogonal projections by applying Pythagoras' identity, Exercise \ref{exe:Pythagorean}, for every $x\in H$ and $F\in \mathcal{F}$, we have
\[
\|Px\|^2 = \lim_F \|P_Fx\|^2=\lim_F \sum_{i\in F} \|P_i x\|^2= \sum_{i\in I} \|P_i x\|^2.
\]
One easily checks that if $P=1$, then $U\inv:\oplus_{i\in I} P_i(H) \ra H$ is given by
\[
U\inv (x_i)_{i\in I}=\sum_{i\in I} x_i, \qquad \forall (x_i)_{i\in I}\in \bigoplus_{i\in I} P_i(H).
\]
Hence for every $(x_i)_{i\in I}\in \oplus_{i\in I} P_i(H)$ and $x\in H$, we compute
\begin{eqnarray*}
\lan U\inv (x_i) , x\ran &=& \sum_{i\in I} \lan x_i , x\ran =\sum_{i\in I} \lan x_i , P_ix\ran \\
&=& \lan (x_i), (P_i x)\ran = \lan (x_i), Ux \ran = \lan U\s (x_i), x \ran .
\end{eqnarray*}
Therefore $U\inv=U\s$.
\end{proof}

\begin{definition}
Let $H_1$ and $H_2$ be two Hilbert spaces and let $T\in B(H_1,H_2)$. The orthogonal projection on the closed subspace $N(T)^\perp$ of $H_1$ is called the {\bf right support projection of $T$} and is denoted by $P_T$. The orthogonal projection on the closed subspace $\overline{R(T)}$ of $H_2$ is called the {\bf left support projection of $T$} and is denoted by $Q_T$.
\end{definition}
The basic properties of the left and right support projections are listed in the following propositions:
\begin{proposition}
\label{prop:supportprojections1}
Let $H, H_1,H_2$ be Hilbert spaces. For given $A\in B(H_1,H_2)$ and $T\in B(H)$, the following statement hold:
\begin{itemize}
\item [(i)] The left support projection of $A$ is the right support projection of $A\s$, i.e. $Q_A=P_{A\s}$.
\item [(ii)]
\begin{itemize}
\item [(a)] $TP_T=T$.
\item [(b)] For every $S\in B(H)$, if $TS=0$, then $P_T S=0$.
\end{itemize}
    Moreover, the projection $P_T$ with these two properties is unique.
\item [(iii)]
\begin{itemize}
\item [(a)] $Q_T T=T$.
\item [(b)] For every $S\in B(H)$, if $ST=0$, then $SQ_T=0$.
\end{itemize}
    Moreover, the projection $Q_T$ with these two properties is unique.
\item [(iv)] The right support projection of $T$ is the smallest projection $P\in B(H)$  such that $TP=T$.
\item [(v)] If $T$ is normal, then $P_T=Q_T$.
\item [(vi)] $P_T=P_{T\s T}=Q_{T\s T}$ and $Q_T=Q_{TT\s}=P_{TT\s}$.
\end{itemize}
\end{proposition}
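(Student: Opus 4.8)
The plan is to begin with part (i), since it is the hinge that lets the remaining parts be organized into a ``right'' family $(P)$ and a ``left'' family $(Q)$ linked by passing to adjoints. For (i), I would observe that $Q_A$ is the projection onto $\overline{R(A)}$ while $P_{A\s}$ is the projection onto $N(A\s)^\perp$. By Lemma \ref{lem:rangenull} we have $R(A)^\perp = N(A\s)$, hence $N(A\s)^\perp = (R(A)^\perp)^\perp$, which by Corollary \ref{cor:perp}(iv) is precisely $\overline{R(A)}$. Two orthogonal projections with equal range coincide, so $Q_A = P_{A\s}$. This identity will be reused repeatedly below to convert $Q$-statements into $P$-statements.

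Next I would dispatch (ii). For (ii)(a), decompose $H = N(T)\oplus N(T)^\perp$ and write $x = x_1 + x_2$ accordingly; then $P_T x = x_2$ and $Tx = Tx_2 = TP_T x$, giving $TP_T = T$. For (ii)(b), $TS = 0$ forces $R(S)\sub N(T)$, and $P_T$ annihilates $N(T)$, so $P_T S = 0$. The uniqueness clause is the one genuinely fiddly step: if $P$ is another projection obeying (a) and (b), then $T(1-P)=0$ and $T(1-P_T)=0$, so applying property (b) of each projection to the other's complement yields $P_T = P_T P$ and $P = P P_T$; taking adjoints (projections are self-adjoint) turns the second into $P = P_T P$, and comparing gives $P = P_T$. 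I would then obtain (iii) not from scratch but from (ii) via (i): apply (ii) to $T\s$, substitute $Q_T = P_{T\s}$, and take adjoints throughout, so that $T\s P_{T\s} = T\s$ becomes $Q_T T = T$, and the annihilation and uniqueness statements transpose correspondingly.

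Part (iv) follows by combining (ii)(a) with a range-inclusion argument. We already have $TP_T = T$. For any projection $P$ with $TP = T$, rewrite this as $T(1-P)=0$, i.e. $N(P) = R(1-P)\sub N(T)$; taking orthogonal complements and using that $P$ has closed range gives $N(T)^\perp = R(P_T)\sub R(P)$, and Proposition \ref{prop:projection1} converts this inclusion of ranges into $P_T \leq P$, exhibiting $P_T$ as the smallest such projection. For (v), normality of $T$ gives $N(T) = R(T)^\perp$ by Proposition \ref{prop:normaloperator}, whence $N(T)^\perp = \overline{R(T)}$ and therefore $P_T = Q_T$.

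Finally, part (vi) is a bookkeeping exercise assembling the earlier pieces. Exercise \ref{exe:adjointopt}(v) gives $N(T) = N(T\s T)$, so $N(T)^\perp = N(T\s T)^\perp$ and hence $P_T = P_{T\s T}$; since $T\s T$ is positive and therefore normal, (v) upgrades this to $P_{T\s T} = Q_{T\s T}$, yielding the first chain. The companion chain $Q_T = P_{TT\s} = Q_{TT\s}$ comes from feeding $T\s$ into the same argument: $Q_T = P_{T\s}$ by (i), and $P_{T\s} = P_{(T\s)\s T\s} = P_{TT\s} = Q_{TT\s}$. I expect the main obstacle to be purely organizational rather than conceptual, namely keeping the $P$-versus-$Q$ and $T$-versus-$T\s$ symmetries straight through the adjoint manipulations, with the uniqueness argument in (ii) being the only place requiring genuine care.
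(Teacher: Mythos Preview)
Your proposal is correct and follows essentially the same route as the paper: part (i) via Lemma \ref{lem:rangenull}, parts (ii)(a)(b) as immediate from the definition, (iii) by duality from (i) and (ii), (iv) by the range-inclusion argument, (v) from Proposition \ref{prop:normaloperator}, and (vi) from $N(T)=N(T\s T)$. The only noticeable variation is in the uniqueness clause of (ii): the paper argues geometrically by showing directly that the range $X$ of any qualifying projection satisfies both $N(T)^\perp\sub X$ (from (a)) and $X\sub N(T)^\perp$ (from (b) applied to suitable $S$), whereas you give a slicker algebraic symmetry argument, applying each projection's property (b) to the other's complement and then taking adjoints; both are valid and of comparable length.
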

\begin{proof}
\begin{itemize}
\item [(i)] By Lemma \ref{lem:rangenull}, $N(A\s)=R(A)^\perp$. Therefore the orthogonal projection on the complement subspace of $N(A\s)$, i.e. $Q_{A\s}$, equals to the orthogonal projection on $\overline{R(A)}$, i.e. $P_A$. Similarly, $Q_A=P_{A\s}$.
\item [(ii)] Conditions (a) and (b) are immediate consequences of the definition. Assume $P$ is a projection in $B(H)$ satisfying (a) and (b). Let $X$ be the closed subspace of $H$ such that $P=P_X$, i.e. $X=R(P)$. Then $1-P=P_{X^\perp}$. Condition (a) implies that $T(1-P)=0$, or equivalently $X^\perp\sub N(T)$. Hence $N(T)^\perp\sub X$. Condition (b) means that if $R(S)\sub N(T)$, then $R(S)\sub N(P)=X^\perp$. Hence $N(T)\sub X^\perp$, or equivalently $X\sub N(T)^\perp$. Therefore $N(T)^\perp=X$, and so $P=P_T$.
\item [(iii)] It follows from (i) and (ii).
\item [(iv)] Assume $P=P_X$ for some closed subspace $X$ of $H$ and $TP=T$. In item (ii), we already proved that $N(T)^\perp\sub X$. Hence $P_T\leq P_X$.
\item [(v)] By Proposition \ref{prop:normaloperator}, when $T$ is normal, $N(T)^\perp=N(T\s)^\perp$. Hence $P_T=P_{T\s}=Q_T$.
\item [(vi)] The first equality follows easily from the fact that $N(T)=N(T\s T)$. The second equality follows from the first one and (i).
\end{itemize}
\end{proof}
\begin{proposition}
\label{prop:rightsupportstrong} Let $T\in B(H)$ be a positive operator and let $t\in ]0,\infty[$. Then we have the following limits in the strong operator topology:
\begin{itemize}
\item [(i)] $P_T=\lim_{t\ra 0} T^t$, and therefore $P_T$ is a strong limit of polynomials in $T$. This also shows that $P_T\in C\s(T)^{\prime \prime}$.
\item [(ii)] $Q_T=\lim_{t\ra 0} (T\s T)^t$ and $P_T=\lim_{t\ra 0} (TT\s)^t$.
\item [(iii)] $P_T=\lim_{t\ra 0} T(T+ t)\inv$.
\end{itemize}
\end{proposition}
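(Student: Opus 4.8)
The plan is to route every statement through the Borel functional calculus $\Psi_T$ of Theorem \ref{thm:borelfunctionalcalculus} and the single Borel function $p:=\chi_{(0,\infty)}$ restricted to $\si(T)\subseteq[0,\infty)$. Since $p$ is a bounded real Borel function with $p^2=p$ and $\Psi_T$ is a $\ast$-homomorphism, $p(T):=\Psi_T(p)$ is a projection. The first goal is to identify it as $p(T)=P_T$ via the uniqueness characterization of the right support projection in Proposition \ref{prop:supportprojections1}(ii). Property (a), $Tp(T)=T$, is immediate from the pointwise identity $\mathrm{id}\cdot p=\mathrm{id}$ on $\si(T)$ together with $\Psi_T(\mathrm{id})=T$ (the Borel calculus extends the continuous one, and $\Phi_T(\mathrm{id})=T$ by Theorem \ref{thm:confuncal}). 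Property (b), that $TS=0$ forces $p(T)S=0$, I would establish only after proving the strong convergence in (i), so I postpone it to avoid circularity.

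The key analytic device is a squaring observation that upgrades pointwise bounded convergence of symbols directly to strong operator convergence. Suppose $(f_t)_{t>0}$ is a uniformly bounded family of real bounded Borel functions on $\si(T)$ with $f_t\to p$ pointwise as $t\ra 0^+$. Each $f_t(T)$ is self adjoint and commutes with $p(T)$, so $(f_t(T)-p(T))^2=k_t(T)$ where $k_t:=(f_t-p)^2\geq 0$ is uniformly bounded and $k_t\to 0$ pointwise. By the bounded convergence theorem $\int k_t\,d\mu\to 0$ for every $\mu\in M(\si(T))$, that is $k_t\to 0$ in the \ws topology of $B(\si(T))$ (Remark \ref{rem:borelfc}); continuity of $\Psi_T$ then gives $k_t(T)\to 0$ in the weak operator topology. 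Hence for every $x\in\h$ we get $\|(f_t(T)-p(T))x\|^2=\lan k_t(T)x,x\ran\to 0$, so $f_t(T)\to p(T)$ strongly. I expect this passage from weak to strong convergence to be the main obstacle, and this observation is what carries it.

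Applying the observation with $f_t(s)=s^t$ (bounded for $t\in(0,1]$ and convergent to $p$ pointwise) yields $T^t\to p(T)$ strongly, and with $f_t(s)=s/(s+t)$ it yields $T(T+t)\inv\to p(T)$ strongly. I would then close the identification of $p(T)$: from $TS=0$ one gets $T^nS=0$ for all $n\geq 1$, hence $q(T)S=0$ for every polynomial $q$ with $q(0)=0$; since $g_t(0)=0$, the continuous functional calculus and Stone--Weierstrass exhibit $T^t$ as a norm limit of such polynomials in $T$, so $T^tS=0$ for all $t>0$, and letting $t\ra 0$ strongly gives $p(T)S=0$. This is property (b), so $p(T)=P_T$ and parts (i) and (iii) follow. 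The assertion $P_T\in C\s(T)''$ then holds because the operators $T^t$ (equivalently, polynomials in $T$) lie in $C\s(T)\subseteq C\s(T)''$, and $C\s(T)''$, being a commutant, is strongly closed by Remark \ref{rem:continofoperations}(v).

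Finally, part (ii) I would deduce from (i) rather than reprove. The operators $T\s T$ and $TT\s$ are positive, so (i) gives $(T\s T)^t\to P_{T\s T}$ and $(TT\s)^t\to P_{TT\s}$ strongly, while Proposition \ref{prop:supportprojections1}(vi) identifies $P_{T\s T}=P_T$ and $P_{TT\s}=Q_T$; since $T$ is positive, $P_T=Q_T$ by Proposition \ref{prop:supportprojections1}(v), so the limits match the stated equalities. The one point of care in the overall argument is ordering: the strong convergence in (i) must be proved independently of the identity $p(T)=P_T$, so that property (b) may invoke it without circular reasoning.
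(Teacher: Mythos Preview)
Your argument is correct and takes a genuinely different route from the paper's. The paper proceeds spatially: it uses the orthogonal decomposition $H=N(T)\oplus\overline{R(T)}$ (valid since $T$ is normal), checks $T^t x\to P_T x$ separately on each summand---on $N(T)$ via the non-unital continuous functional calculus, on $R(T)$ via the norm continuity $T^{t+1}\to T$ from Proposition \ref{prop:positiveroots}(ii)---and then passes to $\overline{R(T)}$ by boundedness and density (Proposition \ref{prop:lessseminormstops}(i)); part (iii) is handled by a parallel computation on the two summands. Your approach instead packages everything through the Borel functional calculus: you identify the target projection as $\chi_{(0,\infty)}(T)$, and your squaring device (passing from weak convergence of $(f_t-p)^2(T)$ to strong convergence of $f_t(T)$) is precisely the content of Proposition \ref{prop:weakstrongnet1}/Corollary \ref{cor:weakstrongzero} specialized to this setting. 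What your route buys is uniformity---(i) and (iii) fall out of a single lemma by choosing $f_t(s)=s^t$ or $f_t(s)=s/(s+t)$---and it makes transparent why any bounded pointwise approximation of $\chi_{(0,\infty)}$ would work equally well. The paper's route is more elementary in that it never invokes Borel functions or the bounded convergence theorem, relying only on the continuous calculus and direct Hilbert-space geometry. Your handling of (ii), including the observation that the apparent swap of $P_T$ and $Q_T$ is harmless because $P_T=Q_T$ for positive $T$, matches the paper's one-line reduction to (i) via Proposition \ref{prop:supportprojections1}(vi).
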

\begin{proof}
\begin{itemize}
\item [(i)] Given $t>0$, the function $f(x):=x^t$ is a limit of polynomials whose constant terms are zero. So, by the non-unital continuous functional calculus, if $Tx=0$, then $T^t x=0$ for all $t>0$. This proves $T^tx\ra P_T x$ for all $x\in N(T)$. Since $T$ is positive (normal), by Proposition \ref{prop:normaloperator}, $\overline{R(T)}=N(T)^\perp$. Hence
\begin{equation}
\label{eqn:NRdirectsum1}
H=N(T)\oplus \overline{R(T)}.
\end{equation}
Therefore it is enough to show that $T^t x \ra P_T x$ for all $x\in R(T)$. Since $T$ is positive, by Proposition \ref{prop:positiveroots}(ii), $T^{t+1}\ra T$. For given $x\in R(T)$, $x=T(y)$ for some $y\in H$. Therefore we have \[
Tx=T^{t+1} y \ra Ty=x=Q_T x=P_T x, \qquad \forall x\in R(T).
\]
By Proposition \ref{prop:lessseminormstops}(i), this convergence holds for all $x\in \overline{R(T)}$ as well.
\item [(ii)] It follows from (i) and Part (vi) of the above proposition.
\item [(iii)] Similar to (i), we use the equality (\ref{eqn:NRdirectsum1}) and prove the convergence in two steps. Since $T$ is positive, $T+t$ is invertible for all $t\in ]0,\infty[$. On the other hand, for every $t\in ]0,\infty[$, $(T+t)\inv\in C\s(T,1)$, and so it commutes with $T$. This implies that $N(T)\sub N(T(T+t)\inv)$ for all $t\in ]0,\infty[$. Hence $T(T+t)\inv x= P_Tx=0$ for all $x\in N(T)$. For every $x\in R(T)$, we have
\[
T(T+t)\inv x = (T+t)\inv (T+t) x + tx = x+tx\ra x=Q_T x =P_T x, \quad \text{as}\, t\ra 0.
\]
\end{itemize}
\end{proof}
\begin{definition}
An element $a$ in a \cs-algebra $A$ is called a {\bf partial isometry} if $a\s a$ is a projection in $A$, which is called the {\bf support projection of $a$}. In the context of operators, an operator $T\in B(H_1, H_2)$ between two Hilbert spaces is called  a {\bf partial isometry} if $T\s T$ is a projection in $B(H_1)$, which is called the {\bf support projection of $T$}.
\end{definition}
One easily sees that the above notions of partial isometry coincide on $B(H)$. If $P_0$ is a projection in $B(H)$, then $P_{P_0}=Q_{P_0}=P_0$. Therefore for every partial isometry $T\in B(H)$, we have $P_T=P_{T\s T} = T\s T$. This justifies the name ``support projection''.
\begin{proposition}
\label{prop:partialisometry} Let $T\in B(H_1,H_2)$ be a bounded operator between two Hilbert spaces. Then the following conditions are equivalent:
\begin{itemize}
\item [(i)] $T$ is a partial isometry.
\item [(ii)] $TT\s$ is a projection, ($T\s$ is a partial isometry).
\item [(iii)] $T=TT\s T$.
\item [(iv)] The restriction of $T$ on $N(T)^\perp$ is an isometry.
\end{itemize}
\end{proposition}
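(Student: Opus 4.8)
The plan is to prove the equivalence of the four conditions characterizing partial isometries by establishing a cycle of implications, relying throughout on the \cs-identity, the relationship $\|Tx\|^2 = \lan T\s T x, x\ran$, and the description of $N(T)^\perp = \overline{R(T\s)}$ from Lemma \ref{lem:rangenull} and Proposition \ref{prop:normaloperator}. I would organize the argument as (i)$\Rightarrow$(iv)$\Rightarrow$(iii)$\Rightarrow$(i), and then handle the symmetric condition (ii) separately by invoking the symmetry of (iii) under the involution.

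\emph{First} I would show (i)$\Rightarrow$(iv). Assume $E:=T\s T$ is a projection. Since $E$ is the right support projection (note $P_{T\s T}=P_T$ by Proposition \ref{prop:supportprojections1}(vi), and for a projection $P_{T\s T}=T\s T$), we have $R(E)=N(T)^\perp$ and $N(E)=N(T)$. For $x\in N(T)^\perp$, we compute $\|Tx\nt = \lan T\s T x, x\ran = \lan Ex, x\ran = \lan x, x\ran = \|x\nt$, where $Ex=x$ because $x\in R(E)$. Hence $T$ restricted to $N(T)^\perp$ is an isometry. \emph{Next}, for (iv)$\Rightarrow$(iii), I would use the decomposition $H_1 = N(T)\oplus N(T)^\perp$ from Corollary \ref{cor:perp}(i). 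The identity $T=TT\s T$ is an operator equation, so it suffices to check it on each summand. On $N(T)$ both sides vanish. On $N(T)^\perp$, the key point is that $T\s T$ acts as the identity: for $x\in N(T)^\perp$ and all $y\in N(T)^\perp$, using the isometry property and the polarization identity in Proposition \ref{prop:pre-inner}(iii), one gets $\lan T\s T x, y\ran = \lan Tx, Ty\ran = \lan x, y\ran$, and for $y\in N(T)$ both inner products vanish; by Corollary \ref{cor:innerzero} this forces $T\s T x = x$ on $N(T)^\perp$. Then $TT\s T x = T(T\s T x) = Tx$ for $x\in N(T)^\perp$, completing the verification.

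\emph{Then} for (iii)$\Rightarrow$(i), assuming $T=TT\s T$, set $E:=T\s T$. I would compute $E^2 = T\s T T\s T = T\s (TT\s T) = T\s T = E$, so $E$ is idempotent, and $E\s = (T\s T)\s = T\s T = E$ is self adjoint; hence $E$ is a projection and $T$ is a partial isometry. \emph{Finally}, to incorporate (ii), I would observe that condition (iii), namely $T=TT\s T$, is \emph{self-dual}: taking adjoints gives $T\s = T\s T T\s$, which is exactly the statement that $T\s$ satisfies (iii) in the roles $H_2\ra H_1$. Therefore, by the equivalence (i)$\Leftrightarrow$(iii) already proved (applied to $T\s$), the statement ``$TT\s$ is a projection'' is equivalent to ``$T\s = T\s T T\s$'', which in turn is equivalent to (iii). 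This closes the loop and shows (ii) is equivalent to the rest.

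The main obstacle I anticipate is the clean verification that $T\s T$ restricts to the identity on $N(T)^\perp$ in the step (iv)$\Rightarrow$(iii); one must be careful to split the inner product calculation across both summands of $H_1$ so as to pin down $T\s T x$ completely, rather than only knowing its norm. Everything else is routine manipulation with the \cs-identity and the adjoint relation, and the self-duality observation for (ii) avoids any repetition of the argument.
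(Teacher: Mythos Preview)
Your argument is correct, and the cycle (i)$\Rightarrow$(iv)$\Rightarrow$(iii)$\Rightarrow$(i) together with the self-duality observation for (ii) closes cleanly.

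The paper organizes the proof differently. For the algebraic implications it invokes Problem~\ref{e:3-13}: from (i) one gets $(TT\s)^3=(TT\s)^2$, hence $TT\s$ is a projection, and from (iii) one gets $(T\s T)^3=T\s T$, hence $T\s T$ is a projection. You instead compute $E^2=T\s(TT\s T)=T\s T=E$ directly and then use the adjoint-symmetry of (iii) to get (ii), avoiding that lemma entirely. For the link with (iv), the paper goes (iv)$\Rightarrow$(i) by showing $\lan P_T x,x\ran=\lan T\s T x,x\ran$ on each summand and then appealing to Problem~\ref{e:5-18} to conclude $P_T=T\s T$; you go (iv)$\Rightarrow$(iii) by using polarization to show $T\s T$ restricts to the identity on $N(T)^\perp$, which is a bit more hands-on but equally valid. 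Your route is slightly more self-contained (no external problems needed), while the paper's is shorter once those lemmas are granted.
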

\begin{proof}
Assume (i), then $(TT\s)^3=(TT\s)^2$. Therefore by Problem \ref{e:3-13}, $TT\s$ is a projection. Hence (ii) holds. Similarly, (ii) implies (i).

Assume (iii), then $T\s T=(T\s T)^3$. Therefore by Problem \ref{e:3-13}(ii), $T\s T$ is a projection. Hence (i) holds. Conversely, in the above, we observed that (i) implies that $P_T=T\s T$. Therefore $T=TP_T=TT\s T$. Hence (iii) follows from (i).

Assume (iv). For every $x\in N(T)^\perp$, we compute
\[
\lan P_T x, x\ran =\lan x, x\ran= \|x\|^2= \|Tx\|^2= \lan Tx, Tx\ran=\lan T\s T x, x\ran.
\]
Also, for every $x\in N(T)$, we have $\lan P_T x,x\ran =0=\lan T\s T x,x\ran$. Using the decomposition $H=N(T)^\perp \oplus N(T)$, we obtain $\lan P_T x,x\ran =\lan T\s T x,x\ran$ for all $x\in H$. Therefore by Problem \ref{e:5-18}, $P_T=T\s T$, and so (i) holds. If (i) holds, then $P_T=T\s T$. Therefore for every $x\in N(T)^\perp$, we compute
\[
\|Tx\|^2= \lan Tx, Tx \ran= \lan T\s Tx, x\ran=\lan P_T x,x\ran= \lan x,x\ran=\|x\|^2.
\]
Hence (iv) holds.
\end{proof}
\begin{exercise}
Let $T\in B(H)$. Prove that $T$ is a partial isometry if and only if $P_T=T\s T$.
\end{exercise}
Similar to polar decomposition of elements of $\c$, every element $T$ of the algebra $B(H)$ has a left (resp. right) polar decomposition $T=U|T|$ (resp. $T=|T\s|U$), where $U$ is a partial isometries. These decompositions have many applications in the theory of \cs-algebras.
\begin{theorem}
\label{thm:polardecomp} [Polar decomposition] Let $T\in B(H_1, H_2)$ be a bounded operator between two Hilbert spaces. Then there exists a unique partial isometry $U\in B(H_1,H_2)$ such that $T=U|T|$, where $|T|:=(T\s T)^{1/2}$, and $N(T)=N(U)$. Furthermore, $U\s T=|T|$. This decomposition of $T$ is called the {\bf left polar decomposition of $T$}.

Moreover, when $H_1=H_2$, we have $U\in C\s(T,T\s)^{\prime \prime}\sub B(H_1)$.
\end{theorem}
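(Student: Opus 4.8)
The plan is to construct $U$ directly from the defining relation $U|T|=T$, exploiting the fundamental identity $\||T|x\|=\|Tx\|$ for all $x\in H_1$, which holds because $\||T|x\|^2=\lan |T|^2 x,x\ran=\lan T\s Tx,x\ran=\lan Tx,Tx\ran=\|Tx\|^2$. First I would record the immediate consequence $N(|T|)=N(T)$, and, since $|T|$ is positive and hence normal, invoke Proposition \ref{prop:normaloperator} to get $\overline{R(|T|)}=N(|T|)^\perp=N(T)^\perp$, together with the orthogonal decomposition $H_1=N(T)\oplus\overline{R(|T|)}$ from Corollary \ref{cor:perp}(i).

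Next I would define $U$ on the dense subspace $R(|T|)$ of $\overline{R(|T|)}$ by $U(|T|x):=Tx$. The isometry identity makes this both well defined (if $|T|x=|T|y$ then $|T|(x-y)=0$, so $T(x-y)=0$) and norm preserving, so $U$ extends uniquely by continuity to an isometry on $\overline{R(|T|)}$; I then set $U:=0$ on $N(T)$ and extend linearly using the decomposition of $H_1$. By construction $U|T|=T$, $N(U)=N(T)$, and the restriction of $U$ to $N(U)^\perp=\overline{R(|T|)}$ is an isometry, so $U$ is a partial isometry by the equivalence (iv)$\Rightarrow$(i) of Proposition \ref{prop:partialisometry}. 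The relation $U\s T=|T|$ I would deduce from $U\s U=P_{\overline{R(|T|)}}$ (the support projection of $U$) via $U\s T=U\s U|T|=P_{\overline{R(|T|)}}|T|=|T|$, since $R(|T|)\subseteq\overline{R(|T|)}$. Uniqueness is then routine: any partial isometry $V$ with $T=V|T|$ and $N(V)=N(T)$ must agree with $U$ on $R(|T|)$, hence on $\overline{R(|T|)}$ by continuity, and both vanish on $N(T)$, so $V=U$.

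The main work is the last assertion, $U\in C\s(T,T\s)''$ when $H_1=H_2=H$. Here I would take an arbitrary $S\in C\s(T,T\s)'$ and show $SU=US$. The key preliminary observation is that $|T|=(T\s T)^{1/2}$ lies in $C\s(T\s T)\subseteq C\s(T,T\s)$ by the non-unital continuous functional calculus, so $S$ commutes with $|T|$ as well as with $T$ and $T\s$. Then $US|T|=U|T|S=TS=ST=SU|T|$ shows that $US$ and $SU$ agree on $R(|T|)$, hence on $\overline{R(|T|)}=N(T)^\perp$ by continuity. On $N(T)$ both sides vanish: $SUx=0$ because $Ux=0$, and $USx=0$ because $Sx\in N(T)=N(U)$ (as $T(Sx)=S(Tx)=0$). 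Using the decomposition $H=N(T)^\perp\oplus N(T)$ this yields $SU=US$ on all of $H$, whence $U\in C\s(T,T\s)''$.

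The step I expect to be most delicate is keeping the bookkeeping on $N(T)$ consistent throughout: ensuring that $U$ genuinely vanishes there, that $S$ leaves $N(T)$ invariant, and that the commutation established separately on the two orthogonal summands assembles correctly. The analytic content, namely $\||T|x\|=\|Tx\|$ and the continuous extension, is straightforward; the subtlety in the commutant argument lies in correctly locating $|T|$ inside $C\s(T,T\s)$ and in the invariance $S\,N(T)\subseteq N(T)$.
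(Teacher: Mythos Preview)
Your construction of $U$ and the verification of $T=U|T|$, $N(U)=N(T)$, $U\s T=|T|$, and uniqueness are essentially identical to the paper's, down to defining $U$ on $R(|T|)$ via $|T|x\mapsto Tx$, extending by continuity to $\overline{R(|T|)}=N(T)^\perp$, and setting $U=0$ on $N(T)$.

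Where you genuinely diverge is in the proof that $U\in C\s(T,T\s)''$ when $H_1=H_2$. The paper argues analytically: it claims that $T(T\s T+t)^{-1/2}\to U$ strongly as $t\to 0^+$, and since each $T(T\s T+t)^{-1/2}$ lies in $C\s(T,T\s,1)\subseteq C\s(T,T\s)''$ and the bicommutant is strongly closed, the conclusion follows. Your route is purely algebraic: you pick $S\in C\s(T,T\s)'$, note that $|T|\in C\s(T,T\s)$ so $S$ commutes with $|T|$, and then check $SU=US$ separately on $\overline{R(|T|)}$ (via $US|T|=TS=ST=SU|T|$) and on $N(T)$ (via the invariance $S\,N(T)\subseteq N(T)$). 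Your argument is correct and arguably cleaner; it avoids the functional-calculus limit entirely and isolates exactly the commutation needed. What the paper's approach buys is an explicit strong approximation of $U$ by elements built functionally from $T$ and $T\s$, which can be independently useful, whereas your argument yields membership in the bicommutant without producing any such approximants.
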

\begin{proof}
By Exercise \ref{exe:adjointopt}(v), $N(T)=N(T\s T)$. Hence the restriction of $T\s T$ to $N(T)^\perp=P_T H_1$ is one-to-one. For every $x\in N(T)^\perp$, we have $T\s Tx=P_T T\s Tx\in N(T)^\perp$, so the map $T\s T :N(T)^\perp\ra N(T)^\perp$ is a well defined bounded operator. Also, it follows from Lemma \ref{lem:rangenull} that the image of this map is dense in $N(T)^\perp$. These facts show that $|T|=(T\s T)^{1/2}:N(T)^\perp \ra N(T)^\perp$ is one-to-one and its image is dense in $N(T)^\perp$. It also follows easily from definition that $T:N(T)^\perp \ra R(T)$ is one-to-one and onto. One also notes that $R(T)$ is dense in $Q_TH_2$.

Now, we define a map $R(|T|)\ra R(T)$ by $|T|x \mapsto Tx$. One easily checks that this is a linear map. Also, by Problem \ref{e:5-20}, we have $\| |T|x\|=\|Tx\|$ for all $x\in H_1$, so it is an isometry. It extends to an isometry $U:N(T)^\perp \ra Q_TH_2$. Since $U$ is an isometry between two Banach spaces and its image is dense, it has to be onto. In fact, by Problem \ref{e:5-19}, $U$ is a unitary operator in $B( N(T)^\perp, Q_TH_2)$. Hence $U\s Tx= |T|x $ for all $x\in H_1$. We can extend $U$ to $H_1$ by setting $Ux=0$ for all $x\in N(T)$. Then by Proposition \ref{prop:partialisometry}(iv), $U\in B(H_1,H_2)$ is a partial isometry. It is clear that $T=U|T|$ and $N(T)=N(U)$. The uniqueness of $U$ follows from these equalities.

When $H_1=H_2$, in order to prove that $U\in C\s(T,T\s)^{\prime \prime}$, it is enough to show that $T(T\s T + t)^{-1/2}\ra U$ strongly as $t\ra 0$ (for $t\in ]0,\infty[$), see Remark \ref{rem:continofoperations}(v). Since $T\s T$ is positive, for every $t>0$, the operator $T\s T +t$ is invertible and positive, and so $(T\s T+t)^{-1/2}$ is well defined. For $x\in N(T)^\perp$, we compute
\begin{eqnarray*}
T(T\s T+t)^{-1/2} |T|x&=& T(T\s T+t)^{-1/2} (T\s T+t) x +T t^{-1/2}x\\
&=& Tx +T t^{-1/2}x \ra Tx= Ux, \qquad \text{(in norm) as}\quad t\ra 0.
\end{eqnarray*}
For $x\in N(T)$, we have $T(T\s T+t)^{-1/2} |T|x= 0 = Ux$. This proves the required convergence.
\end{proof}
\begin{corollary}
\label{cor:rightpolardecomp} Assume $T$ is as the above theorem and $T=U|T|$ is its left polar decomposition. Then $T=|T\s|U$. This decomposition of $T$ is called the {\bf right polar decomposition of $T$}.
\end{corollary}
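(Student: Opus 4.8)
The plan is to deduce the right polar decomposition from the left one by establishing a single intertwining identity and then comparing the two bounded operators $T$ and $|T\s|U$ on a dense subspace of $H_1$.

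First I would prove the intertwining relation
\[
T|T|=|T\s|T.
\]
The starting observation is the purely algebraic identity $(TT\s)T=T(T\s T)$, which by an easy induction gives $(TT\s)^n T= T(T\s T)^n$ for all $n$, hence $p(TT\s)T=Tp(T\s T)$ for every complex polynomial $p$. Since $\si(T\s T)$ and $\si(TT\s)$ both lie in $[0,\|T\|^2]$, I would approximate the function $t\mapsto t^{1/2}$ uniformly on $[0,\|T\|^2]$ by polynomials $p_k$; because the continuous functional calculus is isometric, $p_k(TT\s)\ra |T\s|$ and $p_k(T\s T)\ra |T|$ in norm. Multiplying the identity $p_k(TT\s)T=Tp_k(T\s T)$ on the appropriate side by the bounded operator $T$ and passing to the limit yields $|T\s|T=T|T|$.

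Next I would use the decomposition $H_1=N(T)\oplus N(T)^\perp$ from Corollary \ref{cor:perp}(i), noting that $N(T)^\perp=\overline{R(|T|)}$ since $|T|$ is self adjoint with $N(|T|)=N(T\s T)=N(T)$. On $N(T)$ both $T$ and $|T\s|U$ vanish: $T$ kills $N(T)$ by definition, while $U$ kills $N(T)=N(U)$ by Theorem \ref{thm:polardecomp}, so $|T\s|U$ does too. On the dense subspace $R(|T|)$ I would take $x=|T|y$ and compute, using $U|T|=T$ and the intertwining identity,
\[
|T\s|Ux=|T\s|U|T|y=|T\s|Ty=T|T|y=Tx.
\]
Thus $T$ and $|T\s|U$ agree on $R(|T|)$, hence on its closure $N(T)^\perp$ by continuity, and also on $N(T)$; since these two closed subspaces span $H_1$, we conclude $T=|T\s|U$, which is the asserted right polar decomposition.

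The routine parts are the induction and the approximation estimate; the only point requiring a little care, and what I would regard as the main obstacle, is keeping track of where each operator is supported, i.e. verifying that $N(T)^\perp=\overline{R(|T|)}$ and that the agreement on the dense range $R(|T|)$ extends by boundedness to all of $N(T)^\perp$. Once the intertwining identity $T|T|=|T\s|T$ is in hand, everything else is a short matching of the two operators on complementary pieces of $H_1$.
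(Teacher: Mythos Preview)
Your argument is correct. The paper takes a slightly different route: instead of your intertwining relation $|T\s|T=T|T|$, it establishes the conjugation identity $|T\s|=U|T|U\s$ by first checking $(TT\s)^n=U(T\s T)^nU\s$ for all $n$ and then invoking Problem~\ref{e:3-14} to pass to the square root. From there the conclusion is purely algebraic: $|T\s|U=U|T|U\s U=U|T|P_T=U|T|=T$, using $U\s U=P_T$ and $|T|P_T=|T|$ from Problem~\ref{e:5-21}. Both proofs rest on the same mechanism---pushing a polynomial identity in $TT\s$ and $T\s T$ through the continuous functional calculus to reach the square root---but sandwich different operators. The paper's version is a touch cleaner once the conjugation formula is in hand, since no subspace decomposition is needed; your version is more self-contained, as it avoids appealing to the support-projection identities and does the density argument directly.
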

\begin{proof}
It is clear that $(TT\s)^n= U(T\s T)^nU\s$ for all $n\in \n$. Therefore by Problem \ref{e:3-14}, we have $|T\s|= U|T|U\s$. Since $U\s U=P_T$ and $|T|P_T=|T|$, see Problem \ref{e:5-21}, we obtain $T=U|T|=|T\s|U$.
\end{proof}
Some of the easy properties of the polar decomposition is listed in the following exercise:
\begin{exercise}
\label{exe:polardecomp} Assume $T$ is as the above theorem.
\begin{itemize}
\item [(i)] Show that if $T\s T$ is invertible, then $U$ is an isometry and $U=T(T\s T)^{-1/2}$.
\item [(ii)] Show that if $T$ is invertible (or more generally, $T$ is one-to-one and its image is dense), then $U$ is a unitary operator.
\item [(iii)] Show that if $H_1=H_2$ and $T\s T$ is invertible, then $U\in C\s(T,T\s)$.
\end{itemize}
\end{exercise}


\section{Compact operators}
\label{sec:compactoperators}
In this section, $H$ is a Hilbert space. By Proposition \ref{prop:compactoperators}, the algebra $K(H)$ is a closed two sided ideal of $B(H)$. Therefore by Proposition \ref{prop:closedidealcs}, $K(H)$ is closed under involution, and so is a \cs-subalgebra of $B(H)$. It is called the {\bf \cs-algebra of compact operators on $H$}. When $H$ is an infinite dimensional separable Hilbert space, or equivalently $H\simeq \ell^2$, $K(H)$ is briefly called the {\bf \cs-algebra of compact operators} and is denoted by $\mathcal{K}$. The quotient \cs-algebra $B(H)/K(H)$ is called the {\bf Calkin algebra of $H$} and is denoted by $Q(H)$. When $H\simeq \ell^2$, it is briefly called the {\bf Calkin algebra} and is denoted by $\mathcal{Q}$.
\begin{definition}
An operator $T\in B(H)$ is called {\bf diagonalisable} if there exists an orthonormal basis for $H$ consisting of eigenvectors of $T$.
\end{definition}
\begin{exercise}
Show that every diagonalisable operator $T\in B(H)$ is normal.
\end{exercise}
The converse of the above exercise is not generally true, see the following example:
\begin{example}
Define an operator $S:\ell^2(\z)\ra \ell^2(\z)$ by $S(\d_n):=\d_{n+1}$ and extend this rule linearly, where as usual $\d_n$ is the characteristic function of $\{n\}$. This operator is bounded and is called the {\bf bilateral shift operator}. It is a unitary operator, and so normal. One can checks that $S$ has no eigenvalues, and so is not diagonalisable.
\end{example}
\begin{proposition}
\label{prop:compactdiag}
Every normal compact operator $T\in K(H)$ is diagonalisable.
\end{proposition}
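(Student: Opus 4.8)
The plan is to produce an orthonormal basis of $H$ consisting of eigenvectors of $T$ by assembling orthonormal bases of the individual eigenspaces and then verifying maximality. Concretely, for each eigenvalue $\lambda \in e(T)$ (including $\lambda = 0$ when $N(T) \neq \{0\}$) I would pick an orthonormal basis $B_\lambda$ of the eigenspace $N(T-\lambda)$; by Proposition \ref{prop:compactopt1}(ii) each $B_\lambda$ with $\lambda \neq 0$ is finite, and by Theorem \ref{thm:compactoptspec}(iii) there are at most countably many such $\lambda$, although $N(T)$ itself may be large. Setting $B := \bigcup_\lambda B_\lambda$, the whole problem reduces, via Theorem \ref{thm:basisHilbert}, to showing that $B$ is orthonormal and maximal, i.e. that $M := [B]$ satisfies $M = H$.

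First I would establish the orthogonality that makes $B$ an orthonormal set. The key observation is that for a normal $T$ the operator $T - \lambda$ is again normal, so Proposition \ref{prop:normaloperator} applied to $T-\lambda$ gives $N(T-\lambda) = N(T^* - \overline{\lambda})$; hence $Tx = \lambda x$ forces $T^* x = \overline{\lambda} x$. Then for eigenvectors $x, y$ with eigenvalues $\lambda \neq \mu$,
\[
\lambda \langle x, y \rangle = \langle Tx, y \rangle = \langle x, T^* y \rangle = \mu \langle x, y \rangle,
\]
which yields $\langle x, y \rangle = 0$. Together with orthonormality within each $B_\lambda$, this shows $B$ is orthonormal and that $M$ is the closed orthogonal direct sum of the eigenspaces.

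The heart of the argument is proving $M^\perp = \{0\}$. Since every eigenvector is scaled by both $T$ and $T^*$, the subspace $M$ is invariant under $T$ and $T^*$, and therefore $M^\perp$ is $T$-invariant; the restriction $S := T|_{M^\perp}$ is then a compact normal operator on the Hilbert space $M^\perp$. Assuming $M^\perp \neq \{0\}$, I would derive a contradiction in two cases. If $S \neq 0$, then because $S$ is normal, $r(S) = \|S\| > 0$ by Proposition \ref{prop:spradius3}, so $\sigma(S)$ contains a nonzero point, which by Theorem \ref{thm:compactoptspec}(ii) is an eigenvalue of $S$; its eigenvector lies in $M^\perp$ yet is an eigenvector of $T$, hence in $M$, contradicting $M \cap M^\perp = \{0\}$. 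If $S = 0$, then $M^\perp \subseteq N(T) \subseteq M$, again forcing $M^\perp = \{0\}$. By Corollary \ref{cor:perp}(iii) this gives $M = H$, so $B$ is a maximal orthonormal set of eigenvectors, which is exactly the desired diagonalising basis.

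The main obstacle I anticipate is the careful verification that $S = T|_{M^\perp}$ is a legitimate compact normal operator to which the spectral theory of compact operators applies: one must confirm that $M^\perp$ is closed and $T$-invariant (using the $T^*$-invariance of $M$), that the restriction remains compact, and that $(T|_{M^\perp})^* = T^*|_{M^\perp}$ so that $S$ is indeed normal. Once this reduction is in place, everything else is bookkeeping with the already-established spectral theorem for compact operators, Theorem \ref{thm:compactoptspec}.
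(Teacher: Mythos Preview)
Your argument is correct and follows essentially the same route as the paper: pass to the orthogonal complement of the closed span of eigenvectors, observe that the restriction is a compact normal operator with no nonzero eigenvalues, and conclude via $\|S\|=r(S)=0$ that this complement lies in $N(T)$. The only cosmetic difference is that the paper starts from a Zorn's-lemma maximal orthonormal set of eigenvectors rather than your explicit union $\bigcup_\lambda B_\lambda$ of eigenspace bases, but the core reduction and contradiction are identical.
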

\begin{proof}
Let $E$ be a maximal orthonormal set of eigenvectors of $T$, which exists by Zorn's lemma. Let $H_0$ be the closed span of $E$. Then $H=H_0\oplus H^\perp_0$. One observes that the restriction of $T$ to $H_0$ is a compact operator which we denote it by $T'$. $T'$ is compact and normal. By maximality of $E$, $T'$ has no non-zero eigenvalues. Hence by Theorem \ref{thm:compactoptspec}(ii), $\si(T')$ has no non-zero element. Since $T'$ is normal, this implies that $\|T'\|=r(T')=0$, and so $T'=0$. Therefore $H_0^\perp$ is the eigenspace of the eigenvalue $0$ of $T$ and the union of $E$ with every orthonormal basis of $H_0^\perp$ is an orthonormal basis for $H$ consisting of eigenvectors of $T$. This contradicts with the maximality of $E$ unless $H_0^\perp=0$. In this case, $H=H_0$ and the proof is complete.
\end{proof}
In Proposition \ref{prop:compactoperators}, for every Banach space $E$, we proved that the closure of $F(E)$ is a subalgebra of $K(E)$. When $E$ is a Hilbert space, we can say more as the following proposition:
\begin{proposition}
\label{prop:compactoptfiniterank}
The \cs-algebra $K(H)$ is the closure of $F(H)$.
\end{proposition}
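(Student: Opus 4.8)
The plan is to establish the two inclusions separately. One of them, $\overline{F(H)}\sub K(H)$, is already available: Proposition \ref{prop:compactoperators}(iii) states that $F(H)$ and its norm closure are subalgebras of $K(H)$. So the entire content lies in the reverse inclusion $K(H)\sub \overline{F(H)}$, that is, in showing that every compact operator on $H$ is a norm limit of finite rank operators. First I would fix $T\in K(H)$ and reduce the problem to the positive compact operator $T\s T$. Since $K(H)$ is a two sided ideal of $B(H)$ by Proposition \ref{prop:compactoperators}(ii), the operator $T\s T$ is compact, and being positive it is in particular normal.

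Next I would invoke the spectral picture developed just above. By Proposition \ref{prop:compactdiag}, $T\s T$ is diagonalisable, so there is an orthonormal basis $\{e_i\}_{i\in I}$ of $H$ consisting of eigenvectors of $T\s T$, say $T\s T e_i=\la_i e_i$ with $\la_i\geq 0$. By Theorem \ref{thm:compactoptspec}(ii)--(iii) the set of nonzero eigenvalues is at most countable with $0$ as its only possible limit point, and by Proposition \ref{prop:compactopt1}(ii) each nonzero eigenvalue has finite dimensional eigenspace; hence, counted with multiplicity, the nonzero $\la_i$ can be listed as a (finite or infinite) decreasing sequence $\la_1\geq \la_2\geq\cdots$ with $\la_n\ra 0$, attached to eigenvectors $e_1,e_2,\dots$. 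For each $n$ let $P_n$ be the orthogonal projection onto $\mathrm{span}\{e_1,\dots,e_n\}$; concretely $P_n=\sum_{k=1}^n e_k\otimes e_k$ in the rank one notation of the section. Then $TP_n$ is a finite rank operator, since its range lies in the finite dimensional space $T(\mathrm{span}\{e_1,\dots,e_n\})$.

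The core estimate, which I expect to be the only real step, is $\|T-TP_n\|\ra 0$. I would extract it from the identity $\|Tx\nt=\lan T\s T x, x\ran$. For $x\in H$ write $x=x_0+\sum_k \lan x,e_k\ran e_k$, where $x_0$ lies in the eigenspace of the eigenvalue $0$; then $(1-P_n)x=x_0+\sum_{k>n}\lan x,e_k\ran e_k$ and $T\s T(1-P_n)x=\sum_{k>n}\la_k\lan x,e_k\ran e_k$, so that
\[
\|T(1-P_n)x\nt=\lan T\s T(1-P_n)x,(1-P_n)x\ran=\sum_{k>n}\la_k|\lan x,e_k\ran|^2\leq \la_{n+1}\|x\nt,
\]
using Bessel's inequality in the last step. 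Taking the supremum over $\|x\|\leq 1$ gives $\|T-TP_n\|=\|T(1-P_n)\|\leq \la_{n+1}^{1/2}\ra 0$, so $T=\lim_n TP_n\in \overline{F(H)}$, which together with the first paragraph yields $K(H)=\overline{F(H)}$. (Alternatively, one could approximate the positive operator $|T|=(T\s T)^{1/2}$ by truncating its eigen-expansion and then transport the approximation to $T$ through the polar decomposition $T=U|T|$ of Theorem \ref{thm:polardecomp} with $U$ bounded; the estimate is identical.) The only delicate points are the bookkeeping of the eigenvalue enumeration and the observation that the correct finite rank approximant is $TP_n$ rather than $P_n$ itself.
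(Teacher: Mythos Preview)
Your proof is correct, but it follows a different reduction strategy than the paper's. The paper first invokes the decomposition of an arbitrary element of a \cs-algebra as a linear combination of four positive elements (Corollary \ref{cor:4positive}) to reduce to the case where $T$ itself is positive, and then diagonalises $T$ directly via Proposition \ref{prop:compactdiag}; the finite rank approximants are the partial sums $\sum_{m=1}^n T_m$ of the eigen-expansion of $T$, with the clean estimate $\|T-\sum_{m=1}^n T_m\|=\la_{n+1}$.

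You instead keep $T$ arbitrary and diagonalise the positive compact operator $T\s T$, taking $TP_n$ as the approximant where $P_n$ is the spectral projection of $T\s T$; the estimate $\|T-TP_n\|\leq \la_{n+1}^{1/2}$ comes from the identity $\|T(1-P_n)x\nt=\lan T\s T(1-P_n)x,(1-P_n)x\ran$. This avoids the four-positives decomposition entirely and handles all $T$ at once, at the price of a slightly less transparent approximant and a square root in the bound. Your argument is essentially the singular value decomposition in disguise (the parenthetical remark about polar decomposition makes this explicit). Both routes rest on the same spectral input (Proposition \ref{prop:compactdiag} and Theorem \ref{thm:compactoptspec}), so the difference is one of packaging rather than depth.
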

\begin{proof}
Let $T$ be a compact operator. Without loss of generality, using the fact that every element of a \cs-algebra is the linear combination of four positive elements, we can assume that $T$ is positive. Therefore by Proposition \ref{prop:compactdiag}, there is an orthonormal basis $B$ for $H$ consisting of eigenvectors of $T$. Using Theorem \ref{thm:compactoptspec}, we can arrange the set of eigenvalues of $T$ as a decreasing sequence $\la_1>\la_2>\cdots$ of non-negative real numbers. For every $n\in \n$, let $B_n$ be the subset of $B$ consisting of eigenvectors of $\la_n$ and define
\[
T_n(u):=\left\{
\begin{array}{ll}
  \la_n u & u\in B_n \\
  0 &  u\in B- B_n
\end{array} \right.
\]
and extend $T_n$ linearly to $H$. Clearly, $T_n$ is a finite rank operator, and so is $\sum_{m=1}^n T_m$ for every $n\in \n$. Using Theorem \ref{thm:compactoptspec}, we have
\[
\|T-\sum_{m=1}^n T_m \|=\la_{n+1}\ra 0, \qquad \text{when} \quad n\ra \infty.
\]
Therefore $T$ is the limit of the sequence $(\sum_{m=1}^n T_m)_{n\in\n}$ of finite rank operators.
\end{proof}
\begin{corollary}
\label{cor;compactsimple}
The \cs-algebra $K(H)$ is simple.
\end{corollary}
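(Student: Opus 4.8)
The plan is to show that every non-zero closed ideal $J$ of $K(H)$ must be all of $K(H)$, by proving that $J$ contains every rank one operator and then invoking closedness together with Proposition \ref{prop:compactoptfiniterank}. The argument is essentially a copy of the one used in Proposition \ref{prop:finiterank3}, with one crucial adjustment: the operators by which one multiplies can be taken of finite rank, hence compact, so that the ideal structure of $K(H)$ itself (rather than that of $B(H)$) is already enough.

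First I would fix a non-zero closed ideal $J$ of $K(H)$ and pick some $0\neq T\in J$. Since $T\neq 0$, there is a vector not annihilated by $T$, and after rescaling I may assume there is $x_0\in H$ with $\|Tx_0\|=1$. Setting $y_0:=Tx_0$, this gives $\langle y_0,y_0\rangle=\|y_0\|^2=1$, which is exactly the normalization that will make the next computation collapse to a single rank one operator.

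Next, for arbitrary $x,y\in H$ I would compute, using parts (iii) and (v) of Exercise \ref{exe:finiterank},
\[
(x\otimes y_0)\,T\,(x_0\otimes y)=(x\otimes y_0)(Tx_0\otimes y)=(x\otimes y_0)(y_0\otimes y)=\langle y_0,y_0\rangle\,(x\otimes y)=x\otimes y .
\]
The operators $x\otimes y_0$ and $x_0\otimes y$ have rank one, hence are finite rank and therefore compact, so they lie in $K(H)$. As $J$ is a two sided ideal of $K(H)$ and $T\in J$, the left hand side lies in $J$; consequently $x\otimes y\in J$ for all $x,y\in H$.

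Finally, since $J$ contains every rank one operator, Proposition \ref{prop:finiterank2} yields $F(H)\subseteq J$. Because $J$ is closed and, by Proposition \ref{prop:compactoptfiniterank}, $K(H)$ is the closure of $F(H)$, it follows that $K(H)=\overline{F(H)}\subseteq J$, so $J=K(H)$. Hence $0$ and $K(H)$ are the only closed ideals of $K(H)$, and $K(H)$ is simple. I do not expect any serious obstacle; the single point requiring care is the observation that the multiplying operators $x\otimes y_0$ and $x_0\otimes y$ are compact, which is what lets the ideal property of $J$ inside $K(H)$ suffice without passing to ideals of $B(H)$.
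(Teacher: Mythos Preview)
Your proof is correct and follows the same overall strategy as the paper: show that any non-zero closed ideal $J$ of $K(H)$ contains $F(H)$, and then use Proposition \ref{prop:compactoptfiniterank} together with closedness of $J$ to conclude $J=K(H)$.

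The difference lies only in how the inclusion $F(H)\subseteq J$ is obtained. The paper simply invokes Proposition \ref{prop:finiterank3}, which concerns ideals of $B(H)$; this implicitly uses Proposition \ref{prop:idealideal} (a closed ideal of the closed ideal $K(H)$ of $B(H)$ is automatically an ideal of $B(H)$), so that Proposition \ref{prop:finiterank3} applies directly. You instead rerun the rank-one computation from the proof of Proposition \ref{prop:finiterank3} and observe that the flanking operators $x\otimes y_0$ and $x_0\otimes y$ already lie in $K(H)$, so the ideal property of $J$ inside $K(H)$ suffices on its own. Your route is slightly more self-contained and makes explicit the point the paper leaves tacit; the paper's route is shorter once one is willing to cite the earlier results.
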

\begin{proof}
Let $I$ be a non-zero closed ideal of $K(H)$. Then by Proposition \ref{prop:finiterank3}, $F(H)\sub I$. Since $I$ is closed, $K(H)=\overline{F(H)}\sub I$.
\end{proof}
\begin{proposition}
\label{prop:multipliercompact}
$M(K(H))=B(H)$.
\end{proposition}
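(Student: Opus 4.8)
The plan is to realize the natural injection of $B(H)$ into $M(K(H))$ and to prove it is onto. Since $K(H)$ is an essential ideal of $B(H)$ (as noted right after Proposition \ref{prop:finiterank3}), Proposition \ref{prop:multiplier2} already supplies an injective \ss-homomorphism $\ff:B(H)\ra M(K(H))$, namely $T\mapsto (L_T,R_T)$ with $L_T(S)=TS$ and $R_T(S)=ST$ for $S\in K(H)$; these are legitimate because $K(H)$ is a two sided ideal of $B(H)$, by Proposition \ref{prop:compactoperators}. So the whole task reduces to showing surjectivity: every double centralizer $(L,R)$ for $K(H)$ must equal $(L_T,R_T)$ for some $T\in B(H)$.

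To build $T$, I would fix a unit vector $e\in H$ and set $Th:=L(h\ten e)(e)$ for all $h\in H$. Linearity of $T$ is immediate from the linearity of $L$, of $h\mapsto h\ten e$, and of evaluation at $e$. Boundedness follows from
\[
\|Th\|=\|L(h\ten e)(e)\|\leq \|L(h\ten e)\|\leq \|L\|\,\|h\ten e\|=\|L\|\,\|h\|,
\]
using Exercise \ref{exe:finiterank}(ii), so $T\in B(H)$ with $\|T\|\leq\|L\|$.

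Next I would verify $\ff(T)=(L,R)$. Since $F(H)$ is dense in $K(H)$ by Proposition \ref{prop:compactoptfiniterank} and $L,R$ are continuous, it suffices to check the two identities on rank one operators. The key is the factorization $x\ten y=(x\ten e)(e\ten y)$, valid because $\|e\|=1$ by Exercise \ref{exe:finiterank}(v), combined with the axiom $L(ab)=L(a)b$: applying $L(x\ten y)=L(x\ten e)(e\ten y)$ to an arbitrary $h\in H$ yields $\langle h,y\rangle\,L(x\ten e)(e)=\langle h,y\rangle\,Tx=(Tx\ten y)(h)$, and by Exercise \ref{exe:finiterank}(iii) the right-hand side equals $T(x\ten y)(h)=L_T(x\ten y)(h)$. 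Hence $L=L_T$ on all of $K(H)$. For the second component I would use $R(a)b=aL(b)=a(Tb)=aTb=R_T(a)b$ for all $a,b\in K(H)$, so that $C:=R(a)-R_T(a)\in K(H)$ satisfies $Cb=0$ for every $b\in K(H)$; taking $b=z\ten y$ with $\|y\|=1$ and using $C(z\ten y)=Cz\ten y$ (Exercise \ref{exe:finiterank}(iii)) forces $Cz=0$ for all $z$, whence $R(a)=R_T(a)$. Therefore $\ff$ is bijective, and being an injective \ss-homomorphism it is an isometry by Corollary \ref{cor:csinjection}, giving the isometric \ss-isomorphism $M(K(H))=B(H)$.

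The main obstacle is surjectivity: one must extract the correct operator $T$ from an abstract double centralizer and confirm that \emph{both} components $L$ and $R$ are recovered. The two delicate points are spotting the rank one factorization $x\ten y=(x\ten e)(e\ten y)$ that lets the centralizer axioms do the work, and the two separation arguments with rank one operators (one for $L$, one for $R$) that reduce operator identities on $K(H)$ to vector identities in $H$.
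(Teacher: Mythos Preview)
Your proof is correct and follows essentially the same route as the paper: both use Proposition \ref{prop:multiplier2} for injectivity, define $T$ via $Th=L(h\otimes e)(e)$ for a fixed unit vector, and verify $L=L_T$ on rank one operators through the factorization $x\otimes y=(x\otimes e)(e\otimes y)$. The only difference is in recovering $R$: the paper observes that $(L_T-L,R_T-R)$ is a double centralizer, so $\|L_T-L\|=0$ forces $\|R_T-R\|=0$ by Proposition \ref{prop:doublecentralizer1}(i), whereas you argue directly from the axiom $R(a)b=aL(b)$ and a rank one separation; both are perfectly fine.
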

In the following proof, we use some parts of Exercise \ref{exe:finiterank}.
\begin{proof}
By Proposition \ref{prop:finiterank3}, $K(H)$ is an essential ideal of $B(H)$. Therefore by Proposition \ref{prop:multiplier2}, there is a one-to-one \ss-homomorphism $\ff:B(H)\ra M(K(H))$. To show $\ff$ is onto, let $(L,R)\in M(K(H))$. Fix a unit vector $u\in H$ and define
\begin{eqnarray*}
T:H&\ra& H,\\
x&\mapsto& L(x\otimes u)(u), \quad \forall x\in H.
\end{eqnarray*}
Clearly, $T$ is linear and we also have
\[
\|Tx\|\leq \|L(x\otimes u)\|\leq \|L\| \|s\otimes\|=\|L\|\|x\|, \quad \forall x\in H.
\]
Hence $T\in B(H)$. For every $x,y,z\in H$, we compute
\begin{eqnarray*}
[L_T(x\otimes y)] z&=& (Tx\otimes y)x\\
&=&\lan z,y\ran Tx\\
&=& \lan z, y \ran [L(x\otimes u)]u\\
&=& [L(x\otimes u)](\lan z, y\ran u)\\
&=& [L(x\otimes u)] (u\otimes y) z\\
&=&[L((x\otimes u)(u\otimes y)) ]z\\
&=& [L(x\otimes y)]z.
\end{eqnarray*}
This shows that $L_T=L$ over $F(H)$, and since $F(H)$ is dense in $K(H)$ and both $L$ and $L_T$ are bounded, $L_T=L$ over $K(H)$. This amounts to $0=\|L_T-L\|=\|R_T-R\|$. Hence $\ff(T)=(L_T,R_T)=(L,R)$, and therefore $\ff$ is onto.
\end{proof}

\begin{exercise}
Show that when $H$ is an infinite dimensional Hilbert space, $K(H)$ is not unital. Therefore $K(H)$ is not a von Neumann algebra.
\end{exercise}
The following proposition follows from elementary properties of von Neumann algebras:
\begin{proposition}
\label{prop:voncompact}
$K(H)''=B(H)$.
\end{proposition}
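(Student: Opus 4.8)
The plan is to reduce this bicommutant computation to a commutant computation by means of Proposition \ref{prop:basiccommutant}(v), which asserts that $K(H)''=B(H)$ if and only if $K(H)'=\c 1$. Thus it suffices to prove that the only bounded operators commuting with every compact operator are the scalar multiples of the identity. The inclusion $\c 1\sub K(H)'$ is immediate since the identity commutes with everything, so the entire content lies in establishing the reverse inclusion $K(H)'\sub \c 1$.

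First I would take an arbitrary $T\in K(H)'$. Since every rank one operator $x\ten y$ is compact (indeed finite rank, see Proposition \ref{prop:finiterank2}), $T$ must commute with all of them. It is enough to test this against the rank one projections: for every unit vector $y\in H$, the operator $y\ten y$ is a projection by Exercise \ref{exe:finiterank}(vi), and it commutes with $T$. Using Exercise \ref{exe:finiterank}(iii) and (iv) to compute the two sides of $T(y\ten y)=(y\ten y)T$, this commutation relation becomes $Ty\ten y = y\ten T\s y$. Evaluating both of these operators at the vector $y$ and using $\|y\|=1$ yields $Ty=\lan Ty, y\ran y$, so $Ty$ is a scalar multiple of $y$. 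Hence every nonzero vector of $H$ is an eigenvector of $T$.

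Next I would invoke the elementary fact that an operator all of whose nonzero vectors are eigenvectors must be a scalar multiple of the identity. Writing $Tx=\la_x x$ for each nonzero $x\in H$, one shows that $\la_x$ is independent of $x$: for a linearly independent pair $x,y$, applying $T$ to $x+y$ and comparing coefficients forces $\la_{x+y}=\la_x=\la_y$, while for a linearly dependent pair the equality $\la_x=\la_y$ is trivial. This produces a single scalar $\la\in\c$ with $T=\la 1$, so $K(H)'\sub \c 1$ and therefore $K(H)'=\c 1$. Applying Proposition \ref{prop:basiccommutant}(v) then delivers $K(H)''=B(H)$.

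I do not expect any serious obstacle in this proof; the argument is short because it bypasses topology entirely and works purely algebraically with rank one operators. The only step requiring mild care is the concluding eigenvector argument, namely verifying that the eigenvalue is common to all vectors, and it is precisely there that the hypothesis that $K(H)$ contains every rank one operator is used in full strength.
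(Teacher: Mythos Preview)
Your proposal is correct and follows essentially the same approach as the paper: both reduce to proving $K(H)'=\c 1$ via Proposition \ref{prop:basiccommutant}(v), and both establish this using rank one operators. The paper's proof is more terse because it simply invokes the argument already carried out inside the proof of Proposition \ref{prop:basiccommutant}(v) (where, given $T\notin\c 1$, an explicit rank one operator failing to commute with $T$ is constructed), whereas you argue the contrapositive directly by showing that commutation with all $y\otimes y$ forces every vector to be an eigenvector. These are two sides of the same coin, and your version has the minor advantage of avoiding the case split on $\lan Tx,x\ran$ that appears in the paper's earlier argument.
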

\begin{proof}
Let $T\in B(H) -\c 1$. In the proof of Proposition \ref{prop:basiccommutant}(v), we showed that $T$ does not commute with some finite rank operator. Hence $T\notin K(H)'$. This implies $K(H)'=\c 1$, and consequently $K(H)''=B(H)$ by Proposition \ref{prop:basiccommutant}(v).
\end{proof}

\section{Elements of von Neumann algebras}
\label{sec:vonneumann}
In this section, $H$ is always a Hilbert space. We mainly follow Gert K. Pedersen's \cite{pedersen1} book to prove the bicommutant theorem. Using this theorem, we observe that the image of the Borel functional calculus of an operator $T$ lies in $VN(T)=C\s(T,T\s)^{\prime \prime}$, the von Neumann algebra generated by $T$.
\begin{definition}
We say a {\bf \cs-subalgebra $A$ of $B(H)$ acts non-degenerately on $H$} if $x\in H$ and $Tx=0$ for all $T\in A$ implies $x=0$.
\end{definition}
\begin{theorem}
\label{thm:bicommutant} [The von Neumann bicommutant theorem] Let $M$ be a \cs-subalgebra of $B(H)$ acting non-degenerately on $H$. Then the following statements are equivalent:
\begin{itemize}
\item [(i)] $M=M^{\prime \prime}$.
\item [(ii)] $M$ is weakly closed.
\item [(iii)] $M$ is strongly closed.
\end{itemize}
\end{theorem}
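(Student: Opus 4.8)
The plan is to prove the cycle of implications $(i)\Rightarrow(ii)\Rightarrow(iii)\Rightarrow(i)$, reserving the bulk of the work for the last one. For $(i)\Rightarrow(ii)$ I would simply observe that $M=M''=(M')'$ is the commutant of a subset of $B(H)$, and by Remark \ref{rem:continofoperations}(v) every such commutant is weakly closed. For $(ii)\Rightarrow(iii)$, since the weak operator topology is coarser than the strong operator topology by Proposition \ref{prop:strongweaktop}(i), a weakly closed set is automatically strongly closed: its complement is weakly open, hence strongly open. The inclusion $M\subseteq M''$ always holds by Proposition \ref{prop:basiccommutant}(iii), so for $(iii)\Rightarrow(i)$ it remains to prove $M''\subseteq M$ under the assumption that $M$ is strongly closed.

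The heart of the matter is to show that every $T\in M''$ lies in the strong closure of $M$; since $M$ is strongly closed this forces $T\in M$. By the definition of the strong operator topology, this amounts to showing that for every finite family $x_1,\dots,x_n\in H$ and every $\ep>0$ there is $S\in M$ with $\|(T-S)x_i\|<\ep$ for all $i$. I would first treat the case $n=1$. Given $x\in H$, set $K:=\overline{Mx}$ and let $P$ be the orthogonal projection of $H$ onto the closed subspace $K$, a self-adjoint idempotent in $B(H)$ by the discussion opening Section \ref{sec:projections} (existence of the closed complement is Corollary \ref{cor:perp}(i)). Because $M$ is a self-adjoint algebra and each $S\in M$ is continuous, $SK\subseteq K$ for all $S\in M$; applying this to both $S$ and $S\s$ and taking adjoints yields $PS=SP$ for all $S\in M$, i.e. $P\in M'$. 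A short argument using non-degeneracy then gives $x\in K$: for every $S\in M$ we have $S(1-P)x=(1-P)Sx=0$ since $Sx\in K=R(P)$, so $(1-P)x=0$ by non-degeneracy. Finally, since $T\in M''$ and $P\in M'$, we get $Tx=TPx=PTx\in K=\overline{Mx}$, which is exactly the assertion that $Tx$ is approximable in norm by $Sx$ with $S\in M$.

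To pass from one vector to finitely many I would use the amplification trick. On the Hilbert space $H^n=H\oplus\cdots\oplus H$ of Example \ref{exa:constructions}(i), associate to each $S\in B(H)$ the diagonal operator $S^{(n)}(x_1,\dots,x_n):=(Sx_1,\dots,Sx_n)$, and set $M^{(n)}:=\{S^{(n)};\,S\in M\}$. The key computation, which I expect to be the main technical obstacle, is to identify the bicommutant of $M^{(n)}$: writing $B(H^n)$ as $n\times n$ matrices over $B(H)$, one checks that $(M^{(n)})'$ consists of all matrices with entries in $M'$, whence $(M^{(n)})''$ consists exactly of the diagonal operators $T^{(n)}$ with $T\in M''$. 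Commuting with the scalar matrix units pins down the diagonal-with-equal-entries form, while commuting with the diagonal matrices built from elements of $M'$ forces the common entry into $M''$. Granting this, $M^{(n)}$ acts non-degenerately on $H^n$ (inherited from $M$ on $H$), and $T^{(n)}\in(M^{(n)})''$ whenever $T\in M''$. Applying the $n=1$ result to $M^{(n)}$, $T^{(n)}$, and the vector $\xi=(x_1,\dots,x_n)$ produces $S\in M$ with $\|T^{(n)}\xi-S^{(n)}\xi\|<\ep$; since $\|T^{(n)}\xi-S^{(n)}\xi\|^2=\sum_{i=1}^n\|(T-S)x_i\|^2$, this yields the desired simultaneous approximation, showing $T$ is in the strong closure of $M$ and hence $M''\subseteq M$.
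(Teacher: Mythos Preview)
Your proof is correct and follows essentially the same route as the paper's: the easy implications are handled via Remark~\ref{rem:continofoperations}(v) and Proposition~\ref{prop:strongweaktop}(i), and the substantive direction $(iii)\Rightarrow(i)$ is obtained by first proving the single-vector approximation $Tx\in\overline{Mx}$ via the invariant-subspace projection argument and then passing to finitely many vectors by the diagonal amplification on $H^n$. The only cosmetic difference is that the paper merely checks $D(M)'=M_n(M')$ (relegated to Problem~\ref{e:5-22}) and deduces $D(T)\in D(M)''$ directly, whereas you go one step further and identify $(M^{(n)})''$ completely; either version suffices.
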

\begin{proof} The implications (i) $\Rightarrow$ (ii) $\Leftrightarrow$ (iii) follow from Proposition \ref{prop:strongweaktop}(i), Remark \ref{rem:continofoperations}(v), and Corollary \ref{cor:weakstrongconvexclosed}.

Assume (iii) holds. For given $x_0\in H$, let $X$ be the closure of the vector space $Mx_0:=\{Tx_0; T\in M\}$ and set $P:=P_X$. One checks $PTP=TP$ for all $T\in M$. Thus
\[
TP=(PT\s P)\s=(T\s P)\s= PT, \quad \forall T\in M,
\]
and so $P\in M'$. On the other hand, for every $T\in M$, we have $T(1-P)x_0=(1-P)Tx_0=0$. Since $M$ acts non-degenerately on $H$, $(1-P)x_0=0$. Hence $Px_0=x_0$. For given $S\in M^{\prime\prime}$, we have $SP=PS$, so $Sx_0 = SPx_0 =PSx_0 \in PH=X$. Thus for every $\ep_0>0$, there is $T\in M$ such that $\|(S-T)x_0\|<\ep_0$. Let
\[
U_{S, x_1,\cdots,x_n, \ep}:= \{ T\in B(H); \|(T-S)x_k\|<\ep, \, \forall k=1,\cdots, n \}
\]
be an arbitrary basic neighborhood in the strong operator topology of $B(H)$ containing $S$. We need to show that $U_{S, x_1,\cdots,x_n, \ep}$ contains an element of $M$.

Set $x:=(x_1,\cdots, x_n)\in H^n$ and define $D:B(H)\ra B(H^n)$ by
\[
D(T):=(Tx_1,\cdots, Tx_n), \quad \forall T\in B(H).
\]
Then using the isomorphism $B(H^n)\simeq M_n(B(H))$, we have
\[
D(M)'=\left\{ (T_{ij})\in B(H^n); T_{ij}\in M' \, \forall i,j=1,\cdots,n\right\},
\]
see Problem \ref{e:5-22}. Thus $D(S)\in D(M)^{\prime \prime}$. Now, apply the first part of the proof with $D(M)$, D(S), $x$, $\ep$, and $H^n$ in place of $M$, $S$, $x_0$, $\ep$, and $H$. Then there is some $T\in B(H)$ such that $\|(D(S)-D(T)) x\|<\ep$. Using this, for every $m=1,\cdots, n$, we have
\[
\|(S-T)x_m\|\leq \left(\sum_{k=1}^n \|(S-T)x_k\|^2\right)^{1/2} = \|(D(S)-D(T)) x\|<\ep.
\]
Therefore $T\in U_{S, x_1,\cdots,x_n, \ep}$.
\end{proof}
The following corollary is an immediate consequence of the bicommutant theorem:
\begin{corollary}
Let $M$ be a von Neumann algebra on $H$ and Let $T\in M$ be a normal element. Then for every Borel function $f\in B(\si(T))$, we have $f(T)\in M$.
\end{corollary}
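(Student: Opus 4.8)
The plan is to deduce the corollary directly from the von Neumann bicommutant theorem together with the continuity properties of the Borel functional calculus established in Theorem \ref{thm:borelfunctionalcalculus}. The key observation is that the Borel functional calculus $\Psi_T:B(\si(T))\ra B(H)$ was constructed as a \ws-to-weak-operator continuous extension of the continuous functional calculus, and that $C(\si(T))$ is \ws dense in $B(\si(T))$, see Remark \ref{rem:borelfc}. So the strategy is: show first that $f(T)\in M$ when $f$ is continuous, then pass to general Borel $f$ by approximation in the \ws topology, using that $M$ is weakly closed by the bicommutant theorem.

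First I would record that $M=M^{\prime\prime}$, and by Theorem \ref{thm:bicommutant} this means $M$ is weakly (equivalently strongly) closed in $B(H)$. Next I would handle the continuous case: for $f\in C(\si(T))$, the element $\Phi_T(f)=f(T)$ lies in $C\s(T,1)$, which is the closure of the algebra of polynomials in $T$ and $T\s$. Since $T\in M$ and $M$ is a \cs-subalgebra (hence norm closed, self-adjoint, and closed under multiplication), every such polynomial lies in $M$, and therefore its norm limit $f(T)$ lies in $M$ as well. Thus $\Phi_T(C(\si(T)))\sub M$.

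For the general Borel function $f\in B(\si(T))$, I would invoke the \ws density of $C(\si(T))$ in $B(\si(T))$ to choose a net $(f_\la)$ in $C(\si(T))$ with $f_\la\ra f$ in the \ws topology. By the continuity statement in Theorem \ref{thm:borelfunctionalcalculus}, $\Psi_T(f_\la)\ra \Psi_T(f)=f(T)$ in the weak operator topology of $B(H)$. Each $\Psi_T(f_\la)=f_\la(T)$ belongs to $M$ by the continuous case, so $f(T)$ is a weak operator limit of elements of $M$. Since $M$ is weakly closed, we conclude $f(T)\in M$.

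The main obstacle is simply making sure the approximation argument is legitimate, and here everything has already been arranged in the excerpt: the \ws density of $C(\si(T))$ in $B(\si(T))$ is Remark \ref{rem:borelfc}, the weak-operator continuity of $\Psi_T$ is part of the conclusion of Theorem \ref{thm:borelfunctionalcalculus}, and the weak closedness of $M$ is exactly what the bicommutant theorem supplies once we use $M=M^{\prime\prime}$. I do not expect any genuine computational difficulty; the only care needed is to keep the net-theoretic phrasing correct (nets rather than sequences, since the \ws topology need not be metrizable) and to verify the normality hypothesis is used precisely where the Borel functional calculus requires it, namely to guarantee $\Psi_T$ exists.
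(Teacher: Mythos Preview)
Your argument is correct and is exactly the natural way to unpack the paper's one-line claim that the corollary ``is an immediate consequence of the bicommutant theorem.'' The paper gives no further proof beyond that remark, and your proposal supplies precisely the expected details: continuous functional calculus lands in $M$ because $M$ is a unital \cs-subalgebra, and then the \ws-to-weak continuity of $\Psi_T$ together with the weak closedness of $M=M''$ handles the general Borel case.
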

\section{Problems}

\begin{e}
\label{e:5-1}
Describe all inner products on $\c^n$. Show that, for every natural number $n$, there is only one Hilbert space of dimension $n$ up to unitary equivalence.
\end{e}
\begin{e}
\label{e:5-2}
Let $(E,\|-\|)$ be a normed vector space. Prove that $E$ is a Banach space if and only if every absolutely convergent series in $E$ is convergent.
\end{e}
\begin{e}
\label{e:5-3}
Let $(X,\mu)$ be a measure space. Use the above exercise to show that $L^2(X,\mu)$ equipped with the norm defined by $\|f\|_2^2:=\int_X|f(x)|^2d\mu(x)$ for all $f\in L^2(X,\mu)$ is a Banach space.
\end{e}
\begin{e}
\label{e:5-4}
[The {\bf Gram-Schmidt orthogonalization process}] Assume
\[
X=\{x_n; n\in \n\}
\]
is a linearly independent subset of a Hilbert space $H$. Show that there is an orthonormal subset $\{u_n; n\in \n\}$ in $H$ such that $[\{x_1,\cdots, x_n\}]=[\{u_1,\cdots, u_n\}]$ for all $n\in \n$.
\end{e}
\begin{e}
\label{e:5-5}
Prove that the Hilbert space $L^2(\r^n,m)$ is separable for all $n\in \n$, where $m$ is the Lebesgue measure. More generally, let $(X,\mu)$ be a measure space such that the topology of $X$ has a countable basis (in other words, $X$ is {\bf second countable}) and $\mu$ is a Borel measure. Show that $L^2(X, \mu)$ is separable.
\end{e}
\begin{e}
\label{e:5-6}
Let $H$ be an infinite dimensional Hilbert space. Show that the weak topology on $H$ is not first countable. (Remember; a topological space is called {\bf first countable} if each point has a countable basis of neighborhoods.)
\end{e}
\begin{e}
\label{e:5-7}
Using the Uniform boundedness theorem, see \ref{thm:uniboundedness}, show that every weakly convergent sequence in a Hilbert space is norm bounded. On the contrary, find an example to show that a weakly convergent net in a Hilbert space need not be norm bounded.
\end{e}
\begin{e}
\label{e:5-23}
Let $X$ be a set and let $H$ be a Hilbert space. Show that Hilbert spaces $\ell^2(X) \otimes H$ and $H^X$ are unitary equivalent.
\end{e}
\begin{e}
\label{e:5-8}
Let $(X, \mu)$ be a measure space and let $H$ be a Hilbert space. Show that Hilbert spaces $L^2(X) \otimes H$ and $L^2(X, H)$ are unitary equivalent.
\end{e}
\begin{e}
\label{e:5-9}
Let $H_1$, $H_2$ and $H_3$ be Hilbert spaces. Show that
\[
(H_1\oplus H_2)\otimes H_3 \simeq (H_1\otimes H_3)\oplus (H_2\otimes H_3).
\]
\end{e}
\begin{e}
\label{e:5-10}
Assume $X=\{x_i; i\in I\}$ and $Y=\{y_j; j\in J\}$ be orthonormal bases for Hilbert spaces $H_1$ and $H_2$, respectively. Prove that $\{ x_i\otimes y_j; i\in I, \, j\in J\}$ is an orthonormal basis for $H_1\otimes H_2$. Show that $H_1\otimes H_2$ is unitary equivalent to $H_1\otimes \ell^2(J)$. Also, use $X$ and $Y$ to find an orthonormal basis for $H_1\oplus H_2$.
\end{e}
\begin{e}
\label{e:5-11}
Let $G$ be an LCG with a Haar measure $\mu$.
\begin{itemize}
\item [(i)] Using a Dirac net on $G$ find and approximate unit for $C_r\s(G)$.
\item [(ii)] If the topology of $G$ is first countable, show that $C_r\s(G)$ is $\si$-unital.
\item [(iii)] Assume $G$ is discrete. Show that $\la(\d_e)=1\in B(\ell^2(G))$ and $\la(\d_g)$ is a unitary element in $C_r\s(G)$ for all $g\in G$, where $\d_g$ is the characteristic function of the one point subset $\{ g\}$ of $G$.
\end{itemize}
\end{e}
\begin{e}
\label{e:5-12}
Let $B$ be an orthonormal basis for a Hilbert basis $H$ and let $T,S\in B(H)$. Show that $T=S$ if and only if $\lan Tu,v\ran=\lan S u, v\ran$ for all $u, v\in B$.
\end{e}
\begin{e}
\label{e:5-13} Assume $H$ is a Hilbert space. Prove that $F(H)$ is generated by projections of rank one.
\end{e}
\begin{e}
\label{e:5-14}
Let $H$ be a Hilbert space. Show that the set $B(H)_h$ of self adjoint operators is weakly, and consequently strongly, closed.
\end{e}
\begin{e}
\label{e:5-15}
Prove that the Borel functional calculus agrees with the holomorphic functional calculus on holomorphic functions.
\end{e}
\begin{e}
\label{e:5-16}
Let $M:L^\infty (X,\mu)\ra B(L^2(X))$ be the map defined in Example \ref{exa:linfty} and let $f\in L^\infty(X)$.
\begin{itemize}
\item [(i)] By definition, the {\bf essential range of $f$} is the set
\[
\left\{ \la\in \c; \mu(f\inv(O))>0 \, \text{for all open subsets $O\sub \c$ containing $\la$ } \right\}.
\]
     Show that $\si(M_f)$ is exactly the essential range of $f$.
\item [(ii)] Show that if $|f|=1$ almost every where, then $M_f$ is a unitary operator.
\item [(iii)] Show that if $f=0$ or $f=1$ almost every where, then $M_f$ is a projection.
\item [(iv)] When is $M_f$ self adjoint or positive? Justify your answer.
\item [(v)] Let $g\in C(\si(M_f))$. Show that $g(M_f)=M_{g o f}$.
\item [(vi)] Let $g\in B(\si(M_f))$, see Remark \ref{rem:borelfc} and Theorem \ref{thm:borelfunctionalcalculus}. Show that
\[
g(M_f)=M_{g o f}.
\]
\end{itemize}
\end{e}
\begin{e}
\label{e:5-17}
Let $P$ and $Q$ be two projections on a Hilbert space $H$. Show that $P\leq Q$ if and only if $\|Px\|\leq \|Qx\|$ for all $x\in H$.
\end{e}
\begin{e}
\label{e:5-18}
Let $H$ be a Hilbert space and let $T, S\in B(H)$. Show that if $\lan Tx, x\ran=\lan Sx, x\ran$ for all $x\in H$, then $T=S$.
\end{e}
\begin{e}
\label{e:5-19}
Let $T\in B(H_1,H_2)$  be an onto isometry between two Hilbert spaces. Prove that $T$ is a unitary.
\end{e}
\begin{e}
\label{e:5-20}
Let $T\in B(H_1,H_2)$ be a bounded operator between two Hilbert spaces and set $|T|:=(T\s T)^{1/2}\in B(H_1)$. Prove that $\| |T|x\|=\|Tx\|$ for all $x\in H_1$.
\end{e}
\begin{e}
\label{e:5-21}
Let $H$ be a Hilbert space, $T\in B(H)$, and let $P_T$ and $Q_T$ be the left and right support projections of $T$, respectively. For every $S\in C\s(T,T\s)$, show that $SP_T=S$ and $Q_T S=S$.
\end{e}
\begin{e}
\label{e:5-22}
Let $H$ be a Hilbert space and let $H^n$ be the (orthogonal) direct sum of $n$ copies of $H$.
\begin{itemize}
\item [(i)] Show that $B(H^n)\simeq M_n(B(H))$.
\item [(ii)] Using the above isomorphism, for all $T\in B(H)$, define $D(T)\in B(H^n)$ by
\[
D(T):=(T_{ij})=\left\{ \begin{array} {ll} T& i=j\\ 0 & i\neq j \end{array} \right.
\]
Show that if $X$ is a \ss-subalgebra of $B(H)$, then we have
\[
D(M)'=\{ (T_{ij})\in B(H^n); T_{ij}\in M' \, \forall 1\leq i,j\leq n \}.
\]
\end{itemize}
\end{e}






\addcontentsline{toc}{chapter}{Bibliography}

\end{document}